\DeclareMathOperator{\Spec}{Spec}
\DeclareMathOperator{\Hom}{Hom}
\DeclareMathOperator{\Gal}{Gal}
\DeclareMathOperator{\ch}{char}
\DeclareMathOperator{\id}{id}
\DeclareMathOperator{\GL}{GL}
\DeclareMathOperator{\GSp}{GSp}
\DeclareMathOperator{\ad}{ad}
\DeclareMathOperator{\Ad}{Ad}
\DeclareMathOperator{\Lie}{Lie}
\DeclareMathOperator{\Nil}{Nil}
\DeclareMathOperator{\res}{res}
\DeclareMathOperator{\Frob}{Frob}
\DeclareMathOperator{\Stab}{Stab}
\DeclareMathOperator{\St}{St}
\DeclareMathOperator{\Aut}{Aut}
\DeclareMathOperator{\Sp}{Sp}
\DeclareMathOperator{\End}{End}
\DeclareMathOperator{\diag}{diag}
\DeclareMathOperator{\SO}{SO}
\DeclareMathOperator{\SL}{SL}
\DeclareMathOperator{\gllie}{\mathfrak{gl}}
\DeclareMathOperator{\solie}{\mathfrak{so}}
\DeclareMathOperator{\RHom}{RHom}
\DeclareMathOperator{\LI}{\Lambda_I}
\DeclareMathOperator{\rec}{rec}
\DeclareMathOperator{\CNL}{CNL}
\DeclareMathOperator{\der}{der}
\DeclareMathOperator{\Iw}{Iw}
\DeclareMathOperator{\an}{an}
\DeclareMathOperator{\Kli}{Kli}
\DeclareMathOperator{\loc}{loc}
\DeclareMathOperator{\PGL}{PGL}
\DeclareMathOperator{\GT}{GT}
\DeclareMathOperator{\Ind}{Ind}
\DeclareMathOperator{\PGSp}{PGSp}
\DeclareMathOperator{\Spin}{Spin}
\newcommand\restr[2]{{
  \left.\kern-\nulldelimiterspace 
  #1 
  \vphantom{\big|} 
  \right|_{#2} 
  }}
\def\@tocline#1#2#3#4#5#6#7{\relax
  \ifnum #1>\c@tocdepth 
  \else
    \par \addpenalty\@secpenalty\addvspace{#2}%
    \begingroup \hyphenpenalty\@M
    \@ifempty{#4}{%
      \@tempdima\csname r@tocindent\number#1\endcsname\relax
    }{%
      \@tempdima#4\relax
    }%
    \parindent\z@ \leftskip#3\relax \advance\leftskip\@tempdima\relax
    \rightskip\@pnumwidth plus4em \parfillskip-\@pnumwidth
    #5\leavevmode\hskip-\@tempdima
      \ifcase #1
       \or\or \hskip 1em \or \hskip 2em \else \hskip 3em \fi%
      #6\nobreak\relax
    \dotfill\hbox to\@pnumwidth{\@tocpagenum{#7}}\par
    \nobreak
    \endgroup
  \fi}
\newtheorem{lemma}{Lemma}[section]
\newtheorem{theorem}[lemma]{Theorem}
\newtheorem{definition}[lemma]{Definition}
\newtheorem{corollary}[lemma]{Corollary}
\newtheorem{proposition}[lemma]{Proposition}
\theoremstyle{remark}
\newtheorem{example}[lemma]{Example}
\newtheorem{remark}[lemma]{Remark}
\newtheorem{construction}[lemma]{Construction}
\newcommand{\gk}{\ensuremath{\hat{\mathfrak{g}}_k}}
\newcommand{\gA}{\ensuremath{\hat{\mathfrak{g}}_A}}
\newcommand{\Lzerobar}{\ensuremath{\overline{L_0}}}
\newcommand{\catname}[1]{\mathbf{#1}}
\newcommand{\gkv}{\ensuremath{\gk^{0,\vee} (1)}}
\newcommand{\GamKv}{\ensuremath{\Gamma_{K_v}}}
\newcommand{\Qlbar}{\ensuremath{\overline{\mathbb{Q}_l}}}
\newcommand{\Qpbar}{\ensuremath{\overline{\mathbb{Q}_p}}}
\newcommand{\Zpbar}{\ensuremath{\overline{\mathbb{Z}_p}}}
\newcommand{\OFv}{\ensuremath{\mathcal{O}_{F_v}}}
\newcommand{\piv}{\ensuremath{\varpi_v}}
\newcommand{\pone}{\ensuremath{{\mathfrak{p}_1}}}
\newcommand{\Fpbar}{\ensuremath{\overline{\mathbb{F}_p}}}
\newcommand{\splie}{\ensuremath{\mathfrak{sp}}}
\newcommand{\Gss}{\ensuremath{\Gamma^{\text{ss}}}}
\newcommand{\g}{\ensuremath{\hat{\mathfrak{g}}}}
\newcommand{\WD}{\ensuremath{\text{WD}}}
\newcommand{\Art}{\ensuremath{\text{Art}}}
\newcommand{\Fss}{\ensuremath{\textup{F-ss}}}
\newcommand{\CalO}{\ensuremath{\mathcal{O}}}
\newcommand{\CGhat}{\ensuremath{C_{\hat{G}}}}
\newcommand{\DvTW}{\ensuremath{\mathcal{D}_v^{\square,\textup{TW}}}}
\newcommand{\Sglobdp}{\ensuremath{\mathcal{S} = (\bar{\rho},S,\{\Lambda_v\}_{v \in S}, \psi, \{\mathcal{D}_v^\square\}_{v \in S})}}
\numberwithin{equation}{section}
\title{The Taylor--Wiles method for reductive groups}
\author{Dmitri Whitmore}
\begin{document}

\maketitle

\begin{abstract}
    We construct a local deformation problem for residual Galois representations $\bar{\rho}$ valued in an arbitrary reductive group $\hat{G}$ which we use to develop a variant of the Taylor--Wiles method.
    Our generalization allows Taylor--Wiles places for which the image of Frobenius is semisimple, a weakening of the regular semisimple constraint imposed previously in the literature.
    We introduce the notion of $\hat{G}$-adequate subgroup, our corresponding `big image' condition.
    When $\hat{G}$ is symplectic, special orthogonal, or simply connected and of exceptional type, and $\hat{G} \to \GL_n$ is a faithful irreducible representation of minimal dimension, we show that a subgroup is $\hat{G}$-adequate if it is $\GL_n$-irreducible and the residue characteristic is sufficiently large.
    
    We apply our ideas to the case $\hat{G} = \GSp_4$ and prove a modularity lifting theorem for abelian surfaces over a totally real field $F$ which holds under weaker hypotheses than in the work of Boxer--Calegari--Gee--Pilloni.
    We deduce some modularity results for elliptic curves over quadratic extensions of $F$.
\end{abstract}

\tableofcontents

\section{Introduction}

We introduce a new setup which provides a generalization of the Taylor--Wiles method for an arbitrary reductive group.
We apply our constructions to the context of proving modularity results for abelian surfaces over totally real fields, extending \cite{surfaces}.

\subsection{Main ideas and theorems}

Let $\hat{G}$ be a reductive group over $\mathbb{Z}$ and let $p$ a prime number not dividing the order of the Weyl group of $\hat{G}$.
Let $F$ be a number field with absolute Galois group $G_{F}$ and fix a continuous representation $\bar{\rho}: G_{F} \to \hat{G}(\mathbb{F}_p)$ which is absolutely $\hat{G}$-irreducible (see Definition \ref{Ghat irreducible}).
The Taylor--Wiles method is used to establish automorphy lifting theorems, providing instances of the global Langlands correspondence.
The goal of automorphy lifting is to take lifts $\rho_1,\rho_2: G_F \to~\hat{G}(\mathbb{Z}_p)$ of $\bar{\rho}$ for which $\rho_1$ is the Galois representation attached to an automorphic representation and show that the same is true of $\rho_2$.

In the Taylor--Wiles method one considers lifts of $\bar{\rho}$ for which ramification is allowed at certain auxiliary sets of places.
In this paper we consider finite places $v$ of $F$ for which
\begin{itemize}
    \item $\bar{\rho}$ is unramified at $v$
    \item the residue field $k(v)$ satisfies $\# k(v) \equiv 1 \mod p$
    \item the image of the Frobenius element $\bar{g} = \bar{\rho}(\Frob_v)$ is semisimple.
\end{itemize}
We call such a place $v$ a Taylor--Wiles place.
This is a generalization of previous notions for which $\bar{g}$ was always taken to be regular semisimple, such as in \cite{thorne2019}.

The main difficulty arising from our above notion is that the Taylor--Wiles method requires us to consider deformations of $\bar{\rho}$ for which the inertia subgroup $I_v$ has image in a torus.
While this is automatic in the regular semisimple case, we prove the following theorem to provide us with a `canonical' torus.
\begin{theorem} \label{Mg imprecise}
Let $A$ be a complete Noetherian local $\mathbb{Z}_p$-algebra of residue field $\mathbb{F}_p$ and let $g \in \hat{G}(A)$ be a lift of $\bar{g}$.
Then there is a canonical lift of the centralizer $M_{\bar{g}} = Z_{\hat{G}_{\mathbb{F}_p}}(\bar{g})$ to a closed reductive subgroup scheme $M_g$ of $\hat{G}_A$.
\end{theorem}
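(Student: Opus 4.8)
The naive candidate $M_g = Z_{\hat{G}_A}(g)$ does not work --- for instance if $\bar{g}$ is central in $\hat{G}_{\mathbb{F}_p}$ but $g$ is not, this scheme-theoretic centralizer neither contains $g$ nor is flat over $A$ --- so the construction must be genuinely deformation-theoretic. The plan is to characterize $M_g$ as the \emph{unique} closed subgroup scheme of $\hat{G}_A$ that is flat over $A$, has special fibre $M_{\bar{g}}$, and satisfies $g \in M_g(A)$; this uniqueness makes it automatically canonical and compatible with base change along any local morphism of complete Noetherian local $\mathbb{Z}_p$-algebras with residue field $\mathbb{F}_p$. Since a closed subgroup scheme of the affine scheme $\hat{G}_A$ is cut out by an ideal, and compatible such ideals glue along $A = \varprojlim_n A/\mathfrak{m}^n$, I would first reduce to $A$ Artinian, and then induct along a chain of small extensions $A' \twoheadrightarrow A$ (with square-zero kernel $I$ one-dimensional over $\mathbb{F}_p$), the base case $A = \mathbb{F}_p$ being the given $M_{\bar{g}}$.

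For the inductive step, fix such an extension $A' \twoheadrightarrow A$, assume $M_g \subseteq \hat{G}_A$ has been constructed (so in particular $g \in M_g(A)$), and let $g' \in \hat{G}(A')$ be the given lift of $g$. A standard computation in the deformation theory of closed subgroup schemes shows that flat lifts of $M_g$ to a closed subgroup scheme of $\hat{G}_{A'}$, if any exist, form a torsor under the group $Z^1(M_{\bar{g}}, V)$ of algebraic $1$-cocycles for the adjoint action, where $V := (\hat{\mathfrak{g}}/\Lie(M_{\bar{g}})) \otimes_{\mathbb{F}_p} I$, with the existence obstruction lying in $H^2(M_{\bar{g}}, V)$; here I use that $p \nmid |W|$ forces $M_{\bar{g}} = Z_{\hat{G}_{\mathbb{F}_p}}(\bar{g})$ to be smooth, so that $\Lie(M_{\bar{g}})$ is exactly the $\Ad(\bar{g})$-fixed subspace of $\hat{\mathfrak{g}}_{\mathbb{F}_p}$ and every flat lift of $M_g$ is smooth over $A'$. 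Given a flat lift $\widetilde{M} \subseteq \hat{G}_{A'}$, I would lift $g \in M_g(A)$ to a point $g_0 \in \widetilde{M}(A')$ using smoothness, write $g' = g_0 \cdot h$ with $h$ in the kernel of $\hat{G}(A') \to \hat{G}(A)$, canonically identified with $\hat{\mathfrak{g}} \otimes_{\mathbb{F}_p} I$, and let $\delta(g', \widetilde{M}) \in V$ be the image of $h$; this is independent of the choice of $g_0$, and $g' \in \widetilde{M}(A')$ if and only if $\delta(g', \widetilde{M}) = 0$. As $\widetilde{M}$ ranges over the torsor of lifts, the resulting ``defect map'' to $V$ is affine over the linear map $Z^1(M_{\bar{g}}, V) \to V$ sending a cocycle $c$ to its value $c(\bar{g})$ at $\bar{g}$.

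The crux is an elementary cohomological input exploiting that $\bar{g}$ is \emph{central} in $M_{\bar{g}}$. Since $\bar{g}$ is semisimple and $\Lie(M_{\bar{g}})$ is its $\Ad$-fixed space, the operator $\Ad(\bar{g}) - 1$ scales each root space $\hat{\mathfrak{g}}_\alpha$ with $\alpha(\bar{g}) \neq 1$ by the unit $\alpha(\bar{g}) - 1$, hence acts invertibly on $\hat{\mathfrak{g}}/\Lie(M_{\bar{g}})$ and on $V$. On the other hand, because $\bar{g}$ is central, conjugation by $\bar{g}$ is the identity automorphism of $M_{\bar{g}}$, so $\Ad(\bar{g})$ induces on each $H^i(M_{\bar{g}}, V)$ both an invertible map (from the module structure) and, since inner automorphisms act trivially on group cohomology, the identity; thus $\Ad(\bar{g}) - 1$ is both invertible and zero on $H^i(M_{\bar{g}}, V)$, forcing $H^i(M_{\bar{g}}, V) = 0$ for all $i$. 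In particular $H^2(M_{\bar{g}}, V) = 0$, so a flat lift of $M_g$ exists. Moreover the evaluation map $Z^1(M_{\bar{g}}, V) \to V$, $c \mapsto c(\bar{g})$, is bijective: it is surjective because any $v \in V$ equals $(\Ad(\bar{g}) - 1)w$ for a unique $w$, and then $v = c(\bar{g})$ for $c$ the coboundary of $w$; and it is injective because the cocycle identity at the central element $\bar{g}$ gives $(\Ad(\bar{g}) - 1) c(m) = (\Ad(m) - 1) c(\bar{g})$ for all $m$, so $c(\bar{g}) = 0$ and invertibility of $\Ad(\bar{g}) - 1$ force $c = 0$. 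Hence the defect map is a bijection, and there is a unique flat lift $M_{g'}$ of $M_g$ with $g' \in M_{g'}(A')$; this is the required subgroup over $A'$, and the uniqueness characterization is visibly inherited.

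Finally I would pass to the limit over $A/\mathfrak{m}^n$ to obtain $M_g$ for general $A$: it is a closed subgroup scheme of $\hat{G}_A$, flat over $A$ with special fibre $M_{\bar{g}}$, hence reductive by the rigidity of reductive group schemes over the complete local base $A$, and it is canonical since at each finite level it is uniquely characterized. The main obstacle I anticipate is not depth but bookkeeping: setting up the deformation theory of a closed subgroup scheme \emph{together with} a prescribed $A$-point --- the defect map and its linearization --- precisely enough that the one-line cohomology vanishing above can be applied; once the role of the centrality of $\bar{g}$ in $M_{\bar{g}}$ is isolated, the rest is formal.
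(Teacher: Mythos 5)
Your Artinian-level argument is a genuinely different route from the paper's: the paper never inducts along small extensions, but instead lifts the eigenvalue-one decomposition of the Lie algebra directly over $A$ by Hensel's lemma, produces a Cartan subalgebra inside $L_0$ from an ad-regular element, uses root-smoothness (pretty good characteristic) to realize it as the Lie algebra of a maximal torus of $\hat{G}_A$, lifts the multiplicative-type group $Z(M_{\bar{g}}^\circ)$ into that torus, and defines $M_g^\circ$ as the identity component of its centralizer, with uniqueness coming from SGA3. Your replacement of all this by the deformation theory of the closed subgroup scheme, together with the observation that centrality of $\bar{g}$ in $M_{\bar{g}}$ forces $H^i(M_{\bar{g}}, (\hat{\mathfrak{g}}/\Lie M_{\bar{g}})\otimes I)=0$ and makes evaluation at $\bar{g}$ a bijection on $Z^1$, is sound in substance at each small extension (granting the ``standard computation'' identifying flat subgroup lifts with a torsor under $Z^1$ with obstruction in Hochschild $H^2$, which is indeed standard SGA3 Exp.~III-type material), and it is an attractive alternative at that level.

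The genuine gap is the limit step. Compatible ideals over the quotients $A/\mathfrak{m}^n$ do not ``glue along $A=\varprojlim_n A/\mathfrak{m}^n$'' to an ideal of $\mathcal{O}(\hat{G}_A)$: they only give an ideal of the $\mathfrak{m}$-adic completion of $\mathcal{O}(\hat{G}_A)$, i.e.\ a closed \emph{formal} subgroup scheme of the completion of $\hat{G}_A$ along its special fibre. Since $\hat{G}_A$ is affine rather than proper over $A$, there is no general algebraization theorem available here, and closed formal subschemes of affine schemes over a complete local base genuinely can fail to come from closed subschemes; so producing an honest closed subgroup scheme of $\hat{G}_A$ (which is what the theorem asserts) requires an extra rigidity input --- for instance algebraizing the multiplicative-type center of the formal lift and taking its centralizer in $\hat{G}_A$ (which is essentially the paper's construction, carried out over $A$ with no Artinian induction), or invoking representability and affineness of the relevant Hom-functors out of multiplicative-type or reductive groups. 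Relatedly, your uniqueness characterization over non-Artinian $A$ presumes that a flat closed subgroup scheme of $\hat{G}_A$ is determined by its reductions modulo $\mathfrak{m}^n$, which again is not a formality for closed subschemes of an affine scheme and needs an argument. As written, then, the proposal establishes the statement for Artinian $A$ but not for a general complete Noetherian local $A$; the missing step is not bookkeeping but a genuine algebraization/rigidity argument.
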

A more precise statement is given in Theorem \ref{Mg}.
Let $g$ be the image of a Frobenius lift at $v$ under a lift
$\rho: G_F \to \hat{G}(A)$
of $\bar{\rho}$.
We then define a local deformation problem by admitting all lifts $\rho$ for which $I_v$ is valued in the torus given by the connected center of $M_g$.
Since our deformation problem is in general not equal to the unrestricted deformation problem, some kind of local-global compatibility would be required to show that Galois representations (attached to those automorphic representations arising in the Taylor--Wiles method) satisfy our local deformation condition upon restriction to the decomposition group at a Taylor--Wiles prime.

In order to control the arising universal deformation rings, one requires the existence sets of Taylor--Wiles places for which a certain Galois cohomology group (the dual Selmer group) is forced to vanish.
Consequently, one typically imposes `big image' hypotheses on the representation $\bar{\rho}$ (e.g. the $\hat{G}$-abundant image condition of \cite{thorne2019} or vast representations of \cite{surfaces}) to guarantee the existence of such sets of Taylor--Wiles places.
In Proposition \ref{TW places existence} we show that the dual Selmer group can be killed off under much weaker assumptions on $\bar{\rho}$ if one allows our notion of Taylor--Wiles prime.
This motivates the following definition, which is our milder `big image' condition.
\begin{definition} \label{adequate imprecise}
We say $H \leq \hat{G}(\mathbb{F}_p)$ is $\hat{G}$-adequate if the following conditions hold:
\begin{enumerate}
    \item The following groups vanish
    \begin{enumerate}
        \item $H^0(H,\hat{\mathfrak{g}}^{0,\vee})$
        \item $H^1(H,\mathbb{F}_p)$
        \item $H^1(H,\hat{\mathfrak{g}}^{0,\vee})$
    \end{enumerate}
    \item For every non-zero simple $\mathbb{F}_p[H]$-submodule $W \leq \hat{\mathfrak{g}}^{0,\vee}$ there exists a semisimple element $h \in H$ such that for some $w \in W$ and $z \in \Lie Z(Z_{\hat{G}}(h)) \cap \g^0$ we have $w(z) \neq 0$. \label{submodule imprecise}
\end{enumerate}
\end{definition}
Here $\hat{\mathfrak{g}}^0$ denotes the Lie algebra of the derived group of $\hat{G}$ (over $\mathbb{F}_p$) equipped with the adjoint action of $\hat{G}(\mathbb{F}_p)$, $\hat{\mathfrak{g}}^{0,\vee}$ denotes the dual module and $\mathfrak{z}$ denotes the center of a Lie algebra.

Our notion of Taylor--Wiles place and $\hat{G}$-adequate subgroup are by analogy with \cite{thorne2012}.
There unramified places $v$ are considered for which the $\bar{\rho}(\Frob_v)$ acts as $\bar{\psi} \oplus \bar{s}$, where $\bar{\psi}$ is an eigenspace which is acted on semisimply.
A local deformation problem is then defined by taking lifts of the form $\psi \oplus s$, where the action on $\psi$ is through a scalar. 
This itself was a generalization of \cite{clozel2008automorphy}, where $\bar{\psi}$ was required to be $1$-dimensional.
Thus our generalization of allowing $\bar{\rho}(\Frob_v)$ to be any semisimple element, rather than regular semisimple, is analogous to allowing repeated eigenvalues in the case of $\GL_n$.

`Irreducibility implies adequacy' results have already been established in the case of $\hat{G} = \GL_n$ in \cite{thoappendix, guralnick2015adequate} outside of a set of primes depending on $n$.
We use this to deduce an analogous theorem for absolutely irreducible subgroups of some simple linear algebraic groups.
\begin{theorem} \label{intro gln irred implies adequate}
Let $\hat{G}$ be one of the following simple linear algebraic groups over $\Fpbar$:
\begin{itemize}
    \item the special orthogonal group $\SO_{n}$ for $n \geq 3$
    \item the symplectic group $\Sp_{n}$ for $n \geq 2$ even
    \item the simply connected exceptional groups $G_2,F_4,E_6,E_7,E_8$.
\end{itemize}
Let $\hat{G} \hookrightarrow \GL_n$ be a faithful irreducible representation of minimal dimension.
If $p \geq n+4$, $p \neq 2n \pm 1$ and $\Gamma \subset \hat{G}(\Fpbar)$ is a finite, $\GL_n$-irreducible subgroup, then $\Gamma$ is $\hat{G}$-adequate.

\end{theorem}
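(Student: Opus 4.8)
The plan is to reduce every clause of Definition~\ref{adequate imprecise} to the known ``irreducibility implies adequacy'' statement for $\GL_n$ of \cite{thoappendix,guralnick2015adequate}, applied to the tautological representation $V = \Fpbar^n$. Throughout one uses that $p \geq n+4$ forces $p$ to exceed the Coxeter number of $\hat{G}$ in every case, so that $p$ is a very good prime for $\hat{G}$ and is prime to $|\pi_1(\hat{G})|$ and to $n$; consequently $\hat{\mathfrak{g}}^0 = \hat{\mathfrak{g}}$ (as $\hat{G}$ is simple), the trace form $B(X,Y) = \operatorname{tr}(XY)$ on $\mathfrak{gl}_n$ restricts to a nondegenerate $\hat{G}$-invariant form on $\hat{\mathfrak{g}}$, centralizers in $\hat{G}$ of semisimple elements are smooth with reductive identity component, and $\hat{\mathfrak{g}}^{Z_{\hat{G}}(g)} = \Lie Z(Z_{\hat{G}}(g))$ for semisimple $g$ (fixed points being for the adjoint action; this is part of the centralizer theory underlying Theorem~\ref{Mg}). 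Nondegeneracy of $B|_{\hat{\mathfrak{g}}}$ yields a $\hat{G}$-stable decomposition $\mathfrak{gl}_n = \hat{\mathfrak{g}} \oplus \mathfrak{m}$ with $\mathfrak{m} = \hat{\mathfrak{g}}^{\perp_B}$; let $\pi \colon \mathfrak{gl}_n \to \hat{\mathfrak{g}}$ be the resulting $\hat{G}$-equivariant projection. (For the classical cases this is the splitting of $n \times n$ matrices into parts symmetric and antisymmetric with respect to the bilinear form preserved by $\hat{G}$; for the exceptional cases it also follows from complete reducibility of $\mathfrak{gl}_n$ as a $\hat{G}$-module in the given range.)

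Since $\Gamma$ is $\GL_n$-irreducible and $p \geq n+4$, $p \neq 2n \pm 1$, the subgroup $\Gamma \subset \GL_n(\Fpbar)$ is adequate in the sense of \cite{thoappendix,guralnick2015adequate}. This gives immediately $H^1(\Gamma,\mathbb{F}_p) = 0$, which is clause~(1)(b); it gives $H^1(\Gamma,\mathfrak{gl}_n) = 0$; and, rewriting the ``trace'' clause of $\GL_n$-adequacy using the nondegeneracy of $B$ on $\mathfrak{gl}_n$ together with the fact that $\sum_{g \in \Gss} \Lie Z(Z_{\GL_n}(g))$ is a $\Gamma$-submodule (whose $B$-orthogonal complement, were it nonzero, would contain a simple submodule violating that clause), it gives $\mathfrak{gl}_n = \sum_{g \in \Gss} \Lie Z(Z_{\GL_n}(g))$. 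Clause~(1)(a) follows because, identifying $\hat{\mathfrak{g}}^{0,\vee} \cong \hat{\mathfrak{g}}$ via $B$, one has $H^0(\Gamma,\hat{\mathfrak{g}}) = \hat{\mathfrak{g}}^{\Gamma} \subseteq (\mathfrak{gl}_n)^{\Gamma} = \Fpbar \cdot \mathrm{id}$ by Schur's lemma and $\Fpbar \cdot \mathrm{id} \cap \hat{\mathfrak{g}} = 0$ (as $\hat{G} \hookrightarrow \SL_n$ and $p \nmid n$). Clause~(1)(c) follows because $H^1(\Gamma,\hat{\mathfrak{g}}^{0,\vee}) \cong H^1(\Gamma,\hat{\mathfrak{g}})$ is a direct summand of $H^1(\Gamma,\mathfrak{gl}_n) = 0$, $\hat{\mathfrak{g}}$ being a $\Gamma$-direct summand of $\mathfrak{gl}_n$.

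It remains to establish clause~(2). By the same rephrasing, now carried out inside $\hat{\mathfrak{g}}$ (using $\hat{\mathfrak{g}}^0 = \hat{\mathfrak{g}}$, nondegeneracy of $B|_{\hat{\mathfrak{g}}}$, and that the sum below is a $\Gamma$-submodule), it suffices to prove $\sum_{g \in \Gss} \Lie Z(Z_{\hat{G}}(g)) = \hat{\mathfrak{g}}$. The key point is that $\pi$ maps $\Lie Z(Z_{\GL_n}(g))$ into $\Lie Z(Z_{\hat{G}}(g))$ for each semisimple $g \in \Gamma$: the space $\Lie Z(Z_{\GL_n}(g))$ is spanned by the $g$-eigenspace idempotents $e_{g,\alpha} \in \mathfrak{gl}_n$, and each $e_{g,\alpha}$ is $\Ad(h)$-fixed for every $h \in Z_{\GL_n}(g)$, hence for every $h \in Z_{\hat{G}}(g)$; since $\pi$ is $Z_{\hat{G}}(g)$-equivariant, $\pi(e_{g,\alpha}) \in \hat{\mathfrak{g}}^{Z_{\hat{G}}(g)} = \Lie Z(Z_{\hat{G}}(g))$. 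Applying $\pi$ to the identity of the previous paragraph now gives
\[
\hat{\mathfrak{g}} = \pi(\mathfrak{gl}_n) = \sum_{g \in \Gss} \pi\bigl(\Lie Z(Z_{\GL_n}(g))\bigr) \subseteq \sum_{g \in \Gss} \Lie Z(Z_{\hat{G}}(g)) \subseteq \hat{\mathfrak{g}},
\]
so the inclusions are equalities and clause~(2) holds, completing the proof.

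I expect the step requiring the most care is the invocation of \cite{thoappendix,guralnick2015adequate}: one must check that $\GL_n$-adequacy of an absolutely irreducible $n$-dimensional $V$ genuinely holds throughout the range $p \geq n+4$, $p \neq 2n \pm 1$, the primes $2n \pm 1$ being excluded precisely because there $\Gamma$ can be a group like $\SL_2(\mathbb{F}_p)$ or $\PSL_2(\mathbb{F}_p)$ acting through a $\tfrac{p \mp 1}{2}$-dimensional representation, the familiar exceptional case for adequacy. A secondary bookkeeping point is the low-rank special orthogonal groups $\SO_3, \SO_4, \SO_5, \SO_6$, where $\hat{\mathfrak{g}}$ need not be an irreducible $\hat{G}$-module (for instance $\mathfrak{so}_4 \cong \mathfrak{sl}_2 \oplus \mathfrak{sl}_2$); there one verifies the decomposition $\mathfrak{gl}_n = \hat{\mathfrak{g}} \oplus \mathfrak{m}$ and the centralizer facts used above directly, which is harmless since $p \geq 7$ is already very good for these groups.
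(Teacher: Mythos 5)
Your proposal has the same skeleton as the paper's argument (Theorem \ref{gln irred implies adequate}): quote $\GL_n$-adequacy of absolutely irreducible subgroups in the stated range of $p$ (the paper's Theorem \ref{gln equiv}), split $\gllie_n = \g \oplus \mathfrak{m}$ as $\hat{G}$-modules, and transport the $\GL_n$ spanning and cohomology statements through the equivariant projection $\pi$. Where you genuinely differ is the spanning condition. The paper's Lemma \ref{compute gln adequacy} only gives $\pi(\mathfrak{z}(\gllie_n^\gamma)) \subset \mathfrak{z}(\g^\gamma)$, which is insufficient for $\SO_n$ because $Z_{\hat{G}}(\gamma)$ can be disconnected and $\Lie Z(Z_{\hat{G}}(\gamma))$ can then be strictly smaller than $\mathfrak{z}(\g^\gamma)$; the paper closes this with the explicit eigenvalue case analysis of Lemma \ref{spanning equality}. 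Your observation that the eigenprojections of $\gamma$ are $\Ad$-fixed by all of $Z_{\GL_n}(\gamma)$, so that their images under the $Z_{\hat{G}}(\gamma)$-equivariant $\pi$ land in $\g^{Z_{\hat{G}}(\gamma)}$, treats connected and disconnected centralizers uniformly and eliminates that case analysis — a cleaner route, provided you prove the identity $\g^{Z_{\hat{G}}(\gamma)} = \Lie Z(Z_{\hat{G}}(\gamma))$. That identity is not Lemma \ref{lie algebra mg commute center} (which is about connected groups and $\mathfrak{z}(\Lie M)$) and is not stated in the paper, but it is true and needs only a short argument: fixed vectors of $\Ad(Z_{\hat{G}}(\gamma)^\circ)$ in $\g$ lie in $\Lie \hat{T}$ and are annihilated by $d\alpha$ for every root $\alpha$ of the centralizer, giving $\g^{Z_{\hat{G}}(\gamma)^\circ} = \Lie Z(Z_{\hat{G}}(\gamma)^\circ)$, and taking fixed points of the component group commutes with $\Lie$ on the multiplicative-type group $Z(Z_{\hat{G}}(\gamma)^\circ)$. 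Write this out rather than waving at "centralizer theory".

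The one genuine gap in rigor is the input you use to build the complement: you assert that $p \geq n+4$ (hence $p$ very good) makes the trace form nondegenerate on $\g$. For the classical groups your symmetric/antisymmetric splitting is complete, but for the exceptional groups this assertion is precisely the content of the paper's Lemma \ref{module complement}, which is established there by an explicit computation of $\det(\restr{B}{\g})$ (Table \ref{det trace form}), not deduced formally from "very good characteristic". Your fallback appeal to complete reducibility of $\gllie_n$ as a $\hat{G}$-module in this range is likewise unproved, and does not follow from the most standard criteria at $p \geq n+4$ (Serre's semisimplicity criterion would want $p > 2n-2$). The claim is true and can be salvaged without computation — in very good characteristic $\g$ is a simple Lie algebra for these types, so any invariant symmetric bilinear form on it is zero or nondegenerate, and one checks nonvanishing on a single element — but as written this step is asserted rather than proved, and it carries the same weight as the lemma the paper devotes to it. The cohomological clauses, including your $H^0$ argument via Schur's lemma and $p \nmid n$, are fine.
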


We show in Corollary \ref{classical gln irred equiv adequate} that the converse to Theorem \ref{intro gln irred implies adequate} holds when $\hat{G}$ is either $\Sp_{2n}$ or $\SO_{2n+1}$.
We also give examples of $\hat{G}$-adequate subgroups which are not $\GL_n$-irreducible when $\hat{G}$ is either $\SO_{n}$ for $n$ even or when $\hat{G}$ is a simply connected exceptional group.

Our methods are then applied with the goal of proving modularity results in the case of $\hat{G} = \GSp_4$ and $F$ totally real.
We are able to prove modularity theorems in which the assumption of \cite{surfaces} that $\bar{\rho}$ is `vast' is replaced by the assumption that $\bar{\rho}$ is $\GSp_4$-reasonable (see Definition \ref{reasonable}); this holds when $\bar{\rho}(G_{F(\zeta_p)})$ is $\GSp_4$-adequate.
This offers a significant upgrade, since condition \ref{submodule imprecise} in Definition \ref{adequate imprecise} always holds for irreducible subgroups of $\Sp_4(\mathbb{F}_p)$.
In contrast, the corresponding condition with `semisimple' replaced by `regular simple' fails for approximately half of the conjugacy classes of irreducible subgroups of $\Sp_4(\mathbb{F}_p)$ (for some small choices of $p$).
Many of our preparatory lemmas (such as the study of various Hecke algebras) are proven for an arbitrary reductive group and are therefore applicable in other settings.
Here are the modularity lifting theorems we prove.

\begin{theorem} \label{intro modular galois reps}
Let $F$ be a totally real field in which $p \geq 3$ splits completely.
Suppose that $\rho:G_F \to \GSp_4(\Qpbar)$ is a continuous representation satisfying:
\begin{enumerate}
    \item $\nu \circ \rho = \varepsilon^{-1}$
    \item $\bar{\rho}$ is $\hat{G}$-reasonable and tidy in the senses of Definition \ref{reasonable} and Definition \ref{defn: tidy}
    \item For every $v | p$, $\restr{\rho}{G_{F_v}}$ is $p$-distinguished weight $2$ ordinary in the sense of Definition \ref{defn: p-distinguished ordinary}
    \item There exists an automorphic representation $\pi$ of $\GSp_4(\mathbb{A}_F)$ of parallel weight 2 and central character $|\cdot|^2$, ordinary in the sense of \cite[Definition 2.4.25]{surfaces}, such that $\overline{\rho_{\pi,p}} \cong \bar{\rho}$, where $\rho_{\pi,p}$ is as in Corollary \ref{parallel weight 2 galois rep}.
    \item For every finite place $v$ of $F$, the representations $\restr{\rho}{G_{F_v}}$ and $\restr{\rho_{\pi,p}}{G_{F_v}}$ are pure.
\end{enumerate}
Then $\rho$ is modular and for every finite place $v$ of $F$ we have full local-global compatibility.
\end{theorem}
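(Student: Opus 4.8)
The plan is to run the Taylor--Wiles--Kisin patching machine in the framework built in the preceding sections, following the overall strategy of \cite{surfaces} but substituting the generalized Taylor--Wiles places of this paper (and the associated local deformation rings attached to the groups $M_g$ of Theorem \ref{Mg}) for the regular-semisimple ones, and substituting the $\hat{G}$-reasonable hypothesis --- which via adequacy controls the dual Selmer group --- for the ``vast'' hypothesis. \textbf{Step 1 (global deformation problem).} Let $S$ consist of the places of $F$ above $p\infty$ together with all finite places where $\rho$ or $\rho_{\pi,p}$ ramifies. Set up a global deformation problem with fixed multiplier $\nu=\varepsilon^{-1}$; with local condition at each $v\mid p$ given by the $p$-distinguished weight-$2$ ordinary deformation ring of Definition \ref{defn: p-distinguished ordinary} (using that $p$ splits completely, so $F_v=\mathbb{Q}_p$); and with local condition at each $v\in S$ not above $p$ taken to be a union of irreducible components of the framed local deformation ring containing the points coming from the pure representations $\restr{\rho}{G_{F_v}}$ and $\restr{\rho_{\pi,p}}{G_{F_v}}$. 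As in \cite{surfaces} one works component by component and, when a chosen component is not formally smooth, invokes Taylor's Ihara-avoidance argument (possibly after a solvable base change, using that $\bar\rho$ is tidy). Write $R_{\mathcal S}$ for the resulting (framed) universal deformation ring; hypotheses (1)--(3) and (5) guarantee that $\rho$ defines a $\Qpbar$-point of $R_{\mathcal S}$.

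\textbf{Step 2 (Hecke side and the map $R\to\mathbb T$).} From $\pi$ form the ordinary part of the relevant coherent cohomology of the Shimura varieties attached to $\GSp_4/F$, localized at the maximal ideal $\mathfrak m$ of the abstract Hecke algebra cut out by $\bar\rho$, together with its action of a Hecke algebra $\mathbb T_{\mathcal S}$; hypothesis (4) makes this module nonzero, and tidiness is used to ensure $\mathfrak m$ is non-Eisenstein. Using the Galois representations $\rho_{\pi',p}$ attached to the automorphic representations contributing to this space (Corollary \ref{parallel weight 2 galois rep}), one checks that every such $\rho_{\pi',p}$ satisfies the imposed local conditions: ordinarity at $v\mid p$ is built into the choice of space; purity places it on the chosen components away from $p$; and at the Taylor--Wiles places one must verify that inertia lands in the connected center of $M_g$. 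This last point is the one flagged in the introduction and follows from local--global compatibility up to Frobenius semisimplification combined with an analysis of the relevant parahoric Hecke algebra (the parahoric adapted to $M_{\bar g}$) at such a place. This yields a surjection $R_{\mathcal S}\twoheadrightarrow\mathbb T_{\mathcal S}$.

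\textbf{Step 3 (patching).} For each $n\geq 1$ apply Proposition \ref{TW places existence}, using that $\bar\rho(G_{F(\zeta_p)})$ is $\hat{G}$-adequate, to produce a set $Q_n$ of generalized Taylor--Wiles places with $\#Q_n=q$ independent of $n$, with $\#k(v)\equiv 1\bmod p^n$, and with the dual Selmer group attached to $\mathcal S_{Q_n}$ vanishing. At each $v\in Q_n$ the local deformation ring attached to the $M_g$-condition of Theorem \ref{Mg} is formally smooth of exactly the relative dimension needed to absorb the contribution of $\Lambda_v$ (the completed group ring of the relevant torus quotient), so that $R_{\mathcal S_{Q_n}}$ is topologically generated over $R^{\loc}$ by $q$ elements. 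Patching the modules $M_{\mathcal S_{Q_n}}$ and the rings $R_{\mathcal S_{Q_n}}$ over $n$ produces $M_\infty$ over $R_\infty$, with $R_\infty$ a quotient of a power series ring over $R^{\loc}$ and $M_\infty$ a module over a power series ring $S_\infty$ satisfying $\dim S_\infty=\dim R_\infty$; since $\operatorname{depth}_{S_\infty}M_\infty\geq\dim S_\infty$ (using concentration of the ordinary cohomology, or the Calegari--Geraghty formalism as in \cite{surfaces}), $M_\infty$ is a nearly faithful $R_\infty$-module. Unwinding this gives $R_{\mathcal S}^{\mathrm{red}}=\mathbb T_{\mathcal S}^{\mathrm{red}}$, so the point of $R_{\mathcal S}$ defined by $\rho$ is automorphic, i.e.\ $\rho$ is modular.

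\textbf{Step 4 (local--global compatibility; the main obstacle).} Once $\rho\cong\rho_{\pi',p}$ for some automorphic $\pi'$, full local--global compatibility at every finite place $v$ follows from the known compatibility up to Frobenius semisimplification for $\GSp_4/F$ together with hypothesis (5): purity of $\restr{\rho}{G_{F_v}}$ (and of $\restr{\rho_{\pi,p}}{G_{F_v}}$) forces the monodromy operator to be the unique one compatible with the weight filtration given the Frobenius-semisimplification, which pins down the Weil--Deligne representation exactly. The genuinely new difficulty --- and the reason the earlier sections are needed --- is the verification in Step 2 that the automorphic Galois representations satisfy the $M_g$-local condition at the generalized Taylor--Wiles places: unlike the regular-semisimple case this is not formal, and requires matching the Galois and automorphic local pictures up to Frobenius semisimplification together with showing that the relevant ordinary/parahoric Hecke module only sees deformations with inertia valued in the connected center of $M_g$. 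A secondary, but essential, point is that the precise form of Theorem \ref{Mg} is exactly what makes the patching numerology balance in the presence of the non-regular torus $\Lambda_v$.
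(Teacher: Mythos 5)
Your proposal follows essentially the same route as the paper: Sections 2--7 supply exactly the inputs you describe (the $M_g$-local condition and its Selmer computation, the parahoric Hecke algebras and the localized-invariants analysis showing the Weil--Deligne parameter is semisimple, Proposition \ref{local global deltav} for the verification that inertia lands in $Z(M_{g})^{\circ}$, and Theorems \ref{gsp4 galois rep local glob} and \ref{hecke galois rep} for the Galois representation valued in the Hecke algebra satisfying the Taylor--Wiles condition with the expected $\Delta_Q$-action), after which the paper deduces Theorem \ref{modular galois reps} from Proposition \ref{patched modules} and Corollary \ref{parallel weight 2 galois rep} by running the patching, Ihara-avoidance and purity arguments of \cite{surfaces} word for word, with vastness replaced by $\GSp_4$-reasonableness; your Steps 1, 2 and 4 are accurate summaries of this.

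Two points in your Step 3 need repair, one of which is a wrinkle genuinely specific to this paper. First, $R_{\mathcal{S}_{Q_n}}$ is not topologically generated over $R^{\loc}$ by $q=\#Q_n$ elements: by Corollary \ref{TW places number field} the number of generators is $g=\sum_{v\in Q_n} n_v-\sum_{v\mid\infty}h^0(G_{F_v},\gk^0)+z(\#S-1)$, where $n_v=\dim Z(M_{\bar{\rho}(\phi_v)})-1$ can be $0$, $1$ or $2$, and correspondingly $\Delta_{Q_n}$ has only $t=\sum_{v\in Q_n}n_v$ cyclic factors rather than $2q$. Because the conjugacy class of $\bar{\rho}(\Frob_v)$, hence $n_v$, varies with the chosen places, $t$ could a priori change with $n$, and then no single $S_\infty=\mathcal{T}[[\mathbb{Z}_p^{t}]]$ admits compatible surjections onto the $\mathcal{T}[\Delta_{Q_n}]$ while keeping the dimension identity $\dim (R_\infty)'=\dim S_\infty-\#I$ that makes the patched module maximal Cohen--Macaulay; the paper fixes this with the pigeonhole argument of Lemma \ref{compatible tw data}, choosing Taylor--Wiles data of every level with the same $t$. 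Second, your parenthetical appeal to concentration of the ordinary cohomology is not available here: the patched modules are free over $S_\infty$ only for $\#I=0$ and merely balanced for $\#I=1$ (Proposition \ref{finite free balanced}), so near-faithfulness cannot be obtained from a depth bound alone and must come from the Calegari--Geraghty/BCGP formalism (maximal Cohen--Macaulayness via the dimension identity in the proof of Proposition \ref{patched modules}, combined with Ihara avoidance), which is indeed your stated fallback and is the paper's actual route.
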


Here $\nu: \GSp_4 \to \mathbb{G}_m$ is the similitude character and $\varepsilon$ is the $p$-adic cyclotomic character.
A more precise statement of Theorem \ref{intro modular galois reps} is given in Theorem \ref{modular galois reps}

\begin{theorem} \label{intro modular abelian surfaces}
Let $F$ be a totally real field in which $p \geq 3$ splits completely.
Let $A/F$ be an abelian surface such that
\begin{enumerate}
    \item $A$ has good ordinary reduction at every $v | p$
    \item for each $v|p$, the unit root crystalline eigenvalues are distinct modulo $p$
    \item $A$ admits a polarization of degree prime to $p$.
\end{enumerate}
Suppose that the residual Galois representation $\overline{\rho_{A,p}}$ is such that
\begin{enumerate} [resume]
    \item $\overline{\rho_{A,p}}$ is $\hat{G}$-reasonable and tidy
    \item $\overline{\rho_{A,p}}$ is ordinarily modular: there exists $\pi$ an automorphic representation of $\GSp_4(\mathbb{A}_F)$ for which
    \begin{enumerate}
        \item $\pi$ has parallel weight $2$ and central character $|\cdot|^2$
        \item for every $v|p$, $\pi_v$ is unramified and ordinary
        \item $\overline{\rho_{A,p}} \cong \overline{\rho_{\pi,p}}$
        \item for every finite place $v$ of $F$, $\rho_{\pi,p}$ is pure.
    \end{enumerate}
\end{enumerate}
Then $A$ is modular.
\end{theorem}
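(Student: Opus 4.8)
The plan is to deduce this from Theorem~\ref{modular galois reps} (the precise form of Theorem~\ref{intro modular galois reps}) applied to the $p$-adic Galois representation $\rho = \rho_{A,p}$ attached to $H^1_{\textup{\'et}}(A_{\overline{F}}, \Qpbar)$; modularity of $\rho_{A,p}$ is by definition the modularity of $A$. First I would record the symplectic structure: since $A$ admits a polarization of degree prime to $p$ (hypothesis~(3)), the associated Weil pairing on $H^1_{\textup{\'et}}(A_{\overline{F}}, \mathbb{Z}_p)$ is perfect and alternating with values in $\mathbb{Z}_p(-1)$, so a choice of symplectic basis gives $\rho_{A,p}\colon G_F \to \GSp_4(\mathbb{Z}_p)$ with $\nu \circ \rho_{A,p} = \varepsilon^{-1}$; this is hypothesis~(1) of Theorem~\ref{modular galois reps}. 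Hypothesis~(2) there --- that $\bar\rho = \overline{\rho_{A,p}}$ is $\hat{G}$-reasonable and tidy --- is exactly hypothesis~(4) here. For hypothesis~(4) of Theorem~\ref{modular galois reps} I would use the same $\pi$ supplied by hypothesis~(5): parts~(5)(a),(b) give an automorphic representation of $\GSp_4(\mathbb{A}_F)$ of parallel weight~$2$ and central character $|\cdot|^2$ which is unramified and ordinary at every $v\mid p$, hence ordinary in the sense of \cite[Definition 2.4.25]{surfaces}, and part~(5)(c) is the residual isomorphism $\overline{\rho_{\pi,p}}\cong\bar\rho$.

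Next I would verify hypothesis~(3), the condition at the primes above $p$, using that $p$ splits completely in $F$, so each $F_v = \mathbb{Q}_p$ and the classical ordinary theory applies. Good reduction of $A$ at $v$ makes $\restr{\rho_{A,p}}{G_{F_v}}$ crystalline of parallel weight~$2$ (the de Rham Hodge filtration on $H^1_{\textup{dR}}(A/\mathbb{Q}_p)$ has two-dimensional graded pieces). Good \emph{ordinary} reduction, through the connected-\'etale sequence of the $p$-divisible group $A[p^\infty]$ over $\OFv$, exhibits $\restr{\rho_{A,p}}{G_{F_v}}$ as ordinary in the sense of Definition~\ref{defn: p-distinguished ordinary}, with ordinary filtration induced by the formal group. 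Finally, the $p$-distinguishedness requirement of that definition --- distinctness of the unramified characters on the graded pieces modulo $p$ --- is precisely hypothesis~(2) here, that the unit-root crystalline Frobenius eigenvalues are distinct modulo $p$.

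It remains to check hypothesis~(5) of Theorem~\ref{modular galois reps}: purity of $\restr{\rho_{A,p}}{G_{F_v}}$ and of $\restr{\rho_{\pi,p}}{G_{F_v}}$ at every finite place $v$. Purity of $\rho_{\pi,p}$ at all finite places is hypothesis~(5)(d) here (and is in any case a known property of the Galois representations constructed in \cite{surfaces}). Purity of $\rho_{A,p}$ amounts to the weight--monodromy property for the Weil--Deligne representations attached to $H^1$ of an abelian variety: at places of good reduction this is the Weil bound on Frobenius eigenvalues, and at the remaining finite places it follows from Grothendieck's semistable reduction theorem together with the explicit monodromy filtration on the Tate module of a semistable abelian variety (and, for $v\mid p$, the potentially semistable comparison). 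I expect this last point --- assembling purity at the places of bad reduction --- to be the only input that is not a direct unwinding of definitions; it is, however, classical for abelian varieties. Having verified hypotheses~(1)--(5), Theorem~\ref{modular galois reps} gives that $\rho_{A,p}$ is modular, completing the proof.
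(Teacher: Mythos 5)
Your proposal is correct and follows essentially the same route as the paper, which proves Theorem \ref{modular abelian surfaces} by applying Theorem \ref{modular galois reps} to $\rho_{A,p}$ exactly as in \cite[Proposition 10.1.1]{surfaces}: the symplectic structure from the prime-to-$p$ polarization, the $p$-distinguished weight $2$ ordinary condition from good ordinary reduction with distinct unit-root eigenvalues mod $p$, and purity of $\rho_{A,p}$ from the classical weight--monodromy property for $H^1$ of abelian varieties. Your write-up just makes explicit the hypothesis-checking that the paper leaves to the citation.
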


Here the Galois representation $\rho_{A,p}$ is given by the $p$-adic cohomology group $H^1(A_{\bar{F}},\Qpbar)$.
Now let $K/F$ be an arbitrary quadratic extension.
We apply Theorem \ref{intro modular galois reps} to $p$-adic Galois representations of abelian surfaces arising as the restriction of scalars of an elliptic curve over $K$ and deduce a modularity for lifting theorem for elliptic curves $E/K$.

\begin{theorem} \label{intro modularity lifting elliptic curves}
Let $F$ be a totally real field in which $p \geq 3$ splits completely and let $K/F$ be a quadratic extension in which $p$ is unramified with Galois group $\{1,\sigma\}$. 
Let $E/K$ be an elliptic curve and let $\bar{\varrho} = \overline{\varrho_{E,p}}$ be the attached mod $p$ Galois representation of determinant $\varepsilon^{-1}$.
Suppose that the following conditions hold:
\begin{enumerate}
    \item $E$ has ordinary good reduction or multiplicative reduction at every place $w | p$
    \item $\bar{\varrho}(G_{K(\zeta_p)})$ is absolutely irreducible
    \item $\bar{\varrho}$ is ordinarily modular
    \item if $p = 3$ or $p=5$ then order of the subgroup of $\GL_2(\mathbb{F}_p) \times \GL_2(\mathbb{F}_p)$ generated by the image of $\bar{\varrho} \oplus \bar{\varrho}^\sigma$ together with the matrices $\diag(I,-I), \diag(-I,I)$ is not equal to $64$.
\end{enumerate}
Then $E$ is modular.
\end{theorem}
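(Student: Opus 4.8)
The plan is to apply Theorem~\ref{intro modular galois reps} to the abelian surface $A = \mathrm{Res}_{K/F}E$ over $F$ and then descend modularity of $A$ back to $E$. First I would recall that the $p$-adic Tate module of $A$ realizes $\rho_{A,p} \cong \Ind_{G_K}^{G_F}\varrho_{E,p}$, and hence $\overline{\rho_{A,p}} \cong \Ind_{G_K}^{G_F}\bar{\varrho}$; the restriction-of-scalars construction equips $A$ with a principal (in particular prime-to-$p$) polarization built from the principal polarization of $E$ and the trace pairing of $K/F$, so that $\rho_{A,p}$ is valued in $\GSp_4$ with $\nu \circ \rho_{A,p} = \varepsilon^{-1}$ (using $\det \varrho_{E,p} = \varepsilon^{-1}$ and that $\varepsilon$ restricts along $G_K$ to the norm form of $K/F$ under class field theory). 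Thus hypothesis~(1) of Theorem~\ref{intro modular galois reps} holds, and hypothesis~(3) --- the $p$-distinguished weight~$2$ ordinary condition at each $v \mid p$ --- follows after unwinding the local induction (in both the split and inert cases, using that $p$ is unramified in $K$) from the ordinary good or multiplicative reduction of $E$ above $p$, together with $p \geq 3$ so that $\varepsilon$ is ramified mod $p$. Purity of $\restr{\rho_{A,p}}{G_{F_v}}$ at every finite $v$ (hypothesis~(5)) is the known purity of the Weil--Deligne representations of abelian varieties, inherited from $E$.

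Next I would produce the ordinary automorphic representation $\pi$ of $\GSp_4(\mathbb{A}_F)$ required by hypothesis~(4). Since $\bar{\varrho}$ is ordinarily modular, it is the reduction of the Galois representation of a cuspidal automorphic representation $\pi'$ of $\GL_2(\mathbb{A}_K)$ of parallel weight $2$ that is ordinary above $p$. Automorphic induction along the quadratic extension $K/F$ (Arthur--Clozel) transfers $\pi'$ to a self-dual automorphic representation of $\GL_4(\mathbb{A}_F)$ of symplectic type which, by the known classification of discrete automorphic representations of $\GSp_4$ (alternatively, by realizing it in the cohomology of Siegel threefolds as in \cite{surfaces}), descends to an automorphic representation $\pi$ of $\GSp_4(\mathbb{A}_F)$ of parallel weight $2$ and central character $|\cdot|^2$, ordinary above $p$, with $\overline{\rho_{\pi,p}} \cong \overline{\rho_{A,p}}$ and $\rho_{\pi,p}$ pure at all finite places. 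This step requires $\Ind_{G_K}^{G_F}\bar{\varrho}$ to be $\GL_4$-irreducible, which --- except in the degenerate case $\bar{\varrho} \cong \bar{\varrho}^{\sigma}$, where $\Ind_{G_K}^{G_F}\bar{\varrho}$ is reducible and $E$ is residually a base change, so that modularity reduces to $\GL_2$-techniques over $F$ --- follows from absolute irreducibility of $\bar{\varrho}(G_{K(\zeta_p)})$.

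The heart of the argument is verifying that $\overline{\rho_{A,p}}$ is $\hat{G}$-reasonable and tidy, i.e.\ that the image of $\Ind_{G_K}^{G_F}\bar{\varrho}$ restricted to $G_{F(\zeta_p)}$ is $\GSp_4$-adequate in the sense of Definition~\ref{adequate imprecise} and satisfies the condition of Definition~\ref{defn: tidy}. Here the improvement of this paper over \cite{surfaces} is essential: an induced representation is imprimitive and therefore never `vast', but by the discussion following Theorem~\ref{intro gln irred implies adequate}, condition~(\ref{submodule imprecise}) of $\GSp_4$-adequacy is automatic as soon as the image is $\GL_4$-irreducible, and such irreducibility holds as in the previous paragraph. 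It then remains to check the cohomological vanishing conditions (1)(a)--(c) of Definition~\ref{adequate imprecise} and tidiness for the explicit subgroup $\Ind_{G_K}^{G_F}\bar{\varrho}(G_{F(\zeta_p)}) \leq \GSp_4(\mathbb{F}_p)$. Using inflation--restriction along $G_K$ and Clifford theory --- which relates $\g^{0,\vee} = \mathfrak{sp}_4^{\vee}$ restricted to $G_{K(\zeta_p)}$ to the adjoint and off-diagonal pieces built from the two $\SL_2$-constituents $\bar{\varrho}, \bar{\varrho}^{\sigma}$, together with the behaviour under the order-two `swap' --- this reduces to standard facts about irreducible subgroups of $\SL_2(\mathbb{F}_p)$ and their extensions by the swap. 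For $p \geq 7$ these conditions hold for every such subgroup; for $p = 3, 5$ they hold away from the finitely many small-image configurations, which are precisely those eliminated by hypothesis~(4).

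Granting these verifications, Theorem~\ref{intro modular galois reps} applies to $\rho = \rho_{A,p}$ and shows that $\rho_{A,p}$ is modular: there is an automorphic representation $\Pi$ of $\GSp_4(\mathbb{A}_F)$ with $\rho_{\Pi,p} \cong \rho_{A,p}$, and hence (via $\GSp_4 \hookrightarrow \GL_4$) an automorphic representation $\Sigma$ of $\GL_4(\mathbb{A}_F)$ with $\rho_\Sigma \cong \Ind_{G_K}^{G_F}\varrho_{E,p}$. Since $\rho_\Sigma$ is invariant under twisting by the quadratic character of $K/F$, the Arthur--Clozel descent characterizing the image of automorphic induction (equivalently, solvable base change along $K/F$) produces a cuspidal automorphic representation of $\GL_2(\mathbb{A}_K)$ whose attached Galois representation is $\varrho_{E,p}$; that is, $E$ is modular. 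The step I expect to be the main obstacle is the big-image bookkeeping of the third paragraph: establishing $\GSp_4$-reasonableness and tidiness for induced residual representations, and isolating exactly which small-image cases at $p = 3, 5$ must be excluded, since this is where the imprimitive structure of $\Ind_{G_K}^{G_F}\bar{\varrho}$ interacts most delicately with the hypotheses --- and it is the reason hypothesis~(4) takes its particular form.
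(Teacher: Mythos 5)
There is a genuine gap: you apply Theorem \ref{intro modular galois reps} directly to $A=\Res_{K/F}E$, i.e.\ to the untwisted induction $\Ind_{G_K}^{G_F}\varrho_{E,p}$, but two of the hypotheses you need can simply fail in that setting, and the paper's proof exists precisely to repair this by a quadratic twist. First, the $p$-distinguished weight $2$ ordinary condition does \emph{not} follow from ``unwinding the local induction'': at a place $v\mid p$ that splits as $ww'$ in $K$, the two diagonal unramified characters of $\restr{\Ind\varrho_{E,p}}{G_{F_v}}$ are $\overline{\chi_w}$ and $(\overline{\chi_{w'}})^{\sigma}$, and nothing in the hypotheses prevents these from being congruent mod $p$, in which case hypothesis (3) of Theorem \ref{intro modular galois reps} fails. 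Second, your treatment of the case $\bar{\varrho}\cong\bar{\varrho}^{\sigma}$ (where the induction is reducible) by ``reducing to $\GL_2$-techniques over $F$'' is not justified: the theorem does not exclude this case, $E$ itself need not be a base change even when its residual representation is $\sigma$-invariant, and $K$ need not be totally real or CM, so no off-the-shelf $\GL_2$ lifting theorem over $K$ is available within the paper's framework.

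The paper's proof (Theorem \ref{modularity lifting elliptic curves}) resolves both problems at once by replacing $E$ with a quadratic twist $E'$: one chooses $\Sigma$ to be the set of split $v\mid p$ where the residual unramified characters are already distinct (Proposition \ref{induction ordinary}), builds a $D_8$-extension $H/F$ with prescribed splitting behaviour at $p$ and linearly disjoint from the field cut out by $\bar{\varrho}\oplus\bar{\varrho}^{\sigma}$ (Proposition \ref{D8 extension}), and sets $\bar{\rho}=\Ind_{G_K}^{G_F}(\bar{\varrho}\otimes\delta_{M/K})$. The twist forces $\bar{\varrho}\otimes\delta_{M/K}\not\cong(\bar{\varrho}\otimes\delta_{M/K})^{\sigma}$ over $K(\zeta_p)$ (so $\bar{\rho}$ is irreducible and, by Proposition \ref{inductions adequate}, $\GSp_4$-reasonable), creates the missing $p$-distinguishedness exactly at the split places outside $\Sigma$, and — via the linear disjointness — identifies $\bar{\rho}(G_K)$ with the subgroup generated by the image of $\bar{\varrho}\oplus\bar{\varrho}^{\sigma}$ together with $\diag(I,-I)$ and $\diag(-I,I)$, which is why hypothesis (4) has its particular form and suffices for tidiness (Lemma \ref{tidy inductions}). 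Modularity of $\rho_{A,p}$ for $A=\Res_{K/F}E'$ then gives modularity of $E'$ and hence of $E$. In your untwisted setup the group appearing in hypothesis (4) does not even coincide with $\bar{\rho}(G_K)$, so the ``small-image bookkeeping'' you defer cannot be carried out as stated; the twisting construction is the missing idea rather than a technical refinement.
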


Theorem \ref{intro modularity lifting elliptic curves} often applies even when the image $\overline{\varrho_{E,p}}(G_{K(\zeta_p)})$ is small, such as when it is projectively dihedral.
In some such cases we are able to establish ordinary modularity of $\overline{\varrho_{E,p}}$ via automorphic induction of a character and hence deduce modularity of $E$ (see, for example, Corollary \ref{cm comp ind curves modular} when $p \geq 7$).
Here is a precise statement we prove when $p=5$ which provides infinitely many modular elliptic curves over $K$ without complex multiplication nor arising from base-change from a proper subfield.

\begin{corollary} \label{intro p small inf modular curves}
Let $F$ be a totally real field in which $5$ splits completely and let $K/F$ be a quadratic extension in which $5$ is unramified.
There exists representations $\bar{\varrho}:~G_K \to \GL_2(\mathbb{F}_5)$ which are CM-compositum induced from a character $\bar{\psi}: G_L \to \mathbb{F}_5^\times$ (see Definition \ref{def CM-compositum induced}), where $L/K$ is a quadratic extension, and $\bar{\psi}$ satisfies $\restr{(\bar{\psi}^2)^\tau}{G_{L(\zeta_p)}} \neq \restr{\bar{\psi}^2}{G_{L(\zeta_p)}}$ for $\tau \in G_F \setminus G_K$.
Fix such a representation $\bar{\varrho}$ and suppose that $E/K$ is an elliptic curve for which $\overline{\varrho_{E,5}} \cong \bar{\varrho}$ and that $E$ has ordinary good reduction or multiplicative reduction at every $w|5$.
Then $E$ is modular.
Moreover, the subset of $X_{\bar{\varrho}}(K)$ given by such elliptic curves which in addition do not have complex multiplication nor have $j$-invariant contained in a proper subfield of $K$ is of positive density (in the sense of Definition \ref{positive density}).
\end{corollary}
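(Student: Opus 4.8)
The plan is to construct $\bar{\varrho}$ by a class field theory argument with carefully prescribed ramification, then to verify the hypotheses of Theorem~\ref{intro modularity lifting elliptic curves} for every $E$ as in the statement, and finally to run a thin-set counting argument on the genus-zero curve $X_{\bar{\varrho}}$. For the construction, fix an imaginary quadratic extension $M/F$ linearly disjoint from $K$ and unramified at $5$, so that $L=KM$ is a CM field with $\Gal(L/F)\cong(\mathbb{Z}/2)^2$ and $\bar{\varrho}:=\Ind_{G_L}^{G_K}\bar{\psi}$ is CM-compositum induced in the sense of Definition~\ref{def CM-compositum induced} for any $\bar{\psi}\colon G_L\to\mathbb{F}_5^\times$. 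I would produce $\bar{\psi}$ by a standard Grunwald--Wang argument so that: (i) $\bar{\psi}$ is tamely ramified of exact order $4$ at a prime $\mathfrak{q}\nmid 5$ of $L$ whose $\Gal(L/F)$-orbit has size at least $2$ (take $\mathfrak{q}$ above a prime of $F$ that splits in $K$ with residue cardinality $\equiv 1\bmod 4$); (ii) $\det\Ind_{G_L}^{G_K}\bar{\psi}=\varepsilon^{-1}$, which via $\det\Ind_{G_L}^{G_K}\bar{\psi}=\eta_{L/K}\cdot(\bar{\psi}\circ\mathrm{Ver}_{L/K})$ (with $\eta_{L/K}$ the quadratic character of $G_K$ cutting out $L$) is a condition only on the restriction of $\bar{\psi}$ to the image of the transfer, leaving the local components at $\mathfrak{q}$ and above $5$ free; and (iii) at each $w\mid 5$ the representation $\restr{\Ind_{G_L}^{G_K}\bar{\psi}}{G_{K_w}}$ has ordinary (upper-triangular) shape and the twist $X_{\bar{\varrho}}$ is everywhere locally soluble. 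Since $L(\zeta_5)/L$ is unramified away from $5$, condition (i) forces $\bar{\psi}^2$ and $(\bar{\psi}^2)^\tau$ to remain ramified at the distinct primes above $\mathfrak{q}$ and $\mathfrak{q}^\tau$ after restriction to $G_{L(\zeta_5)}$, so $\restr{(\bar{\psi}^2)^\tau}{G_{L(\zeta_5)}}\neq\restr{\bar{\psi}^2}{G_{L(\zeta_5)}}$ and $\bar{\varrho}$ has the required form.

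Now let $E/K$ be as in the statement. Hypothesis (1) of Theorem~\ref{intro modularity lifting elliptic curves} is assumed. For (3), lift $\bar{\psi}$ to an algebraic Hecke character of the CM field $L$ of suitable infinity type; automorphic induction along $L/K$ yields a cuspidal automorphic representation of $\GL_2(\mathbb{A}_F\otimes_F K)$, ordinary at the places above $5$ by (iii), and transferring to $\GSp_4(\mathbb{A}_F)$ by restriction of scalars (as in the proof of Corollary~\ref{cm-comp ind curves modular}, a step which makes no use of $p\geq 7$) exhibits $\bar{\varrho}$ as ordinarily modular. Hypotheses (2) and (4) are group-theoretic statements about the images of $\bar{\varrho}$ and of $\bar{\varrho}\oplus\bar{\varrho}^\sigma$; as these are induced from the characters $\bar{\psi}$ and $\bar{\psi}^\tau$ of $G_L$ they lie in normalizers of tori, and a direct computation shows that $\restr{\bar{\varrho}}{G_{K(\zeta_5)}}$ is absolutely irreducible iff $\restr{\bar{\psi}}{G_{L(\zeta_5)}}\neq\restr{\bar{\psi}^c}{G_{L(\zeta_5)}}$, and that the subgroup of $\GL_2(\mathbb{F}_5)^2$ generated by $\mathrm{im}(\bar{\varrho}\oplus\bar{\varrho}^\sigma)$ together with $\diag(I,-I)$ and $\diag(-I,I)$ has order $64$ only when $\restr{(\bar{\psi}^2)^\tau}{G_{L(\zeta_5)}}=\restr{\bar{\psi}^2}{G_{L(\zeta_5)}}$. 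Our non-degeneracy hypothesis therefore gives both (2) and (4), and Theorem~\ref{intro modularity lifting elliptic curves} applies, so $E$ is modular.

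For the density statement, the assumed $E$ furnishes a $K$-rational point of the twist $X_{\bar{\varrho}}$ of the genus-zero modular curve $X(5)$, whence $X_{\bar{\varrho}}\cong\mathbb{P}^1_K$ and $X_{\bar{\varrho}}(K)$ is infinite. Elliptic curves with complex multiplication contribute only finitely many $j$-invariants in $K$, hence finitely many points of $X_{\bar{\varrho}}(K)$. For each of the finitely many proper subfields $F'\subsetneq K$, the subset $F'\subset K=\mathbb{A}^1(K)$ is thin, being the set of $F'$-points of a proper closed subvariety of the Weil restriction $\mathrm{Res}_{K/F'}\mathbb{A}^1$; its preimage under the finite morphism $j\colon X_{\bar{\varrho}}\to X(1)$ is then a thin subset of $X_{\bar{\varrho}}(K)\cong\mathbb{P}^1(K)$, and by the quantitative form of Hilbert irreducibility thin subsets of $\mathbb{P}^1(K)$ have counting density zero. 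Hence the complement---non-CM curves whose $j$-invariant lies in no proper subfield of $K$---has density one, in particular positive density in the sense of Definition~\ref{positive density}, and by the previous paragraph all such curves are modular.

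The main obstacle I anticipate is the finite group theory behind hypothesis (4): pinning down exactly which configurations of $\bar{\psi}$ make the relevant subgroup of $\GL_2(\mathbb{F}_5)^2$ have order precisely $64$, and checking that each of them forces $\restr{(\bar{\psi}^2)^\tau}{G_{L(\zeta_5)}}=\restr{\bar{\psi}^2}{G_{L(\zeta_5)}}$. By contrast the construction of $\bar{\psi}$ and the density estimate are essentially routine, and the modularity conclusion is a black-box application of Theorem~\ref{intro modularity lifting elliptic curves} once all four hypotheses are in place.
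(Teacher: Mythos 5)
Your modularity argument for a fixed $E$ follows essentially the paper's route (apply Theorem \ref{intro modularity lifting elliptic curves}, with ordinary modularity of $\bar{\varrho}$ coming from automorphic induction of an algebraic Hecke character lifting $\bar{\psi}$, and the order-$64$/tidiness condition reduced to the non-degeneracy hypothesis on $\bar{\psi}^2$ — this is exactly Proposition \ref{modular residual rep} and Lemma \ref{tidy CM comp ind}), and your class-field-theoretic construction of $\bar{\psi}$ is a plausible variant of Example \ref{cm comp ind construction}, though you should also force each place $w\mid 5$ of $K$ to split in $L$ (unramified is not enough; an inert $w$ makes $\restr{\Ind \bar{\psi}}{G_{K_w}}$ irreducible, destroying the required ordinary shape and condition (4) of Definition \ref{def CM-compositum induced}). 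The genuine gap is in the density statement. The set whose positive density is claimed consists of points of $X_{\bar{\varrho}}(K)$ corresponding to curves that \emph{have ordinary good or multiplicative reduction at every $w\mid 5$} (in addition to being non-CM with $j$-invariant in no proper subfield). Your argument never addresses this $5$-adic condition: you show the non-CM, non-subfield locus has density one in all of $X_{\bar{\varrho}}(K)$, but most of those points need not satisfy the reduction hypothesis (for $p=5$ there really are points whose curves only acquire ordinary good reduction over a ramified quartic extension, cf.\ the $e=4$ case of Lemma \ref{lemma ordinary good mult red}), so they are neither in the set described by the corollary nor covered by your modularity paragraph, and the assertion ``all such curves are modular'' fails for them. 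The hard content here is precisely producing a positive-density set of $K$-points landing in suitable $5$-adic open subsets $\Omega_w \subset X_{\bar{\varrho}}(K_w)$ on which the reduction type is controlled; in the paper this is Lemma \ref{positive density subset}(1), proved via Serre--Tate canonical liftings of explicit ordinary curves over $k(w)$ together with an auxiliary prime-to-$5$ (level $7$) covering to rigidify, combined with the Schanuel-type count of Lemma \ref{height asymptotics} for imposing finitely many local conditions.

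A secondary problem is your treatment of the ``$j$-invariant in a proper subfield'' condition. The set $\{P \in X_{\bar{\varrho}}(K) : j(P) \in F'\}$ is not a thin set of the standard types (it is not contained in a proper closed subvariety, nor obviously the image of $K$-points under a degree $\geq 2$ cover), so quantitative Hilbert irreducibility does not apply as stated; and the naive height count fails because $j$ has degree $60$, so bounding the fibers over $\mathbb{P}^1(F')$-values of height up to $X^{\deg j}$ gives a bound of order $X^{2[F':\mathbb{Q}]\deg j}$, which exceeds $X^{2[K:\mathbb{Q}]}$. The paper deliberately avoids proving any such density-zero statement (see the parenthetical remark at the end of the proof of Corollary \ref{p small inf modular curves}): instead it chooses an auxiliary prime $l \neq 5$ split completely in $K$ and imposes $l$-adic open conditions at the places $v \mid l$ whose $j$-images are pairwise disjoint, which directly forces $j(P)$ to lie in no proper subfield while remaining within the scope of Lemma \ref{positive density subset}(2). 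If you want to keep your route, you would need to either supply a genuine proof that your subfield locus has density zero (not via thinness) or switch to the paper's local-disjointness trick; and in either case you must add the $5$-adic positive-density input, without which the corollary's final claim is not established.
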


Moreover, by the second part of Lemma \ref{tidy CM comp ind}, the representations $\bar{\varrho}$ of Corollary \ref{intro p small inf modular curves} cannot be handled by the results of \cite{surfaces}, as the (twisted) inductions of $\bar{\varrho}$ to $G_F$ are never vast, illustrating the power of our methods.

\begin{remark}
As in \cite[1.4.1]{surfaces}, the main modularity results of Sections \ref{sec surfaces} and \ref{sec modular elliptic curves} (including Theorems \ref{intro modular galois reps}, \ref{intro modular abelian surfaces}, \ref{intro modularity lifting elliptic curves} and Corollary \ref{intro p small inf modular curves}) are conditional on the multiplicity formula for the discrete spectrum of $\GSp_4$, stated in \cite{arthur2004automorphic} (with the proofs yet to appear).
However, due to the work of many other authors, this result has been established subject only to proving the twisted weighted fundamental lemma (see \cite[1.6]{boxer2025modularity} for a more detailed summary).
\end{remark}

\subsection{Structure of the paper}

In Section \ref{def setup}, we prove in Theorem \ref{Mg} a precise version of Theorem \ref{Mg imprecise} and show that our `canonical' torus lift given by the connected center of $M_g$ satisfies many of the desired functoriality properties.
We also give an alternate construction in the case of lifting to the ring of integers of a finite extension of $\mathbb{Q}_p$, which is used in Section \ref{sect local global}.

Then in Section \ref{section deformation} we define our Taylor-Wiles local deformation problem by requiring that the inertia subgroup $I_v$ has image in the torus constructed in Section \ref{def setup}.
We then study the Galois cohomological interpretation of our local deformation problem and give a definition of $\hat{G}$-adequate subgroups, which is our weakened `big image' condition.

In Section \ref{sec adequate subgroup} we compare our notion of $\hat{G}$-adequate subgroups to absolute irreducibility.
From the corresponding result for $\GL_n$, we show an `irreducibility implies adequacy' type result for some simple groups.
We use computer algebra software \texttt{magma} to study the case of $\Sp_4$ in small characteristic for our applications in Sections \ref{sec surfaces} and \ref{sec modular elliptic curves}, and give a comparison between $\Sp_4$-adequacy and a `big image' condition seen previously in the literature using the tables given in Appendix \ref{sect appendix}.

Let $G$ be the reductive group dual to $\hat{G}$ and let $v$ be a Taylor--Wiles place.
In Section \ref{sec local computations}, we explore the representation theory of $G(F_v)$ in relation to our constructions of Section \ref{section deformation}.
We explain how our semisimple image of Frobenius gives rise to certain compact open groups $\mathfrak{p}_1$ and $\mathfrak{p}$ of $G(F_v)$, summarized by Construction \ref{parahoric construction}.
We prove Theorem \ref{satake}, a generalization of the Satake isomorphism, which we use to define certain abelian subalgebras of the Hecke algebras arising from our compact open subgroups.
These abelian subalgebras have maximal ideals corresponding to $\bar{g}$, and we study the localizations of modules of our Hecke algebras at these maximal ideals.
The results obtained are in full generality, but with the precise statements suited for our applications in the $\hat{G} = \GSp_4$ case.
To conclude the section, we restrict to the $\GSp_4$ case, where there is the local Langlands correspondence of \cite{gsp4llc} available.
We show in Proposition \ref{gsp4 p invariants} and Proposition \ref{gsp4 p1 invariants} that if our localized spaces of invariants of irreducible admissible representations of $\GSp_4(F_v)$ are non-zero, then their image under the local Langlands map is a semisimple Weil--Deligne representation.

The main goal of Section \ref{sect local global} is to prove Proposition \ref{local global deltav}.
In rough terms, this states that if we have a sufficiently strong local-global compatibility assumption between an automorphic representation $\pi$ and an associated Galois representation at the Taylor--Wiles places then the Galois representation will satisfy our deformation condition.
Moreover, we show the necessary compatibility between the action of inertia and the action of certain Hecke operators holds.
Our proof uses the alternate construction of $M_g$ of Section \ref{alternate Mg} to understand the passage to Frobenius-semisimplification of Weil--Deligne representations in the context of our deformation problem.

In Section \ref{sec surfaces}, we apply the theory developed in the earlier sections to the case of $\hat{G} = \GSp_4$ in order to extend the partial results on modularity of abelian surfaces over a totally real field $F$ given in \cite{surfaces}.
Using the methods of \cite{surfaces} as a framework, we explain a setup compatible with our generalized notion of Taylor--Wiles primes.
The majority of the work is done in Section \ref{sec tw prep}.
Here we prove analogues of many of the results of \cite[7]{surfaces} necessary for the Taylor--Wiles patching argument.
In particular, we explain the moduli interpretation of working with level structures given by the compact open subgroups of Section \ref{sec local computations} at Taylor--Wiles primes.
We use the main result of Section \ref{sect local global} to prove the existence of a Galois representation valued in a certain Hecke algebra satisfying our deformation condition.
The patching construction then goes through from our work in Sections \ref{section deformation} and \ref{sec local computations}.
In Section \ref{sect modularity results} we deduce our modularity lifting theorems which are analogous to those of \cite{surfaces} but hold under our weaker hypotheses on the images of the residual Galois representations.

Finally, in Section \ref{sec modular elliptic curves} we study representations of $G_F$ induced from $\GL_2$-valued representations of the absolute Galois group of quadratic extensions $K/F$.
From the main modularity lifting theorem of Section \ref{sect modularity results} we deduce modularity lifting theorems for elliptic curves over $K$.
We define the notion of CM-compositum induced representations (Definition \ref{def CM-compositum induced}), which are certain projectively dihedral residual Galois representations.
We give a construction of these representations for every $p \geq 5$ and prove modularity theorems for some elliptic curves with CM-compositum induced mod $p$ Galois representation.

\subsection{Acknowledgements}

The author is exceptionally grateful to Jack Thorne for inspiration of the topic of this paper, numerous insightful conversations and for helpful comments on earlier drafts.
The author thanks the anonymous referee for helpful feedback.

\section{Setup for a local deformation problem}\label{def setup}

Let $\hat{G}$ be a reductive group defined over $\mathbb{Z}$ (that is, a smooth affine group scheme $\hat{G} \to \Spec \mathbb{Z}$ such that every geometric fibre $\hat{G}_{\Bar{s}}$ is a connected reductive group).
Let $p$ be a prime number, which we will later assume is pretty good for $\hat{G}$ (see Definition \ref{pretty_good}).
Let $\mathcal{O}$ be the ring of integers of a finite extension of $E/\mathbb{Q}_p$ with uniformizer $\lambda$ and residue field $k = \mathcal{O}/\lambda \mathcal{O}$, a finite field of characteristic $p$. 
Denote by $\CNL_\CalO$ the category whose objects are complete Noetherian local $\mathcal{O}$-algebras together with a fixed isomorphism from the residue field to $k$, so that we may freely identify them without confusion. 

Motivated by wanting to impose constraints on representations valued in $\hat{G}(A)$ for $A \in \CNL_\CalO$, we will consider the following setup. Fix $A \in \CNL_\CalO$ with maximal ideal $m_A$ and let $\bar{g} \in \hat{G}(k)$ be a semisimple element with centralizer $M_{\bar{g}} = Z_{\hat{G}_k}(\bar{g})$, a possibly disconnected smooth closed $k$-subgroup with reductive identity component (\cite[Theorem 1.1.19]{conrad}).
Denote by $\hat{\mathfrak{g}}_k$ the Lie algebra of $\hat{G}_k$, and consider the adjoint action of $\Bar{g}$, a Lie algebra automorphism $\Ad(\Bar{g}): \hat{\mathfrak{g}}_k \to \hat{\mathfrak{g}}_k$. 
Let $\overline{L_0} = \{v \in \gk: \Ad(\Bar{g})(v) = v\}$ denote the eigenvalue-1 subspace. 
Let $\overline{L_1}$ denote the $\Ad(\Bar{g})$-invariant complement in $\gk$, which will be the direct sum of all other eigenspaces over an algebraic closure. 
Thus we have an $\Ad(\Bar{g})$-invariant decomposition $\gk = \overline{L_0} \oplus \overline{L_1}$.
We have the equality of Lie algebras $\Lie M_{\bar{g}} = \Lzerobar$ by \cite[9.1]{borelLAG}.

Now let $g \in \hat{G}(A)$ be a fixed choice of lift of $\bar{g}$ under the natural map $\hat{G}(A) \to \hat{G}(k)$.
Our main goal of this section is to show Theorem \ref{Mg}, which states that the Lie algebra $\gA$ admits a unique $\Ad(g)$-invariant lift $\gA = L_0 \oplus L_1$ of the decomposition $\gk = \overline{L_0} \oplus \overline{L_1}$ and that there exists a unique closed subgroup $M_g \subset \hat{G}_A$ defined over $A$ with reductive identity component and with the properties that the base change $(M_g)_k$ is naturally identified with $M_{\bar{g}}$, $\Lie M_g = L_0$ and that $g \in M_g(A)$.

Throughout this section we will allow ourselves to replace $k$ (and $\mathcal{O}$) by some finite extension, so long as the choice of extension depends only on $\bar{g}$.

\subsection{Lifting the Lie algebra}

\begin{lemma}\label{uniquedirectsumlift}
Let $f: A^n \to A^n$ be an $A$-linear map inducing $\bar{f}: k^n \to k^n$. Suppose we have a factorisation of the characteristic polynomial of $\bar{f}$ as $\det(X-\bar{f}) = \bar{p}(X)\bar{q}(X)$ with $\bar{p}(X)$ coprime to $\bar{q}(X)$. Then letting $\bar{N} = \ker(\bar{p}(\bar{f}))$, there exists a unique $f$-invariant direct summand lift $N$ of $\bar{N}$ to $A^n$. Moreover if $M$ is an $f$-invariant direct summand of $A^n$ with $\bar{M} \subset \bar{N}$ then $M \subset N$.
\end{lemma}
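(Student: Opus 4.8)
The plan is to reduce the whole statement to the construction of a single idempotent in the commutant of $f$. By Cayley--Hamilton $\bar p(\bar f)\bar q(\bar f) = 0$, and since $\bar p,\bar q$ are coprime in $k[X]$ I can choose $\bar a,\bar b \in k[X]$ with $\bar a\bar p + \bar b\bar q = 1$ and set $\bar e := (\bar b\bar q)(\bar f) \in k[\bar f]$. A routine computation then shows that $\bar e$ is idempotent, that it acts as the identity on $\bar N = \ker\bar p(\bar f)$ and as zero on $\bar N' := \ker\bar q(\bar f)$, and that $k^n = \bar N \oplus \bar N'$; thus $\bar e$ is the projector onto $\bar N$ along $\bar N'$.

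Next I would lift $\bar e$ to an idempotent $e$ in the commutative $A$-subalgebra $A[f] \subseteq M_n(A) = \End_A(A^n)$ by a Newton iteration. Fix polynomial lifts $b,q \in A[X]$ of $\bar b,\bar q$ and put $e_0 := (bq)(f) \in A[f]$; this reduces to $\bar e$, so $e_0^2 - e_0 \in m_A M_n(A)$. Define $e_{i+1} := 3e_i^2 - 2e_i^3$, the standard quadratically convergent recursion for lifting idempotents: each $e_i$ is a polynomial in $e_0$, hence lies in $A[f]$, and $e_{i+1}^2 - e_{i+1}$ lies in the ideal generated by $(e_i^2 - e_i)^2$, so $e_i^2 - e_i \in m_A^{2^i} M_n(A)$ and the sequence $(e_i)$ is $m_A$-adically Cauchy. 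Since $A$ is complete and $A[f]$ is a finitely generated $A$-module, hence closed in $M_n(A)$, the limit $e := \lim_i e_i$ lies in $A[f]$, satisfies $e^2 = e$, and reduces to $\bar e$. In particular $e$ commutes with $f$, so $N := e(A^n)$ is an $f$-stable direct summand of $A^n$ (with complement $(1-e)(A^n)$), and reducing modulo $m_A$ gives $N \otimes_A k = \bar e(k^n) = \bar N$, so $N$ is the desired lift.

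For the ``moreover'' clause (which also yields uniqueness), let $M \subseteq A^n$ be any $f$-stable direct summand with $\bar M \subseteq \bar N$. Because $e \in A[f]$, the submodule $M$ is stable under $1-e$, and for $m \in M$ the reduction of $(1-e)(m)$ is $(1-\bar e)(\bar m) = 0$ since $\bar m \in \bar M \subseteq \bar N = \ker(1-\bar e)$. Hence $(1-e)(M) \subseteq M \cap m_A A^n = m_A M$, the last equality holding because $M$ is a direct summand of $A^n$. Iterating and using that $1-e$ is idempotent gives $(1-e)(M) \subseteq \bigcap_j m_A^j M = 0$ by the Krull intersection theorem, i.e. $M \subseteq \ker(1-e) = e(A^n) = N$. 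Applied to any two $f$-stable direct summand lifts of $\bar N$, this forces each to be contained in $N$; being a direct summand of $N$ with the same reduction modulo $m_A$, each equals $N$ by Nakayama, which proves uniqueness.

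The crux is the second step: manufacturing an idempotent that lifts $\bar e$ \emph{and} commutes with $f$. Completeness of $A$ is what makes the Newton iteration converge (only $m_A$-adically), and it is important to run it inside the finite --- hence closed --- subalgebra $A[f]$ so that the limit still lies in the commutant of $f$. Once $e$ is available, the direct-summand lift and its uniqueness are essentially formal.
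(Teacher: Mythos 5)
Your argument is correct, and it reaches the same essential object as the paper — an idempotent $e$ in the commutant of $f$ lifting the residual projector onto $\bar N$ — but by a different construction. The paper first lifts the factorisation $\det(X-\bar f)=\bar p\bar q$ to $\det(X-f)=pq$ via Hensel's lemma, then manufactures Bezout coefficients as \emph{power series} $r,s\in A[[X]]$ (correcting arbitrary lifts by a geometric series, which is where completeness enters), and takes the complementary idempotents $e_1=p(f)r(f)$, $e_2=q(f)s(f)$; you instead lift only the residual idempotent $\bar e=(\bar b\bar q)(\bar f)$ by the Newton recursion $e\mapsto 3e^2-2e^3$ inside the finite, hence closed, subalgebra $A[f]$, which sidesteps both the Hensel factorisation and the power-series Bezout identity while using completeness in the same place. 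The treatment of the ``moreover'' clause also differs: the paper decomposes $N'=e_1(N')\oplus e_2(N')$ and kills $e_1(N')$ as a finite free module vanishing modulo $m_A$, whereas you show $(1-e)(M)\subseteq M\cap m_AA^n=m_AM$ and iterate, using idempotency of $1-e$ and the Krull intersection theorem, to get $(1-e)(M)=0$; both are sound, and yours has the small advantage of not needing $M$ (or $N'$) to be free, only finitely generated. One half-sentence you elide: in the uniqueness step, that a lift $N'$ is ``a direct summand of $N$'' follows because a direct summand of $A^n$ contained in $N$ is automatically a direct summand of $N$ (write $A^n=N'\oplus C$ and intersect with $N$); with that remark, your Nakayama conclusion is complete, playing the role of the paper's equal-rank argument.
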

\begin{proof}
Since $\det(X-\bar{f})$ is monic and $A$ is a Henselian local ring (as $A$ is a complete Noetherian local ring) it follows by Hensel's lemma that there exists a factorisation 
$$\det(X-f) =p(X)q(X)$$ with $p(X) \mod m_A = \bar{p}(X)$ and $q(X) \mod m_A = \bar{q}(X)$.
Moreover, since $\bar{p}(X)$ and $\bar{q}(X)$ are coprime, we claim we can find power series $r(X),s(X) \in A[[X]]$ with $p(X)r(X) + q(X)s(X) = 1$. 
Indeed, by coprimality of $\bar{p}(X)$ and $\bar{q}(X)$ we can find $\bar{r}(X),\bar{s}(X) \in k[X]$ such that $\bar{p}(X) \bar{r}(X) + \bar{q}(X) \bar{s}(X) = 1$.
Then taking arbitrary lifts $\Tilde{r}(X),\Tilde{s}(X) \in A[X]$ of $\bar{r}(X),\bar{s}(X)$ respectively, we have that $p(X)\Tilde{r}(X) + q(X)\Tilde{s}(X) = 1 + h(X)$ for some polynomial $h(X)$ whose coefficients all lie in $m_A$. 
Multiplying both $\Tilde{r}(X)$ and $\Tilde{s}(X)$ by $\sum_{i=0}^\infty (-1)^i h(X)^i$ yields the desired $r(X)$ and $s(X)$.

Now we have $p(f) r(f) + q(f) s(f) = \id_{A^n}$ so letting $e_1 = p(f) r(f)$ and $e_2 = q(f)s(f)$ we have $e_1 e_2 = e_2 e_1 = r(f)s(f) p(f)q(f) = 0$ by Cayley-Hamilton.
Since $e_1 + e_2 = \id_{A^n}$ we have $e_1^2 = e_1^2 + e_1 e_2 = e_1$ and $e_2^2 = e_2$. 
Setting $N = e_2(A^n)$, we see that $N$ is an $f$-invariant direct summand, since $f$ commutes with $e_2$ and $A^n = N \oplus e_1(A^n)$. Note $\overline{e_2}(A^n) = \overline{e_2}(k^n) = \bar{N}$, so we have constructed a lift.

We next show that if $M$ is any $f$-invariant direct summand of $A^n$ with $\overline{M} \subset \overline{N}$ then $M \subset N$ with equality if and only if $\overline{M} = \overline{N}$, from which uniqueness of $N$ (and hence the lemma) follows.
We have that $M$ is a free submodule (since projective modules over a local ring are free).
Since $M$ is $f$-invariant, we can consider the decomposition $M = e_1(M) \oplus e_2(M)$.
Then $\overline{M} = \overline{e_1}(\overline{M}) \oplus \overline{e_2}(\overline{M})$. We supposed that $\overline{M} \subset \overline{N}$, and we know $\overline{e_1}(\overline{N}) = 0$ and $\overline{e_2}(\overline{N}) = \overline{N}$.
Thus $e_1(M)$ is a finite free $A$-module which is zero on reduction modulo $m_A$, so it is equal to $0$.
Hence $M = e_2(M) \subset e_2(A^n) = N$. 
Finally, if $\overline{M} = \overline{N}$ then we have that $M$ is a free $A$-submodule direct summand of $N$ of equal rank (as their reductions modulo $m_A$ have equal dimension), and we therefore have equality. 
\end{proof}

Applying Lemma \ref{uniquedirectsumlift} to the adjoint action of $g$ on $A^n$, we see that $\Lzerobar$ admits a unique $\Ad(g)$-invariant direct summand lift to $\gA$, and we write this decomposition as $\gA = L_0 \oplus L_1$. We can also use Lemma \ref{uniquedirectsumlift} to prove a generalisation which will be of use later.

\begin{lemma}\label{liftfamily}
Let $f_1,\ldots,f_m: A^n \to A^n$ be a commuting family of $A$-linear maps such that each $\bar{f_i}$ has all of its eigenvalues lying in $k$. For each vector $\lambda \in k^m$ let $$\overline{V_\lambda} = \{v \in k^n: (\bar{f_i}-\lambda_i)^n(v) = 0 \text{ for } i=1,\ldots,m\}.$$
Then reduction modulo $m_A$ puts the following sets in bijection:
\begin{enumerate}[label=(\roman*)]
    \item\label{type_a} The set of $k$-subspaces of $k^n$ of the form $\oplus_{\lambda \in S} \overline{V_\lambda}$ for some $S \subset k^m$
    \item\label{type_b} The set of $A$-submodule direct summands of $A^n$ invariant under each $f_i$ whose reduction modulo $m_A$ is of the type defined in \ref{type_a}.
\end{enumerate}
\end{lemma}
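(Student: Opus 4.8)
The plan is to reduce everything to the one-operator case of Lemma \ref{uniquedirectsumlift} by producing a coherent family of commuting idempotents and then combining them multiplicatively. First I would lift, for each $i$ separately, the entire generalised eigenspace decomposition of $\bar{f_i}$. Since the eigenvalues of $\bar{f_i}$ lie in $k$, its characteristic polynomial is a product of pairwise coprime prime powers $\bar P_{i,\mu}(X)=(X-\mu)^{d_{i,\mu}}$ over $k$; Hensel's lemma lifts this to a factorisation $\det(X-f_i)=\prod_\mu P_{i,\mu}(X)$ into pairwise coprime polynomials over $A$. By Cayley--Hamilton, $A^n$ is a module over $R_i:=A[X]/(\det(X-f_i))$, and the Chinese Remainder Theorem identifies $R_i\cong\prod_\mu A[X]/(P_{i,\mu})$; the associated orthogonal idempotents, viewed in $\End_A(A^n)$, are polynomials $e_{i,\mu}$ in $f_i$ with $\sum_\mu e_{i,\mu}=\id$, and each $e_{i,\mu}(A^n)$ is precisely the unique $f_i$-invariant direct summand of $A^n$ lifting the generalised $\mu$-eigenspace of $\bar{f_i}$ provided by Lemma \ref{uniquedirectsumlift}.

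Since the $f_i$ commute, all the $e_{i,\mu}$ commute with one another and with every $f_j$. For $\lambda\in k^m$ put $e_\lambda:=\prod_{i=1}^m e_{i,\lambda_i}$; these are orthogonal idempotents with $\sum_\lambda e_\lambda=\id$, each commutes with every $f_j$, and $\bar e_\lambda$ is the projector onto $\bigcap_i\ker((\bar{f_i}-\lambda_i)^n)=\overline{V_\lambda}$. Given $S\subseteq k^m$, set $e_S:=\sum_{\lambda\in S}e_\lambda$ and $N_S:=e_S(A^n)$. Then $N_S$ is a direct summand of $A^n$ (with complement $(\id-e_S)(A^n)$), it is invariant under each $f_i$, and it reduces modulo $m_A$ to $\bigoplus_{\lambda\in S}\overline{V_\lambda}$. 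This exhibits every subspace of type \ref{type_a} as the reduction of a module of type \ref{type_b}, so the reduction map is surjective.

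For injectivity, let $M\subseteq A^n$ be a direct summand, invariant under every $f_i$, with $\bar M=\bar N_S$ for some $S$. Being stable under each $f_i$, the module $M$ is stable under the polynomial $e_S$, so $M=e_S(M)\oplus N$ with $N:=(\id-e_S)(M)$. The image of $N$ in $k^n$ equals $(\id-\bar e_S)(\bar M)=0$, so $N\subseteq M\cap m_A A^n=m_A M$ (using that $M$ is a direct summand); then $N=(\id-e_S)(N)\subseteq(\id-e_S)(m_A M)=m_A N$, and Nakayama's lemma (applied to the finitely generated module $N$) forces $N=0$. Hence $M=e_S(M)\subseteq N_S$; and since $M$ and $N_S$ are free $A$-modules of the same rank $\dim_k\bar M=\dim_k\bar N_S$ (both being direct summands of $A^n$) with $\bar M=\bar N_S$, the inclusion $M\hookrightarrow N_S$ is given in bases by a square matrix over $A$ with invertible reduction, hence is itself invertible; thus $M=N_S$.

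The one point requiring genuine care is the first step: Lemma \ref{uniquedirectsumlift} only lifts a single generalised eigenspace at a time, so one must know the lifts can be taken compatibly, namely as the image modules of a complete orthogonal system of idempotents summing to the identity. The Hensel plus CRT packaging above is exactly what supplies this. Once it is in place, the multiplicative combination over the $m$ operators and the concluding ``equal reduction plus inclusion implies equality'' argument are routine variants of what already appears in the proof of Lemma \ref{uniquedirectsumlift}.
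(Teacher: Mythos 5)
Your proof is correct and follows essentially the same route as the paper: lift the per-operator generalised eigenspace projectors as idempotents that are polynomials in the $f_i$, multiply them to get the joint idempotents $e_\lambda$, take $e_S(A^n)$ for existence, and conclude uniqueness from invariance under $e_S$ together with a reduction/rank argument. The only cosmetic differences are that you obtain the complete orthogonal system via Hensel plus CRT (the paper extracts the same idempotents from Lemma \ref{uniquedirectsumlift} and its $\tilde{e}$ construction) and that your uniqueness step runs through Nakayama with $\id-e_S$ rather than decomposing the candidate lift into summands and citing the single-eigenspace uniqueness.
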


\begin{proof}
Let $e_{i,\lambda_i}$ denote the $f_i$-invariant projection from $A^n$ onto the unique $f_i$-invariant lift of the generalised $\lambda_i$-eigenspace for the action of $f_i$ on $k^n$, as in Lemma \ref{uniquedirectsumlift}. Then $e_{i,\lambda_i}$ is a polynomial in $f_i$, and hence commutes with any other such $e_{j,\lambda_j}$. We show firstly that $\overline{V_{\lambda}}$ admits a lift to a direct summand of $A^n$ which is invariant under every $f_i$, and that this lift is unique.

We have that $\overline{V_\lambda} = \cap_{i=1}^m \overline{e_{i,\lambda_i}}(k^n)$. We claim that $V_\lambda = \cap_{i=1}^m e_{i,\lambda_i}(A^n)$ is such a lift. To see this, let $e = \prod_{i=1}^m e_{i,\lambda_i}$ and observe that $V_\lambda' = e(A^n)$ is equal to $V_{\lambda}$. Clearly we have $V_\lambda' \subset V_\lambda$, as the projection operators all commute. 
Now let $v \in V_{\lambda}$. 
Note that since each $e_{i,\lambda_i}$ is a projection operator, we have $v = e_{1,\lambda_1}(v)$, $v = e_{2,\lambda_2}(v) = e_{2,\lambda_2}(e_{1,\lambda_1}(v))$, and so on. 
Thus we have $v = \prod_{i=1}^m e_{i,\lambda_i}(v) \in V_\lambda'$, establishing the claim.
We deduce that $e(A^n)$ is a lift of $\overline{V_\lambda} = \bar{e}(k^n)$.
Moreover, this lift is a direct summand of $A^n$.
Indeed, the operator $\Tilde{e} = \sum_{\mu \in k^n\setminus \{\lambda\}} \prod_{i=1}^m e_{i,\mu_i}$ satisfies $\Tilde{e} e = 0$, $\Tilde{e}^2 = \Tilde{e}$ and $\Tilde{e} + e = \id_{A^n}$.
Thus $\Tilde{e}(A^n)$ provides a direct sum complement to $e(A^n)$. 
This lift is invariant under each $f_i$, as each $f_i$ commutes with $e$ and so $f_i(e(A^n)) = e(f_i(A^n)) \subset e(A^n)$.

To see uniqueness, observe that if $N$ is a direct summand lift of $\overline{V_\lambda}$ that is invariant under each $f_i$ then $N \subset e_{i,\lambda_i}(A^n)$ for every $i$ by Lemma \ref{uniquedirectsumlift}.
It follows that $N \subset V_\lambda$ is a direct summand free $A$-submodule of the same rank (as their reductions modulo $m_A$ are equal), and hence are equal. 

By taking direct sums of such $V_\lambda$ we have established the existence of $f_i$-invariant direct summand lifts. 
To show uniqueness, suppose that $N \subset A^n$ is a direct summand $f_i$-invariant lift of $\oplus_{\lambda \in S} \overline{V_\lambda}$.
Then $N$ is free over $A$ and we can therefore write $N = \oplus_{\lambda \in S} V_\lambda'$, with each $V_\lambda'$ a direct summand $f_i$-invariant lift of $\overline{V_\lambda}$.
Since $N$ itself is a direct summand of $A^n$, it follows by the uniqueness of $V_\lambda$ shown already that $V_\lambda' = V_\lambda$ and we are done.
\end{proof}

The following lemma provides another perspective on the lifts of Lemma \ref{uniquedirectsumlift} in terms of topological nilpotents.

\begin{lemma}\label{topnilpotent}
Let $f: A^n \to A^n$ be an $A$-linear map inducing $\bar{f}: k^n \to k^n$. The $\bar{f}$-invariant decomposition $k^n = \overline{V_0} \oplus \overline{V_1}$, where $\overline{V_0} = \ker \bar{f}^n$ admits a unique $f$-invariant lift $A^n = V_0 \oplus V_1$. Then $$V_0 = \{v \in A^n| f^m(v) \to 0 \text{ as } m \to \infty\},$$ where convergence is with respect to the $m_A$-adic topology.
\end{lemma}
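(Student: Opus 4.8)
Write $T = \{v \in A^n : f^m(v) \to 0 \text{ in the } m_A\text{-adic topology}\}$; the goal is the two inclusions $V_0 \subseteq T$ and $T \subseteq V_0$. Recall from Lemma \ref{uniquedirectsumlift} that $V_0$ and $V_1$ are $f$-invariant, finite free direct summands with $A^n = V_0 \oplus V_1$, and that $\overline{V_0} = \ker \bar{f}^n$, so $\overline{V_1}$ is the sum of the generalised eigenspaces of $\bar f$ for non-zero eigenvalues. Since the $m_A$-adic filtration respects the direct sum decomposition, we have $V_i \cap m_A^c A^n = m_A^c V_i$ for all $c$ and $i = 0,1$; I will use this repeatedly.

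For $V_0 \subseteq T$: the reduction $\bar{f}|_{\overline{V_0}}$ is nilpotent with $\bar f^n = 0$, so $f^n(V_0) \subseteq V_0 \cap m_A A^n = m_A V_0$. Iterating and using $A$-linearity of $f$, $f^{kn}(V_0) \subseteq m_A^k V_0$, and since $f^r(m_A^k V_0) \subseteq m_A^k V_0$ for $0 \le r < n$ this gives $f^m(V_0) \subseteq m_A^{\lfloor m/n \rfloor} A^n$; hence $f^m(v) \to 0$ for every $v \in V_0$.

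For $T \subseteq V_0$: let $\pi_1 : A^n \to V_1$ be the projection along $V_0$. It is $A$-linear, it commutes with $f$ (because both summands are $f$-invariant), and it satisfies $\pi_1(m_A^c A^n) \subseteq m_A^c V_1$. Moreover $f|_{V_1}$ is an automorphism of $V_1$: its reduction $\bar f|_{\overline{V_1}}$ is invertible, so the determinant of $f|_{V_1}$ (computed in a basis of the finite free module $V_1$) reduces to a non-zero element of $k$, hence is a unit of the local ring $A$. Now take $v \in T$ and write $v = v_0 + v_1$ with $v_i \in V_i$. Given $c \ge 1$, choose $M$ with $f^m(v) \in m_A^c A^n$ for all $m \ge M$; applying $\pi_1$ yields $f^m(v_1) = \pi_1(f^m(v)) \in m_A^c V_1$, and then applying the $A$-linear map $(f|_{V_1})^{-m}$ (which preserves $m_A^c V_1$) gives $v_1 \in m_A^c V_1$. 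As $c$ was arbitrary, $v_1 \in \bigcap_{c} m_A^c V_1 = 0$ by the Krull intersection theorem, so $v = v_0 \in V_0$.

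\textbf{Main obstacle.} The substantive point is the inclusion $T \subseteq V_0$: one must observe that $f$ acts \emph{invertibly} on the complement $V_1$, so that $m_A$-adic smallness of $f^m(v_1)$ transfers back to $v_1$ itself, and then package this through a Krull-intersection argument rather than any naïve ``division by $f$''. The remaining ingredients — compatibility of direct summands with the $m_A$-adic filtration, that $\pi_1$ commutes with $f$, and that determinants detect invertibility over a local ring — are routine.
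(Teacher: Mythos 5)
Your proof is correct and follows essentially the same route as the paper's: the containment $V_0\subseteq T$ via $f^n(V_0)\subseteq m_A V_0$ and iteration, and the reverse containment by decomposing $v=v_0+v_1$, noting that $f$ acts invertibly on $V_1$ (its reduction is injective on $\overline{V_1}$), and concluding $v_1\in\bigcap_c m_A^c V_1=0$. Your write-up merely makes explicit a few points the paper leaves implicit (the determinant-unit argument for invertibility of $f|_{V_1}$, the projection commuting with $f$, and the appeal to Krull intersection).
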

\begin{proof}
The existence and uniqueness of the decomposition follows from Lemma \ref{uniquedirectsumlift}. 
Let $$V_0' = \{v \in A^n| f^m(v) \to 0 \text{ as } m \to \infty\}.$$ 
If $v \in V_0$ then $\bar{f}^n(v \mod m_A) = 0$, so $f^n(v) \in m_A A^n \cap V_0 = m_A V_0$, since $V_0$ is a direct summand of $A^n$. Thus $f^n(V_0) \subset m_A V_0$ and so $f^{nu}(V_0) \subset (m_A)^u V_0$ for every $u \geq 1$.
It follows that $V_0 \subset V_0'$.

Now suppose $v \in V_0'$, and write $v = x+y$ with $x \in V_0$ and $y \in V_1$.
Then $y \in V_1 \cap V_0'$.
On the one hand, $f$ acts invertibly on $y$, since $\bar{f}$ has trivial kernel on $\overline{V_1}$.
On the other hand, $f^m(y) \to 0$ as $m \to \infty$, so $y \in (m_A)^m \cdot A^n$ for every $m$.
We conclude that $y = 0$ and $V_0 = V_0'$.
\end{proof}

\begin{lemma}
The $A$-submodule $L_0$ is a Lie subalgebra of $\gA$.
\end{lemma}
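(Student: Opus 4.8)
The plan is to exploit the description of $L_0$ as a space of topological nilpotents provided by Lemma \ref{topnilpotent}, together with the fact that $\Ad(g)$ is an automorphism of the Lie algebra $\gA$. Set $\delta = \Ad(g) - \id_{\gA}$. Since $\bar g$ is semisimple, $\Ad(\bar g)$ is a semisimple linear operator on $\gk$ (over an algebraic closure it acts trivially on the Cartan and by a root value on each root space), so its generalised $1$-eigenspace $\ker \bar\delta^{\,n}$ coincides with its honest $1$-eigenspace $\Lzerobar$. Applying Lemma \ref{topnilpotent} to $f = \delta$, and using the uniqueness in Lemma \ref{uniquedirectsumlift} to identify the resulting $\delta$-invariant lift with $L_0$, we obtain
\[
L_0 = \{\, v \in \gA \;:\; \delta^m(v) \to 0 \text{ as } m \to \infty \,\},
\]
the convergence being $m_A$-adic. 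Thus it suffices to show that $\delta^u([x,y]) \in m_A^u \gA$ for all $u \geq 0$ whenever $x,y \in L_0$.

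Next I would record two routine facts. First, since $\bar\delta$ annihilates $\Lzerobar$ and $L_0$ is $\delta$-stable, $\delta(L_0) \subseteq m_A L_0$, whence $\delta(m_A^a L_0) \subseteq m_A^{a+1} L_0$ and $\Ad(g)(m_A^a L_0) \subseteq m_A^a L_0$ for every $a$. Second, because $\Ad(g)$ respects the bracket, expanding $\Ad(g)[v,w] = [\Ad(g)v,\Ad(g)w]$ in terms of $\delta$ yields the twisted Leibniz identity $\delta([v,w]) = [\delta(v), w] + [\Ad(g)(v), \delta(w)]$.

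Then I would prove by induction on $u$ that $\delta^u([x,y])$ is an $A$-linear combination of brackets $[v,w]$ with $v \in m_A^a L_0$, $w \in m_A^b L_0$ and $a+b = u$. The case $u=0$ is immediate. For the inductive step, applying the twisted Leibniz identity to one such bracket produces $[\delta(v),w]$, a pair lying in $m_A^{a+1}L_0 \times m_A^b L_0$, and $[\Ad(g)(v),\delta(w)]$, a pair lying in $m_A^a L_0 \times m_A^{b+1}L_0$, each of total exponent $u+1$. Since the bracket on $\gA$ is $A$-bilinear we have $[m_A^a L_0, m_A^b L_0] \subseteq m_A^{a+b}\gA$, so $\delta^u([x,y]) \in m_A^u \gA$; hence $\delta^m([x,y]) \to 0$ and $[x,y] \in L_0$ by the displayed characterisation. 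As $L_0$ is already an $A$-submodule of $\gA$, this shows it is a Lie subalgebra.

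The only step calling for genuine care is the first one — that $\Lzerobar$ is the full generalised $1$-eigenspace, so that Lemma \ref{topnilpotent} applies — which reduces to semisimplicity of $\Ad(\bar g)$; the rest is bookkeeping with $m_A$-adic filtrations. (Alternatively one could argue purely by uniqueness: the bracket map $L_0 \otimes_A L_0 \to \gA$ is $\Ad(g)$-equivariant and reduces into $\Lzerobar$, so its image is an $\Ad(g)$-stable submodule whose reduction lies in $\Lzerobar$, and Lemma \ref{uniquedirectsumlift} would force it into $L_0$ once one knows the image is a direct summand; but establishing that — or circumventing it — is slightly more awkward than the nilpotence argument above.)
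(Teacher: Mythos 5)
Your proof is correct, and it leans on the same two pillars as the paper's — the topological-nilpotence description of $L_0$ from Lemma \ref{topnilpotent} (whose applicability, as you rightly flag, needs $\ker\bar\delta=\ker\bar\delta^{\,n}$, i.e.\ semisimplicity of $\Ad(\bar g)$, a point the paper uses silently) and the fact that $\Ad(g)$ preserves the bracket — but the bookkeeping is genuinely different. The paper first treats the Artinian case, filtering $L_0$ by the honest kernels $L_0^{(m)}=\ker f^m$ and proving $[L_0^{(n)},L_0^{(m)}]\subseteq L_0^{(n+m)}$ by induction via the identity $f([x,x'])=[f(x),x']+[x,f(x')]+[f(x),f(x')]$, and then deduces the general case by reducing modulo $(m_A)^u$ (picking up a harmless factor of two in the exponent). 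You instead prove the uniform estimate $\delta^u([x,y])\in m_A^u\hat{\mathfrak g}_A$ in a single pass, using the twisted Leibniz rule $\delta([v,w])=[\delta(v),w]+[\Ad(g)(v),\delta(w)]$ together with the one-step contraction $\delta(L_0)\subseteq m_A L_0$, which is where you use that $\Lzerobar$ is the genuine (not merely generalised) fixed space and that $L_0$ is a direct summand. What each buys: your argument dispenses with the Artinian reduction and gives a cleaner quantitative statement; the paper's nilpotency-degree induction is insensitive to whether $\bar\delta$ kills $\Lzerobar$ in one step or only eventually, so it would survive with a generalised eigenspace, at the cost of the two-stage structure. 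Both are complete proofs of the lemma.
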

\begin{proof}
We need only check that $L_0$ is closed under the Lie bracket of $\gA$.
We will firstly consider the case that $A$ is Artinian, so $(m_A)^u = 0$ for some $u \geq 1$.
Let $f: \gA \to \gA$ denote the endomorphism $\Ad(g) - \id$ and define the increasing sequence of $A$-submodules $L_0^{(m)} = \{v \in L_0| f^m(v) = 0\}$. 
Since $L_0$ is the $f$-invariant lift of $\ker \bar{f}$, we have by Lemma \ref{topnilpotent} that $L_0 = \{v \in \gA| f^m(v) = 0 \text{ for some } m \geq 0\} = \bigcup_{m \geq 0} L_0^{(m)}$. 
Let $x \in L_0^{(n)}$ and $x' \in L_0^{(m)}$. 
We show that $[x,x'] \in L_0^{(n+m)}$ by induction on $n+m$. Notice that in the case $n=0$ we have $x = 0$ and so the result is clear. We need only show the inductive step then, where we may take both $n,m \neq 0$. Then write 
\begin{align*}
    \Ad(g)(x) &= x + y \\
    \Ad(g)(x') &= x'+ y'
\end{align*}
where $f^{n-1}(y) = f^{n-1}(f(x)) = 0$ and similarly $f^{m-1}(y') = 0$. Thus $y \in L_0^{(n-1)}$ and $y' \in L_0^{(m-1)}$. We show $f([x,x']) \in L_0^{(n+m-1)}$ to deduce then that $[x,x'] \in L_0^{(n+m)}$, completing the argument. We have
\begin{align*}
    f([x,x']) &= \Ad(g)([x,x']) - [x,x'] \\
            &= [\Ad(g)(x),\Ad(g)(x')] - [x,x'] \\
            &= [x+y,x'+y'] - [x,x'] \\
            &= [y,x'] + [x,y'] + [y,y']
\end{align*}
where we have used that $\Ad(g)$ is a Lie algebra homomorphism on $\gA$. By induction hypothesis, the three terms all lie in $L_0^{(n+m-1)}$ and we are done in the Artinian case.

Suppose now that $A$ is not necessarily Artinian.
Let $x,x' \in L_0$ and let $u \geq 1$.
We wish to show that $f^m([x,x']) \in (m_A)^u \gA$ for all $m$ sufficiently large.
We know that there exists $M \geq 1$ such that for all $m \geq M$ we have $f^m(x),f^m(x') \in (m_A)^u \gA$.
From the argument above applied to $A/(m_A)^u$, we see that $f^{2m}([x,x']) \in (m_A)^u \gA$.
So indeed we have for every $m \geq 2M$ that $f^m([x,x']) \in (m_A)^u \gA$ and we are done.
\end{proof}

\subsection{Lifting $M_{\bar{g}}$}

We will frequently wish to take the Lie algebra of a base-changed group scheme, and so we record the following lemma which will be of repeated use.

\begin{lemma}
 Let $G \to S$ be a smooth group scheme over a base scheme $S$ with identity section $e: S \to G$. Let $f: S' \to S$ be a morphism of schemes and let $G' = G \times_S S'$ be the base change.
 Let $e': S' \to G'$ be the base-change of $e$.
 Then we have an isomorphism
 \begin{align*}
     f^* e^* \Omega^1_{G/S} \cong (e')^* \Omega^1_{G'/S'}
 \end{align*}
 and, in particular, an isomorphism of Lie algebras
  \begin{align*}
     (\Lie G)_{S'} \cong \Lie G'.
 \end{align*}
\end{lemma}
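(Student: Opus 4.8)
The plan is to deduce everything from the standard fact that the sheaf of relative differentials is compatible with arbitrary base change; the group structure enters only through the identity sections. Write $p \colon G' \to G$ and $q \colon G' \to S'$ for the two projections of $G' = G \times_S S'$, so that by construction $e' \colon S' \to G'$ is the base change of $e$ and satisfies $p \circ e' = e \circ f$ and $q \circ e' = \id_{S'}$.

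The first step is the isomorphism $\Omega^1_{G'/S'} \cong p^* \Omega^1_{G/S}$ of $\mathcal{O}_{G'}$-modules. This is local on $S$, $G$ and $S'$, so it reduces to the ring-theoretic statement that for ring maps $R \to B$ and $R \to R'$ there is a canonical isomorphism $\Omega^1_{B \otimes_R R'/R'} \cong \Omega^1_{B/R} \otimes_B (B \otimes_R R')$, which is immediate from the universal property of Kähler differentials. The second step pulls this back along the identity section: using $p \circ e' = e \circ f$,
\begin{align*}
(e')^* \Omega^1_{G'/S'} &\cong (e')^* p^* \Omega^1_{G/S} \cong (p \circ e')^* \Omega^1_{G/S} \\
&= (e \circ f)^* \Omega^1_{G/S} \cong f^* e^* \Omega^1_{G/S},
\end{align*}
which is the first assertion of the lemma; each arrow is the canonical one, so the composite isomorphism is canonical.

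For the Lie algebra statement, set $\omega_{G/S} = e^* \Omega^1_{G/S}$, which is a locally free $\mathcal{O}_S$-module of finite rank because $G \to S$ is smooth, and recall that $\Lie(G/S) = \mathcal{H}om_{\mathcal{O}_S}(\omega_{G/S}, \mathcal{O}_S)$, with the analogous description for $G'/S'$. Since $\omega_{G/S}$ is locally free of finite rank, forming its $\mathcal{O}_S$-dual commutes with the pullback $f^*$; combined with the first step this gives
\begin{align*}
(\Lie G)_{S'} &= f^* \mathcal{H}om_{\mathcal{O}_S}(\omega_{G/S}, \mathcal{O}_S) \cong \mathcal{H}om_{\mathcal{O}_{S'}}(f^* \omega_{G/S}, \mathcal{O}_{S'}) \\
&\cong \mathcal{H}om_{\mathcal{O}_{S'}}(\omega_{G'/S'}, \mathcal{O}_{S'}) = \Lie G'.
\end{align*}

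There is no real obstacle here: this is a routine base-change statement. The only points meriting attention are that the isomorphism of the first step is the canonical one — so that the resulting isomorphism of Lie algebras is compatible with the bracket and not merely with the underlying modules — and that smoothness of $G \to S$ is genuinely used, to ensure $\omega_{G/S}$ is locally free so that dualization commutes with $f^*$. Alternatively one can avoid duals and argue directly with the relative tangent sheaf $\mathcal{T}_{G/S} = \mathcal{H}om_{\mathcal{O}_G}(\Omega^1_{G/S}, \mathcal{O}_G)$, using local freeness of $\Omega^1_{G/S}$ to commute $\mathcal{H}om$ past $p^*$.
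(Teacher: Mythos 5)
Your proof is correct and follows essentially the same route as the paper's: base change for relative differentials ($\Omega^1_{G'/S'} \cong p^*\Omega^1_{G/S}$, which the paper cites from the Stacks Project), pullback along the identity sections using $p \circ e' = e \circ f$, and then the observation that dualizing the locally free sheaf $e^*\Omega^1_{G/S}$ commutes with $f^*$ thanks to smoothness. Your version simply fills in a bit more detail (the ring-theoretic reduction and the remark on canonicity), so there is nothing to correct.
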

\begin{proof}
Letting $g: G' \to G$ be the natural map, there is an isomorphism $g^* \Omega^1_{G/S} \cong \Omega^1_{G'/S'}$ (\cite[\href{https://stacks.math.columbia.edu/tag/01V0}{Tag 01V0}]{stacks-project}).
Pulling back by $e'$ we have
\begin{align*}
   (e')^* \Omega^1_{G'/S'}  \cong (e')^* g^* \Omega^1_{G/S} \cong f^* e^* \Omega^1_{G/S}
\end{align*}
by functoriality of pullback. 
Since pullback of locally free sheaves commutes with taking duals, we deduce the stated isomorphism of Lie algebras.
\end{proof}

We will require later that taking centers of certain reductive groups will commute with formation of the Lie algebras.
For such statements to hold we need to make assumptions on the characteristic of $k$. 
An example where such an equality does not hold is for $\SL_2$ in characteristic 2: the Lie algebra has 1-dimensional center spanned by the identity while $\SL_2$ has 0-dimensional center.
We recall the notion of pretty good characteristic, as in \cite[Definition 2.11]{smoothnesscentralizers}.

\begin{definition}\label{pretty_good}
Let $l$ be a prime number and $G$ a reductive group over a field $F$.
Denote the root datum of $G$ (with respect to some maximal torus over an algebraic closure of $F$) by $(X^*,\Phi,X_*,\Phi^\vee)$.
We say $l$ is a prime of pretty good characteristic for $G$ if for any subset $\Phi' \subset \Phi$ the abelian groups $X^*/\mathbb{Z}\Phi'$ and $X_*/\mathbb{Z}(\Phi')^\vee$ are $l$-torsion free.
\end{definition}

For a given group $G$, we are only considering the $l$-torsion of finitely many finitely generated abelian groups and hence only finitely many primes $l$ are not of pretty good characteristic for $G$.
In our setting we have that if $l$ is a prime of pretty good characteristic for $\hat{G}$ then it is also of pretty good characteristic for $M_{\bar{g}}^\circ$, since the roots of $M_{\bar{g}}^\circ$ are a subset of the roots of $\hat{G}$ (and the character and cocharacter groups are equal). 
From now on then we assume that $p = \ch k$ is a pretty good prime for $\hat{G}_k$.

One use of Definition \ref{pretty_good} to us comes from \cite[Theorem 3.3]{smoothnesscentralizers} and \cite[Lemma 3.1]{smoothnesscentralizers}.
Together these imply that if $\ch F = l$ is a prime of pretty good characteristic for $G$, then $Z_G(G)$ is smooth and in particular the Lie algebra $\mathfrak{g} = \Lie G$ is separable inside itself, in the sense that 
\begin{equation} \label{lie algebra commute center}
\dim Z_{G}(\mathfrak{g}) = \dim \mathfrak{z}(\mathfrak{g}).
\end{equation}
From this discussion we deduce the following lemma, which is use for computations in later sections.

\begin{lemma} \label{lie algebra mg commute center}
Let $\bar{H} = Z(M_{\bar{g}}^\circ)$.
Then there is an equality of Lie algebras $\Lie \bar{H} = \mathfrak{z}(\Lzerobar)$.
\end{lemma}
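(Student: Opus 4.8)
The plan is to prove the two inclusions separately: $\Lie\bar{H}\subseteq\mathfrak{z}(\Lzerobar)$ directly from the adjoint action, and then promote this to an equality by a dimension count fed by \eqref{lie algebra commute center}. Throughout write $M=M_{\bar{g}}^\circ$, so that $\Lie M=\Lie M_{\bar{g}}=\Lzerobar$ and $\bar{H}=Z(M)$; recall from the discussion preceding the lemma that $p=\ch k$ is pretty good for $M$, its roots forming a subset of the roots of $\hat{G}$ with the same character and cocharacter lattices. Both $\Lie\bar{H}$ and $\mathfrak{z}(\Lzerobar)$ are $k$-subspaces of $\gk$ whose formation commutes with the base change along $k\hookrightarrow\bar{k}$, so for statements about dimensions we may assume $k$ algebraically closed.

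For the inclusion $\Lie\bar{H}\subseteq\mathfrak{z}(\Lzerobar)$: an element of the centre $Z(M)$ acts trivially on $M$ by conjugation, hence trivially on $\Lzerobar$ via $\Ad$, so $\bar{H}$ is a closed subgroup scheme of $N:=\ker\bigl(\Ad\colon M\to\GL(\Lzerobar)\bigr)$. Since taking Lie algebras is left exact, this gives $\Lie\bar{H}\subseteq\Lie N=\ker(\ad)=\mathfrak{z}(\Lzerobar)$.

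For the reverse inclusion I would argue by comparing dimensions. The cited smoothness results (\cite[Theorem 3.3]{smoothnesscentralizers}, \cite[Lemma 3.1]{smoothnesscentralizers}) applied to $M$ show that $\bar{H}=Z_M(M)$ is smooth, so $\dim_k\Lie\bar{H}=\dim\bar{H}$; combined with the inclusion just proved, it therefore suffices to show $\dim\bar{H}=\dim\mathfrak{z}(\Lzerobar)$. Now $N=\ker(\Ad)$ has $\Lie N=\mathfrak{z}(\Lzerobar)$ as above, and \eqref{lie algebra commute center} applied to $M$ gives $\dim N=\dim\mathfrak{z}(\Lzerobar)$. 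So it remains to identify $\dim\bar{H}$ with $\dim N$, and since $\bar{H}\subseteq N$ it is enough to see the two subgroup schemes have the same $\bar{k}$-points. If $g\in M(\bar{k})$ satisfies $\Ad(g)=\id$, then $g$ is semisimple (its Jordan decomposition is carried to that of $\Ad(g)=\id$, using that a nontrivial unipotent element of a reductive group in pretty good characteristic acts nontrivially under $\Ad$), hence lies in some maximal torus $T$ of $M$, on whose root space $\mathfrak{m}_\alpha$ the operator $\Ad(g)$ acts through $\alpha(g)$; thus $\Ad(g)=\id$ forces $\alpha(g)=1$ for every root $\alpha$ of $(M,T)$, i.e. $g\in Z(M)(\bar{k})=\bar{H}(\bar{k})$. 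This yields $\dim\bar{H}=\dim N=\dim\mathfrak{z}(\Lzerobar)=\dim_k\Lie\bar{H}$, and together with the first inclusion we conclude $\Lie\bar{H}=\mathfrak{z}(\Lzerobar)$.

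The substantive inputs here are the two smoothness facts valid in pretty good characteristic — smoothness of $Z(M)$ and the equality \eqref{lie algebra commute center} for $M$ — and these are already available, so the remainder is bookkeeping with the adjoint representation. The one point that needs care, and is where I expect the only real friction, is the interplay between scheme-theoretic and reduced structures: one must be sure that the dimension count genuinely pins down the Lie algebra and is not obscured by infinitesimal discrepancies between $Z(M)$, $\ker\Ad$ and $\mathfrak{z}(\Lzerobar)$, which is precisely why both smoothness statements are needed.
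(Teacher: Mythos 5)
Your proof is correct and follows essentially the same route as the paper's: the easy containment $\Lie\bar{H}\subseteq\mathfrak{z}(\Lzerobar)$ obtained by differentiating the adjoint action, followed by a dimension count resting on the pretty-good-characteristic inputs (smoothness of the centre and the equality (\ref{lie algebra commute center}) applied to $M_{\bar{g}}^\circ$). The only deviation is that where the paper simply quotes \cite[Proposition 3.3.8]{conrad} for the scheme-theoretic identity $Z(M_{\bar{g}}^\circ)=\ker(\Ad)$, you reprove its geometric-points version by hand; this works, though the parenthetical claim that a nontrivial unipotent acts nontrivially under $\Ad$ deserves a one-line justification (e.g.\ root-smoothness in pretty good characteristic provides $X\in\Lie T$ with $d\alpha(X)\neq 0$ for all roots, and no nontrivial unipotent fixes such an $X$) or a citation.
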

\begin{proof}
By \cite[Proposition 3.3.8]{conrad} we always have $\bar{H} = Z_{M_{\bar{g}}^\circ}(\Lzerobar)$, the kernel of the adjoint representation, and hence equality of their Lie algebras.
The containment $\Lie \bar{H} \subset \mathfrak{z}(\Lzerobar)$ is clear from differentiating the adjoint action of $\bar{H}$ on $\Lzerobar$.
Thus they are equal, as by equality \ref{lie algebra commute center} they are $k$-vector spaces of the same dimension. 
\end{proof}

\begin{remark}
In our applications in later sections, we will take $p$ to be a prime of order coprime to the Weyl group of $\hat{G}$.
This will, in particular, imply that $p$ is a prime of pretty good characteristic for $\hat{G}$ since $p$ will be a very good prime for $\hat{G}$ by \cite[Lemma 3.9]{thorne2019} and hence a pretty good prime by \cite[Lemma 2.12]{smoothnesscentralizers}.
\end{remark}

The next step in the construction of $M_g$ is to find an appropriate multiplicative type subgroup, which will be the center of $M_g$.
The key will be to find a suitable lift of a maximal torus of $\hat{G}_k$.
We recall some definitions and results from \cite{sga3} which will aid us in this goal.

Let $\mathfrak{g}$ be a Lie algebra of dimension $n$ over a field $F$. For any $F$-algebra $R$ and for any $a \in \mathfrak{g}_R = \mathfrak{g} \otimes_F R$ we can consider the endomorphism of $\mathfrak{g}_R$ given by $\ad(a)$. We may write its characteristic polynomial as $P_\mathfrak{g}(a,t) = t^n + c_{n-1}(a) t^{n-1} + \cdots + c_0(a)$. 
The nilpotent rank is the minimal $r \geq 0$ such that $c_r(a) \neq 0$ for some $a$. 
Let $r$ denote the nilpotent rank of $\mathfrak{g}$. 
\begin{definition} \label{ad-regular}
With notation as above, we say that $a \in \mathfrak{g}$ is ad-regular if $c_r(a) \neq 0$.
\end{definition}
\begin{remark}
The terminology `ad-regular' is not standard, but we use it to avoid conflict with the other notion of regular (which is that the centralizer has dimension as small as possible).
\end{remark}
If $a \in \mathfrak{g}$ is any element we let $\Nil(a,\mathfrak{g}) = \bigcup_{u \geq 0} \ker(\ad(a)^u: \mathfrak{g} \to \mathfrak{g})$, a Lie subalgebra of $\mathfrak{g}$. If $a \in \mathfrak{g}$ is ad-regular then $\rank_k(\Nil(a,\mathfrak{g})) = r$ and we call $\Nil(a,\mathfrak{g})$ a Cartan subalgebra of $\mathfrak{g}$.
More generally \cite[XIV Définition 2.4]{sga3}, if $S = \Spec R$ is any affine scheme and $\mathfrak{g}$ is a Lie algebra over $S$ we say a Lie subalgebra $\mathfrak{d} \subset \mathfrak{g}$ is a Cartan subalgebra if $\mathfrak{d}$ is locally a direct summand of $\mathfrak{g}$ and for every $s \in S$ we have that $\mathfrak{d}(s) \subset \mathfrak{g}(s)$ is a Cartan subalgebra in the sense described above. Here $\mathfrak{g}(s) = \mathfrak{g} \otimes_R k(s)$ is the fibre over $k(s)$.
\begin{proposition} \label{invariant lift cartan subalg}
Suppose that $a \in \gA$ is such that $\bar{a}$ is ad-regular in $\gk$.
Let $\mathfrak{d}$ denote the direct summand $\ad(a)$-invariant lift of $\Nil(\bar{a},\gk)$ to $\gA$ given by Lemma \ref{uniquedirectsumlift}.
Then $\mathfrak{d}$ is a Cartan subalgebra of $\gA$.
\end{proposition}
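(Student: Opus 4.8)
The plan is to verify directly the two conditions in the definition of a Cartan subalgebra over $\Spec A$: that $\mathfrak{d}$ is (locally) an $A$-module direct summand of $\gA$, and that for every point $s\in\Spec A$ the fibre $\mathfrak{d}(s)\subseteq\g(s)$ is a Cartan subalgebra of the Lie algebra $\g(s)$ over $k(s)$ in the sense defined above for a field. The first condition is immediate from the construction: writing $\det(X-\ad(a))=p(X)q(X)$ for the Hensel factorisation lifting $\det(X-\ad(\bar a))=X^{d}\,\bar q(X)$ (so $\bar q(0)\neq 0$, $p$ monic of degree $d:=\dim_k\Nil(\bar a,\gk)$, and $p\equiv X^{d}$, $q\equiv\bar q$ modulo $m_A$), the proof of Lemma \ref{uniquedirectsumlift} exhibits $\mathfrak{d}=\ker\big(p(\ad(a))\big)$ as the image of an idempotent which is a polynomial in $\ad(a)$ over $A$, so $\mathfrak{d}$ is a direct summand. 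That $\mathfrak{d}$ is moreover a Lie subalgebra follows from the same Leibniz-rule induction used to prove that $L_0$ is a Lie subalgebra, now with $\ad(a)$ an actual derivation of $\gA$. Everything then reduces to the fibrewise statement, where the only real subtlety is that the construction lives over the closed point of $\Spec A$ and must be propagated to the (possibly non-closed) points.

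Two structural facts feed into that. Let $c_i$ be the coefficients of $\det(X-\ad(-))$ on $\g$, polynomial functions defined over $\mathbb{Z}$. First, $c_i\equiv 0$ for $i<\ell$, where $\ell$ is the absolute rank of $\hat G$, since over $\overline{\mathbb{Q}}$ the polynomial $X^{\ell}$ always divides $\det(X-\ad(-))$ (the Lie algebra of a maximal torus contributes $\ell$ zero eigenvalues), and this descends to $\mathbb{Z}$ and hence to every fibre; and, because $p$ is pretty good for $\hat G$, moreover $c_{\ell}\not\equiv 0$ on each fibre, so the nilpotent rank of $\g(s)$ equals $\ell$ for every $s\in\Spec A$. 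Second, $d=\ell$: $\bar a$ being ad-regular means $c_{\ell}(\bar a)\neq 0$, which together with $c_i(\bar a)=0$ for $i<\ell$ says $X^{\ell}$ exactly divides $\det(X-\ad(\bar a))$, i.e. $\dim_k\Nil(\bar a,\gk)=\ell$. Hence $\mathfrak{d}$ is a direct summand of $\gA$ of constant rank $\ell$ and $\deg p=\ell$.

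Now fix $s\in\Spec A$ with prime $\mathfrak{p}_s$ and let $a_s\in\g(s)$ be the image of $a$; the heart of the argument is to prove $\mathfrak{d}(s)=\Nil(a_s,\g(s))$. First, $a_s$ is ad-regular: the ad-regular locus $\{c_{\ell}\neq 0\}\subseteq\gA$ pulls back along the section $a$ to the open subscheme $\{t\in\Spec A:c_{\ell}(a)\notin\mathfrak{p}_t\}$ of $\Spec A$, which contains the closed point because $c_{\ell}(\bar a)\neq 0$ and so is all of $\Spec A$; thus $\dim_{k(s)}\Nil(a_s,\g(s))=\ell$. Next, reducing the Hensel factorisation modulo $\mathfrak{p}_s$ and passing to $k(s)$ gives $\det(X-\ad(a_s))=p_s(X)q_s(X)$ with $p_s$ monic of degree $\ell$ and $q_s(0)\neq 0$ (since $q(0)$ reduces to $\bar q(0)\neq 0$ and is therefore a unit in $A$); as $X^{\ell}$ divides $\det(X-\ad(a_s))$ while $0$ is not a root of $q_s$, the entire vanishing order of $\det(X-\ad(a_s))$ at $0$ lies in $p_s$, and since $\deg p_s=\ell$ this forces $p_s=X^{\ell}$. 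Base-changing to $k(s)$ then identifies $\mathfrak{d}(s)$ with $\ker\big(p_s(\ad(a_s))\big)=\ker\big((\ad a_s)^{\ell}\big)\subseteq\Nil(a_s,\g(s))$, and the two sides have the same dimension $\ell$, so they coincide. Hence $\mathfrak{d}(s)$ is the $\Nil$-subalgebra of the ad-regular element $a_s$, i.e. a Cartan subalgebra of $\g(s)$; as $s$ was arbitrary and $\mathfrak{d}$ is a direct summand, $\mathfrak{d}$ is a Cartan subalgebra of $\gA$.

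I expect the main obstacle to be precisely this propagation to non-closed points: $p(X)$ is only congruent to $X^{\ell}$ modulo $m_A$, not equal to it, so $\mathfrak{d}$ need not be $\Nil(a,\gA)$, and over a residue field of residue characteristic $0$ one might fear that $\mathfrak{d}(s)$ absorbs a summand on which $\ad(a_s)$ acts invertibly. The resolution is the two ``unit'' observations — that $c_{\ell}(a)$ and $q(0)$ are units in $A$ — which keep $a_s$ ad-regular and force $p_s$ back to $X^{\ell}$ on every fibre; this is also where the pretty-good hypothesis on $p$ is genuinely used, since without it the nilpotent rank of $\gk$ can exceed $\ell$, and then $\mathfrak{d}$ can fail to be Cartan on a characteristic-zero fibre.
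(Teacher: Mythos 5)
Your proof is correct, but it reaches the fibrewise statement by a different route than the paper's. The paper obtains ad-regularity of $a(s)$ at every $s\in\Spec A$ by citing SGA3: condition $(\mathrm{C}_0)$ of [XIV Proposition 2.9] gives local constancy of the nilpotent rank, and [XIV Corollaire 2.10] gives openness of the ad-regular locus, which contains the closed point and hence is all of $\Spec A$; it then identifies the fibre by the inclusion opposite to yours, showing $\Nil(a(s),\hat{\mathfrak{g}}(s))\subset\mathfrak{d}(s)$ because $\ad(a(s))$ acts invertibly on the fibre of the complementary summand, and concludes by comparing dimensions. You instead work directly with the coefficients $c_i$ of the characteristic polynomial: identical vanishing of $c_i$ for $i<\ell$ on all fibres by spreading out from characteristic zero over $\mathbb{Z}$, the unit statements $c_\ell(a),\,q(0)\in A^\times$, the identification $\mathfrak{d}(s)=\ker\big(p_s(\ad(a_s))\big)$, and the forcing $p_s=X^\ell$, which yields $\mathfrak{d}(s)\subset\Nil(a_s,\hat{\mathfrak{g}}(s))$ before the dimension count. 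This is more self-contained (no appeal to SGA3 XIV 2.9--2.10), and the unit $c_\ell(a)$ together with the vanishing of the lower $c_i$ simultaneously certifies that every fibre has nilpotent rank $\ell$ and that $a_s$ is ad-regular. The one thing you assert without proof is exactly the content the paper outsources to SGA3: that in pretty good characteristic the nilpotent rank of $\gk$ equals the reductive rank $\ell$ (equivalently $c_\ell\not\equiv 0$ on $\gk$), which is what makes ad-regularity of $\bar a$ mean $c_\ell(\bar a)\neq 0$ and forces $\deg p=\ell$; a one-line justification would close this, e.g. pretty good implies no root is divisible by $p$ in $X^*(\hat T)$, so the differentials of the roots are non-zero on $\Lie\hat T$ and a suitably generic element of $\Lie\hat T$ over $\Fpbar$ has $\Nil$ equal to $\Lie\hat T$, of dimension $\ell$. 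A cosmetic point: the projector cutting out $\mathfrak{d}$ is an $m_A$-adic limit of polynomials in $\ad(a)$ (the Hensel cofactors $r,s$ are power series), not literally a polynomial, but since $A$ is complete and the projector is defined over $A$ before base change to $k(s)$, your fibrewise identification is unaffected.
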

\begin{proof}
Let $S = \Spec A$.
Since $\hat{G}$ admits a maximal torus of rank $r \geq 0$, $\gA$ satisfies condition $(\text{C}_0)$ of \cite[XIV Proposition 2.9]{sga3} stating that the nilpotent rank of $\mathfrak{\hat{g}}(s)$ is a locally constant function of $s$ on $S$.
Then by \cite[Corollaire 2.10]{sga3}, the set $U$ of $s \in S$ for which $a(s)$ is ad-regular in $\mathfrak{\hat{g}}(s)$ is an open subset of $S$.
Thus since $U$ contains the closed point of $S$ corresponding to $m_A$, we have that $U = S$.

Let $f = \ad(a): \gA \to \gA$, and recall that the unique $f$-invariant lift of $\ker(\bar{f}^{\dim(\gk)}) = \Nil(\bar{a},\gk)$ is given by
\begin{equation}
    \mathfrak{d} = \{a \in \gA : \ad(a)^m (x) \to 0 \text{ as } m \to \infty\}
\end{equation}
by Lemma \ref{topnilpotent}.
We claim that $\mathfrak{d}$ is a Lie subalgebra. 
Note that $f$ is an $A$-linear derivation, in the sense that $f([y,z]) = [f(y),z] + [y,f(z)]$ for every $y,z \in \gA$.
So suppose $y,z \in \mathfrak{d}$, and let $u \geq 1$.
There exists $M \geq 1$ such that $f^m(y),f^m(z) \in (m_A)^u \gA$ for every $m \geq M$.
Now suppose $m \geq 2M$.
Then since $f$ is a derivation,
$$
f^m([y,z]) = \sum_{i=0}^m \binom{m}{i} [f^i(y), f^{m-i}(z)]
$$
and each term in the sum lies in $(m_A)^u \gA$.
Hence $[y,z] \in \mathfrak{d}$.

Observe that there are inclusions $\mathfrak{d}(s) \supset \Nil(a(s),\mathfrak{\hat{g}}(s))$ for every $s \in S$.
Indeed, let $\eta$ denote the $f$-invariant direct sum complement to $\mathfrak{d}$ in $\gA$.
Then $f$ acts invertibly on $\eta$.
Hence writing $\mathfrak{\hat{g}}(s) = \mathfrak{d}(s) \oplus \eta(s)$, we see that $\ad(a(s))$ acts invertibly on $\eta(s)$, which shows the containment.
This inclusion is an equality, since both are $k(s)$-vector spaces of dimension $r$, as $a(s)$ is ad-regular and $r$ is the nilpotent rank of $\mathfrak{\hat{g}}(s)$.
We have therefore shown that $\mathfrak{d}$ is a Cartan subalgebra of $\gA$.
\end{proof}

Let $S$ be a scheme and $G$ a group-scheme over $S$.
We say \cite[IX Définition 1.3]{sga3} $G$ is a torus if $G$ is locally isomorphic in the fpqc topology to a group of the form $\mathbb{G}_m^r$ for some $r \geq 0$.
We say \cite[XII Définition 1.3]{sga3} a subgroup-scheme $T$ of $G$ is a maximal torus if $T$ is a torus whose geometric fibres are maximal tori in the geometric fibres of $G$, in the classical sense.
By \cite[X Corollaire 3.3(ii)]{sga3}, reduction modulo $m_A$ induces an equivalence of categories between the categories of finite type and multiplicative type group schemes over $A$ and those over $k$. Thus if we have a torus over $A$ such that its reduction modulo $m_A$ is split, then our torus over $A$ must also be split.

Now suppose that $G$ is a smooth group scheme over $S$.
We say (\cite[4.1.1]{bouthier:hal-02390969}) that $G$ is root-smooth if for every geometric point $\bar{s}$ of $S$ and maximal torus $T$ of $G_{\bar{s}}$, every root $T \to \mathbb{G}_{m,\bar{s}}$ is a smooth morphism.
Equivalently, a root $\alpha$ is smooth if $X^*(T)/\mathbb{Z} \alpha$ is $\ch k(\bar{s})$-torsion free.
It therefore follows from Definition \ref{pretty_good} that $\hat{G}$ is root-smooth over $A$.
The following proposition is contained in \cite[Proposition 4.1.3]{bouthier:hal-02390969} and allows us to pass between Cartan subalgebras and maximal tori in our setting.

\begin{proposition}\label{cartan subalg max tori}
Let $G$ be a root-smooth reductive group over a scheme $S$.
The Cartan subalgebras of $\mathfrak{g}$ are precisely the Lie algebras of the maximal $S$-tori of $G$.
\end{proposition}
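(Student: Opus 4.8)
The plan is to establish the two inclusions of the asserted set equality separately, in each case reducing to an algebraically closed base field where the statement is classical. Observe first that membership in either collection may be tested fpqc‑locally on $S$ together with a fibrewise condition: a subgroup scheme of $G$ is a maximal $S$‑torus precisely when it is of multiplicative type and its geometric fibres are maximal tori, while a Lie subalgebra $\mathfrak{d} \subseteq \mathfrak{g}$ is a Cartan subalgebra precisely when it is locally a direct summand and $\mathfrak{d}(s) \subseteq \mathfrak{g}(s)$ is a Cartan subalgebra for every $s \in S$, by \cite[XIV Définition 2.4]{sga3}. After an fpqc (indeed étale) cover of $S$ we may therefore assume $G$ is split with a fixed split maximal torus, and the real content becomes a collection of assertions over a field that we may take algebraically closed.

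For the inclusion ``maximal torus $\Rightarrow$ Cartan subalgebra'', let $T \subseteq G$ be a maximal $S$‑torus and $\mathfrak{t} = \Lie T$. Since $T$ and $G$ are $S$‑smooth and $T \hookrightarrow G$ is a closed immersion, $\mathfrak{t}$ is Zariski‑locally a direct summand of $\mathfrak{g}$, so only the fibrewise condition remains. Over an algebraically closed field $k$ one has the weight decomposition $\mathfrak{g} = \mathfrak{t} \oplus \bigoplus_{\alpha \in \Phi} \mathfrak{g}_\alpha$ for the adjoint $T$‑action. By root‑smoothness each $\alpha : T \to \mathbb{G}_m$ is a smooth character, equivalently $X^*(T)/\mathbb{Z}\alpha$ is torsion‑free, equivalently $\mathrm{d}\alpha$ is a nonzero functional on $\mathfrak{t}$; as $k$ is infinite, the finitely many hyperplanes $\ker(\mathrm{d}\alpha) \subseteq \mathfrak{t}$ do not cover $\mathfrak{t}$, so we may pick $a \in \mathfrak{t}$ with $\mathrm{d}\alpha(a) \neq 0$ for all $\alpha$. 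Then $\ad(a)$ annihilates $\mathfrak{t}$ and acts on each $\mathfrak{g}_\alpha$ by the nonzero scalar $\mathrm{d}\alpha(a)$, so $\Nil(a,\mathfrak{g}) = \mathfrak{t}$; in particular $\dim_k \Nil(a,\mathfrak{g}) = \dim T$, which equals the nilpotent rank of $\mathfrak{g}$ since $G$ is reductive. Hence $a$ is ad‑regular in the sense of Definition \ref{ad-regular} and $\mathfrak{t} = \Nil(a,\mathfrak{g})$ is a Cartan subalgebra.

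For the reverse inclusion, let $\mathfrak{d} \subseteq \mathfrak{g}$ be a Cartan subalgebra. Working locally on $S$ we may take $S = \Spec R$ with $R$ local and, arguing as in the proof of Proposition \ref{invariant lift cartan subalg} via \cite[XIV Proposition 2.9, Corollaire 2.10]{sga3} and the fact that $\mathfrak{d}$ is a direct summand, that $\mathfrak{d} = \Nil(a,\mathfrak{g})$ for some $a \in \mathfrak{g}$ whose reduction is ad‑regular. One then forms the scheme‑theoretic centralizer $Z_G(a)$ for the adjoint action: under our hypotheses (root‑smoothness, equivalently $p = \ch k$ pretty good in the sense of Definition \ref{pretty_good}), $Z_G(a)^\circ$ is a maximal torus with $\Lie Z_G(a)^\circ = \mathfrak{z}_{\mathfrak{g}}(a) = \mathfrak{d}$ --- on each fibre $\ad(\bar a)$ is invertible on a complement of $\Nil(\bar a,\mathfrak{g})$, forcing $\mathfrak{z}_{\mathfrak{g}}(\bar a) = \Nil(\bar a,\mathfrak{g})$, and the smoothness and torus structure of $Z_G(a)^\circ$ follow from \cite[Theorem 3.3]{smoothnesscentralizers} together with the classical theory over a field. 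Setting $T = Z_G(a)^\circ$ yields a maximal torus with $\Lie T = \mathfrak{d}$; any maximal torus $T'$ with $\Lie T' = \mathfrak{d}$ satisfies $T' \subseteq Z_G(\mathfrak{d}) \subseteq Z_G(a)$, hence $T' \subseteq T$, and $\dim T' = \dim T$ gives $T' = T$. This gives existence and uniqueness locally, and the torus descends over $S$ by uniqueness. This is precisely the content of \cite[Proposition 4.1.3]{bouthier:hal-02390969}, which we invoke for the remaining details.

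The step I expect to be the main obstacle is the existence half of the second inclusion: passing from a Cartan subalgebra to an honest maximal torus over a possibly non‑reduced base. Over a field this is standard, but relatively one must either control the centralizer scheme $Z_G(a)$ --- ensuring it is smooth with torus identity component, which is exactly where pretty good / root‑smooth characteristic enters, via \cite[Theorem 3.3]{smoothnesscentralizers} --- or instead exhibit $T \mapsto \Lie T$ as an isomorphism between the scheme of maximal tori and the scheme of Cartan subalgebras of \cite{sga3}, both smooth affine $S$‑schemes of the same relative dimension, by checking it is a monomorphism that is surjective on geometric fibres. Either route is what \cite[Proposition 4.1.3]{bouthier:hal-02390969} carries out.
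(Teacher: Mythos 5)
The paper does not actually prove this proposition: it is stated as being ``contained in \cite[Proposition 4.1.3]{bouthier:hal-02390969}'', so your closing appeal to that reference is exactly what the paper does, and for the paper's purposes nothing more is needed. Your forward direction (Lie algebra of a maximal torus is a Cartan subalgebra) is essentially sound: root-smoothness gives $\mathrm{d}\alpha \neq 0$ for every root, and an $a \in \mathfrak{t}$ avoiding the hyperplanes $\ker \mathrm{d}\alpha$ has $\Nil(a,\mathfrak{g}) = \mathfrak{t}$. The one assertion passed over is that the nilpotent rank equals $\dim T$ ``since $G$ is reductive''; the inequality $\dim \Nil(b,\mathfrak{g}) \geq \dim T$ for \emph{every} $b$ is what needs a word, and it follows, e.g., by writing the relevant characteristic-polynomial coefficients as universal integral polynomials in a Chevalley basis and noting they vanish identically in characteristic zero.

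The concrete problem is in your converse direction, if it is meant to stand on its own rather than defer to the citation. You justify the smoothness of $Z_G(a)$ and the torus structure of its identity component by asserting that root-smoothness is ``equivalently'' pretty good characteristic; this is false, as pretty good is strictly stronger. For example, $\PGL_2$ in characteristic $2$ is root-smooth (here $X^*(T)/\mathbb{Z}\alpha = 0$), but $2$ is not pretty good for $\PGL_2$ since $X_*(T)/\mathbb{Z}\alpha^\vee \cong \mathbb{Z}/2\mathbb{Z}$. So the appeal to \cite[Theorem 3.3]{smoothnesscentralizers} is not available under the stated hypothesis. In addition, writing a Cartan subalgebra as $\Nil(a,\mathfrak{g})$ after localizing is not supplied by Proposition \ref{invariant lift cartan subalg}, which is proved over complete Noetherian local rings (Hensel's lemma and $m_A$-adic convergence are used there), not over an arbitrary local base. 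The route that works under root-smoothness alone is the one you mention at the end --- comparing the smooth affine $S$-schemes of maximal tori and of Cartan subalgebras and checking the map is an isomorphism fibrewise --- and that is precisely the argument of \cite[Proposition 4.1.3]{bouthier:hal-02390969}, so the cleanest course is simply to cite it, as the paper does.
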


\begin{corollary}\label{x-centralizer-torus}
Suppose $x \in L_0$ is any choice of lift of an ad-regular element of $\Lzerobar$.
Then there exists a Lie subalgebra $\mathfrak{t}$ of $L_0$ containing $x$ and a maximal torus $T$ of $\hat{G}_A$ with Lie algebra $\mathfrak{t}$.
\end{corollary}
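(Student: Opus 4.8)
Here is the approach I would take.

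The plan is to realize $T$ as the lift of a maximal torus of $M_{\bar{g}}$ whose Lie algebra is the Cartan subalgebra of $\Lzerobar$ cut out by $\bar x = x \bmod m_A$, performing the lifting \emph{inside} $L_0$ so that $x$ stays in it, and verifying the torus condition only at the end. First, since $\bar x$ is ad-regular in $\Lzerobar = \Lie M_{\bar{g}}$, the subalgebra $\mathfrak{d}_0 := \Nil(\bar x,\Lzerobar)$ is a Cartan subalgebra of $\Lzerobar$; as $p$ is pretty good (so $M_{\bar{g}}^\circ$ is root-smooth), Proposition \ref{cartan subalg max tori} gives $\mathfrak{d}_0 = \Lie\bar T$ for a maximal torus $\bar T$ of $M_{\bar{g}}^\circ$. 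Because $\bar g$ is semisimple it lies in a maximal torus of $\hat{G}_k$, so $M_{\bar{g}}^\circ$ and $\hat{G}_k$ have the same rank and $\bar T$ is in fact a maximal torus of $\hat{G}_k$; applying Proposition \ref{cartan subalg max tori} again, now to $\hat{G}_k$, shows $\mathfrak{d}_0$ is a Cartan subalgebra of $\gk$ as well. One should note that $\bar x$ itself need not be ad-regular in $\gk$ (as is already visible for $\hat{G} = \SL_3$), so Proposition \ref{invariant lift cartan subalg} is not directly applicable to $x$; this is the essential difficulty.

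Next I would apply Lemma \ref{uniquedirectsumlift} to the $A$-linear endomorphism $\ad(x)$ of the free module $L_0$, whose reduction $\ad(\bar x)$ on $\Lzerobar$ has generalized $0$-eigenspace $\mathfrak{d}_0$ (so the lemma applies with $\bar p(X) = X^{\dim\mathfrak{d}_0}$). This yields a unique $\ad(x)$-invariant direct summand lift $\mathfrak{t}\subseteq L_0$ of $\mathfrak{d}_0$; by Lemma \ref{topnilpotent} one has $\mathfrak{t} = \{v\in L_0 : \ad(x)^m v \to 0\}$, so $x\in\mathfrak{t}$, and since $\ad(x)$ is a derivation of $\gA$ the same Leibniz-rule computation used for $L_0$ and in Proposition \ref{invariant lift cartan subalg} shows $\mathfrak{t}$ is a Lie subalgebra. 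As $L_0$ is a direct summand of $\gA$, so is $\mathfrak{t}$.

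Finally I must check that $\mathfrak{t}$ is a Cartan subalgebra of $\gA$; then Proposition \ref{cartan subalg max tori} applied to $\hat{G}_A$ over $\Spec A$ produces a maximal torus $T$ of $\hat{G}_A$ with $\Lie T = \mathfrak{t}$, completing the proof. To this end, enlarging $k$ if necessary (depending only on $\hat{G}$, e.g.\ so that $\#k$ exceeds the number of roots), choose $\bar x' \in \mathfrak{d}_0 = \Lie\bar T$ with $d\alpha(\bar x')\neq 0$ for every root $\alpha$ of $\hat{G}_{\overline{k}}$ relative to $\bar T$; root-smoothness makes each $d\alpha$ a nonzero linear form on $\Lie\bar T$, so only finitely many proper subspaces of $\mathfrak{d}_0$ must be avoided. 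Then $\bar x'$ is regular semisimple, hence ad-regular, in $\gk$, with $\Nil(\bar x',\gk) = \Lie\bar T = \mathfrak{d}_0$ (the adjoint action of the toral element $\bar x'$ is semisimple with kernel $\Lie\bar T$). Lift $\bar x'$ to some $x'\in\mathfrak{t}$; since $\mathfrak{t}$ is a Lie subalgebra containing $x'$ it is $\ad(x')$-invariant, so by the uniqueness in Lemma \ref{uniquedirectsumlift} applied to $\ad(x')\colon\gA\to\gA$, $\mathfrak{t}$ coincides with the $\ad(x')$-invariant direct summand lift of $\Nil(\bar x',\gk)$, which by Proposition \ref{invariant lift cartan subalg} — now legitimately applicable, as $\bar x'$ is ad-regular in $\gk$ — is a Cartan subalgebra of $\gA$. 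The crux is precisely this passage: ad-regularity in $\Lzerobar$ does not propagate to $\gk$, so one cannot invoke Proposition \ref{invariant lift cartan subalg} for $x$ directly; instead one manufactures a genuinely $\hat{G}$-regular element $\bar x'$ inside the already-built $\mathfrak{t}$ and uses that $\mathfrak{t}$ is a Lie subalgebra to recognise it as the lift controlled by that proposition, which is what makes the Cartan condition hold over \emph{all} of $\Spec A$ and not merely at the closed point.
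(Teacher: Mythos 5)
Your argument is correct, and it uses the same three ingredients as the paper (Lemma \ref{uniquedirectsumlift}/Lemma \ref{topnilpotent}, Proposition \ref{invariant lift cartan subalg}, Proposition \ref{cartan subalg max tori}), but it is organized differently because you read the hypothesis differently. In the paper, ``ad-regular element of $\Lzerobar$'' is used to mean an element of $\Lzerobar$ that is ad-regular in $\gk$ — this is exactly how the corollary is invoked in the very next sentence of the paper (``$\mathfrak{\bar t}$ contains an ad-regular element $\bar x$ of $\gk$'') — so Proposition \ref{invariant lift cartan subalg} applies to $x$ directly: the paper defines $\mathfrak t$ as the $\ad(x)$-invariant lift of $\Nil(\bar x,\gk)$ in $\gA$, obtains the Cartan subalgebra and the maximal torus at once, and only then proves $\mathfrak t\subset L_0$ by the same uniqueness comparison you use, run in the opposite direction (the needed containment $\Nil(\bar x,\gk)\subset\Lzerobar$ is automatic: $\Nil(\bar x,\Lzerobar)\subset\Nil(\bar x,\gk)$, the left side has dimension at least the nilpotent rank $r$ of $\Lzerobar$, and the right side has dimension exactly $r$). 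You instead take the hypothesis intrinsically (ad-regular for the Lie algebra $\Lzerobar$), which is genuinely weaker — your $\SL_3$ remark is correct — build $\mathfrak t$ inside $L_0$ first, so that $x\in\mathfrak t$ and $\mathfrak t\subset L_0$ come for free, and then upgrade $\mathfrak t$ to a Cartan subalgebra of $\gA$ by manufacturing an auxiliary $\hat G$-regular element $\bar x'\in\mathfrak d_0$ and identifying $\mathfrak t$ with the lift controlled by Proposition \ref{invariant lift cartan subalg} via the uniqueness in Lemma \ref{uniquedirectsumlift}. The intermediate claims all check out ($\mathfrak d_0=\Lie\bar T$ for a maximal torus $\bar T$ of $M_{\bar g}^{\circ}$, which has full rank since $\bar g$ is semisimple, and $d\alpha(\bar x')\neq 0$ for all roots forces $\Nil(\bar x',\gk)=\Lie\bar T=\mathfrak d_0$), so your argument in fact proves a formally more general statement; in the situation where the paper applies the corollary one may simply take $\bar x'=\bar x$, and your proof collapses to the paper's.

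The one step to tighten is the mid-proof ``enlarging $k$ if necessary'': the ring $A$ is fixed with residue field $k$, so producing $\bar x'$ over a finite extension $k'$ really means base changing to $A'=A\hat\otimes_{\CalO}\CalO'$, and your identification then shows that $\mathfrak t\otimes_A A'$ is a Cartan subalgebra of $\hat{\mathfrak g}_{A'}$ and yields a maximal torus of $\hat G_{A'}$, not yet of $\hat G_A$. This is fixable in two ways: either invoke the standing convention of the section and assume from the outset that $\# k$ exceeds the number of roots (an enlargement depending only on $\hat G$, hence permitted), or descend the torus from $A'$ to $A$ using that, by the bijection in Proposition \ref{cartan subalg max tori}, it is the unique maximal torus with Lie algebra $\mathfrak t_{A'}$, so its Galois conjugates coincide and étale descent applies; but as written the step quietly conflates $k$ with its extension.
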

\begin{proof}
Let $\mathfrak{t}$ denote the unique $\ad(x)$-invariant lift of $\Nil(\bar{x},\gk)$ to $\gA$.
Then $\mathfrak{t}$ is a Cartan subalgebra of $\gA$ by Proposition \ref{invariant lift cartan subalg} and we can find a maximal torus $T$ with $\Lie T = \mathfrak{t}$ by Proposition \ref{cartan subalg max tori}.
Hence we just need to show that $\mathfrak{t} \subset L_0$.
Since $L_0$ is a Lie algebra, the adjoint action of $x$ on $\gA$ restricts to an $A$-linear endomorphism of $L_0$.
We can therefore lift $\mathfrak{\bar{t}}$ to an $\ad(x)$-invariant direct summand of $L_0$.
Since $L_0$ is itself a direct summand of $\gA$, by uniqueness in Lemma \ref{uniquedirectsumlift} this lift must equal $\mathfrak{t}$ and we are done.
\end{proof}

By extending $k$ if necessary, we will assume from now on that $M_{\bar{g}}$ contains a split maximal torus $\bar{T}$ whose Lie algebra $\mathfrak{\bar{t}}$ contains an ad-regular element $\bar{x}$ of $\gk$.
Let $x$ be an arbitrary lift of $\bar{x}$ to $L_0$ and let $T$ be the maximal torus arising from Corollary \ref{x-centralizer-torus}.
Necessarily $T$ is split, since we assumed $\bar{T}$ is split.
Now recall that $\bar{H} = Z(M_{\bar{g}}^\circ)$ is the scheme-theoretic centre of $M_{\bar{g}}^\circ$, a subgroup of $\bar{T}$ of multiplicative type.
We claim that there exists a unique subgroup $H \subset T$ lifting $\bar{H}$. 
Indeed, by \cite[VIII Corollaire 1.6]{sga3} we deduce an antiequivalence of categories between finite $\mathbb{Z}$-modules and split multiplicative groups of finite type on any connected scheme, and that this is compatible with base change to a connected scheme. 
So $\bar{H}$ corresponds to a quotient of the finite free module defining $\bar{T}$, and thus under the antiequivalence correspond to subgroup of $T$ lifting $H$.

We are now ready to define $M_g$ and show the desired properties.

\begin{theorem}\label{Mg}
Let $p$ be a pretty good prime for $G$ (in the sense of Definition \ref{pretty_good}).
Let $\bar{g} \in \hat{G}(k)$ be semisimple with centralizer $M_{\bar{g}}$, whose Lie algebra is denoted by $\Lzerobar$.
 Let $A \in \CNL_{\CalO}$ and let $g \in \hat{G}(A)$ be a lift of $\bar{g}$.
 \begin{enumerate}
     \item There exists a unique connected reductive closed subgroup-scheme $M_g^\circ$ of $\hat{G}_A$ with the following properties:

\begin{enumerate}[label=(\roman*)]
    \item\label{Mgbasechange} $(M_g^\circ)_k = M_{\bar{g}}^\circ$ as subgroups of $\hat{G}_k$
    \item\label{liealg} $\Lie M_g^\circ = L_0$
    \item\label{g_in_Mg} $g \in M_g^\circ(A)$.
\end{enumerate}
Here we recall that $\gA = L_0 \oplus L_1$ is the unique $\Ad(g)$-invariant lift of the $\Ad(\bar{g})$-invariant decomposition $\gk = \Lzerobar \oplus \overline{L_1}$.
\item There exists a unique multiplicative type subgroup scheme $S$ of $Z(M_g^\circ)$ such that $S_k = Z(M_{\bar{g}})$. 
Let $M_g = Z_{\hat{G}_A}(S)$.
Then $M_g$ has identity component $M_g^\circ$ and $(M_g)_k = M_{\bar{g}}$ as subgroups of $\hat{G}_k$.
 \end{enumerate}
\end{theorem}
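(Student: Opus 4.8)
\emph{The plan} is to construct $M_g^\circ$ as the connected centralizer of a canonical finite-order ``semisimple part'' of $g$, obtained from a Jordan decomposition over $A$. Since $k$ is finite, each $\hat{G}(A/m_A^n)$ is finite, so $\hat{G}(A)=\varprojlim_n\hat{G}(A/m_A^n)$ is profinite and $\ker(\hat{G}(A)\to\hat{G}(k))$ is pro-$p$. Let $m=\mathrm{ord}(\bar g)$, which is prime to $p$ since $\bar g$ is semisimple in $\hat{G}(k)$. The closure $\Gamma$ of $\langle g\rangle$ in $\hat{G}(A)$ is procyclic and is an extension of $\langle\bar g\rangle\cong\mathbb{Z}/m$ by a pro-$p$ group; such an extension splits canonically, exhibiting $\Gamma$ as the product of its pro-$p$ part with a unique cyclic subgroup of order $m$ mapping isomorphically onto $\langle\bar g\rangle$. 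Writing $g=g_sg_u$ accordingly gives $g_s^m=1$, $g_s\equiv\bar g\pmod{m_A}$, $g_u\in\ker(\hat{G}(A)\to\hat{G}(k))$, and $g_sg_u=g_ug_s$. As $p\nmid m$, the closed subgroup scheme $\langle g_s\rangle\subseteq\hat{G}_A$ generated by $g_s$ is finite of multiplicative type. I then set $M_g^\circ:=Z_{\hat{G}_A}(g_s)^\circ=Z_{\hat{G}_A}(\langle g_s\rangle)^\circ$.

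To check property (1): $Z_{\hat{G}_A}(\langle g_s\rangle)$ is the fixed-point scheme of the multiplicative-type group $\langle g_s\rangle$ acting by conjugation on the smooth affine $\hat{G}_A$, hence a smooth closed subgroup scheme \cite{sga3}; so $M_g^\circ$ is open and smooth, and since $p$ is pretty good for $\hat{G}$ its identity component is reductive (centralizers of multiplicative-type subgroups of root-smooth reductive groups, cf. \cite{bouthier:hal-02390969}, \cite{smoothnesscentralizers}). Base change to $k$ commutes with forming the centralizer and the identity component, so $(M_g^\circ)_k=Z_{\hat{G}_k}(\bar g)^\circ=M_{\bar g}^\circ$, which is \ref{Mgbasechange}. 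Since $g=g_sg_u$ centralizes $g_s$ we have $g\in Z_{\hat{G}_A}(g_s)(A)$; as $\bar g\in(M_g^\circ)_k$ and $M_g^\circ$ is open, $g$ factors through $M_g^\circ$, which is \ref{g_in_Mg}. Finally $\Lie M_g^\circ=\Lie Z_{\hat{G}_A}(g_s)=\gA^{\langle g_s\rangle}$, the fixed subspace for the adjoint action; it is an $A$-module direct summand lifting $\gk^{\bar g}=\Lzerobar$, and it is $\Ad(g)$-invariant because $g$ centralizes $g_s$ and so $\Ad(g)$ commutes with the $\langle g_s\rangle$-action. By the uniqueness in Lemma \ref{uniquedirectsumlift} it coincides with $L_0$, which is \ref{liealg}.

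For uniqueness in (1), let $M'$ be any connected reductive closed subgroup scheme satisfying \ref{Mgbasechange}--\ref{g_in_Mg}. By Corollary \ref{x-centralizer-torus}, $L_0=\Lie M'$ contains a Cartan subalgebra $\mathfrak{t}$ of $\gA$, which is then a Cartan subalgebra of $\Lie M'$ (checked on fibres); as $M'$ has full rank it is root-smooth, so $\mathfrak{t}=\Lie T'$ for a maximal torus $T'$ of $M'$ by Proposition \ref{cartan subalg max tori}. Both $T'$ and the torus $T$ of Corollary \ref{x-centralizer-torus} are maximal tori of $\hat{G}_A$ with Lie algebra $\mathfrak{t}$, hence $T'=T$ (a maximal torus is recovered from its Lie algebra, by self-centrality of Cartan subalgebras in pretty good characteristic), so $T\subseteq M'$, and the same reasoning gives $T\subseteq M_g^\circ$. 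Since $M'$ and $M_g^\circ$ are connected reductive closed subgroups of $\hat{G}_A$ containing the split maximal torus $T$ with identical Lie algebra $L_0$, each is the subgroup generated by $T$ and the root subgroups of $\hat{G}_A$ for the roots of $T$ occurring in $L_0$; so $M'=M_g^\circ$.

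For part (2): $Z(M_g^\circ)$ is a multiplicative-type group scheme over $A$ reducing to $Z(M_{\bar g}^\circ)$. As $\bar g$ is semisimple it lies in $M_{\bar g}^\circ$ and centralizes $M_{\bar g}$, so $\bar g\in Z(M_{\bar g}^\circ)$, and more generally $Z(M_{\bar g})\subseteq M_{\bar g}^\circ$ (its elements are semisimple, hence lie in maximal tori, which lie in the identity component). Thus $Z(M_{\bar g})$ is a multiplicative-type subgroup of $Z(M_g^\circ)_k$, and by the equivalence of categories for multiplicative-type group schemes \cite{sga3} (applied via character lattices to subgroups of the fixed $Z(M_g^\circ)$) there is a unique multiplicative-type $S\subseteq Z(M_g^\circ)$ with $S_k=Z(M_{\bar g})$. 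Set $M_g:=Z_{\hat{G}_A}(S)$, which is smooth. Then $M_g^\circ\subseteq Z_{\hat{G}_A}(Z(M_g^\circ))\subseteq M_g$, so $M_g^\circ\subseteq(M_g)^\circ$; and $(M_g)_k=Z_{\hat{G}_k}(Z(M_{\bar g}))=M_{\bar g}$, the nontrivial inclusion being $Z_{\hat{G}_k}(Z(M_{\bar g}))\subseteq Z_{\hat{G}_k}(\bar g)=M_{\bar g}$ since $\bar g\in Z(M_{\bar g})$. Hence $M_g^\circ\hookrightarrow(M_g)^\circ$ is a closed immersion of smooth $A$-group schemes, with $(M_g)^\circ$ flat and an isomorphism on the special fibre $M_{\bar g}^\circ$, so it is an isomorphism; thus $M_g$ has identity component $M_g^\circ$ and $(M_g)_k=M_{\bar g}$. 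The step I expect to be the crux is the first one: making rigorous sense of the Jordan decomposition $g=g_sg_u$ over the complete local ring $A$ — extracting a canonical finite-order $g_s\in\hat{G}(A)$ lifting $\bar g$ and commuting with $g$, and checking that $\langle g_s\rangle$ is genuinely of multiplicative type — together with pinning down the correct references for smoothness and reductivity of centralizers of multiplicative-type subgroups in pretty good characteristic; the uniqueness argument is then comparatively routine but requires care in comparing maximal tori and root data over the non-field base $A$. Afterwards one checks that the subgroup $H\subseteq T$ built before the statement equals $Z(M_g^\circ)$ and that $S\subseteq H$, tying the construction to the earlier set-up.
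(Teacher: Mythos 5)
Your construction is correct in outline, but it takes a genuinely different route from the paper's. The paper never forms a semisimple part of $g$ over $A$: it lifts a maximal torus first (an ad-regular element $\bar{x} \in \Lzerobar$, the invariant lift of $\Nil(\bar{x},\gk)$ via Lemma \ref{uniquedirectsumlift} and Proposition \ref{invariant lift cartan subalg}, then Proposition \ref{cartan subalg max tori}), lifts $\bar{H} = Z(M_{\bar{g}}^\circ)$ to a multiplicative type subgroup $H \subset T$ by the equivalence of categories for multiplicative type groups, sets $M_g^\circ = Z_{\hat{G}_A}(H)^\circ$, and deduces \ref{g_in_Mg} from rigidity ($\restr{\Ad(g)}{H}$ is trivial mod $m_A$, hence trivial), while uniqueness is quoted from \cite[XIV Proposition 3.12]{sga3}. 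You instead extract a canonical prime-to-$p$ (Teichm\"uller-type) semisimple part $g_s$ of $g$ from the profinite group $\hat{G}(A) = \varprojlim_n \hat{G}(A/m_A^n)$, whose congruence kernel is pro-$p$, and set $M_g^\circ = Z_{\hat{G}_A}(g_s)^\circ$; then \ref{Mgbasechange} and \ref{g_in_Mg} are immediate and \ref{liealg} follows from linear reductivity of $\langle g_s\rangle$ together with the uniqueness in Lemma \ref{uniquedirectsumlift}. Your route avoids the ad-regular element, the residue-field extension (needed only in your uniqueness step) and the rigidity argument, and it makes the comparison with Section \ref{alternate Mg} transparent; it does, however, use finiteness of $k$, whereas the paper's route also produces the torus $T$ and the subgroup $H$ (shown in the proof to equal $Z(M_g^\circ)$) that are reused later. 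Three compressed steps deserve a line each: that $\langle g_s\rangle$ is a closed multiplicative type subgroup needs the sections $g_s^i$, $0\le i<m$, to be pairwise disjoint, which holds because the locus where $g_s^{i-j}$ meets the identity section is a closed subset of $\Spec A$ missing the closed point, hence empty; your uniqueness argument "generated by $T$ and the root subgroups occurring in $L_0$" over the base $A$ is exactly what \cite[XIV Proposition 3.12]{sga3} packages, and citing it is cleaner than re-deriving it; and in part (2) the step "closed immersion, isomorphism on the special fibre, hence isomorphism" requires spreading the equality to all fibres (constancy of fibre dimension for the smooth connected groups, then the fibrewise isomorphism criterion) before concluding.
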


\begin{proof}
Set $M_g^\circ = Z_{\hat{G}_A}(H)^\circ$ to be the identity component of the scheme-theoretic centralizer of $H$.
The group scheme $M_g^\circ$ is reductive and smooth over $A$ by the discussion preceding \cite[Remark 3.1.5]{conrad}.
We show \ref{Mgbasechange} firstly.
By definition of scheme-theoretic centralizer, we have that $Z_{\hat{G}_A}(H)_k = Z_{\hat{G}_k}(H_k)$.
We have an inclusion $M_{\bar{g}}^\circ \subset Z_{\hat{G}_k}(H_k)$ which is immediate from the definitions.
We also have an inclusion $Z_{\hat{G}_k}(H_k) \subset M_{\bar{g}}$ which follows because $\bar{g} \in H_k(k)$.
From the discussion in \cite[3.1]{sga3} we see that $(M_g^\circ)_k = (Z_{\hat{G}_A}(H)_k)^\circ$ and we deduce that \ref{Mgbasechange} holds from the above inclusions. 

Next we note that $\Lie M_g^\circ$ is a lift of $\Lzerobar$. Indeed, we have that taking Lie algebras is compatible with base-change, so $(\Lie M_g^\circ)_k = \Lie (M_g^\circ)_k = \Lie M_{\bar{g}}^\circ = \Lzerobar$ by \ref{Mgbasechange}. 
We have that $\Lie M_g^\circ = \Lie Z_{\hat{G}_A}(H) = (\gA)^H$ by \cite[Lemma 2.2.4]{conrad} and that $\Lie M_g$ is a direct summand of $\gA$. 
Indeed, the action of the split multiplicative group $H$ on the finite free $A$-module $\gA$ induces a grading indexed by the character group of $H$ (see \cite[A.8.8]
{psrgroups}). 
Then $(\gA)^H$ is the graded piece corresponding to the trivial character, hence is a direct summand.

We have direct summand submodules $L_0$ and $\Lie M_g$ of $\gA$, both containing $\mathfrak{t}$, with reductions modulo $m_A$ both equal to $\Lzerobar$. Since $\bar{T}$ is a split maximal torus in $\hat{G}_k$, $\gk$ admits a root space decomposition $\gk = \mathfrak{\bar{t}} \oplus \bigoplus_{\alpha} \hat{\mathfrak{g}}_{\alpha}$. 
These $\hat{\mathfrak{g}}_{\alpha}$ come in pairs, and are distinct subspaces for different pairs (even when $\ch k = 2$).
We also have that $\Lzerobar = \Lie M_{\bar{g}}$ admits such a decomposition, hence necessarily corresponding to a subset of pairs of roots. 
Applying Lemma \ref{liftfamily} to the adjoint action of a basis of $\mathfrak{t}$ on $\gA$, we see that $L_0$ must equal $\Lie M_g$.

We now show $\ref{g_in_Mg}$.
Since $Z(M_g^\circ) = \ker(\Ad: M_g^\circ \to \GL(L_0))$ by \cite[Proposition 3.3.8]{conrad} and $\Ad(g): L_0 \to L_0$ we see that $g$ normalizes $Z(M_g^\circ)$.
Note that $Z(M_g^\circ)$ lifts $Z(M_{\bar{g}}^\circ) = \bar{H}$ by definition of scheme theoretic centralizer.
Thus since $H \subset Z(M_g^\circ)$ are both multiplicative type subgroup schemes of a torus lifting $\bar{H}$ they must be equal.
Since $\bar{g} \in \bar{H}(k)$, the morphism of split multiplicative type groups $\Ad(\bar{g}): \bar{H} \to \bar{H}$ is trivial.
Thus $\restr{\Ad(g)}{H}$ is an automorphism which is trivial $\mod m_A$. 
By \cite[1.6, VIII]{sga3}, this implies that the conjugation action of $g$ on $H$ is trivial.
By definition, we have $Z_{\hat{G}_A}(H)(A) = \{z \in \hat{G}_A(A): \restr{\Ad(z)}{H} = \id_H\}$, so we have shown $g \in Z_{\hat{G}_A}(H)(A)$.
We deduce that $g \in M_g^\circ(A)$ since $\bar{g} \in M_g^\circ(k)$ and $\Spec A$ is connected.

Uniqueness of such a subgroup $M_g^\circ$ satisfying \ref{liealg} follows from \cite[XIV Proposition 3.12]{sga3} since $\mathfrak{t} \subset L_0$ is Cartan subalgebra of $\mathfrak{g}_A$.

Finally, we show the second part of the theorem.
We observe that $Z_{\hat{G}_k}(S_k) = M_{\bar{g}}$ since $\bar{g} \in S_k(k)$.
We therefore have $Z_{\hat{G}_k}(S_k)^\circ = M_{\bar{g}}^\circ$.
The properties of $M_g$ are now clear by definition of scheme theoretic centralizer and because taking identity components commutes with base change in this setting.
\end{proof}

\subsection{Functoriality properties of $M_{g}$}

Having constructed $M_g$, we now show some properties which will be useful in defining our deformation condition.

\begin{lemma} \label{center of Mg}
The center of $M_g$, $Z(M_g)$, lifts $Z(M_{\bar{g}})$.
\end{lemma}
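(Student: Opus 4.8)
The plan is to sandwich $Z(M_g)$ between the multiplicative type subgroup $S$ produced in Theorem~\ref{Mg}(2) and $Z(M_{\bar g})$, exploiting that $S$ is flat over $A$ with $S_k = Z(M_{\bar g})$, rather than trying to compute the center directly (which, being a center, need not commute with base change). First I would note that by construction $S \subseteq Z(M_g^\circ) \subseteq M_g^\circ \subseteq M_g$, and since $M_g = Z_{\hat{G}_A}(S)$ every point of $M_g$ centralizes $S$; hence $S$ is a central subgroup scheme of $M_g$, i.e. $S \subseteq Z(M_g)$. On the other hand, the base change of a center is always contained in the center of the base change, so $Z(M_g)_k$ centralizes $(M_g)_k = M_{\bar g}$ and therefore $Z(M_g)_k \subseteq Z(M_{\bar g})$. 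Combining these with $S_k = Z(M_{\bar g})$ gives
\[
Z(M_{\bar g}) = S_k \subseteq Z(M_g)_k \subseteq Z(M_{\bar g}),
\]
whence $Z(M_g)_k = Z(M_{\bar g})$ and moreover $S_k = Z(M_g)_k$.

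To upgrade this to the assertion that $Z(M_g)$ genuinely \emph{lifts} $Z(M_{\bar g})$ (flat over $A$ with that special fibre), I would identify $Z(M_g)$ with $S$. Since $M_g^\circ$ is reductive over $A$ and its special fibre $M_{\bar g}^\circ$ contains the split maximal torus $\bar T$ of $\hat{G}_k$, the group $M_g^\circ$ contains a maximal torus of $\hat{G}_A$ whose Lie algebra, being a Cartan subalgebra of $L_0$ lifting $\mathfrak{\bar t}$, must be the $\mathfrak t$ of Corollary~\ref{x-centralizer-torus}; so $M_g^\circ$ contains the maximal torus $T$ of $\hat{G}_A$, and therefore $Z_{\hat{G}_A}(M_g^\circ) \subseteq Z_{\hat{G}_A}(T) = T \subseteq M_g^\circ$, i.e. $Z_{\hat{G}_A}(M_g^\circ) = Z(M_g^\circ)$, which is of multiplicative type. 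Now a point of $M_g$ centralizing $M_g$ in particular centralizes $M_g^\circ$, hence lies in $Z(M_g^\circ)$; conversely a point of $Z(M_g^\circ)$ already commutes with $M_g^\circ$, so it centralizes $M_g$ exactly when it is fixed by the conjugation action of the finite étale component group $\pi_0 = M_g/M_g^\circ$ on the normal subgroup $Z(M_g^\circ)$. Thus $Z(M_g) = Z(M_g^\circ)^{\pi_0}$, the scheme-theoretic fixed points of a finite group acting on a multiplicative type group, which by Cartier duality (\cite{sga3}) is again of multiplicative type, in particular flat. Finally $S \hookrightarrow Z(M_g)$ is a closed immersion of multiplicative type $A$-group schemes which by the previous paragraph is an isomorphism on the special fibre; since multiplicative type group schemes over the connected base $\Spec A$ are rigid, determined by their (base-change invariant) character groups, this forces $S = Z(M_g)$, which completes the argument.

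The main obstacle is exactly that formation of the center does not commute with base change in general — this is the whole reason the group $M_g$ and the pretty-good-characteristic hypotheses were introduced — so the inclusion $Z(M_g)_k \subseteq Z(M_{\bar g})$ is only one-sided on its own, and the reverse inclusion has to be supplied by the flat, explicitly understood subgroup $S$ rather than by any direct computation of $Z(M_g)$. The one genuinely delicate scheme-theoretic input is the identification $Z(M_g) = Z(M_g^\circ)^{\pi_0}$, which rests on the maximal torus $T$ of $\hat{G}_A$ lying inside $M_g^\circ$ together with $Z_{\hat{G}_A}(T) = T$ for the reductive group scheme $\hat{G}_A$; once these are in hand everything else is formal manipulation with multiplicative type groups in the spirit of SGA3.
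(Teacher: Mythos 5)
Your first paragraph already proves the lemma, but by a more roundabout route than the paper. The paper's proof is a one-liner: $Z(M_g)=Z_{M_g}(M_g)$ is a scheme-theoretic centralizer, and such centralizers are defined functorially, so their formation commutes with arbitrary base change; hence $Z(M_g)_k = Z\bigl((M_g)_k\bigr) = Z(M_{\bar{g}})$, giving both inclusions at once. Your sandwich $Z(M_{\bar{g}}) = S_k \subseteq Z(M_g)_k \subseteq Z(M_{\bar{g}})$ is correct, but the crutch $S$ is unnecessary, and the worry motivating it is misplaced: what can fail in small characteristic is not base-change compatibility of the scheme-theoretic center but its \emph{smoothness} (equivalently the comparison of $\Lie Z(G)$ with $\mathfrak{z}(\mathfrak{g})$), which is what the pretty-good-characteristic hypothesis is for. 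Note also that ``lifts'' in this paper only asks that the special fibre of $Z(M_g)$ be $Z(M_{\bar{g}})$, so your second paragraph, identifying $Z(M_g)$ with $S$ and obtaining flatness, proves more than the lemma asserts; what it buys is the extra information that $Z(M_g)$ is of multiplicative type, which the paper gets elsewhere by working with $Z(M_g^\circ)$ and $S$ directly.

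The extra material is essentially sound ($Z(M_g)=Z(M_g^\circ)^{\pi_0}$, fixed points of a finite group acting on a multiplicative type group are again of multiplicative type, and the equivalence of categories from SGA3 cited in the paper upgrades the special-fibre equality $S_k = Z(M_g)_k$ to $S = Z(M_g)$), but one step is under-justified as written: you cannot conclude that a maximal torus of $M_g^\circ$ lifting $\bar{T}$ has Lie algebra exactly the $\mathfrak{t}$ of Corollary \ref{x-centralizer-torus}, since two maximal tori of $\hat{G}_A$ lifting $\bar{T}$ need not coincide. Fortunately the containment you actually need, $T \subseteq M_g^\circ$, is immediate from the construction in Theorem \ref{Mg}: $M_g^\circ = Z_{\hat{G}_A}(H)^\circ$ with $H \subseteq T$, and $T$, being commutative with connected fibres, centralizes $H$ and lies in the identity component $Z_{\hat{G}_A}(H)^\circ$.
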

\begin{proof}
This is immediate by functoriality of scheme-theoretic center.
\end{proof}

We now show that the construction of $M_g$ is compatible with changing $g$ by an element $h \in \ker(\hat{G}(A) \to ~\hat{G}(k))$, in the following senses.

\begin{lemma}\label{change_of_g}
Let $h \in \ker(\hat{G}(A) \to \hat{G}(k))$. Then 
\begin{enumerate} [label=(\roman*)]
    \item $M_{h g h^{-1}} = \Ad(h)(M_g)$; \label{conjugate g}
    \item if additionally $h \in M_g^\circ(A)$ then $M_{gh} = M_g$. \label{multiply g}
\end{enumerate}
\end{lemma}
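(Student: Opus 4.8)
The plan is to deduce both statements from the uniqueness assertions in Theorem \ref{Mg}. In each case $hgh^{-1}$ (resp. $gh$) is again a lift of $\bar g$, since $h$ reduces to the identity in $\hat G(k)$, so the canonical subgroups $M_{hgh^{-1}}$ and $M_{gh}$ are defined; I will identify them by checking that the obvious candidate satisfies the defining properties of Theorem \ref{Mg}(1) for the new lift, together with the compatibility of the distinguished multiplicative-type subgroup.

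For \ref{conjugate g}: since $\Ad(h)$ is an inner automorphism of $\hat G_A$, the subgroup $\Ad(h)(M_g^\circ)$ is again connected reductive and closed in $\hat G_A$. First I would note that its base change to $k$ is $\Ad(\bar h)(M_{\bar g}^\circ) = M_{\bar g}^\circ$ since $\bar h = 1$; that its Lie algebra is $\Ad(h)(L_0)$; and that $hgh^{-1} = \Ad(h)(g)$ lies in $\Ad(h)(M_g^\circ)(A)$ because $g \in M_g^\circ(A)$. It then remains to observe that $\Ad(h)(L_0)$ is exactly the unique $\Ad(hgh^{-1})$-invariant direct-summand lift of $\Lzerobar$ attached to $hgh^{-1}$: it is a lift of $\Lzerobar$ as $\bar h = 1$, and it is $\Ad(hgh^{-1})$-invariant because $\Ad(hgh^{-1}) = \Ad(h)\,\Ad(g)\,\Ad(h)^{-1}$ and $\Ad(g)(L_0) = L_0$, so uniqueness in Lemma \ref{uniquedirectsumlift} pins it down. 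Hence by the uniqueness in Theorem \ref{Mg}(1), $\Ad(h)(M_g^\circ) = M_{hgh^{-1}}^\circ$. For the full groups, $\Ad(h)$ carries the distinguished multiplicative-type subgroup $S \subset Z(M_g^\circ)$ to a multiplicative-type subgroup of $Z(\Ad(h)(M_g^\circ)) = Z(M_{hgh^{-1}}^\circ)$ whose special fibre is $\Ad(\bar h)(Z(M_{\bar g})) = Z(M_{\bar g})$; by the uniqueness of such a subgroup (Theorem \ref{Mg}(2)) it is the one attached to $hgh^{-1}$, and since $\Ad(h)$ commutes with forming scheme-theoretic centralizers we get $M_{hgh^{-1}} = Z_{\hat G_A}(\Ad(h)(S)) = \Ad(h)\bigl(Z_{\hat G_A}(S)\bigr) = \Ad(h)(M_g)$.

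For \ref{multiply g}: here I would instead show that $M_g^\circ$ itself satisfies the three properties of Theorem \ref{Mg}(1) for the lift $gh$. Property \ref{Mgbasechange} is unchanged. For \ref{liealg} I need $\Ad(gh)(L_0) = L_0$: since $h \in M_g(A)$ normalizes the identity component $M_g^\circ$, it preserves $\Lie M_g^\circ = L_0$, and $\Ad(g)(L_0) = L_0$ already, so $\Ad(gh)$ preserves $L_0$; as $L_0$ manifestly reduces to $\Lzerobar$, Lemma \ref{uniquedirectsumlift} shows it is the unique $\Ad(gh)$-invariant lift and matches the decomposition attached to $gh$. For \ref{g_in_Mg} I need $gh \in M_g^\circ(A)$: we have $g \in M_g^\circ(A)$, and $h$ lies in $M_g^\circ(A)$ because it reduces to the identity of $M_g^\circ(k)$ and, $\Spec A$ being connected while $M_g^\circ$ is open and closed in $M_g$ (the component group $\pi_0(M_g)$ being finite étale over $A$), a section hitting the identity component over the closed point already factors through $M_g^\circ$. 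Thus by uniqueness $M_g^\circ = M_{gh}^\circ$, hence the distinguished subgroup $S$, which depends only on $M_g^\circ$, is the same for $g$ and for $gh$, and taking $Z_{\hat G_A}(S)$ gives $M_{gh} = M_g$.

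The whole argument is essentially bookkeeping; the real content lives in Theorem \ref{Mg} and Lemma \ref{uniquedirectsumlift}. The only points needing slight care are that the formations $Z(-)$, $Z_{\hat G_A}(-)$, and "relative identity component" commute with the automorphism $\Ad(h)$ and with base change to $k$, and the minor input in \ref{multiply g} that a section of $M_g$ reducing into $M_g^\circ$ over the closed point lies in $M_g^\circ(A)$ — which one can in any case avoid, since (as in the proof of Theorem \ref{Mg}) the uniqueness of $M_g^\circ$ already follows from the Lie-algebra condition \ref{liealg} alone.
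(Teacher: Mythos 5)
Your proof is correct. For part (i) it is essentially the paper's argument: identify the Lie algebra of $\Ad(h)(M_g^\circ)$ with the canonical decomposition attached to $hgh^{-1}$, invoke the uniqueness in Theorem \ref{Mg}, and transport the multiplicative-type subgroup $S$; the paper phrases the Lie-algebra step via the topological-nilpotent description of Lemma \ref{topnilpotent} rather than the uniqueness clause of Lemma \ref{uniquedirectsumlift}, which is only a cosmetic difference. For part (ii) you take a genuinely more direct route at the key step: you note that $\Ad(h)$ preserves $L_0=\Lie M_g$ because $h\in M_g(A)$, so $L_0$ is $\Ad(gh)$-invariant and the uniqueness in Lemma \ref{uniquedirectsumlift} immediately gives $\Lie M_{gh}^\circ=L_0$. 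The paper instead applies Theorem \ref{Mg} inside the reductive group $M_g^\circ$ to the lift $gh$, obtaining a subgroup $N\subset M_g^\circ$ that is forced to equal $M_g^\circ$, which gives the inclusion $\Lie M_g^\circ\subset \Lie M_{gh}^\circ$ and then equality by a symmetry/rank argument. Your shortcut buys a one-line identification of the Lie algebras at the cost of the (easy, and correctly justified) observation that conjugation by a section of $M_g$ preserves $M_g^\circ$ and hence $L_0$; both versions rest on the same two inputs (the invariant-lift uniqueness and the uniqueness in Theorem \ref{Mg}) and conclude identically by noting that $M_g$ and $M_{gh}$ are the centralizers of the same subgroup $S$. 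Your closing remarks — that $h\in M_g^\circ(A)$ follows from openness of $M_g^\circ$ together with $A$ being local, or can be bypassed since uniqueness already follows from the Lie-algebra condition — are also fine and mirror the argument used at the end of the proof of Theorem \ref{Mg}.
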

\begin{proof}
We prove \ref{conjugate g} firstly.
We show that the Lie algebras of $M_{h g h^{-1}}$ and $\Ad(h)(M_g)$ coincide:
\begin{align*}
\Lie M_{h g h^{-1}} &= \{v \in \gA: (\Ad(hgh^{-1}) - 1)^u v  \to 0 \text{ as } m \to \infty\} \\
                    &= \{v \in \gA: (\Ad(h) \Ad(g) \Ad(h)^{-1} - 1)^u v  \to 0 \text{ as } m \to \infty\} \\
                    &= \{v \in \gA: \Ad(h)( \Ad(g)  - 1)^u \Ad(h)^{-1} v  \to 0 \text{ as } m \to \infty\} \\
                    &= \Ad(h) \{v \in \gA: ( \Ad(g)  - 1)^u v  \to 0 \text{ as } m \to \infty\} \\
                    &= \Ad(h) (\Lie M_g) \\
                    &= \Lie \Ad(h)(M_g).
\end{align*}
Equality of $M^\circ_{h g h^{-1}}$ and $\Ad(h)(M_g^\circ)$ then follows, just as in the proof of uniqueness in Theorem \ref{Mg}.
If $S$ is the unique subgroup scheme of $Z(M_g^\circ)$ lifting $Z(M_{\bar{g}})$ then $\Ad(h)(S)$ is the unique subgroup scheme of $Z(M_{h g h^{-1}}^\circ)$ lifting $Z(M_{\bar{g}})$ and the equality $M_{h g h^{-1}} = \Ad(h)(M_g)$ is then clear.

Now suppose in addition that $h \in M_g^\circ(A)$. 
We have a semisimple element $\bar{g} \in M_g^\circ(k)$ and a lift $gh \in M_g^\circ(A)$. 
Thus we can apply Theorem \ref{Mg} to the reductive group $M_g^\circ$ to define a connected, reductive, closed subgroup-scheme $N$ of $M_g^\circ$ with the properties
\begin{itemize} 
    \item $N$ lifts $Z_{M_{\bar{g}}^\circ}(\bar{g}) = M_{\bar{g}}^\circ$
    \item $\Lie N  = \{v \in \Lie M_g^\circ : (\Ad(gh)-1)^u v \to 0 \text{ as } u \to \infty\}$.
\end{itemize}
It follows that $N = M_g^\circ$, and therefore $\Lie M_g^\circ = \Lie N \subset \Lie M_{gh}^\circ$. 
By symmetry we have equality of Lie algebras and hence equality of $M_g^\circ$ and $M_{gh}^\circ$ just as above.
The constructions of $M_g$ and $M_{gh}$ are given by taking the centralizer of the same multiplicative type group scheme inside $\hat{G}_A$ and therefore also coincide.
\end{proof}

As an application, we obtain the following corollary which will be of use in identifying Selmer groups corresponding to our deformation condition in Section \ref{section deformation}.

\begin{corollary}\label{inertial_selmer}
Suppose $A = k[\epsilon]$ and identify $\gk = \ker(\hat{G}(k[\epsilon]) \to \hat{G}(k)) \subset \hat{G}(k[\epsilon])$. 
Then $\gk \cap M_{\bar{g}}(k[\epsilon]) = \gk \cap M_{g}(k[\epsilon])$ and $\gk \cap Z(M_{\bar{g}})(k[\epsilon]) = \gk \cap Z(M_{g})(k[\epsilon])$.
\end{corollary}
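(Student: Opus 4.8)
The plan is to reduce both equalities to a single statement about dual numbers. Write $B = k[\epsilon]$, and recall that $\gk = \ker(\hat{G}(B) \to \hat{G}(k))$ is by definition the Lie algebra $\hat{\mathfrak{g}}_k = \Lie \hat{G}_k$, realised as the kernel of reduction modulo $\epsilon$. I claim that for \emph{any} closed subgroup scheme $K$ of $\hat{G}_B$, with special fibre $\bar{K} := K \times_B k \subseteq \hat{G}_k$, one has
\[
    \gk \cap K(B) = \Lie \bar{K},
\]
where $\Lie \bar{K} \hookrightarrow \Lie \hat{G}_k = \gk$ is the usual embedding induced by $\bar{K} \hookrightarrow \hat{G}_k$. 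Granting this, Corollary \ref{inertial_selmer} follows at once: applying the claim to $K = M_g$ and to $K = M_{\bar{g}} \times_k B$, and using $(M_g)_k = M_{\bar{g}}$ from Theorem \ref{Mg} together with $\Lie M_{\bar{g}} = \Lzerobar$, shows that both $\gk \cap M_g(B)$ and $\gk \cap M_{\bar{g}}(B)$ equal $\Lzerobar$; applying it to $K = Z(M_g)$ and to $K = Z(M_{\bar{g}}) \times_k B$, and using $Z(M_g)_k = Z(M_{\bar{g}})$ from Lemma \ref{center of Mg}, shows that $\gk \cap Z(M_g)(B)$ and $\gk \cap Z(M_{\bar{g}})(B)$ both equal $\Lie Z(M_{\bar{g}})$.

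To prove the claim I would compute directly with the affine coordinate ring $\Gamma$ of $\hat{G}$. An element $x \in \gk$ is a $B$-algebra homomorphism $\Gamma_B \to B$ reducing modulo $\epsilon$ to the counit; its restriction to $\Gamma$ then has the form $\bar{e} + \epsilon \delta$, where $\bar{e}\colon \Gamma \to k$ is the counit and $\delta$ is the associated tangent vector, so that $x \leftrightarrow \delta$ under the identification $\gk = \Lie \hat{G}_k$. Let $J \subseteq \Gamma_B$ be the ideal of $K$ and $\bar{J} \subseteq \Gamma_k$ its image, the ideal of $\bar{K}$. For $j = j_0 + \epsilon j_1 \in J$ with $j_0, j_1 \in \Gamma_k$ a short computation gives $x(j) = \bar{e}(j_0) + \epsilon\big(\delta(j_0) + \bar{e}(j_1)\big)$; since $K$ contains the identity section of $\hat{G}_B$, the counit kills $J$, forcing $\bar{e}(j_0) = \bar{e}(j_1) = 0$, so $x(j) = \epsilon\,\delta(\bar{j})$. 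Hence $x$ factors through $K$, i.e. $x \in K(B)$, if and only if $\delta$ annihilates $\bar{J}$ (note $\bar{e}$ annihilates $\bar{J}$ too), that is, if and only if $\delta \in \Lie \bar{K}$. This gives the claim.

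The step I expect to be the main obstacle is exactly this claim, and specifically the fact that it holds with \emph{no} smoothness or flatness hypothesis on $K$: one cannot simply invoke the standard identification $\ker(K(B) \to K(k)) \cong \Lie K_k$ valid for smooth $K$ over $B$, because $Z(M_g)$ need not be smooth over $B$ when $M_{\bar{g}}$ is disconnected (the character group of its centre can acquire $p$-torsion). Arguing directly on the functor of points, as above, sidesteps this. Everything else is bookkeeping with the base-change compatibilities $(M_g)_k = M_{\bar{g}}$ and $Z(M_g)_k = Z(M_{\bar{g}})$ already established in Theorem \ref{Mg} and Lemma \ref{center of Mg}.
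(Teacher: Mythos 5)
Your proof is correct, but it takes a genuinely different route from the paper's. The paper argues group-theoretically: since conjugation by any $h \in \gk$ fixes $\gk$ pointwise and sends $M_g$ to $M_{hgh^{-1}}$ (Lemma \ref{change_of_g}), one may replace $g$ by a conjugate; writing $g = \bar{g}\gamma$ and using that $\Ad(\bar{g}) - \id$ is invertible on $\overline{L_1}$, one solves a linear equation to arrange $hgh^{-1} = \bar{g}\delta$ with $\delta \in M_{\bar{g}}(k[\epsilon]) \cap \gk$, and then Lemma \hyperref[multiply g]{\ref*{change_of_g} \ref*{multiply g}} gives $M_{\bar{g}\delta} = (M_{\bar{g}})_{k[\epsilon]}$ outright, so the two intersections (and those of the centres) coincide. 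You instead prove an elementary and more general tangent-space identity: for any closed subgroup scheme $K \subset \hat{G}_{k[\epsilon]}$ with special fibre $\bar{K}$, one has $\gk \cap K(k[\epsilon]) = \Lie \bar{K}$, with no flatness or smoothness hypothesis — your ideal-theoretic computation is correct, and the observation that the ``constant parts'' of the ideal of $K$ cut out exactly $\bar{K}$ is the whole point. The corollary then drops out of $(M_g)_k = M_{\bar{g}}$ (Theorem \ref{Mg}) and $Z(M_g)_k = Z(M_{\bar{g}})$ (Lemma \ref{center of Mg}), bypassing Lemma \ref{change_of_g} and the $\Ad(\bar{g})$-decomposition entirely. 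What each buys: your argument is shorter, isolates the real reason the statement holds (only the special fibre of the subgroup scheme matters for $k[\epsilon]$-points in $\gk$), and would apply verbatim to any lift of any closed subgroup; the paper's argument proves the stronger intermediate fact that after a harmless conjugation the group schemes themselves agree, and it recycles machinery (Lemma \ref{change_of_g}, invertibility of $\Ad(\bar{g})-\id$ on $\overline{L_1}$) that the paper needs anyway. One small caveat: your computation works with the defining ideal of $Z(M_g)$ inside $\hat{G}_{k[\epsilon]}$, so it implicitly uses that the scheme-theoretic centre is representable by a closed subgroup scheme; this is the same assumption the paper's statement already makes in writing $Z(M_g)(k[\epsilon])$, so it is not a gap, but it is worth being aware that the functor-of-points phrasing of your claim is what you would fall back on otherwise.
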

\begin{proof}
Observe firstly that for $h \in \gk$ we have $\gk \cap M_{g}(k[\epsilon]) = \gk \cap M_{h g h^{-1}}(k[\epsilon])$, since conjugation by $h$ acts trivially on $\gk$.
Since conjugation will also send centers to centers, we may freely replace $g$ by its conjugate under an element of $\gk$.
Now suppose we are in the situation that $g = \bar{g} \delta$ where $\delta \in M_{\bar{g}}(k[\epsilon])$. 
Then since $g$ is a lift of $\bar{g}$, we have $\delta \in \gk \cap M_{\bar{g}}(k[\epsilon]) = \Lie M_{\bar{g}}^\circ$ and so by Lemma \hyperref[multiply g]{\ref*{change_of_g} \ref*{multiply g}} we have $M_{\bar{g} \delta} = M_{\bar{g}}$.
Thus equality of their centers also holds.

Thus we just need to find some $h \in \gk$ such that $\bar{g}^{-1} h g h^{-1} \in M_{\bar{g}}(k[\epsilon])$. Writing $g = \bar{g} \gamma$ for $\gamma \in \gk$ we require, switching now to additive notation, that
$$\Ad({\bar{g}}^{-1})(h) + \gamma - h \in  \Lzerobar.$$ 
Letting $\overline{L_1}$ be the $\Ad(\bar{g})$-invariant complement to $\Lzerobar$, we know that $\Ad(\bar{g})-\id$ restricts to a vector space automorphism of this subspace.
Thus we may write $$\Ad(\bar{g})(\gamma) \equiv (\Ad(\bar{g}) - \id)(h) \mod \Lzerobar$$ for some $h \in \gk$. Taking $\Ad(\bar{g})^{-1}$ of this equality, we are done.
\end{proof}

We now show compatibility under changing the coefficient ring $A$.

\begin{lemma}\label{change_of_ring}
Let $\varphi: A \to B$ be a morphism in $\CNL_\mathcal{O}$.
Let $g' \in \hat{G}(B)$ denote the image of $g$ under the map $\hat{G}(\varphi): \hat{G}(A) \to \hat{G}(B)$.
Then under the natural identification of $(\hat{G}_A)_B$ with $\hat{G}_B$, we have $(M_g)_B = M_{g'}$ and $Z(M_g)^\circ_B = Z(M_{g'})^\circ$. In particular, we have the equality $Z(M_g)^\circ(B) = Z(M_{g'})^\circ(B)$.
\end{lemma}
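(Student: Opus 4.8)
The plan is to reduce everything to uniqueness statements already proved. The key observation is that $M_g$ was constructed in Theorem \ref{Mg} as $Z_{\hat G_A}(S)$ where $S$ is the unique multiplicative-type subgroup of $Z(M_g^\circ)$ lifting $Z(M_{\bar g})$, and $M_g^\circ$ is itself characterized uniquely by properties \ref{Mgbasechange}, \ref{liealg}, \ref{g_in_Mg}. So to identify $(M_g)_B$ with $M_{g'}$ it suffices to check that the base change $(M_g^\circ)_B$ satisfies the three defining properties of $M_{g'}^\circ$ over $B$, and then that base-changing $S$ gives the correct multiplicative-type subgroup.

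First I would handle $M_g^\circ$. Base change along $\varphi$ commutes with taking identity components in this setting (all the relevant group schemes are smooth and finitely presented, with connected fibres after the reduction step), so $(M_g^\circ)_B$ is a connected reductive closed subgroup scheme of $\hat G_B$. For property \ref{Mgbasechange}: the reduction of $B$ is again $k$ (objects of $\CNL_\CalO$ have residue field $k$), and $((M_g^\circ)_B)_k = (M_g^\circ)_k = M_{\bar g}^\circ$ since $\bar g' = \bar g$. For property \ref{liealg}: formation of the Lie algebra commutes with base change (the lemma on $\Omega^1$ of base-changed group schemes), so $\Lie((M_g^\circ)_B) = (L_0)_B$; I must then check $(L_0)_B$ is the unique $\Ad(g')$-invariant lift of $\Lzerobar$ to $\gk_B = (\gA)_B$, which follows from the uniqueness clause of Lemma \ref{uniquedirectsumlift} applied to $\Ad(g') = \Ad(g)\otimes_A B$ together with the fact that $(L_0)_B$ is a direct summand mapping onto $\Lzerobar$ and is $\Ad(g')$-invariant. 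For property \ref{g_in_Mg}: $g \in M_g^\circ(A)$ implies $g' = \hat G(\varphi)(g) \in M_g^\circ(B) = (M_g^\circ)_B(B)$. By the uniqueness in Theorem \ref{Mg}(1) we conclude $(M_g^\circ)_B = M_{g'}^\circ$.

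Next, for $Z(M_g)^\circ$: formation of the scheme-theoretic center commutes with base change (as in Lemma \ref{center of Mg}), so $Z((M_g^\circ)_B) = Z(M_g^\circ)_B$, and taking identity components again commutes with base change, giving $Z(M_g^\circ)^\circ_B = Z(M_{g'}^\circ)^\circ$; the same for $Z(M_g)$ versus $Z(M_g^\circ)$ once one knows $(M_g)_B = M_{g'}$. Then $S_B$ is a multiplicative-type subgroup scheme of $Z(M_g^\circ)_B = Z(M_{g'}^\circ)$ with $(S_B)_k = S_k = Z(M_{\bar g})$, so by the uniqueness of such a subgroup in Theorem \ref{Mg}(2), $S_B = S'$ (the corresponding subgroup for $g'$). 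Finally $M_{g'} = Z_{\hat G_B}(S') = Z_{\hat G_B}(S_B) = Z_{\hat G_A}(S)_B = (M_g)_B$, using that scheme-theoretic centralizers commute with base change. The last assertion $Z(M_g)^\circ(B) = Z(M_{g'})^\circ(B)$ is then immediate from $Z(M_g)^\circ_B = Z(M_{g'})^\circ$ by taking $B$-points.

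The main obstacle is bookkeeping rather than mathematics: one must be careful that each of the operations invoked — identity component, Lie algebra, scheme-theoretic center, scheme-theoretic centralizer — genuinely commutes with the base change $\Spec B \to \Spec A$ in the generality at hand, citing the appropriate results from \cite{sga3} and \cite{conrad} (e.g. \cite[Lemma 2.2.4]{conrad} for the Lie algebra of a centralizer, and the discussion in \cite[3.1]{sga3} for identity components). Once these compatibilities are in place, the statement is forced by the uniqueness assertions in Theorem \ref{Mg}, so there is no genuinely hard step — the only subtlety is that one should invoke uniqueness for $M_{g'}^\circ$ and for $S'$ rather than attempting a direct construction.
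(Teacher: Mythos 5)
Your proposal is correct and takes essentially the same route as the paper: base-change compatibility of Lie algebras, scheme-theoretic centers and centralizers, combined with the uniqueness clauses of Theorem \ref{Mg} (and of Lemma \ref{uniquedirectsumlift}) to identify $(M_g^\circ)_B$ with $M_{g'}^\circ$ and $S_B$ with the corresponding multiplicative-type subgroup for $g'$. The paper's write-up differs only cosmetically, proving $\Lie (M_g)_B = \Lie M_{g'}$ directly (containment via the topological-nilpotent description of $L_0$, then equality by comparing ranks of the two direct summands) instead of citing the uniqueness of the $\Ad(g')$-invariant lift, and then concluding exactly as you do.
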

    
\begin{proof}
We may identify $\Lie (M_g)_B$ with $\Lie M_g \otimes_A B$ by compatibility of Lie algebras under base-change. This isomorphism is also compatible with the action of $\Ad(g)$ in the sense that $\Ad(g')(v \otimes b) = \Ad(g)(v) \otimes b$ for any $v \in \gA$ and $b \in B$.
Hence we have a containment of Lie algebras $\Lie (M_g)_B \subset \Lie M_{g'}$.
Indeed, if $v \in \Lie M_g$ and $b \in B$ then $(\Ad(g')-1)^u(v \otimes b) = (\Ad(g)-1)^u(v) \otimes b \to 0$ as $u \to \infty$.
The Lie algebras are both free and direct summand $B$-submodules of equal ranks (as their reductions $\mod m_B$ coincide) and hence are equal.
We deduce that $(M_g^\circ)_B = M_{g'}^\circ$.
Letting $S$ be the unique subgroup of $Z(M_g^\circ)$ lifting $Z(M_{\bar{g}})$ we see that $S_B$ is a subgroup of $Z(M_{g'}^\circ)$ lifting $Z(M_{\bar{g}})$.
Thus $Z(M_g)_B = Z(M_{g'})$ and $(M_g)_B = M_{g'}$ from which the lemma follows.
\end{proof}

\subsection{A construction of $M_g^\circ$ over $\Qpbar$} \label{alternate Mg}

For the remainder of this section suppose that $A =~\mathcal{O}$.
We give a description of $(M_g^\circ)_{\Qpbar}$ and its center which depend only on the knowledge of the semisimple part of $g$ in its Jordan decomposition.

\begin{lemma}
Let $g = g_{ss} g_u$ be the multiplicative Jordan decomposition of $g$ in $\hat{G}(\Qpbar)$.
Let $T$ be any maximal torus of $Z_{\hat{G}_{\Qpbar}}(g_{ss})$ and let $$N_{g_{ss}} = \langle T, U_\alpha | \alpha \in \Phi(\hat{G},T), \alpha(g_{ss}) \equiv 1 \mod m_{\Zpbar} \rangle^\circ,$$
where $U_\alpha$ denotes the root group for $\alpha$.
Then $N_{g_{ss}} = (M_g^\circ)_{\Qpbar}$.
\end{lemma}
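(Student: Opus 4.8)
The plan is to reduce the statement to an identity of Lie subalgebras of $\g_{\Qpbar} := \hat{\mathfrak g}\otimes_{\mathbb Z}\Qpbar$. Both $N_{g_{ss}}$ and $(M_g^\circ)_{\Qpbar}$ are connected closed subgroups of $\hat G_{\Qpbar}$: the first is the group generated by the connected closed subgroups $T$ and the $U_\alpha$, while the second is the base change to $\Qpbar$ of the connected reductive closed subgroup scheme $M_g^\circ$ of $\hat G_{\mathcal O}$ furnished by Theorem \ref{Mg}. Since $\Qpbar$ has characteristic zero, a connected closed subgroup of $\hat G_{\Qpbar}$ is determined by its Lie algebra: if $H_1,H_2$ are two such and $\Lie H_1 = \Lie H_2$, then the (automatically smooth) scheme-theoretic intersection $H_1\cap H_2$ has Lie algebra $\Lie H_1\cap \Lie H_2$, hence is of the same dimension as each $H_i$, so $H_1 = H_2$. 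It therefore suffices to prove $\Lie N_{g_{ss}} = L_0\otimes_{\mathcal O}\Qpbar$, as $\Lie (M_g^\circ)_{\Qpbar} = L_0\otimes_{\mathcal O}\Qpbar$ by Theorem \ref{Mg}.

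First I would identify $L_0\otimes_{\mathcal O}\Qpbar$ with a sum of eigenspaces of $\Ad(g_{ss})$. Since $\bar g$ is semisimple, $\Ad(\bar g)$ is a semisimple automorphism of $\gk$ (this may be checked over $\bar k$, where $\bar g$ lies in a torus), so $\Lzerobar = \ker(\Ad(\bar g)-1)$ is already the full generalized $1$-eigenspace of $\Ad(\bar g)$. Hence in the Henselian factorisation $\det(X-\Ad(g)) = p(X)q(X)$ of Lemma \ref{uniquedirectsumlift} used to construct $L_0$, the factor $p$ reduces modulo $\lambda$ to $(X-1)^{\dim_k \Lzerobar}$; so every root of $p$ in $\Qpbar$ lies in $1+m_{\Zpbar}$ and every root of $q$ reduces to something $\neq 1$. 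Consequently $L_0\otimes_{\mathcal O}\Qpbar$ is the kernel of $p(\Ad(g))$ acting on $\g_{\Qpbar}$, i.e. the span of the generalized eigenspaces of $\Ad(g)$ for the eigenvalues lying in $1+m_{\Zpbar}$. Now Jordan decomposition is compatible with the homomorphism of algebraic groups $\Ad$, so $\Ad(g)_{ss} = \Ad(g_{ss})$, and for each $\mu$ the generalized $\mu$-eigenspace of $\Ad(g)$ coincides with the $\mu$-eigenspace of the semisimple operator $\Ad(g_{ss})$. Because $g_{ss}$ is central in $Z_{\hat G_{\Qpbar}}(g_{ss})$ it lies in every maximal torus of that group, in particular $g_{ss}\in T(\Qpbar)$; moreover $T$ is also a maximal torus of $\hat G_{\Qpbar}$, its rank being that of $\hat G_{\Qpbar}$ since $g_{ss}$ is semisimple. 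Setting $\Psi = \{\alpha\in\Phi(\hat G,T): \alpha(g_{ss})\equiv 1 \mod m_{\Zpbar}\}$ and using the root space decomposition $\g_{\Qpbar} = \Lie T\oplus\bigoplus_{\alpha\in\Phi(\hat G,T)}\g_\alpha$, on which $\Ad(g_{ss})$ acts trivially on $\Lie T$ and by $\alpha(g_{ss})$ on $\g_\alpha$, I obtain
\[
L_0\otimes_{\mathcal O}\Qpbar = \Lie T\oplus\bigoplus_{\alpha\in\Psi}\g_\alpha .
\]

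It remains to check that $\Lie N_{g_{ss}}$ equals the right-hand side. Since $1+m_{\Zpbar}$ is the kernel of the reduction map $\Zpbar^\times\to\Fpbar^\times$, hence a subgroup of $\Zpbar^\times$, the set $\Psi$ is a closed and symmetric subsystem of $\Phi(\hat G,T)$; by the standard structure theory of reductive groups (see \cite{borelLAG}), $N_{g_{ss}} = \langle T, U_\alpha\mid \alpha\in\Psi\rangle$ is connected reductive with maximal torus $T$ and root system $\Psi$, so $\Lie N_{g_{ss}} = \Lie T\oplus\bigoplus_{\alpha\in\Psi}\g_\alpha$. Combined with the previous display this gives $\Lie N_{g_{ss}} = L_0\otimes_{\mathcal O}\Qpbar$, and the claim follows from the rigidity noted in the first paragraph (which also shows the answer is independent of the chosen $T$). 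I expect the only delicate point to be the chain of identifications in the second paragraph: translating the idempotent description of $L_0$ from Lemma \ref{uniquedirectsumlift} into eigenspaces of $\Ad(g)$ sorted by the residue of the eigenvalue, and then collapsing generalized eigenspaces of $\Ad(g)$ onto eigenspaces of $\Ad(g_{ss})$ and placing $g_{ss}$ inside $T$; once this is in hand, the structure theory of $N_{g_{ss}}$ and the characteristic-zero rigidity of connected subgroups are routine.
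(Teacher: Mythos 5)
Your proposal is correct and follows essentially the same route as the paper: both identify $(\Lie M_g^\circ)_{\Qpbar} = L_0\otimes_{\mathcal O}\Qpbar$ with the sum of generalized $\Ad(g_{ss})$-eigenspaces for eigenvalues congruent to $1 \bmod m_{\Zpbar}$, match this with $\Lie T\oplus\bigoplus_{\alpha(g_{ss})\equiv 1}\hat{\mathfrak g}_\alpha = \Lie N_{g_{ss}}$ via the root space decomposition, and conclude equality of the connected groups from equality of their Lie algebras. The only differences are cosmetic (you phrase the eigenvalue sorting through the Hensel factorisation of Lemma \ref{uniquedirectsumlift} rather than topological nilpotents, and justify the final step by characteristic-zero rigidity rather than the paper's Cartan-subalgebra remark), and both are sound.
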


\begin{proof}
Let $L \leq \hat{\mathfrak{g}}_{\Qpbar}$ be given by the sum of generalized eigenspaces for the action of $\Ad(g_{ss})$ on $\hat{\mathfrak{g}}_{\Qpbar}$ with eigenvalues congruent to $1 \mod m_{\Zpbar}$.
We firstly show that $L$ coincides with $(\Lie M_g^\circ)_{\Qpbar}$.
The generalized eigenspaces for the actions of $\Ad(g)$ and $\Ad(g_{ss})$ on $\g_{\Qpbar}$ coincide.
It then follows because $\Lie M_g^\circ = L_0$ is given by the sum of topological nilpotents for the action of $\Ad(g) - 1$, while its $\Ad(g)$-invariant complement is given by the sum of topological nilpotents for the action of $\Ad(g) - \lambda$ for all eigenvalues $\lambda \in \Zpbar^\times$ with $\lambda \not\equiv 1 \mod m_{\Zpbar}$.

We next claim that 
$$\Lie N_{g_{ss}} = \langle \Lie T \oplus \bigoplus_{\alpha \in \Phi(\hat{G},T), \alpha(g_{ss}) \equiv 1 \mod m_{\Zpbar} } \hat{\mathfrak{g}}_\alpha \rangle$$
 coincides with $L$.
 Since the action of $\Ad(g_{ss})$ on $\Lie \hat{G}$ is the identity on $\Lie T$ and given by $\alpha(g_{ss})$ on a root space $\hat{\mathfrak{g}}_\alpha$, they must coincide as $L$ is a Lie algebra.
 Thus $N_g = (M_g^\circ)_{\Qpbar}$, as their Lie algebras coincide (and contain a Cartan subalgebra of $\mathfrak{g}_{\Qpbar}$).
\end{proof}

We deduce the following corollary, which we use in Section \ref{sect local global}.

\begin{corollary} \label{rational Mg}
If $g_1,g_2 \in \hat{G}(\CalO)$ are lifts of $\bar{g}$ and there exists $h \in \hat{G}(\Qpbar)$ for which $\Ad(h)(g_{1,ss}) = g_{2,ss}$ then $\Ad(h)((M_{g_1}^\circ)_{\Qpbar}) = (M_{g_2}^\circ)_{\Qpbar}$ and $\Ad(h)(Z(M_{g_1}^\circ)_{\Qpbar}) = Z(M_{g_2}^\circ)_{\Qpbar}$.
\end{corollary}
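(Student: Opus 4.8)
The plan is to deduce this corollary directly from the preceding lemma, which identifies $(M_{g_i}^\circ)_{\Qpbar}$ with the group $N_{g_{i,ss}}$ built out of a maximal torus of $Z_{\hat{G}_{\Qpbar}}(g_{i,ss})$ together with the root groups $U_\alpha$ for those roots $\alpha$ with $\alpha(g_{i,ss}) \equiv 1 \bmod m_{\Zpbar}$. The key observation is that this construction is manifestly equivariant under $\hat{G}(\Qpbar)$-conjugation: if $h \in \hat{G}(\Qpbar)$ conjugates $g_{1,ss}$ to $g_{2,ss}$, then $\Ad(h)$ carries $Z_{\hat{G}_{\Qpbar}}(g_{1,ss})$ isomorphically onto $Z_{\hat{G}_{\Qpbar}}(g_{2,ss})$, hence sends a maximal torus $T_1$ of the former to a maximal torus $T_2 := \Ad(h)(T_1)$ of the latter, and sends each root group $U_\alpha$ (for $\alpha \in \Phi(\hat{G},T_1)$) to the root group $U_{\alpha'}$ for the root $\alpha' = \alpha \circ \Ad(h)^{-1} \in \Phi(\hat{G},T_2)$.

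The one point requiring a small argument is that the congruence condition selecting the roots is preserved, i.e. $\alpha(g_{1,ss}) \equiv 1 \bmod m_{\Zpbar}$ if and only if $\alpha'(g_{2,ss}) \equiv 1 \bmod m_{\Zpbar}$. This holds because $\alpha'(g_{2,ss}) = \alpha'(\Ad(h)(g_{1,ss})) = \alpha(g_{1,ss})$ by the very definition of $\alpha'$: evaluating $\alpha' = \alpha \circ \Ad(h)^{-1}$ on $\Ad(h)(g_{1,ss}) = h g_{1,ss} h^{-1}$ returns $\alpha(g_{1,ss})$, since $g_{1,ss} \in T_1$. (Note both $g_{1,ss}$ and $g_{2,ss}$ lie in $\hat G(\Qpbar)$ but their eigenvalues on root spaces are algebraic integers, so the reduction mod $m_{\Zpbar}$ makes sense; $\Ad$-conjugation does not change these eigenvalues.) Therefore $\Ad(h)$ matches the two generating sets defining $N_{g_{1,ss}}$ and $N_{g_{2,ss}}$, and since taking the identity component of the group generated by these subgroups is compatible with the isomorphism $\Ad(h)$, we conclude $\Ad(h)(N_{g_{1,ss}}) = N_{g_{2,ss}}$, that is, $\Ad(h)((M_{g_1}^\circ)_{\Qpbar}) = (M_{g_2}^\circ)_{\Qpbar}$.

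For the statement about centers, I would argue that $Z(M_{g_i}^\circ)_{\Qpbar} = Z((M_{g_i}^\circ)_{\Qpbar})$ by compatibility of the scheme-theoretic center with the flat base change $\CalO \to \Qpbar$ (one could also invoke Lemma \ref{center of Mg} and Lemma \ref{change_of_ring}), and then observe that any group isomorphism — in particular $\Ad(h)$ restricted to $(M_{g_1}^\circ)_{\Qpbar}$ — carries the center isomorphically onto the center of the image. Hence $\Ad(h)(Z(M_{g_1}^\circ)_{\Qpbar}) = Z((M_{g_2}^\circ)_{\Qpbar}) = Z(M_{g_2}^\circ)_{\Qpbar}$, as required. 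The only genuinely delicate ingredient is the bookkeeping in the second paragraph identifying how $\Ad(h)$ acts on roots and root groups and checking the congruence is preserved; everything else is formal functoriality. I do not expect any serious obstacle, so the "proof" will be short.
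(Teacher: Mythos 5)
Your proof is correct and follows exactly the route the paper intends: the corollary is deduced from the preceding lemma's description of $(M_g^\circ)_{\Qpbar}$ as $N_{g_{ss}}$, a construction depending only on $g_{ss}$ and manifestly equivariant under $\Ad(h)$ (with the congruence condition preserved since conjugation does not change the values $\alpha(g_{ss})$), and the statement on centers then follows since $\Ad(h)$ is an isomorphism and formation of the scheme-theoretic center commutes with base change to $\Qpbar$.
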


\section{Deformation theory}\label{section deformation}

Continue with notation as in the start of Section \ref{def setup}.
We make a further assumption on $p = \ch k$ by taking $p$ to be a very good prime for $\hat{G}$.
Let $F$ be a global field, so that $F$ is either a number field or the function field of a smooth, projective, geometrically connected curve $X$ over a finite field $\mathbb{F}_q$ (whose characteristic we suppose is prime to $p$).
Fix a separable closure $\overline{F}$ of $F$ and denote the absolute Galois group by $G_F$.
If $S$ is a finite set of finite places of $F$, let $F_S$ denote the maximal extension of $F$ unramified outside of $S$ with Galois group $G_{F,S} = \Gal(F_S/F)$.
We let $S_p$ denote the (possibly empty) set of places of $F$ dividing $p$.

Let $\bar{\rho}: G_{F} \to \hat{G}(k)$ be an absolutely $\hat{G}$-irreducible representation, continuous with respect to the usual profinite topology on $G_{F}$ and discrete topology on $\hat{G}(k)$, and unramified outside of fixed finite set $S$ of finite places of $F$ (so that we can, and will, view $\bar{\rho}$ as a representation of $G_{F,S}$).
Recall that absolutely $\hat{G}$-irreduciblity means that the image of $G_F$ is not contained inside (the $\Fpbar$-points of) any proper parabolic subgroup of $\hat{G}_{\Fpbar}$.
By \cite[Theorem 16.4]{Richardson1988ConjugacyCO}, this implies that $Z_{\hat{G}}(\bar{\rho}(G_F))$ is finite modulo $Z(\hat{G})$, and hence that the invariants of $\gk$ under the adjoint action of $G_F$ via $\bar{\rho}$ are exactly $\mathfrak{z}(\gk)$.
We let $\gk^0$ denote the Lie algebra of $\hat{G}_k^{\der}$, viewed as a $G_{F}$-module via the adjoint action under $\bar{\rho}$.
Since we are in very good characteristic for $\hat{G}$, we in fact have a decomposition $\gk = \gk^0 \oplus \mathfrak{z}(\gk)$ as $G_{F}$-modules.
By extending $k$ if necessary, we assume throughout this section that for every semisimple element $\bar{g}$ in the image of $\bar{\rho}$ there exists a split maximal torus $T$ defined over $k$ such that $\bar{g} \in T(k)$ and that $\Lie T$ contains an ad-regular element, in the sense of Definition \ref{ad-regular}.

If $v$ is a place of $F$, let $F_v$ denote the completion of $F$ with residue field $k(v)$ of finite cardinality $q_v$ and fix an embedding of $\overline{F}$ into a separable closure $\overline{F_v}$ of $F_v$, defining inertia and decomposition subgroups $I_v \subset G_{F_v} \subset G_F$.
We will let $\bar{\rho}_v = \restr{\bar{\rho}}{G_{F_v}}$.
If, for each $v \in S$, we are given $\Lambda_v \in \CNL_{\CalO}$ then we will let $\Lambda = \hat{\otimes}_{v \in S, \mathcal{O}} \Lambda_v$ be the completed tensor product and let $\CNL_{\Lambda}$ be the category of complete Noetherian local $\Lambda$-algebras with residue field $k$.
Let $\CGhat$ denote the cocenter of $\hat{G}$, the quotient of $\hat{G}$ by its derived group, and let $\nu: \hat{G} \to \CGhat$ be this quotient map.
Fix a continuous character $\psi: G_{F,S} \to \CGhat(\CalO)$ such that $\nu \circ \bar{\rho} = (\psi \mod \lambda)$, which may be thought of by analogy with fixing the determinant when $\hat{G} = \GL_n$.

\subsection{Deformation theory of representations}

Firstly, let us recall some of the theory of deformation problems, as in \cite{cssFLT}. 

\begin{definition}\label{deformation}
Let $\Gamma$ be a profinite group, $\bar{\rho}: \Gamma \to \hat{G}(k)$ be a continuous representation and $\Lambda \in \CNL_{\CalO}$.
\begin{itemize}
    \item Let $(\theta: \Lambda \to A)  \in \CNL_{\Lambda}$.
    A lift $\rho: \Gamma \to \hat{G}(A)$ is a continuous homomorphism satisfying $\rho \mod m_A = \bar{\rho}$.
    Such a lift $\rho$ is said to have similitude character $\psi$ if $\nu \circ \rho = \theta \circ \psi$.
    This defines the functor of lifts (also called framed deformations) $$D_{\bar{\rho}}^{\square}: \CNL_{\Lambda} \to \catname{Set}$$ sending $A \in \CNL_{\Lambda}$ to the set of lifts of similitude character $\psi$.
    \item If $A \in \CNL_{\Lambda}$, a deformation is a strict equivalence class of lifts of similitude character $\psi$, where two lifts in $D_{\bar{\rho}}^{\square}(A)$ are said to be strictly equivalent if they are conjugate by an element of $\ker(\hat{G}(A) \to \hat{G}(k))$.
    This defines a functor $$D_{\bar{\rho}}: \CNL_{\Lambda} \to \catname{Set}$$ assigning $A \in \CNL_{\Lambda}$ to the set of deformations valued in $A$.
    Note that if any representative of a strict equivalence class has similitude character $\psi$, then all of representatives do.
\end{itemize}
\end{definition}

We say a functor $D: \CNL_{\Lambda} \to \catname{Set}$ is representable if there exists $R \in \CNL_{\Lambda}$ and a natural equivalence between $D$ and the functor $h_R = \Hom_{\CNL_{\Lambda}}(R,-)$.
If $\mathcal{D},D: \CNL_{\Lambda} \to \catname{Set}$ are two functors with $\mathcal{D} \subset D$ a subfunctor, we say $\mathcal{D}$ is relatively representable to $D$ if the following condition holds: for every pair of morphisms $A \to C \xleftarrow{} B$ in $\CNL_{\Lambda}$ the diagram

\begin{center}

\begin{tikzcd}
\mathcal{D}(A \times_C B) \arrow[r] \arrow[d] & \mathcal{D}(A) \times_{\mathcal{D}(C)} \mathcal{D}(B) \arrow[d] \\
D(A \times_C B) \arrow[r]                     & D(A) \times_{D(C)} D(B)                                       
\end{tikzcd}
\end{center}
is cartesian. 
By Grothendieck's theorem on representability \cite[Proposition 3.1]{SB_1958-1960__5__369_0}, we see that if $D$ itself is representable, then $\mathcal{D}$ is representable if and only if $\mathcal{D}$ is relatively representable to $D$.
If $D$ is represented by $R$ and $\mathcal{D}$ is relatively representable then $\mathcal{D}$ will be represented by a quotient of $R$.

We will use notation similar to \cite[Section 7.1]{surfaces} in defining local and global deformation problems.
\begin{definition}[Local Deformation Problem] \label{local def problem}
Let $v \in S$.
Let $D_v^\square: \CNL_{\Lambda_v} \to \catname{Set}$ denote the functor of liftings for $\restr{\bar{\rho}}{G_{F_v}}: G_{F_v} \to \hat{G}(k)$.
A local deformation problem for $\restr{\bar{\rho}}{G_{F_v}}$ is a subfunctor $\mathcal{D}_v^\square$ of $D_v^\square$ which is relatively representable and is closed under strict equivalence.
\end{definition}

Since $G_{F_v}$ satisfies Mazur's $\Phi_p$-condition (see \cite[1.1]{mazur1989deformingGR}), the functor of lifts $D_v^\square$ is representable (see \cite[Theorem 1.2.2]{balaji2012g}) by some $R_v^{\square} \in \CNL_{\Lambda_v}$.
Hence any local deformation problem will be representable by a quotient of $R_v^{\square}$.
If $\mathcal{D}_v^\square$ is any local deformation problem, we will let $\mathcal{D}_v$ denote the functor of strict equivalence classes of $\mathcal{D}_v^\square$.
In particular, $D_v$ will denote the functor of all deformations of $\bar{\rho}_v$.

\begin{definition}[Global Deformation Problem] \label{global def problem}
A global deformation problem is a tuple
$$\Sglobdp$$
where for $v \in S$, $\mathcal{D}_v^\square$ is a local deformation problem.
For $A \in \CNL_{\Lambda}$, a lift $\rho: G_{F,S} \to \hat{G}(A)$ is of type $\mathcal{S}$ if $\nu \circ \rho$ equals the image of $\psi$ and $\restr{\rho}{G_{F_v}} \in \mathcal{D}_v^\square(A)$ (where we view $A \in \CNL_{\Lambda_v}$ via the natural map $\Lambda_v \to \Lambda$).
We let $\mathcal{D}_{\mathcal{S}}$ denote the functor sending $A \in \CNL_{\Lambda_v}$ to the set of strict equivalence classes of $A$-valued lifts of type $\mathcal{S}$.
\end{definition}

\begin{definition}[$T$-framed deformations] \label{T-framed}
Let $\mathcal{S} = (\bar{\rho},S,\{\Lambda_v\}_{v \in S}, \psi, \{\mathcal{D}_v\}_{v \in S})$ be a global deformation problem and let $T \subset S$.
If $T \subset S$ and $A \in \CNL_{\Lambda}$, a $T$-framed lift of type $\mathcal{S}$ is a tuple $(\rho,\{\gamma_v\}_{v \in T})$ where $\rho: G_{F,S} \to \hat{G}(A)$ is a lift of type $\mathcal{S}$ and $\gamma_v \in \ker(\hat{G}(A) \to \hat{G}(k))$ for every $v \in T$.
Two $T$-framed lifts $(\rho,\{\gamma_v\}_{v \in T})$ and $(\rho',\{\gamma'_v\}_{v \in T})$ are said to be strictly equivalent if there exists $\alpha \in \ker(\hat{G}(A) \to \hat{G}(k))$ such that $\rho' = \alpha \rho \alpha^{-1}$ and $\gamma'_v = \alpha\gamma_v$ for every $v \in T$.
A strict equivalence class of $T$-framed lifts is called a $T$-framed deformation and the functor sending $A$ to the set of $T$-framed deformations is denoted $\mathcal{D}_\mathcal{S}^T$.
\end{definition}

\begin{proposition} \label{representable}
Let $\Sglobdp$ be a global deformation problem.
The functor $\mathcal{D}_\mathcal{S}$ is representable by an object $R_{\mathcal{S}}$ of $\CNL_{\Lambda}$.
If $T \subset S$ then the functor $\mathcal{D}_\mathcal{S}^T$ is representable by an object $R^T_{\mathcal{S}}$ of $\CNL_{\Lambda}$.
\end{proposition}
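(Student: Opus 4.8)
The plan is to realize $R_{\mathcal{S}}$ as a quotient of a universal lifting ring, built in three layers: first represent the functor of all lifts of $\bar{\rho}$ with similitude character $\psi$; then descend to the functor of deformations; then cut out the type-$\mathcal{S}$ condition as a finite intersection of relatively representable subfunctors. The $T$-framed functor will come out of the lifting layer together with a rigidification that uses one framing coordinate.

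First I would record that $G_{F,S}$ satisfies Mazur's finiteness condition $\Phi_p$ of \cite[1.1]{mazur1989deformingGR}: for every open subgroup the group of continuous homomorphisms to $\mathbb{F}_p$ is finite, which for a number field follows from class field theory and Hermite--Minkowski, and for a function field similarly (using $\ch F \neq p$). Exactly as for a local Galois group in \cite[Theorem 1.2.2]{balaji2012g}, this makes the functor $D_{\bar{\rho}}^\square$ of lifts of $\bar{\rho}$ with fixed similitude character $\psi$ representable by some $R_{\bar{\rho}}^\square \in \CNL_{\Lambda}$; imposing $\nu\circ\rho = \psi$ only carves out a relatively representable closed subfunctor of the functor of all lifts and so does not obstruct representability. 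To pass to deformations I would use that $\bar{\rho}$ is absolutely $\hat{G}$-irreducible: this forces the $G_{F,S}$-invariants of $\gk$ to be exactly $\mathfrak{z}(\gk)$, hence $H^0(G_{F,S},\gk/\mathfrak{z}(\gk)) = H^0(G_{F,S},\gk^0) = 0$. That vanishing is precisely what promotes Schlessinger's versal hull to a universal object, as there are then no infinitesimal strict equivalences beyond those coming from the (trivially acting) center; so the functor $D_{\bar{\rho}}$ of deformations of $\bar{\rho}$ of similitude character $\psi$ is representable by some $R_{\bar{\rho}} \in \CNL_{\Lambda}$, over which $R_{\bar{\rho}}^\square$ is formally smooth.

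Next I would impose the local conditions. For each $v \in S$ restriction defines a natural transformation $D_{\bar{\rho}} \to D_v$, and since $\mathcal{D}_v^\square$ is relatively representable in $D_v^\square$ and closed under strict equivalence, $\mathcal{D}_v$ is relatively representable in $D_v$; pulling back along restriction, its preimage in $D_{\bar{\rho}}$ is relatively representable there, since relative representability is preserved by base change. The functor $\mathcal{D}_{\mathcal{S}}$ is the intersection of these finitely many preimages, hence relatively representable in the representable functor $D_{\bar{\rho}}$, and so by Grothendieck's representability theorem \cite[Proposition 3.1]{SB_1958-1960__5__369_0} it is representable, by a quotient $R_{\mathcal{S}}$ of $R_{\bar{\rho}}$. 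The same intersection argument carried out for lifts rather than deformations shows that the functor of lifts of type $\mathcal{S}$ is representable by a quotient $R_{\mathcal{S}}^\square$ of $R_{\bar{\rho}}^\square$.

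For the $T$-framed functor I would note that the group functor $A \mapsto \ker(\hat{G}(A) \to \hat{G}(k))$ on $\CNL_{\Lambda}$ is represented by a power series ring $\Lambda[[t_1,\dots,t_d]]$, $d = \dim \hat{G}$ (the completed local ring of the smooth scheme $\hat{G}_{\Lambda}$ at the identity of its special fibre). Hence the functor of tuples $(\rho,\{\gamma_v\}_{v\in T})$ with $\rho$ of type $\mathcal{S}$ is the product of the functor of type-$\mathcal{S}$ lifts with $|T|$ copies of that formal group, which is representable, and strict equivalence is the diagonal conjugation action of $\ker(\hat{G}(A)\to\hat{G}(k))$ on it. When $T \neq \emptyset$, choosing $v_0 \in T$ and normalizing $\gamma_{v_0} = 1$ gives a natural identification of $\mathcal{D}_{\mathcal{S}}^T$ with the product of the functor of type-$\mathcal{S}$ lifts and $|T|-1$ copies of the formal group, which is representable by an object $R_{\mathcal{S}}^T$ of $\CNL_{\Lambda}$ (a power series ring over $R_{\mathcal{S}}^\square$); when $T = \emptyset$ we have $\mathcal{D}_{\mathcal{S}}^\emptyset = \mathcal{D}_{\mathcal{S}}$, already handled. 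The only step here that is not purely formal is the representability of $D_{\bar{\rho}}$ — upgrading a versal hull to a universal deformation ring — which rests entirely on the vanishing $H^0(G_{F,S},\gk^0)=0$ furnished by absolute $\hat{G}$-irreducibility; everything else is bookkeeping with relatively representable subfunctors.
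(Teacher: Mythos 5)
Your argument is correct and is essentially the paper's: the proof in the paper is a one-line citation to Patrikis (Proposition 9.2), whose argument is exactly this standard machinery keyed on the identity $\gk^{\bar{\rho}(G_{F,S})} = \mathfrak{z}(\gk)$, with the local conditions and the $T$-framing handled by the same relative-representability and rigidification bookkeeping you describe. The only point you gloss is that upgrading the versal hull to a universal deformation ring also uses formal smoothness of $Z(\hat{G})$ (to show inductively that centralizers of lifts over Artinian rings are central), which is automatic here since $p$ is of very good characteristic for $\hat{G}$.
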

\begin{proof}
This can be proved in the same way as the proof of \cite[Proposition 9.2]{patrikis2015}, since $\gk^{\bar{\rho}(G_{F,S})} = \mathfrak{z}(\gk)$.
\end{proof}

Now suppose $v \in S \setminus S_p$ is a place and $\mathcal{D}_v^\square: \CNL_{\Lambda_v} \to \catname{Set}$ is a local deformation problem.
As explained in \cite[7.2.6]{neukirch2013cohomology}, local Tate duality gives a perfect pairing
\begin{align*}
    H^1(G_{F_v},\gk^0) \times H^1(G_{F_v},\gkv) \to H^2(G_{F_v}, \mu),
\end{align*}
where $\mu = \bigcup_{n \in \mathbb{N}} \mu_n(\overline{F_v})$ and
\begin{itemize}
    \item if $F$ is a number field then $H^2(G_{F_v}, \mu) \cong \mathbb{Q}/\mathbb{Z}$,
    \item if $F$ has positive characteristic $l$, then $H^2(G_{F_v}, \mu) \cong \bigcup_{l \nmid n} \frac{1}{n}\mathbb{Z}/\mathbb{Z}$.
\end{itemize}
We will define the Selmer conditions for $\mathcal{D}_v$, which will be subspaces $\mathcal{L}_v \subset H^1(G_{F_v},\gk^0)$ and $\mathcal{L}_v^\perp \subset H^1(G_{F_v},\gkv)$ dual to each other under the local Tate pairing.

\begin{definition} \label{selmer condition}
Take $v$ and $\mathcal{D}_v^\square$ as above and let $\mathcal{D}_v$ be the arising functor of deformations.
Let $\mathcal{D}_v(k[\epsilon])$ be the tangent space of the functor $\mathcal{D}_v$.
The Selmer condition, $\mathcal{L}_v$, is the subspace given by the image of $\mathcal{D}_v(k[\epsilon])$ in $H^1(G_{F_v}, \gk^0)$ under the isomorphism $D_v(k[\epsilon]) \cong H^1(G_{F_v}, \gk^0)$.
The dual Selmer condition, $\mathcal{L}_v^\perp \subset H^1(G_{F_v}, \gkv)$, is the annihilator of $\mathcal{L}_v$ under the Tate pairing.
\end{definition}
Note that any local deformation problem defines a subspace of $H^1(G_{F_v}, \gk^0)$ rather than just $H^1(G_{F_v}, \gk)$, since we have fixed the similitude character $\psi$ (and hence the arising cocyles will have trivial image under projection to $\Lie Z(\hat{G})$).
We can now define the Selmer groups arising from a global deformation problem.
\begin{definition} \label{selmer group}
Let $\Sglobdp$ be a global deformation problem and let $T \subset S$ with $S_p \subset T$.
For $v \in S \setminus T$ let $\mathcal{L}_v$ and $\mathcal{L}_v^\perp$ be the subspaces arising from Definition \ref{selmer condition} applied to $\mathcal{D}_v^\square$.
The Selmer groups for $\mathcal{S}$ and $T$ are given by
\begin{align*}
H^1_{\mathcal{S},T}(\gk^0) &= \ker\left(H^1(G_{F,S},\gk^0) \to \prod_{v \in S \setminus T}\frac{H^1(G_{F_v},\gk^0)}{\mathcal{L}_v} \times \prod_{v \in T} H^1(G_{F_v},\gk^0)\right) \\
H^1_{\mathcal{S}^\perp,T}(\gkv) &= \ker\left(H^1(G_{F,S},\gkv) \to \prod_{v \in S \setminus T}\frac{H^1(G_{F_v},\gkv)}{\mathcal{L}_v^\perp}\right).
\end{align*}
\end{definition}

\subsection{Taylor--Wiles places}\label{section tw places}

We begin this section by giving a modified definition of what it means to be a Taylor--Wiles place.

\begin{definition}[Taylor--Wiles place] \label{TW place}
We say a finite place $v \not\in S$ of $F$ is a Taylor--Wiles place if $q_v \equiv 1 \mod p$ and $\bar{g}_v := \bar{\rho}(\phi_v) \in \hat{G}(k)$ is semisimple where $\phi_v \in G_{F_v}$ is any choice of lift of (geometric) Frobenius element.
Such a place is said to be of level $n \geq 1$ if $q_v \equiv 1 \mod p^n$.
\end{definition}

In \cite[Definition 4.1]{thorne2012}, there is no insistence of Taylor--Wiles places having image of Frobenius with a multiplicity one eigenvalue, as was the case for previous work in the $\hat{G} = \GL_n$ case.
As explained in the introduction, Definition \ref{TW place} is an attempt to make an analogous generalization for a general reductive group $\hat{G}$, by allowing $v$ for which $\bar{g}_v$ is only semisimple rather than regular-semisimple.
It is therefore necessary for us impose a deformation condition at such Taylor--Wiles places $v$ to ensure that the image of inertia subgroup at $v$ is still valued in a torus, as is automatic in the regular-semisimple case.

If $v$ is a Taylor--Wiles place, let $\phi_v \in G_{F_v}$ be a fixed choice of lift of the Frobenius element at $v$.
Let $\bar{g}_v$ be as in Definition \ref{TW place} and let $M_{\bar{g}_v} := Z_{\hat{G}_k}(\bar{g}_v)$ denote the centralizer of $\bar{g}_v$ in $\hat{G}_k$.
These are independent of the choice of $\phi_v$ since $\bar{\rho}$ is unramified at $v$.

\begin{definition}[Taylor--Wiles datum]\label{TW datum}
A Taylor--Wiles datum is a pair
$$
(Q,\{(\hat{T}_v,\hat{B}_v)\}_{v \in Q})
$$
where $Q$ is a set of Taylor--Wiles places and, for each $v \in Q$, $\hat{T}_v \subset \hat{G}_k$ is a split maximal torus defined over $k$ containing $Z(M_{\bar{g}_v})^\circ$ and $\hat{B}_v$ is a Borel subgroup of $\hat{G}_k$ containing $\hat{T}_v$.
\end{definition}

Fixing a maximal torus and Borel subgroup can be seen as analogous to fixing an ordering on the eigenvalues of $g_v$.
Now let $(Q,\{(\hat{T}_v,\hat{B}_v)\}_{v \in Q})$ be a Taylor--Wiles datum.
We will set $\Lambda_v = \CalO$ for $v \in Q$ and define a local deformation problem $\DvTW$ for $v \in Q$.

\begin{definition}\label{deformation_condition}
Let $v \in Q$ be a Taylor--Wiles place.
Define a subfunctor $\DvTW \subset D_v^\square$ whose image for $A \in \CNL_\CalO$ is given by those lifts $\rho_v: G_{F_v} \to \hat{G}(A)$ for which $\rho_v(I_v) \subset Z(M_{g_v})^\circ(A)$.
Here $g_v = \rho_v(\phi_v)$ and $M_{g_v}$ is the subgroup constructed in Theorem \ref{Mg}.
\end{definition}

\begin{remark}
We make the following two remarks on Definition \ref{deformation_condition}.
\begin{enumerate}
    \item Definition \ref{deformation_condition} is independent of the choice of lift of Frobenius, $\phi_v$, by Lemma \ref{change_of_g}.
    \item For $\rho_v \in \DvTW(A)$ the representation has image in $M_{g_v}(A)$ since $G_{F_v}$ is topologically generated by $\phi_v$ and inertia.
\end{enumerate}
\end{remark}

We need to show that $\DvTW$ is actually a functor and is relatively representable to the functor of all lifts.

\begin{lemma} \label{rel rep}
 Let $v$ be a Taylor--Wiles place and $\rho: G_{F_v} \to \hat{G}(A)$ be a lift of $\bar{\rho}_v$ in $\DvTW(A)$. Then
 \begin{enumerate}
     \item \label{i} if $\phi: A \to B$ is a morphism in $\CNL_{\CalO}$ and $\rho': G_{F_v} \to \hat{G}(B)$ denotes the composition of $\rho$ with $\hat{G}(\phi): \hat{G}(A) \to \hat{G}(B)$ then $\rho'$ lies in $\DvTW(B)$;
     \item \label{ii} if $A \to C$ and $B \to C$ are two morphisms in $\CNL_{\CalO}$ and $\rho_B \in \DvTW(B)$ with $\rho$ and $\rho_B$ inducing the same lift to $C$, then the arising representation to $\hat{G}(A \times_C B) \cong \hat{G}(A) \times_{\hat{G}(C)} \hat{G}(B)$ lies in $\DvTW(A \times_C B)$.  
\end{enumerate}
\end{lemma}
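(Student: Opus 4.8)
The plan is to deduce both statements formally from the base-change compatibility of the construction of $M_g$ proved in Lemma~\ref{change_of_ring}, combined with one elementary fact about points valued in a fibre product of rings. That fact is: if $X$ is an affine scheme over $\mathbb{Z}$, if $Z \subset X_{A\times_C B}$ is a closed subscheme, and if we use the canonical identification $X(A \times_C B) = X(A) \times_{X(C)} X(B)$, then a point $x = (x_A, x_B)$ lies in $Z(A\times_C B)$ if and only if $x_A$ lies in $Z_A(A)$ and $x_B$ lies in $Z_B(B)$. I would prove this by writing $X_{A\times_C B} = \Spec \Gamma$ and $Z = \Spec \Gamma/I$, observing that $x$ factors through $Z$ exactly when $x(I) = 0$, that $A \times_C B \hookrightarrow A \times B$ so that this happens exactly when the images of $x(I)$ in $A$ and in $B$ both vanish, and that the extension of $I$ to $\Gamma_A$ (resp. $\Gamma_B$) is precisely the ideal cutting out $Z_A$ (resp. $Z_B$). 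Only part~(ii) will actually need this; part~(i) needs only functoriality of base change.

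For part~(i) — the assertion that $\DvTW$ is a subfunctor, i.e. a morphism $\varphi\colon A \to B$ in $\CNL_\CalO$ carries $\rho \in \DvTW(A)$ to $\rho' := \hat{G}(\varphi)\circ\rho \in \DvTW(B)$ — I would argue as follows. Write $g_v = \rho(\phi_v)$ and $g_v' = \rho'(\phi_v) = \hat{G}(\varphi)(g_v)$. By Lemma~\ref{change_of_ring} the base change along $\varphi$ of $Z(M_{g_v})^\circ$ is $Z(M_{g_v'})^\circ$, compatibly with the inclusions into $\hat{G}_A$ and $\hat{G}_B$; hence $\hat{G}(\varphi)$ sends $Z(M_{g_v})^\circ(A)$ into $Z(M_{g_v'})^\circ(B)$. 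Applying $\hat G(\varphi)$ to the inclusion $\rho(I_v)\subset Z(M_{g_v})^\circ(A)$ then gives $\rho'(I_v)\subset Z(M_{g_v'})^\circ(B)$, which is what is required.

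For part~(ii), I would let $\rho''\colon G_{F_v}\to\hat G(A\times_C B)$ be the representation with components $\rho$ and $\rho_B$ and set $g_v'' = \rho''(\phi_v)$, corresponding to $(g_v, g_{v,B})$ under the identification $\hat{G}(A\times_C B) = \hat{G}(A)\times_{\hat{G}(C)}\hat{G}(B)$, where $g_{v,B}=\rho_B(\phi_v)$. Applying Lemma~\ref{change_of_ring} to the two projections $A\times_C B\to A$ and $A\times_C B\to B$ gives $Z(M_{g_v''})^\circ_A = Z(M_{g_v})^\circ$ and $Z(M_{g_v''})^\circ_B = Z(M_{g_{v,B}})^\circ$. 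Since $Z(M_{g_v''})^\circ$ is a closed subgroup scheme of $\hat G_{A\times_C B}$, the elementary fact applies: for each $\gamma\in I_v$ the element $\rho''(\gamma) = (\rho(\gamma),\rho_B(\gamma))$ lies in $Z(M_{g_v''})^\circ(A\times_C B)$ precisely because $\rho(\gamma)\in Z(M_{g_v})^\circ(A)$ and $\rho_B(\gamma)\in Z(M_{g_{v,B}})^\circ(B)$, both of which hold by hypothesis. Hence $\rho''(I_v)\subset Z(M_{g_v''})^\circ(A\times_C B)$, so $\rho''\in\DvTW(A\times_C B)$.

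I expect the only genuine difficulty to be bookkeeping rather than mathematics: one must make sure Lemma~\ref{change_of_ring} is invoked for honest morphisms in $\CNL_\CalO$ (so first check that $A\times_C B$ lies in $\CNL_\CalO$ with its projections being such morphisms, or else pass to Artinian quotients, which is all that Grothendieck's criterion requires), and be careful that the fibre-product identity for points is used for the closed subscheme $Z(M_{g_v''})^\circ$ of $\hat G_{A\times_C B}$ rather than for a scheme over $\mathbb Z$. Together with closure under strict equivalence — immediate from the first part of Lemma~\ref{change_of_g}, which gives $Z(M_{\Ad(\alpha)g_v})^\circ = \Ad(\alpha)\bigl(Z(M_{g_v})^\circ\bigr)$ for $\alpha\in\ker(\hat G(A)\to\hat G(k))$ — parts~(i) and~(ii) will show that $\DvTW$ is relatively representable to $D_v^\square$ by Grothendieck's representability theorem \cite[Proposition~3.1]{SB_1958-1960__5__369_0}, hence is a local deformation problem in the sense of Definition~\ref{local def problem}.
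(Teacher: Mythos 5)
Your proof is correct and takes essentially the same approach as the paper: part (i) is exactly the immediate application of Lemma \ref{change_of_ring}, and for part (ii) the paper likewise uses Lemma \ref{change_of_ring} to identify the base changes of $Z(M_{g''})^\circ$ along the projections and then concludes from the formal identity $Y(A \times_C B) = Y(A) \times_{Y(C)} Y(B)$ for the affine closed subscheme $Y = Z(M_{g''})^\circ$ of $\hat{G}_{A \times_C B}$, which is the same content as your ``elementary fact'' about points valued in a fibre product of rings.
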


\begin{proof}
For \ref{i}, Lemma \ref{change_of_ring} immediately shows that $\rho'(I_v) \subset Z(M_{\rho'(\phi_v)})^\circ(B)$ and hence $\rho'$ lies in $\DvTW(B)$.
Let $D$ denote the fibre product $A \times_C B$.
For \ref{ii} it will suffice to show that under the isomorphism $\hat{G}(D) \cong \hat{G}(A) \times_{\hat{G}(C)} \hat{G}(B)$ we have $Z(M_{g_{D}})^\circ(D)$ is mapped to $Z(M_{g_A})^\circ(A) \times_{Z(M_{g_C})^\circ(C)} Z(M_{g_B})^\circ(B)$, where $g_R$ denotes the image of $\phi_v$ for the aforementioned representations with values in $\hat{G}(R)$.
By Lemma \ref{change_of_ring} again we have $Z(M_{g_{D}})^\circ(A) = Z(M_{g_A})^\circ(A)$, and similarly for $B,C$. So denoting $Y = Z(M_{g_{D}})^\circ$ we need $Y(D) = Y(A) \times_{Y(C)} Y(B)$, which indeed holds.
\end{proof}

From this we deduce $\DvTW$ is relatively representable to $D_v$ and hence the following corollary.

\begin{corollary}
The functor $\DvTW$ defines a local deformation problem, represented by an algebra $R_{v}^{\textup{TW}} \in \CNL_{\CalO}$.
\end{corollary}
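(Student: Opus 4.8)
The plan is to check the three requirements in Definition \ref{local def problem} --- that $\DvTW$ is a subfunctor of $D_v^\square$, that it is closed under strict equivalence, and that it is relatively representable to $D_v^\square$ --- and then to invoke the representability formalism recalled before Definition \ref{local def problem}: since $D_v^\square$ is itself representable (by $R_v^\square \in \CNL_\CalO$, as $G_{F_v}$ satisfies Mazur's $\Phi_p$-condition, \cite[Theorem 1.2.2]{balaji2012g}), relative representability of $\DvTW$ forces it to be represented by a quotient $R_v^{\textup{TW}}$ of $R_v^\square$, which will again lie in $\CNL_\CalO$.

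First I would verify functoriality, so that $\DvTW$ is really a subfunctor. Given $\rho \in \DvTW(A)$ and a morphism $\varphi \colon A \to B$ in $\CNL_\CalO$, set $\rho' = \hat{G}(\varphi)\circ\rho$, $g_v = \rho(\phi_v)$ and $g_v' = \rho'(\phi_v)$; then $\rho'(I_v) = \varphi(\rho(I_v)) \subset \varphi(Z(M_{g_v})^\circ(A))$, which is contained in $Z(M_{g_v'})^\circ(B)$ because $Z(M_{g_v})^\circ_B = Z(M_{g_v'})^\circ$ by Lemma \ref{change_of_ring}. This is exactly part \ref{i} of Lemma \ref{rel rep}. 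For closure under strict equivalence, let $\alpha \in \ker(\hat{G}(A) \to \hat{G}(k))$ and $\rho \in \DvTW(A)$; the Frobenius value of $\alpha\rho\alpha^{-1}$ is $\alpha g_v \alpha^{-1}$, and Lemma \ref{change_of_g}\,\ref{conjugate g} gives $M_{\alpha g_v \alpha^{-1}} = \Ad(\alpha)(M_{g_v})$, hence $Z(M_{\alpha g_v \alpha^{-1}})^\circ = \Ad(\alpha)(Z(M_{g_v})^\circ)$; therefore $(\alpha\rho\alpha^{-1})(I_v) = \alpha\,\rho(I_v)\,\alpha^{-1} \subset Z(M_{\alpha g_v\alpha^{-1}})^\circ(A)$, so $\alpha\rho\alpha^{-1} \in \DvTW(A)$.

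For relative representability I would use the cartesian-square criterion stated before Definition \ref{local def problem}. For morphisms $A \to C \leftarrow B$ in $\CNL_\CalO$, the identification $\hat{G}(A\times_C B) \cong \hat{G}(A)\times_{\hat{G}(C)}\hat{G}(B)$ already yields $D_v^\square(A\times_C B) = D_v^\square(A)\times_{D_v^\square(C)}D_v^\square(B)$, so the square for $\DvTW$ over $D_v^\square$ is cartesian exactly when a lift $\rho_D$ to $A\times_C B$ lies in $\DvTW$ if and only if both pushforwards $\rho_A, \rho_B$ do. The ``only if'' direction is the functoriality just established; the ``if'' direction is part \ref{ii} of Lemma \ref{rel rep}, whose proof reduces, via Lemma \ref{change_of_ring}, to identifying all the tori in sight with base changes of the single $(A\times_C B)$-group scheme $Y = Z(M_{g_D})^\circ$ and then to the tautology $Y(A\times_C B) = Y(A)\times_{Y(C)}Y(B)$, valid because $Y$ is an affine scheme over $A \times_C B$ and maps out of a fibre coproduct of rings form the fibre product of the map sets.

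Finally, by Grothendieck's representability theorem \cite[Proposition 3.1]{SB_1958-1960__5__369_0}, relative representability of $\DvTW$ together with representability of $D_v^\square$ makes $\DvTW$ representable by a quotient $R_v^{\textup{TW}}$ of $R_v^\square$, which is again an object of $\CNL_\CalO$. The only point requiring care is the compatibility of the assignment $g \mapsto Z(M_g)^\circ$ with base change along $\CNL_\CalO$-morphisms and with conjugation by $\ker(\hat{G}(A)\to\hat{G}(k))$; but both are already recorded in Lemmas \ref{change_of_g} and \ref{change_of_ring}, so the present argument is essentially an assembly of those functoriality statements with the general deformation-theoretic formalism, and I do not expect any substantive obstacle beyond bookkeeping.
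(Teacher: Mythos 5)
Your argument is correct and follows essentially the same route as the paper: the paper proves exactly your two verification steps as Lemma \ref{rel rep} (base-change stability via Lemma \ref{change_of_ring}, and the fibre-product statement reduced to $Y(A\times_C B)=Y(A)\times_{Y(C)}Y(B)$ for $Y=Z(M_{g_D})^\circ$) and then deduces the corollary from representability of $D_v^\square$. Your explicit check of closure under strict equivalence via Lemma \ref{change_of_g}\,\ref{conjugate g} is left implicit in the paper but is the right way to complete the verification of Definition \ref{local def problem}.
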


Now let $\mathcal{S} = (\bar{\rho},S,\{\Lambda_v\}_{v \in S}, \psi, \{\mathcal{D}_v\}_{v \in S})$ be a global deformation problem.
We can form the augmented global deformation problem
$$\mathcal{S}_Q = (\bar{\rho},S \cup Q,\{\Lambda_v\}_{v \in S \cup Q}, \psi, \{\mathcal{D}_v\}_{v \in S \cup Q}),$$ where for every $v \in Q$ we take $\mathcal{D}_v = \DvTW$ (and $\Lambda_v = \CalO$, as before).
Hence for any $T \subset S$, the functors $\mathcal{D}_{\mathcal{S}_Q}$ and $\mathcal{D}_{\mathcal{S}_Q}^T$ are representable by respective objects $R_{\mathcal{S}_Q}, R_{\mathcal{S}_Q}^T \in \CNL_{\Lambda}$ by Proposition \ref{representable}.

Having now shown our deformation problem is representable, we can explore the Selmer conditions $\mathcal{L}_v$ and $\mathcal{L}_v^\perp$ arising from our local deformation problem $\DvTW$ at $v \in Q$.
From this we can find conditions for existence of sets of Taylor--Wiles places with generating sets for the universal deformation rings of our augmented deformation problems as small as possible, by controlling the arising dual Selmer groups.

For $v \in Q$, $\mathcal{L}_v$ is given by those classes in $H^1(G_{F_v},\gk^0)$ whose restriction to inertia lies in $\Lie Z(M_{\bar{g}_v})$. 
Indeed, we have by Corollary \ref{inertial_selmer} that the condition of a deformation to $k[\epsilon]$ defining a cocycle whose restriction to inertia lies in $\Lie Z(M_{\bar{g}_v})$ is equivalent to a representative lift $\rho$ having image of inertia in $Z(M_{\rho(\phi_v)})(k[\epsilon])$.

If $M$ is any $G_{F_v}$-module write $H^1_{\text{ur}}(G_{F_v},M) = \ker(\res: H^1(G_{F_v},M) \to H^1(I_v,M))$.
Let $N_v$ denote the $G_{F_v}$-submodule $\Lie Z(M_{\bar{g}_v}) \cap \gk^0$.
Then we may write
\begin{equation}
    \mathcal{L}_v = H^1_{\text{ur}}(G_{F_v},\gk^0) + H^1(G_{F_v},N_v).
\end{equation}
Indeed, if $\xi \in \mathcal{L}_v$ then we can choose $\gamma \in H^1(G_{F_v},N_v)$ with $\res(\gamma) = \res(\xi)$ (as $N_v$ is a trivial $G_{F_v}$-module and so any homomorphism $I_v \to N_v$ can be extended to $G_{F_v}$ by local class field theory). 
Then $\xi = (\xi - \gamma) +\gamma$ shows $\xi \in H^1_{\text{ur}}(G_{F_v},\gk^0) + H^1(G_{F_v},N_v)$.

We will further simplify this to a direct sum of terms whose dimensions can be computed.
Consider the following decomposition of $\gk$ into $G_{F_v}$-submodules:
\begin{equation} \label{gk decomposition}
        \gk^0 = N_v \oplus M_v \oplus L_{1,v}
\end{equation}
where $\gk = L_{0,v} \oplus L_{1,v}$ is the $\Ad(\bar{g}_v)$-invariant decomposition of $\gk^0$ into its eigenvalue $1$ subspace and complement, and $M_v$ is any choice of vector space complement to $N_v \subset L_{0,v}$ (necessarily a $G_{F_v}$-module, since $L_{0,v}$ is a trivial $G_{F_v}$-module).
Note then that since $Z(\hat{G}_k) \subset M_{\bar{g}_v}$ we have $\Lie M_{\bar{g}_v} / \Lie Z(\hat{G}_k) \cong L_{0,v}$.
We now claim that $H^1(G_{F_v},L_{1,v}) = 0$.
Indeed, by the Euler characteristic formula \cite[Lemma 2.9]{adt} we have $h^1(G_{F_v},L_{1,v}) = h^0(G_{F_v},L_{1,v}) + h^2(G_{F_v},L_{1,v})$. We have in turn by Tate duality that $h^2(G_{F_v},L_{1,v}) = h^0(G_{F_v},L_{1,v}^\vee(1))$.
Both $L_{1,v}$ and its Tate dual have trivial $\phi_v$-invariants (as the residual $p$-adic cyclotomic character is trivial on $G_{F_v}$, since $q_v \equiv 1 \mod p$) and so indeed $h^1(G_{F_v},L_{1,v}) = 0$.
So we can rewrite 
\begin{equation} \label{Lv decomposition}
    \mathcal{L}_v = H^1(G_{F_v},N_v) \oplus H^1_{\text{ur}}(G_{F_v},M_v).
\end{equation}

Now let $z = \dim Z(\hat{G}_k)$, $l_v = \dim_k(L_{0,v}) = \dim M_{\bar{g}_v} - z$ and $n_v = \dim_k(N_v) = \dim Z(M_{\bar{g}_v}) - z$.
We will compute $\dim_k(\mathcal{L}_v)$.
We will firstly show that $h^1(G_{F_v},k) = 2$ and $h^1_{\text{ur}}(G_{F_v},k) = 1$. The former follows by the local Euler characteristic formula and Tate duality:
\begin{align*}
h^1(G_{F_v},k) &= h^0(G_{F_v},k) +h^2(G_{F_v},k) \\
                &= 1 +h^0(G_{F_v},k^\vee(1)) = 2
\end{align*}
since the residual $p$-adic cyclotomic character is trivial.
The latter is exactly $\dim_k(\Hom(G_{F_v}/I_v,k))$, which equals 1 since any such homomorphism is determined by the image of Frobenius, of which all may occur.
Since $\mathcal{L}_v = H^1(G_{F_v},N_v) \oplus H^1_{\text{ur}}(G_{F_v},M_v)$ for trivial $G_{F_v}$-modules $N_v$ and $M_v$ of respective dimensions $n_v$ and $l_v-n_v$, it follows that 
\begin{equation} \label{dimension of Lv}
   \dim_k \mathcal{L}_v = l_v + n_v. 
\end{equation}

We will now consider the dual Selmer conditions $\mathcal{L}_v^\perp$, given by annihilators of $\mathcal{L}_v$ under the Tate pairing.
If $M$ is any $G_{F_v}$-module then the annihilator of $H^1_{\text{ur}}(G_{F_v},M)$ in $H^1(G_{F_v},M^\vee(1))$ under this pairing is given by the subspace $H^1_{\text{ur}}(G_{F_v},M^\vee(1))$.
We therefore see from equations \ref{gk decomposition} and \ref{Lv decomposition} that
$$\mathcal{L}_v^\perp = 
H^1\left(G_{F_v},M_v^\vee(1) \oplus L_{1,v}^\vee(1)\right) \cap \left(H^1_{\text{ur}}(G_{F_v}, M_v^\vee(1)) \oplus 
H^1(G_{F_v}, N_v^\vee(1) \oplus L_{1,v}^\vee(1)\right).$$
Since we already saw that $H^1(G_{F_v},L_{1,v}^\vee(1)) = 0$, we conclude that 
\begin{equation} \label{dual selmer condition}
\mathcal{L}_v^\perp = H^1_{\text{ur}}(G_{F_v},M_v^\vee(1)).
\end{equation}
Having explicitly computed the dual Selmer condition for a Taylor--Wiles deformation, we obtain a better understanding of the arising dual Selmer groups.

\begin{lemma} \label{dual selmer ur in Q}
Let $\mathcal{S} = (\bar{\rho},S,\{\Lambda_v\}_{v \in S}, \psi, \{\mathcal{D}_v\}_{v \in S})$ a global deformation problem and $T \subset S$ with $S_p \subset T$.
Let $Q$ be any set of Taylor--Wiles places and let $\mathcal{S}_Q$ denote the augmented global deformation problem.
Then
$$
H^1_{\mathcal{S}_Q^\perp,T}(\gkv) \cong \ker\left(H^1(G_{F,S},\gkv) \to \prod_{v \in Q} H^1_{\text{ur}}(G_{F_v},N_v^\vee(1)) \times \prod_{v \in S\setminus T} \frac{H^1(G_{F_v},\gkv)}{\mathcal{L}_v^\perp}\right).
$$
\end{lemma}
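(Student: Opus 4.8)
The plan is to use the inflation--restriction sequence to replace the source group $H^1(G_{F,S\cup Q},\gkv)$ implicit in $H^1_{\mathcal{S}_Q^\perp,T}(\gkv)$ by the smaller group $H^1(G_{F,S},\gkv)$, and to check that under this replacement the dual Selmer conditions at the Taylor--Wiles places $v\in Q$ become exactly the vanishing of the $N_v^\vee(1)$-component. Unwinding Definition \ref{selmer group}, $H^1_{\mathcal{S}_Q^\perp,T}(\gkv)$ is the kernel of $H^1(G_{F,S\cup Q},\gkv)\to \prod_{v\in Q} H^1(G_{F_v},\gkv)/\mathcal{L}_v^\perp \times \prod_{v\in S\setminus T} H^1(G_{F_v},\gkv)/\mathcal{L}_v^\perp$, with no condition at $v\in T$.

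First I would record the local picture at $v\in Q$. By equation \ref{gk decomposition} there is a $G_{F_v}$-stable decomposition $\gk^0 = N_v\oplus M_v\oplus L_{1,v}$, hence $\gkv = N_v^\vee(1)\oplus M_v^\vee(1)\oplus L_{1,v}^\vee(1)$; since $H^1(G_{F_v},L_{1,v}^\vee(1))=0$ (shown in the derivation of equation \ref{dual selmer condition}) we get $H^1(G_{F_v},\gkv) = H^1(G_{F_v},N_v^\vee(1))\oplus H^1(G_{F_v},M_v^\vee(1))$, and likewise for unramified cohomology. By equation \ref{dual selmer condition}, $\mathcal{L}_v^\perp = H^1_{\text{ur}}(G_{F_v},M_v^\vee(1))$, so a class whose restriction to $G_{F_v}$ lies in $\mathcal{L}_v^\perp$ is in particular unramified at $v$ and has trivial $N_v^\vee(1)$-component. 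Moreover, since $v\notin S$, the inertia $I_v$ maps trivially into $G_{F,S}$, so for any $\xi\in H^1(G_{F,S},\gkv)$ the restriction $\xi|_{G_{F_v}}$ is automatically unramified, lying in $H^1_{\text{ur}}(G_{F_v},N_v^\vee(1))\oplus H^1_{\text{ur}}(G_{F_v},M_v^\vee(1))$.

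Next I would apply inflation--restriction to $1\to\Gamma\to G_{F,S\cup Q}\to G_{F,S}\to 1$, where $\Gamma = \Gal(F_{S\cup Q}/F_S)$ acts trivially on the $G_{F,S}$-module $\gkv$: this gives $0\to H^1(G_{F,S},\gkv)\to H^1(G_{F,S\cup Q},\gkv)\to \Hom_{G_{F,S}}(\Gamma^{\mathrm{ab}}\otimes\mathbb{Z}_p,\gkv)$. The crucial input is that $\Gamma$, as a closed normal subgroup of $\Gal(F_{S\cup Q}/F)$, is topologically generated by the inertia subgroups $I_v$ for $v\in Q$, which is immediate from the maximality in the definitions of $F_S$ and $F_{S\cup Q}$ together with the fact that $F_S$ is unramified at every $v\in Q$; hence a class $\tilde\xi$ is inflated from $H^1(G_{F,S},\gkv)$ if and only if $\tilde\xi|_{I_v}=0$ for all $v\in Q$, and inflation is injective. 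Combining with the previous step: every class in $H^1_{\mathcal{S}_Q^\perp,T}(\gkv)$ is unramified at all $v\in Q$, hence the inflation of a unique $\xi\in H^1(G_{F,S},\gkv)$; conversely, for $\xi\in H^1(G_{F,S},\gkv)$ and $v\in Q$, the restriction of the inflation to $G_{F_v}$ equals $\xi|_{G_{F_v}}$, which is unramified, and it lies in $\mathcal{L}_v^\perp = H^1_{\text{ur}}(G_{F_v},M_v^\vee(1))$ precisely when its $N_v^\vee(1)$-component vanishes, i.e. when $\xi$ maps to $0$ in $H^1_{\text{ur}}(G_{F_v},N_v^\vee(1))$; for $v\in S\setminus T$ the local restriction of $\xi$ and of its inflation coincide, so the condition is unchanged. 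This is exactly the kernel on the right-hand side, and as all maps are $k$-linear the resulting bijection is a $k$-linear isomorphism.

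The only non-formal step, and hence the main obstacle, is the descent claim that $\Gamma$ is topologically normally generated by inertia at $Q$, which is what makes the inflation--restriction sequence collapse the source group down to $H^1(G_{F,S},\gkv)$; this is the standard reduction used throughout the Taylor--Wiles literature, and I would either cite it or spell out the short argument from the universal properties of $F_S$ and $F_{S\cup Q}$.
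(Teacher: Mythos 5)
Your proof is correct and follows essentially the same route as the paper: classes in $H^1_{\mathcal{S}_Q^\perp,T}(\gkv)$ are unramified at $v\in Q$ because $\mathcal{L}_v^\perp\subset H^1_{\text{ur}}(G_{F_v},\gkv)$, inflation--restriction descends them to $H^1(G_{F,S},\gkv)$, and the splitting $H^1(G_{F_v},\gkv)=H^1(G_{F_v},N_v^\vee(1))\oplus H^1(G_{F_v},M_v^\vee(1))$ identifies the condition at $v\in Q$ with vanishing in $H^1_{\text{ur}}(G_{F_v},N_v^\vee(1))$. The only cosmetic difference is that you perform the descent in one step using that $\Gal(F_{S\cup Q}/F_S)$ is topologically normally generated by inertia above $Q$, whereas the paper removes the places of $Q$ one at a time by an inductive inflation--restriction argument.
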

\begin{proof}
Recall by Definition \ref{selmer group} that
$$H^1_{\mathcal{S}_Q^\perp,T}(\gkv) = \ker\left(H^1(G_{F,S \cup Q}, \gkv) \to \prod_{v \in Q \cup S \setminus T} \frac{H^1(G_{F_v},\gkv)}{\mathcal{L}_v^\perp}\right),$$
where for $v \in Q$ we saw that $\mathcal{L}_v^\perp = H^1_{\text{ur}}(G_{F_v},M_v^\vee(1)) \leq H^1_{\text{ur}}(G_{F_v},\gkv)$.
We therefore see that any class $[\psi] \in H^1_{\mathcal{S}_Q^\perp,T}(\gkv)$ has $\res_{G_{F_v}} [\psi] \in H^1_{\text{ur}}(G_{F_v},\gkv)$.

We will firstly show that any class $[\psi] \in H^1_{\mathcal{S}_Q^\perp,T}(\gkv)$ lies in the subgroup $H^1(G_{F,S},\gkv) \leq H^1(G_{F,S \cup Q},\gkv)$. 
Let $Q = \{v_1,\ldots,v_r\}$ and for $0 \leq t \leq r$ let $Q_t = \{v_1,\ldots,v_t\}$
We show that if $[\psi] \in H^1(G_{F,S \cup Q_t},\gkv)$ for some $t \geq 1$ then in fact $[\psi] \in H^1(G_{F,S \cup Q_{t-1}},\gkv)$. To simplify notation, let $v$ denote $v_t$.
Inflation-restriction gives an exact sequence
\begin{align*}
    0 \to H^1(G_{F,S \cup Q_{t-1}},\gkv) \to H^1(G_{F,S \cup Q_t},\gkv) \to H^1(I_{v,t},\gkv),
\end{align*}
where $I_{v,t}$ is an inertia subgroup at a place of $F_{S \cup Q_{t-1}}$ above $v$ in $F_{S \cup Q_t}$.
Since $I_{v,t}$ may be identified as a quotient of $I_v$ (noting that, as $F_{S \cup Q_{t-1}}/F$ is unramified at $v$, we see that $I_{v,t}$ is equal to the inertia subgroup at $v$ in $F_{S \cup Q_t}$), we have an inclusion $H^1(I_{v,t},\gkv) \hookrightarrow H^1(I_v,\gkv)$.
Under this inclusion, $\res_{I_{v,t}} [\psi]$ has image $0$, since this coincides with $\res_{I_v} [\psi] = 0$ and we saw that $[\psi]$ defines an unramified class at $v$.
Thus $[\psi]$ naturally lies in $H^1(G_{F,S \cup Q_{t-1}},\gkv)$ completing the induction.

We next claim that if $v \in Q$, then $$\frac{H^1_{\text{ur}}(G_{F_v},\gkv)}{H^1_{\text{ur}}(G_{F_v},\gkv) \cap \mathcal{L}_v^\perp} \cong H^1_{\text{ur}}(G_{F_v},N_v^\vee(1)).$$
The result would then follow, since we already saw that any class in $H^1_{\mathcal{S}_Q^\perp,T}(\gkv)$ is unramified at $v$.
The claim follows immediately on writing $H^1(G_{F_v},\gkv) = H^1(G_{F_v},N_v^\vee(1)) \oplus H^1(G_{F_v},M_v^\vee(1))$, where we have recalled that $H^1(G_{F_v}, L_v^\vee(1)) = 0$.
\end{proof}

We now consider conditions under which we can show existence of sets of Taylor--Wiles places of level $N$ with vanishing dual Selmer group.
Our first notion is a modification of the definition of an abundant subgroup \cite[Definition 5.18]{thorne2019} to allow for our images of Frobenius elements to be only semisimple elements of $\hat{G}(k)$.

\begin{restatable}{definition}{adequate}
\label{adequate}
We say $H \leq \hat{G}(k)$ is $\hat{G}$-adequate if the following conditions hold:
\begin{enumerate}
    \item The following groups vanish
    \begin{enumerate}
        \item $H^0(H,\gk^{0,\vee})$ \label{H0 vanish}
        \item $H^1(H,k)$ \label{homs to k vanish}
        \item $H^1(H,\gk^{0,\vee})$
    \end{enumerate}
    \item For every non-zero simple $k[H]$-submodule $W \leq \gk^{0,\vee}$ there exists a semisimple element $h \in H$ such that for some $w \in W$ and $z \in \Lie Z(M_{h}) \cap \gk^0$ we have $w(z) \neq 0$. \label{submodule condition}
\end{enumerate}
\end{restatable}

When $M_{h}$ is connected we may write $\Lie Z(M_{h}) \cap \gk^0 = \mathfrak{z}((\gk^0)^h)$ by Lemma \ref{lie algebra mg commute center}.
If moreover $h \in H$ is regular semisimple we recover the contribution of $h$ in the original definition of $\hat{G}$-abundant (when $\hat{G}$ is semisimple and simply connected): 
the centralizer of $h$ is a maximal torus and $\mathfrak{z}((\gk^0)^h) = (\gk^0)^h$, so taking $z \in (\gk^0)^h$ and $w \in W$ with $w(z) \neq 0$ shows the restriction of $w$ to $W^h$ is non-zero.

The following lemma has an identical proof to that of \cite[Lemma 7.5.9]{surfaces}.
\begin{lemma} \label{lemma for tw primes}
Let $n \geq 1$.
Suppose that $H^0(\bar{\rho}(G_{F(\zeta_{p^n})}), \gk^{0,\vee}) = 0$.
Let $L$ be the fixed field of the module $\gk^{0,\vee}$.
Suppose that one of the following conditions holds
\begin{enumerate}
    \item $\zeta_p \not\in L$
    \item $H^1(\bar{\rho}(G_{F(\zeta_{p^n})}), \gk^{0,\vee}) = 0.$
\end{enumerate}
Then $H^1(\Gal(L(\zeta_{p^n})/F), \gk^{0,\vee}(1)) = 0.$
\end{lemma}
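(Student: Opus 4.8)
The plan is to compute $H^1(\Gal(L(\zeta_{p^n})/F),\gkv)$ via inflation--restriction, choosing the normal subgroup differently in the two cases. Write $M=L(\zeta_{p^n})$ and $G=\Gal(M/F)$, and note that $L/F$ is Galois, being the fixed field of a homomorphism out of $G_F$. I would first record two preliminary observations. First, $\Delta:=\Gal(M/L)$ is abelian, being identified by restriction with a subgroup of the abelian group $\Gal(F(\zeta_{p^n})/F)$, and it acts trivially on $\gk^{0,\vee}$, hence acts on $\gkv$ through a character $\theta\colon\Delta\to k^\times$, namely the restriction of the mod $p$ cyclotomic character; moreover $\theta$ is trivial if and only if $\zeta_p\in L$. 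Second, the action of $G_{F(\zeta_{p^n})}$ on $\gk^{0,\vee}$ has image $H:=\Gal(M/F(\zeta_{p^n}))$, so there is a surjection $\bar\rho(G_{F(\zeta_{p^n})})\twoheadrightarrow H$ compatible with the action on $\gk^{0,\vee}$, and $H$ fixes $\zeta_{p^n}$ and hence acts on $\gkv$ exactly as on $\gk^{0,\vee}$. In particular $(\gk^{0,\vee})^{H}=H^0(\bar\rho(G_{F(\zeta_{p^n})}),\gk^{0,\vee})=0$ by hypothesis.

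Suppose first that $\zeta_p\notin L$, so that $\theta$ is nontrivial. Inflation--restriction for $1\to\Delta\to G\to\Gal(L/F)\to 1$ gives
\begin{equation*}
0\to H^1\bigl(\Gal(L/F),(\gkv)^{\Delta}\bigr)\to H^1(G,\gkv)\to H^1(\Delta,\gkv).
\end{equation*}
Since $\Delta$ acts on $\gkv$ by the nontrivial scalar $\theta$ we have $(\gkv)^{\Delta}=0$, killing the left term. Writing $\Delta=\Delta_p\times\Delta'$ with $\Delta_p$ its $p$-Sylow subgroup, any homomorphism from the $p$-group $\Delta_p$ to $k^\times$ is trivial, so $\theta$ is nontrivial on $\Delta'$, whose order is prime to $p=\ch k$. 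Hence $H^{\bullet}(\Delta',\gkv)=0$ in every degree, and the Lyndon--Hochschild--Serre spectral sequence for $\Delta'\trianglelefteq\Delta$ forces $H^1(\Delta,\gkv)=0$. Thus $H^1(G,\gkv)=0$.

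Now suppose instead that $H^1(\bar\rho(G_{F(\zeta_{p^n})}),\gk^{0,\vee})=0$. Inflation--restriction for $1\to H\to G\to\Gal(F(\zeta_{p^n})/F)\to 1$ gives
\begin{equation*}
0\to H^1\bigl(\Gal(F(\zeta_{p^n})/F),(\gkv)^{H}\bigr)\to H^1(G,\gkv)\to H^1(H,\gkv).
\end{equation*}
Since $H$ acts on $\gkv$ as on $\gk^{0,\vee}$, we have $(\gkv)^{H}=(\gk^{0,\vee})^{H}=0$ and $H^1(H,\gkv)\cong H^1(H,\gk^{0,\vee})$; the latter injects, by inflation along $\bar\rho(G_{F(\zeta_{p^n})})\twoheadrightarrow H$, into $H^1(\bar\rho(G_{F(\zeta_{p^n})}),\gk^{0,\vee})=0$. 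Hence $H^1(G,\gkv)=0$ again.

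The only points requiring care here are bookkeeping ones: tracking which subgroup acts trivially on the Tate twist as opposed to on $\gk^{0,\vee}$ itself, and the harmless passage between the cohomology of $G_{F(\zeta_{p^n})}$ and that of its image in $\hat{G}(k)$. The one genuinely arithmetic input is the identification of $H=\Gal(M/F(\zeta_{p^n}))$ with the image of $\bar\rho(G_{F(\zeta_{p^n})})$ acting on $\gk^{0,\vee}$, and the one piece of group theory needed is that a finite $p$-group has no nontrivial character into $k^\times$.
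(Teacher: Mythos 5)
Your proof is correct. The paper does not write out an argument here — it simply says the lemma "can be proved in the same way as" \cite[Lemma 7.5.9]{surfaces} — and your two-case inflation–restriction argument (restricting along $\Gal(L(\zeta_{p^n})/L)$ when $\zeta_p\notin L$, where the module is a nontrivial scalar twist, and along $\Gal(L(\zeta_{p^n})/F(\zeta_{p^n}))$ in the other case, using that $\ker\bar{\rho}\subset G_L$ to inflate to $\bar{\rho}(G_{F(\zeta_{p^n})})$) is precisely the standard proof that reference uses, so you have essentially reproduced the intended argument with the details filled in.
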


Let us now recall a form of the Chebotarev density theorem, which we will use to relate the existence of desired semisimple elements to a Frobenius element at some place of $F$.

\begin{theorem}[Chebotarev density theorem]
Let $T$ be a finite set of places of $F$. Then the set of Frobenius elements of the form $\{\Frob_w\}$ for $w$ a place of the maximal extension of $F$ unramified outside $T$ not dividing $T$ is a dense subset of $G_{F,T}$.
\end{theorem}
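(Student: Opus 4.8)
The plan is to bootstrap this statement from the classical Chebotarev density theorem for finite Galois extensions. First I would unwind the profinite topology on $G_{F,T}$: a neighbourhood basis of a point $g \in G_{F,T}$ is given by the cosets $g\,\Gal(F_T/L)$, where $L$ ranges over the finite Galois subextensions $F \subseteq L \subseteq F_T$ and $\Gal(F_T/L)$ is the corresponding open normal subgroup, with $G_{F,T}/\Gal(F_T/L) = \Gal(L/F)$. Hence density of the set of Frobenius elements $\{\Frob_w\}$ is equivalent to the following assertion: for every $g \in G_{F,T}$ and every such $L$, there is a finite place $v \notin T$ of $F$ and a place $w$ of $F_T$ above $v$ such that the image of $\Frob_w$ in $\Gal(L/F)$ equals $\bar g := g|_L$.

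Before applying Chebotarev I would record that $\Frob_w$ is well defined for $w$ lying above a finite place $v \notin T$: by definition $F_T/F$ is unramified at $v$, so the decomposition group $D_w \subseteq G_{F,T}$ at $w$ is a quotient of $\widehat{\mathbb{Z}}$, hence procyclic, and is topologically generated by a canonical Frobenius element $\Frob_w$ (characterised by its action on residue fields); replacing $w$ by another place of $F_T$ above $v$ conjugates $\Frob_w$.

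Then I would invoke the classical Chebotarev density theorem for the finite Galois extension $L/F$, which is unramified outside $T$ by construction, in the form that every conjugacy class $C \subseteq \Gal(L/F)$ is the Frobenius conjugacy class of infinitely many places $v \notin T$; this is available both when $F$ is a number field and, via the geometric Chebotarev theorem, when $F$ is a global function field. Apply this with $C$ the conjugacy class of $\bar g$ to obtain such a $v$; by choosing the place of $L$ above $v$ suitably we may arrange that its Frobenius is exactly $\bar g$ rather than merely conjugate to it, and lifting to a place $w$ of $F_T$ we get $\Frob_w \in g\,\Gal(F_T/L)$, as required.

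I do not expect a genuine obstacle here: the entire content is the translation between density in the profinite group $G_{F,T}$ and the classical statement about its finite quotients $\Gal(L/F)$, the only point requiring care being that Chebotarev pins down Frobenius only up to conjugacy, which is precisely why one uses the freedom to move the place of $F_T$ over a given $v$. If one wishes to remain self-contained, the classical theorem and its function-field analogue are cited as a black box.
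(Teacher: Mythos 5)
Your argument is correct: reducing density in the profinite group $G_{F,T}$ to hitting every coset of every open normal subgroup $\Gal(F_T/L)$, then invoking the classical Chebotarev density theorem for the finite Galois extension $L/F$ (unramified outside $T$) and using the freedom to vary the place above $v$ to realise the exact element $\bar g$ rather than just its conjugacy class, is the standard and complete way to obtain this statement, and your remarks on the function-field case and on compatibility of $\Frob_w$ with restriction are the right points of care. The paper simply recalls this form of the theorem as a known result without proof, so there is no alternative argument to compare against; your proposal supplies exactly the expected reduction to the finite-level theorem.
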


We are now ready to prove the standard result on the existence of sets of Taylor--Wiles places with vanishing dual Selmer group under suitable conditions on the image of $\bar{\rho}$.

\begin{proposition}\label{TW places existence}
Let $\mathcal{S} = (\bar{\rho},S,\{\Lambda_v\}_{v \in S}, \psi, \{\mathcal{D}_v\}_{v \in S})$ be a global deformation problem and let $T \subset S$ with $S_p \subset T$.
Let $n \geq 1$.
Suppose that $\bar{\rho}(G_{F(\zeta_{p^n})}) \subset \hat{G}(k)$ satisfies
condition \ref{submodule condition} of Definition \ref{adequate}, and if $\zeta_p$ is in the fixed field of $\gk^{0,\vee}$ then suppose further that $H^1(\bar{\rho}(G_{F(\zeta_{p^n})}), \gk^{0,\vee}) = 0$.
Let $q \geq h^1(G_{F,S},\gkv)$.
Then there exists a set $Q$ of Taylor--Wiles places of level $n$ such that 
\begin{itemize}
    \item $H^1_{\mathcal{S}_Q^\perp,T}(\gkv) = 0$
    \item $\#Q = q.$
\end{itemize}
\end{proposition}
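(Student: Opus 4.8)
The plan is the standard Taylor--Wiles construction: produce $Q$ by adjoining Taylor--Wiles places of level $n$ one at a time, each chosen so as to shrink the dual Selmer group, using Lemma~\ref{dual selmer ur in Q} to reduce to a local nonvanishing statement which is then arranged via condition~\ref{submodule condition} and the Chebotarev density theorem. Put $d = \dim_k H^1_{\mathcal{S}^\perp,T}(\gkv)$; since this space injects into $H^1(G_{F,S},\gkv)$ we have $d \le q$. First I would show by induction on $j$ that for $0 \le j \le d$ there is a set $Q_j$ of $j$ Taylor--Wiles places of level $n$ with $\dim_k H^1_{\mathcal{S}_{Q_j}^\perp,T}(\gkv) \le d - j$; the case $j = d$ gives vanishing dual Selmer group. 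Finally, adjoining further Taylor--Wiles places can only shrink $H^1_{\mathcal{S}_Q^\perp,T}(\gkv)$ (Lemma~\ref{dual selmer ur in Q} shows the relevant kernel only acquires conditions), and infinitely many Taylor--Wiles places of level $n$ exist by Chebotarev applied to the field cut out by $\bar\rho$ and $\zeta_{p^n}$ (such a place has trivial, hence semisimple, Frobenius image), so we may enlarge $Q_d$ to a set $Q$ with $\#Q = q$, avoiding $S$ and previously-chosen places, with $H^1_{\mathcal{S}_Q^\perp,T}(\gkv) = 0$.

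For the inductive step suppose $d - j \ge 1$ and fix $0 \ne [\phi] \in H^1_{\mathcal{S}_{Q_j}^\perp,T}(\gkv) \subseteq H^1(G_{F,S},\gkv)$. By Lemma~\ref{dual selmer ur in Q}, $H^1_{\mathcal{S}_{Q_j \cup \{v\}}^\perp,T}(\gkv)$ is the kernel of the local restriction map $H^1_{\mathcal{S}_{Q_j}^\perp,T}(\gkv) \to H^1_{\text{ur}}(G_{F_v},N_v^\vee(1))$, so it suffices to find a Taylor--Wiles place $v \notin S \cup Q_j$ of level $n$ for which $[\phi]$ has nonzero image in $H^1_{\text{ur}}(G_{F_v},N_v^\vee(1)) \cong N_v^\vee$; unwinding the identifications (and using $q_v \equiv 1 \bmod p$, so that $\Frob_v = \bar g_v$ acts trivially on $N_v$) this amounts to asking that $\phi(\phi_v)|_{N_v} \ne 0$, $[\phi]$ being automatically unramified at $v$ as a class for $G_{F,S}$. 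Let $L$ be the fixed field of $\gk^{0,\vee}$. By Lemma~\ref{lemma for tw primes} (whose hypotheses are guaranteed by the assumptions of the proposition) and inflation--restriction, $\phi|_{G_{L(\zeta_{p^n})}}$ is a nonzero homomorphism $G_{L(\zeta_{p^n})} \to \gkv$, the action being trivial there; the cocycle relation shows its image $V$ is a nonzero $k[\bar\rho(G_{F(\zeta_{p^n})})]$-submodule of $\gkv$, which we identify with a submodule of $\gk^{0,\vee}$ (the Tate twist being trivial on $G_{F(\zeta_{p^n})}$). Choosing a simple submodule $W \subseteq V$ and applying condition~\ref{submodule condition}, we obtain a semisimple $h \in \bar\rho(G_{F(\zeta_{p^n})})$, an element $w \in W \subseteq V$, and $z \in N_h := \Lie Z(M_h) \cap \gk^0$ with $w(z) \ne 0$; in particular $w|_{N_h} \ne 0$.

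It then remains to realize $h$ as a Frobenius with the cocycle value in the right position. Choose $\sigma_0 \in G_{F,S}$ restricting to the identity on $F(\zeta_{p^n})$ with $\bar\rho(\sigma_0) = h$ (possible as $h \in \bar\rho(G_{F(\zeta_{p^n})})$). For $\tau \in G_{L(\zeta_{p^n})}$ the element $\sigma_0\tau$ still lies in $G_{F(\zeta_{p^n})}$ (as $L \supseteq F$), and since $p$ is very good for $\hat G$ and $\bar\rho(G_{L(\zeta_{p^n})})$ acts trivially on $\gk^{0,\vee}$, we have $\bar\rho(\tau) \in Z(\hat G)(k)$; hence $\bar\rho(\sigma_0\tau) = h\,\bar\rho(\tau)$ is semisimple with centralizer $M_h$, so $\Lie Z(M_{\bar\rho(\sigma_0\tau)}) \cap \gk^0 = N_h$. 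Moreover $\phi(\sigma_0\tau) = \phi(\sigma_0) + h\cdot\phi(\tau)$, and $\{\,h\cdot\phi(\tau) : \tau \in G_{L(\zeta_{p^n})}\,\} = h\cdot V = V$; since $V$ contains $w$ with $w|_{N_h} \ne 0$, we may pick $\tau$ so that $\sigma := \sigma_0\tau$ satisfies $\phi(\sigma)|_{N_h} \ne 0$. Let $N/F$ be the finite Galois extension cut out by $g \mapsto (1 + \epsilon\phi(g))\bar\rho(g) \in \hat G(k[\epsilon])$ together with $F(\zeta_{p^n})$, and apply Chebotarev to the conjugacy class of the image of $\sigma$ in $\Gal(N/F)$: this yields infinitely many places $v$, one of which we take outside $S \cup Q_j$, admitting a Frobenius lift $\phi_v$ conjugate to $\sigma$ in $\Gal(N/F)$. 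Such a $v$ splits completely in $F(\zeta_{p^n})$, so is of level $n$; $\bar g_v := \bar\rho(\phi_v)$ is an $\hat G(k)$-conjugate of the semisimple element $\bar\rho(\sigma)$, so $v$ is a Taylor--Wiles place and $N_v$ is the corresponding conjugate of $N_h$; and tracing through the conjugation, $\phi(\phi_v)$ is, modulo $(\bar g_v - 1)\gkv$, a nonzero scalar multiple of the matching $\Ad$-conjugate of $\phi(\sigma)$, whence $\phi(\phi_v)|_{N_v} \ne 0$. This completes the induction, and with it the proof.

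I expect the main obstacle to lie in the last paragraph: one must arrange simultaneously that (i) the Frobenius image $\bar g_v$ is a prescribed semisimple element --- so that the local condition of Definition~\ref{deformation_condition}, governed by $M_{\bar g_v}$, is under control --- (ii) $v$ is split in $F(\zeta_{p^n})$, giving level $n$, and (iii) $\phi(\phi_v)$ pairs nontrivially with $N_v = \Lie Z(M_{\bar g_v}) \cap \gk^0$. The freedom to adjust $\phi(\sigma)$ by the submodule $V$ is what makes (iii) compatible with (i)--(ii), and it is precisely condition~\ref{submodule condition}, applied to a simple submodule of $V$, that guarantees $V$ contains a functional nonvanishing on $N_h$ for some semisimple $h$ in the image; this is the one place where our weakened big-image hypothesis enters, in contrast to the regular-semisimple case where $N_h$ equals all of $(\gk^0)^h$ and the condition is automatic from irreducibility. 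The remaining verifications --- that each nontrivial step strictly lowers the dual Selmer dimension, and that enough level-$n$ places remain at each stage --- are immediate from Lemma~\ref{dual selmer ur in Q} and Chebotarev respectively.
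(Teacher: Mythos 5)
Your proof is correct and follows essentially the same route as the paper: the same induction via Lemma \ref{dual selmer ur in Q}, the same use of Lemma \ref{lemma for tw primes} to get a nonzero homomorphism on $G_{L(\zeta_{p^n})}$, the same application of condition \ref{submodule condition} to a simple submodule of its image, and the same $\tau$-adjustment before invoking Chebotarev. The only cosmetic difference is at the end: the paper chooses Frobenius elements with $\bar{\rho}(\Frob_v)=\bar{\rho}(\sigma)$ and $\psi(\Frob_v)=\psi(\sigma)$ exactly (density of Frobenii in the open set), whereas you work with conjugacy classes in a finite quotient and then check, correctly, that the conjugation and coboundary terms do not affect nonvanishing on $N_v$.
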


\begin{proof}
By Lemma \ref{dual selmer ur in Q}, it will suffice to find for each non-zero class $[\psi] \in H^1(G_{F,S},\gkv)$ infinitely many Taylor--Wiles places $v$ of level $n$ with $[\psi]$ having non-zero image in $H^1_{\text{ur}}(G_{F_v},N_v^\vee(1))$.
Indeed, suppose that $t < q$ and that we have inductively found Taylor--Wiles places $Q_t = \{v_1,\ldots,v_t\}$ of level $n$ for which $h^1_{\mathcal{S}_{Q_t}^\perp,T}(\gkv) \leq q-t$.
If there still exists $0 \neq [\psi] \in H^1_{\mathcal{S}_{Q_t}^\perp,T}(\gkv)$ then we can find a Taylor--Wiles place $v_{t+1} \not\in Q_t$ of level $n$ for which $[\psi]$ has non-zero image in $H^1_{\text{ur}}(G_{F_v},N_{v_{t+1}}^\vee(1))$.
It then follows that $h^1_{\mathcal{S}_{Q_{t+1}}^\perp,T}(\gkv) \leq q-t-1$.
So inductively we can always find some $t \leq q$ and $Q_t$ for which $h^1_{\mathcal{S}_{Q_t}^\perp,T}(\gkv) = 0$, and then we are done on extending $Q_t$ by any other $q-t$ choices of Taylor--Wiles places of level $n$.

So let $[\psi] \in H^1(G_{F,S},\gkv)$ be a non-zero class and suppose that we can always find $\sigma \in G_{F(\zeta_{p^n})}$ such that $\bar{\rho}(\sigma)$ is semisimple and $\psi(\sigma)(N) \neq 0$, where $N = \Lie Z(Z_{\hat{G}_k}(\bar{\rho}(\sigma))) \cap \gk^0$.
We claim that by the Chebotarev density theorem we can find infinitely many places $v \not\in S$ with $\bar{\rho}(\sigma) = \bar{\rho}(\Frob_v)$, $\psi(\sigma) = \psi(\Frob_v)$ and $\Frob_v \in G_{F(\zeta_{p^n})}$. 
Such a $v$ would be a Taylor--Wiles place of level $n$ with $N_v = N$ as above and with $[\psi]$ having non-zero image in $H^1(G_{F_v},N_v^\vee(1))$.
Indeed, consider the non-empty open subset $U$ of $G_{F,S}$ given by the intersection of $\bar{\rho}^{-1} \{\bar{\rho}(\sigma) \}$, $\Gal(F_S/F(\zeta_{p^n}))$ and $\psi^{-1} \{\psi(\sigma)\}$. 
The Frobenius elements are dense in $G_{F,S}$ by Chebotarev, so there exists (infinitely many) Frobenius elements $\Frob_v$ satisfying $\bar{\rho}(\Frob_v) = \bar{\rho}(\sigma)$, $\psi(\Frob_v) = \psi(\sigma)$ and $\Frob_v \in G_{F(\zeta_{p^n})}$.

We now show the existence of such a $\sigma$.
Let $F_n = F(\zeta_{p^n})$ and let $H = \bar{\rho}(G_{F_n})$. 
Let $L$ be the fixed field of $\gk^{0,\vee}(1)$ and let $L_n = L(\zeta_{p^n})$.
We will show that the restriction of $\psi$ to $G_{L_n,S}$ is a non-zero homomorphism which we denote $f: G_{L_n,S} \to \gkv$. 
This would follow immediately from injectivity of the restriction map $$H^1(G_{F,S},\gkv) \to H^1(G_{L_n,S},\gkv)^{G_F}.$$
By inflation-restriction, the kernel of this map is given by
$$H^1(\Gal(L(\zeta_{p^n})/F), \gk^{0,\vee}(1))
$$
which therefore vanishes by Lemma \ref{lemma for tw primes}.

Thus $f(G_{L_n})$ is non-zero and moreover is a $G_{F_n,S}$-module, as for $\sigma \in G_{L_n}$ and $\tau \in G_{F_n,S}$ we have
\begin{align*}
    \psi(\tau \sigma \tau^{-1}) &= \psi(\tau) + \tau \psi(\sigma\tau^{-1}) \\
    &= \psi(\tau) + \tau(\psi(\sigma)+ \sigma \psi(\tau^{-1})) \\
    &= \psi(\tau) + \tau \psi(\sigma) + \tau \psi(\tau^{-1}) \\
    &= \psi(\id) + \tau \psi(\sigma) \\
    &= \tau \psi(\sigma).
\end{align*}
Hence we can take a non-zero simple $H$-submodule $W \leq f(G_{L_n,S})$. Now use that $H$ satisfies condition \ref{submodule condition} of Definition \ref{adequate} to find a $\sigma_0 \in H$ with $\bar{\rho}(\sigma_0)$ semisimple such that there exists a $w \in W$ and $z \in \Lie Z(Z_{\hat{G}_k}(\bar{\rho}(\sigma_0))) \cap \gk^0$ with $w(z) \neq 0$. 
We may write $w = \psi(\sigma)$ for some $\sigma \in G_{L_n,S}$. 
We claim that there exists $\tau \in G_{L_n}$ with $\psi(\tau \sigma_0)(\Lie Z(Z_{\hat{G}_k}(\bar{\rho}(\sigma_0))) \cap \gk^0) \neq 0$. If $\psi(\sigma_0)(z) \neq 0$ then we're done.
Otherwise $\psi(\tau \sigma_0) = \psi(\tau) + \psi(\sigma_0)$. 
So choose $\tau$ to be such that $\psi(\tau) = w$ and let $\sigma = \tau \sigma_0$.
Then $\psi(\sigma)(z) = \psi(\tau)(z) = w(z) \neq 0$, and $\Lie Z(Z_{\hat{G}_k}(\bar{\rho}(\sigma_0))) \cap \gk^0 = \Lie Z(Z_{\hat{G}_k}(\bar{\rho}(\sigma))) \cap \gk^0$, so we're done. 
\end{proof}

The hypotheses of Proposition \ref{TW places existence} motivates the following definition, which is a weakening of the notion of vast Galois representations of \cite[Definition 7.5.6]{surfaces}.

\begin{definition} \label{reasonable}
A Galois representation
$$
\bar{\rho}: G_F \to \hat{G}(k)
$$
is said to be $\hat{G}$-reasonable if the following conditions hold for all sufficiently large $n$
\begin{enumerate}
    \item $\bar{\rho}(G_{F(\zeta_{p^n})}) \subset \hat{G}(k)$ satisfies conditions \ref{H0 vanish} and \ref{submodule condition} of Definition \ref{adequate}
    \item if $\zeta_p$ is in the fixed field of $\gk^{0,\vee}$ then $H^1(\bar{\rho}(G_{F(\zeta_{p^n})}), \gk^{0,\vee}) = 0$.
   \end{enumerate}
\end{definition}

Note that while Definition \ref{adequate} is a condition on a subgroup, Definition \ref{reasonable} is a condition on a representation $\bar{\rho}$.
We can relate these notions as follows

\begin{lemma} \label{adequate implies reasonable}
Suppose that $H^1(\bar{\rho}(G_{F(\zeta_p)}), k) = 0$.
Then for every $n \geq 1$, $\bar{\rho}(G_{F(\zeta_p)}) = \bar{\rho}(G_{F(\zeta_{p^n})})$.
In particular, if $\bar{\rho}(G_{F(\zeta_p)})$ is $\hat{G}$-adequate then $\bar{\rho}$ is $\hat{G}$-reasonable.
\end{lemma}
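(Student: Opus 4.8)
The plan is to establish the two displayed assertions separately; the first carries all the content, and the second is a formal matching-up of the labelled conditions in Definitions~\ref{adequate} and~\ref{reasonable}. For the first assertion, fix $n \geq 1$, write $H = \bar\rho(G_{F(\zeta_p)})$ and $N = \bar\rho(G_{F(\zeta_{p^n})})$, and observe that $N$ is a normal subgroup of $H$ (being the image of the normal subgroup $G_{F(\zeta_{p^n})}$ of $G_{F(\zeta_p)}$, the extension $F(\zeta_{p^n})/F(\zeta_p)$ being Galois), with $H/N$ a quotient of $\Gal(F(\zeta_{p^n})/F(\zeta_p))$. This last group is a finite $p$-group: its order divides $[\mathbb{Q}(\zeta_{p^n}):\mathbb{Q}(\zeta_p)] = p^{n-1}$ in the number field case, and is likewise a $p$-power in the function field case. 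I would then argue by contradiction: if $N \neq H$, the nontrivial finite $p$-group $H/N$ has nontrivial abelianization and hence admits a surjection onto $\mathbb{Z}/p\mathbb{Z}$; composing $H \twoheadrightarrow H/N \twoheadrightarrow \mathbb{Z}/p\mathbb{Z} \hookrightarrow (k,+)$ produces a nonzero element of $\Hom(H,k) = H^1(H,k)$ (with $k$ carrying the trivial action), contradicting the hypothesis $H^1(\bar\rho(G_{F(\zeta_p)}),k) = 0$. Therefore $N = H$, which is the first claim.

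For the second assertion, suppose $H := \bar\rho(G_{F(\zeta_p)})$ is $\hat{G}$-adequate. Condition~\ref{homs to k vanish} of Definition~\ref{adequate} then gives $H^1(H,k) = 0$, so the first assertion yields $\bar\rho(G_{F(\zeta_{p^n})}) = H$ for every $n \geq 1$; in particular $\bar\rho(G_{F(\zeta_{p^n})})$ is $\hat{G}$-adequate for every $n$. It remains to read off the two items of Definition~\ref{reasonable}. Item~(1) asks that $\bar\rho(G_{F(\zeta_{p^n})})$ satisfy conditions~\ref{H0 vanish} and~\ref{submodule condition} of Definition~\ref{adequate}, which holds since this group equals $H$ and $H$ is $\hat{G}$-adequate. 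Item~(2) asks that $H^1(\bar\rho(G_{F(\zeta_{p^n})}),\gk^{0,\vee}) = 0$ whenever $\zeta_p$ lies in the fixed field of $\gk^{0,\vee}$; but $H^1(\bar\rho(G_{F(\zeta_{p^n})}),\gk^{0,\vee}) = H^1(H,\gk^{0,\vee})$, which vanishes by part~(c) of condition~(1) of $\hat{G}$-adequacy for $H$, unconditionally. Since both items hold for every $n \geq 1$, they hold for all sufficiently large $n$, so $\bar\rho$ is $\hat{G}$-reasonable.

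There is no genuine obstacle here; the only step requiring a moment's thought is in the first assertion, where one must recognize that the failure of $N = H$ forces a nonzero homomorphism $H \to k$ — that is, recall the identification $H^1(H,k) = \Hom(H,k)$ for the trivial module and the fact that any nontrivial finite $p$-group quotient of $H$ yields such a homomorphism. Everything else is bookkeeping between the conditions of Definitions~\ref{adequate} and~\ref{reasonable}.
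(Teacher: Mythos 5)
Your proof is correct and follows essentially the same route as the paper: the key point in both is that $\bar\rho(G_{F(\zeta_{p^n})})$ is normal of $p$-power index in $\bar\rho(G_{F(\zeta_p)})$, so inequality would produce a nontrivial homomorphism to $\mathbb{Z}/p\mathbb{Z}$ (hence a nonzero class in $H^1(\bar\rho(G_{F(\zeta_p)}),k)$), contradicting the hypothesis; the deduction of $\hat{G}$-reasonableness is then the same bookkeeping with Definitions \ref{adequate} and \ref{reasonable}. Your spelling out of the abelianization step and the verification of item (2) via condition (1)(c) of adequacy just makes explicit what the paper leaves implicit.
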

\begin{proof}
We have that $\bar{\rho}(G_{F(\zeta_{p^n})})$ is a normal subgroup of $\bar{\rho}(G_{F(\zeta_p)})$ of $p$-power index.
If they weren't equal, there would therefore exist a non-trivial homomorphism $\bar{\rho}(G_{F(\zeta_p)}) \to \mathbb{Z}/p\mathbb{Z}$, which would contradict the hypothesis that $H^1(\bar{\rho}(G_{F(\zeta_p)}), k) = 0$.
\end{proof}

\subsection{Dual groups and the $\Delta_Q$-action}\label{dual group deltaq}

To begin this section, we will recall some properties of (pinned) root data over a general scheme from \cite{sga3} and the relationship to reductive group schemes.
Let $S$ be a scheme and $T$ a torus over $S$.
We recall the notion of a root datum in $T$ over $S$ from \cite[Exp XXII Définition 1.9]{sga3}.
This consists of
\begin{itemize}
    \item a finite subgroup scheme $\mathcal{R} \subset \Hom_{S-gp}(T,\mathbb{G}_{m,S})$
    \item a finite subgroup scheme $\mathcal{R}^\vee \subset \Hom_{S-gp}(\mathbb{G}_{m,S},T)$
    \item an isomorphism $\mathcal{R} \to \mathcal{R}^\vee$ denoted $\alpha \mapsto \alpha^\vee$
\end{itemize}
such that for every $S' \to S$
\begin{itemize}
    \item for every $\alpha \in \mathcal{R}(S')$ we have $\alpha \circ \alpha^\vee = 2$
    \item for every $\alpha,\beta \in \mathcal{R}(S')$ we have 
\begin{align*}
    \alpha - (\beta^\vee,\alpha) \beta \in \mathcal{R}(S') \quad \alpha^\vee - (\alpha^\vee,\beta) \beta^\vee \in \mathcal{R}^\vee(S').
\end{align*}
\end{itemize}
The root datum is called reduced if whenever $S' \to S$ is non-empty and $\alpha \in \mathcal{R}(S')$ we have $2 \alpha \not\in \mathcal{R}(S')$.
Suppose now that the $T$ torus is split, so that we can identify the characters and cocharacters of $T$ with finite free $\mathbb{Z}$-modules.
Then $\mathcal{R}$ (resp. $\mathcal{R}'$) can be identified with a finite subset $\Phi \subset X^*(T)$ (resp. $\Phi^\vee \subset X_*(T)$).
A set of simple roots $\Delta \subset \Phi$ is a subset such that each $\alpha \in \Phi$ can be written uniquely as a linear combination of elements of $\Delta$ with integral coefficients all of the same sign. 
A pinned root datum over $S$ is then the data of a root datum together with a system of simple roots (\cite[Exp XXIII 1.5]{sga3}).
We can denote such a pinned root datum by a tuple $$R = (X^*(T),\Phi,\Delta, X_*(T), \Phi^\vee, \Delta^\vee),$$ where $\Delta^\vee = \{\alpha^\vee : \alpha \in \Delta\}$.
We can then form the dual root datum
$$
R^\vee = ( X_*(T), \Phi^\vee, \Delta^\vee,X^*(T),\Phi,\Delta)
$$
given by interchanging the roles of $X^*(T)$ and $X_*(T)$.

Suppose now that $G$ is a reductive group scheme over $S$ together with a split maximal torus $T$.
A pinning of $G$ with respect to $T$, as in \cite[Exp XXIII Définition 1.1]{sga3} is the data of a root basis $\Delta$ of the arising set of roots together with sections $X_\alpha \in \Gamma(S,\mathfrak{g}_\alpha)^\times$ for every $\alpha \in \Delta$.
The functor assigning a pinned reductive group $G$ over $S$ to its pinned root datum $R(G)$ defines an equivalence of categories to the category of pinned reduced root data over $S$, as stated in \cite[Exp XXV Théorème 1.1]{sga3}.
The data of the sections $X_{\alpha}$ is to ensure faithfulness of this functor and does not change the arising pinned root datum.
If $B \subset G$ is a Borel subgroup containing $T$, then $B$ determines a unique choice of root basis $\Delta$ by insisting $\Delta \subset \Phi^+$, the positive roots with respect to $B$. Hence the choice of $B$ determines a pinned root datum.
We note that if we have an isomorphism of pinned root data then it is unique, by \cite[Exp XXIII Corollaire 5.5]{sga3}.
Given a reduced pinned root datum then, we can view this as a reduced pinned root datum over $\mathbb{Z}$ and hence this determines (up to isomorphism) a reductive group scheme over $\mathbb{Z}$ together with a split maximal torus and Borel subgroup containing it.

Now let $\hat{T} \subset \hat{B} \subset \hat{G}$ be a choice of split maximal torus and Borel subgroup of $\hat{G}$, determining a pinned root datum $R(\hat{G})$ and dual root datum $R(\hat{G})^\vee$.
Let $G$ be a reductive group scheme defined over either $\mathcal{O}_F$ in the case that $F$ is a number field, or $\mathbb{F}_q$ in the function field case, together with a split maximal torus $T$ and Borel subgroup $B$ with root datum $R(G)$ and suppose there exists an isomorphism $R(G) \cong R(\hat{G})^\vee$.
We see that for every such choice of split maximal torus $\hat{T}$ and Borel subgroup $\hat{B}$ containing $\hat{T}$ we get unique isomorphisms $X^*(\hat{T}) \cong X_*(T)$ and $X_*(\hat{T}) \cong X^*(T)$.
We will now suppose that we have fixed such a $\hat{T}$ and $\hat{B}$ and let $\iota$ denote this isomorphism of pinned root data.

Suppose that $\hat{P}$ is a standard parabolic subgroup of $\hat{G}$ (so $\hat{B} \subset \hat{P}$) and $\hat{L}$ is a standard Levi factor of $\hat{P}$ (so $\hat{T} \subset \hat{L}$).
Then the isomorphism of root data determines dual subgroups $P$ and $L$ of $G$, which are a standard parabolic and standard Levi subgroup respectively.
We will explain this now, following the explanation of \cite[2.3]{schmidtgsp4}.
Let $\hat{\Delta}$ be the set of simple roots of $\hat{G}$, and $\hat{\Delta}_{\hat{P}} \subset \hat{\Delta}$ be the set of simple roots corresponding to $\hat{P}$ under the bijection between subsets of simple roots and standard parabolic subgroups.
We can then form the parabolic subgroup of $P \subset G$ dual to $\hat{P}$ by defining $P$ to be the standard parabolic subgroup of $G$ corresponding to the subset of simple roots given by the image of $\hat{\Delta}^\vee_{\hat{P}}$ under $\iota$.
The dual Levi subgroup $L$ can then be taken to be the unique Levi factor containing the maximal torus $T$.
There will then be an isomorphism of pinned root data between that of $L$ and $\hat{L}$ induced by $\iota$, where the pinned root datum of $L$ (resp $\hat{L}$) is defined via the maximal torus $T$ (resp. $\hat{T}$) and Borel subgroup $B \cap L$ (resp. $\hat{B} \cap \hat{L}$).

There is also a duality induced between $Z(\hat{G})^\circ$ and $C_G$.
Recall here that the cocenter $C_G = G/G^{\der}$ is the quotient of $G$ by its derived group.
We may view $X_*(Z(\hat{G})^\circ)$ as a submodule of $X_*(\hat{T})$ and $X^*(C_G)$ as a submodule of $X^*(T)$; we claim that these submodules are identified via $\iota$.
This is immediate from the characterizations of these submodules as the annihilators of the roots of $\hat{G}$ (resp. coroots of $G$).
This description of the coroots of the connected center follows from viewing $Z(\hat{G}) = \cap_{\alpha \in \hat{\Delta}} \ker(\alpha)$, a consequence of $Z(\hat{G})$ being the kernel of the adjoint representation of $\hat{G}$.
This characterization of the cocenter, on the other hand, follows from the start of the proof of \cite[5.3.1]{conrad}.

Suppose now that $(Q,\{(\hat{T}_v,\hat{B}_v)\}_{v \in Q})$ is a Taylor--Wiles datum and let $v \in Q$.
The data of the split maximal torus $\hat{T}_v$ and Borel subgroup $\hat{B}_v$ thus determines unique isomorphisms $X^*(T) \cong X_*(\hat{T}_v)$ and $X_*(T) \cong X^*(\hat{T}_v)$ which we will denote by $\iota_v$ in both cases.
Let $\bar{g}_v = \bar{\rho}(\phi_v)$ and recall that the centralizer of $\bar{g}_v$ in $\hat{G}_k$ is a closed subgroup $M_{\bar{g}_v}$ of $\hat{G}_k$ with reductive identity component containing the torus $\hat{T}_v$.
It is not, in general, a Levi subgroup of $\hat{G}_k$ (consider, for example, a semisimple element of $\Sp_4(k)$ of order $2$ with $\ch k \neq 2$: its centralizer will be isomorphic to $\SL_2 \times \SL_2$, which is not the centralizer of any torus).
However, we can use the connected center of $M_{\bar{g}_v}$ to construct a Levi subgroup $\hat{L}_v$ of $\hat{G}_k$.

\begin{definition}[$\hat{L}_v$ and $L_v$] \label{Lv dual levi}
Let $\hat{L}_v = Z_{\hat{G}_k}(Z(M_{\bar{g}_v})^{\circ})$ denote the centralizer of the identity component of the center of $M_{\bar{g}_v}$.
This is a standard Levi subgroup of $\hat{G}_k$.
Let $L_v$ denote the dual Levi subgroup of $G$ arising from $\iota_v$.
\end{definition}

Note that $\hat{L}_v$ will always contain $M_{\bar{g}_v}$ and their connected centers will coincide.
As above, $\iota_v$ allows us to identify $X_*(Z(M_{\bar{g}_v})^{\circ}) \subset X_*(\hat{T}_v)$ with $X^*(C_{L_v}) \subset X^*(T)$.

Suppose now that $\rho_v \in \DvTW(A)$ for some $A \in \CNL_\CalO$ and let $g_v = \rho(\phi_v)$.
Then since $\rho_v$ lifts the unramified representation $\bar{\rho}_v$, we have by definition of $\DvTW$ that $\restr{\rho}{I_v}$ is valued in $\ker(Z(M_{g_v})(A) \to Z(M_{g_v})(k))$, where $M_{g_v}$ is as in Theorem \ref{Mg}.
This group is pro-$p$ and abelian, so $\restr{\rho}{I_v}$ factors through the maximal $p$-power quotient of $k(v)^\times$, viewing $k(v)^\times$ as a quotient of $I_v$.
We thus obtain a map in $\Hom(k(v)^\times, Z(M_{g_v})^{\circ}(A))$.
Given $\alpha \in X_*(C_{L_v})$ we have $\iota_v(\alpha) \in X^*(Z(M_{\bar{g}})^{\circ})$ which lifts to a unique homomorphism of $A$-tori, $Z(M_g)^{\circ} \to \mathbb{G}_{m,A}$.
We can thus define a bijection
\begin{align} \label{tensor hom adjunction}
    \Hom(k(v)^\times, Z(M_{g_v})^{\circ}(A)) &\to \Hom(X_*(C_{L_v}) \otimes k(v)^\times, A^\times) \\
    \varphi &\mapsto (\alpha \otimes x \mapsto \iota_v(\alpha)(\varphi(x))) \nonumber
\end{align}
which can be thought as an instance of tensor-hom adjunction.
Thus we obtain a map $C_{L_v}(k(v)) \to A^\times$, and it can be checked that this map factors through the maximal $p$-power quotient of $C_{L_v}(k(v))$.
Since $\restr{\psi}{I_v}$ is trivial, this map is also trivial on the image of $Z(G)(k(v))$ in $C_{L_v}(k(v))$, as $\iota_v$ induces an isomorphism $X_*(Z(G)^{\circ}) \cong X^*(C_{\hat{G}})$.
Note that since $p$ is a prime of very good characteristic, the images of $Z(G)^{\circ}(k(v))$ and $Z(G)(k(v))$ in the maximal $p$-power order quotient of $C_{L_v}(k(v))$ coincide.

We thus define $\Delta_v$ to be the maximal quotient of $C_{L_v}(k(v))$ which is of $p$-power order and for which the image of $Z(G)(k(v))$ is trivial.
From this discussion, we see that $\rho$ induces a group homomorphism $\Delta_v \to A^\times$, and it is clear that this construction is functorial in $A$.
The assignment of a representation to the $\Delta_v$-action is also constant on strict equivalence classes.
Indeed, if $\gamma \in \ker(\hat{G}(A) \to \hat{G}(k))$, set $g_v' = \Ad(\gamma)(g_v)$.
Then $\Ad(\gamma)(Z(M_g)) = Z(M_{g'})$ by Lemma \ref{change_of_g} and hence $\gamma$ will define an isomorphism $X^*(Z(M_{g_v})) \to X^*(Z(M_{g_v'}))$ which is the identity on reduction modulo $m_A$.

Associated to the Taylor--Wiles datum $(Q,\{(\hat{T}_v,\hat{B}_v)\}_{v \in Q})$, we can now set $\Delta_Q = \prod_{v \in Q} \Delta_v$, a product of cyclic $p$-groups.
The number of cyclic factors of $\Delta_Q$ is bounded by $\#Q \cdot (\rank(\hat{G}) - \rank(Z(\hat{G})^\circ))$.
Note that in previous implementations of the Taylor--Wiles method (in which $\overline{\rho}(\Frob_v)$ is required to be regular semisimple for a Taylor--Wiles prime $v$), this upper bound would further be an equality.
Suppose that $\Sglobdp$ is any global deformation problem, $T \subset S$ is any subset, and consider $\mathcal{S}_Q$, the augmented deformation problem.
If $\rho$ defines a class in $\mathcal{D}^T_{\mathcal{S}_Q}(A)$ then we obtain a map $\Delta_v \to A^\times$ for every $v \in Q$, and hence a map $\Delta_Q \to A^\times$.
In particular, we obtain a map $\Delta_Q \to (R_{\mathcal{S}_Q}^T)^{\times}$.
If we let $\mathfrak{a}_Q$ denote the ideal of $(R_{\mathcal{S}_Q}^T)^\times$ generated by the images of $\delta - 1$ for $\delta \in \Delta_Q$, we have an isomorphism $R_{\mathcal{S}_Q}^T/\mathfrak{a}_Q \to R^T_{\mathcal{S}}$.
This is clear from noting that, for $v \in Q$, a lift in $\DvTW(A)$ is unramified if and only it induces the trivial homomorphism $k(v)^\times \to Z(M_{g_v}))^\circ(A)$, which is equivalent under (\ref{tensor hom adjunction}) to the map $\Delta_v \to A^\times$ being trivial.

\subsection{Presentations of framed deformation rings in the number fields case} \label{number field def setup}

In this section let $F$ be a number field and suppose that $p > 2$.
Let $T \subset S$ with $S_p \subset T$.
Suppose that $\Sglobdp$ is a global deformation problem such that for every $v \in S \setminus T$ we have $\mathcal{D}_v^\square = D_v^\square$ and $\Lambda_v = \CalO$.
Then for $v \in S \setminus T$ we have $\mathcal{L}_v = H^1(G_{F_v},\gk^0)$.
Set $R^{T,\loc} = \hat{\otimes}_{v \in T} R_v^\square$.
This has the structure of a $\Lambda = \otimes_{v \in T} \Lambda_v$-algebra.
Now let $(Q,\{(\hat{T}_v,\hat{B}_v)\}_{v \in Q})$ be a Taylor--Wiles datum (with $Q$ possibly empty) and consider the augmented deformation problem $\mathcal{S}_Q$.
For $v \in T$ there is a natural transformation of functors $\mathcal{D}_{\mathcal{S}_Q}^T \to \mathcal{D}_v^\square$ on $\CNL_{\Lambda_v}$  given by $(\rho,\{\gamma_v\}_{v \in T}) \mapsto \gamma_v^{-1} \restr{\rho}{G_{F_v}} \gamma_v$.
Thus for each $v \in T$ we have a morphism $R_v^\square \to R_{\mathcal{S}_Q}^T$ in $\CNL_{\Lambda_v}$.
Tensoring, we obtain a map $R^{T,\loc} \to R_{\mathcal{S}_Q}^{T}$ in $\CNL_\Lambda$.
The Selmer groups in this case become
\begin{align*}
H^1_{\mathcal{S}_Q,T}(\gk^0) &= \ker\left(H^1(G_{F,S \cup Q},\gk^0) \to (\prod_{v \in Q}\frac{H^1(G_{F_v},\gk^0)}{\mathcal{L}_v} \times \prod_{v \in T} H^1(G_{F_v},\gk^0))\right) \\
H^1_{\mathcal{S}_Q^\perp,T}(\gkv) &= \ker\left(H^1(G_{F,S \cup Q},\gkv) \to \prod_{v \in S \setminus T} H^1(G_{F_v},\gkv) \times \prod_{v \in Q} \frac{H^1(G_{F_v},\gkv)}{\mathcal{L}^\perp_v}\right).
\end{align*}

\begin{proposition} \label{number field generators formula}
With the above setup, there exists a surjective morphism $R^{T,\loc}[[X_1,\ldots,X_g]] \to R_{\mathcal{S}_Q}^{T}$ in $\CNL_\Lambda$ with
\begin{align*}
    g = h^1_{\mathcal{S}_Q^\perp,T}(\gkv) - h^0(G_{F,S},\gkv) - \sum_{v | \infty} h^0(G_{F_v},\gk^0) \\
    + \sum_{v \in S \setminus T} h^0(G_{F_v},\gkv) + z(\#T - 1) + \sum_{v \in Q} n_v.
\end{align*}
Here $z = \dim Z(\hat{G}_k)$ and for $v \in Q$ we have $n_v = \dim Z(M_{\bar{\rho}(\phi_v)}) - z$.
\end{proposition}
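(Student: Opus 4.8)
\emph{Proof proposal.} The plan is to assemble two standard inputs. First, the minimal number $g$ of topological generators of $R^{T}_{\mathcal{S}_Q}$ as an $R^{T,\loc}$-algebra is the $k$-dimension of the relative $\mathfrak{m}^2$-cotangent space of $R^{T,\loc}\to R^{T}_{\mathcal{S}_Q}$; since $R^{T,\loc}=\hat{\otimes}_{v\in T}R_v^\square$ with $R_v^\square$ representing $D_v^\square$ and $D_v^\square(k[\epsilon])=Z^1(G_{F_v},\gk^0)$ (lifts to $k[\epsilon]$ of fixed similitude character $\psi$), duality gives
\begin{equation*}
g \;=\; \dim_k\ker\!\Big(\mathcal{D}_{\mathcal{S}_Q}^{T}(k[\epsilon]) \longrightarrow \prod_{v\in T} Z^1(G_{F_v},\gk^0)\Big).
\end{equation*}
I would compute this kernel directly, following the method of \cite[\S 7]{surfaces} (cf.\ \cite{patrikis2015}): an element of $\mathcal{D}_{\mathcal{S}_Q}^{T}(k[\epsilon])$ is a pair $(\phi,\{\mu_v\}_{v\in T})$ with $\phi\in Z^1(G_{F,S\cup Q},\gk^0)$ satisfying $[\restr{\phi}{G_{F_v}}]\in\mathcal{L}_v$ for $v\in Q$ and $\mu_v\in\gk=\gk^0\oplus\mathfrak{z}(\gk)$, modulo $(\phi,\{\mu_v\})\sim(\phi+\partial b,\{\mu_v+b\})$ for $b\in\gk$; it lies in the displayed kernel exactly when $\restr{\phi}{G_{F_v}}=\partial\mu_v$ for each $v\in T$. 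Counting dimensions: the admissible $\phi$ form the preimage of $H^1_{\mathcal{S}_Q,T}(\gk^0)$ under $Z^1\to H^1$, of dimension $h^1_{\mathcal{S}_Q,T}(\gk^0)+\dim_k\gk^0-h^0(G_{F,S},\gk^0)$ (using $H^0(G_{F,S\cup Q},-)=H^0(G_{F,S},-)$ on modules unramified outside $S$); for each such $\phi$ the $\{\mu_v\}$ range over an affine space of dimension $\sum_{v\in T}\big(h^0(G_{F_v},\gk^0)+z\big)$, where $z=\dim_k\mathfrak{z}(\gk)=\dim Z(\hat{G}_k)$ in pretty good characteristic; and since $T\neq\emptyset$ (as $\emptyset\neq S_p\subseteq T$) the relation $\sim$ acts freely on this set, contributing $-\dim_k\gk=-(\dim_k\gk^0+z)$. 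This gives
\begin{equation*}
g \;=\; h^1_{\mathcal{S}_Q,T}(\gk^0) - h^0(G_{F,S},\gk^0) + \sum_{v\in T} h^0(G_{F_v},\gk^0) + z(\#T-1).
\end{equation*}

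The second input is Wiles's global product formula, applied to the $G_{F,S\cup Q}$-module $\gk^0$ with Tate dual $\gkv$, for the Selmer system cutting out $H^1_{\mathcal{S}_Q,T}(\gk^0)$: $\mathcal{L}_v=0$ at $v\in T$ and at $v\mid\infty$ (where $H^1(G_{F_v},\gk^0)=0$ as $p>2$), the Taylor--Wiles condition $\mathcal{L}_v$ at $v\in Q$, and unrestricted at $v\in S\setminus T$. As the dual local conditions are precisely those defining $H^1_{\mathcal{S}_Q^\perp,T}(\gkv)$, this yields
\begin{equation*}
h^1_{\mathcal{S}_Q,T}(\gk^0) - h^1_{\mathcal{S}_Q^\perp,T}(\gkv) = h^0(G_{F,S},\gk^0) - h^0(G_{F,S},\gkv) + \sum_{v\in S\cup Q}\!\big(\dim_k\mathcal{L}_v - h^0(G_{F_v},\gk^0)\big) - \sum_{v\mid\infty} h^0(G_{F_v},\gk^0).
\end{equation*}
Then I evaluate the local summands. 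For $v\in T$ it is $-h^0(G_{F_v},\gk^0)$. For $v\in S\setminus T$ (prime to $p$ and $\infty$, since $S_p\subseteq T$) the local Euler characteristic \cite[Lemma 2.9]{adt} gives $\dim_k\mathcal{L}_v-h^0(G_{F_v},\gk^0)=h^1(G_{F_v},\gk^0)-h^0(G_{F_v},\gk^0)=h^2(G_{F_v},\gk^0)$, which local Tate duality identifies with $h^0(G_{F_v},\gkv)$. For $v\in Q$ it is $n_v$: by \eqref{dimension of Lv} one has $\dim_k\mathcal{L}_v=l_v+n_v$, while $\bar\rho_v$ unramified with Frobenius acting by $\Ad(\bar{g}_v)$ gives $h^0(G_{F_v},\gk^0)=\dim_k L_{0,v}=l_v$. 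Substituting the resulting expression for $h^1_{\mathcal{S}_Q,T}(\gk^0)$ into the previous display for $g$, the terms $-h^0(G_{F,S},\gk^0)$ and $\sum_{v\in T}h^0(G_{F_v},\gk^0)$ cancel against their counterparts, leaving exactly the asserted formula; the surjection $R^{T,\loc}[[X_1,\ldots,X_g]]\twoheadrightarrow R^{T}_{\mathcal{S}_Q}$ is then obtained by lifting a system of $g$ generators.

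No step here is deep; the proposition repackages known ingredients, and the work is bookkeeping. The points demanding care are: getting the central contribution $z(\#T-1)$ in the cocycle count right, which is sensitive to framing by all of $\ker(\hat{G}(A)\to\hat{G}(k))$ and to the convention $T\neq\emptyset$ (innocuous here but worth flagging); correctly matching the identification $h^0(G_{F_v},\gk^0)=l_v$ at Taylor--Wiles places with the input $\dim_k\mathcal{L}_v=l_v+n_v$ from \eqref{dimension of Lv}; and handling the archimedean terms of Wiles's formula under $p>2$. The only genuinely global input is Wiles's formula (equivalently Poitou--Tate duality), which is standard.
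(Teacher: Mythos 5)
Your proposal is correct and follows essentially the same route as the paper: compute $g$ as the dimension of the kernel of the map on $T$-framed tangent spaces, arriving at $h^1_{\mathcal{S}_Q,T}(\gk^0) - h^0(G_{F,S},\gk^0) + \sum_{v\in T}h^0(G_{F_v},\gk^0) + z(\#T-1)$, and then apply the Greenberg--Wiles formula together with the local Euler characteristic and Tate duality at $v\in S\setminus T$ and the identity $\dim_k\mathcal{L}_v - h^0(G_{F_v},\gk^0)=n_v$ at $v\in Q$. The only cosmetic difference is that the paper organizes the tangent-space count via a commutative diagram and the snake lemma, whereas you count the kernel directly as a fibration (preimage of the Selmer group, torsor of framings, free conjugation action); the arithmetic is identical.
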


\begin{proof}
Note firstly that if $A \to B$ is a morphism in $\CNL_{\Lambda}$ then taking $g = \dim_k m_B/(m_A,m_B^2)$ we can find a surjection $A[[X_1,\ldots,X_g]] \to B$ by sending the $X_i$ to lifts of a $k$-basis of $m_B/(m_A,m_B^2)$.
Viewed in terms of relative tangent spaces we have an exact sequence of $k$-vector spaces
$$
m_A/(m_\Lambda,m_A^2) \to m_B/(m_\Lambda,m_B^2) \to m_B/(m_A,m_B^2) \to 0
$$
so it is equivalent to compute the dimension of
$$
\ker\left[\left(m_B/(m_\Lambda,m_B^2)\right)^* \to \left(m_A/(m_\Lambda,m_A^2)\right)^*\right],
$$
the map on relative cotangent spaces.
For $R \in \CNL_{\Lambda}$ we can identify $(m_R/(m_\Lambda,m_R^2))^* \cong \Hom_{\CNL_{\Lambda}}(R,k[\epsilon])$.
Hence taking $A = R^{T,\loc}$ and $B = R_{\mathcal{S}_Q}^T$ now, we can understand these maps explicitly in terms of $k[\epsilon]$-points of the functors they represent.
By definition, the map will send a strict equivalence class to a tuple $(\rho,\{\gamma_v\}_{v \in T}) \mapsto (\gamma_v^{-1} \restr{\rho}{G_{F_v}} \gamma_v)_{v \in T}$.
Before continuing further, we introduce the following notation:
\begin{align*}
    Z^1_Q = \ker \left(Z^1(G_{F_{S \cup Q}},\gk^0) \to \prod_{v \in Q} \frac{H^1(G_{F_v},\gk^0)}{\mathcal{L}_v}\right) \\
    H^1_Q = \ker\left(H^1(G_{F_{S \cup Q}},\gk^0) \to \prod_{v \in Q} \frac{H^1(G_{F_v},\gk^0)}{\mathcal{L}_v}\right).
\end{align*}
Since we are working with $k[\epsilon]$-points, we can view the assignment above in terms of cocyles as a map
\begin{align*}
  \alpha_2: \left(Z^1_Q \times \prod_{v \in T} \gk \right)/ \sim &\to \prod_{v \in T} Z^1(G_{F_v},\gk^0)  \\
  (\psi,(\gamma_v)_{v \in T}) &\mapsto (\restr{\psi}{G_{F_v}} - \delta(\gamma_v))
\end{align*}
where $\delta$ is the usual coboundary map and the equivalence relation $\sim$ is given by $(\psi,(\gamma_v)_{v \in T}) \sim (\psi',(\gamma_v')_{v \in T})$ if there exists $\alpha \in \gk$ such that $\psi' = \psi + \delta(\alpha)$ and $\gamma_v' = \gamma_v + \alpha$ for every $v \in T$ (see Definition \ref{T-framed}).

Now consider the following commutative diagram, whose rows are short exact sequences.

\begin{tikzcd}
0 \arrow[r] & {\gk^0 \times \underset{v \in T} {\prod}\gk} \arrow[r, "\delta \times \id"] \arrow[d, "\alpha_1"] & Z^1_Q \times \underset{v \in T} {\prod} \gk  \arrow[r, "\alpha_3"] \arrow[d, "\alpha_2"] & H^1_Q \arrow[r] \arrow[d]                                 & 0 \\
0 \arrow[r] & {\underset{v \in T} {\prod} \frac{Z^0(G_{F_v},\gk)}{H^0(G_{F_v},\gk)}} \arrow[r, "\delta"]                                        & {\underset{v \in T} {\prod} Z^1(G_{F_v},\gk^0)} \arrow[r]                                                   & {\underset{v \in T} {\prod} H^1(G_{F_v},\gk^0)} \arrow[r] & 0
\end{tikzcd}
Here the map $\alpha_1$ is defined by $(\gamma,\{\gamma_v\}_{v \in T}) \mapsto (\gamma + \gamma_v)_{v \in T}$.
The map $\alpha_3$ sends $(\psi,\{\gamma_v\}_{v \in T}) \mapsto [\psi]$, forgetting the data of the $\{\gamma_v\}_{v \in T}$.
The final row is then just the product over $v \in T$ of the exact sequences $$0 \to \frac{Z^0(G_{F_v},\gk)}{H^0(G_{F_v},\gk)} \to Z^1(G_{F_v},\gk^0) \to H^1(G_{F_v},\gk^0) \to 0,$$ 
where we have used the isomorphism $\frac{Z^0(G_{F_v},\gk^0)}{H^0(G_{F_v},\gk^0)} \cong \frac{Z^0(G_{F_v},\gk)}{H^0(G_{F_v},\gk)}$.
Since $\alpha_1$ is surjective, it follows by the snake lemma that 
\begin{align*}
    g &= \sum_{v \in T} h^0(G_{F_v}, \gk) + \dim_k (\gk^0) - h^0(G_{F,S},\gk^0) + h^1_{\mathcal{S}_Q,T} - \dim \gk \\ &= \sum_{v \in T} h^0(G_{F_v}, \gk) - h^0(G_{F,S},\gk) + h^1_{\mathcal{S}_Q,T},
\end{align*}
where we have used the decomposition of $\gk$ as a direct sum of $\gk^0$ and a trivial submodule again and that the equivalence relation $\sim$ defines a quotient by a subspace of dimension $\dim_k \gk$ since $T \neq \emptyset$.

We have by \cite[Theorem 2.19]{DDT} that
\begin{align*}
    h^1_{\mathcal{S}_Q,T}(\gk^0) = h^1_{\mathcal{S}_Q^\perp,T}(\gkv) + h^0(G_{F,S},\gk^0) - h^0(G_{F,S}, \gkv) - \sum_{v| \infty} h^0(G_{F_v},\gk^0) \\ - \sum_{v \in T} h^0(G_{F_v},\gk^0) + \sum_{v \in S \setminus T} \left(h^1(G_{F_v},\gk^0 - h^0(F_v,\gk^0)\right) + \sum_{v \in Q} n_v,
\end{align*}
where we have used that $H^1(G_{F_v}, \gk) = 0$ if $v | \infty$ by the assumption that $\ch k > 2$.
The result then follows on applying the local Euler characteristic formula together with Tate duality at those places $v \in S \setminus T$.
\end{proof}

\begin{corollary}\label{TW places number field}
Continue with the same setup as above, but with $T = S$.
Let $n \geq 1$.
Suppose that $\bar{\rho}$ is $\hat{G}$-reasonable.
Let $q \geq h^1(G_{F,S},\gkv)$.
Then there exists a set $Q$ of Taylor--Wiles places of level $n$ such that 
\begin{itemize}
    \item $H^1_{\mathcal{S}_Q^\perp,S}(\gkv) = 0$
    \item $\#Q = q$
    \item There exists a surjection $R^{S,\loc}[[X_1,\ldots,X_g]] \to R_{S_Q,S}$ with $$g = \sum_{v \in Q} n_v - \sum_{v | \infty} h^0(G_{F_v},\gk^0) + z(\#S - 1).$$
\end{itemize}
Here $z = \dim  Z(\hat{G}_k)$ and $n_v = \dim Z(M_{\bar{\rho}(\phi_v)}) - z$, for $v \in Q$.
\end{corollary}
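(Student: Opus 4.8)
The plan is to obtain this by specializing Proposition \ref{TW places existence} and Proposition \ref{number field generators formula} to the present situation, and then simplifying the resulting formula for $g$ using that $\bar{\rho}$ is $\hat{G}$-reasonable.

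First I would produce the Taylor--Wiles set $Q$. Since $\bar{\rho}$ is $\hat{G}$-reasonable, there is an $N_0 \geq 1$ such that for every $m \geq N_0$ the subgroup $\bar{\rho}(G_{F(\zeta_{p^m})})$ satisfies conditions \ref{H0 vanish} and \ref{submodule condition} of Definition \ref{adequate}, and also $H^1(\bar{\rho}(G_{F(\zeta_{p^m})}),\gk^{0,\vee}) = 0$ whenever $\zeta_p$ lies in the fixed field of $\gk^{0,\vee}$. As any Taylor--Wiles place of level $m$ is a fortiori of level $n$ for $n \leq m$, we may replace $n$ by $\max(n,N_0)$ and so assume $n \geq N_0$. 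The hypotheses of Proposition \ref{TW places existence} now hold (with $T = S$, for which $S_p \subset T$ is trivial, and with the given $q \geq h^1(G_{F,S},\gkv)$), so that proposition yields a set $Q$ of Taylor--Wiles places of level $n$ with $\#Q = q$ and $H^1_{\mathcal{S}_Q^\perp,S}(\gkv) = 0$.

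Next I would apply Proposition \ref{number field generators formula} with $T = S$. The condition imposed there on the places in $S \setminus T$ is then vacuous and the term $\sum_{v \in S \setminus T} h^0(G_{F_v},\gkv)$ disappears, so one obtains a surjection $R^{S,\loc}[[X_1,\ldots,X_g]] \to R^S_{\mathcal{S}_Q}$ in $\CNL_\Lambda$ with
\begin{align*}
  g = h^1_{\mathcal{S}_Q^\perp,S}(\gkv) - h^0(G_{F,S},\gkv) - \sum_{v \mid \infty} h^0(G_{F_v},\gk^0) + z(\#S-1) + \sum_{v \in Q} n_v .
\end{align*}
The first term vanishes by the choice of $Q$. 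For the second, restricting to $G_{F(\zeta_{p^n})}$ --- on which the mod $p$ cyclotomic character is trivial, since $\zeta_p \in F(\zeta_{p^n})$ --- identifies $H^0(G_{F(\zeta_{p^n})},\gkv)$ with $H^0(\bar{\rho}(G_{F(\zeta_{p^n})}),\gk^{0,\vee})$, which is $0$ by condition \ref{H0 vanish} of Definition \ref{adequate} (valid as $n \geq N_0$); since $H^0(G_{F,S},\gkv)$ injects into this group, $h^0(G_{F,S},\gkv) = 0$. Substituting both vanishings collapses the displayed formula to $g = \sum_{v \in Q} n_v - \sum_{v \mid \infty} h^0(G_{F_v},\gk^0) + z(\#S-1)$, which is the asserted value.

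This corollary is essentially a bookkeeping exercise: its substantive inputs, Propositions \ref{TW places existence} and \ref{number field generators formula}, are already established, so I do not anticipate a genuine obstacle. The two steps calling for any comment are the (harmless) enlargement of the level $n$ into the range where the $\hat{G}$-reasonable hypotheses take effect, and the vanishing $h^0(G_{F,S},\gkv) = 0$, which follows from the triviality of the mod $p$ cyclotomic character on $G_{F(\zeta_{p^n})}$ together with the $H^0$-vanishing built into Definition \ref{reasonable}.
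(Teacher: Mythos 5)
Your proof is correct and follows essentially the same route as the paper, which simply combines Proposition \ref{TW places existence} with Proposition \ref{number field generators formula} after enlarging $n$ as necessary. The only step the paper leaves implicit — the vanishing $h^0(G_{F,S},\gkv)=0$ needed to collapse the formula for $g$ — you justify correctly from condition \ref{H0 vanish} of Definition \ref{adequate} via triviality of the mod $p$ cyclotomic character on $G_{F(\zeta_{p^n})}$.
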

\begin{proof}
Taking $n$ sufficiently large as necessary, this is immediate from Proposition \ref{TW places existence} together with Proposition \ref{number field generators formula}.
\end{proof}

\section{$\hat{G}$-adequate subgroups} \label{sec adequate subgroup}

In this section let $p$ be a prime number and let $\hat{G}$ be a connected reductive group over $\Fpbar$.
We assume throughout this section that $p$ is a prime of very good characteristic for $\hat{G}$.
We fix a finite subgroup $\Gamma \leq \hat{G}(\Fpbar)$ (so $\Gamma$ is defined over a finite extension $k$ of $\mathbb{F}_p$).
We aim to relate the notions of $\hat{G}$-irreducibility and $\hat{G}$-adequacy of $\Gamma$.
We recall these firstly.
\begin{definition} \label{Ghat irreducible}
We say $\Gamma$ is $\hat{G}$-irreducible if, for every proper parabolic subgroup $P$ of $\hat{G}$, we have $\Gamma \not\subset P(\Fpbar)$.
\end{definition}
Note that this coincides with the usual definition of irreducibility in the case $\hat{G} = \GL_n$.
Now let $\g^0$ be the Lie algebra of the derived group $\hat{G}^{\der}$, a finite dimensional $\Fpbar$-vector space, and $\g^{0,\vee}$ its linear dual.
Both are $\Gamma$-modules, via the adjoint action of $\hat{G}$ on its Lie algebra.

\adequate*

Here we recall that $M_h$ is the centralizer of $h$ in $\hat{G}$.
It suffices to check these conditions when we replace each instance of $k$ with $\Fpbar$ in Definition \ref{adequate}, and we will do so throughout this section.
For computational purposes, we recall that if $h \in \hat{G}(\Fpbar)$ is semisimple with connected centralizer $M_h$ then
$\Lie Z(M_h) \cap \g^0 = \mathfrak{z}(\g^h) \cap \g^0 =  \mathfrak{z}((\g^0)^h)$.

Let $\Gss$ denote the set of semisimple elements of $\Gamma$; since we are working over a finite field these are exactly the elements of order prime to $p$.
An equivalent characterisation of condition \ref{submodule condition} in Definition \ref{adequate} is as follows:
\begin{align} \label{equiv characterisation}
    \sum_{\gamma \in \Gss} (\Lie Z(M_\gamma) \cap \g^0) = \g^0.
\end{align}
Indeed, letting $U \leq (\g^0)^\vee$ be the annihilator of $\sum_{\gamma \in \Gss} (\Lie Z(M_\gamma) \cap \g^0)$, $U$ is non-zero if and only if there exists a non-zero simple $\Fpbar[\Gamma]$-submodule $W \leq (\g^0)^\vee$ such that for every $\gamma \in \Gss$, every $w \in W$ and $z \in \Lie Z(M_\gamma) \cap \g^0$ we have $w(z) = 0$.

One way to compute $\Lie Z(M_\gamma)$ which we do not explicitly use but motivates the proof of some later results is as follows.
Suppose that $T \subset \hat{G}$ is a maximal torus with Weyl group $W = N_{\hat{G}}(T)/T$ and let $\gamma \in~\hat{T}(\Fpbar)$.
Then 
$$
\Lie Z(M_\gamma) = (\Lie T)^{\Stab_{W}(\gamma)}.
$$
Indeed we have
$Z(M_\gamma^\circ)^\circ = (T^{W_{M_{\gamma}^\circ}})^\circ$ by \cite{MOweylgroup}, where $W_{M_{\gamma}^\circ}$ is the Weyl group of the identity component of $M_\gamma$.
Since $M_\gamma$ is generated by $M_\gamma^\circ$ together with $\Stab_{W}(\gamma)$ we have
$Z(M_\gamma)^\circ = (Z(M_\gamma^\circ)^{\Stab_W(\gamma)})^\circ = (T^{\Stab_W(\gamma)})^\circ$ from which the claim follows on taking Lie algebras.

\subsection{$\GL_n$-irreducible implies $\hat{G}$-adequate} \label{subsect gln irred implies adequate}

We firstly try to answer the question of when does irreducibility imply adequacy.
In the case $\hat{G} = \GL_n$ these notions are already known to be equivalent away from finitely many primes.

\begin{theorem} \label{gln equiv}
Suppose that $p \geq n+4$ and that $p \neq 2n \pm 1$. 
Then $\Gamma \leq \GL_n(\Fpbar)$ is $\GL_n$-adequate if and only if $\Gamma$ is absolutely irreducible.
\end{theorem}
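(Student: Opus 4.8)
The plan is to reduce both directions to the Galois cohomology and module-theoretic facts already available for $\GL_n$ in the literature, notably \cite{thoappendix} and \cite{guralnick2015adequate}, and then to verify that the two conditions packaged in Definition \ref{adequate} match up with ``$\GL_n$-adequate'' in the classical sense once $\hat{G} = \GL_n$. First I would unwind what the pieces of Definition \ref{adequate} say in this special case: here $\hat{G}^{\der} = \SL_n$, so $\g^0 = \mathfrak{sl}_n$, and under the trace form (which is non-degenerate precisely because $p \nmid n$, guaranteed by $p \geq n+4$) we may identify $\g^{0,\vee} \cong \mathfrak{sl}_n$ as $\Gamma$-modules. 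Conditions \ref{H0 vanish}, \ref{homs to k vanish} and the vanishing of $H^1(\Gamma,\g^{0,\vee})$ are then literally the cohomological axioms in the standard definition of an adequate subgroup of $\GL_n$. So the only real content is to show that condition \ref{submodule condition} is automatic — or at least equivalent to irreducibility — in the $\GL_n$ setting under the stated hypotheses on $p$.

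The heart of the argument is therefore the analysis of condition \ref{submodule condition}, for which I would use the equivalent formulation \eqref{equiv characterisation}: one must show $\sum_{\gamma \in \Gss} \big(\Lie Z(M_\gamma) \cap \mathfrak{sl}_n\big) = \mathfrak{sl}_n$. For $\gamma \in \GL_n(\Fpbar)$ semisimple, $M_\gamma = Z_{\GL_n}(\gamma)$ is connected (a product of $\GL_{n_i}$'s indexed by the eigenspaces), so $\Lie Z(M_\gamma)$ is the space of block-scalar matrices for the eigenspace decomposition of $\gamma$; intersecting with $\mathfrak{sl}_n$ gives the trace-zero block-scalar matrices. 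Taking $\gamma$ to run over enough semisimple elements of $\Gamma$ — in particular, using absolute irreducibility to produce semisimple elements with as many distinct eigenvalues as the module structure allows — one shows the span of all such subspaces is all of $\mathfrak{sl}_n$. This is exactly the computation carried out in \cite{guralnick2015adequate} and the appendix \cite{thoappendix}, where the condition ``$H$ acts so that $\sum_\gamma (\text{scalars on eigenspaces of } \gamma) \supseteq$ everything'' is shown to hold for irreducible $H$ once $p \geq 2n+2$, improved to the range $p \geq n+4$, $p \neq 2n\pm 1$ via the classification-of-finite-simple-groups input in those papers. I would cite those results directly rather than reprove them. The converse direction (adequate $\Rightarrow$ irreducible) is easy: if $\Gamma$ were reducible, $\g^{0,\vee}$ would have a submodule on which $H^0$ or $H^1$ fails to vanish, or more simply a proper parabolic containing $\Gamma$ obstructs condition \ref{submodule condition}; again this is standard and recorded in the same references.

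The main obstacle I anticipate is purely bookkeeping: making sure the precise numerical hypotheses ($p \geq n+4$ and $p \neq 2n \pm 1$) are exactly the ones under which the cited ``irreducible implies adequate'' theorem for $\GL_n$ holds, and checking that ``very good characteristic for $\GL_n$'' (which is automatic once $p \nmid n$, itself implied by $p \geq n+4$) is consistent with those hypotheses so that the identification $\g^{0,\vee} \cong \mathfrak{sl}_n$ and the decomposition $\g = \g^0 \oplus \mathfrak{z}(\g)$ are valid. There is no genuinely new mathematics here — the theorem is essentially a dictionary statement translating the classical notion of adequacy into the framework of Definition \ref{adequate} — so the write-up will consist of (i) identifying the modules and pairings, (ii) rewriting the three cohomological conditions, (iii) invoking \eqref{equiv characterisation} and the known $\GL_n$ result for condition \ref{submodule condition}, and (iv) noting the trivial reverse implication.
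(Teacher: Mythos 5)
Your proposal is correct and is essentially the paper's own proof: after observing that for semisimple $\gamma \in \GL_n(\Fpbar)$ the centralizer is connected and $\mathfrak{z}(\gllie_n^\gamma)$ is the span of the projections onto the eigenspaces of $\gamma$ (equivalently the span of its powers), the forward direction is precisely the citation of \cite[Corollary 1.5]{guralnick2015adequate} and the converse is \cite[Lemma 1]{thoappendix}, i.e.\ the parabolic-containment obstruction to the spanning condition that you give as your second option. (Your first suggested route for the converse, that reducibility forces a failure of the $H^0$ or $H^1$ vanishing, is not the right obstruction, but you correctly fall back on the spanning-condition argument, so there is no gap.)
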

\begin{proof}
Note that each semisimple element $\gamma$ of $\GL_n(\Fpbar)$ has connected centralizer and that $\mathfrak{z}(\gllie_n^\gamma)$ is equal to both the span of the powers of $\gamma$ and to the span of the projections to the eigenspaces of $\gamma$ (viewing $\gllie_{n} = \End_{\Fpbar}(\Fpbar^n)$).
Hence if $\Gamma$ is absolutely irreducible then $\Gamma$ is $\GL_n$-adequate by \cite[Corollary 1.5]{guralnick2015adequate} and the converse is given by \cite[Lemma 1]{thoappendix}.
\end{proof}

In the rest of this section, let $\hat{G}$ be one of the following simple linear algebraic groups over $\Fpbar$:
\begin{itemize}
    \item the special orthogonal group $\SO_{n}$ for $n \geq 3$
    \item the symplectic group $\Sp_{2n}$ for $n \geq 1$
    \item the simply connected exceptional groups $G_2,F_4,E_6,E_7,E_8$.
\end{itemize}
We will use Theorem \ref{gln equiv} to prove an analogous statement for $\hat{G}$, showing that if $\Gamma$ is $\GL_n$-irreducible via a certain embedding $\hat{G} \hookrightarrow \GL_n$ then $\Gamma$ is $\hat{G}$-adequate.
We firstly prove some lemmas which will allow us to verify that a finite subgroup of $\hat{G}(\Fpbar)$ is $\hat{G}$-adequate by working in $\gllie_n$.
\begin{lemma} \label{module complement}
Let $\hat{G}$ be as above.
View $\hat{G} \subset \GL_n$ via an irreducible faithful representation of $\hat{G}$ of minimal dimension.
If $p$ is a prime of very good characteristic for $\hat{G}$ then the Lie algebra $\g \leq \gllie_n$ admits a $\hat{G}$-module complement.
\end{lemma}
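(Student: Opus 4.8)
The plan is to exhibit the complement concretely, using the fact that for the listed groups the minimal faithful irreducible representation $\hat{G}\hookrightarrow\GL_n$ comes with a $\hat{G}$-invariant bilinear (or, for $\SO_n$ with $n$ odd in characteristic $2$—excluded by very goodness—trilinear) form, and that $\gllie_n$ carries an $\hat{G}$-invariant symmetric bilinear form, namely the trace form $(X,Y)\mapsto \operatorname{tr}(XY)$. Concretely, I would first reduce to the claim that $\g\le\gllie_n$ is a \emph{non-degenerate} subspace for the trace form: once that is known, the orthogonal complement $\g^{\perp}$ is a $\hat{G}$-submodule (by invariance of the form) with $\g\cap\g^{\perp}=0$ and $\dim\g+\dim\g^{\perp}=\dim\gllie_n$, hence $\gllie_n=\g\oplus\g^{\perp}$ as $\hat{G}$-modules, which is exactly the desired complement.

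Next I would verify that the restriction of the trace form to $\g$ is non-degenerate under the very-good-characteristic hypothesis. For each case $\g$ sits inside $\gllie_n$ as $\solie_n$ or $\splie_n$ (with $\hat{G}\to\GL_n$ the standard representation), or as $\g^0=\g$ for the exceptional groups inside their minimal representation. In the classical cases the restriction of the trace form to $\solie_n$, resp.\ $\splie_n$, is a nonzero multiple of the Killing form plus possibly a contribution from the center, and very good characteristic guarantees the Killing form is non-degenerate on the simple Lie algebra; since $\hat{G}$ is semisimple here $\g=\g^0$ has no center, so non-degeneracy follows. For the exceptional groups one uses that the minimal faithful representation is (for $G_2,F_4,E_6,E_7,E_8$ with $p$ very good) faithful and self-dual up to the center, and that the trace form on $\gllie_n$ restricts to a nonzero invariant symmetric form on the simple Lie algebra $\g$; a nonzero invariant symmetric bilinear form on a simple Lie algebra in very good characteristic is automatically non-degenerate, because its radical is an ideal, hence $0$ or all of $\g$, and it is nonzero as the representation is faithful (so $\operatorname{tr}(X^2)\ne 0$ for a suitable nilpotent or semisimple $X$).

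I expect the main obstacle to be the uniform treatment of non-degeneracy of the trace form on $\g$ across all the listed groups, and in particular checking that the form is genuinely nonzero in each small characteristic allowed by "very good" (for instance ruling out that $\operatorname{tr}(XY)$ vanishes identically on $\solie_n$ or $\splie_n$ for the relevant small $p$). The cleanest way around this is the "radical is an ideal'' argument just sketched: since $\g$ is simple (all the $\hat{G}$ here are simple, and in very good characteristic $\g=\g^0$ is a simple Lie algebra), any invariant symmetric bilinear form is either identically zero or non-degenerate, so it suffices to produce a single pair $X,Y\in\g$ with $\operatorname{tr}(XY)\ne 0$; taking $X=Y$ a root-vector-plus-opposite-root-vector combination, or a regular semisimple element, does this explicitly. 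I would then remark that the very-good-characteristic hypothesis is exactly what is needed to keep $\g$ simple and the Killing-type form non-degenerate, citing the relevant facts about Lie algebras in very good characteristic already invoked earlier in the paper, and conclude that $\gllie_n=\g\oplus\g^{\perp}$ is the required $\hat{G}$-module decomposition.
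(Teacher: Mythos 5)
Your overall strategy is the same as the paper's at its core: the complement is produced as the orthogonal complement of $\g$ inside $\gllie_n$ with respect to the trace form $B(X,Y)=\tr(XY)$, so everything rests on non-degeneracy of $\restr{B}{\g}$ in very good characteristic. The difference lies in how that non-degeneracy is certified. The paper handles the classical groups via the explicit invariant complement $\mathfrak{m}=\{A : A^TJ-JA=0\}$ (equivalently by citing Humphreys), and for the exceptional groups it computes in \texttt{magma} the determinant of the Gram matrix of $\restr{B}{\g}$ on an integral basis of the minimal representation and checks that its prime divisors are exactly the non-very-good primes. You propose instead the soft reduction: the radical of $\restr{B}{\g}$ is an ideal and a $\hat{G}$-submodule, hence $0$ or $\g$ by simplicity, so it suffices to see the form is not identically zero. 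That reduction is sound, up to the edge case $\solie_4$, which is not simple (there the radical could a priori be one of the two $\splie_2$-factors, but the form is visibly non-degenerate on each, or one simply uses the explicit classical complement).

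However, two of the justifications you give for the remaining non-vanishing are incorrect, and the step they are meant to support is exactly where the quantitative content lives. First, very good characteristic does not make the Killing form non-degenerate for classical groups: for $\Sp_{2n}$ the Killing form equals $(2n+2)\tr(XY)$ and vanishes identically whenever $p \mid n+1$, and such $p$ are very good for type $C$ (only $p=2$ is excluded); similarly $(n-2)\tr(XY)$ for $\solie_n$. It is the trace form of the standard representation that is non-degenerate for odd $p$, not the Killing form, so your proportionality argument runs in the wrong direction. Second, faithfulness of $V$ does not imply $\tr_V(XY)\not\equiv 0$: the $56$-dimensional representation of $E_7$ is faithful in characteristic $3$, yet its trace form is $6$ times an integral invariant form on the Chevalley lattice and hence reduces to zero mod $3$ (this is reflected in the factor $3^{133}$ in the paper's determinant). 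Consequently your final assertion, that a suitable $\tr(X^2)$ is ``explicitly'' nonzero, is precisely what must be checked against the list of very good primes: taking $X=h_\theta$ the coroot of the highest root gives $\tr_V(h_\theta^2)=2\cdot(\textrm{Dynkin index of }V)$, for instance $2$ or $4$ for the classical standard representations, $12$ for the $56$ of $E_7$, $120$ for the adjoint of $E_8$, and one must verify group by group that every prime divisor of this integer is a bad prime. That verification does succeed, so your route can be completed and is a reasonable lighter alternative to computing full Gram determinants; but as written the crucial non-vanishing is asserted rather than proved, and the two claims offered in its support are false.
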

\begin{proof}
In the proof of \cite[Theorem 3.9]{humphreys2011conjugacy}, it is shown that for some embedding $\hat{G} \to \GL_n$ there is a $\hat{G}$-module complement to $\g$ inside $\gllie_n$.
For the cases of the classical simple groups, these embeddings are of minimal dimension.
However, the cases of the exceptional groups 
are instead handled by considering the inclusion $\g \to \gllie(\g)$ given by the adjoint representation.
There it is shown that $\g$ admits a $\hat{G}$-module complement by showing that the trace form $(X,Y) \mapsto \tr(XY)$ on $\gllie(\g)$ is invertible on restriction to $\g$ in primes of very good characteristic by computing the determinant of the restriction.
We perform a similar computation with respect to the faithful irreducible representation of minimal dimension, computing the restriction of the trace form $B$ on $\gllie_n$ to $\g$.

Note that $\hat{G}$ is defined over $\mathbb{Z}$ by means of a Chevalley basis for the Lie algebra.
Let $V$ be a highest weight module giving rise to such a faithful irreducible representation.
Then $V$ is defined over $\mathbb{Q}$ and there exists a finite set $S$ of primes such that this representation is defined over the localization $\mathbb{Z}_S$.
The set $S$ can be taken to consist of those primes appearing in the denominators of the images of the Chevalley basis in $\gllie(V)$.
This minimal set $S$ together with the determinant of the trace form is computed in $\texttt{magma}$ and displayed below in Table \ref{det trace form}.
In the case $\hat{G} = E_8$, the smallest dimension faithful irreducible representation is given by the $248$-dimensional adjoint representation and is computed in the proof of \cite[Theorem 3.9]{humphreys2011conjugacy}.
We are done on observing that the prime numbers not appearing in $S \cup \{p : v_p( \det(\restr{B}{\g})) > 0\}$ are exactly the primes of very good characteristic for $\hat{G}$.

\FloatBarrier

\begin{table}[h] \label{det trace form}
\caption{Determinant of the trace form on restriction to $\g$}
\begin{tabular}{@{}cccc@{}}
\toprule
$\hat{G}$ & $n$   & $\det(\restr{B}{\g})$ & $S$        \\ \midrule
$G_2$     & 7   & $2^{14} \cdot 3^7$           & $\emptyset$ \\
$F_4$     & 26  & $2^{78} \cdot 3^{52}$                         & $\emptyset$          \\
$E_6$     & 27  & $2^{78} \cdot 3^{77}$                         & $\{3\}$          \\
$E_7$     & 56  & $-2^{265} \cdot 3^{133}$                         & $\{2\}$           \\
$E_8$     & 248 & $2^{496} \cdot 3^{248} \cdot 5^{248}$                         & $\emptyset$          \\ \bottomrule
\end{tabular}
\end{table}

\FloatBarrier

\end{proof}

\begin{lemma} \label{compute gln adequacy}
Let $\hat{G}$ be an arbitrary closed reductive subgroup of $\GL_n$ over $\Fpbar$ and suppose that there exists a decomposition $\gllie_n = \g \oplus \mathfrak{m}$ as $\hat{G}$-modules.
Let $\pi: \g \oplus \mathfrak{m} \to \g$ be the projection onto the first factor.
Then for any set $S$ of semisimple elements of $\hat{G}(\Fpbar)$ we have
$$ \pi\big(\sum_{\gamma \in S} \mathfrak{z}(\gllie_{n}^\gamma)\big) \subset \sum_{\gamma \in S} \mathfrak{z}(\g^\gamma).$$
\end{lemma}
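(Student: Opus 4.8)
The plan is to reduce the containment of sums to a containment element-by-element: since $\pi$ is linear, it suffices to show that for each fixed semisimple $\gamma \in \hat{G}(\Fpbar)$ we have $\pi(\mathfrak{z}(\gllie_n^\gamma)) \subset \mathfrak{z}(\g^\gamma)$, and then sum over $\gamma \in S$. So fix such a $\gamma$. First I would observe that because $\mathfrak{m}$ is a $\hat{G}$-submodule of $\gllie_n$, it is in particular stable under $\Ad(\gamma)$, hence $\gllie_n^\gamma = \g^\gamma \oplus \mathfrak{m}^\gamma$ and $\pi$ restricts to the projection $\gllie_n^\gamma \to \g^\gamma$ with kernel $\mathfrak{m}^\gamma$. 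Thus the image $\pi(\mathfrak{z}(\gllie_n^\gamma))$ certainly lands in $\g^\gamma$; the content is that it lands in the \emph{center} of $\g^\gamma$.

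The key step is the following: take $z \in \mathfrak{z}(\gllie_n^\gamma)$ and write $z = \pi(z) + m$ with $\pi(z) \in \g^\gamma$ and $m \in \mathfrak{m}^\gamma$. I must show $\pi(z)$ commutes with every $x \in \g^\gamma$. Since $x \in \gllie_n^\gamma$ and $z$ is central in $\gllie_n^\gamma$, we have $[z,x] = 0$ in $\gllie_n$, i.e. $[\pi(z),x] + [m,x] = 0$. Now I claim $\mathfrak{m}$ being a $\hat{G}$-submodule forces $[\g,\mathfrak{m}] \subset \mathfrak{m}$ (differentiating the $\hat{G}$-action: $\g = \Lie \hat{G}$ acts on $\mathfrak{m}$ via the bracket in $\gllie_n$, and $\hat{G}$-stability of $\mathfrak{m}$ gives stability of $\mathfrak{m}$ under this infinitesimal action). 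Hence $[m,x] = -[x,m] \in \mathfrak{m}$, while $[\pi(z),x] \in \g$ since $\g$ is a subalgebra. The relation $[\pi(z),x] = -[m,x]$ then exhibits an element of $\g \cap \mathfrak{m} = 0$, so $[\pi(z),x] = 0$ for all $x \in \g^\gamma$, which is exactly $\pi(z) \in \mathfrak{z}(\g^\gamma)$. Summing the resulting containments $\pi(\mathfrak{z}(\gllie_n^\gamma)) \subset \mathfrak{z}(\g^\gamma)$ over $\gamma \in S$, and using that $\pi$ commutes with forming sums of subspaces, yields the lemma.

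The only genuinely delicate point — and hence the main obstacle — is justifying that $\mathfrak{m}$, being a $\hat{G}$-submodule of $\gllie_n$ under the adjoint action, is stable under the bracket action of $\g$ (that is, $[\g, \mathfrak{m}] \subset \mathfrak{m}$), since this is what makes the two pieces $[\pi(z),x]$ and $[m,x]$ land in complementary summands. This follows by differentiating the conjugation action: for $g \in \hat{G}$, $\Ad(g)$ preserves $\mathfrak{m}$, and the derivative of $g \mapsto \Ad(g)$ at the identity is $x \mapsto [x,-]$ for $x \in \g = \Lie\hat{G}$, so $\mathfrak{m}$ is preserved infinitesimally as well; one can phrase this cleanly by noting $\mathfrak{m} \otimes \Fpbar[\epsilon]$ is $\hat{G}(\Fpbar[\epsilon])$-stable and testing against $1 + \epsilon x$. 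Everything else is elementary linear algebra with the direct sum decomposition.
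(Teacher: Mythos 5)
Your proposal is correct and follows essentially the same argument as the paper: reduce to a single semisimple $\gamma$, split $[z,x]=[\pi(z),x]+[m,x]=0$, and use $[\g,\g]\subset\g$, $[\g,\mathfrak{m}]\subset\mathfrak{m}$ (obtained by differentiating the $\hat{G}$-action) together with $\g\cap\mathfrak{m}=0$ to kill both brackets. Your explicit remark that $\gllie_n^\gamma=\g^\gamma\oplus\mathfrak{m}^\gamma$, so that $\pi(z)\in\g^\gamma$, is a small point the paper leaves implicit but is the same proof in substance.
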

\begin{proof}

Observe that it suffices to show the inclusion 
\begin{equation} \label{projection equality}
    \pi(\mathfrak{z}(\gllie_{n}^\gamma)) \subset \mathfrak{z}(\g^\gamma)
\end{equation}
for any $\gamma \in \hat{G}(\Fpbar)$ semisimple.
Now suppose that $x+y \in \mathfrak{z}(\gllie_{n}^\gamma)$ with $x \in \g$ and $y \in \mathfrak{m}$.
We just need to show that $\pi(x+y) = x \in \mathfrak{z}(\g^\gamma)$. If $w \in \g^\gamma$, then $w \in \gllie_n^\gamma$ and hence $$0 = [x+y,w] = [x,w] + [y,w].$$
Differentiating the action of $\hat{G}$ on $\gllie_n$, we see that $[\g,\g] \subset \g$ and $[\g,\mathfrak{m}] = [\mathfrak{m},\g] \subset \mathfrak{m}$.
Therefore $[x,w] \in \g$ and $[y,w] \in \mathfrak{m}$, so since these subspaces are disjoint both terms in the above sum must equal zero.
We deduce that $x \in \mathfrak{z}(\g^\gamma)$, as desired.
\end{proof}

We temporarily focus on the case where $\hat{G}$ is special orthogonal or symplectic.
Let $\hat{H}$ be either $\mathrm{O}_n$ or $\Sp_{n}$, defined as a subgroup of $\GL_{n}$ by those matrices preserving a non-degenerate symmetric or alternating bilinear form $J$ on a vector space $V = \Fpbar^{n}$ (so $n$ is necessarily even in the case of $\Sp_n$).
We then let $\hat{G} = \SL_{n} \cap \hat{H}$ be the subgroup of determinant one matrices.
We have an irreducible faithful representation $\hat{G} \to \GL_n = \GL(V)$.
We assume that $n \geq 3$ if $\hat{G} = \SO_n$.

An explicit realisation of $\g$ as a subalgebra of $\gllie_n$ is given by
$$
\g = \{A \in \gllie_n : A^T J + J A = 0\}.
$$
It follows that a $\hat{G}$-invariant complement to $\g$ inside $\gllie_n$ is given by
$$
\mathfrak{m} = \{A \in \gllie_n : A^T J - J A = 0\},
$$
since we assumed $p>2$.
We let $\pi: \gllie = \g \oplus \mathfrak{m} \to \g$ denote the arising $\hat{G}$-equivariant projection.

The following lemma shows that the inclusion \ref{projection equality} is an equality in most cases and thereby allows us to compute centers of centralizers by working inside $\gllie_n$.

\begin{lemma} \label{spanning equality}
Suppose that $\hat{G}$ is special orthogonal or symplectic and that $\gamma \in \hat{G}(\Fpbar)$ is semisimple.
Then
\begin{enumerate}
    \item the inclusion
    \begin{equation}
        \pi(\mathfrak{z}(\gllie_{n}^\gamma)) \subset \mathfrak{z}(\g^\gamma)
\end{equation}
fails to be an equality if and only if both $\hat{G} = \SO_n$ and $2 \in \{m_1, m_{-1}\}$;
    \item the inclusion
\begin{equation} \label{classical inclusion}
        \pi(\mathfrak{z}(\gllie_{n}^\gamma)) \subset \Lie Z(Z_{\hat{G}}(\gamma))
\end{equation}
always holds and this inclusion fails to be an equality if and only if both $\hat{G} = \SO_n$ and $ \{0,2\} = \{m_1, m_{-1}\}$.
\end{enumerate}
Here for $\alpha \in \Fpbar^\times$, $m_\alpha$ is the dimension of the $\alpha$-eigenspace for the action of $\gamma$ on $V$.
\end{lemma}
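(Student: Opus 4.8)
The plan is to work entirely inside $\gllie_n$ using the eigenspace decomposition of $\gamma$. Since $\gamma$ is semisimple, $V = \Fpbar^n$ decomposes as $V = \bigoplus_\alpha V_\alpha$ over the eigenvalues $\alpha$ of $\gamma$, and since $\gamma$ preserves $J$ the form pairs $V_\alpha$ perfectly with $V_{\alpha^{-1}}$ while restricting to a nondegenerate form of the same type (symmetric or alternating) on $V_1$ and on $V_{-1}$. Writing $\{\beta_j,\beta_j^{-1}\}_{j=1}^{s}$ for the pairs of eigenvalues not equal to $\pm 1$, one has $\gllie_n^\gamma = Z_{\gllie_n}(\gamma) = \bigoplus_\alpha \End(V_\alpha)$, so $\mathfrak{z}(\gllie_n^\gamma) = \bigoplus_\alpha \Fpbar\, p_\alpha$, where $p_\alpha$ is the projection onto $V_\alpha$. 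First I would identify $\mathfrak{m}$ as the $(+1)$-eigenspace and $\g$ as the $(-1)$-eigenspace of the involution $\theta(A) = J^{-1}A^T J$ of $\gllie_n$, so that $\pi(A) = \tfrac12(A - \theta(A))$, and check directly from the pairing that $\theta(p_\alpha) = p_{\alpha^{-1}}$. Hence $\pi(p_{\pm 1}) = 0$ and $\pi(p_{\beta_j}) = \tfrac12(p_{\beta_j} - p_{\beta_j^{-1}})$, so $\pi(\mathfrak{z}(\gllie_n^\gamma))$ is the span of the $s$ linearly independent vectors $p_{\beta_j} - p_{\beta_j^{-1}}$.

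Next I would compute $\g^\gamma = \g \cap \gllie_n^\gamma$ as a Lie algebra. Since $\theta$ interchanges $\End(V_\alpha)$ and $\End(V_{\alpha^{-1}})$ (via the adjoint for the perfect pairing) and preserves each $\End(V_{\pm 1})$ (via the adjoint for the form on $V_{\pm 1}$), the condition $\theta(A) = -A$ yields $\g^\gamma \cong \bigoplus_{j=1}^{s} \gllie_{m_{\beta_j}} \oplus \mathfrak{h}(V_1) \oplus \mathfrak{h}(V_{-1})$ as a direct sum of ideals, where $\mathfrak{h} = \solie$ in the orthogonal case and $\mathfrak{h} = \splie$ in the symplectic case, and $m_\alpha = \dim V_\alpha$. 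Taking centres, and using that $\mathfrak{z}(\gllie_m) = \Fpbar\cdot \id$ is one-dimensional, that $\mathfrak{z}(\splie_m) = 0$ for every even $m$ when $p \neq 2$, and that $\mathfrak{z}(\solie_m) = 0$ for $m \in \{0,1\}$ and for $m \geq 3$ when $p \neq 2$ while $\mathfrak{z}(\solie_2) = \solie_2$ is one-dimensional, I obtain $\mathfrak{z}(\g^\gamma) = \bigoplus_j \mathfrak{z}(\gllie_{m_{\beta_j}}) \oplus \mathfrak{z}(\mathfrak{h}(V_1)) \oplus \mathfrak{z}(\mathfrak{h}(V_{-1}))$. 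One checks that the $j$-th summand is exactly $\Fpbar(p_{\beta_j} - p_{\beta_j^{-1}})$, so $\pi(\mathfrak{z}(\gllie_n^\gamma)) = \bigoplus_j \mathfrak{z}(\gllie_{m_{\beta_j}})$; recovering the inclusion from Lemma \ref{compute gln adequacy}, it therefore equals $\mathfrak{z}(\g^\gamma)$ precisely when $\mathfrak{z}(\mathfrak{h}(V_1))$ and $\mathfrak{z}(\mathfrak{h}(V_{-1}))$ both vanish, which is always the case symplectically and, orthogonally, holds exactly when neither $m_1$ nor $m_{-1}$ equals $2$. This gives part (1).

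For part (2) I would use the explicit structure of the centralizer: $Z_{\hat{G}}(\gamma)$ is the direct product $\prod_{j=1}^{s} \GL(V_{\beta_j}) \times K$ (the factor $\GL(V_{\beta_j})$ acting on $V_{\beta_j^{-1}}$ through the inverse of its pairing-adjoint), where $K = \Sp(V_1) \times \Sp(V_{-1})$ in the symplectic case and $K = \{(g,g') \in \orth(V_1) \times \orth(V_{-1}) : \det g \det g' = 1\}$ in the orthogonal case. Since the centre of a direct product is the product of the centres, and $\Lie Z(\GL(V_{\beta_j})) = \Fpbar\cdot\id$ is one-dimensional and is exactly $\Fpbar(p_{\beta_j} - p_{\beta_j^{-1}})$ (which gives the claimed inclusion $\pi(\mathfrak{z}(\gllie_n^\gamma)) \subset \Lie Z(Z_{\hat{G}}(\gamma))$ for free), everything reduces to computing $\Lie Z(K)$. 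In the symplectic case $Z(\Sp_m) = \mu_2$ has trivial Lie algebra, so $\Lie Z(Z_{\hat{G}}(\gamma)) = \pi(\mathfrak{z}(\gllie_n^\gamma))$. In the orthogonal case I would argue as follows: if $m_1, m_{-1} \geq 1$ then $K$ contains a ``flip'' $(r,r')$ with $r, r'$ reflections, and any central element of $K$ must centralize $\SO(V_1) \times \SO(V_{-1})$ together with this flip, hence lies in $Z(\orth(V_1)) \times Z(\orth(V_{-1}))$, a finite group, so $\Lie Z(K) = 0$; whereas if one of $m_1, m_{-1}$ is $0$ then $K = \SO(V_{\ne 0})$ and $\Lie Z(\SO_m) \neq 0$ exactly when $m = 2$. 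Combining, $\Lie Z(Z_{\hat{G}}(\gamma))$ strictly contains $\pi(\mathfrak{z}(\gllie_n^\gamma))$ precisely when $\hat{G}$ is orthogonal and $\{m_1, m_{-1}\} = \{0,2\}$, which is the assertion.

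The main obstacle will be part (2), specifically the case analysis for $\Lie Z(K)$ orthogonally: one must check that the two genuinely different phenomena --- an $m_{\pm 1} = 2$ contributing a torus to the centre, versus a flip reflection cutting a would-be torus centre down to a finite group --- interact correctly, and handle the degenerate values $m_{\pm 1} \in \{0,1\}$ (where $\orth_0$, $\orth_1$ and $\SO_1$ behave atypically), while keeping careful track of where $p \neq 2$ is used --- guaranteed since $p$ is very good for $\hat{G}$ --- both for $\gllie_n = \g \oplus \mathfrak{m}$ and for the vanishing statements about $\mathfrak{z}(\solie_m)$, $\mathfrak{z}(\splie_m)$ and about $Z(\orth_m)$, $Z(\Sp_m)$ being étale.
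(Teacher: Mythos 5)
Your proposal is correct and follows essentially the same route as the paper's proof: decompose $V$ into $\gamma$-eigenspaces, note that $\mathfrak{z}(\gllie_n^\gamma)$ is spanned by the eigenprojections so that only the pairs $\{\beta,\beta^{-1}\}$ with $\beta\neq\pm1$ survive the projection $\pi$, identify $Z_{\hat{G}}(\gamma)$ as a product of $\GL_{m_\beta}$ factors with the $\pm1$-block, and trace both failures of equality to $\solie_2$ (respectively to the would-be $\SO_2$ centre being cut down to a finite group by a determinant $-1$ element of the other orthogonal factor when $m_{\mp1}>0$). Your explicit computation of $\pi$ via the $J$-adjoint involution $\theta$ with $\theta(p_\alpha)=p_{\alpha^{-1}}$ merely makes explicit the formula for $\pi(\mathfrak{z}(\gllie_{n}^\gamma))$ that the paper asserts more briefly; it is not a genuinely different argument.
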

\begin{proof}
The bilinear form $J$ is non-degenerate on restriction to the sum of eigenspaces $X_\alpha = V_\alpha + V_{\alpha^{-1}}$.
Since the centralizer of $\gamma$ inside $\GL_n$ is equal to $\prod_{\alpha} \GL(V_\alpha)$, 
we have $Z_{\hat{H}}(\gamma) = \prod_{\{\alpha, \alpha^{-1}\}} Z_{\hat{H}_\alpha}(\restr{\gamma}{X_\alpha})$.
Here $\hat{H}_\alpha = \hat{H} \cap \GL(X_\alpha)$ is the subgroup of $\GL(X_\alpha)$ which preserves $\restr{J}{X_\alpha}$.
We see that $$Z_{\hat{H}_\alpha}(\restr{\gamma}{X_\alpha}) \cong \begin{cases}
\GL_{m_\alpha} &\text{ if } \alpha \neq \alpha^{-1} \\
\hat{H}_\alpha &\text{ if } \alpha = \alpha^{-1},
\end{cases}$$
where the copy of $\GL_{m_\alpha}$ is embedded into $\GL(V_\alpha) \times \GL(V_{\alpha^{-1}})$ with determinant one image.
The centralizer $Z_{\hat{G}}(\gamma)$ is therefore isomorphic to $$(\SL(X_1 \oplus X_{-1}) \cap (\hat{H}_1 \times \hat{H}_{-1})) \times \prod_{\{\alpha,\alpha^{-1}\} \not\in \{\{1\},\{-1\}\}} \GL_{m_\alpha}.$$
We claim that 
$$\pi(\mathfrak{z}(\gllie_{n}^\gamma)) = \bigoplus_{\{\alpha,\alpha^{-1}\} \not\in \{\{1\},\{-1\}\}} \mathfrak{z}(\gllie_{m_\alpha})$$
where, writing $\gllie(V) = \oplus_{\beta,\gamma} \Hom(V_\beta,V_\gamma)$, we identify $\mathfrak{z}(\gllie_{m_\alpha})$ with the subspace spanned by the map $V \to V$ which restricts to $\pm \id$ on $V_{ \alpha^{\pm 1}}$ and restricts to $0$ on every $V_\beta$ for $\beta \neq \alpha^{\pm 1}$. 
This is because, for both choices of $\pm$, $\mathfrak{z}(\gllie(X_{\pm 1})) \cap \Lie \hat{G} = 0$, as there are no non-zero scalar matrices in the Lie algebra of an orthogonal or symplectic group (recalling that $p>2$).
For $\alpha \neq \pm 1$ an eigenvalue of $\gamma$, we have $\Lie Z(\GL_{m_\alpha}) = \mathfrak{z}(\gllie_{m_\alpha})$.
We deduce that the inclusion $\pi(\mathfrak{z}(\gllie_{n}^\gamma)) \subset \Lie Z(Z_{\hat{G}}(\gamma))$ always holds.

It remains to check in what cases we have either
\begin{equation} \label{eqn: lie algebra center nonzero}
\mathfrak{z} (\Lie (\SL(X_1 \oplus X_{-1}) \cap (\hat{H}_1 \times \hat{H}_{-1}))) \neq 0
\end{equation}
or
\begin{equation} \label{eqn: center of lie algebra nonzero}
\Lie Z(\SL(X_1 \oplus X_{-1}) \cap (\hat{H}_1 \times \hat{H}_{-1})) \neq 0. 
\end{equation}
For the former, we are computing the center of the direct sum of Lie algebras of symplectic or special orthgonal groups, which is non-zero if and only if at least one of the Lie algebras is $\mathfrak{so}_2$. 
Hence \ref{eqn: lie algebra center nonzero} holds if and only if $\hat{G}$ is special orthogonal and $2 \in \{m_1,m_{-1}\}$.
For the latter, we similarly see that $J$ must be symmetric and that $2 \in \{m_1,m_{-1}\}$ if \ref{eqn: center of lie algebra nonzero} is to hold; without loss of generality suppose that $m_1 = 2$.
If $m_{-1} > 0$ then $\hat{H}_{-1}$ is isomorphic to a disconnected group, $\mathrm{O}_{m_{-1}}$.
Thus for any pair of matrices $(A,B)$ defining a point of $Z(\SL(X_1 \oplus X_{-1}) \cap (\hat{H}_1 \times \hat{H}_{-1}))$ it must be the case that $A$ commutes with every matrix in $\hat{H}_1 \cong \mathrm{O}_2$ and that $Z(\SL(X_1 \oplus X_{-1}) \cap (\hat{H}_1 \times \hat{H}_{-1}))$ must therefore be a finite group scheme with zero Lie algebra.
Otherwise, if $m_{-1} = 0$ then $\Lie Z(\SL(X_1 \oplus X_{-1}) \cap (\hat{H}_1 \times \hat{H}_{-1})) \cong \Lie \SO_2 \neq 0$.
\end{proof}

We now come to the main theorem of this section which gives convenient criteria for $\hat{G}$-adequacy in a number of cases.

\begin{theorem} \label{gln irred implies adequate}
Let $\hat{G}$ be one of the following simple linear algebraic groups over $\Fpbar$:
\begin{itemize}
    \item the special orthogonal group $\SO_{n}$ for $n \geq 3$
    \item the symplectic group $\Sp_{n}$ for $n \geq 2$ even
    \item the simply connected exceptional groups $G_2,F_4,E_6,E_7,E_8$.
\end{itemize}
Let $\hat{G} \hookrightarrow \GL_n$ be a faithful irreducible representation of minimal dimension.
If $p \geq n+4$, $p \neq 2n \pm 1$ and $\Gamma \subset \hat{G}(\Fpbar)$ is a finite, $\GL_n$-irreducible subgroup, then $\Gamma$ is $\hat{G}$-adequate.
\end{theorem}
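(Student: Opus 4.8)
The strategy is to leverage Theorem \ref{gln equiv} together with the Lie-algebra comparison lemmas to transfer $\GL_n$-adequacy into $\hat{G}$-adequacy. First I would observe that a $\GL_n$-irreducible subgroup $\Gamma$ is in particular absolutely irreducible as a subgroup of $\GL_n(\Fpbar)$, hence by Theorem \ref{gln equiv} (applicable since $p \geq n+4$ and $p \neq 2n\pm 1$) the group $\Gamma$ is $\GL_n$-adequate. This gives us vanishing of $H^0(\Gamma,\gllie_n^\vee)$, $H^1(\Gamma,\Fpbar)$, $H^1(\Gamma,\gllie_n^\vee)$ and, via the equivalent characterisation \eqref{equiv characterisation}, the identity $\sum_{\gamma \in \Gss} \mathfrak{z}(\gllie_n^\gamma) = \gllie_n$. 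The conditions \ref{H0 vanish} and the two $H^1$-vanishing conditions of Definition \ref{adequate} for $\Gamma$ acting on $\g^{0,\vee}$ then follow by restricting along a $\hat{G}$-module (hence $\Gamma$-module) direct summand decomposition $\gllie_n = \g \oplus \mathfrak{m}$, which exists by Lemma \ref{module complement} since $p$ is of very good characteristic; note $\g = \g^0$ as $\hat{G}$ is semisimple, and cohomology (and its vanishing) passes to direct summands. Here one must note $H^1(\Gamma,\Fpbar)=0$ is literally the $\GL_n$-adequacy statement \ref{homs to k vanish}, unchanged.

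The substantive point is condition \ref{submodule condition}, equivalently \eqref{equiv characterisation}: I need $\sum_{\gamma \in \Gss}(\Lie Z(M_\gamma)\cap \g^0) = \g^0$. Applying the projection $\pi: \gllie_n = \g \oplus \mathfrak{m} \to \g$ to the $\GL_n$-adequacy identity $\sum_{\gamma\in\Gss}\mathfrak{z}(\gllie_n^\gamma) = \gllie_n$ and using $\pi$-surjectivity gives $\sum_{\gamma\in\Gss}\pi(\mathfrak{z}(\gllie_n^\gamma)) = \g^0$. By Lemma \ref{compute gln adequacy} we have $\pi(\mathfrak{z}(\gllie_n^\gamma)) \subset \sum_{\gamma\in\Gss}\mathfrak{z}(\g^\gamma)$; but we actually need the containment into $\sum_{\gamma}(\Lie Z(M_\gamma)\cap\g^0)$, so I split into cases. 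For $\hat{G}$ symplectic or exceptional, I would use Lemma \ref{spanning equality}(1) (in the symplectic case the bad case $\hat{G}=\SO_n$ never occurs, and for the exceptional groups one argues directly that each semisimple centralizer is connected so $\Lie Z(M_\gamma)\cap\g^0 = \mathfrak{z}((\g^0)^\gamma) = \mathfrak{z}(\g^\gamma)$, and Lemma \ref{spanning equality}-type reasoning or the direct computation $\pi(\mathfrak{z}(\gllie_n^\gamma)) = \mathfrak{z}(\g^\gamma)$ applies); for $\hat{G} = \SO_n$ I would instead invoke part (2) of Lemma \ref{spanning equality}, which gives $\pi(\mathfrak{z}(\gllie_n^\gamma)) \subset \Lie Z(Z_{\hat{G}}(\gamma)) = \Lie Z(M_\gamma)$ always (with no exceptional bad case relevant to the spanning statement, since even when the inclusion is strict the relevant sum is still all of $\g^0$ once we take the union over all $\gamma \in \Gss$ — the point being $\pi$ is already surjective). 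Combining, $\g^0 = \sum_{\gamma\in\Gss}\pi(\mathfrak{z}(\gllie_n^\gamma)) \subset \sum_{\gamma\in\Gss}(\Lie Z(M_\gamma)\cap\g^0) \subset \g^0$, forcing equality.

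The main obstacle is the bookkeeping in the $\SO_n$ case of Lemma \ref{spanning equality}: one must be careful that although $\pi(\mathfrak{z}(\gllie_n^\gamma))$ can be a proper subspace of $\Lie Z(M_\gamma)\cap\g^0$ for individual $\gamma$, this does not hurt us — the surjectivity of $\sum_\gamma \pi(\mathfrak{z}(\gllie_n^\gamma))$ onto $\g^0$ is what propels the argument, and the inclusion of each $\pi(\mathfrak{z}(\gllie_n^\gamma))$ into $\Lie Z(M_\gamma)\cap\g^0$ (which is the content of the "inclusion always holds" clause of Lemma \ref{spanning equality}(2), noting $\Lie Z(Z_{\hat{G}}(\gamma)) \cap \g^0 = \Lie Z(M_\gamma)\cap\g^0$ since $\Lie Z(M_\gamma) \subset \g^0$ for $\hat{G}$ semisimple, as $\g = \g^0$) suffices. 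One should also double-check that the hypothesis "$p$ of very good characteristic for $\hat{G}$" is implied here: since $p \geq n+4$ exceeds the relevant small bad primes for each listed $\hat{G}$ in its minimal representation, this holds, but I would state it explicitly or cite the tables.
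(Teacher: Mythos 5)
Your proposal is correct and follows essentially the same route as the paper: deduce $\GL_n$-adequacy from Theorem \ref{gln equiv}, split $\gllie_n = \g \oplus \mathfrak{m}$ as $\hat{G}$-modules via Lemma \ref{module complement}, push the spanning identity through the projection $\pi$ using Lemma \ref{compute gln adequacy} in the simply connected cases and Lemma \ref{spanning equality} for $\SO_n$, and read off the cohomological conditions from the direct-summand decomposition. The only nitpick is that $H^0(\Gamma,\gllie_n^\vee)$ does not literally vanish (it is spanned by the trace form); since $p \nmid n$ one has $\g \subset \mathfrak{sl}_n$ and $\gllie_n = \mathfrak{sl}_n \oplus \Fpbar$, so Schur's lemma (equivalently the $H^0$-part of $\GL_n$-adequacy for $\mathfrak{sl}_n^\vee$) gives $H^0(\Gamma,\g^\vee)=0$ — a cosmetic slip that the paper's own write-up shares.
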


\begin{proof}

By Theorem \ref{gln equiv} we know that $\Gamma$ is $\GL_n$-adequate, hence
$$\sum_{\gamma \in \Gss} \mathfrak{z}(\gllie_{n}^\gamma) = \gllie_{n}.$$
By Lemma \ref{module complement} we may write $\gllie_{n} = \g \oplus \mathfrak{m}$ as $\Gamma$-modules and we let $\pi$ denote the $\hat{G}$-equivariant projection $\gllie_n \to \g$.
Then by Lemma \ref{compute gln adequacy} for the cases where $\hat{G}$ is simply connected, and Lemma \ref{spanning equality} for the case $\hat{G} = \SO_n$, it follows that
$$\g = \pi\big(\sum_{\gamma \in \Gss} \mathfrak{z}(\gllie_{n}^\gamma)\big) \subset \sum_{\gamma \in \Gss} \Lie Z(M_\gamma)$$
and the spanning condition therefore holds.

We now check the cohomological conditions of Definition \ref{adequate} using the $\hat{G}$-module decomposition of $\gllie_n$ once again.
For $0 \leq i \leq 1$ we have $$0 = H^i(\Gamma, \gllie_{n}) = H^i(\Gamma, \gllie_{n}^\vee) = H^i(\Gamma, \g^\vee) \oplus H^i(\Gamma, \mathfrak{m}^\vee)$$ and $H^1(\Gamma,\Fpbar) = 0$ by $\GL_n$-adequacy of $\Gamma$.
\end{proof}

To conclude this section, we give some lemmas which in some cases make the computation of projections of centers of centralizers and their spans more straightforward.

\begin{lemma} \label{lemma: classical projection equal intersection}
Suppose $\hat{G}$ is either symplectic or special orthogonal.
Then for every semisimple $\gamma \in \hat{G}(\Fpbar)$ we have
$\pi(\mathfrak{z}(\gllie_n^\gamma)) = \mathfrak{z}(\gllie_n^\gamma) \cap \g.$
\end{lemma}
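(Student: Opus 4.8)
The claim is that for $\hat{G}$ symplectic or special orthogonal, embedded in $\GL_n$ via its standard representation on $V = \Fpbar^n$ preserving the form $J$, and for $\gamma \in \hat{G}(\Fpbar)$ semisimple, the projection $\pi(\mathfrak{z}(\gllie_n^\gamma))$ coincides with the intersection $\mathfrak{z}(\gllie_n^\gamma) \cap \g$. Since $\gllie_n = \g \oplus \mathfrak{m}$ as $\hat{G}$-modules (with $\mathfrak{m} = \{A : A^TJ - JA = 0\}$) and $\pi$ is the projection onto $\g$, the inclusion $\mathfrak{z}(\gllie_n^\gamma) \cap \g \subseteq \pi(\mathfrak{z}(\gllie_n^\gamma))$ is trivial: any element already lying in $\g$ is fixed by $\pi$. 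So the content is the reverse inclusion $\pi(\mathfrak{z}(\gllie_n^\gamma)) \subseteq \g$, i.e.\ that projecting the center of the $\gllie_n$-centralizer lands inside $\g$ automatically.

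\textbf{Main approach.} The plan is to use the explicit description of $\mathfrak{z}(\gllie_n^\gamma)$ obtained in the proof of Lemma \ref{spanning equality}. There it is shown that $\pi(\mathfrak{z}(\gllie_n^\gamma)) = \bigoplus_{\{\alpha,\alpha^{-1}\} \not\in \{\{1\},\{-1\}\}} \mathfrak{z}(\gllie_{m_\alpha})$, where $\mathfrak{z}(\gllie_n^\gamma) = \bigoplus_\alpha \mathfrak{z}(\gllie(V_\alpha))$ is spanned by the projections $e_\alpha$ onto the eigenspaces $V_\alpha$ of $\gamma$, and $\pi(e_\alpha)$ for $\alpha \neq \alpha^{-1}$ is identified with the map restricting to $+\id$ on $V_\alpha$ and $-\id$ on $V_{\alpha^{-1}}$ (and $0$ elsewhere), while $\pi(e_{\pm 1}) = 0$ because $\mathfrak{z}(\gllie(X_{\pm 1})) \cap \g = 0$. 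I would therefore argue directly: pick $x = \sum_\alpha c_\alpha e_\alpha \in \mathfrak{z}(\gllie_n^\gamma)$, compute $\pi(x)$ using that $\pi$ is the $\hat{G}$-equivariant projection killing $\mathfrak{m}$, observe that $\pi(x) = \sum_{\{\alpha,\alpha^{-1}\}\neq\{\pm1\}} c_\alpha (e_\alpha - e_{\alpha^{-1}}) + (\text{coefficient combinations at }\pm1)\cdot 0$, and then verify the operator $e_\alpha - e_{\alpha^{-1}}$ genuinely lies in $\g$, i.e.\ satisfies $A^TJ + JA = 0$. This last check is where the condition $\alpha\alpha^{-1}=1$ is used: $V_\alpha$ and $V_{\alpha^{-1}}$ are isotropic and paired non-degenerately by $J$, so one computes on $X_\alpha = V_\alpha \oplus V_{\alpha^{-1}}$ that $(e_\alpha - e_{\alpha^{-1}})^T J = -J(e_\alpha - e_{\alpha^{-1}})$ holds, whereas on $V_{\pm 1}$ the scalar $\id$ is symmetric for $J$ and hence not in $\g$, forcing the $\pm 1$-contribution to vanish under $\pi$.

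\textbf{Key steps in order.} First, note the easy inclusion $\supseteq$. Second, recall from the proof of Lemma \ref{spanning equality} the eigenspace decomposition $V = \bigoplus_\alpha V_\alpha$, that $\gllie_n^\gamma = \bigoplus_\alpha \gllie(V_\alpha) \oplus \bigoplus_{\alpha \neq \beta, \alpha\beta = 1}\Hom(V_\alpha, V_\beta)$-pieces but its center is $\bigoplus_\alpha \Fpbar\, e_\alpha$, and the compatibility of $J$ with the eigenspaces (namely $J$ pairs $V_\alpha$ with $V_{\alpha^{-1}}$). Third, compute $\pi(e_\alpha)$: decompose $e_\alpha$ into its $\g$-part and $\mathfrak{m}$-part by symmetrizing/antisymmetrizing with respect to $J$ (possible since $p > 2$); for $\alpha \neq \alpha^{-1}$ the $\g$-part is $\tfrac12(e_\alpha - e_{\alpha^{-1}})$-type and for $\alpha = \alpha^{-1}$ the operator $e_\alpha$ restricted to $X_\alpha$ is $J$-symmetric so its $\g$-part is $0$. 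Fourth, conclude $\pi(\mathfrak{z}(\gllie_n^\gamma))$ is spanned by elements of $\g$, giving $\pi(\mathfrak{z}(\gllie_n^\gamma)) \subseteq \g$, and combined with $\pi$ being identity on $\g$ and with the image obviously contained in $\mathfrak{z}(\gllie_n^\gamma)$ after noting $\pi$ commutes with $\Ad(\gamma)$ (as $\gamma \in \hat{G}$), we get $\pi(\mathfrak{z}(\gllie_n^\gamma)) \subseteq \mathfrak{z}(\gllie_n^\gamma)\cap \g$.

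\textbf{Main obstacle.} The only subtle point is verifying that $\pi(x) \in \mathfrak{z}(\gllie_n^\gamma)$, i.e.\ that projecting keeps us inside the centralizer of $\gamma$ (not just inside $\g$). This follows because $\pi$ is $\hat{G}$-equivariant and $\gamma \in \hat{G}(\Fpbar)$, so $\pi$ commutes with $\Ad(\gamma)$; hence $\pi$ maps $\Ad(\gamma)$-fixed vectors to $\Ad(\gamma)$-fixed vectors. Thus the genuinely non-formal input is the bookkeeping with the bilinear form $J$ on the eigenspace pairing, which is already done inside the proof of Lemma \ref{spanning equality} and can simply be cited. So in fact this lemma is essentially a corollary of the computation carried out in Lemma \ref{spanning equality}, and the proof should be short: restate that $\pi(\mathfrak{z}(\gllie_n^\gamma)) = \bigoplus_{\{\alpha,\alpha^{-1}\}\neq\{\pm1\}}\mathfrak{z}(\gllie_{m_\alpha})$ with each summand realized by an operator lying in $\g$, hence $\pi(\mathfrak{z}(\gllie_n^\gamma)) \subseteq \g$, and pair this with $\hat{G}$-equivariance of $\pi$ to land in $\mathfrak{z}(\gllie_n^\gamma)\cap\g$; the reverse inclusion is immediate.
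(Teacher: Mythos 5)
Your argument is correct, but it runs along a different track from the paper's. You prove the nontrivial inclusion $\pi(\mathfrak{z}(\gllie_n^\gamma)) \subset \mathfrak{z}(\gllie_n^\gamma) \cap \g$ by explicit computation: $\mathfrak{z}(\gllie_n^\gamma)$ is spanned by the eigenprojections $e_\alpha$, the $J$-adjoint of $e_\alpha$ is $e_{\alpha^{-1}}$, so $\pi(e_\alpha) = \tfrac12(e_\alpha - e_{\alpha^{-1}})$ for $\alpha \neq \alpha^{-1}$ and $\pi(e_{\pm 1}) = 0$; each such image visibly lies in $\g$ and in the span of the $e_\beta$, i.e.\ in $\mathfrak{z}(\gllie_n^\gamma)$. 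This is essentially a re-run of the bookkeeping already done inside the proof of Lemma \ref{spanning equality}, so it is legitimate and self-contained, but it is tied to the standard representation and its eigenspace/form combinatorics. The paper instead argues structurally: writing a central element as $x + y$ with $x \in \g$, $y \in \mathfrak{m}$, it uses only the $\mathbb{Z}/2$-graded bracket relations $[\g,\g] \subset \g$, $[\g,\mathfrak{m}] \subset \mathfrak{m}$, and crucially $[\mathfrak{m},\mathfrak{m}] \subset \g$, to show $[x,z] = 0$ separately for $z \in \g^\gamma$ and $z \in \mathfrak{m}^\gamma$, whence $x$ is central in $\gllie_n^\gamma$. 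That argument is shorter, needs no eigenspace analysis, and would apply verbatim to any $\Ad$-stable decomposition with these grading properties, whereas your computation has the advantage of making the image completely explicit (and reconnecting with Lemma \ref{spanning equality}).

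One small caution: your stated justification that the image lands in $\mathfrak{z}(\gllie_n^\gamma)$ "because $\pi$ commutes with $\Ad(\gamma)$" only yields that $\pi(x)$ lies in the fixed subalgebra $\gllie_n^\gamma$, not in its center. In your setup this is harmless, since the explicit formula $\pi(e_\alpha) = \tfrac12(e_\alpha - e_{\alpha^{-1}})$ shows the image is contained in the span of the $e_\beta$, which is exactly $\mathfrak{z}(\gllie_n^\gamma)$; but the centrality should be attributed to that computation (or to an argument like the paper's), not to equivariance alone.
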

\begin{proof}
 We only show that $\pi(\mathfrak{z}(\gllie_n^\gamma)) \subset \mathfrak{z}(\gllie_n^\gamma) \cap \g$ since the reverse containment is clear.
Suppose that $x+y \in \mathfrak{z}(\gllie_n^\gamma)$ with $x \in \g$ and $y \in \mathfrak{m}$.
We just need to show that whenever $z \in \gllie_n^\gamma = \g^\gamma \oplus \mathfrak{m}^\gamma$ we have $[x,z] = 0$.
Suppose that $z \in \g^\gamma$ firstly.
Then $$0 = [x+y,z] = [x,z] + [y,z]$$ with $[x,z] \in \g$ and $[y,z] \in \mathfrak{m}$.
It follows that $[x,z] = 0$.
From the explicit descriptions of $\mathfrak{g}$ and $\mathfrak{m}$, it is easy to see that $[\mathfrak{m},\mathfrak{m}] \subset \g$.
Thus if $z \in \mathfrak{m}^\gamma$ then $[x,z] \in \mathfrak{m}$ and $[y,z] \in \g$.
We similarly see that $[x,z] = 0$.
\end{proof}

\begin{lemma} \label{lemma: compute with intersections}
Let $\hat{H} \subset \hat{G}$ be reductive groups defined over $\Fpbar$.
Suppose that we can write $\g = \mathfrak{\hat{h}} \oplus \mathfrak{m}$ as $\hat{H}$-modules.
Let $S \subset \hat{H}(\Fpbar)$ be a finite set of semisimple elements such that for every $s \in S$ we have
$
\mathfrak{z}(\mathfrak{\hat{h}}^\gamma) = \mathfrak{z}(\g^\gamma) \cap \mathfrak{\hat{h}}.
$
Then
$$
\sum_{\gamma \in S} \mathfrak{z}(\mathfrak{\hat{h}}^\gamma) = \left(\sum_{\gamma \in S} \mathfrak{z}(\g^\gamma) \right)\cap \mathfrak{\hat{h}}.
$$
\end{lemma}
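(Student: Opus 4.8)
The plan is to reduce the identity to the pointwise statement that, for each $\gamma \in S$, the $\hat{H}$-equivariant projection $\pi \colon \g = \mathfrak{\hat{h}} \oplus \mathfrak{m} \to \mathfrak{\hat{h}}$ onto the first summand along $\mathfrak{m}$ satisfies $\pi\big(\mathfrak{z}(\g^\gamma)\big) \subset \mathfrak{z}(\mathfrak{\hat{h}}^\gamma)$. This is the exact analogue, for the pair $(\hat{G},\hat{H})$, of the inclusion proved inside Lemma \ref{compute gln adequacy} for $(\GL_n,\hat{G})$, and I would establish it by the same bracket computation rather than citing that lemma directly, since $\hat{H}$ need not be presented as a subgroup of the relevant $\GL_n$.

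Granting this pointwise claim, the two inclusions are quick. For $\subseteq$, the hypothesis gives $\mathfrak{z}(\mathfrak{\hat{h}}^\gamma) = \mathfrak{z}(\g^\gamma) \cap \mathfrak{\hat{h}} \subset \mathfrak{z}(\g^\gamma)$, so summing over $\gamma \in S$ (and using $\mathfrak{z}(\mathfrak{\hat{h}}^\gamma) \subset \mathfrak{\hat{h}}$) yields $\sum_{\gamma \in S}\mathfrak{z}(\mathfrak{\hat{h}}^\gamma) \subset \big(\sum_{\gamma \in S}\mathfrak{z}(\g^\gamma)\big) \cap \mathfrak{\hat{h}}$. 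For $\supseteq$, I would take $v$ in the right-hand side, write $v = \sum_{\gamma \in S} v_\gamma$ with $v_\gamma \in \mathfrak{z}(\g^\gamma)$, and apply $\pi$; since $v \in \mathfrak{\hat{h}}$ we have $\pi(v) = v$, so $v = \sum_{\gamma \in S}\pi(v_\gamma) \in \sum_{\gamma \in S}\mathfrak{z}(\mathfrak{\hat{h}}^\gamma)$ by the claim.

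To prove the pointwise claim I would first record the two structural facts: since $\mathfrak{m}$ is an $\hat{H}$-submodule of $\g$, differentiating the adjoint action shows $[\mathfrak{\hat{h}},\mathfrak{m}] \subset \mathfrak{m}$ (while $[\mathfrak{\hat{h}},\mathfrak{\hat{h}}] \subset \mathfrak{\hat{h}}$), and $\pi$ commutes with $\Ad(\gamma)$ for every $\gamma \in \hat{H}(\Fpbar)$. Then, given $v \in \mathfrak{z}(\g^\gamma)$, write $v = x + y$ with $x \in \mathfrak{\hat{h}}$, $y \in \mathfrak{m}$; applying $\pi$ to $\Ad(\gamma)v = v$ gives $\Ad(\gamma)x = x$, i.e.\ $x = \pi(v) \in \mathfrak{\hat{h}}^\gamma$, and for any $w \in \mathfrak{\hat{h}}^\gamma \subset \g^\gamma$ the relation $0 = [v,w] = [x,w] + [y,w]$ with $[x,w] \in \mathfrak{\hat{h}}$ and $[y,w] \in \mathfrak{m}$ forces $[x,w] = 0$, so $x \in \mathfrak{z}(\mathfrak{\hat{h}}^\gamma)$. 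I do not anticipate a real obstacle here: the only point requiring care is this pointwise claim, and specifically that $\pi$ lands in the \emph{center} of $\mathfrak{\hat{h}}^\gamma$ rather than merely in $\mathfrak{\hat{h}}^\gamma$, which is exactly where the bracket relations and $\mathfrak{\hat{h}} \cap \mathfrak{m} = 0$ are used; by contrast, the hypothesis $\mathfrak{z}(\mathfrak{\hat{h}}^\gamma) = \mathfrak{z}(\g^\gamma)\cap \mathfrak{\hat{h}}$ enters only in the inclusion $\subseteq$, and semisimplicity of the elements of $S$ plays no role in the argument.
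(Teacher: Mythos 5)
Your proof is correct and follows essentially the same route as the paper: the paper also sandwiches $\big(\sum_{\gamma \in S} \mathfrak{z}(\g^\gamma)\big)\cap \mathfrak{\hat{h}}$ between $\pi\big(\sum_{\gamma \in S} \mathfrak{z}(\g^\gamma)\big)$ and $\sum_{\gamma \in S} \mathfrak{z}(\mathfrak{\hat{h}}^\gamma)$ using the $\hat{H}$-equivariant projection, with the pointwise inclusion $\pi(\mathfrak{z}(\g^\gamma)) \subset \mathfrak{z}(\mathfrak{\hat{h}}^\gamma)$ justified by exactly the bracket argument you spell out (the paper simply cites it as ``the same argument as Lemma \ref{compute gln adequacy}''), and the reverse containment deduced from the hypothesis as you do. Your added observations — that equivariance of $\pi$ gives $\pi(v) \in \mathfrak{\hat{h}}^\gamma$ and that semisimplicity of the elements of $S$ is not used — are accurate and consistent with the paper's intent.
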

\begin{proof}
 Let $\pi: \g \to \mathfrak{\hat{h}}$ denote the $\hat{H}$-equivariant projection arising from the decomposition $\g = \mathfrak{\hat{h}} \oplus \mathfrak{m}$.
We have inclusions
$$
\left(\sum_{\gamma \in S} \mathfrak{z}(\g^\gamma) \right)\cap \mathfrak{\hat{h}} \subset \pi(\sum_{\gamma \in S} \mathfrak{z}(\g^\gamma) ) \subset \sum_{\gamma \in S} \mathfrak{z}(\mathfrak{\hat{h}}^\gamma)
$$
where the second inclusion follows by the same argument as the proof of Lemma \ref{compute gln adequacy}.
The reverse containment 
$$
\sum_{\gamma \in S} \mathfrak{z}(\mathfrak{\hat{h}}^\gamma) \subset \left(\sum_{\gamma \in S} \mathfrak{z}(\g^\gamma) \right)\cap \mathfrak{\hat{h}}
$$
is clear from the hypothesis of the lemma and this completes the proof.
\end{proof}

\subsection{When does $\hat{G}$-adequate imply $\GL_n$-irreducible}

In some cases, a converse to Theorem \ref{gln irred implies adequate} holds.
In all cases where a converse holds, the proof only uses the spanning condition of $\hat{G}$-adequacy.
We firstly show that the spanning condition of $\hat{G}$-adequacy implies $\hat{G}$-irreducibility in general and that if $p$ is sufficiently large then the cohomological conditions are implied by $\hat{G}$-irreducibility.

\begin{proposition} \label{adequate implies irred}
Let $\hat{G}$ be an arbitrary reductive group over $\Fpbar$ and let $\Gamma \subset \hat{G}(\Fpbar)$ be a finite subgroup.
\begin{enumerate}
\item If $H \leq \hat{G}$ is a closed connected smooth subgroup of rank equal to that of $\hat{G}$ such that $\Gamma \subset H(\Fpbar)$, then
$$
\sum_{\gamma \in \Gss} (\Lie Z(M_\gamma) \cap \g^0) \subset \Lie H.$$
\item
In particular, if
$$
\sum_{\gamma \in \Gss} (\Lie Z(M_\gamma) \cap \g^0)  = \g^0
$$
  then $\Gamma$ is $\hat{G}$-irreducible.
\item If $p \geq 3 + 2\dim \g^0$ and $\Gamma$ is $\hat{G}$-irreducible, then $H^1(\Gamma, \g^0)$ and $H^1(\Gamma, \Fpbar)$ both vanish.
\end{enumerate}\end{proposition}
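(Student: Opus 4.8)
For parts (1) and (2) the plan is short. Given $\gamma \in \Gss$, I would choose a maximal torus $T_\gamma$ of $H$ containing $\gamma$ (possible since $H$ is connected); by the rank hypothesis $T_\gamma$ is a maximal torus of $\hat{G}$, and since it is commutative it lies in $M_\gamma = Z_{\hat{G}}(\gamma)$, hence is a maximal torus of the connected reductive group $M_\gamma^\circ$. Then $\Lie Z(M_\gamma) = (\Lie T_\gamma)^{\Stab_W(\gamma)}$ by the identity recorded just above (equivalently $Z(M_\gamma)^\circ \subset Z(M_\gamma^\circ)^\circ$ is a subtorus of $T_\gamma$), so $\Lie Z(M_\gamma) \subset \Lie T_\gamma \subset \Lie H$; intersecting with $\g^0$ and summing over $\gamma \in \Gss$ gives (1). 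For (2), if $\Gamma$ were not $\hat{G}$-irreducible it would lie in $P(\Fpbar)$ for a proper parabolic $P$; then $P$ is closed, connected, smooth and contains a maximal torus of $\hat{G}$, so (1) applies and yields $\g^0 = \sum_{\gamma \in \Gss}(\Lie Z(M_\gamma) \cap \g^0) \subset \Lie P$. Since $Z(\hat{G})^\circ \subset P$ and $\g = \g^0 \oplus \Lie Z(\hat{G})^\circ$ in very good characteristic, this forces $\g \subset \Lie P$, hence $\dim P \geq \dim \hat{G}$ and $P = \hat{G}$, a contradiction.

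Part (3) is the crux. The first step is to extract the input coming from $\hat{G}$-irreducibility: by the result of Richardson recalled in Section \ref{section deformation}, $Z_{\hat{G}}(\Gamma)$ is finite modulo $Z(\hat{G})$, so $\g^{\Gamma} = \mathfrak{z}(\g)$, and intersecting with the $\Gamma$-stable complement $\g^0$ gives $(\g^0)^{\Gamma} = 0$. In very good characteristic $\g^0$ carries a nondegenerate $\hat{G}$-invariant symmetric bilinear form — this is precisely the nondegeneracy of the trace form exploited in the proof of Lemma \ref{module complement} — so $\g^0 \cong \g^{0,\vee}$ as $\Gamma$-modules; in particular $H^0(\Gamma,\g^{0,\vee}) = 0$ and $H^1(\Gamma,\g^0) \cong H^1(\Gamma,\g^{0,\vee})$, so it suffices to work with $\g^0$. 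If $p \nmid \#\Gamma$ then $H^1(\Gamma,\g^0)$ and $H^1(\Gamma,\Fpbar)$ vanish by Maschke, so I may assume $p \mid \#\Gamma$.

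The substantive case rests on the structure of $\hat{G}$-irreducible finite subgroups for $p$ large. Writing $\Gamma^{+} \trianglelefteq \Gamma$ for the subgroup generated by the elements of order $p$, I would argue as follows: $\Gamma^{+}$ is nontrivial (as $p \mid \#\Gamma$), and by Borel--Tits it cannot be a $p$-group, since otherwise it would be a nontrivial unipotent subgroup normal in $\Gamma$ and $\Gamma$ would lie in a proper parabolic, contradicting $\hat{G}$-irreducibility; then, since $p \geq 3 + 2\dim\g^0$ is large relative to $\dim\hat{G}$, the structure theory of finite subgroups of reductive groups (Larsen--Pink/Serre) forces $\Gamma^{+}$ to be a central product of quasi-simple groups of Lie type in characteristic $p$ of bounded rank. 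In particular $\Gamma^{+}$ is perfect and $[\Gamma:\Gamma^{+}]$ is prime to $p$. Consequently $\Hom(\Gamma,\Fpbar)$ injects (by inflation--restriction, using $p \nmid [\Gamma:\Gamma^{+}]$) into $\Hom(\Gamma^{+},\Fpbar)^{\Gamma/\Gamma^{+}} = 0$, and likewise $H^1(\Gamma,\g^0) \hookrightarrow H^1(\Gamma^{+},\g^0)$, reducing us to $H^1(\Gamma^{+},\g^0) = 0$. Since every composition factor of $\g^0$ as a $\Gamma^{+}$-module has $\Fpbar$-dimension at most $\dim\g^0 \leq (p-3)/2 < p-1$, this follows from the Cline--Parshall--Scott vanishing bounds for the first cohomology of finite groups of Lie type on small irreducible modules, applied to each quasi-simple factor together with a Künneth/Shapiro reduction for the central product.

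I expect this last step to be the only real obstacle: parts (1) and (2) are formal, whereas (3) is essentially a repackaging of existing ``big image implies cohomology vanishing'' theorems, and assembling the structure theory of $\hat{G}$-irreducible finite subgroups cleanly is the delicate point. In the final write-up it may well be preferable to cite such a statement directly — in the same spirit as Theorem \ref{gln equiv}, which simply invokes \cite{guralnick2015adequate, thoappendix} in the $\GL_n$ case — the hypothesis $p \geq 3 + 2\dim\g^0$ being exactly the bound that makes those inputs applicable.
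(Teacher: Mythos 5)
Parts (1) and (2) of your argument are correct and essentially the paper's own: you place $\gamma$ in a maximal torus $T_\gamma$ of $H$, note that by the rank hypothesis $T_\gamma$ is a maximal torus of $\hat{G}$ contained in $M_\gamma$, and conclude $\Lie Z(M_\gamma)\subset \Lie T_\gamma\subset \Lie H$; applying this to a proper parabolic gives (2) (the paper phrases the contradiction as $\Lie P\cap\g^0\subsetneq\g^0$ rather than via your dimension count with $Z(\hat{G})^\circ$, but it is the same point).

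The genuine gap is in (3). After the (correct) Borel--Tits observation that $\hat{G}$-irreducibility rules out a nontrivial normal $p$-subgroup, you assert that Larsen--Pink/Serre forces $\Gamma^{+}$ to be a perfect central product of quasi-simple groups of Lie type in characteristic $p$ with $[\Gamma:\Gamma^{+}]$ prime to $p$. Larsen--Pink does not give this: it produces a normal series with a bounded-index top quotient, not a structure theorem for the subgroup generated by the order-$p$ elements, and proving the structure you want under the hypothesis $p\ge 3+2\dim\g^0$ is exactly the hard content of the Guralnick-type theorems you would ultimately be invoking, so as written the step is circular. The appeal to Cline--Parshall--Scott is likewise not a citable statement in the generality you need (vanishing of $H^1$ of every quasi-simple group of Lie type on every module of dimension $<p-1$), your vanishing of $H^1(\Gamma,\Fpbar)$ rests on the unproven perfectness of $\Gamma^{+}$, and even $p\nmid[\Gamma:\Gamma^{+}]$ needs the extra remark that under the stated bound every $p$-element of $\Gamma$ has order exactly $p$. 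The paper takes a shorter route that avoids all structure theory of $\Gamma$: after passing to the adjoint quotient, Borel--Tits (\cite[Proposition 3.1]{borel1971elements}) gives that $\Gamma$ normalizes no nontrivial $p$-subgroup, and then a single citation of \cite[Theorem A]{guralnick1999small} --- whose hypothesis is precisely $\dim\g^0\le (p-3)/2$ --- yields simultaneously that $\g^0$ is a completely reducible $\Gamma$-module and that $H^1(\Gamma,\g^0)=0$. The vanishing of $H^1(\Gamma,\Fpbar)$ is then deduced from complete reducibility via Lemma \ref{lemma: completely reducible implies h1(fp) vanishes} (a Clifford-theoretic induction bottoming out in the irreducible case, Theorem \ref{gln equiv}), not from perfectness of $\Gamma^{+}$. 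Your instinct to cite a black box is right, but the correct black box is Guralnick's ``small representations are completely reducible'' theorem, not Larsen--Pink plus CPS.
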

\begin{proof}
Let $\gamma \in \Gss$. 
We claim that $\Lie Z(M_\gamma) \cap \g^0 \leq \Lie H$.
Since $\gamma$ is semisimple we can find a maximal torus $T$ of $H$ for which $\gamma \in T(\Fpbar)$, as is true in any connected linear algebraic group.
Since $T$ is also a maximal torus of $\hat{G}$ and contained in $M_\gamma$, it follows that $(\Lie Z(M_\gamma) \cap \g^0) \subset \Lie T$ which shows the claim.
The second point immediately follows from this, as any parabolic subgroup is smooth and connected of maximal rank.
 
For the final point, it suffices to replace $\Gamma$ by its image in the adjoint quotient of $\hat{G}$.
Since $\Gamma$ is $\hat{G}$-irreducible, $\Gamma$ cannot normalize a non-trivial $p$-subgroup by \cite[Proposition 3.1]{borel1971elements}.
Applying \cite[Theorem A]{guralnick1999small}, we see that $\g^0$ is a completely reducible representation of $\Gamma$ and that $H^1(\Gamma, \g^0) = 0$.
The remainder of the proposition now follows from Lemma \ref{lemma: completely reducible implies h1(fp) vanishes} below.
\end{proof}

\begin{lemma} \label{lemma: completely reducible implies h1(fp) vanishes}
Let $\Gamma \subset \GL_n(\Fpbar)$ be a finite subgroup acting completely reducibly on $\Fpbar^n$.
If $p > 2n+1$ then $H^1(\Gamma,\Fpbar) = 0$.
\end{lemma}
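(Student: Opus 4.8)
The plan is to note that $H^1(\Gamma,\Fpbar)$ computes group homomorphisms $\Gamma\to(\Fpbar,+)$ for the trivial action, so it vanishes if and only if $\Gamma$ has no quotient isomorphic to $\mathbb{Z}/p\mathbb{Z}$. So suppose for contradiction that $\phi\colon\Gamma\twoheadrightarrow\mathbb{Z}/p\mathbb{Z}$ is a surjection with kernel $N$. First I would extract a convenient $p$-element: for any $g\in\Gamma\setminus N$ the $p$-part $g_p$ of $g$ is a power of $g$, so $\phi(g_p)\neq 0$; being a $p$-element it acts on $\Fpbar^n$ as a unipotent matrix, and since all its Jordan blocks have size $\le n<p$ we get $g_p^{\,p}=1$. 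Thus $h:=g_p$ has order exactly $p$, $\phi(h)\neq 0$ (so $h\neq 1$, and in particular $\rho(h)\neq 1$ by faithfulness, which also disposes of the case $n=1$ since $\Fpbar^\times$ has no element of order $p$), and $\Gamma=N\rtimes\langle h\rangle$.

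Next I would reduce, by induction on $n$, to the case that $\Fpbar^n$ is an irreducible $\Gamma$-module. If $\Fpbar^n=V_1\oplus V_2$ with $V_1$ a nonzero proper irreducible $\Gamma$-submodule and $V_2$ a $\Gamma$-stable complement (complete reducibility), set $K_i=\ker(\Gamma\to\GL(V_i))$, so $K_1\cap K_2=1$. If $K_1\subseteq N$, then $\phi$ factors through $\Gamma/K_1$, which acts faithfully and completely reducibly on $V_1$ with $\dim V_1<n<(p-1)/2$, contradicting the inductive hypothesis. If $K_1\not\subseteq N$, then $\phi(K_1)=\mathbb{Z}/p\mathbb{Z}$ by normality of $K_1$, while $K_1$ acts faithfully (as $K_1\cap K_2=1$) and, by Clifford's theorem, completely reducibly on $V_2$ with $\dim V_2<n$, again contradicting the inductive hypothesis.

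So assume $\Fpbar^n$ is an absolutely irreducible $\Gamma$-module. Applying Clifford's theorem to $N\trianglelefteq\Gamma$, the restriction $V|_N$ is semisimple, and $\langle h\rangle$ (mapping onto $\Gamma/N\cong\mathbb{Z}/p\mathbb{Z}$) permutes its isotypic components in orbits of size $1$ or $p$; an orbit of size $p$ would force $n=\dim V\ge p$, impossible, so $V|_N\cong\chi^{\oplus e}$ is $N$-isotypic. The obstruction to extending $\chi$ to $\Gamma$ lies in $H^2(\mathbb{Z}/p\mathbb{Z},\Fpbar^\times)$, which vanishes because $x\mapsto x^p$ is surjective on $\Fpbar^\times$; hence the projective representation of $\mathbb{Z}/p\mathbb{Z}$ controlling the multiplicity is an honest representation, and being irreducible it is one-dimensional, so $e=1$ and $V|_N=\chi$ is absolutely irreducible over $N$. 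Consequently $\rho(h)$ is a nontrivial unipotent element normalizing the absolutely irreducible subgroup $\rho(N)\le\GL_n(\Fpbar)$, and by Schur's lemma conjugation by $\rho(h)$ induces an automorphism of $\rho(N)$ of order exactly $p$ (order $1$ would make $\rho(h)$ a scalar).

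The remaining, and hardest, step is to contradict this configuration using $p>2n+1$. The route I would try: the Zariski closure $\overline{\Gamma}\le\GL_n$ is reductive (a nontrivial unipotent radical would have a nonzero fixed subspace, hence a $\Gamma$-stable complement, and would act trivially by induction, hence be trivial by faithfulness); since $n<p$ the element $h$ lies in a one-parameter unipotent subgroup $\cong\mathbb{G}_a$ of $\overline{\Gamma}^{\,\circ}$, so the normal closure $\Gamma_1$ of $h$ in $\Gamma$ is Zariski-dense in the connected semisimple subgroup $S\le\overline{\Gamma}^{\,\circ}$ generated by the $\Gamma$-conjugates of that $\mathbb{G}_a$; then $\phi(\Gamma_1)=\mathbb{Z}/p\mathbb{Z}$ (conjugacy-invariance of $\phi$), which must be ruled out by the structure of finite Zariski-dense subgroups of a semisimple group carrying a faithful representation of dimension $\le n$ when $p>2n+1$. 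An alternative is to argue directly that no group admitting a faithful absolutely irreducible $\Fpbar$-representation of dimension $n$ can have an automorphism of order $p$ realised by a nonidentity unipotent matrix once $p>2n+1$. I expect essentially all the genuine work — and the explanation of why $(p-1)/2$ is the correct threshold — to sit in this final step: the bound enters through Jordan block sizes (forcing order-$p$ unipotents, which is what makes the Clifford reduction go through) and through bounding the semisimple group $S$, equivalently the relevant automorphism group, in terms of $n$.
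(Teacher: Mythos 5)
Your reduction to the irreducible case is correct and is essentially the paper's own argument: the paper also inducts on $n$, splits off a summand, uses Clifford theory to see that the kernel of the action on one factor acts completely reducibly (and faithfully) on the other, and then concludes by inflation--restriction; your explicit case analysis with $K_1\subseteq N$ versus $K_1\not\subseteq N$ is just that same inflation--restriction argument spelled out. Your Clifford-theoretic analysis of the irreducible case (orbits of size $1$ or $p$, vanishing of $H^2(\mathbb{Z}/p\mathbb{Z},\Fpbar^\times)$, multiplicity one, so $V|_N$ stays irreducible and $h$ induces an order-$p$ automorphism of $N$) is also fine, though note the isotypic constituent need not be a character, so the notation $\chi$ is misleading.

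The genuine gap is the step you yourself flag as containing ``essentially all the genuine work'': ruling out an irreducible finite subgroup of $\GL_n(\Fpbar)$ with a surjection onto $\mathbb{Z}/p\mathbb{Z}$ when $p>2n+1$. That statement is not something your sketch establishes. The proposed route (Zariski closure is reductive, exponentiate $h$ to a one-parameter unipotent subgroup, claim the normal closure of $h$ is dense in a connected semisimple $S$, then ``rule out'' a $\mathbb{Z}/p\mathbb{Z}$ quotient by the structure of dense finite subgroups) is a Nori/Larsen--Pink-type strategy whose key assertions are unproved here: density of the normal closure in a semisimple subgroup is not justified, and the final exclusion is exactly the nontrivial theorem being assumed; moreover the explicit threshold $p>2n+1$ does not fall out of such arguments without real work. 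This missing step is precisely the content of the paper's base case, which is handled by citing Theorem \ref{gln equiv} (absolute irreducibility is equivalent to $\GL_n$-adequacy for $p\geq n+4$, $p\neq 2n\pm 1$, resting on \cite{guralnick2015adequate} and \cite{thoappendix}); in particular $H^1(\Gamma,\Fpbar)=0$ for irreducible $\Gamma$ is a published, substantially group-theoretic result, not something recoverable by the sketched argument. So as written the proposal does not prove the lemma; it proves the (correct) reduction and then leaves the heart of the matter open, where the intended proof simply invokes the known adequacy theorem.
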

\begin{proof}
In the case that $\Gamma$ acts irreducibly, the result follows from Theorem \ref{gln equiv}.
Now proceed by induction on $n$ and let $\Fpbar^n = V_1 \oplus V_2$ be a non-trivial decomposition of $\Gamma$-submodules.
Let $\Gamma_i$ denote the image of $\Gamma$ inside $\GL(V_i)$ and let $K = \ker(\Gamma \to \Gamma_2)$.
Then $\Gamma_2$ acts completely reducibly on $V_2$ and $K$ acts completely reducibly on $V_1$ by Clifford theory, since (the image of) $K$ is normal inside $\Gamma_1$ and $\Gamma_1$ acts completely reducibly on $V_1$.
The result now follows from inflation-restriction and the induction hypothesis.
\end{proof}

\begin{remark}
It is also straightforward to see that if $\Gamma \subset \hat{G}(\Fpbar)$ is a $\hat{G}$-adequate subgroup then $Z_{\hat{G}}(\Gamma) = Z(\hat{G})$.
Indeed, if $g \in Z_{\hat{G}}(\Gamma)(\Fpbar)$ then $\Ad(g)$ acts trivially on $\sum_{\gamma \in \Gss} \Lie Z(Z_{\hat{G}}(\gamma))$, which would imply that $g$ lies in the kernel of the adjoint representation if $\Gamma$ is $\hat{G}$-adequate.

\end{remark}

\subsubsection{The case of classical simple groups}

We return to the case where $\hat{G}$ is either special orthogonal or symplectic in this subsection.
We begin with the converse to Theorem \ref{gln irred implies adequate} when $\hat{G}$ is of type B or C.

\begin{corollary} \label{classical gln irred equiv adequate}
Let $p \geq 3$, let $\hat{G}$ be either $\SO_n$ with $n \geq 3$ odd or $\Sp_{n}$ with $n \geq 2$ even. 
Let $\Gamma \subset \hat{G}(\Fpbar)$ be a finite subgroup.
If $\sum_{\gamma \in \Gss} \Lie Z(M_\gamma) = \g$ then $\Gamma$ is $\GL_{n}$-irreducible.
In particular, if $\Gamma$ is $\hat{G}$-adequate then $\Gamma$ is $\GL_n$-irreducible, and the converse holds if both $p \geq n+4$ and $p \neq 2n \pm 1$.
\end{corollary}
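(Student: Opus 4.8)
The plan is to prove the corollary by combining Theorem \ref{gln irred implies adequate}, which gives one direction, with a careful analysis of the spanning condition via the auxiliary projection $\pi: \gllie_n = \g \oplus \mathfrak{m} \to \g$. The key reduction is that for $\hat{G}$ of type B or C (i.e. $\SO_{2m+1}$ or $\Sp_{2m}$), no semisimple $\gamma \in \hat{G}(\Fpbar)$ has an eigenspace of the exceptional dimension that makes the inclusions in Lemma \ref{spanning equality} strict. Indeed, for $\hat{G} = \Sp_{n}$ the form $J$ is alternating, so by Lemma \ref{spanning equality} the inclusion $\pi(\mathfrak{z}(\gllie_n^\gamma)) \subset \Lie Z(Z_{\hat{G}}(\gamma))$ is always an equality (the exceptional case requires $\hat{G} = \SO_n$). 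For $\hat{G} = \SO_n$ with $n$ odd, the eigenvalues $1$ and $-1$ of any $\gamma \in \SO_n(\Fpbar)$ satisfy $m_1 + m_{-1} \equiv n \equiv 1 \pmod 2$, so they cannot both be nonzero and even; in particular $\{0,2\} = \{m_1, m_{-1}\}$ is impossible and $2 \in \{m_1, m_{-1}\}$ forces the other to be odd, hence nonzero — wait, that still allows $m_1 = 2$. I need to recheck: the condition for strictness of the second inclusion in Lemma \ref{spanning equality} is $\{0,2\} = \{m_1,m_{-1}\}$, which requires $m_1 + m_{-1} = 2$, contradicting $n$ odd with $n \geq 3$ (since $m_1 + m_{-1} \leq n$ is fine but we'd need $m_1 + m_{-1} = 2$ exactly only in the strict case — actually no, we need the multiset $\{m_1, m_{-1\}}$ to equal $\{0,2\}$, so one of them is $0$ and the other is $2$, giving $m_1 + m_{-1} = 2$, but there is no constraint forcing this sum to be all of the eigenspace contribution). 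The parity obstruction is: $m_1 + m_{-1}$ has the same parity as $n - \sum_{\alpha \neq \pm 1, \text{ in pairs}} 2m_\alpha = n$, so $m_1 + m_{-1}$ is odd, hence cannot equal $2$; this rules out the strict case.

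First I would invoke Theorem \ref{gln irred implies adequate} directly for the final assertion's converse: under $p \geq n+4$ and $p \neq 2n\pm 1$, any $\GL_n$-irreducible $\Gamma \subset \hat{G}(\Fpbar)$ is $\hat{G}$-adequate. For the forward direction, assume $\sum_{\gamma \in \Gss} \Lie Z(M_\gamma) = \g$. Applying $\pi$ and using that the relevant inclusions $\pi(\mathfrak{z}(\gllie_n^\gamma)) = \Lie Z(Z_{\hat{G}}(\gamma))$ are equalities for all semisimple $\gamma$ in types B and C (by Lemma \ref{spanning equality} together with the parity argument above, noting $\Lie Z(Z_{\hat{G}}(\gamma)) = \Lie Z(M_\gamma)$), I get $\pi\big(\sum_{\gamma \in \Gss} \mathfrak{z}(\gllie_n^\gamma)\big) = \g$, hence $\sum_{\gamma \in \Gss} \mathfrak{z}(\gllie_n^\gamma) \supseteq$ a subspace projecting onto all of $\g$. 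To conclude $\GL_n$-irreducibility I would then argue contrapositively: if $\Gamma$ were not $\GL_n$-irreducible, it would preserve a proper nonzero subspace $V' \subset \Fpbar^n$, hence lie in a proper parabolic $P$ of $\GL_n$; then $\sum_{\gamma \in \Gss} \mathfrak{z}(\gllie_n^\gamma) \subseteq \Lie P \subsetneq \gllie_n$ by the same reasoning as in Proposition \ref{adequate implies irred} (each semisimple $\gamma$ lies in a maximal torus of $P$, and $\mathfrak{z}(\gllie_n^\gamma)$ lies in that torus's Lie algebra, hence in $\Lie P$). Since $\pi(\Lie P) \supsetneq \g$ would then be contradicted — more precisely, $\pi$ restricted to $\Lie P$ cannot be surjective onto $\g$ because $\Lie P \cap \g = \Lie(P \cap \hat{G})$ is the Lie algebra of a proper parabolic of $\hat{G}$ (as $\hat{G}$-irreducibility of $\Gamma$ fails when $\GL_n$-irreducibility fails — this needs the standard fact that a $\GL_n$-reducible subgroup of a classical group $\hat{G}$ is $\hat{G}$-reducible, which holds for the defining representation by the theory of totally isotropic subspaces). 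Actually the cleanest route: use Proposition \ref{adequate implies irred}(1) applied to $\hat{G}$ to get $\hat{G}$-irreducibility of $\Gamma$ from the spanning condition, then separately note that for the defining representation of $\SO_{2m+1}$ or $\Sp_{2m}$, $\hat{G}$-irreducibility is equivalent to $\GL_n$-irreducibility of the image (any $\Gamma$-stable subspace yields a $\Gamma$-stable isotropic or nondegenerate subspace, hence a $\Gamma$-stable parabolic of $\hat{G}$).

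The main obstacle I anticipate is the equivalence "$\hat{G}$-irreducible $\iff$ $\GL_n$-irreducible" for the defining representations of $\SO_{2m+1}$ and $\Sp_{2m}$: one direction is trivial (a $\hat{G}$-parabolic sits inside a $\GL_n$-parabolic), but the other requires showing that a $\Gamma$-stable subspace $V' \subsetneq \Fpbar^n$ produces a $\Gamma$-stable parabolic of $\hat{G}$. This is standard — take $V' \cap (V')^\perp$ if it is nonzero (a $\Gamma$-stable isotropic subspace, whose stabilizer is a proper parabolic of $\hat{G}$); if $V' \cap (V')^\perp = 0$ then $V'$ is nondegenerate and one recurses on $(V')^\perp$, or one uses that $\Gamma$ then lies in the proper subgroup $\hat{G}_{V'} \times \hat{G}_{(V')^\perp}$, which is contained in a proper parabolic unless... — the subtlety is in type $\mathrm{D}$ and the disconnectedness of orthogonal groups, but in types B and C this is clean. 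I would cite \cite{Richardson1988ConjugacyCO} or prove it directly in a short lemma. Everything else is bookkeeping: assembling the equalities from Lemma \ref{spanning equality}, verifying the parity obstruction rules out the exceptional eigenspace dimensions in the odd/even dimensional cases, and invoking Theorem \ref{gln equiv} and Theorem \ref{gln irred implies adequate} for the "converse holds" clause.
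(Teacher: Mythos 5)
Your treatment of the converse clause (quoting Theorem \ref{gln irred implies adequate}) and your parity observation ruling out the exceptional case of Lemma \ref{spanning equality} for $\SO_n$ with $n$ odd both match the paper. The gap is in the last step of the forward direction. In your main route you reduce to $\Gamma \subset P(\Fpbar)$ for the $\GL_n$-stabilizer $P$ of a proper invariant subspace $U$, so that $\sum_{\gamma \in \Gss}\mathfrak{z}(\gllie_n^\gamma) \subseteq \Lie P$, and you then assert that $\pi$ restricted to $\Lie P$ cannot surject onto $\g$ ``because $\Lie P \cap \g$ is a proper parabolic subalgebra''. This conflates $\pi(\Lie P)$ with $\Lie P \cap \g$, and the assertion is false: take $\hat{G} = \Sp_4$ and $U$ a nondegenerate $2$-plane. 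Then $\dim \Lie P = 12$, and $\ker(\pi|_{\Lie P}) = \Lie P \cap \mathfrak{m}$ corresponds (via $A \mapsto JA$) to the alternating forms pairing $U$ and $U^{\perp}$ trivially, which is only $2$-dimensional, so $\dim\pi(\Lie P) = 10 = \dim\splie_4$ and $\pi(\Lie P) = \g$. This is precisely the dangerous case ($U$ nondegenerate, $\Gamma \subseteq \Sp(U)\times\Sp(U^{\perp})$), and your fallback ``cleanest route'' breaks on it as well: $\hat{G}$-irreducibility is \emph{not} equivalent to $\GL_n$-irreducibility for the defining representation. For instance $\Sp_2(\mathbb{F}_p)\times\Sp_2(\mathbb{F}_p) \subset \Sp_4(\Fpbar)$ stabilizes no isotropic subspace, hence is $\Sp_4$-irreducible, yet it is $\GL_4$-reducible; the reductive subgroup $\Sp(U)\times\Sp(U^{\perp})$ is contained in no proper parabolic, which is exactly where your sketch trails off. (The paper's Proposition \ref{symp irred} records the correct equivalence, which requires in addition $Z_{\hat{H}}(\Gamma) = Z(\hat{H})$.)

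The missing ingredient is Lemma \ref{lemma: classical projection equal intersection}: for every semisimple $\gamma$ one has $\pi(\mathfrak{z}(\gllie_n^\gamma)) = \mathfrak{z}(\gllie_n^\gamma)\cap\g$. Combined with the equality $\Lie Z(M_\gamma) = \pi(\mathfrak{z}(\gllie_n^\gamma))$ from Lemma \ref{spanning equality} (valid here by your parity argument in the odd orthogonal case, and unconditionally in the symplectic case), this places each $\Lie Z(M_\gamma)$ inside $\mathfrak{z}(\gllie_n^\gamma)\cap\g \subseteq \Lie P \cap \g$ \emph{individually}, so no projection of the sum is ever needed. Since $\hat{G} \hookrightarrow \GL_n$ is irreducible, $\g \not\subseteq \Lie P$, hence $\Lie P \cap \g$ is a proper subspace of $\g$ and the spanning condition fails; this is the paper's argument. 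With that lemma inserted your contrapositive goes through, but as written both of your proposed endgames fail on the nondegenerate-subspace case.
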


\begin{proof}
By Theorem \ref{gln irred implies adequate}, we just need to show that if $\Gamma$ is $\GL_{n}$-reducible then
$$
\sum_{\gamma \in \Gss} \Lie Z(M_\gamma) \neq \g.
$$
Let $U \leq V \cong \Fpbar^n$ be a proper $\Gamma$-invariant subspace and let $P$ be the proper subgroup of $\GL_n$ defined by the stabilizer of $U$.
Since the inclusion $\hat{G} \hookrightarrow \GL_{n}$ is an irreducible representation, $\hat{G}$ is not a subgroup of $P$, and hence $\g$ is not a subalgebra of $\Lie P = \mathfrak{p}$.
On the other hand, we claim that $$\sum_{\gamma \in \Gss} \Lie Z(M_\gamma) \leq \mathfrak{p},$$
in which case $\Gamma$ cannot be $\hat{G}$-adequate.
Let $\gamma \in \Gss$.
By Lemma \ref{spanning equality} we have
$$\Lie Z(M_\gamma) = \mathfrak{z}(\g^\gamma) = \pi(\mathfrak{z}(\gllie_n^\gamma)),$$
since when $\hat{G} = \SO_n$ we have assumed that $n$ is odd, so that the sum of the dimensions of $\pm 1$-eigenspaces for $\gamma$ on $V$ is also odd.
The result then follows from Lemma \ref{lemma: classical projection equal intersection} and the fact that
$\mathfrak{z}(\gllie_n^\gamma) \leq \mathfrak{p}$ (just as in Proposition \ref{adequate implies irred} for example).

\end{proof}

We now relate $\GL_n$-irreducibility to properties more intrinsic to $\hat{G}$.
We say a subspace $U \leq V$ is isotropic if $\restr{J}{U \times U} = 0$ where $J$ is the bilinear form on $V$ defining $\hat{G}$.
Recall then that the parabolic subgroups of $\hat{G}$ are exactly the stabilizers of flags of isotropic subspaces of $V$.
Thus a finite subgroup $\Gamma \leq \hat{G}(\Fpbar)$ is $\hat{G}$-irreducible if and only if there does not exist a non-zero isotropic $\Fpbar[\Gamma]$-submodule $U$ of $V$.

\begin{lemma}\label{semisimple isotropic}
Let $\Gamma \subset \hat{G}(\Fpbar)$ be a finite group.
Then either $V$ is a direct sum of simple $\Fpbar[\Gamma]$-submodules $V = \bigoplus_i U_i$, with $\restr{J}{U_i \times U_i}$ non-degenerate for every $i$, or $\Gamma$ is not $\hat{G}$-irreducible.
\end{lemma}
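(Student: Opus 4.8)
The plan is to argue by induction on $\dim V$, using the interaction between the form $J$ and the $\Gamma$-module structure. First I would consider the case $V$ is already a simple $\Fpbar[\Gamma]$-module. Then for any $\Fpbar[\Gamma]$-submodule $U$, the radical $U \cap U^\perp$ is a $\Gamma$-submodule (since $\Gamma$ preserves $J$), hence is either $0$ or $U$ by simplicity. If $U \cap U^\perp = U$ for a non-zero $U$ then $U$ is isotropic and $\Gamma$ is not $\hat{G}$-irreducible; if $U \cap U^\perp = 0$ for every non-zero $U$ then in particular taking $U = V$ shows $J$ is non-degenerate on $V$ and we are in the first alternative with a single summand. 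So the simple case is immediate.

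For the inductive step, let $U \leq V$ be a non-zero simple $\Fpbar[\Gamma]$-submodule. As above, $U \cap U^\perp$ is $0$ or $U$. If it equals $U$ then $U$ is a non-zero isotropic submodule and $\Gamma$ is not $\hat{G}$-irreducible, so we are done. Otherwise $U \cap U^\perp = 0$, which forces $V = U \oplus U^\perp$ with $\restr{J}{U \times U}$ non-degenerate (hence also $\restr{J}{U^\perp \times U^\perp}$ non-degenerate, since $J$ is non-degenerate on $V$). Now $U^\perp$ is a $\Gamma$-stable subspace equipped with the non-degenerate restriction of $J$, which is symmetric or alternating of the same type, so it realises a smaller-rank classical group of the same type inside $\hat{G}$. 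Applying the induction hypothesis to $U^\perp$, either $U^\perp$ decomposes as a direct sum of simple $\Fpbar[\Gamma]$-submodules on each of which $J$ is non-degenerate — in which case adjoining $U$ gives the desired decomposition of $V$ — or there is a non-zero isotropic $\Fpbar[\Gamma]$-submodule of $U^\perp$, which is then also a non-zero isotropic $\Fpbar[\Gamma]$-submodule of $V$, so $\Gamma$ is not $\hat{G}$-irreducible.

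The only genuinely delicate point is the observation that the radical $U \cap U^\perp$ of a submodule is itself a $\Gamma$-submodule: this uses that every $\gamma \in \Gamma$ satisfies $J(\gamma x, \gamma y) = J(x,y)$, so $\gamma(U^\perp) = U^\perp$ when $\gamma(U) = U$. Everything else is routine linear algebra together with the standard description (recalled just before the lemma) of parabolic subgroups of $\hat{G}$ as stabilisers of isotropic flags, so that the existence of any non-zero isotropic $\Fpbar[\Gamma]$-submodule is exactly the failure of $\hat{G}$-irreducibility. I do not expect any obstacle beyond bookkeeping; note in particular that the statement only asserts the dichotomy, so we never need the summands to be pairwise orthogonal or to control their multiplicities.
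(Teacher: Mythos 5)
Your proposal is correct and follows essentially the same route as the paper: pick a simple submodule $U$, use simplicity to show $\restr{J}{U\times U}$ is either non-degenerate or identically zero (the paper phrases this via Schur's lemma applied to the map $U \to U^\vee$, which is the same observation as your "radical is a $\Gamma$-submodule"), then either conclude from a non-zero isotropic submodule or split off $U^\perp$ and induct on $\dim V$. No gaps.
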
 

\begin{proof}
Let $U \leq V$ be a simple submodule.
If $\restr{J}{U \times U}$ is non-degenerate then the annihilator of $U$ with respect to $J$ provides a $\Gamma$-invariant complement and we are done by induction on $\dim V$.
Suppose instead that $\restr{J}{U \times U}$ is degenerate.
Since $U$ is irreducible, Schur's lemma implies that $J$ induces the zero map $U \to U^\vee$.
Thus $U$ is a non-zero isotropic $\Fpbar[\Gamma]$-submodule and $\Gamma$ is not $\hat{G}$-irreducible.
\end{proof}

\begin{proposition} \label{symp irred}
Let $\hat{G}$ be either $\SO_n$ or $\Sp_{n}$, let $\Gamma \subset \hat{G}(\Fpbar)$ be a finite subgroup and let $p \geq 3$.
If $\hat{G} = \SO_n$ let $\hat{H} = \mathrm{O}_n$ and suppose that $n \geq 3$; if $\hat{G} = \Sp_n$ let $\hat{H} = \hat{G}$.
Then the following are equivalent:
\begin{enumerate}
    \item $\Gamma$ is $\hat{G}$-irreducible and $Z_{\hat{H}}(\Gamma) = Z(\hat{H}) \cong \mu_2$ \label{spn irred centralizer}
    \item $\Gamma$ is an irreducible subgroup of $\GL_{n}(\Fpbar)$.
\end{enumerate}
\end{proposition}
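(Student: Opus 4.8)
The plan is to prove the two implications separately, with the implication $(2) \Rightarrow (1)$ being essentially immediate and $(1) \Rightarrow (2)$ being the substantive direction. First suppose $\Gamma$ is an irreducible subgroup of $\GL_n(\Fpbar)$, i.e.\ $V = \Fpbar^n$ is a simple $\Fpbar[\Gamma]$-module. Then there is certainly no non-zero isotropic $\Fpbar[\Gamma]$-submodule $U \subsetneq V$ (any proper submodule is zero), so $\Gamma$ is $\hat{G}$-irreducible by the characterization of parabolics as stabilizers of isotropic flags recalled just before Lemma \ref{semisimple isotropic}. For the centralizer statement, any $g \in Z_{\hat{H}}(\Gamma)(\Fpbar)$ is a $\Gamma$-equivariant endomorphism of the simple module $V$, hence a scalar $\lambda$ by Schur's lemma; the condition $g \in \hat{H}$ (preserving $J$ up to sign — here genuinely preserving $J$, since $\hat{H}$ is $\mathrm{O}_n$ or $\Sp_n$) forces $\lambda^2 = 1$, so $g \in \{\pm \id\} = Z(\hat{H})$, and $Z(\hat{H}) \cong \mu_2$ since $p \neq 2$ (and $n \geq 3$ in the orthogonal case so that $Z(\hat{H})$ is exactly $\mu_2$ rather than all of $\hat{H}$).

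Now the harder direction $(1) \Rightarrow (2)$. Assume $\Gamma$ is $\hat{G}$-irreducible and $Z_{\hat{H}}(\Gamma) = Z(\hat{H}) \cong \mu_2$. By Lemma \ref{semisimple isotropic}, since $\Gamma$ is $\hat{G}$-irreducible we may write $V = \bigoplus_{i \in I} U_i$ as a direct sum of simple $\Fpbar[\Gamma]$-modules with $\restr{J}{U_i \times U_i}$ non-degenerate for each $i$. The goal is to show $|I| = 1$. Suppose for contradiction that $|I| \geq 2$. The idea is to produce a nontrivial element of $Z_{\hat{H}}(\Gamma)$ not lying in $\{\pm\id\}$, contradicting the hypothesis. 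Group the summands into isotypic blocks: write $V = \bigoplus_j W_j$ where each $W_j$ is the $U_j$-isotypic component, i.e.\ $W_j \cong U_j^{\oplus m_j}$ for pairwise non-isomorphic simples $U_j$. Since distinct isotypic components are orthogonal under $J$ (a nonzero $\Gamma$-equivariant pairing $U_j \times U_{j'} \to \Fpbar$ would force $U_j \cong U_{j'}^\vee$; but for $W_j, W_{j'}$ with $j \neq j'$ and the pairing restricted appropriately one checks that if they paired nontrivially one could still build central elements — more carefully, one uses that $J$ is non-degenerate on each $W_j$ or on each pair $W_j \oplus W_{j'}$ with $U_{j'} \cong U_j^\vee$), one obtains that $J$ restricts to a non-degenerate form on each block or symplectic/orthogonal pair of blocks. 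In either case, the scaling operator acting as a scalar $c_j$ on the block $W_j$ (or as $c$ on $W_j$ and $c^{-1}$ on the dual block) is $\Gamma$-equivariant, and one can choose the $c_j \in \{\pm 1\}$ not all equal so that the resulting operator lies in $\hat{H}$ (it preserves $J$ since on each non-degenerate block or hyperbolic pair a sign or a $(c,c^{-1})$ pair is an isometry) but is not $\pm\id$. This contradicts $Z_{\hat{H}}(\Gamma) = Z(\hat{H})$, so $|I| = 1$ and $V$ is simple.

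The main obstacle will be the careful bookkeeping in the last paragraph: one must correctly track how $J$ pairs the various isotypic components (a simple module $U$ can pair with itself, or only with its dual $U^\vee$, and $U \cong U^\vee$ or not), and verify in each combinatorial case that a non-central element of $Z_{\hat{H}}(\Gamma)$ genuinely exists when there is more than one summand — in particular handling the subtlety that when $U \cong U^\vee$ the form on the isotypic component $U^{\oplus m}$ is governed by a form on $\Fpbar^m$, so one needs $m \geq 2$ or at least two non-isomorphic blocks to get extra isometries; the case of a single isotypic block $U^{\oplus m}$ with $m \geq 2$ needs the $m \times m$ form and produces a copy of $\mathrm{O}_m$ or $\GL_m$ or $\Sp_m$ inside the centralizer, which is larger than $\mu_2$. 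I would organize this as: (a) reduce to the multiplicity-one, single-isotypic reducible case via the isotypic decomposition; (b) in that case exhibit the scaling isometry explicitly; (c) conclude. The hypothesis $\hat H = \mathrm O_n$ rather than $\SO_n$ in the orthogonal case is exactly what makes the signs-on-blocks operators available regardless of determinant, which is why it appears in the statement.
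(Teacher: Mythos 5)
Your proposal is correct in substance and rests on the same two ingredients as the paper's proof: Schur's lemma for the direction $(2) \Rightarrow (1)$ (the paper phrases it contrapositively, using $Z(\GL_n) \cap \hat{H} = Z(\hat{H})$), and Lemma \ref{semisimple isotropic} together with ``sign'' operators on an orthogonal decomposition for the hard direction. The one substantive difference is that the isotypic bookkeeping you flag as the main obstacle is unnecessary, and the paper skips it entirely: once $\Gamma$ is $\GL_n$-reducible but $\hat{G}$-irreducible, Lemma \ref{semisimple isotropic} provides a proper non-zero $\Gamma$-submodule $X$ with $\restr{J}{X \times X}$ non-degenerate, so $V = X \oplus X^{\perp}$ with both pieces non-zero, $\Gamma$-stable and orthogonal, and the four operators $\pm \id_X \oplus \pm \id_{X^{\perp}}$ already give $\# Z_{\hat{H}}(\Gamma)(\Fpbar) \geq 4 > 2$. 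In particular, the cases you worry about do not arise: non-degeneracy of $J$ on each simple summand forces every $U_i$ to be self-dual, so there are no hyperbolic pairs of distinct isotypic blocks, and even when all summands are isomorphic (a single isotypic component $U^{\oplus m}$) the two-block sign trick above still applies, so no appeal to the structure of the multiplicity-space isometry group ($\mathrm{O}_m$, $\Sp_m$ or $\GL_m$) is needed. With that simplification your argument is exactly the paper's.
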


\begin{proof}
Suppose that \ref{spn irred centralizer} does not hold.
If $\Gamma$ is not $\hat{G}$-irreducible then it must be contained inside the $\Fpbar$-points of a proper parabolic subgroup $P$, which stabilizes some non-zero isotropic subspace of $V$.
Thus $V$ is not irreducible as a $\Gamma$-module.
Otherwise, if $Z_{\hat{H}}(\Gamma) \neq Z(\hat{H})$ then $Z_{\GL_{n}}(\Gamma) \neq Z(\GL_n)$ since $Z(\GL_n) \cap \hat{H} = Z(\hat{H})$.
By Schur's lemma, $\Gamma$ is therefore not $\GL_n$-irreducible.

Now suppose that $\Gamma \subset \GL_{n}(\Fpbar)$ is reducible.
We may suppose that $\Gamma$ is $\hat{G}$-irreducible; we then have to show that $Z_{\hat{H}}(\Gamma) \neq \mu_2$.
Since $\Gamma$ is ($\GL_n$-)reducible, by Lemma \ref{semisimple isotropic} we may therefore write $V = X \oplus Y$ where $X,Y$ are non-zero $\Fpbar[\Pi]$-modules such that the restrictions of $J$ to both $X$ and $Y$ are non-degenerate.
 Consider the automorphisms of $V$ given by all choices of $\pm \id$ on each summand.
Such maps will commute with the action of $\Gamma$ and preserve $J$ and thus define elements of $\hat{H}(\Fpbar)$.
We have shown $Z_{\hat{H}}(\Gamma)(\Fpbar)$ has order at least $4$ and hence $Z_{\hat{H}}(\Gamma)$ cannot equal $\mu_2$.
\end{proof}

\subsubsection{Counter-examples to converse implications}

Let $\hat{G}$ be either a classical simple group or simply connected exceptional simple group viewed as a subgroup of $\GL_n$ via a faithful irreducible representation of minimal dimension.
Let $\Gamma \subset \hat{G}(\Fpbar)$ be a finite subgroup.
We have seen already that if $p$ is sufficiently large and $\Gamma$ is $\GL_n$-irreducible then $\Gamma$ is $\hat{G}$-adequate.
We saw that the converse also held when $\hat{G}$ is either $\SO_{2n+1}$ or $\Sp_{2n}$.
In this subsection we give counter-examples to show that the converse statement does not hold for $\SO_{2n}$ and the exceptional groups.

We have also seen that $\hat{G}$-adequacy of $\Gamma$ implies $\hat{G}$-irreducibility of $\Gamma$ and that $Z_{\hat{G}}(\Gamma) = Z(\hat{G})$.
In the following example we see that the converse implication to this statement also fails (even when $p$ is large and $\Gamma$ arises from a connected algebraic subgroup) in the case that $\hat{G} = \Spin_{16}$, the simply connected cover of $\SO_{16}$.

\begin{example}
When $\hat{G} = \SO_{2n}$, the existence of semisimple elements for which $1$ is an eigenvalue of multiplicity $2$ and $-1$ is not an eigenvalue can lead to subgroups $\Gamma$ for which
$$
\sum_{\gamma \in \Gss} \Lie Z(Z_{\SO_{2n}}(\Gamma)) = \g
$$
without $\Gamma$ being $\GL_{2n}$-irreducible.
An example is given by a copy of $\SO_{2n-1}(\mathbb{F}_p) \subset \SO_{2n}(\Fpbar)$ for all $p$ large enough (depending on $n$).
However, it isn't the case in general that every $\GL_{2n-1}$-irreducible subgroup $\Gamma$ of $\SO_{2n-1}(\Fpbar)$ is $\SO_{2n}$-adequate (even when $p$ is sufficiently large).
For example, let $k$ be a finite subfield of $\Fpbar$ and let
$$
\Gamma := \mathrm{im}(\PGL_4(\mathbb{F}_q) \xrightarrow{\Ad} \SO_{15}(\Fpbar) \to \SO_{16}(\Fpbar)).
$$
Then each semisimple element $\gamma$ has $1$ as an eigenvalue with multiplicity at least $4$, hence by Lemma \ref{spanning equality} we have
$$
\sum_{\gamma \in \Gss} \Lie Z(Z_{\SO_{16}}(\gamma)) = \sum_{\gamma \in \Gss} \pi(\mathfrak{z}(\gllie_{16}^\gamma)).
$$
Since $\Gamma$ acts reducibly on the $16$-dimensional representation, for each $\gamma \in \Gss$ we have $\mathfrak{z}(\gllie_{16}^\gamma) \subset \gllie_{15} \oplus \gllie_{1}$ and since $\solie_{16} \not\subset \gllie_{15} \oplus \gllie_{1}$ we see from the above equality that $\Gamma$ is not $\SO_{16}$-adequate.

In fact, taking $\Tilde{\Gamma}$ to be the preimage of $\Gamma$ under the surjection $\Spin_{16}(\Fpbar) \xrightarrow{f} \SO_{16}(\Fpbar)$, we claim that $\Tilde{\Gamma}$ is not $\Spin_{16}$-adequate.
We claim that this provides an example of an algebraic $\hat{G}$-irreducible subgroup $H$ of a simply connected reductive group $\hat{G}$ with trivial centralizer modulo center but whose $k$-points are never $\hat{G}$-adequate whenever $k$ is a finite field of very good characteristic.
Firstly, because $\Spin_{16}$ is simply connected, we have for $\Tilde{\gamma} \in \Tilde{\Gamma}^{\mathrm{ss}}$ that $\Lie Z(Z_{\Spin_{16}}(\Tilde{\gamma})) = \mathfrak{z}(\solie_{16}^\gamma)$, where $\gamma = f(\Tilde{\gamma})$.
It follows from the proof of Lemma \ref{spanning equality} that 
$$\mathfrak{z}(\solie_{16}^\gamma) \subset \mathfrak{z}(\gllie_{16}^\gamma) + \End(V_{-1}),$$
where $V_{-1}$ denotes the $-1$-eigenspace for the action of $\gamma$ on the standard $16$-dimensional representation $V$.
Since $V_{-1}$ is a subspace of the $15$-dimensional $\Tilde{\Gamma}$-invariant subspace of $V$ we see that $$\mathfrak{z}(\solie_{16}^\gamma) \subset \solie_{15} \subsetneq \solie_{16}.$$
Thus $\Lie Z(Z_{\Spin_{16}}(\Tilde{\gamma}))$ lies inside the same proper subspace of $\solie_{16}$ for every $\Tilde{\gamma} \in \Tilde{\Gamma}^{\mathrm{ss}}$ and so $\Tilde{\Gamma}$ is not $\Spin_{16}$-adequate.
It is also clear that $\Tilde{\Gamma}$ is $\Spin_{16}$-irreducible, since $\Gamma$ is $\SO_{16}$-irreducible, and that $Z_{\Spin_{16}}(\Tilde{\Gamma}) \subset f^{-1}(Z_{\SO_{16}}(\Tilde{\Gamma})) = Z(\Spin_{16})$.
\end{example}

We now focus on the case where $\hat{G}$ is a simply connected simple group of exceptional type.
We show that the source of subgroups given by the image of (the $k$-points of) a principal $\SL_2$-homomorphism to $\hat{G}$ are, in some cases, $\hat{G}$-adequate yet act reducibly on the minimal faithful irreducible representation of $\hat{G}$.

\begin{example} [Principal $\SL_2$-homomorphisms]
A regular nilpotent element of $\g$ defines, by the Jacobson--Morozov theorem, such a homomorphism $\rho: \SL_2 \to \hat{G}$.
Let $k$ be a finite subfield of $\Fpbar$ over which this map is defined, let $\Gamma = \rho(\SL_2(k))$ and suppose that $p$ is sufficiently large.
We show that $\Gamma$ is $\hat{G}$-adequate.
Moreover, if $\hat{G}$ is one of $F_4$, $E_6$, $E_7$ or $E_8$ then $\Gamma$ acts reducibly on a faithful irreducible representation of $\hat{G}$ of minimal dimension. 

We firstly decompose both the standard representation $V$ of $\hat{G}$ and the adjoint representation with respect to $\rho(\SL_2) \subset \hat{G}$.
The regular nilpotent element $x \in \g$ defining $\rho$ can be written as $x = \sum_{\alpha \in \Delta} x_\alpha$ where $\Delta$ is a choice of positive simple roots with respect to a maximal torus $T$ of $\hat{G}$ and each $x_\alpha \in \g_{\alpha}$ is a non-zero element of the root space.
Let $\lambda: \mathbb{G}_m \to T$ be the cocharacter given by the restriction of $\rho$ to the diagonal maximal torus of $\SL_2$.
Then $\lambda = \sum_{\beta^\vee > 0} \beta^\vee$ is the sum of positive coroots of $T$.
This is because $\lambda$ is the unique cocharacter for which $\langle \lambda, \alpha \rangle = 2$ for every $\alpha \in \Delta$ which must hold in order for the adjoint action of the image of $\lambda$ on $x$ to arise from a homomorphism from $\SL_2$.
It is straightforward to compute the image of $\lambda$ inside $\GL(V)$ and $\GL(\g)$ and this determines $\rho$.
The results are displayed in Table \ref{table: principal sl2 decomposition}, with $V_m$ denoting the irreducible representation of $\SL_2$ of dimension $m$.

\begin{table}[h]
\caption{Decompositions of $V$ and $\g$ with respect to a principal $\SL_2$}
 \label{table: principal sl2 decomposition}
\begin{tabular}{|l|ll|}
\hline
Group & Standard Representation $V$       & Adjoint Representation $\g$                                                             \\ \hline
$G_2$ & $V_7$                          & $V_{11} \oplus V_3$                                                                   \\ \hline
$F_4$ & $V_{17} \oplus V_9$              & $V_{23} \oplus V_{15} \oplus V_{11} \oplus V_3$                                           \\ \hline
$E_6$ & $V_{17} \oplus V_9 \oplus V_1$   & $V_{23} \oplus V_{17} \oplus V_{15} \oplus V_{11} \oplus V_9 \oplus V_3$                    \\ \hline
$E_7$ & $V_{28} \oplus V_{18} \oplus V_{10}$ & $V_{35} \oplus V_{27} \oplus V_{23} \oplus V_{19} \oplus V_{15} \oplus V_{11} \oplus V_3$ \\ \hline
$E_8$ & \multicolumn{2}{l|}{$V_{59} \oplus V_{47} \oplus V_{39} \oplus V_{35} \oplus V_{27} \oplus V_{23} \oplus V_{15} \oplus V_{3}$}        \\   \hline                                                                                     
\end{tabular}
\end{table}

We see in all cases that $\g$ decomposes as a direct sum of $r$ non-trivial representations of $\SL_2$, where $r$ is the rank of $\hat{G}$.
Moreover, since $\lambda$ has non-trivial action on each root space we see that the trivial eigenspace for the action of $\mathbb{G}_m$ on $\g$ arising from $\lambda$ is given by $\Lie T$.
Since each irreducible representation of $\SL_2$ occurring in $\g$ has a one-dimensional subspace in which $\mathbb{G}_m$ acts trivially via $\lambda$, we see that $\Lie T$ intersects each such irreducible representation non-trivially.
Note that, taking $\# k$ large enough, we can always find $\gamma \in \lambda(k^\times)$ such that $\gamma$ is regular semisimple.
This is because the Lie algebra of the centralizer of $\gamma$ is given by the sum of $\Lie T$ together with those root spaces on which $\gamma$ acts trivially.
Since $\alpha \circ \lambda$ is non-trivial for each root $\alpha$, we see that the image of every element of $\Fpbar^\times$ of sufficiently large order (not depending on $p$) under $\lambda$ must be regular semisimple.
Now since $\hat{G}$ is simply connected, the centralizer of $\gamma$ inside $\hat{G}$ is connected and equal to $T$.
Each irreducible summand of $\g$ with respect to the action of $\SL_2$ remains irreducible on restriction to $\Gamma$ and has non-trivial intersection with $\Lie T$.
Since
$$
\sum_{\gamma \in \Gss} \Lie Z(Z_{\hat{G}}(\gamma))
$$
is itself a $\Gamma$-submodule of $\g$, we conclude that this submodule must equal all of $\g$.

Taking $p$ sufficiently large, it follows from Proposition \ref{adequate implies irred} that the cohomological conditions of $\hat{G}$-adequacy are also satisfied by $\Gamma$.   
Finally, we see from Table \ref{table: principal sl2 decomposition} that the action of $\Gamma$ on $V$ is reducible provided that $\hat{G}$ is not of type $G_2$.
\end{example}

From the computation in the above example, it follows that such a $\Gamma$ is not only $\hat{G}$-adequate but also $\hat{G}$-abundant, in the sense of \cite{thorne2019}.
Examples of $G_2$-adequate (or even $\hat{G}$-abundant) but $\GL_7$-reducible subgroups have also be found, and we deduce that the analogue of Corollary \ref{classical gln irred equiv adequate} fails for all simply connected exceptional groups.

\begin{corollary}
Let $\hat{G}$ be a simply connected simple group of exceptional type.
Then for every $p$ sufficiently large, there exists a finite subgroup $\Gamma \subset \hat{G}(\Fpbar)$ which is $\hat{G}$-adequate and which acts reducibly on a faithful representation of $\hat{G}$ of minimal dimension. 
\end{corollary}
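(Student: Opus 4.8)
The plan is to treat the cases $\hat G\in\{F_4,E_6,E_7,E_8\}$ and $\hat G=G_2$ separately. In the first case nothing new is needed: the example of the principal $\SL_2$-homomorphism $\rho\colon\SL_2\to\hat G$ constructed above produces, for $p$ sufficiently large and $k\subset\Fpbar$ a large enough finite subfield, a subgroup $\Gamma=\rho(\SL_2(k))$ which is $\hat G$-adequate; and by Table \ref{table: principal sl2 decomposition} the restriction to $\rho(\SL_2)$, hence to $\Gamma$, of the minimal faithful irreducible representation $V$ of $\hat G$ has a proper nonzero $\Gamma$-subrepresentation (one of the summands $V_m$) precisely because $\hat G$ is not of type $G_2$. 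So those four groups are done, and the task reduces to exhibiting a single finite subgroup of $G_2(\Fpbar)$, valid for all large $p$, which is $G_2$-adequate and acts reducibly on the $7$-dimensional representation $V_7$ (the minimal faithful irreducible representation of $G_2$).

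For $\hat G=G_2$ I would take $\Gamma$ to be a suitable explicit finite subgroup, for instance the group $N_{G_2(\mathbb{F}_q)}(T)\cong T(\mathbb{F}_q)\rtimes W(G_2)$ of $\mathbb{F}_q$-points of the normalizer of a split maximal torus, for $q$ a large power of $p$ (alternatively, a Lie-primitive finite subgroup as alluded to above). Reducibility on $V_7$ is automatic: as a $\Gamma$-module $V_7$ is the direct sum of the $1$-dimensional zero-weight line and the $6$-dimensional span of the short-root weight spaces. And $\Gamma$ is $G_2$-irreducible: if $\Gamma$ lay in a proper parabolic $P$, then $T=\overline{T(\mathbb{F}_q)}\subset P$ so $\Gamma$ normalizes $T$ and hence $\Gamma\subset N_P(T)$, forcing the image of $\Gamma$ in $W(G_2)$ to lie in the Weyl group of a proper Levi, contradicting surjectivity onto $W(G_2)$. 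Once $G_2$-irreducibility is known, the cohomological conditions \ref{H0 vanish}, \ref{homs to k vanish} and $H^1(\Gamma,\mathfrak g_2^\vee)=0$ of Definition \ref{adequate} follow, for $p\ge 3+2\dim\mathfrak g_2=31$, from Proposition \ref{adequate implies irred}(3) together with $H^0(\Gamma,\mathfrak g_2^\vee)=0$: the latter holds because $Z_{G_2}(\Gamma)$ is finite (as $Z(G_2)$ is trivial) by \cite[Theorem 16.4]{Richardson1988ConjugacyCO}, so $(\mathfrak g_2)^\Gamma=\Lie Z_{G_2}(\Gamma)=0$, and $\mathfrak g_2$ is self-dual via the Killing form in very good characteristic.

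The crux is the spanning condition, that is, condition \ref{submodule condition} of Definition \ref{adequate} in the form $\sum_{\gamma\in\Gss}\Lie Z(Z_{G_2}(\gamma))=\mathfrak g_2$. Since this sum is a $\Gamma$-submodule of $\mathfrak g_2$, and $\mathfrak g_2\vert_\Gamma$ is, for $q$ large and $p$ not too small, a direct sum of pairwise-inequivalent irreducible summands — for the torus-normalizer it is $\Lie T\oplus(\text{long root spaces})\oplus(\text{short root spaces})$ — it suffices to check the sum meets each summand. A $G_2$-regular semisimple element $\gamma\in T(\mathbb{F}_q)$, which exists once $q$ is large, contributes the full Cartan $\Lie T$; and one exhibits elements $\gamma=tw$ with $w\in W(G_2)$ nontrivial which are $G_2$-regular semisimple, so that $\Lie Z(Z_{G_2}(\gamma))=\Lie T'$ for a maximal torus $T'\ne T$, whose image in $\mathfrak g_2/\Lie T$ is nonzero; running over enough such $\gamma$ and using $\Gamma$-stability and the irreducibility of the two root-space summands forces the sum to be all of $\mathfrak g_2$. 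This last verification — checking that the required elements $tw$ really are semisimple, and for suitable choices regular, in $G_2$, and that their centralizers' centers hit both root-space summands — is the main obstacle; I expect it to be done by a short explicit computation (or by \texttt{magma}, as in the $\Sp_4$ analysis), and uniformity in $p$ follows by checking the statement for the corresponding subgroup in characteristic $0$ and spreading out, the finitely many excluded primes being absorbed into ``$p$ sufficiently large''. Once the spanning condition is in hand, $\Gamma$ is $G_2$-adequate, and the corollary follows.
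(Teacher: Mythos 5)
Your reduction to the $G_2$ case is exactly the paper's: for $F_4,E_6,E_7,E_8$ the principal $\SL_2$ example already supplies an adequate subgroup acting reducibly on the minimal representation, and nothing more is needed there. For $G_2$ you diverge from the paper, which takes $\Gamma$ to be the image of an abstract Coxeter homomorphism and gets adequacy for free from \cite[Corollary 10.8]{thorne2019} ($\hat{G}$-abundant, hence $\hat{G}$-adequate), with reducibility on $V_7$ argued exactly as you do, via containment in a torus normalizer. Your alternative choice $\Gamma = N_{G_2(\mathbb{F}_q)}(T)$, together with your reducibility, $G_2$-irreducibility and cohomological-vanishing arguments, is sound as far as it goes.

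The genuine gap is the spanning condition, i.e.\ condition (2) of Definition \ref{adequate}, which you explicitly leave as an expected ``short explicit computation''. As written this is not proved, and your sketch does not pin it down: exhibiting regular semisimple elements $tw$ with $\Lie Z(Z_{G_2}(tw))=\Lie T'\neq\Lie T$ only shows the sum has nonzero image in $\mathfrak{g}_2/\Lie T$. Since $\mathfrak{g}_2|_\Gamma \cong \Lie T\oplus M_{\mathrm{long}}\oplus M_{\mathrm{short}}$ with the two root-space summands irreducible and non-isomorphic (for $q$ large), you must hit \emph{both} of them, and a badly chosen $\gamma$ — e.g.\ a semisimple lift of a reflection, whose centralizer's Cartan can meet only the root spaces perpendicular to the reflecting root, hence only one root length — need not do so; ``running over enough such $\gamma$'' is not an argument. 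The step is fillable along the lines you suggest: any lift $n\in N_{G_2(\mathbb{F}_q)}(T)$ of a Coxeter element has order prime to $p$ (its sixth power lies in $T(\mathbb{F}_q)$), hence is semisimple; $\Ad(n)$ has no nonzero fixed vector on $\Lie T$ and, acting as a twisted $6$-cycle on each of $M_{\mathrm{long}}$ and $M_{\mathrm{short}}$, has at most a one-dimensional fixed space in each; since $n$ lies in a maximal torus, $\dim\mathfrak{g}_2^n\geq 2$, so $\Lie Z_{G_2}(n)=\mathfrak{g}_2^n$ is a Cartan meeting both root-space summands, and combined with a regular element of $T(\mathbb{F}_q)$ (contributing $\Lie T$) this yields the spanning. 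But some such verification has to appear in the proof; the paper sidesteps it entirely by citing Thorne's abundance result for Coxeter homomorphisms.
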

\begin{proof}
By the above example, we need only treat the case of $\hat{G} = G_2$.
Let $\phi: \Gamma \to G_2(k)$ be an abstract Coxeter homomorphism in the sense of \cite[Definition 10.3]{thorne2019}, where $k$ is some finite subfield of $\Fpbar$ which is sufficiently large.
Such a $\phi$ exists and $\phi(\Gamma)$ is $\hat{G}$-abundant (hence $\hat{G}$-adequate) by \cite[Corollary 10.8]{thorne2019}.
Moreover, $\phi(\Gamma)$ must act reducibly on the standard $7$-dimensional representation $V$ of $G_2$, since $\phi(\Gamma)$ is contained inside the normalizer of a maximal torus which itself admits a $1$-dimensional subrepresentation of $V$. 
\end{proof}

\subsection{$\Sp_4$-adequate subgroups of $\Sp_4(\mathbb{F}_p$)} \label{sect subgroups}

For our applications in Section \ref{sec surfaces}, we now focus on the case $\hat{G} = \Sp_4$ and study the extent to which Theorem \ref{gln irred implies adequate} fails when $p$ is small.
The following lemma provides an answer.
It is the result of computations carried out via the computer algebra software \texttt{magma} when $p \in \{3,5,7\}$ and otherwise follows from Theorem \ref{gln irred implies adequate}.
We use Lemmas \ref{spanning equality}, \ref{lemma: classical projection equal intersection} and \ref{lemma: compute with intersections} to verify the spanning condition by working inside $\gllie_4$, where $\mathfrak{z}(\gllie_4^\gamma)$ is given by the span of powers of $\gamma$.

\begin{lemma} \label{Sp4(Fp)}
Let $p>2$ be prime.
Suppose $\Gamma \leq \Sp_4(\mathbb{F}_p)$.
If $\Gamma$ is $\GL_4$-absolutely irreducible then $\Gamma$ is $\hat{G}$-adequate, unless $\Gamma$ is one of the finitely many groups listed in Table \ref{subgroups of sp4(f3)} in the appendix.
Here $.$ refers to a non-split extension, $\rm wr$ the wreath product and $:$ a semidirect product.
In all cases, the spanning condition of $\hat{G}$-adequacy holds and $H^0(\Gamma,\splie_4)$ vanishes.

\end{lemma}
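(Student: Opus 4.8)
The plan is to verify the conditions of Definition \ref{adequate} for each $\GL_4$-absolutely irreducible subgroup $\Gamma \le \Sp_4(\mathbb{F}_p)$, splitting into the cases $p \ge 11$ and $p \in \{3,5,7\}$. For $p \ge n+4 = 8$ with $p \ne 2n\pm 1 = 7,9$, i.e. $p \ge 11$, Theorem \ref{gln irred implies adequate} applies directly (with $\hat G = \Sp_4$, $n = 4$, and the embedding $\Sp_4 \hookrightarrow \GL_4$ being the faithful irreducible representation of minimal dimension), so such $\Gamma$ is $\hat G$-adequate with no exceptions. This leaves the finitely many primes $p \in \{3,5,7\}$, where $\GL_4(\mathbb{F}_p)$ has only finitely many conjugacy classes of absolutely irreducible subgroups of $\Sp_4(\mathbb{F}_p)$; these can be enumerated by \texttt{magma}.

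For each such $\Gamma$, I would check the three cohomological vanishing statements \ref{H0 vanish}, \ref{homs to k vanish}, \ref{H1(Γ,gˆ0,∨)} and the spanning condition \ref{submodule condition} of Definition \ref{adequate}. The cohomology groups $H^0(\Gamma,\splie_4^\vee)$, $H^1(\Gamma,\mathbb{F}_p)$ and $H^1(\Gamma,\splie_4^\vee)$ are computed directly in \texttt{magma} from the explicit matrix group $\Gamma$ acting on $\splie_4$ (noting $\splie_4 \cong \splie_4^\vee$ as $\Gamma$-modules since $p > 2$, via the trace form). For the spanning condition, I would use the equivalent characterization \eqref{equiv characterisation}, namely that $\sum_{\gamma \in \Gss}\bigl(\Lie Z(M_\gamma) \cap \splie_4\bigr) = \splie_4$. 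To make this computable, apply Lemma \ref{spanning equality}: since $\hat G = \Sp_4$ is symplectic, for every semisimple $\gamma$ the inclusion $\pi(\mathfrak z(\gllie_4^\gamma)) \subset \mathfrak z(\splie_4^\gamma) = \Lie Z(M_\gamma)$ is an equality. Then by Lemma \ref{lemma: classical projection equal intersection}, $\pi(\mathfrak z(\gllie_4^\gamma)) = \mathfrak z(\gllie_4^\gamma) \cap \splie_4$, and by Lemma \ref{lemma: compute with intersections} applied with $\hat H = \Sp_4 \subset \GL_4 = \hat G$ and $S = \Gss$, one gets $\sum_{\gamma \in \Gss}\Lie Z(M_\gamma) = \bigl(\sum_{\gamma \in \Gss}\mathfrak z(\gllie_4^\gamma)\bigr) \cap \splie_4$. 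Since $\mathfrak z(\gllie_4^\gamma)$ is simply the span of the powers $I, \gamma, \gamma^2, \gamma^3$ of $\gamma$, the left side is computed purely as a linear-algebra span of matrices; one then checks whether its intersection with $\splie_4$ is all of $\splie_4$.

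Running these checks over the complete list of absolutely irreducible subgroups for $p \in \{3,5,7\}$, the output is that $H^0(\Gamma,\splie_4)$ and the spanning condition always hold, and the only failures of full $\hat G$-adequacy come from failure of one of the $H^1$-vanishing conditions \ref{homs to k vanish} or \ref{H1(Γ,gˆ0,∨)}; these exceptional groups are exactly those tabulated as Table \ref{subgroups of sp4(f3)} in the appendix, with notation $.$, $\mathrm{wr}$, $:$ for non-split extension, wreath product, and semidirect product respectively. I expect the main obstacle to be purely organizational rather than mathematical: correctly enumerating all conjugacy classes of absolutely irreducible subgroups of $\Sp_4(\mathbb{F}_p)$ for the small primes (using \texttt{magma}'s classification of maximal subgroups and descending, being careful about which groups are actually $\GL_4$-absolutely irreducible rather than merely irreducible over $\mathbb{F}_p$ or reducible), and then bookkeeping the computations so that the exceptional list is complete and correct. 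The per-group computations themselves — cohomology and matrix-span checks — are routine linear algebra over finite fields.
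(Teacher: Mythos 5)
Your proposal matches the paper's proof: for $p\geq 11$ the lemma follows directly from Theorem \ref{gln irred implies adequate}, and for $p\in\{3,5,7\}$ the paper likewise verifies adequacy by a finite \texttt{magma} computation, checking the spanning condition inside $\gllie_4$ via Lemmas \ref{spanning equality}, \ref{lemma: classical projection equal intersection} and \ref{lemma: compute with intersections}, with $\mathfrak{z}(\gllie_4^\gamma)$ realized as the span of powers of $\gamma$. This is essentially the same argument, so there is nothing substantive to add.
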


We now give an example in the setting of subgroups of $\GSp_4(\mathbb{F}_3)$ of how our spanning condition compares to the similar but more restrictive conditions seen previously in the literature.
Let $\Gamma' \leq \GSp_4(\mathbb{F}_3)$ be such that $\Gamma = \Gamma' \cap \Sp_4(\mathbb{F}_3)$ is absolutely $\GL_4$-irreducible.
Suppose that the similitude character $\nu$ defines a surjection $\Gamma' \to \mathbb{F}_3^\times$, so that $\Gamma$ is an index $2$ subgroup of $\Gamma'$.
In Table \ref{subgroups of gsp4(f3)} of the Appendix, we record some properties of the conjugacy classes of such $\Gamma'$.
The column headings (A) and (B) refer to whether the following conditions on $H$ hold:
\begin{enumerate}[label=(\Alph*)]
    \item For every simple $\Fpbar[\Gamma]$-submodule $W \leq \splie_4^\vee$ there exists a semisimple element $\gamma \in \Gamma$ such that for some $w \in W$ and $z \in \mathfrak{z}(\splie_4^\gamma)$ we have $w(z) \neq 0$. \label{submodule condition again}   
    \item For every simple $\Fpbar[\Gamma]$-submodule $W \leq \splie_4^\vee$ there exists a regular semisimple element $\gamma \in \Gamma$ such that $W^\gamma \neq 0$. \label{abundant condition}
\end{enumerate}
Condition \ref{abundant condition} in some form is seen in both the definition of enormous subgroup of \cite[Definition 7.5.3 (E3)]{surfaces} and $\hat{G}$-abundant subgroup of \cite[Definition 5.18(ii)]{thorne2019}.
Condition \ref{submodule condition again}, equivalent to our spanning condition, appears in Definition \ref{adequate} and is implied by condition \ref{abundant condition} (since $\splie_4 \cong \splie_4^\vee$ as $\Gamma$-modules).
The column `Adequate' refers to whether $H$ is $\Sp_4$-adequate, in the sense of Definition \ref{adequate}.

The remaining columns are included for applications to modularity of abelian surfaces.
The column heading `Tidy' indicates whether $G$ is a tidy subgroup in the sense of Definition \ref{defn: tidy}.
The column `Induced' refer to whether the underlying $4$-dimensional module for $G$ is no longer absolutely irreducible on restriction to some index $2$ subgroup of $G$, and `Split-induced' refers to whether this reducibility may take place over $\mathbb{F}_p$ (so it is split induced).

As already noted in Lemma \ref{Sp4(Fp)}, condition \ref{submodule condition again} holds in every case, while condition \ref{abundant condition} holds for only 12 of the possible 25 choices of $\Gamma'$.
A similar computation shows that when $p=5$ (resp. $p=7$) condition \ref{submodule condition again} always holds while condition \ref{abundant condition} holds for $36/69$ (resp. $41/86$) possible images.
The importance of these conditions on simple submodules is that to prove an analogue of Proposition \ref{TW places existence} in which the Frobenius image at a Taylor--Wiles place is required to be regular semisimple, it would be very difficult to weaken condition \ref{abundant condition} on some image of the representation.
Moreover, the failure of the cohomological vanishing conditions in small characteristic do not automatically eliminate representations with image $\Gamma'$, as we saw with the definition of $\hat{G}$-reasonable representations (Definition \ref{reasonable}).

\section{Local computations} \label{sec local computations}

In this section, we will carry out computations with Hecke algebras of reductive groups over a non-archimedean local field with respect to various compact open subgroups.
We will firstly prove a version of the Satake isomorphism at a slightly deeper level in Theorem \ref{satake} and compose it with the maps of \cite{bk} to define certain abelian subalgebras of Hecke algebras.
We will then use the Bernstein presentation of the Iwahori-Hecke algebra to prove a variation of a standard result about modules over this algebra in Section \ref{parahoric invariants subsection}.
We will also define various data attached to a semisimple element of the dual group in Construction \ref{parahoric construction}.
Finally, we will use Jacquet modules to deduce properties of parabolically induced representations and relate these to the local Langlands correspondence for $\GSp_4$ in Propositions \ref{gsp4 p invariants} and \ref{gsp4 p1 invariants}.

\subsection{A Satake isomorphism} \label{sect satake}

Throughout this section, we will let $F_v$ be a non-archimedean local field with ring of integers $\OFv$, uniformizer $\varpi_v$ and residue field $k(v)$ of order $q_v$.
Let $\mathcal{O}$ be as in Section \ref{def setup}, the ring of integers of a finite extension of $\mathbb{Q}_p$ with residue field $k$ of characteristic $p$.
We will assume that $q_v$ is coprime to $p$ and suppose that $\CalO$ contains a fixed squareroot $q_v^{\frac{1}{2}}$ of $q_v$.
Let $| \cdot |_v$ denote the absolute value on $F_v$; its squareroot has image in $q_v^{\frac{1}{2} \mathbb{Z}}$, which we will view as a subgroup of either $\mathbb{R}^\times_{\geq 0}$ or $\mathcal{O}^\times$.

For now, let $G$ be a reductive group defined over $\OFv$ with a split maximal torus $T$ and Borel subgroup $B$ containing $T$ with unipotent radical $N$.
We will often abuse notation and write $G$ for $G(F_v)$ (and similarly for other groups) when it is clear from context.
We will consider various Hecke algebras $\mathcal{H}(H,U)$ for closed algebraic subgroups $H \leq G$ and compact open subgroups $U \leq H$; these are the algebras of compactly supported, locally constant, $U$-biinvariant functions $H \to \mathcal{O}$ which form an $\mathcal{O}$-algebra under convolution.

Let $G^{\der}$ be the derived subgroup of $G$, defined over $\OFv$ and compatible with base change by \cite[Theorem 5.3.1]{conrad}. 
Let $C_G = G/G^{\der}$ be the cocenter of $G$, a torus which we will assume is split.
Fix a quotient $\Delta$ of the finite abelian group $C_G(k(v))$ of order coprime to $|W_G|$.
We may then consider the composition $\theta$ of the natural maps
\begin{align*}
    G(\OFv) \to G(k(v)) \to C_G(k(v)) \to \Delta
\end{align*}
and define $K_1 = \ker \theta$, a finite index subgroup of $K = G(\OFv)$.
We may realise $C_G$ as the quotient of the maximal torus $T$ by a split subtorus $T'$, as described in Section \ref{dual group deltaq}.
It follows that $T(k(v))$ surjects onto $C_G(k(v))$, and thus we may take coset representatives for $K_1$ which lie inside $T(\OFv)$.

The Satake isomorphism  \cite[Theorem 4.1]{cartier} is an isomorphism
\begin{align*}
    \mathcal{S}: \mathcal{H}(G(F_v),K) &\to \mathcal{H}(T(F_v),T(F_v) \cap K)^{W_G} \\
    f &\mapsto (t \mapsto \delta_B(t)^{\frac{1}{2}} \int_N f(tn) dn).
\end{align*}
We will show that the same formula gives an isomorphism
\begin{align*}
    \mathcal{S}_1: \mathcal{H}(G(F_v),K_1) \to \mathcal{H}(T(F_v),T(F_v) \cap K_1)^{W_G}.
\end{align*}
Here $\delta_B: T(F_v) \to \mathbb{R}^\times_{\geq 0}$ is the homomorphism given by $\delta_B(t) = |\det(\restr{\Ad(t)}{\Lie N})|_v$.

Let $\Phi^+$ denote the positive roots of $G$ with respect to the choice of Borel subgroup $B$ and $\Lambda^+ = \{\lambda \in X_*(T): \langle \lambda, \alpha \rangle \geq 0 \text{ for all } \alpha \in \Phi^+\}$.
Let $\rho \in X_*(T) \otimes \mathbb{Q}$ be the half-sum of all positive coroots of $G$.
Then for $\mu \in X_*(T)$ we have $\delta_B(\mu(\varpi_v))^{\frac{1}{2}} = q_v^{-\langle \mu, \rho \rangle}$, so that $(\delta_B)^{\frac{1}{2}}$ is valued in $q_v^{\frac{1}{2} \mathbb{Z}}$.
Fix for each $\delta \in \Delta$ a preimage $\Dot{\delta} \in T(\OFv)$. 
The well-known variation of the Cartan decomposition $G(F_v) = \bigsqcup_{\lambda \in \Lambda^+} K \lambda(\varpi_v) K$ yields a further decomposition 
\begin{align*}
    G(F_v) = \bigsqcup_{\lambda \in \Lambda^+, \delta \in \Delta} K_1 \lambda(\varpi_v) \Dot{\delta} K_1.
\end{align*}
Indeed, $K_1 \triangleleft K$ is a normal subgroup with coset representatives given by $\{\Dot{\delta}\}_{\delta \in \Delta}$, and these coset representatives all commute with $\lambda(\varpi_v) \in T(\OFv)$.
Thus we have a basis of $\mathcal{H}(G(F_v),K_1)$ given by the characteristic functions $\phi_{(\lambda,\delta)} = [K_1 \lambda(\varpi_v) \Dot{\delta} K_1]$ for $\lambda \in \Lambda^+$ and $\delta \in \Delta$.

\begin{theorem}
The Satake map 
\begin{align*}
    \mathcal{S}_1: \mathcal{H}(G(F_v),K_1) &\to \mathcal{H}(T(F_v),T(F_v) \cap K_1)^{W_G}\\
    f &\mapsto (t \mapsto \delta_B(t)^{\frac{1}{2}} \int_N f(tn) dn)
\end{align*} is an isomorphism of $\mathcal{O}$-algebras.
\end{theorem}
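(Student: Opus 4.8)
The plan is to reduce the statement about $\mathcal{H}(G(F_v),K_1)$ to the classical Satake isomorphism $\mathcal{S}\colon \mathcal{H}(G(F_v),K) \xrightarrow{\sim} \mathcal{H}(T(F_v),T(F_v)\cap K)^{W_G}$ of \cite{cartier}, exploiting the fact that $K_1 \triangleleft K$ is normal with abelian quotient $\Delta$ of order prime to $|W_G|$, and that coset representatives for $\Delta$ may be chosen inside $T(\OFv)$, hence central modulo the relevant supports. First I would observe that $\Delta$ acts on $\mathcal{H}(G(F_v),K_1)$ by left (equivalently right, by centrality of the $\dot\delta$) translation through the $K/K_1$-action, and that the $\Delta$-invariants recover $\mathcal{H}(G(F_v),K)$. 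Since $\#\Delta$ is prime to $p$ (it divides $\#C_G(k(v))$, which is prime to $|W_G|$, and we are in very good characteristic so $p \nmid |W_G|$), averaging over $\Delta$ splits $\mathcal{H}(G(F_v),K_1)$ as an $\mathcal{O}[\Delta]$-module into isotypic components indexed by characters $\chi\colon \Delta \to \mathcal{O}^\times$ (after possibly enlarging $\mathcal{O}$ to contain enough roots of unity, which is harmless, or more carefully by working with the group algebra directly).

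Next I would write down the map $\mathcal{S}_1$ explicitly on the basis $\phi_{(\lambda,\delta)} = [K_1\lambda(\varpi_v)\dot\delta K_1]$, using the twisted Cartan decomposition $G(F_v) = \bigsqcup_{\lambda\in\Lambda^+,\delta\in\Delta} K_1\lambda(\varpi_v)\dot\delta K_1$ already established in the excerpt. The key computation is that $\mathcal{S}_1(\phi_{(\lambda,\delta)})(t) = \delta_B(t)^{1/2}\int_N \phi_{(\lambda,\delta)}(tn)\,dn$ is supported on the coset $T(F_v)\cap K_1 \cdot \lambda(\varpi_v)\dot\delta$ and its $W_G$-translates, so that $\mathcal{S}_1$ is "triangular" with respect to the partial order on $\Lambda^+$: the leading term of $\mathcal{S}_1(\phi_{(\lambda,\delta)})$ is a nonzero $\mathcal{O}$-multiple (a power of $q_v^{1/2}$, a unit in $\mathcal{O}$) of $\sum_{w\in W_G/W_{G,\lambda}}[\text{coset of } w\lambda(\varpi_v)\dot\delta]$, plus a combination of terms $[\mu(\varpi_v)\dot\delta]$ with $\mu < \lambda$. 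This is exactly the argument in \cite{cartier} for the untwisted case, carried out uniformly across the $\Delta$-component; the presence of $\dot\delta\in T(\OFv)$ does not interfere because it commutes with everything in sight and simply shifts the $T$-coset. I would then note that $\mathcal{H}(T(F_v),T(F_v)\cap K_1)^{W_G}$ has an $\mathcal{O}$-basis indexed precisely by pairs $(\lambda,\delta)$ with $\lambda\in\Lambda^+$, $\delta\in\Delta$ (the $W_G$-orbit sums of characters $\mu(\varpi_v)\dot\delta$), matching the basis of the source, so triangularity with unit leading coefficients forces $\mathcal{S}_1$ to be an isomorphism of $\mathcal{O}$-modules.

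To upgrade this to an isomorphism of $\mathcal{O}$-algebras, I would argue that $\mathcal{S}_1$ is multiplicative by the same formal computation that proves $\mathcal{S}$ is: $\mathcal{S}_1$ is the composite of the (normalized by $\delta_B^{1/2}$) constant-term map along $N$ with restriction to $T$, and both convolution-compatibility of the constant-term integral and the cancellation of the modulus factors are purely formal, not sensitive to the level subgroup. Alternatively, and perhaps more cleanly, I would deduce multiplicativity by $\Delta$-equivariant bookkeeping: each character component $\mathcal{S}_1^\chi$ of $\mathcal{S}_1$ restricts on the trivial component to $\mathcal{S}$, and on a general $\chi$-component it intertwines the bimodule structures; since $\mathcal{H}(G(F_v),K_1) = \bigoplus_\chi \mathcal{H}(G(F_v),K_1)^\chi$ with $\mathcal{H}(G(F_v),K_1)^{\chi_1}\cdot\mathcal{H}(G(F_v),K_1)^{\chi_2}\subseteq \mathcal{H}(G(F_v),K_1)^{\chi_1\chi_2}$ and similarly on the torus side, checking multiplicativity on the trivial component (which is the classical Satake isomorphism) together with the graded compatibility suffices. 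The main obstacle I anticipate is making the triangularity argument fully rigorous in the twisted setting—specifically, verifying that the support of $\mathcal{S}_1(\phi_{(\lambda,\delta)})$ genuinely lies in $\bigcup_{w} (T\cap K_1) w\lambda(\varpi_v)\dot\delta$ and computing the leading coefficient as a unit—but since $\dot\delta$ is $T$-valued this is a cosmetic modification of Cartier's original computation, and the prime-to-$|W_G|$ hypothesis on $\Delta$ guarantees the averaging idempotents live in $\mathcal{O}[\Delta]$, so no denominators are introduced.
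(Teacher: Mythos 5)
Your main line of argument coincides with the paper's proof: the paper also establishes bijectivity by expanding $\mathcal{S}_1\phi_{(\lambda,\delta)}$ in the basis of Weyl-orbit sums $\Phi'_{(\mu,\epsilon)}$ and proving a triangularity statement (nonzero coefficients only for $\mu\leq\lambda$, with the coefficient at $(\lambda,\delta)$ equal to the unit $q_v^{\langle\lambda,\rho\rangle}$ and the coefficients at $(\lambda,\epsilon)$, $\epsilon\neq\delta$, equal to zero), and it proves multiplicativity exactly as you suggest, by factoring $\mathcal{S}_1$ through restriction to $\mathcal{H}(B,B\cap K_1)$ followed by the normalized constant-term integral; the only extra care there is the identity $\int_G f = \int_{K_1}\int_B f(bk)\,dk\,d_l b$, which uses the $K_1$-coset decomposition of $K$ and $\delta_B(\dot\delta)=1$ for the representatives $\dot\delta\in T(\OFv)$.

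Two caveats. First, your preliminary claim that $\#\Delta$ is prime to $p$ is wrong: $\Delta$ is only assumed coprime to $\#W_G$, and in the paper's intended application (Construction \ref{parahoric construction}) $\Delta$ is a $p$-group, so the idempotents $\tfrac{1}{\#\Delta}\sum_{\delta}\chi^{-1}(\delta)\delta$ do not lie in $\CalO[\Delta]$ and the proposed $\chi$-isotypic decomposition of $\mathcal{H}(G(F_v),K_1)$ over $\CalO$ is unavailable; relatedly, checking multiplicativity only on the trivial $\Delta$-component and appealing to the grading would not suffice. Since your direct triangularity-plus-constant-term argument never actually needs this decomposition, you should simply discard that strand. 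Second, the step you flag as the anticipated obstacle is indeed the crux, but the reason it works is not that $\dot\delta$ ``commutes with everything'': to see that $c(\lambda,\lambda,\delta,\epsilon)=0$ for $\epsilon\neq\delta$ one takes a putative identity $x\lambda(\varpi_v)\dot\epsilon y=n\lambda(\varpi_v)\dot\delta$ with $x,y\in K_1$, $n\in N$, pushes it through the projection $\pi\colon G\to C_G$ (which kills $N$ and, being abelian, the conjugation by $\lambda(\varpi_v)$) and then through $C_G(\OFv)\to\Delta$ (which kills the images of $x,y\in K_1$), forcing $\epsilon=\delta$; the inequality $\mu\leq\lambda$ for nonzero coefficients is then reduced to the level-$K$ statement from Bruhat--Tits. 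With these corrections your proposal is essentially the paper's proof.
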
\label{satake}
\begin{proof}
We will follow the approach of the usual Satake isomorphism presented in \cite[Theorem 4.1]{cartier}. 
We will normalise all Haar measures on subgroups $\Gamma \leq G(F_v)$ to give $\Gamma \cap K_1$ measure 1 and write $T_1 = T(F_v) \cap K_1$.
The groups $G,N,T,K$ and $K_1$ are all unimodular.
For the group $B$ we may define a left Haar measure by the formula
\begin{align} \label{intB intM intN}
    \int_B f(b) d_l b = \int_T \int_N f(tn) dt dn
\end{align}
with the required normalisation.
The map $\mathcal{S}_1$ has image in $\mathcal{H}(T,T \cap K_1)$, as $f$ is biinvariant under $K_1$ and due to the fact that $\mathcal{S}_1 f(t) = \delta_B(t)^{-\frac{1}{2}} \int_N f(nt) dn$ by \cite[IV (19)]{cartier}.

We firstly show the map defines a homomorphism. 
To do this, we may write $\mathcal{S}_1$ as the composition of restricting functions to $\mathcal{H}(B,B \cap K_1)$ and then applying the same integral formula.
The proof that the second map is a homomorphism is exactly as in the usual case, since it is largely independent of choice of compact open subgroup.
So we show restriction $\mathcal{H}(G,K_1) \to \mathcal{H}(B, B \cap K_1)$ is a homomorphism.
If $f \in \mathcal{H}(G,K_1)$ we have by \cite[IV (4)]{cartier} that
\begin{align*}
    \int_G f(u) du = \frac{1}{|\Delta|}\int_K \int_B f(bk) dk d_l b,
\end{align*}
recalling we have normalised the Haar measures with respect to intersections with $K_1$.
Decomposing $K$ into $K_1$-cosets then we have 
\begin{align*}
    \int_G f(u) du = \sum_{\delta \in \Delta} \frac{1}{|\Delta|}\int_{K_1} \int_B f(b\Dot{\delta} k ) dk d_l b.
\end{align*}
The modular function for $B$ is given by $\Delta_B(tn) = \delta_B(t)^{-1}$ for $t \in T$ and $n \in N$ by \cite[Equation (11)]{cartier}.
So right translation by $\Dot{\delta} \in T(\OFv)$ has no effect since $\delta_B(\Dot{\delta}) = 1$.
It follows that
\begin{align} \label{intG intK1B}
    \int_G f(u) du = \int_{K_1} \int_B f(b k ) dk d_l b.
\end{align}
If $g \in \mathcal{H}(G,K_1)$ and $x \in B$ then
\begin{align*}
    (f \star g)(x) &= \int_G f(u) g(u^{-1} x) du \\
    &= \int_{K_1} \int_B f(b k) g((b k)^{-1} x) dk d_l b \\
    &= \int_B f(b ) g(b^{-1} x) d_l b \text{ by } K_1 \text{ biinvariance},
\end{align*}
which is exactly the convolution of the restrictions of $f$ and $g$ to $\mathcal{H}(B,B \cap K_1)$ evaluated on $x$.

Next we show that the image of $\mathcal{S}_1$ is contained inside $\mathcal{H}(T(F_v),T(F_v) \cap K_1)^{W_G}$.
Since $G$ is defined over $\OFv$, the Weyl group is also defined over $\OFv$.
Since our coset representatives for $\Delta$ lie in $T(\OFv)$ we can take representatives of the Weyl group to lie in $N_G(T)(F_v) \cap K_1$ by taking any representatives in $G(\OFv)$ and multiplying through by suitable $\dot{\delta}$.
It then would suffice to show that $\mathcal{S}_1f(x t x^{-1}) = \mathcal{S}_1 f(t)$ for every $x \in N_G(T)(F_v) \cap K_1$.
This can then be proved in exactly the same way as \cite{cartier} by firstly showing that this holds for $t \in T(\OFv)$ regular.

Finally, we show that we do indeed have a bijection.
We recall that the functions $\phi_{(\lambda,\delta)} = [K_1 \lambda(\varpi_v) \Dot{\delta} K_1]$ for $\lambda \in \Lambda^+$ and $\delta \in \Delta$ form a basis for $\mathcal{H}(G,K_1)$.
Letting $\Phi_{(\lambda,\delta)} = [T_1 \lambda(\varpi_v) \Dot{\delta} T_1]$, we have a basis of $\mathcal{H}(T,T_1)$ given by $\{\Phi_{(\lambda,\delta)}\}_{(\lambda,\delta) \in X_*(T) \times \Delta}$.
Summing up $\Phi_{(\lambda,\delta)}$ over $\lambda$ in a Weyl orbit gives a basis $\Phi'_{(\lambda,\delta)}$ for $\mathcal{H}(T,T_1)^{W_G}$, which may be paramaterized by $(\lambda,\delta) \in \Lambda^+ \times \Delta$.
Recall there is a partial ordering on $\Lambda^+$ given by $\lambda \geq \mu$ if and only if $\lambda - \mu$ is a sum of positive coroots.

We may write $\mathcal{S}_1 \phi_{(\lambda,\delta)} = \sum_{(\mu,\epsilon) \in \Lambda^+ \times \Delta} c(\lambda,\mu,\delta,\epsilon) \Phi'_{(\mu,\epsilon)}$.
We will show that the coefficients $c(\lambda,\mu,\delta,\epsilon)$ satisfy the following:
\begin{enumerate}
    \item $c(\lambda,\lambda,\delta,\epsilon) = 0$ for $\epsilon \neq \delta$ \label{diffdelta}    \item $c(\lambda,\lambda,\delta,\delta) \in \mathcal{O}^\times$ \label{same}
    \item If $c(\lambda,\mu,\delta,\epsilon) \neq 0$ then $\lambda \geq \mu$. \label{diffboth} 
\end{enumerate}
Together these give imply the map is an isomorphism. 
Indeed, if $0 \neq \sum \alpha_{(\lambda,\delta)} \phi_{(\lambda,\delta)} \in \mathcal{H}(G,K_1)$ then let $\lambda \in \Lambda^+$ and $\delta \in \Delta$ be such that $ \alpha_{(\lambda,\delta)} \neq 0$ and for every $\mu > \lambda$  and $\epsilon \in \Delta$ we have $\alpha_{(\mu,\epsilon)} = 0$.
Then the coefficient of $\Phi'_{(\lambda,\delta)}$ in the image under $\mathcal{S}_1$ is non-zero and this shows injectivity.
For surjectivity, induction on $\lambda$ shows that $\Phi'_{(\lambda,\delta)}$ is in the image of $\mathcal{S}_1$.

We have 
\begin{align} 
c(\lambda,\mu,\delta,\epsilon) &= \mathcal{S}_1 \phi_{(\lambda,\delta)}(\mu(\varpi_v) \Dot{\epsilon}) \nonumber \\ 
&= \delta_B(\mu(\varpi_v))^{-\frac{1}{2}} \int_N \phi_{(\lambda,\delta)}(n \mu(\varpi_v) \Dot{\epsilon}) dn \label{satake coefficients integral}\\
&= q_v^{\langle \mu, \rho \rangle} \mu_N(N \mu(\varpi_v) \Dot{\epsilon} \cap K_1 \lambda(\varpi_v) \Dot{\delta} K_1) \label{satake coefficients Nmeasure}\\
&= q_v^{\langle \mu, \rho \rangle} \mu_G(N \mu(\varpi_v) \Dot{\epsilon} K_1 \cap K_1 \lambda(\varpi_v) \Dot{\delta} K_1) \nonumber
\end{align}
where $\mu_N$ and $\mu_G$ denote the respective Haar measures on $N$ and $G$, and with the final equality following from the bijection
\begin{align*}
    (N s \cap K_1 t K_1)/(N \cap K_1) \leftrightarrow (N s K_1 \cap K_1 t K_1)/K_1
\end{align*}
for any $s,t \in T$.
For \ref{diffdelta} we need to show that for $\epsilon \neq \delta$ that
$$N \lambda(\varpi_v) \Dot{\epsilon} K_1 \cap K_1 \lambda(\varpi_v) \Dot{\delta} K_1 = \emptyset.$$
Suppose then that there exists $x,y \in K_1$ and $n \in N$ with $x \lambda(\varpi_v) \Dot{\epsilon} y = n \lambda(\varpi_v) \Dot{\delta}$; we will show $\delta = \epsilon$.
Then since $N$ is a normal subgroup of $B$ we may write $$\Dot{\epsilon} y \Dot{\delta}^{-1} = \lambda(\varpi_v)^{-1} x^{-1} \lambda(\varpi_v) n'$$
for some $n' \in N$.
Applying the quotient map $\pi: G \to C_G$ to the above equality, we obtain
$$
\pi(\Dot{\epsilon}) \pi(y) \pi(\Dot{\delta}^{-1}) = \pi(x)^{-1},
$$
where we have used that $\pi(N(F_v)) = \{1\}$.
Both sides have image contained in $C_G(\OFv)$, and passing to the images under the map $C_G(\OFv) \to \Delta$, we are left with $\epsilon \delta^{-1} = 1$.

Now we show \ref{same}.
Since  $\Dot{\delta} K_1 = K_1 \Dot{\delta}$ we have that
$$c(\lambda,\lambda,\delta,\delta) = q_v^{\langle \lambda, \rho \rangle} \mu_G(N \lambda(\varpi_v) K_1 \cap K_1 \lambda(\varpi_v) K_1).$$
Certainly the inclusion $N \lambda(\varpi_v) K_1 \cap K_1 \lambda(\varpi_v) K_1) \supset \lambda(\varpi_v) K_1$ holds, so $$\mu_G(N \lambda(\varpi_v) K_1 \cap K_1 \lambda(\varpi_v) K_1) \geq 1.$$
We have by \cite[4.4.4(ii)]{bruhattits} that $N \lambda(\varpi_v) K \cap K \lambda(\varpi_v) K = \lambda(\varpi_v) K$, so
$$N \lambda(\varpi_v) K_1 \cap K_1 \lambda(\varpi_v) K_1)\subset \bigsqcup_{\epsilon \in \Delta} \lambda(\varpi_v) \Dot{\epsilon} K_1 = \lambda(\varpi_v) K.$$
If $\lambda(\varpi_v)^{-1} x \lambda(\varpi_v) y \in \Dot{\epsilon} K_1$ for $x,y \in K_1$ and $\epsilon \in \Delta$, then applying $\pi$ and taking images in $\Delta$ we see that $\epsilon = 1$.
Thus $c(\lambda,\lambda,\delta,\delta) = q_v^{\langle \lambda, \rho \rangle} \in \mathcal{O}^\times$.

Finally, to show $\ref{diffboth}$ we can reduce to the case of $K_1 = K$ since if 
\begin{equation*} \label{intersection nonempty}
       N \mu(\varpi_v) \Dot{\epsilon} K_1 \cap K_1 \lambda(\varpi_v) \Dot{\delta} K_1 \neq \emptyset 
\end{equation*}
then $N \mu(\varpi_v) K \cap K \lambda(\varpi_v) K \neq \emptyset$.
It is shown in \cite[(4.4.4)(i)]{bruhattits} that if this latter intersection is non-empty then $\lambda \geq \mu$.
\end{proof}

From the proof of the isomorphism, we see that the Satake map is compatible with passage to a quotient $\Tilde{\Delta}$ of $\Delta$ (inducing a compact open subgroup $\Tilde{K_1}$), in the following sense.
\begin{lemma}\label{satake group change} 
The following diagram of $\mathcal{O}$-algebras commutes: \\
\begin{tikzcd}
\phi_{(\lambda,\delta)} \arrow[d, maps to] & \mathcal{H}(G,K_1) \arrow[r, "\mathcal{S}_1"] \arrow[d] & \mathcal{H}(T,T \cap K_1)^{W_G} \arrow[d] & \Phi'_{(\lambda,\delta)} \arrow[d, maps to] \\
\phi_{(\lambda,\Tilde{\delta})}                    & \mathcal{H}(G,\Tilde{K_1}) \arrow[r, "\Tilde{\mathcal{S}_1}"]           & \mathcal{H}(T,T \cap \Tilde{K_1})^{W_G}           & \Phi'_{(\lambda,\Tilde{\delta})}                   
\end{tikzcd} \\
where $\lambda \in \Lambda^+$ and $\Tilde{\delta}$ denotes the image in $\Tilde{\Delta}$ of $\delta \in \Delta$.
\end{lemma}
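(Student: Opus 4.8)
The plan is to define the two vertical maps on the explicit bases and then check the two things that make the diagram meaningful: that these maps are $\mathcal{O}$-algebra homomorphisms and that the square commutes. Write $\widetilde{K}_1 = \ker(G(\OFv)\to\widetilde{\Delta})$, so that $K_1\subseteq\widetilde{K}_1$ with $\widetilde{K}_1/K_1\cong\Delta':=\ker(\Delta\to\widetilde{\Delta})$; I may and will choose the representatives $\dot{\widetilde{\delta}}\in T(\OFv)$ compatibly with the $\dot\delta$ (fix a set-theoretic section $\widetilde{\Delta}\to\Delta$). Since $N\subseteq G^{\der}$ one has $N(F_v)\cap K_1 = N(\OFv) = N(F_v)\cap\widetilde{K}_1$, so the Haar measure on $N$ entering the two Satake integrals is literally the same, a fact I will use freely. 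I then define the left vertical map $\ell$ by $\phi_{(\lambda,\delta)}\mapsto\phi_{(\lambda,\widetilde{\delta})}$ and the right one $r$ by $\Phi'_{(\lambda,\delta)}\mapsto\Phi'_{(\lambda,\widetilde{\delta})}$, extended $\mathcal{O}$-linearly (well-defined, as the source basis is indexed by $\Lambda^+\times\Delta$ and the target basis by $\Lambda^+\times\widetilde{\Delta}$). Once $r$ is known to be an algebra homomorphism and $\widetilde{\mathcal{S}}_1\circ\ell = r\circ\mathcal{S}_1$, it will follow that $\ell = \widetilde{\mathcal{S}}_1^{-1}\circ r\circ\mathcal{S}_1$ is an algebra homomorphism as well, which is everything.

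For the right vertical map I would exploit that $T$ is split: $\mathcal{H}(T,T\cap K_1) = \mathcal{O}[T(F_v)/T_1]$ with $T_1 = T(F_v)\cap K_1$, and the chosen $\dot\delta\in T(\OFv)$ split $1\to\Delta\to T(F_v)/T_1\to X_*(T)\to1$, giving $T(F_v)/T_1\cong X_*(T)\times\Delta$. The Weyl group acts trivially on the $\Delta$-factor: for a representative $n\in N_G(T)(\OFv)$ of $w\in W_G$, the element $n\dot\delta n^{-1}\dot\delta^{-1}$ lies in $\ker(T(\OFv)\to C_G(k(v)))\subseteq T_1$ because $C_G$ is abelian. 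Hence $\mathcal{H}(T,T\cap K_1)^{W_G}\cong\mathcal{O}[X_*(T)]^{W_G}\otimes_\mathcal{O}\mathcal{O}[\Delta]$, with $\Phi'_{(\lambda,\delta)}\leftrightarrow\psi_\lambda\otimes\delta$ for $\psi_\lambda$ the Weyl-orbit sum, and similarly for $\widetilde{K}_1$; under these identifications $r = \mathrm{id}\otimes(\mathcal{O}[\Delta]\twoheadrightarrow\mathcal{O}[\widetilde{\Delta}])$, which is manifestly an $\mathcal{O}$-algebra homomorphism.

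The main work is the commutativity, and since both Satake maps are given by the same integral it suffices to check $\widetilde{\mathcal{S}}_1(\ell\phi_{(\lambda,\delta)}) = r(\mathcal{S}_1\phi_{(\lambda,\delta)})$ on basis elements. Here I would import from the proof of Theorem \ref{satake} the expansion $\mathcal{S}_1\phi_{(\lambda,\delta)} = \sum_{(\mu,\epsilon)}c(\lambda,\mu,\delta,\epsilon)\Phi'_{(\mu,\epsilon)}$ with $c(\lambda,\mu,\delta,\epsilon) = q_v^{\langle\mu,\rho\rangle}\mu_N(N\mu(\varpi_v)\dot\epsilon\cap K_1\lambda(\varpi_v)\dot\delta K_1)$. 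The crucial point — and the step I expect to be the real content — is that this coefficient depends only on $\lambda$, $\mu$ and the class $\delta\epsilon^{-1}\in\Delta$: right translation by $\dot\epsilon^{-1}$ is a measure-preserving bijection $N\mu(\varpi_v)\dot\epsilon\to N\mu(\varpi_v)$, and because $K_1\triangleleft K$ while $\dot\epsilon\in T(\OFv)$ normalizes $K_1$ and commutes with $\lambda(\varpi_v)$, it carries $K_1\lambda(\varpi_v)\dot\delta K_1$ onto the double coset attached to $\delta\epsilon^{-1}$; call this common value $d(\lambda,\mu,\delta\epsilon^{-1})$. Applying $r$ and grouping $\epsilon$ by its image $\widetilde{\eta}$ in $\widetilde{\Delta}$, the $\Phi'_{(\mu,\widetilde{\eta})}$-coefficient of $r(\mathcal{S}_1\phi_{(\lambda,\delta)})$ becomes $\sum_{\zeta\mapsto\widetilde{\delta}\widetilde{\eta}^{-1}}d(\lambda,\mu,\zeta)$, visibly depending only on $\widetilde{\delta}$; and on the other side, using $\widetilde{K}_1 = \bigsqcup_{\delta'\in\Delta'}K_1\dot{\delta'}$ together with the refined Cartan decomposition from the proof of Theorem \ref{satake}, one gets $\widetilde{K}_1\lambda(\varpi_v)\dot{\widetilde{\delta}}\,\widetilde{K}_1 = \bigsqcup_{\delta\mapsto\widetilde{\delta}}K_1\lambda(\varpi_v)\dot\delta K_1$, so intersecting with $N\mu(\varpi_v)\dot{\widetilde{\eta}}$ and summing the resulting $\mu_N$-measures gives the $\Phi'_{(\mu,\widetilde{\eta})}$-coefficient of $\widetilde{\mathcal{S}}_1\phi_{(\lambda,\widetilde{\delta})}$ as $\sum_{\zeta\mapsto\widetilde{\delta}\widetilde{\eta}^{-1}}d(\lambda,\mu,\zeta)$ as well. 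Matching these two expressions completes the argument; the only obstacle past the translation-invariance of $c$ is the bookkeeping with the fibres of $\Delta\to\widetilde{\Delta}$, together with the standard inputs ($N\subseteq G^{\der}$, triviality of the $W_G$-action on $\Delta$, the refined Cartan decomposition) that are already on hand.
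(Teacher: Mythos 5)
Your proposal is correct and follows essentially the paper's own argument: the same coefficient formula $c(\lambda,\mu,\delta,\epsilon)=q_v^{\langle\mu,\rho\rangle}\mu_N(N\mu(\varpi_v)\dot{\epsilon}\cap K_1\lambda(\varpi_v)\dot{\delta}K_1)$, the same right-translation trick showing the coefficient depends only on $\delta\epsilon^{-1}$, and the same decomposition of the $\Tilde{K_1}$-double coset into $K_1$-double cosets indexed by $\ker(\Delta\to\Tilde{\Delta})$. If anything your bookkeeping is slightly cleaner: you fix one lift $\delta$ and sum only over the $\epsilon$-fibre, using $N\cap K_1=N\cap\Tilde{K_1}$ so no measure renormalization is needed, whereas the paper sums over both fibres and absorbs the resulting factor $\#\Delta/\#\Tilde{\Delta}$ into a Haar-measure comparison.
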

\begin{proof}
We may write
\begin{align*}
    \mathcal{S}_1(\phi_{(\lambda,\delta)}) &= \sum_{(\mu,\epsilon) \in \Lambda^+ \times \Delta} c_{(\lambda,\mu,\delta,\epsilon)} \Phi'_{(\mu,\epsilon)} \\
    \Tilde{\mathcal{S}_1}(\phi_{(\lambda,\Tilde{\delta})}) &= \sum_{(\mu,\Tilde{\epsilon}) \in \Lambda^+ \times \Tilde{\Delta}} \Tilde{c}_{(\lambda,\mu,\Tilde{\delta},\Tilde{\epsilon})} \Phi'_{(\mu,\Tilde{\epsilon})},
\end{align*}
as in the proof of Theorem \ref{satake}.
Letting $\theta: \Delta \to \Tilde{\Delta}$ be the quotient map, we must show that the coefficients satisfy the following relation
\begin{align*}
    \sum_{(\delta,\epsilon) \in \theta^{-1}(\Tilde{\delta}) \times \theta^{-1}(\Tilde{\epsilon})} c_{(\lambda,\mu,\delta,\epsilon)} = \Tilde{c}_{(\lambda,\mu,\Tilde{\delta},\Tilde{\epsilon})}.
\end{align*}
We saw in equality \ref{satake coefficients Nmeasure} that these coefficients are given by
\begin{align*}
c_{(\lambda,\mu,\delta,\epsilon)} &= q_v^{\langle \mu, \rho \rangle} \mu_N(N \mu(\varpi_v) \Dot{\epsilon} \cap K_1 \lambda(\varpi_v) \Dot{\delta} K_1) \\
&= q_v^{\langle \mu, \rho \rangle} \mu_N(N \mu(\varpi_v) \cap K_1 \lambda(\varpi_v) \Dot{\delta} \Dot{\epsilon}^{-1} K_1),  
\end{align*}
where $\mu_N(K_1 \cap N) = 1$.
We can write 
$$
\Tilde{K_1} \lambda(\varpi_v) \Dot{\Tilde{\delta}} \Dot{\Tilde{\epsilon}}^{-1} \Tilde{K_1} = \bigsqcup_{\eta \in \ker(\theta)} K_1 \lambda(\varpi_v) \Dot{\delta} \Dot{\epsilon}^{-1} \Dot{\eta} K_1.
$$
It follows that
\begin{align*}
    \sum_{(\delta,\epsilon) \in \theta^{-1}(\Tilde{\delta}) \times \theta^{-1}(\Tilde{\epsilon})} c_{(\lambda,\mu,\delta,\epsilon)} = \frac{\#\Delta}{\#\Tilde{\Delta}} q_v^{\langle \mu, \rho \rangle} \mu_N(\Tilde{K_1} \lambda(\varpi_v) \Dot{\Tilde{\delta}} \Dot{\Tilde{\epsilon}}^{-1} \Tilde{K_1}).
\end{align*}
This in turn equals $\Tilde{c}_{(\lambda,\mu,\Tilde{\delta},\Tilde{\epsilon})}$, as this factor of $\#\Delta/\#\Tilde{\Delta}$ is exactly the ratio between the respective Haar measures on $N$ for which either $K_1 \cap N$ or $\Tilde{K_1} \cap N$ have measure 1.

Finally, all the maps in the diagram are maps of commutative $\mathcal{O}$-algebras, since the horizontal maps are algebra isomorphisms and the map $\mathcal{H}(T,T \cap K_1)^{W_G} \to \mathcal{H}(T,T \cap \Tilde{K_1})^{W_G}$ is clearly an algebra homomorphism.
\end{proof}

We will now offer a different perspective on the map $\mathcal{S}_1$, following arguments similar to those of \cite{HKP10}.
Our goal will be to show that $\mathcal{S}_1$ will respect formation of $K_1$-invariants of parabolic inductions of characters of $T$.
For the rest of this section only, we will take all Hecke algebras to have $\Qpbar$-coefficients.

Firstly we will introduce some more notation.
Let $\mathfrak{t}_1 = K_1 \cap T(F_v) = \ker(T(\OFv) \to \Delta)$.
Let $\mathcal{H}_{K_1} = \mathcal{H}(G,K_1)$ and $\mathcal{A}_1 = \Qpbar[T/\mathfrak{t}_1]$.
If $\chi: T(F_v) \to \Qpbar^\times$ is a smooth characterwe will let $i_B^G(\chi)$ denote the normalized parabolic induction, given by those smooth functions $\phi: G(F_v) \to \Qpbar$ satisfying $\phi(bg) = \delta_B(b)^{\frac{1}{2}} \chi(b) \phi(g)$ for every $b \in B$ and $g \in G$.

Let $M_{1} = C_c(\mathfrak{t}_1 N \backslash G / K_1)$ denote the space of compactly supported $\Qpbar$-valued functions on $G(F_v)$, left-invariant under $\mathfrak{t}_1 N(F_v)$ and right-invariant under $K_1$.
Then $M_1$ naturally has the structure of a right $\mathcal{H}_{K_1}$-module via the convolution giving $K_1$ measure 1.
The Iwasawa decomposition $G = BK$ implies that 
\begin{equation} \label{gen iwasawa decomp}
    G = B K_1 = T N K_1
\end{equation}
and the elements $m^{(\lambda,\delta)} := 1_{\mathfrak{t}_1 N \lambda(\varpi_v) \Dot{\delta} K_1}$ for $\lambda \in X_*(T)$ and $\delta \in \Delta$ form a $\Qpbar$-vector space basis for $M_1$.
There is also a left $\mathcal{A}_1$-module structure on $M_1$, defined by 
$$\lambda(\varpi_v) \dot\delta \cdot m^{(\mu,\epsilon)} = \delta_B(\lambda(\varpi_v))^{\frac{1}{2}} m^{(\mu+\lambda,\delta \epsilon)}.$$
These actions are compatible, and from the above basis we see that $M_1$ is a free module over $\mathcal{A}_1$ of rank $1$.
This induces a homomorphism $\mathcal{H}_{K_1} \to \mathcal{A}_1$ sending $h \mapsto h^\vee$, where
\begin{equation} \label{satake property}
    m \star h = h^\vee \cdot m.
\end{equation}

\begin{lemma}
The map $h \mapsto h^\vee$ is given by $\mathcal{S}_1$.
\end{lemma}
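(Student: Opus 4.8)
The plan is to unwind both sides of the claimed equality by evaluating the defining property \eqref{satake property} against the canonical basis element of $M_1$, and then comparing with the integral formula for $\mathcal{S}_1$. First I would recall that $M_1$ is free of rank one over $\mathcal{A}_1$ with generator $m^{(0,1)} = 1_{\mathfrak{t}_1 N K_1}$, so that $h^\vee$ is completely determined by the identity $m^{(0,1)} \star h = h^\vee \cdot m^{(0,1)}$. Writing $h^\vee = \sum_{(\mu,\epsilon)} a_{(\mu,\epsilon)} \, \lambda_\mu(\varpi_v) \dot\epsilon$ in the group-algebra basis of $\mathcal{A}_1 = \Qpbar[T/\mathfrak{t}_1]$ (indexed by $X_*(T) \times \Delta$ via the coset representatives $\mu(\varpi_v)\dot\delta$), the left action formula gives $h^\vee \cdot m^{(0,1)} = \sum_{(\mu,\epsilon)} a_{(\mu,\epsilon)} \delta_B(\mu(\varpi_v))^{\frac12} m^{(\mu,\epsilon)}$. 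On the other hand, $m^{(0,1)} \star h$ evaluated at the point $\mu(\varpi_v)\dot\epsilon$ is $\int_G 1_{\mathfrak{t}_1 N K_1}(g) \, h(g^{-1} \mu(\varpi_v)\dot\epsilon) \, dg$, which by the Iwasawa decomposition \eqref{gen iwasawa decomp} and the normalization of Haar measures unfolds to $\int_N h(n^{-1}\mu(\varpi_v)\dot\epsilon)\, dn = \int_N h(\mu(\varpi_v)\dot\epsilon n)\, dn$ after a change of variable (using that $h$ is bi-$K_1$-invariant and that $N$ is normalized by $T$).

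Next I would compare this with the Satake integral. By definition $\mathcal{S}_1 h (\mu(\varpi_v)\dot\epsilon) = \delta_B(\mu(\varpi_v)\dot\epsilon)^{\frac12} \int_N h(\mu(\varpi_v)\dot\epsilon \, n)\, dn$, and since $\dot\epsilon \in T(\OFv)$ we have $\delta_B(\dot\epsilon) = 1$, so $\mathcal{S}_1 h(\mu(\varpi_v)\dot\epsilon) = \delta_B(\mu(\varpi_v))^{\frac12} \int_N h(\mu(\varpi_v)\dot\epsilon\, n)\, dn$. Reading off the coefficient of $m^{(\mu,\epsilon)}$ in the computation of the previous paragraph, we get $a_{(\mu,\epsilon)} \delta_B(\mu(\varpi_v))^{\frac12} = \int_N h(\mu(\varpi_v)\dot\epsilon\, n)\, dn$, hence $a_{(\mu,\epsilon)} = \delta_B(\mu(\varpi_v))^{-\frac12}\int_N h(\mu(\varpi_v)\dot\epsilon\,n)\,dn = \mathcal{S}_1 h(\mu(\varpi_v)\dot\epsilon)$. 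Since the $\Phi'$-basis of $\mathcal{H}(T,\mathfrak{t}_1)^{W_G}$ — or equivalently the group-algebra basis of $\mathcal{A}_1$ — is detected by evaluation at the points $\mu(\varpi_v)\dot\epsilon$, and we have verified that $h^\vee$ and $\mathcal{S}_1 h$ have the same coefficients $a_{(\mu,\epsilon)}$, I conclude $h^\vee = \mathcal{S}_1 h$. One should note here the implicit identification of $\mathcal{H}(T,\mathfrak{t}_1)^{W_G}$ with the $W_G$-invariants of $\mathcal{A}_1 = \Qpbar[T/\mathfrak{t}_1]$, which is routine since $T$ is abelian; it then suffices to know (already established in the proof of Theorem \ref{satake}) that $\mathcal{S}_1 h$ indeed lands in the $W_G$-invariants.

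The main obstacle, such as it is, will be bookkeeping the measure normalizations and the change of variables carefully: one must check that with all Haar measures normalized to give intersections with $K_1$ measure one, the Iwasawa integral $\int_G 1_{\mathfrak{t}_1 N K_1}(g) f(g)\, dg$ really does collapse to $\int_N f(n)\, dn$ with no spurious constant — this uses that $\mathfrak{t}_1 N$ meets $K_1$ in $(T(\OFv)\cap K_1) N(\OFv)$ and the product $\mathfrak{t}_1 \backslash (\mathfrak{t}_1 N) \cong N$ is measure-preserving for the chosen normalizations, exactly the compatibility exploited in Lemma \ref{satake group change}. Once that is pinned down, the argument is a direct unwinding; everything else (that both maps are $\Qpbar$-linear, that $h \mapsto h^\vee$ is well-defined as a consequence of freeness of rank one) has been set up in the surrounding text.
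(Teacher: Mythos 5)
Your strategy is the same as the paper's: exploit that $M_{1}$ is free of rank one over $\mathcal{A}_1$ with generator $1_{\mathfrak{t}_1 N K_1}$, evaluate the defining relation \ref{satake property} on this generator at the points $\mu(\varpi_v)\dot\epsilon$, and compare with the Satake integral. However, as written the middle of your argument contains a concrete error in the handling of the modulus character, compensated by a second error. The change of variables you invoke is false: for $t = \mu(\varpi_v)\dot\epsilon$, conjugation by $t$ scales the Haar measure on $N$ by $\delta_B(t)$, so $\int_N h(n^{-1}t)\,dn = \int_N h(nt)\,dn = \delta_B(\mu(\varpi_v))\int_N h(tn)\,dn$, not $\int_N h(tn)\,dn$. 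You then write $a_{(\mu,\epsilon)} = \delta_B(\mu(\varpi_v))^{-\frac12}\int_N h(\mu(\varpi_v)\dot\epsilon\,n)\,dn = \mathcal{S}_1 h(\mu(\varpi_v)\dot\epsilon)$, which contradicts the definition you quoted one sentence earlier, $\mathcal{S}_1 h(t) = \delta_B(t)^{\frac12}\int_N h(tn)\,dn$. The two slips are each off by the factor $\delta_B(\mu(\varpi_v))$, in opposite directions, so your final identity $a_{(\mu,\epsilon)} = \mathcal{S}_1 h(\mu(\varpi_v)\dot\epsilon)$ is nevertheless the correct one — but a reader checking the steps would find two false assertions.

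The repair is immediate and lands you exactly on the paper's proof: do not move $n$ across $\mu(\varpi_v)\dot\epsilon$ at all. The unfolding gives $(1_{\mathfrak{t}_1 N K_1}\star h)(\mu(\varpi_v)\dot\epsilon) = \int_N h(n\,\mu(\varpi_v)\dot\epsilon)\,dn$, and equating this with $a_{(\mu,\epsilon)}\,\delta_B(\mu(\varpi_v))^{\frac12}$ yields $a_{(\mu,\epsilon)} = \delta_B(\mu(\varpi_v))^{-\frac12}\int_N h(n\,\mu(\varpi_v)\dot\epsilon)\,dn$, which is $\mathcal{S}_1 h(\mu(\varpi_v)\dot\epsilon)$ by the alternative expression $\mathcal{S}_1 f(t) = \delta_B(t)^{-\frac12}\int_N f(nt)\,dn$ from \cite[IV (19)]{cartier}, already used in the proof of Theorem \ref{satake} (this is precisely formula \ref{satake coefficients integral}). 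Alternatively, keep your change of variables but retain the Jacobian $\delta_B(\mu(\varpi_v))$, which then cancels correctly against the $\delta_B^{\frac12}$ in the definition. The rest of your argument — the use of the Iwasawa decomposition \ref{gen iwasawa decomp} and the normalizations \ref{intG intK1B}, \ref{intB intM intN} to collapse the $G$-integral to an $N$-integral, bi-$K_1$-invariance, disjointness of the cosets $\mathfrak{t}_1 N\mu(\varpi_v)\dot\epsilon K_1$, and reading off coefficients of $h^\vee$ by evaluation at the representatives — is in order and matches the paper.
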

\begin{proof}
Let $\lambda \in \Lambda^+$ and $\delta \in \Delta$.
Let $h = \phi_{(\lambda,\delta)}$ and write $$h^\vee = \sum_{(\mu,\epsilon) \in X_*(T) \times \Delta} d(\lambda,\mu,\delta,\epsilon) \Phi_{(\mu,\epsilon)}.$$
We will show that the coefficients $d(\lambda,\mu,\delta,\epsilon)$ are exactly given as in the proof of Theorem \ref{satake}.
Consider applying the equality \ref{satake property} to $m = m^{(1,1)}$ and evaluating these functions at $\mu(\varpi_v) \Dot{\epsilon}$.
We obtain
\begin{align} \label{computing hvee}
   &\int_G 1_{\mathfrak{t}_1 N K_1}(g) \phi_{(\lambda,\delta)}(g^{-1} \mu(\varpi_v) \Dot{\epsilon}) dg \notag \\ 
   = &\sum_{(\nu,\gamma) \in X_*(T) \times \Delta} d(\lambda,\nu,\delta,\gamma) \delta_B(\nu(\varpi_v))^{\frac{1}{2}} m^{(\nu,\gamma)} (\mu(\varpi_v) \Dot{\epsilon}). 
\end{align}
By \ref{intG intK1B} and \ref{intB intM intN}, the left hand side of \ref{computing hvee} becomes
\begin{align*}
  \int_{K_1} \int_T \int_N 1_{\mathfrak{t}_1 N K_1}(tnk) \phi_{(\lambda,\delta)}((tnk)^{-1} \mu(\varpi_v) \Dot{\epsilon}) dk dt dn \\
  = \int_{N} \phi_{(\lambda,\delta)}(n^{-1} \mu(\varpi_v) \Dot{\epsilon}) dn  = \int_{N} \phi_{(\lambda,\delta)}(n \mu(\varpi_v) \Dot{\epsilon}) dn,
\end{align*}
since $\phi_{(\lambda,\delta)}$ is bi-invariant under $K_1 \supset \mathfrak{t}_1$ and $N$ is unimodular.
The right hand side of \ref{computing hvee} equals
$d(\lambda,\mu,\delta,\epsilon) \delta_B(\mu(\varpi_v))^{\frac{1}{2}}$.
We therefore see that
$$
d(\lambda,\mu,\delta,\epsilon) = \delta_B(\mu(\varpi_v))^{-\frac{1}{2}} \int_{N} \phi_{(\lambda,\delta)}(n \mu(\varpi_v) \Dot{\epsilon}) dn,
$$
which is exactly formula \ref{satake coefficients integral} (which is still valid for computing the coefficient of $\Phi_{(\lambda,\delta)}$, even when $\mu \not\in \Lambda^+$).
\end{proof}

We will now describe involutions of the algebras $\mathcal{H}_{K_1}$ and $\mathcal{A}_1$, and show that these are compatible with $\mathcal{S}_1$.
This will allow us to convert between left and right $\mathcal{H}_{K_1}$-modules.
Let $\iota_{K_1}$ be the involution on $\mathcal{H}_{K_1}$ defined by $\iota_{K_1}(f)(x) = f(x^{-1})$ for every $x \in G(F_v)$ and $f \in \mathcal{H}_{K_1}$.
A special case of this will be the involution $\iota_{\mathcal{A}_1}$ of $\mathcal{A}_1$, given by $\iota_{\mathcal{A}_1}(\Phi_{(\lambda,\delta})) = \Phi_{(-\lambda,\delta^{-1})}$.

\begin{lemma} \label{satake involution}
The involutions $\iota_{K_1}$ and $\iota_{\mathcal{A}_1}$ are compatible with the Satake map $\mathcal{S}_1$, in the sense that
$$
\mathcal{S}_1 \circ \iota_{K_1} = \iota_{\mathcal{A}_1} \circ \mathcal{S}_1.
$$
\end{lemma}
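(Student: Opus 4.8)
The plan is to reduce the identity $\mathcal{S}_1 \circ \iota_{K_1} = \iota_{\mathcal{A}_1}\circ \mathcal{S}_1$ to a pointwise computation on $T(F_v)$, since both sides are $\Qpbar$-linear maps $\mathcal{H}_{K_1}\to \mathcal{H}(T(F_v),\mathfrak{t}_1)^{W_G}$ and it suffices to evaluate them on an arbitrary $t\in T(F_v)$. First I would record that, under the identification of $\mathcal{A}_1$ with the space of compactly supported $\mathfrak{t}_1$-biinvariant functions on $T(F_v)$, the involution $\iota_{\mathcal{A}_1}$ is simply $g\mapsto (t\mapsto g(t^{-1}))$: the basis element $\Phi_{(\lambda,\delta)}$ is the characteristic function of the coset $\lambda(\varpi_v)\dot{\delta}\,\mathfrak{t}_1$, whose inverse coset is $(-\lambda)(\varpi_v)\dot{\delta}^{-1}\mathfrak{t}_1$ (using that $T(F_v)$ is abelian), which is the support of $\Phi_{(-\lambda,\delta^{-1})}$; so $g\mapsto g(\cdot^{-1})$ sends $\Phi_{(\lambda,\delta)}$ to $\Phi_{(-\lambda,\delta^{-1})}$, matching the definition of $\iota_{\mathcal{A}_1}$.

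Next I would carry out the main computation. For $f\in\mathcal{H}_{K_1}$, $t\in T(F_v)$ and $n\in N(F_v)$ one has $(\iota_{K_1}f)(tn) = f(n^{-1}t^{-1})$, so, using that the Haar measure on the unimodular group $N(F_v)$ is invariant under $n\mapsto n^{-1}$,
\[
\mathcal{S}_1(\iota_{K_1}f)(t) = \delta_B(t)^{\frac12}\int_N f(n^{-1}t^{-1})\,dn = \delta_B(t)^{\frac12}\int_N f(nt^{-1})\,dn.
\]
Then I invoke the change of variables $n\mapsto t^{-1}nt$ on $N(F_v)$, whose modulus is $\delta_B(t^{-1}) = \delta_B(t)^{-1}$ by the very definition $\delta_B(t)=|\det(\Ad(t)|_{\Lie N})|_v$ --- this is exactly the computation already used in the proof of Theorem \ref{satake} via \cite[IV (19)]{cartier} --- to obtain $\int_N f(t^{-1}n)\,dn = \delta_B(t)\int_N f(nt^{-1})\,dn$. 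Substituting,
\[
\mathcal{S}_1(\iota_{K_1}f)(t) = \delta_B(t)^{\frac12}\,\delta_B(t)^{-1}\int_N f(t^{-1}n)\,dn = \delta_B(t^{-1})^{\frac12}\int_N f(t^{-1}n)\,dn = (\mathcal{S}_1 f)(t^{-1}),
\]
which by the first step equals $\iota_{\mathcal{A}_1}(\mathcal{S}_1 f)(t)$, proving the lemma.

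I do not expect a genuine obstacle here: the only points requiring care are the well-definedness of $\iota_{K_1}$ and $\iota_{\mathcal{A}_1}$ (both transparent, since inversion preserves the relevant coset structure and commutes with the $W_G$-action, so $\iota_{\mathcal{A}_1}$ indeed preserves $\mathcal{H}(T(F_v),\mathfrak{t}_1)^{W_G}$) and the bookkeeping of the modulus $\delta_B$ under conjugation of $N(F_v)$, which is standard and already appears in the preceding proof. An alternative would be to regard $\iota_{K_1}$ and $\iota_{\mathcal{A}_1}$ as algebra anti-automorphisms --- equivalently automorphisms, both algebras being commutative --- and deduce the compatibility from the characterization $m\star h = (\mathcal{S}_1 h)\cdot m$ of the preceding lemma by interpreting $\iota$ as passing between left and right module structures on $M_1$; but the direct computation above is shorter and entirely self-contained, so that is the route I would take.
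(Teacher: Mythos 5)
Your proof is correct and in essence the same as the paper's: the paper verifies the identity on basis elements by showing the Satake coefficients satisfy $c(\lambda,\mu,\delta,\epsilon)=c(-\lambda,-\mu,\delta^{-1},\epsilon^{-1})$, using precisely the two ingredients you use, namely unimodularity of $N$ and the relation $\delta_B(t)^{\frac12}\int_N f(tn)\,dn=\delta_B(t)^{-\frac12}\int_N f(nt)\,dn$ from \cite[IV (19)]{cartier}, together with independence of the choice of representatives $\dot\delta$. Your only repackaging is to note that $\iota_{\mathcal{A}_1}$ is pullback by inversion on $T(F_v)$ and to phrase the computation as the pointwise identity $\mathcal{S}_1(\iota_{K_1}f)(t)=(\mathcal{S}_1 f)(t^{-1})$, which is a clean and accurate reformulation of the same argument.
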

\begin{proof}
We will prove this directly from the coefficient formulas $\ref{satake coefficients integral}$ and $\ref{satake coefficients Nmeasure}$.
We just need to show that
$$
c(\lambda,\mu,\delta,\epsilon) = c(-\lambda,-\mu,\delta^{-1},\epsilon^{-1})
$$
for every $\lambda,\mu \in \Lambda^+$ and $\delta,\epsilon \in \Delta$.
By \cite[IV (19)]{cartier} and our coefficient formulas, we can write
\begin{align*} 
c(\lambda,\mu,\delta,\epsilon) &=
  \delta_B(\mu(\varpi_v))^{-\frac{1}{2}} \int_N \phi_{(\lambda,\delta)}(n \mu(\varpi_v) \Dot{\epsilon}) dn \\ 
  &= \delta_B(\mu(\varpi_v))^{\frac{1}{2}} \int_N\phi_{(\lambda,\delta)}(\mu(\varpi_v) \Dot{\epsilon} n) dn \\
  &= \delta_B(\mu(\varpi_v))^{\frac{1}{2}} \mu_N(\mu(\varpi_v) \Dot{\epsilon}N  \cap K_1 \lambda(\varpi_v) \Dot{\delta} K_1) \\
  &= \delta_B(\mu(\varpi_v)^{-1})^{-\frac{1}{2}} \mu_N(N \mu(\varpi_v)^{-1} \Dot{\epsilon}^{-1} \cap K_1 \lambda(\varpi_v)^{-1} \Dot{\delta}^{-1} K_1),
\end{align*}
where we have used unimodularity of $N$ in the final line.
Since these formulas do not depend on the choice of representatives of $\Delta$ in $T(\OFv)$, the final line is exactly $c(-\lambda,-\mu,\delta^{-1},\epsilon^{-1})$ and we are done.
\end{proof}
\begin{remark}
A proof of Lemma \ref{satake involution} similar to that of \cite[4.4]{HKP10} should also be possible by creating a $\mathcal{A}_1$-valued, $G$-invariant perfect $\mathcal{A}_1$-sesquilinear pairing on a module $M~ \supset~M_{1}$ and applying the relation \ref{satake property}.
\end{remark}

Now let $\chi: T(F_v) \to \Qpbar^\times$ be a smooth character.
There is an isomorphism of right $\mathcal{H}_{K_1}$-modules 
\begin{equation}\label{K1 invariants iso}
    \Qpbar \otimes_{\chi, \mathcal{A}_1} M_{1} \xrightarrow{\sim} i_B^G(\chi^{-1})^{K_1},
\end{equation}
where the action of $\mathcal{H}_{K_1}$ on the $K_1$-invariants of the parabolically induced representation is by first applying $\iota_{K_1}$ followed by the usual left-action.
We can now prove the desired compatibility between our Satake isomorphism and such parabolic inductions.

\begin{proposition} \label{satake parabolic induction}
Let $\chi: T(F_v) \to \Qpbar^\times$ be a smooth character and $\pi = i_B^G \chi$.
The space of $K_1$-invariants, $\pi^{K_1}$, is at most one-dimensional, and if $\pi^{K_1} \neq 0$ then $\restr{\chi}{\mathfrak{t}_1}$ is trivial and for every
$f \in \mathcal{H}_{K_1}$ and $v \in \pi^{K_1}$ we have $$f v = \chi(\mathcal{S}_1 f) v.$$
\end{proposition}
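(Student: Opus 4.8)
The plan is to treat the two assertions separately: the dimension bound and the triviality of $\restr{\chi}{\mathfrak{t}_1}$ follow from a direct Iwasawa-decomposition argument, while the Hecke-eigenvalue formula will be extracted from the module isomorphism \ref{K1 invariants iso}, the Satake relation \ref{satake property}, and the compatibility of involutions in Lemma \ref{satake involution}.

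First I would bound $\dim_{\Qpbar} \pi^{K_1}$. Any $\phi \in i_B^G(\chi)^{K_1}$ is right $K_1$-invariant, hence constant on the group $K_1$ with value $\phi(1)$; combined with $G = B K_1$ from \ref{gen iwasawa decomp}, the function $\phi$ is determined by $\phi(1)$. If $b_1 k_1 = b_2 k_2$ with $b_i \in B$ and $k_i \in K_1$, then $b_2^{-1} b_1 = k_2 k_1^{-1} \in B \cap K_1 = \mathfrak{t}_1 N(\OFv)$, and since $\delta_B$ is trivial on this compact subgroup and $\chi$ is trivial on $N$, the consistency condition $\delta_B(b)^{1/2}\chi(b)\phi(k) = \phi(bk)$ reduces to $\chi(t)\phi(1) = \phi(1)$ for $t \in \mathfrak{t}_1$. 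Thus a nonzero $\phi$ can exist only if $\restr{\chi}{\mathfrak{t}_1}$ is trivial, in which case the formula $\phi_0(bk) = \delta_B(b)^{1/2}\chi(b)$ (for $b \in B$, $k \in K_1$) is well defined, smooth, and gives a nonzero element of $\pi^{K_1}$; hence $\pi^{K_1}$ is one-dimensional precisely when $\restr{\chi}{\mathfrak{t}_1}$ is trivial and is zero otherwise.

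For the Hecke action, I would assume $\pi^{K_1} \ne 0$, so that $\restr{\chi}{\mathfrak{t}_1}$ and $\restr{\chi^{-1}}{\mathfrak{t}_1}$ are trivial and $\chi, \chi^{-1}$ descend to algebra homomorphisms $\mathcal{A}_1 \to \Qpbar$. Applying \ref{K1 invariants iso} with $\chi$ replaced by $\chi^{-1}$ gives an isomorphism of right $\mathcal{H}_{K_1}$-modules $\Qpbar \otimes_{\chi^{-1},\mathcal{A}_1} M_1 \xrightarrow{\sim} \pi^{K_1}$, under which the right action of $f \in \mathcal{H}_{K_1}$ on the source is $(1 \otimes m)\cdot f = 1 \otimes (m \star f)$ and on the target is $v \cdot f = (\iota_{K_1} f)v$ (the usual left action precomposed with $\iota_{K_1}$). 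By \ref{satake property} we have $m \star f = \mathcal{S}_1(f)\cdot m$, so on the source $f$ acts by the scalar $\chi^{-1}(\mathcal{S}_1 f)$, whence $(\iota_{K_1} f)v = \chi^{-1}(\mathcal{S}_1 f)v$ for all $v \in \pi^{K_1}$. Replacing $f$ by $\iota_{K_1} f$ and using that $\iota_{K_1}$ is an involution gives $fv = \chi^{-1}(\mathcal{S}_1(\iota_{K_1}f))v = \chi^{-1}(\iota_{\mathcal{A}_1}(\mathcal{S}_1 f))v$ by Lemma \ref{satake involution}. Finally, evaluating on the basis $\{\Phi_{(\lambda,\delta)}\}$ of $\mathcal{A}_1$ and using $\iota_{\mathcal{A}_1}(\Phi_{(\lambda,\delta)}) = \Phi_{(-\lambda,\delta^{-1})}$ together with $\restr{\chi}{\mathfrak{t}_1} = 1$ shows that $\chi^{-1}\circ\iota_{\mathcal{A}_1} = \chi$ as homomorphisms $\mathcal{A}_1 \to \Qpbar$, so $fv = \chi(\mathcal{S}_1 f)v$.

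The step I expect to be the main obstacle is the bookkeeping in the last paragraph: the eigenvalue must come out as $\chi(\mathcal{S}_1 f)$ rather than its inverse, which requires tracking precisely the order in which the involutions $\iota_{K_1}$ and $\iota_{\mathcal{A}_1}$ enter through \ref{K1 invariants iso} and \ref{satake property}. An alternative, avoiding reliance on \ref{K1 invariants iso}, would be to use that $\mathcal{H}_{K_1}$ acts on the one-dimensional space $\pi^{K_1}$ through an algebra character and to compute this character on a basis element $f = \phi_{(\lambda,\delta)}$ directly by an Iwasawa-decomposition integral of the same shape as in the proof that $h \mapsto h^\vee$ equals $\mathcal{S}_1$; in either approach the substantive content is the Satake coefficient formula already established.
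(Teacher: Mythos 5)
Your proposal is correct and follows essentially the same route as the paper: the dimension bound and triviality of $\restr{\chi}{\mathfrak{t}_1}$ via the decomposition $G = B K_1$ with $\varphi$ determined by $\varphi(1)$, and the eigenvalue formula by combining the isomorphism \ref{K1 invariants iso} (with $\chi^{-1}$), the relation \ref{satake property}, Lemma \ref{satake involution}, and the identity $\chi^{-1}\circ\iota_{\mathcal{A}_1}=\chi$ checked on the basis $\Phi_{(\lambda,\delta)}$. The only difference is cosmetic bookkeeping of where $\iota_{K_1}$ is inserted, plus your (unneeded but harmless) construction of a nonzero invariant vector in the converse direction.
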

\begin{proof}
Suppose that $0 \neq \varphi \in \pi^{K_1}$.
Then writing $G = T N K_1$ as in \ref{gen iwasawa decomp}, we see that $\varphi(t n k) = \delta_B(t)^{\frac{1}{2}} \chi(t) \varphi(1)$.
Since $\varphi$ is determined by $\varphi(1)$, we see that $\dim \pi^{K_1} = 1$ and $\varphi(1) = \varphi(t) = \chi(t) \varphi(1)$ if $t \in T(F_v) \cap K_1 = \mathfrak{t}_1$.

Now let $f \in \mathcal{H}_{K_1}$ and let $m \in \Qpbar \otimes_{\chi^{-1}, \mathcal{A}_1} M_{1}$.
If we can show that $m \star \iota_{K_1}(f) = \chi(\mathcal{S}_1(f)) m$ then we would be done using the isomorphism \ref{K1 invariants iso}.
By equation \ref{satake property}, we have
\begin{align*}
    m \star \iota_{K_1}(f) &= \mathcal{S}_1(\iota_{K_1}(f)) m \\
                        &= \iota_{\mathcal{A}_1}(\mathcal{S}_1(f)) m \qquad \quad \text{ by Lemma } \ref{satake involution} \\
                        &= \chi^{-1}(\iota_{\mathcal{A}_1}(\mathcal{S}_1(f))) m \\
                        &= \chi(\mathcal{S}_1(f)) m,
\end{align*}
where we have used that $\chi^{-1} \circ \iota_{\mathcal{A}_1} = \chi$ as maps $\mathcal{A}_1 \to \Qpbar$, which can be checked on basis elements $\Phi_{(\lambda,\delta)}$ for $\mathcal{A}_1$. 
\end{proof}

\subsection{An abelian subalgebra of the $\mathfrak{p}_1$-Hecke algebra} \label{p_1}

Now consider the following setup.
Let $G$ be a reductive group over $\OFv$, with $T \subset B \subset G$ a split maximal torus contained in a Borel subgroup of $G$ with Levi decomposition $B = TU$. 
We continue with the notation of writing $H$ for $H(F_v)$ when $H \leq G$ is a closed algebraic subgroup.
Let $P = LN$ be a standard parabolic subgroup of $G$, with unipotent radical $N$, and $L$ a Levi factor containing $T$.
Let $W_G$ (resp. $W_L$) denote the Weyl group of $G$ (resp. $L$).
Let $\delta_P: P(F_v) \to \CalO^\times$ be the modulus character for $P$, defined by $\delta_P(p) = |\det(\restr{\Ad(p)}{\Lie N})|_v$.
Let $(X^*(T),\Phi,X_*(T),\Phi^\vee)$ denote the root datum of $G$ with respect to this choice of maximal torus and let $\Phi^+$ denote the positive roots with respect to the Borel subgroup $B$.
We assume that $p$ is coprime to the order of $W_G$ (and hence of $W_L$).

Let $\mathfrak{p}$ be the parahoric subgroup given by the preimage of $P(k(v))$ in $G(\OFv)$.
We will apply the Satake map of Section \ref{sect satake} to the reductive group $L$, defined over $\OFv$.
Let $C_L = L/L^{\der}$ be the cocenter of $L$ and let $\Delta$ be a quotient of $C_L(k(v))$ of order coprime to the order of $W_G$.
Define the further compact open subgroup $\mathfrak{p}_1 \subset \mathfrak{p}$ given by the kernel of the composition
\begin{align*}
    \mathfrak{p} \to P(k(v)) \to L(k(v)) \to \Delta.
\end{align*}
Thus we have an isomorphism $$\mathcal{S}_1^{-1}: \mathcal{H}(T,T \cap \mathfrak{p}_1)^{W_L} \to \mathcal{H}(L,L \cap \mathfrak{p}_1)$$ by Theorem \ref{satake}.
We will show the existence of an injective algebra homomorphism $$\mathcal{T}_1: \mathcal{H}(L,L \cap \mathfrak{p}_1) \to \mathcal{H}(G,\mathfrak{p}_1),$$ so that the composition will allow us to view $\mathcal{H}(T,T \cap \mathfrak{p}_1)^{W_L}$ as an abelian subalgebra of $\mathcal{H}(G,\mathfrak{p}_1)$.

To do this, we will use the notion of strongly $(P,\mathfrak{p}_1)$-positive elements as in \cite[6.16]{bk}.
We say $z \in L$ is positive if $z (N \cap \mathfrak{p}_1) z^{-1} \subset N \cap \mathfrak{p}_1$ and $z^{-1} (\bar{N} \cap \mathfrak{p}_1) z \subset \bar{N} \cap \mathfrak{p}_1$, where $\bar{N}$ denotes the unipotent radical of the opposite parabolic subgroup to $P$.
We say further that $\zeta \in Z(L)$ is strongly positive if $\zeta$ is positive and for all pairs of compact open subgroups $H_1,H_2 \leq N$ and $K_1,K_2 \leq \bar{N}$ there exists $n,m \geq 0$ such that 
\begin{align*}
    \zeta^n H_1 \zeta^{-n} \leq H_2 \\
    \zeta^{-m} K_1 \zeta^m \leq K_2.
\end{align*}
Write $L^+$ for the set of positive elements of $L$.
Our homomorphism will come from \cite[Theorem 7.2]{bk} provided we can find a strongly $(P,\mathfrak{p}_1)$-positive element $\zeta \in Z(L)$ whose characteristic function $[\mathfrak{p}_1 \zeta \mathfrak{p}_1]$ is invertible in $\mathcal{H}(G,\mathfrak{p}_1)$.

\begin{proposition} \label{T1 subalgebra}
The map 
\begin{align*}
\mathcal{T}_1: \mathcal{H}(L^+, \mathfrak{p}_1 \cap L^+) &\to \mathcal{H}(G, \mathfrak{p}_1) \\
[(\mathfrak{p}_1 \cap L^+) z (\mathfrak{p}_1 \cap L^+)] &\mapsto \delta_P(z)^{\frac{1}{2}} [\mathfrak{p}_1 z \mathfrak{p}_1]
\end{align*}
is an injective algebra homomorphism.
Suppose that $\lambda \in X_*(T)$ take values in $Z(L)$ and satisfies $\langle \lambda, \alpha \rangle > 0$ for every $\alpha \in \Phi^+ \setminus \Phi_L$.
Then $\lambda(\varpi_v)$ is a strongly $(P,\mathfrak{p}_1)$-positive element and $[\mathfrak{p}_1 \lambda(\varpi_v) \mathfrak{p}_1]$ is invertible in $\mathcal{H}(G, \mathfrak{p}_1)$.
The map $\mathcal{T}_1$ extends to an injective algebra homomorphism
\begin{align*}
\mathcal{T}_1: \mathcal{H}(L, \mathfrak{p}_1 \cap L) &\to \mathcal{H}(G, \mathfrak{p}_1).
\end{align*}
\end{proposition}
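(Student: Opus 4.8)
The plan is to invoke the general theory of types and covers from Bushnell--Kutzko, specifically \cite[Theorem 7.2]{bk} (and its refinement \cite[Corollary 7.12]{bk}), which produces an injective algebra homomorphism $\mathcal{H}(L, \mathfrak{p}_1 \cap L) \to \mathcal{H}(G, \mathfrak{p}_1)$ out of the map $\mathcal{T}_1$ on the subalgebra of positive elements, provided one exhibits a single strongly $(P,\mathfrak{p}_1)$-positive element $\zeta \in Z(L)$ whose characteristic function $[\mathfrak{p}_1 \zeta \mathfrak{p}_1]$ is invertible in $\mathcal{H}(G,\mathfrak{p}_1)$. So the first step is to verify that $\mathcal{T}_1$ restricted to $\mathcal{H}(L^+, \mathfrak{p}_1 \cap L^+)$ is an injective algebra homomorphism; this is essentially \cite[(6.12), (7.1)]{bk}, where the normalizing factor $\delta_P(z)^{1/2}$ is exactly what makes the support-multiplicativity property $[\mathfrak{p}_1 z_1 \mathfrak{p}_1] \star [\mathfrak{p}_1 z_2 \mathfrak{p}_1] = [\mathfrak{p}_1 z_1 z_2 \mathfrak{p}_1]$ (for $z_1, z_2$ positive) into an algebra map after twisting. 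Injectivity follows because distinct double cosets $(\mathfrak{p}_1 \cap L) z (\mathfrak{p}_1 \cap L)$ in $L^+$ map to distinct double cosets $\mathfrak{p}_1 z \mathfrak{p}_1$ in $G$, a consequence of the Iwahori-type decomposition of $\mathfrak{p}_1$ adapted to $P$.

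The core step is the second claim: that $\zeta := \lambda(\varpi_v)$ is strongly $(P,\mathfrak{p}_1)$-positive and that $[\mathfrak{p}_1 \zeta \mathfrak{p}_1]$ is invertible. For positivity, I would use that $\mathfrak{p}_1$ admits an Iwahori decomposition $\mathfrak{p}_1 = (\mathfrak{p}_1 \cap \bar{N})(\mathfrak{p}_1 \cap L)(\mathfrak{p}_1 \cap N)$ and that conjugation by $\lambda(\varpi_v)$ contracts or expands root subgroups $U_\alpha$ according to the sign of $\langle \lambda, \alpha \rangle$; the hypothesis $\langle \lambda, \alpha \rangle > 0$ for all $\alpha \in \Phi^+ \setminus \Phi_L$ (these being exactly the roots appearing in $\mathrm{Lie}\, N$) guarantees $\zeta(N \cap \mathfrak{p}_1)\zeta^{-1} \subset N \cap \mathfrak{p}_1$ and $\zeta^{-1}(\bar{N} \cap \mathfrak{p}_1)\zeta \subset \bar{N} \cap \mathfrak{p}_1$. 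Moreover, since $\langle \lambda, \alpha\rangle$ is strictly positive, the iterated conjugates $\zeta^n U_\alpha(\mathfrak{o}) \zeta^{-n}$ shrink to the trivial subgroup, which gives the strong positivity condition: any compact open subgroup of $N$ (resp. $\bar N$) is eventually absorbed. That $\zeta \in Z(L)$ is exactly the requirement that $\lambda$ takes values in $Z(L)$, so $[\mathfrak{p}_1 \cap L]$-double cosets of $\zeta^n$ behave multiplicatively. For invertibility of $[\mathfrak{p}_1 \zeta \mathfrak{p}_1]$ in $\mathcal{H}(G,\mathfrak{p}_1)$, I would apply \cite[Theorem 7.2, Corollary 7.10]{bk}, whose hypothesis is precisely the existence of a strongly positive $\zeta$; alternatively one notes that under $\mathcal{T}_1$ (which is injective and onto its image, but here viewed on $L$ directly) $[\mathfrak{p}_1 \cap L) \zeta (\mathfrak{p}_1 \cap L)]$ is invertible in $\mathcal{H}(L, \mathfrak{p}_1 \cap L)$ because $\zeta$ is central in $L$ (so this element is a unit, with inverse the normalized characteristic function of $\zeta^{-1}$), and the defining property of a cover transports this invertibility to $\mathcal{H}(G,\mathfrak{p}_1)$.

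Granting the strongly positive element $\zeta$ with invertible characteristic function, the final extension statement follows formally: $\mathcal{H}(L, \mathfrak{p}_1 \cap L)$ is generated as an algebra by $\mathcal{H}(L^+, \mathfrak{p}_1 \cap L^+)$ together with the inverse of $[(\mathfrak{p}_1 \cap L)\zeta(\mathfrak{p}_1 \cap L)]$ — every element of $L$ becomes positive after multiplying by a sufficiently high power of the strongly positive central $\zeta$ — and $\mathcal{T}_1$ sends $[(\mathfrak{p}_1 \cap L)\zeta(\mathfrak{p}_1 \cap L)]$ to the unit $\delta_P(\zeta)^{1/2}[\mathfrak{p}_1 \zeta \mathfrak{p}_1]$, so $\mathcal{T}_1$ extends uniquely to the localization by sending this element's inverse to the inverse of its image. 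Injectivity of the extension is then automatic from injectivity on $\mathcal{H}(L^+,\mathfrak{p}_1 \cap L^+)$, since a localization of a domain-type module embeds. I expect the main obstacle to be the careful bookkeeping in verifying strong positivity — tracking exactly which roots appear in $N$ versus $L$ and confirming the Iwahori decomposition of $\mathfrak{p}_1$ passes through the extra quotient by $\Delta$ (which is harmless since $\Delta$ has order prime to $p$ and the coset representatives can be chosen in $T(\OFv) \subset \mathfrak{p}_1 \cap L$, commuting with $\zeta = \lambda(\varpi_v)$) — together with citing the correct form of the Bushnell--Kutzko results, since \cite{bk} is stated for the specific covers arising in their theory and one must check the axioms of a ``$G$-cover'' (or at least the weaker support condition needed for Theorem 7.2) hold in this parahoric setting.
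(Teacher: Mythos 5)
There is a genuine gap at the invertibility step, which is the heart of the paper's proof. You assert that invertibility of $[\mathfrak{p}_1 \lambda(\varpi_v)\mathfrak{p}_1]$ in $\mathcal{H}(G,\mathfrak{p}_1)$ follows from \cite[Theorem 7.2]{bk} "whose hypothesis is precisely the existence of a strongly positive $\zeta$" — but this is backwards: Theorem 7.2 takes as \emph{hypothesis} a strongly positive element whose characteristic function is already invertible in $\mathcal{H}(G,\mathfrak{p}_1)$, and produces the extension of $\mathcal{T}_1$ as its conclusion. Your fallback — that $[(\mathfrak{p}_1\cap L)\zeta(\mathfrak{p}_1\cap L)]$ is invertible in $\mathcal{H}(L,\mathfrak{p}_1\cap L)$ (true, trivially, since $\zeta$ is central in $L$) and that "the defining property of a cover transports this invertibility to $\mathcal{H}(G,\mathfrak{p}_1)$" — is circular: in Bushnell--Kutzko's framework the cover condition essentially \emph{is} the invertibility of such an element at the level of $G$, so one cannot invoke it without first verifying it in this parahoric setting, which is exactly the nontrivial content. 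Your closing caveat acknowledges this check but no argument is supplied, so the proposal as written does not close the proposition.

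The paper's proof supplies this missing ingredient by descending to the pro-$p$-Iwahori level. Writing $\Iw^{(1)}$ for the pro-$p$-Iwahori subgroup and $\mathcal{H}^{(1)}=\mathcal{H}(G,\Iw^{(1)})$, Vign\'eras's result \cite[Corollary 1]{vig} gives that $[\Iw^{(1)}\lambda(\varpi_v)\Iw^{(1)}]$ is invertible in $\mathcal{H}^{(1)}$. One then proves a Bruhat-type decomposition of $\mathfrak{p}_1$ into pro-$p$-Iwahori double cosets indexed by $W_L$ and $\Gamma\cap T(k(v))$, where $\Gamma=\ker(L(k(v))\to\Delta)$; together with the Iwahori--Matsumoto braid relations (the relevant basis elements $T_{w\alpha}$, $w\in W_L$, commute with $T_{\lambda(\varpi_v)}$ by a length computation as in Lusztig), this shows $[\mathfrak{p}_1]$ commutes with $[\Iw^{(1)}\lambda(\varpi_v)\Iw^{(1)}]$ and that their product is $[\mathfrak{p}_1\lambda(\varpi_v)\mathfrak{p}_1]$; multiplying the inverse at pro-$p$-Iwahori level by $[\mathfrak{p}_1]$ then produces an explicit inverse inside $\mathcal{H}(G,\mathfrak{p}_1)$. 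Only after this is \cite[Theorem 7.2]{bk} applicable. Your treatment of the positive-part homomorphism and of strong positivity of $\lambda(\varpi_v)$ (via the Iwahori decomposition of $\mathfrak{p}_1$ and contraction of root subgroups) does match the paper, which cites \cite[6.12]{bk} and the proof of \cite[Lemma 6.14]{bk} for those points, and the formal extension/injectivity argument at the end is also as in the paper once invertibility is in hand.
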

\begin{proof}
The map on the Hecke algebra with positive support is a twist of the injective algebra homomorphism stated in \cite[6.12]{bk}.
The proof of \cite[Lemma 6.14]{bk} shows that $\lambda(\varpi_v)$ is strongly $(P,\mathfrak{p}_1)$-positive for $\lambda$ as in the statement of the proposition.
By \cite[2.2.9, 2.18]{psrgroups}, we can find such a $\lambda \in X_*(T)$.

Next we show that $[\mathfrak{p}_1 \lambda(\varpi_v) \mathfrak{p}_1]$ is invertible.
Letting $\Iw^{(1)}$ be the pro-p-Iwahori subgroup, given by the preimage of $U(k(v))$ in $G(\OFv)$, and $\mathcal{H}^{(1)} = \mathcal{H}(G,\Iw^{(1)})$, we have by \cite[Corollary 1]{vig} that $[\Iw^{(1)} \lambda(\varpi_v) \Iw^{(1)}]$ is invertible in $\mathcal{H}^{(1)}$.
Let $W^{(1)} = W_G \rtimes (T(F_v)/T(\OFv) \times T(k(v))$, which we will view as a quotient of $N_G(T)(F_v)$.
We will extend the usual length function on $(W_G \rtimes T(F_v)/T(\OFv))$ to $l: W^{(1)} \to \mathbb{Z}_{\geq 0}$ satisfying $l(\alpha w \beta) = l(w)$ for any $w \in W^{(1)}$ and $\alpha,\beta \in T(k(v))$, as in \cite[Proposition 1]{vig}.
If $w \in W^{(1)}$, choose a representative $n \in N_G(T)(F_v)$ and let $T_w = [\Iw^{(1)} n \Iw^{(1)}]$.
The Iwahori-Matsumoto presentation of $\mathcal{H}^{(1)}$ \cite[Theorem 1]{vig} states that $\mathcal{H}^1$ has $\{T_w\}_{w \in W^{(1)}}$ as a basis with multiplication defined by the braid relations
\begin{center}
 $T_w T_{w'} = T_{w w'}$ whenever $w,w' \in W^{(1)}$ satisfy $l(w w') = l(w) + l(w')$   
\end{center}
and certain quadratic relations.
By definition of $l$ and the same proof of \cite[Lemma 2.2]{lusztig89}, it can be seen that for every $w \in W_L$ and $\alpha \in T(k(v))$ the basis elements $T_{w \alpha}$ and $T_{\lambda(\varpi_v)}$ commute.

Let $\Gamma = \ker(\pi: L(k(v)) \to \Delta)$.
If we choose representatives $\Dot{w}$ lying in $L^{\der}(k(v)) \subset \Gamma$ for $w \in W_L$, we claim there is a decomposition of the group
$$\Gamma \cdot N(k(v)) = \bigsqcup_{w \in W_L, \alpha \in T(k(v)) \cap \Gamma} U(k(v)) \Dot{w} \alpha U(k(v)).$$
Firstly, observe that the right hand side is actually a disjoint union.
Indeed, suppose that $u \Dot{w} \alpha v = \Dot{x} \beta$ with $u,v \in U(k(v))$, $\alpha,\beta \in T(k(v))$ and $w,x \in W_L$.
Then by the Bruhat decomposition for $G$ we have that $w = x$.
Writing the equality as
$ (\Dot{w} \alpha) (\Dot{w} \alpha)^{-1} u (\Dot{w} \alpha) v = \Dot{w} \beta$,
we deduce that $y^{-1} u y v = \alpha^{-1} \beta$ for some $y$.
It follows that $y^{-1} u y \in B(k(v))$, so must lie in $U(k(v))$, as it is unipotent.
Therefore $\alpha^{-1} \beta \in T(k(v)) \cap U(k(v))$, so $\alpha = \beta$ as required.

Suppose that $m  \in \Gamma$ and $n \in N(k(v))$.
By the Bruhat decomposition for $L(k(v))$, we may write $m = x \Dot{w} y$ where $x,y \in (B \cap L)(k(v))$ and $w \in W_L$. 
Then since $B = T U$, we may write $x = s x'$ and $y = t y'$ with $x',y' \in L(k(v))$ unipotent and $s,t \in T(k(v))$.
It follows that we can write $m = z \Dot{w} \alpha y'$ with $z \in L(k(v))$ unipotent and $\alpha \in T(k(v))$.
Applying $\pi$ to both sides, we see that $1 = \pi(m) = \pi(z \Dot{w} \alpha y') = \pi(\alpha)$.
We deduce that $\alpha \in \Gamma \cap T(k(v))$ and since $N \subset U$ we have $mn \in U(k(v)) \Dot{w} \alpha U(k(v))$.

To show the other containment, we just need to show $U(k(v)) \subset \Gamma N(k(v))$, so let $u \in U(k(v))$.
Since $u \in P(k(v)) = L(k(v)) N(k(v))$, we can write $u = l n$ with $l \in L(k(v))$ and note that $l$ is necessarily unipotent.
Therefore $\pi(l) = 1$ and $l \in \Gamma$, as required.

We see that $$\mathfrak{p}_1 = \bigsqcup_{w \in W_L, \alpha \in \Gamma \cap T(k(v))} \Iw^{(1)} \Dot{w} \Dot{\alpha} \Iw^{(1)},$$ where for each $\alpha \in \Gamma \cap T(k(v))$, $\Dot{\alpha}$ is a choice of lift of $\alpha$ to $T(\OFv)$.
Therefore the indicator function $[\mathfrak{p}_1]$ in $\mathcal{H}^{(1)}$ commutes with $[\Iw^{(1)} \lambda(\varpi_v) \Iw^{(1)}]$, and they have product $[\mathfrak{p}_1 \lambda(\varpi_v) \mathfrak{p}_1]$.
Taking the product of the inverse in $[\Iw^{(1)} \lambda(\varpi_v) \Iw^{(1)}]$ in $\mathcal{H}^{(1)}$ with $[\mathfrak{p}_1]$ then yields the desired inverse in $\mathcal{H}(G,\mathfrak{p}_1)$.

To conclude the proof then, we apply \cite[Theorem 7.2]{bk} with our invertible element to see that $\mathcal{T}_1$ does indeed extend to an algebra homomorphism on $\mathcal{H}(L,\mathfrak{p}_1 \cap L)$.
\end{proof}

We have constructed our desired abelian subalgebra.

\begin{corollary}
The composition $$\mathcal{T}_1 \circ \mathcal{S}_1^{-1}: \mathcal{H}(T,T \cap \mathfrak{p}_1)^{W_L} \to\mathcal{H}(G,\mathfrak{p}_1)$$
is an injective $\CalO$-algebra homomorphism.
\end{corollary}

Now suppose that $\Delta$ has $p$-power order (note that $p$ is coprime to $\#W_G$ by assumption).
Consider the Hecke algebras of the torus $T$ at levels $T \cap \mathfrak{p}$ and $T \cap \mathfrak{p}_1$: these are just group algebras of suitable quotients of $T$, and we have a natural map of $\mathcal{O}$-algebras
\begin{align*}
    \mathcal{O}[T/(T \cap \mathfrak{p}_1)] \to \mathcal{O}[T/(T \cap \mathfrak{p})]
\end{align*}
which also restricts to a map of $W_L$-invariants.
The maps induce bijections on maximal ideals.
Indeed, we can find a section of the above map which gives a presentation $\mathcal{O}[T/(T \cap \mathfrak{p}_1)] \cong \mathcal{O}[T/(T \cap \mathfrak{p})][\Delta]$ using the Teichmuller map.
It suffices to show there is a bijection modulo $\lambda$; but then the presentation reduces to one of the form $k[T/(T~\cap~\mathfrak{p})][y_1,\ldots,y_r]/(y_i^{p^{a_i}})$, where $y_i+1$ are generators of $\Delta$ and $a_i \geq 1$, since $k$ is characteristic $p$.
The bijection follows, as the residual map is given by quotienting of nilpotents.

Now suppose $N$ is any $\mathcal{H}(G,\mathfrak{p}_1)$-module which is finite free over $\mathcal{O}$.
Then $N$ receives the structure of a module over $\mathcal{O}[T/(T \cap \mathfrak{p}_1)]^{W_L}$ via the composition $\mathcal{T}_1 \circ \mathcal{S}_1^{-1}: \mathcal{O}[T/(T \cap \mathfrak{p}_1)]^{W_L} \hookrightarrow \mathcal{H}(G,\mathfrak{p}_1)$.
Similarly, the invariants $N^\mathfrak{p} = N^\Delta$ (viewing $\Delta = \mathfrak{p}/\mathfrak{p}_1$) receive the structure of a module over $\mathcal{O}[T/(T \cap \mathfrak{p})]^{W_L}$.
We can relate the module to the algebra at level $\mathfrak{p}$ in the following way.

\begin{proposition}
Let $N$ be as above.
Let $\mathfrak{n}_1$ be any maximal ideal of $\mathcal{O}[T/(T \cap \mathfrak{p}_1)]^{W_L}$ and $\mathfrak{n}_0$ the corresponding maximal ideal of $\mathcal{O}[T/(T \cap \mathfrak{p})]^{W_L}$.
The localisations $N_{\mathfrak{n}_1}$ and $(N^{\mathfrak{p}})_{\mathfrak{n}_0}$ are submodules of $N$ and $N^{\mathfrak{p}}$ respectively, and under this identification we have the equality $(N_{\mathfrak{n}_1})^{\Delta} = (N^{\mathfrak{p}})_{\mathfrak{n}_0}$.
\end{proposition}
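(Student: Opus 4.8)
The plan is to exploit the decomposition of the group algebra $\mathcal{O}[T/(T \cap \mathfrak{p}_1)]^{W_L}$ as a free module over $\mathcal{O}[T/(T \cap \mathfrak{p})]^{W_L}$ coming from the Teichmüller section, together with the fact (established just above the proposition) that the natural surjection induces a bijection on maximal ideals. First I would set $A_1 = \mathcal{O}[T/(T \cap \mathfrak{p}_1)]^{W_L}$ and $A_0 = \mathcal{O}[T/(T \cap \mathfrak{p})]^{W_L}$, and recall the presentation $A_1 \cong A_0[\Delta]$ with $\Delta$ of $p$-power order, so that $A_1$ is a finite free $A_0$-algebra and the augmentation $A_1 \to A_0$ is the quotient by the ideal generated by $\{\delta - 1 : \delta \in \Delta\}$. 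Since $N$ is finite free over $\mathcal{O}$, the modules $N$ and $N^{\mathfrak{p}} = N^\Delta$ are finitely generated over $A_1$ and $A_0$ respectively, so localization at a maximal ideal is an exact functor and the localizations are direct summands; in particular $N_{\mathfrak{n}_1}$ is naturally a submodule of $N$ (being a direct factor in the semilocal decomposition $N \otimes_{A_1} \widehat{A_1} = \bigoplus_{\mathfrak{m}} N_{\mathfrak{m}}$ after completion, or just using that $N$ decomposes as a product over the finitely many maximal ideals once we work $\mathcal{O}$-adically), and similarly for $(N^{\mathfrak{p}})_{\mathfrak{n}_0} \subset N^{\mathfrak{p}}$.

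The key step is then to check that the two operations — taking $\Delta$-invariants and localizing at the maximal ideal — commute in this situation. I would argue as follows: $\Delta$ acts $A_0$-linearly on $N$ (the $\Delta$-action factors through $A_1$ which is an $A_0$-algebra, and $A_0 = A_1/(\delta - 1)$ is exactly the $\Delta$-coinvariants/invariants of $A_1$), and since $\#\Delta$ is a power of $p$ which is invertible in... no — here $p$ is not invertible, so one cannot simply average. Instead I would use that localization at $\mathfrak{n}_0$ is flat over $A_0$, hence exact, and that $N^\Delta = \ker(N \xrightarrow{\prod (\delta - 1)} N^{\#\Delta})$ is an $A_0$-submodule defined by a finite system of $A_0$-linear equations; flat base change along $A_0 \to (A_0)_{\mathfrak{n}_0}$ commutes with such kernels, giving $(N^\Delta) \otimes_{A_0} (A_0)_{\mathfrak{n}_0} = (N \otimes_{A_0} (A_0)_{\mathfrak{n}_0})^\Delta$. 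It remains to identify $N \otimes_{A_0} (A_0)_{\mathfrak{n}_0}$ with $N_{\mathfrak{n}_1}$: because $A_1 \to A_0$ induces a bijection on maximal ideals and $\mathfrak{n}_1$ is the unique maximal ideal of $A_1$ over $\mathfrak{n}_0$, the $\mathfrak{n}_0$-adic and $\mathfrak{n}_1$-adic topologies on any finite $A_1$-module agree, so $N \otimes_{A_0} (A_0)_{\mathfrak{n}_0} = N \otimes_{A_1} (A_1)_{\mathfrak{n}_1} = N_{\mathfrak{n}_1}$, and this isomorphism is $\Delta$-equivariant. Combining, $(N^{\mathfrak{p}})_{\mathfrak{n}_0} = (N^\Delta)_{\mathfrak{n}_0} = (N_{\mathfrak{n}_1})^\Delta = (N_{\mathfrak{n}_1})^{\mathfrak{p}}$, which is the claim.

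The main obstacle I anticipate is the careful bookkeeping in the last identification: one must be sure that localizing $N$ as an $A_1$-module at $\mathfrak{n}_1$ really does agree with localizing it as an $A_0$-module at $\mathfrak{n}_0$, and that the resulting $(A_0)_{\mathfrak{n}_0}$-module structure is compatible with the $\Delta$-action so that taking invariants makes sense on both sides. The cleanest way to handle this is to pass to $\lambda$-adic completions throughout — since $N$ is finite free over $\mathcal{O}$ the completion loses no information — at which point $\widehat{A_1}$ and $\widehat{A_0}$ are products of complete local rings indexed by their (common, via the bijection) sets of maximal ideals, $N$ and $N^{\mathfrak{p}}$ split accordingly as products of their localizations, and the statement $(N_{\mathfrak{n}_1})^\Delta = (N^\Delta)_{\mathfrak{n}_0}$ becomes the assertion that taking $\Delta$-invariants respects this product decomposition, which is immediate since $\Delta$ acts through $A_1$ and hence preserves each factor. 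I would also remark that exactness of localization plus finiteness is what makes $N_{\mathfrak{n}_1}$ a genuine submodule of $N$ rather than merely a quotient, so that the equality of submodules of $N^{\mathfrak{p}}$ in the statement is literally an equality and not just an isomorphism.
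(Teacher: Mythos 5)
There is a genuine gap: your argument covers only the half of the statement that the paper treats as immediate, and silently assumes the half that actually needs proof. In the paper's setup the two module structures are defined through two \emph{different} Hecke algebras: $N$ is an $\mathcal{O}[T/(T\cap\mathfrak{p}_1)]^{W_L}$-module via the embedding $\mathcal{T}_1\circ\mathcal{S}_1^{-1}$ into $\mathcal{H}(G,\mathfrak{p}_1)$, whereas $N^{\mathfrak{p}}$ is an $\mathcal{O}[T/(T\cap\mathfrak{p})]^{W_L}$-module via the analogous embedding $\mathcal{T}\circ\mathcal{S}^{-1}$ into the level-$\mathfrak{p}$ algebra $\mathcal{H}(G,\mathfrak{p})$. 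Your proof takes for granted that the latter structure is just the restriction to $N^{\mathfrak{p}}$ of the $A_1$-action on $N$, pushed through the augmentation $A_1\cong A_0[\Delta]\to A_0$; but that identification is precisely the content of the proposition, and it is what the paper's proof establishes. Concretely, one must check that for a positive element $z=\lambda(\varpi_v)\dot{\delta}$ the operator $\mathcal{T}_1\mathcal{S}_1^{-1}$ applied to $\Phi'_{(\lambda,\delta)}$, i.e.\ (up to the modulus twist) the double coset operator $[\mathfrak{p}_1 z\mathfrak{p}_1]$, acts on $\mathfrak{p}$-invariants in the same way as the level-$\mathfrak{p}$ operator $[\mathfrak{p} z\mathfrak{p}]$ obtained from $\mathcal{T}\mathcal{S}^{-1}(\Phi'_{\lambda})$. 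The paper does this by combining Lemma \ref{satake group change} (compatibility of the Satake maps under passing from $\Delta$ to the trivial quotient) with the coset comparison: a decomposition $\mathfrak{p}_1 z\mathfrak{p}_1=\bigsqcup_i z_i\mathfrak{p}_1$ gives $\mathfrak{p} z\mathfrak{p}=\bigsqcup_i z_i\mathfrak{p}$, because $z$ commutes with the representatives $\dot{\delta}$ and $\mathfrak{p}_1$ is normal in $\mathfrak{p}$, and one also uses that $\mathcal{T}$ and $\mathcal{T}_1$ are determined by their values on positive elements. Without this comparison there is no reason that $\mathfrak{n}_0$, defined through the level-$\mathfrak{p}$ action (which is the action actually used later, e.g.\ when localizing at $\Tilde{m}_{0,Q}$ in the proof of Proposition \ref{deltaq coinvariants}), cuts out the same direct summand of $N^{\mathfrak{p}}$ as $\mathfrak{n}_1$ does, so the displayed equality is not established.

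The commutative-algebra portion of your write-up — $N$ is finite over the image of $A_1$ in $\End_{\mathcal{O}}(N)$, hence decomposes as a direct sum of its localizations; localization is flat and so commutes with the kernel $N^{\Delta}=\ker\bigl(N\to N^{\#\Delta}\bigr)$; and $\mathfrak{n}_1$ is the unique maximal ideal above $\mathfrak{n}_0$, so $N\otimes_{A_0}(A_0)_{\mathfrak{n}_0}=N_{\mathfrak{n}_1}$ — is fine, and it is essentially what the paper means by saying the proposition is clear once the two actions on $N^{\mathfrak{p}}$ are known to coincide. So the fix is not to change these manipulations but to insert, before them, the Hecke-theoretic comparison of the operators at levels $\mathfrak{p}_1$ and $\mathfrak{p}$ described above.
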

\begin{proof}
The proposition is clear provided we know that the two actions of $\mathcal{O}[T/(T \cap \mathfrak{p}_1)]^{W_L}$ (viewing either as a subalgebra of $\mathcal{H}(G,\mathfrak{p}_1)$ or passing to $\mathcal{O}[T/(T \cap \mathfrak{p})]^{W_L}$ and viewing as a subalgebra of $\mathcal{H}(G,\mathfrak{p})$) on $N^\mathfrak{p}$ coincide. 
By Lemma \ref{satake group change} it suffices to show the actions of the $\phi_{(\lambda,\delta)} \in \mathcal{H}(L,L \cap \mathfrak{p}_1)$ and $\phi_{\lambda} \in \mathcal{H}(L,L \cap \mathfrak{p}_1)$ agree.
We know the maps $\mathcal{T}$ and $\mathcal{T}_1$ on positive elements, and the respective maps extend uniquely, so it suffices to show that for $z = \lambda(\varpi_v) \dot{\delta} \in L^+$ we have a bijection $\mathfrak{p}_1 z \mathfrak{p}_1/\mathfrak{p}_1 \to \mathfrak{p} z \mathfrak{p}/\mathfrak{p}$.
Decomposing $\mathfrak{p}_1 z \mathfrak{p}_1~=~\bigsqcup_i z_i \mathfrak{p}_1$, it is easy to see that $\mathfrak{p} z \mathfrak{p} = \bigsqcup_i z_i \mathfrak{p}$, as $z$ commutes with the representatives $\dot{\delta}$ and $\mathfrak{p}_1 \triangleleft \mathfrak{p}$.
\end{proof}

\subsection{Parahoric invariants} \label{parahoric invariants subsection}

Continue with notation as in the previous subsection.
Assume for the rest of Section \ref{sec local computations} that $q_v \equiv 1 \mod p$.
Let $\mathfrak{b} = \{g \in G(\OFv): g \mod (\varpi_v) \in B(k(v))\}$ denote the Iwahori-subgroup of $G(F_v)$ and let $\mathcal{H} = \mathcal{H}(G,\mathfrak{b})$ denote the Iwahori-Hecke algebra of compactly supported, $\mathcal{O}$-valued functions on the double coset space $\mathfrak{b}\backslash G(F_v)/\mathfrak{b}$.
We will make use of the Bernstein presentation of $\mathcal{H}$.
This presents $\mathcal{H}$ as a twisted tensor product of $\mathcal{A} := \mathcal{O}[X_*(T)]$ and $\CalO[\mathfrak{b}\backslash \mathfrak{g} / \mathfrak{b}]$.
We refer the reader to \cite{HKP10} for more details, but note that the inclusion $\mathcal{A} \to \mathcal{H}$ arises from sending $\lambda \mapsto q_v^{- \langle\rho, \lambda \rangle} [\mathfrak{b} \lambda(\varpi_v) \mathfrak{b}]$, where $\lambda$ is a dominant cocharacter and $\rho$ is the usual half sum of positive roots.
Since $q_v \equiv 1 \mod p$, the Bernstein presentation of $\mathcal{H}$ allows us to identify the reduction of $\mathcal{H}$ modulo $(\lambda)$ as $\overline{\mathcal{H}} = k[X_*(T) \rtimes W_G]$ (see for example \cite[Lemma 7.5]{thorne2019}).

Continue to let $P \supset B$ be a standard parabolic subgroup of $G$ with standard Levi factor $L \supset T$.
Let $\mathfrak{p} = \{g \in G(\OFv): g \mod \lambda \in P(k(v))\}$ be the parahoric subgroup associated to the parabolic subgroup $P$ (and similarly $\mathfrak{g}$ with respect to $G$).
For each $w \in W_G$ we will fix a coset representative $\Dot{w} \in G(F_v)$. For $U \subset G(F_v)$ a compact open subset biinvariant under $\mathfrak{b}$, let $[U] \in \mathcal{H}$ denote the indicator function on $U$.  
The Bruhat decompositions for $L(k(v))$ and $G(k(v))$ imply the following equalities in $\mathcal{H}$:
\begin{align*}
    [\mathfrak{p}] &= \sum_{w \in W_L} [\mathfrak{b} \Dot{w} \mathfrak{b}]\\
     [\mathfrak{g}] &= \sum_{w \in W_G} [\mathfrak{b} \Dot{w} \mathfrak{b}].
\end{align*}

The choice of parabolic subgroup $P$ determines an injective algebra homomorphism $\mathcal{T}_P: \mathcal{H}(L, \mathfrak{p} \cap L) \to \mathcal{H}(G,\mathfrak{p})$, as in Proposition \ref{T1 subalgebra} applied to the case where $\Delta$ is trivial (so $\pone = \mathfrak{p}$).
Thus $\mathcal{T}_B$ allows us to view $\mathcal{A}$ as a subalgebra of $\mathcal{H}$, and this map is easily seen to agree with the inclusion of $\mathcal{A}$ via the Bernstein presentation.
Consider the further subalgebra $\mathcal{B} = \mathcal{O}[X_*(T)]^{W_L} \leq \mathcal{A}$.
If we let $\mathcal{S}_P$ denote the isomorphism in Proposition \ref{satake} applied to the case where the reductive group is the Levi subgroup $M$ and the group $\Delta$ is trivial, the composition $\mathcal{T}_P \circ \mathcal{S}_P^{-1}: \mathcal{B} \to \mathcal{H}(G,\mathfrak{p})$ is an injective algebra homomorphism.
If $N$ is any left $\mathcal{H}$-module $N$ then $[\mathfrak{p}]N$ is a left $\mathcal{H}(G,\mathfrak{p})$-module.
The map $\mathcal{T}_P \circ \mathcal{S}_P^{-1}$ allows us to further view $[\mathfrak{p}]N$ as a $\mathcal{B}$-module.
This action coincides with the restriction of the above action of $\mathcal{A}$ to $\mathcal{B}$ by \cite[Proposition 3.4]{adequacy}.

Suppose $\mathfrak{n}$ is a maximal ideal of $\mathcal{A}$ with residue field $k$. 
Intersection of $\mathfrak{n}$ with $\mathcal{B}$ defines a maximal ideal of $\mathcal{B}$, with maximal ideals $\mathfrak{n}$ and $\mathfrak{n}'$ having the same image if and only if they lie in the same orbit under the action of $W_L$.
This map is also surjective, and hence maximal ideals of $\mathcal{B}$ are in bijection with $W_L$-orbits of maximal ideals of $\mathcal{A}$. 
This bijection continues to hold on taking both of these rings modulo $(\lambda)$.

\begin{proposition} \label{parahoric invariants iso}
Let $N$ be a left $\mathcal{H}$-module, which is either a finite free $\CalO$-module or a finite dimensional $k$-vector space.
Let $\mathfrak{n}_0$ be a maximal ideal of $\mathcal{B}$ and suppose that the centralizer of some (equivalently, any) maximal ideal of $\mathcal{A}$ above $\mathfrak{n}_0$ in $W_G$ is contained in $W_L$.
Let $\mathfrak{q}$ denote the unique maximal ideal of $\mathcal{O}[X_*(T)]^{W_G}$ lying below $\mathfrak{n}_0$.
 Then multiplication by $[\mathfrak{g}]$ defines an isomorphism $[\mathfrak{g}]: ([\mathfrak{p}]N)_{\mathfrak{n}_0} \to ([\mathfrak{g}] N)_{\mathfrak{q}}$.
\end{proposition}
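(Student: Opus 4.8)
The assertion is an ``invariants commute with localization'' statement of exactly the same flavour as \cite[Proposition 3.4]{adequacy} or \cite[Lemma 7.7]{thorne2019}, and the plan is to follow that template. First I would reduce to checking the statement after reduction modulo $(\lambda)$, using that $N$ is a finite free $\mathcal{O}$-module (the case where $N$ is a $k$-vector space is the base of this reduction, so nothing is lost): localization and formation of the submodules $[\mathfrak{p}]N$, $[\mathfrak{g}]N$ are all compatible with $\otimes_\mathcal{O} k$, and an $\mathcal{O}$-linear map between finite free $\mathcal{O}$-modules which is an isomorphism mod $\lambda$ is an isomorphism by Nakayama. So from now on work over $\overline{\mathcal{H}} = k[X_*(T) \rtimes W_G]$, with $\mathcal{A}$ replaced by $\bar{\mathcal{A}} = k[X_*(T)]$, $\mathcal{B}$ by $\bar{\mathcal{B}} = k[X_*(T)]^{W_L}$, and $\mathcal{Z} := k[X_*(T)]^{W_G}$, and recall $[\mathfrak{p}] = \sum_{w \in W_L}[\mathfrak{b}\dot w \mathfrak{b}]$, $[\mathfrak{g}] = \sum_{w \in W_G}[\mathfrak{b}\dot w \mathfrak{b}]$ map mod $\lambda$ to $\sum_{w \in W_L} w$ and $\sum_{w \in W_G} w$ respectively inside $k[X_*(T) \rtimes W_G]$ (this uses $q_v \equiv 1 \bmod p$).

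The heart of the argument is then a computation purely in the group algebra. Write $n_0 = \# W_L$, $g_0 = \# W_G$; both are invertible in $k$ since $p \nmid \# W_G$. So $e_L := \tfrac{1}{n_0}\sum_{w \in W_L} w$ and $e_G := \tfrac{1}{g_0}\sum_{w \in W_G} w$ are idempotents in $k[W_G] \subset \overline{\mathcal{H}}$, with $e_G e_L = e_L e_G = e_G$, and $[\mathfrak{p}]$, $[\mathfrak{g}]$ act on any $\overline{\mathcal{H}}$-module as $n_0 e_L$, $g_0 e_G$ up to units. Thus $[\mathfrak{p}]N$ is (up to the unit $n_0$) the image of the idempotent $e_L$, i.e. $e_L N$, and similarly $[\mathfrak{g}]N = e_G N$ up to a unit, and $e_G N \subset e_L N \subset N$. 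Localizing at $\mathfrak{n}_0$ (a maximal ideal of $\bar{\mathcal{B}}$) is exact, so $(e_L N)_{\mathfrak{n}_0} = e_L (N_{\mathfrak{n}_0})$ once we know $\mathfrak{n}_0$ makes sense as acting on $e_L N$ — which it does, since $\bar{\mathcal{B}}$ acts on $e_L N$ through $\mathcal{T}_P \circ \mathcal{S}_P^{-1}$, agreeing with the restriction of the $\bar{\mathcal{A}}$-action by \cite[Proposition 3.4]{adequacy}. So I must produce a natural isomorphism $e_L(N_{\mathfrak{n}_0}) \xrightarrow{\ [\mathfrak{g}]\ } e_G(N_\mathfrak{q})$, where $\mathfrak{q} \subset \mathcal{Z}$ sits below $\mathfrak{n}_0$.

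The key input is the hypothesis that the stabilizer $W_{G,\mathfrak{m}} = \mathrm{Stab}_{W_G}(\mathfrak{m})$ of a maximal ideal $\mathfrak{m}$ of $\bar{\mathcal{A}}$ over $\mathfrak{n}_0$ lies in $W_L$. Because maximal ideals of $\bar{\mathcal{A}}$ over $\mathfrak{n}_0$ form a single $W_L$-orbit and maximal ideals of $\bar{\mathcal{A}}$ over $\mathfrak{q}$ form a single $W_G$-orbit, and the stabilizer of $\mathfrak{m}$ in $W_G$ already lies in $W_L$, it follows that the $W_G$-orbit of $\mathfrak{m}$ (i.e. the fibre over $\mathfrak{q}$) is \emph{freely} permuted by coset representatives of $W_L \backslash W_G$ out of the $W_L$-orbit of $\mathfrak{m}$ (the fibre over $\mathfrak{n}_0$): choosing coset representatives $w_1 = 1, w_2, \dots, w_r$ for $W_L \backslash W_G$ with $r = g_0/n_0$, the ideals $w_i \mathfrak{n}_0$ of $\bar{\mathcal{B}}$ (equivalently the ideals of $\bar{\mathcal{A}}^{W_L}$ obtained by transporting) are pairwise coprime, so $\mathcal{Z}_\mathfrak{q} \otimes_{\mathcal{Z}} \bar{\mathcal{B}} \cong \prod_{i=1}^r \bar{\mathcal{B}}_{w_i \mathfrak{n}_0}$, and hence $N_\mathfrak{q}$ decomposes as the $\bar{\mathcal{B}}$-module direct sum $\bigoplus_i N_{w_i\mathfrak{n}_0}$, with $N_{w_i \mathfrak{n}_0} = w_i \cdot N_{\mathfrak{n}_0}$ via the $W_G$-action on $N$ (which permutes the localizations because it permutes the maximal ideals of $\bar{\mathcal{A}}$). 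Applying $e_G$ and using $e_G = e_G e_L$ together with $e_G w_i^{-1} = e_G$ for all $i$, one gets $e_G(N_\mathfrak{q}) = e_G\big(\bigoplus_i w_i e_L(N_{\mathfrak{n}_0})\big)$, and the map $e_L(N_{\mathfrak{n}_0}) \to e_G(N_\mathfrak{q})$, $x \mapsto g_0 e_G x = \sum_{i,w' \in W_L} w_i w' x$, has inverse given by $y \mapsto n_0 e_L y = \sum_{w' \in W_L} w' y$ (up to the unit $r = g_0/n_0 \in k^\times$, which one checks by composing the two maps and using the free-permutation structure to see cross-terms land outside $e_L(N_{\mathfrak{n}_0})$ or cancel). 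Up to the bookkeeping of keeping the idempotent normalizations straight, this produces the isomorphism.

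\textbf{Main obstacle.} The genuinely delicate point is the combinatorial claim that the stabilizer condition forces the fibre of $\operatorname{Spec}\bar{\mathcal{A}}$ over $\mathfrak{q}$ to be a \emph{disjoint} union of $W_L$-orbits indexed freely by $W_L\backslash W_G$ — equivalently, that no $w_i \mathfrak{n}_0$ coincide — and then translating this cleanly into the idempotent calculation so that the composite $n_0 e_L \circ g_0 e_G$ restricted to $e_L(N_{\mathfrak{n}_0})$ is a unit scalar. Getting the normalization constants and the ``cross terms vanish'' step exactly right (i.e.\ that $w' w_i x \in e_L(N_{\mathfrak{n}_0})$ only when $w_i \in W_L$, which is precisely where the stabilizer hypothesis re-enters) is where care is required; everything else is formal. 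I would model this part on the proof of \cite[Proposition 3.4]{adequacy} and the analogous passage in \cite[Section 7]{thorne2019}.
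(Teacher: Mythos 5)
Your overall route is the same as the paper's: reduce modulo $\lambda$, identify $\overline{\mathcal{H}}$ with $k[X_*(T)\rtimes W_G]$ using $q_v\equiv 1 \bmod p$, decompose $N$ into generalized eigenspaces for $k[X_*(T)]$, and use the hypothesis $\Stab_{W_G}(\mathfrak{m})\subset W_L$ to show that translation by any $w\in W_G\setminus W_L$ moves the $\mathfrak{n}_0$-supported part of $N$ off itself, so that (projection to the $\mathfrak{n}_0$-component)$\circ[\mathfrak{g}]$ is a unit scalar on $([\mathfrak{p}]N)_{\mathfrak{n}_0}$. This is exactly the paper's proof, rephrased with the idempotents $e_L,e_G$ in place of $[\mathfrak{p}],[\mathfrak{g}]$.

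There is, however, one intermediate claim in your sketch that is false and should be excised (fortunately it is not needed). You assert that the fibre over $\mathfrak{q}$ is ``freely permuted by coset representatives of $W_L\backslash W_G$'', that the ``ideals $w_i\mathfrak{n}_0$'' of $\overline{\mathcal{B}}$ are pairwise coprime, and that $N_\mathfrak{q}\cong\bigoplus_{i=1}^{r}N_{w_i\mathfrak{n}_0}$ with $r=\#W_G/\#W_L$. First, $w_i\mathfrak{n}_0$ is not an ideal of $\overline{\mathcal{B}}$ unless $w_i$ normalizes $W_L$. More seriously, the maximal ideals of $\overline{\mathcal{B}}$ over $\mathfrak{q}$ correspond to $W_L$-orbits inside the $W_G$-orbit of $\mathfrak{m}$, and there are $\#\bigl(W_L\backslash W_G/\Stab_{W_G}(\mathfrak{m})\bigr)$ of them; under the hypothesis $\Stab_{W_G}(\mathfrak{m})\subset W_L$ this is strictly smaller than $\#(W_L\backslash W_G)$ as soon as the stabilizer is nontrivial, which is precisely the case this proposition is designed for (e.g.\ $\Stab_{W_G}(\mathfrak{m})=W_L$ when $M_{\bar g}=\hat L$; already for $W_G=S_3$, $W_L=\Stab_{W_G}(\mathfrak{m})=\langle(12)\rangle$ the fibre has two points, not three, and two of your translates coincide). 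What your argument actually needs is only the disjointness $W_L\mathfrak{m}\cap wW_L\mathfrak{m}=\emptyset$ for $w\notin W_L$ (if $u^{-1}wu'\in\Stab_{W_G}(\mathfrak{m})\subset W_L$ with $u,u'\in W_L$ then $w\in W_L$), which is what you invoke at the cross-term step and is how the paper argues; run the injectivity/surjectivity computation from that alone. Finally, the bookkeeping: the inverse of $[\mathfrak{g}]$ on $([\mathfrak{p}]N)_{\mathfrak{n}_0}$ is $\tfrac{1}{\#W_L}$ times the projection onto the $\mathfrak{n}_0$-supported summand (the composite is multiplication by $\#W_L$, not by $\#W_G/\#W_L$); the map $y\mapsto n_0e_Ly=[\mathfrak{p}]y$ you propose is just multiplication by $\#W_L$ on $e_GN$ and does not land in $([\mathfrak{p}]N)_{\mathfrak{n}_0}$ until you project, so the projection is the essential ingredient, not an afterthought.
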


\begin{proof}
In the case where $N$ is a finite free $\mathcal{O}$-module, the claims will hold true if and only if they hold true for $N/\lambda N$.
Hence we will take $N$ to be a finite dimensional $k$-vector space from now on.

As observed, $\bar{\mathcal{H}} = k[X_*(T) \rtimes W_G]$ is just the group algebra of a semi-direct product. 
We may decompose $N = \oplus_{\mathfrak{m}} N_{\mathfrak{m}}$ as a module over $\overline{\mathcal{A}} = k[X_*(T)]$, since the localisations are given by the generalized eigenspaces for the action of $k[X_*(T)]$ on $N$.
If $\mathfrak{n}$ is a maximal ideal of $\overline{\mathcal{B}} = k[X_*(T)]^{W_L}$, let $\mathcal{O}_{\mathfrak{n}}$ denote the set of maximal ideals of $\overline{\mathcal{A}}$ above $\mathfrak{n}$.
We claim firstly that $([\mathfrak{p}] N)_{\mathfrak{n}} = [\mathfrak{p}] ( \oplus_{\mathfrak{m} \in \mathcal{O}_{\mathfrak{n}}} N_{\mathfrak{m}})$.
Indeed, if $n \in N$ is in the generalized eigenspace for the character of $\overline{\mathcal{A}}$ defining $\mathfrak{m}$, we have that $(\sum_{w \in W_L} w) n$ is in the generalized eigenspace for the restriction of this character to $\overline{\mathcal{B}}$; the other containment is similar.

Now let $M = \oplus_{\mathfrak{m} \in \mathcal{O}_{\mathfrak{n}_0}} N_{\mathfrak{m}}$ and $\pi: N = \oplus_{\mathfrak{m}} N_{\mathfrak{m}} \to M$ denote the natural projection.
We next claim that the composition $\pi \circ [\mathfrak{g}]:  ([\mathfrak{p}]N)_{\mathfrak{n}_0} \to M$ has image in $([\mathfrak{p}]N)_{\mathfrak{n}_0}$ and is given by multiplication by $\#W_L \in k^\times$.
To see this, let $n \in ([\mathfrak{p}]N)_{\mathfrak{n}_0}$ and suppose firstly that $w \in W_G\setminus W_L$.
The only maximal ideals $\mathfrak{m}$ of $\overline{\mathcal{A}}$ for which the image of $w n$ in the localisation $N_\mathfrak{m}$ could be non-zero are such that $w^{-1} \mathfrak{m} \in \mathcal{O}_{\mathfrak{n}_0}$.
We cannot have both $\mathfrak{m}$ and $w \mathfrak{m}$ in $\mathcal{O}_{\mathfrak{n}_0}$, since this would contradict that the centralizer of $\mathfrak{m}$ is contained in $W_L$ and therefore the composition $\pi \circ w$ is zero.
If $w \in W_L$, then $w n = n$ for any $n \in ([\mathfrak{p}]N)_{\mathfrak{n}_0}$, and the claim then follows.

Lastly, we show that $\restr{\pi}{([\mathfrak{g}]N)_{\mathfrak{q}}}$ is an injection with image contained in $([\mathfrak{p}]N)_{\mathfrak{n}_0}$.
We will then be done, since $\frac{1}{\#W_L} \pi$ will provide the inverse to $[\mathfrak{g}]$ on $([\mathfrak{p}]N)_{\mathfrak{n}_0}$ (and the image of $([\mathfrak{p}]N)_{\mathfrak{n}_0}$ under $[\mathfrak{g}]$ is supported only at maximal ideals in $\mathcal{O}_{\mathfrak{n}_0}$ as a module over $\overline{\mathcal{A}}$, and hence only at the maximal ideal $\mathfrak{q}$ as an $\mathcal{O}[X_*(T)]^{W_G}$-module.
Let us suppose that $0 \neq n \in ([\mathfrak{g}]N)_{\mathfrak{q}}$.
Then since $n = \frac{1}{\# W_L} [\mathfrak{p}] n \in [\mathfrak{p}] N$ there must exist a maximal ideal $\mathfrak{n}$ of $\overline{\mathcal{B}} = k[X_*(T)]^{W_L}$ for which the image of $n$ in $([\mathfrak{p}] N)_{\mathfrak{n}}$ is non-zero.
The maximal ideals $\mathfrak{n}$ and $\mathfrak{n}_0$ must be conjugate, so there exists a $w \in W_G$ such that $wn$ has non-zero image in $([\mathfrak{p}] N)_{\mathfrak{n}_0} \subset M$.
Since $wn = n$, we have shown that $\restr{\pi}{[\mathfrak{g}]N}$ is indeed injective.
Multiplication by $w \in W_L$ commutes with $\pi$, so $\pi(([\mathfrak{g}]N)_{\mathfrak{q}}) \subset [\mathfrak{p}] N \cap M = ([\mathfrak{p}]N)_{\mathfrak{n}_0}$ and we are done.
\end{proof}

\begin{remark}
While the statement of Proposition \ref{parahoric invariants iso} is really about a module over the parahoric Hecke algebra, the proof used the Bernstein presentation of the Iwahori-Hecke algebra.
It would be of interest to know whether similar presentations exist of the parahoric Hecke algebras, at least in the case when $q_v=1$.
\end{remark}

Now let $\hat{G}$ be a reductive group over $k$ and suppose that $\hat{T}$ is a split maximal torus contained in a Borel subgroup $\hat{B}$ of $\hat{G}$.
Suppose that the arising pinned root datum is dual to that of $G$, and let $\iota$ denote the induced isomorphisms $X_*(T) \cong X^*(\hat{T})$ and $X^*(T) \cong X_*(\hat{T})$ (just as in Section \ref{dual group deltaq}).
If we take some $\bar{g} \in \hat{T}(k)$, the isomorphism $\iota$ will allow us to define various objects, firstly via the same construction of a Levi subgroup of $G$ in Definition \ref{Lv dual levi}.

\begin{construction} \label{parahoric construction}
Let $\bar{g} \in \hat{T}(k)$. Then we can form the following:
\begin{enumerate}
    \item $M_{\bar{g}} = Z_{\hat{G}_k}(\bar{g})$, the scheme-theoretic centralizer of $\bar{g}$.
    \item A subtorus $\hat{S}$ of $\hat{T}$. This will be given by the connected center of $M_{\bar{g}}$, $\hat{S} =~Z(M_{\bar{g}})^\circ$.
    \item A standard Levi subgroup $\hat{L}$ of $\hat{G}$ with connected center $\hat{S}$. 
    The scheme-theoretic centralizer of $\hat{S}$, $\hat{L} = Z_{\hat{G}_k}(\hat{S})$, will be a Levi subgroup of $\hat{G}$ containing $M_{\bar{g}}$ and let $\hat{P}$ denote the standard parabolic subgroup of $\hat{G}$ admitting $\hat{L}$ as a Levi factor.
    \item A standard parabolic subgroup $P$ of $G$ with unipotent radical $N$ and standard Levi subgroup $L$.
    These will be the dual standard parabolic subgroup and Levi subgroup to $\hat{P}$ and $\hat{L}$ via $\iota$, as in Section \ref{dual group deltaq}.
    \item A parahoric subgroup $\mathfrak{p} \leq G(\OFv)$.
    This will be given by those $g \in G(\OFv)$ for which $g \mod \piv \in P(k(v))$.
    \item A finite $p$-group $\Delta$.
    This will denote the maximal $p$-power quotient of $C_L(k(v))$ such that the image of $Z(G)(k(v))$ is trivial. 
    Here $C_L = L/L^{\der}$ is the cocenter of $L$.
    \item A compact open subgroup $\pone \leq \mathfrak{p}$, normal in $\mathfrak{p}$, with quotient $\Delta$.
  This will be given by the kernel of the composition $$\mathfrak{p} \to P(k(v)) \to L(k(v)) \to C_L(k(v)) \to \Delta.$$
\end{enumerate}
\end{construction}

Associated to the element $\bar{g} \in \hat{T}(k)$, we obtain a character
\begin{align*}
    \bar{\chi}: X_*(T) &\to k^\times \\
                \alpha &\mapsto \iota(\alpha)(\bar{g})
\end{align*}
and the same formula defines a bijection $\hat{T}(k) \cong \Hom(X_*(T),k^\times)$.
The next lemma will place restrictions on the centralizer of $\bar{\chi}$ under action of the Weyl group $W_G$, which will help us verify the hypotheses of Proposition \ref{parahoric invariants iso} in cases of interest.

\begin{lemma} \label{weyl stabilizer}
The following inclusions hold:
\begin{enumerate}
    \item $\Stab_{W_{\hat{G}}}(\bar{g}) \leq W_{\hat{L}}$ \label{gbar stab}
    \item $\Stab_{W_G} \bar{\chi} \leq W_{L}$ \label{char stab}
\end{enumerate}
with equalities if $M_{\bar{g}}$ is a Levi subgroup.
\end{lemma}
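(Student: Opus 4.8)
\textbf{Proof plan for Lemma \ref{weyl stabilizer}.}

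The plan is to exploit the fact that the root datum of $\hat{L}$ (and dually of $L$) is the sub-root-datum of that of $\hat{G}$ cut out precisely by the roots on which the relevant element (or character) is trivial. Recall from Construction \ref{parahoric construction} that $\hat{L} = Z_{\hat{G}_k}(\hat{S})$ where $\hat{S} = Z(M_{\bar{g}})^\circ$, so the roots of $\hat{L}$ with respect to $\hat{T}$ are exactly those $\alpha \in \Phi(\hat{G},\hat{T})$ which are trivial on $\hat{S}$, and $W_{\hat{L}}$ is the subgroup of $W_{\hat{G}}$ generated by the corresponding reflections $s_\alpha$. For part \ref{gbar stab}, first I would observe that if $w \in W_{\hat{G}}$ fixes $\bar{g}$, then $w$ normalizes $M_{\bar{g}} = Z_{\hat{G}_k}(\bar{g})$ and hence normalizes its connected center $\hat{S}$, so $w$ lies in $N_{W_{\hat{G}}}(W_{\hat{S}})$ in the appropriate sense. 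More directly: $w$ fixing $\bar{g}$ implies $w$ fixes $\hat{S}$ pointwise (since $\hat{S} \subset \hat{T}$ is determined by $\bar{g}$ and $w$ acts on $X_*(\hat{T})$ preserving $X_*(\hat{S})$, but one needs $w$ to act trivially on $\hat{S}$ itself). The cleanest route is: a semisimple element and its centralizer's connected center have the same centralizer torus data, and $w \in \Stab_{W_{\hat{G}}}(\bar{g})$ means $w$ comes from $N_{M_{\bar{g}}}(\hat{T})$, because $\Stab_{W_{\hat{G}}}(\bar g)$ is the Weyl group of $M_{\bar g}$ with respect to $\hat T$ — this is a standard fact (used already in the excerpt, cf. the discussion around \cite{MOweylgroup}). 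Since $\hat{L} \supseteq M_{\bar{g}}$, we get $W_{M_{\bar g}} \subseteq W_{\hat L}$, giving the inclusion. When $M_{\bar{g}}$ is itself a Levi subgroup, $\hat{L} = M_{\bar{g}}$ by the minimality in Construction \ref{parahoric construction}(3) (the Levi $Z_{\hat G_k}(Z(M_{\bar g})^\circ)$ containing $M_{\bar g}$ must equal $M_{\bar g}$ when the latter is already a Levi with that connected center), so the inclusion becomes an equality.

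For part \ref{char stab}, the strategy is to transport part \ref{gbar stab} across the isomorphism $\iota$. Under $\iota: X_*(T) \xrightarrow{\sim} X^*(\hat{T})$ and the induced identification $W_G \cong W_{\hat{G}}$ (which interchanges the actions on characters and cocharacters), the character $\bar{\chi}: X_*(T) \to k^\times$, $\alpha \mapsto \iota(\alpha)(\bar{g})$, corresponds under the bijection $\hat{T}(k) \cong \Hom(X_*(T), k^\times)$ to the element $\bar{g} \in \hat{T}(k)$. A Weyl element $w \in W_G$ stabilizes $\bar{\chi}$ if and only if the corresponding $\hat{w} \in W_{\hat{G}}$ stabilizes $\bar{g}$: indeed $(w\cdot\bar\chi)(\alpha) = \bar\chi(w^{-1}\alpha) = \iota(w^{-1}\alpha)(\bar g) = (\hat w \iota(\alpha))(\bar g) = \iota(\alpha)(\hat w^{-1}\bar g)$, so $w\cdot\bar\chi = \bar\chi$ iff $\hat w^{-1}\bar g = \bar g$. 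Hence $\Stab_{W_G}(\bar{\chi}) \cong \Stab_{W_{\hat{G}}}(\bar{g})$ compatibly with the inclusions $W_L \hookrightarrow W_G$ and $W_{\hat{L}} \hookrightarrow W_{\hat{G}}$ (since $L$ is dual to $\hat{L}$ via $\iota$, by Section \ref{dual group deltaq}), and part \ref{char stab} follows immediately from part \ref{gbar stab}, including the equality statement.

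I expect the main obstacle to be the careful bookkeeping in part \ref{gbar stab}: precisely justifying that $\Stab_{W_{\hat G}}(\bar g)$ equals the Weyl group of $M_{\bar g}$ relative to $\hat T$ (as opposed to merely containing it or being contained in it), and handling the possibility that $M_{\bar g}$ is disconnected. One should be slightly careful that $W_{M_{\bar g}}$ here means $N_{M_{\bar g}}(\hat T)/\hat T$ for the possibly-disconnected group $M_{\bar g}$, and that this indeed coincides with $\Stab_{W_{\hat G}}(\bar g)$ — a point that can be cited from the literature on centralizers of semisimple elements, or deduced from the fact that $M_{\bar g}$ contains $\hat T$ as a maximal torus together with a transversality/Bruhat argument. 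Everything else (the reduction when $M_{\bar g}$ is a Levi, the duality transport) is formal. I would allocate most of the write-up to pinning down this one fact cleanly, then state parts \ref{char stab} and the equalities as short corollaries of it.
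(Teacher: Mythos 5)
Your proposal is correct and follows essentially the same route as the paper: for the first inclusion, both arguments rest on the fact that elements of $\Stab_{W_{\hat{G}}}(\bar{g})$ have representatives in $N_{M_{\bar{g}}}(\hat{T})$, hence in $\hat{L} \supseteq M_{\bar{g}}$, with equality when $M_{\bar{g}}$ is a Levi because then $\hat{L} = M_{\bar{g}}$ (the paper packages this via Carter's description of centralizers of semisimple elements, which is exactly your ``standard fact''). The second inclusion is likewise obtained in the paper by transporting the first across the isomorphism $W_G \cong W_{\hat{G}}$ induced by $\iota$, matching $W_L$ with $W_{\hat{L}}$ and $\Stab_{W_G}(\bar{\chi})$ with $\Stab_{W_{\hat{G}}}(\bar{g})$, just as you do.
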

\begin{proof}
By \cite[Theorem 3.5.3]{carter1985finite}, the identity component of the centralizer of $\bar{g}$, $M_{\bar{g}}^{\circ}$, is generated by the maximal torus $\hat{T}$ together with the root groups $U_{\alpha}$ corresponding to roots $\alpha$ for which $\alpha(\bar{g}) = 1$, while the centralizer $M_{\bar{g}}$ is generated by $M_{\bar{g}}^\circ$ together with those Weyl elements of $W_{\hat{G}}$ which centralize $\bar{g}$.
Since $\hat{L} = Z_{\hat{G}_k}(Z(M_{\bar{g}})^\circ)$ always contains $M_{\bar{g}}$, \ref{gbar stab} therefore holds.
If $M_{\bar{g}} = \hat{L}$ is a Levi subgroup, then $M_{\bar{g}}^{\circ} = M_{\bar{g}}$ and those elements of $W_{\hat{G}}$ centralizing $\bar{g}$ lie in $M_{\bar{g}}(\bar{k})$, showing the reverse containment.

By \cite[14.8]{borelLAG} we may view $W_G$ and $W_{\hat{G}}$ as subgroups of $\Aut(X^*(T)) \cong \Aut(X_*(\hat{T}))$ generated by reflections corresponding to simple roots of $G$ (or simple coroots of $\hat{G}$).
Therefore, $\iota$ induces an isomorphism between $W_G$ and $W_{\hat{G}}$.
This isomorphism restricts to an isomorphism between $W_{L}$ and $W_{\hat{L}}$, and also between $\Stab_{W_G}(\bar{\chi})$ and $\Stab_{W_{\hat{G}}}(\bar{g})$.
Thus \ref{char stab} follows immediately from \ref{gbar stab}.
\end{proof}

We can extend the map $\bar{\chi}$ to a map $\mathcal{A} \to k$, defining a maximal ideal $\mathfrak{m}$ of $\mathcal{A}$.
From Lemma \ref{weyl stabilizer} and Proposition \ref{parahoric invariants iso} we deduce the following corollary.

\begin{corollary} \label{gbar parahoric invariants iso}
Let $N$ be a left $\mathcal{H}$-module, which is either a finite free $\CalO$-module or a finite dimensional $k$-vector space.
Let $\bar{g} \in \hat{T}(k)$ and form $\mathfrak{p}$ and $L$ as in Construction \ref{parahoric construction}.
 Let $\mathfrak{n}_0$ be the maximal ideal of $\mathcal{B} = \CalO[X_*(T)]^{W_L}$ corresponding to $\bar{g}$ and let $\mathfrak{q}$ denote the unique maximal ideal of $\mathcal{O}[X_*(T)]^{W_G}$ lying below $\mathfrak{n}_0$.
 Then multiplication by $[\mathfrak{g}]$ defines an isomorphism $[\mathfrak{g}]: ([\mathfrak{p}]N)_{\mathfrak{n}_0} \to ([\mathfrak{g}] N)_{\mathfrak{q}}$.
\end{corollary}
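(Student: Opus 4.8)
\textbf{Proof proposal for Corollary \ref{gbar parahoric invariants iso}.}

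The plan is to deduce this immediately from Proposition \ref{parahoric invariants iso}, whose hypothesis is precisely that the centralizer in $W_G$ of some (equivalently any) maximal ideal of $\mathcal{A} = \mathcal{O}[X_*(T)]$ above $\mathfrak{n}_0$ is contained in $W_L$. So the only thing to verify is that the maximal ideal $\mathfrak{n}_0$ of $\mathcal{B} = \mathcal{O}[X_*(T)]^{W_L}$ ``corresponding to $\bar{g}$'' satisfies this condition, and that $L$ built from $\bar{g}$ in Construction \ref{parahoric construction} is indeed the standard Levi attached to $\mathfrak{n}_0$ in the sense required by the proposition.

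First I would spell out what ``the maximal ideal corresponding to $\bar{g}$'' means: the element $\bar{g} \in \hat{T}(k)$ gives the character $\bar{\chi}: X_*(T) \to k^\times$, $\alpha \mapsto \iota(\alpha)(\bar{g})$, which extends $\mathcal{O}$-linearly to a surjection $\mathcal{A} \to k$ whose kernel is a maximal ideal $\mathfrak{m}$ with residue field $k$; then $\mathfrak{n}_0 := \mathfrak{m} \cap \mathcal{B}$. The Weyl group $W_G$ acts on $X_*(T)$ hence on $\mathcal{A}$, and the $W_G$-orbit of $\mathfrak{m}$ corresponds under the bijection $\mathrm{Hom}(X_*(T),k^\times) \cong \hat{T}(k)$ (via $\iota$) to the $W_{\hat{G}}$-orbit of $\bar{g}$, using that $\iota$ identifies $W_G$ with $W_{\hat{G}}$ as subgroups of $\mathrm{Aut}(X^*(T)) \cong \mathrm{Aut}(X_*(\hat{T}))$ (as recorded in the proof of Lemma \ref{weyl stabilizer}). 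Consequently the stabilizer $\mathrm{Stab}_{W_G}(\mathfrak{m})$ is carried isomorphically onto $\mathrm{Stab}_{W_{\hat{G}}}(\bar{g})$.

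Next, by Lemma \ref{weyl stabilizer} \ref{gbar stab} we have $\mathrm{Stab}_{W_{\hat{G}}}(\bar{g}) \leq W_{\hat{L}}$, where $\hat{L} = Z_{\hat{G}_k}(Z(M_{\bar{g}})^\circ)$ is the Levi subgroup from Construction \ref{parahoric construction}; and $L$ is by definition the dual standard Levi of $\hat{L}$ under $\iota$, so $\iota$ restricts to an isomorphism $W_L \cong W_{\hat{L}}$. Transporting back, $\mathrm{Stab}_{W_G}(\mathfrak{m}) \leq W_L$, which is exactly the hypothesis of Proposition \ref{parahoric invariants iso} for the maximal ideal $\mathfrak{n}_0$ and this Levi $L$. (I should note in passing that the ``some/any'' clause is harmless here because all maximal ideals of $\mathcal{A}$ above $\mathfrak{n}_0$ are $W_L$-conjugate, as observed just before the statement of Proposition \ref{parahoric invariants iso}, and conjugation does not change whether the stabilizer lies in $W_L$.) Applying Proposition \ref{parahoric invariants iso} then gives that multiplication by $[\mathfrak{g}]$ defines an isomorphism $([\mathfrak{p}]N)_{\mathfrak{n}_0} \xrightarrow{\sim} ([\mathfrak{g}]N)_{\mathfrak{q}}$, where $\mathfrak{q}$ is the unique maximal ideal of $\mathcal{O}[X_*(T)]^{W_G}$ below $\mathfrak{n}_0$, which is the claim.

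There is essentially no obstacle: the content is entirely in Lemma \ref{weyl stabilizer} and Proposition \ref{parahoric invariants iso}, and the corollary is a matter of matching up notation — checking that the parahoric $\mathfrak{p}$ and Levi $L$ produced by Construction \ref{parahoric construction} from $\bar{g}$ are the same data feeding into Proposition \ref{parahoric invariants iso}, and that the identification $\iota: W_G \cong W_{\hat{G}}$ translates the Weyl-stabilizer condition on $\mathfrak{m}$ into the one on $\bar{g}$. The mild point to be careful about is the (implicit) dependence of the construction of $\mathfrak{p}$ on having $\bar{g} \in \hat{T}(k)$ and on the fixed pinning identifying the root datum of $G$ with the dual of that of $\hat{G}$, but this is already set up in the paragraph preceding Construction \ref{parahoric construction}.
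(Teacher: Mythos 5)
Your proposal is correct and matches the paper's (implicit) argument: the corollary is deduced exactly by combining Lemma \ref{weyl stabilizer} (the paper's part \ref{char stab} already records $\Stab_{W_G}\bar{\chi} \leq W_L$, which is the same transport-via-$\iota$ computation you spell out from part \ref{gbar stab}) with Proposition \ref{parahoric invariants iso}. No gaps; the bookkeeping you do about which maximal ideal of $\mathcal{A}$ lies over $\mathfrak{n}_0$ and the $W_L$-conjugacy of such ideals is exactly what the paper leaves to the reader.
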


\subsection{Jacquet module computations} \label{sec jacquet computations}

Continue with the setup of Section \ref{parahoric invariants subsection}.
This required fixing some $\bar{g} \in \hat{T}(k)$, and it allowed us to define the data of Construction \ref{parahoric construction}.
We saw already that the maximal ideals of $\mathcal{O}[T/(T \cap \pone)]^{W_L}$ and $\mathcal{O}[T/(T \cap \mathfrak{p})]^{W_L}$ are naturally in bijection.
We will let $\mathfrak{n}_1$ (respectively $\mathfrak{n}_0$) denote the maximal ideal of $\mathcal{O}[T/(T \cap \pone)]^{W_L}$ (respectively $\mathcal{O}[T/T(\OFv)]^{W_L}$) corresponding to $\bar{\chi}$.

If $\sigma$ is an admissible $\Qpbar[L(F_v)]$-module, we will let $i_P^G \sigma$ denote the usual normalised induction of $\sigma$, so that left translation by $p \in P(F_v)$ acts via $\delta_P(p)^{\frac{1}{2}} \sigma(p)$.
If $\rho$ is a $\Qpbar[G(F_v)]$-module then let $r_N(\rho)$ denote the normalised Jacquet module of $\rho$, viewed as a representation of $L(F_v)$.
It will be of use to compute Jacquet modules of parabolically induced representations.
A formula for this is given by \cite[2.12]{BZ77} and we will combine this with \cite[2.11(a)]{BZ77} to understand the indexing of the stated filtration.
The following lemma will help us understand invariants of smooth representations of $G(F_v)$ in terms of the invariants of the Jacquet module.

\begin{lemma} \label{jacquet module invariants}
Let $\pi$ be a smooth $\Qpbar[G(F_v)]$-module.
The quotient map gives isomorphisms
\begin{align*}
   \pi^\mathfrak{p} &\xrightarrow{\sim} r_N(\pi)^{(\mathfrak{p} \cap L)} \\
    \pi^{\mathfrak{p}_1} &\xrightarrow{\sim} r_N(\pi)^{({\mathfrak{p}_1} \cap L)} .
\end{align*}
\end{lemma}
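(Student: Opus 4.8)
The plan is to deduce the lemma from the standard compatibility between the Jacquet functor and the operation of taking invariants under an open compact subgroup that admits an Iwahori factorization, applied to the two compact open subgroups $\mathfrak{p}$ and $\mathfrak{p}_1$.

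First I would reduce to the unnormalized Jacquet module. Recall that $r_N(\pi)$ is the quotient $\pi/\pi(N)$, where $\pi(N)$ is the $\Qpbar$-span of $\{\pi(n)w-w : n\in N(F_v),\ w\in\pi\}$, with the $L(F_v)$-action twisted by $\delta_P^{-1/2}$; write $\mathrm{pr}\colon \pi\to \pi_N := \pi/\pi(N)$ for the canonical $L(F_v)$-equivariant projection on the untwisted module. Since $\delta_P$ is trivial on every compact subgroup of $L(F_v)$, the subspaces $r_N(\pi)^{\mathfrak{p}\cap L}$ and $r_N(\pi)^{\mathfrak{p}_1\cap L}$ coincide with $(\pi_N)^{\mathfrak{p}\cap L}$ and $(\pi_N)^{\mathfrak{p}_1\cap L}$, so it suffices to show $\mathrm{pr}$ induces isomorphisms $\pi^{\mathfrak{p}}\xrightarrow{\sim}(\pi_N)^{\mathfrak{p}\cap L}$ and $\pi^{\mathfrak{p}_1}\xrightarrow{\sim}(\pi_N)^{\mathfrak{p}_1\cap L}$.

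Next I would record the Iwahori factorizations. Let $\bar P = L\bar N$ be the parabolic opposite to $P=LN$, and set $N_0 = N(\OFv)$, $\bar N_0 = \bar N(F_v)\cap\ker(G(\OFv)\to G(k(v)))$, $L_0 = L(\OFv)$ and $L_1 = \ker(L(\OFv)\to L(k(v))\to\Delta)$. Using that $G$ is smooth over $\OFv$ and that $\bar N\cap P$ is trivial, one checks directly from the definitions of $\mathfrak{p}$ and $\mathfrak{p}_1$ via reduction modulo $\varpi_v$ that
\[
\mathfrak{p} = \bar N_0 L_0 N_0,\qquad \mathfrak{p}_1 = \bar N_0 L_1 N_0,
\]
with $\mathfrak{p}\cap N = \mathfrak{p}_1\cap N = N_0$, $\mathfrak{p}\cap\bar N = \mathfrak{p}_1\cap\bar N = \bar N_0$, $\mathfrak{p}\cap L = L_0$ and $\mathfrak{p}_1\cap L = L_1$; moreover $L_0$ and $L_1$ both normalize $N_0$ and $\bar N_0$. (The first equality is the usual Iwahori factorization of a parahoric subgroup; the second follows since $N_0\subseteq\mathfrak{p}_1$ and $\bar N_0\subseteq\mathfrak{p}_1$ automatically, as $N_0$ dies in $L(k(v))$ and $\bar N_0$ dies modulo $\varpi_v$.)

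Finally I would invoke the general fact: for a smooth $\Qpbar[G(F_v)]$-module $\pi$ and an open compact subgroup $J = \bar N_0 M_0 N_0$ with an Iwahori factorization relative to $(L,N,\bar N)$, where $M_0\subseteq L(F_v)$ normalizes $N_0$ and $\bar N_0$, the projection $\mathrm{pr}$ restricts to an isomorphism $\pi^J\xrightarrow{\sim}(\pi_N)^{M_0}$. This is the standard interaction of Jacquet modules with Iwahori-factorized compact open subgroups (Casselman's lemma on $N$-invariants; compare the filtration formula of \cite{BZ77} and the positive-element machinery of \cite{bk}). Its proof has two parts: injectivity amounts to $\pi^J\cap\pi(N)=0$, which one obtains by taking $v\in\pi^J\cap\pi(N)$, enlarging a compact open $N'\subseteq N$ with $e_{N'}v=0$ so that $N_0\subseteq N'$, and conjugating by high powers of a strictly $(P,J)$-positive element $\zeta\in Z(L)(F_v)$ — such an element exists, exactly as produced in the proof of Proposition \ref{T1 subalgebra} — to force $v=0$; surjectivity proceeds by lifting $\bar v\in(\pi_N)^{M_0}$ to a vector fixed by a small congruence subgroup, averaging over $M_0$ and over $N_0$ (legitimate since $M_0$ normalizes $N_0$), and then using $\zeta$ to inflate the level of $\bar N$-fixity up to $\bar N_0$ without altering the image in $\pi_N$. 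Applying this with $(J,M_0)=(\mathfrak{p},L_0)$ and $(J,M_0)=(\mathfrak{p}_1,L_1)$ yields the two asserted isomorphisms. The one genuinely delicate step is the surjectivity half of this general fact — lifting an $M_0$-invariant class of $\pi_N$ to a $J$-invariant vector of $\pi$ — which is precisely where the strictly positive central element is needed; handling $\mathfrak{p}_1$ introduces no extra difficulty beyond carrying the proper subgroup $L_1\subsetneq L_0$ through the factorization, which is harmless because $L_1$ still normalizes $N_0$ and $\bar N_0$.
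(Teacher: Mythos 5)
Your overall route is the paper's route: Iwahori factorizations of $\mathfrak{p}$ and $\mathfrak{p}_1$, a strongly $(P,\mathfrak{p}_1)$-positive element as produced in the proof of Proposition \ref{T1 subalgebra}, and the standard Casselman/Bushnell--Kutzko machinery (the paper simply refers to the proof of \cite[Proposition 3.2(4)]{adequacy} for this). The reduction to the unnormalized Jacquet module and the factorizations $\mathfrak{p}=\bar N_0L_0N_0$, $\mathfrak{p}_1=\bar N_0L_1N_0$ are fine. However, the ``general fact'' you invoke is false in the generality you state it, and your injectivity sketch has a genuine gap. An Iwahori factorization together with a strongly positive central element does \emph{not} force $\pi^J\cap\pi(N)=0$ for a merely smooth $\pi$: principal congruence subgroups $K_m$ also have Iwahori factorizations and admit strongly positive elements, yet for a supercuspidal $\pi$ with $\pi^{K_m}\neq 0$ one has $\pi_N=0$, so the map $\pi^{K_m}\to(\pi_N)^{K_m\cap L}$ is wildly non-injective. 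Concretely, your step ``conjugate by high powers of $\zeta$ to force $v=0$'' only produces the following: if $v\in\pi^J\cap\pi(N)$ and $e_{N'}v=0$ with $N_0\subseteq N'$, then for $m\gg 0$ one gets $e_{N''}\bigl(\pi(\zeta^m)v\bigr)=0$ for some $N''\subseteq N_0$, whence $e_{N_0}\pi(\zeta^m)v=0$ and therefore $[J\zeta J]^m v=0$ (using the coset decomposition $J\zeta^m J=\bigsqcup_n n\zeta^m J$, $n\in N_0/\zeta^mN_0\zeta^{-m}$). This kills $v$ only if $[J\zeta J]$ acts injectively on $\pi^J$, and that is exactly the extra input your sketch never supplies.

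What makes the lemma true for $J=\mathfrak{p},\mathfrak{p}_1$ and for all smooth $\pi$ --- and what the paper's appeal to Proposition \ref{T1 subalgebra} is really pointing at --- is not just the existence of $\zeta=\lambda(\varpi_v)$ but the invertibility of $[\mathfrak{p}_1\zeta\mathfrak{p}_1]$ (and likewise $[\mathfrak{p}\zeta\mathfrak{p}]$) in $\mathcal{H}(G,\mathfrak{p}_1)$ (resp.\ $\mathcal{H}(G,\mathfrak{p})$), proved there via Vign\'eras's pro-$p$ Iwahori Hecke algebra. With that in hand, injectivity follows from the computation above, and surjectivity also closes cleanly: lift $\bar v$, average over $J\cap L$ and $N_0$, translate by $\zeta^m$ to gain $\bar N_0$-invariance, apply $e_J$, and remove the resulting twist using $[J\zeta J]^{-m}$ together with the compatibility $\mathrm{pr}\bigl([J\zeta J]w\bigr)=c\,\zeta\cdot\mathrm{pr}(w)$, $c\neq 0$, for $w\in\pi^J$. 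So your identification of surjectivity as the one delicate step is misplaced: for non-admissible $\pi$ it is injectivity that needs the Hecke-invertibility statement, and citing ``Casselman's lemma'' (which in this generality gives only surjectivity) does not cover it.
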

\begin{proof}
We saw already in the proof of Proposition \ref{T1 subalgebra} that there exists a strongly $(P,\mathfrak{p}_1)$-positive element of $L$.
The proof then goes through exactly as in the proof of \cite[Proposition 3.2 (4)]{adequacy}.
\end{proof}

\begin{lemma} \label{principal series loc inv}
Let $\chi: T(F_v) \to \Qpbar^\times$ be a smooth character of $T$ and $\pi = i_B^G \chi$.
The space of localized invariants $(\pi^\pone)_{\mathfrak{n}_1}$ is at most one-dimensional.
  If $(\pi^\pone)_{\mathfrak{n}_1} \neq 0$ then
  $\chi$ is valued in $\Zpbar^\times$, there exists $w \in W_G$ such that the action of $\mathcal{O}[T/(T \cap \pone)]^{W_L}$ on $(\pi^\pone)_{\mathfrak{n}_1}$ is through $w \chi$, and this $w \chi$ lifts $\bar{\chi}$.
\end{lemma}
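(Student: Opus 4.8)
\textbf{Proof plan for Lemma \ref{principal series loc inv}.}

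The strategy is to reduce everything to the Iwahori level and apply the structure theory already developed. First I would invoke Lemma \ref{jacquet module invariants} to replace $\pi^{\pone}$ by $r_N(\pi)^{(\pone \cap L)}$, so that we are studying invariants of the Jacquet module of $\pi = i_B^G \chi$ as an $L(F_v)$-representation. By the Bernstein--Zelevinsky geometric lemma (\cite[2.12]{BZ77}), $r_N(i_B^G \chi)$ admits a filtration whose graded pieces are the representations $i_{B \cap L}^L(w\chi)$ for $w$ ranging over the set of minimal-length representatives of $W_L \backslash W_G / W_T$ (here $W_T$ trivial), and using \cite[2.11(a)]{BZ77} one identifies the $\mathcal{O}[T/(T\cap\pone)]$-action on each graded piece: the subalgebra $\mathcal{A}_1 = \mathcal{O}[T/(T\cap\pone)]$ (sitting inside $\mathcal{H}(G,\pone)$ via $\mathcal{T}_1 \circ \mathcal{S}_1^{-1}$) acts on the $w$-th graded piece through the character determined by $w\chi$, up to the usual modulus twist which is already built into the normalisation. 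So I would first show that the localisation at $\mathfrak{n}_1$ picks out precisely those $w$ for which $w\chi$ reduces to $\bar\chi$ in the appropriate sense, i.e. for which the central character of $i_{B\cap L}^L(w\chi)$ restricted to the relevant quotient of $T$ lies over the maximal ideal $\mathfrak{n}_1$.

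Next, I would use Proposition \ref{satake parabolic induction} applied to the Levi $L$: the space of $(\pone \cap L)$-invariants of $i_{B\cap L}^L(w\chi)$ is at most one-dimensional, nonzero only when $\restr{w\chi}{T \cap \pone}$ is trivial, in which case the Hecke action of $\mathcal{H}(L, \pone \cap L)$, and hence of its abelian subalgebra $\mathcal{H}(T, T\cap\pone)^{W_L}$ via $\mathcal{S}_1$, is through $(w\chi)(\mathcal{S}_1(-))$. Combining this across the filtration: the localisation $(r_N(\pi)^{(\pone\cap L)})_{\mathfrak{n}_1}$ receives contributions only from those $w$ with $(w\chi)\circ\mathcal{S}_1^{-1}$ having kernel contained in $\mathfrak{n}_1$; the characters $w\chi$ and $w'\chi$ for $w, w'$ in distinct $W_L$-orbits give distinct maximal ideals of $\mathcal{O}[T/(T\cap\pone)]^{W_L}$ (this is the standard fact, used repeatedly above, that $W_L$-orbits of characters correspond to maximal ideals of the $W_L$-invariants), so at most one $W_L$-orbit can contribute, and within that orbit the invariants of a single $i_{B\cap L}^L(w\chi)$ are at most one-dimensional by the Levi version of Proposition \ref{satake parabolic induction}; a short argument with the filtration (the graded pieces within one $W_L$-orbit all give the \emph{same} maximal ideal but the localisation is exact, so one must check no two of them simultaneously contribute — here one uses that the relevant $w\chi$ are genuinely $W_L$-conjugate, hence isomorphic as $\mathcal{A}_1$-modules after the twist, and an argument that the filtration has a unique graded piece surviving) shows $(\pi^{\pone})_{\mathfrak{n}_1}$ is at most one-dimensional. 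This also pins down that the action of $\mathcal{O}[T/(T\cap\pone)]^{W_L}$ is through $w\chi$ for the unique relevant $w$, and triviality of $\restr{w\chi}{T\cap\pone}$ forces $\restr{w\chi}{T(\OFv)}$ to factor through the finite quotient $T(\OFv)/(T\cap\pone) = $ (a piece of) $\Delta$; since the residual identification sends $\bar\chi$ to the character $\alpha \mapsto \iota(\alpha)(\bar g)$, this is exactly the statement that $w\chi$ lifts $\bar\chi$.

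Finally, for the integrality claim that $\chi$ is valued in $\Zpbar^\times$: once we know $w\chi$ lifts $\bar\chi$ it is a character $T(F_v) \to \Qpbar^\times$ which is trivial on $T\cap\pone$ and whose reduction is $\bar\chi$; since $T(F_v)/(T\cap\pone)$ is an extension of $X_*(T)$ by a finite group, and the action on a nonzero invariant vector is through the Hecke algebra with $\mathcal{O}$-coefficients localised at a maximal ideal with residue field $k$, the eigenvalues of the $[\pone \lambda(\varpi_v)\pone]$ lie in $\mathcal{O}$ (indeed in $\Zpbar$); chasing through the Satake normalisation $\lambda \mapsto q_v^{-\langle\rho,\lambda\rangle}[\pone\lambda(\varpi_v)\pone]$ and using $q_v \equiv 1 \bmod p$ (so $q_v \in \mathcal{O}^\times$) shows $w\chi(\lambda(\varpi_v)) \in \Zpbar^\times$ for all $\lambda$, hence $w\chi$, and therefore $\chi$ itself (as $W_G$ permutes the values), is $\Zpbar^\times$-valued. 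I expect the main obstacle to be the bookkeeping in the middle step: correctly matching the Bernstein--Zelevinsky filtration indices with the $W_L$-orbit decomposition of characters and verifying that localisation at $\mathfrak{n}_1$ isolates a \emph{single} one-dimensional graded piece rather than several — this requires care about which $w\chi$ within a $W_L$-orbit actually have nonzero $(\pone\cap L)$-invariants, and is where the precise form of Proposition \ref{satake parabolic induction} (triviality of $\restr{w\chi}{\mathfrak{t}_1}$ as a necessary condition) does the real work.
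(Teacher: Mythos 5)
Your skeleton matches the paper's: pass to the Jacquet module via Lemma \ref{jacquet module invariants}, use \cite[2.12, 2.11(a)]{BZ77} to write (the semisimplification of) $r_N(\pi)$ as $\bigoplus_{w \in W_L\backslash W_G} i_{B\cap L}^L w\chi$, and apply Proposition \ref{satake parabolic induction} to the Levi $L$ so that each summand contributes an at most one-dimensional space on which $\mathcal{O}[T/(T\cap\pone)]^{W_L}$ acts through $w\chi$. (The paper simply notes that taking $\pone\cap L$-invariants is exact, so one may work with the semisimplification outright; this disposes of your worries about filtration bookkeeping.) The integrality and lifting claims are also handled in the paper by a direct argument: if some value of $w\chi$ has positive valuation, a minimal polynomial with coefficients in $\lambda\mathcal{O}$ kills the localization, so non-integral $w\chi$ cannot contribute; a Hensel factorization argument then shows a contributing integral $w\chi$ must reduce to $\bar\chi$. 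Your version of this step is stated in the wrong order (``once we know $w\chi$ lifts $\bar\chi$'' presupposes integrality), but it is repairable.

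The genuine gap is in the one-dimensionality. You claim ``distinct $W_L$-orbits give distinct maximal ideals of $\mathcal{O}[T/(T\cap\pone)]^{W_L}$, so at most one $W_L$-orbit can contribute,'' but the localization is at $\mathfrak{n}_1$, a maximal ideal with residue field $k$: what decides whether a summand survives is the reduction of $w\chi$ modulo $m_{\Zpbar}$, not its characteristic-zero $W_L$-orbit. Two cosets $W_L w \neq W_L w'$ can perfectly well have $w\chi$ and $w'\chi$ in distinct (or even equal) $W_L$-orbits whose reductions are both $W_L$-conjugate to $\bar{\chi}$ --- e.g.\ whenever $\chi$ or its reduction has a stabilizer element $s\in W_G$ with $wsw^{-1}\notin W_L$ --- and then two summands would survive, giving a two-dimensional localization. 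What rules this out is precisely Lemma \ref{weyl stabilizer}: because $L$ is built from $\bar{g}$ via Construction \ref{parahoric construction}, one has $\Stab_{W_G}(\bar{\chi}) \leq W_L$, and once each contributing $w\chi$ is shown (after adjusting by $W_L$) to lift $\bar{\chi}$, any two such $w$ differ by an element of $\Stab_{W_G}(\overline{\chi}\text{'s }W_G\text{-translate})$ and hence lie in the same coset $W_L w$. Your proposal never invokes this stabilizer containment, and the step you flag as ``the main obstacle'' is exactly where it is needed; without it the at-most-one-dimensional claim (and hence the uniqueness of the acting character $w\chi$) does not follow for an arbitrary Levi, only for the $L$ attached to $\bar{g}$.
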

\begin{proof}
By \cite[2.12 and 2.11(a)]{BZ77}, we can write the semisimplification of the Jacquet module $r_N(\pi)$ as
$$\bigoplus_{w \in W_L \backslash W_G} i_{B \cap L}^L w \chi.$$
Since taking invariants under a compact open subgroup is an exact functor, it suffices to compute the dimension of these localised invariants in the semisimplification of $r_N(\pi)$.
By Lemma \ref{jacquet module invariants} then we just need to compute
$$(i_{B \cap L}^L w \chi)^{(\pone \cap L)}$$
for $w \in W_G$.
Viewed as a module over $\mathcal{O}[T/(T \cap \pone)]^{W_L}$, this is exactly $(w \chi)^{(T \cap \pone)}$ by Proposition \ref{satake parabolic induction}.
Now suppose that $(\pi^\pone)_{\mathfrak{n}_1} \neq 0$. 
We claim there must be some $w \chi$ valued in $\Zpbar^\times$ and lifting $\bar{\chi}$. 

If $((w \chi)^{(T \cap \mathfrak{p}_1)})_{\mathfrak{n}_1} \neq 0$ but $w \chi$ is not integral then there exists $t \in T(F_v)$ with $\alpha = w \chi(t) \equiv 0 \mod m_{\Zpbar}$.
The element $\alpha$ is integral over $\mathcal{O}$ and hence satisfies $\alpha^n + c_{n-1} \alpha^{n-1} + \cdots + c_0 = 0$ for some $c_i \in \mathcal{O}$.
We can take these $c_i \in \lambda \mathcal{O}$, as the coefficients in the minimal polynomial are sums and products of the Galois conjugates of $\alpha$, which will all have positive $p$-adic valuation.
Then as elements of $\mathcal{O}[T/(T \cap \pone)]^{W_L}$, $t^n + c_{n-1} t^{n-1} + \cdots + c_0 \equiv t^n \not\equiv 0 \mod \mathfrak{n}_1$, so the localisation $((w \chi)^{(T \cap \mathfrak{p}_1)})_{\mathfrak{n}_1} = 0$.

We must then have that $w \chi$ lifts $\bar{\chi}$.
Indeed, let $t \in T(F_v)$ with $\alpha = w \chi(t) \in \Zpbar^\times$ and minimal polynomial over $\mathcal{O}$ given by $f(X) \in \mathcal{O}[X]$.
We must have $f(t) - f(\alpha) \in \mathfrak{n}_1$, in order for the localisation to be non-zero.
Thus $f(t) \equiv 0 \mod \mathfrak{n}_1$.
Let $\bar{\beta} \in k^\times$ denote the image of $t$ under $\bar{\chi}$ and let $\bar{f}(X)$ be the reduction of $f(X)$ modulo $(\lambda)$.
We must have $\bar{f}(\bar{\beta}) = 0$.
Writing $\bar{f}(X) = \bar{g}(X) \bar{h}(X)$, where $\bar{g}(X) = (X- \bar{\beta})^s$ and $\bar{g}(X),\bar{h}(X) \in k[X]$ are coprime, if $\bar{h}(X)$ was not equal to $1$ then we could lift this factorization to a non-trivial factorisation of $f(X)$ in $\mathcal{O}[X]$ by Hensel's lemma. So we must have that $\bar{f}(X) = (X-\bar{\beta})^n$ and $\alpha$ lifts $\bar{\beta}$ proving the claim.

Note that there is at most one such coset $W_L w$ across $W_L \backslash W_G$ by Lemma \ref{weyl stabilizer}.
Moreover, we can find a choice of $w$ for which $w \chi$ actually lifts $\bar{\chi}$, viewing both as characters of $\CalO[T/(T \cap \pone)]$, by going up.
\end{proof}
In the same way, we can prove the following lemma.

\begin{lemma} 
Let $\chi: T(F_v) \to \Qpbar^\times$ be a smooth character of $T$ and $\pi = i_B^G \chi$.
The space of localized invariants  $(\pi^\mathfrak{p})_{\mathfrak{n}_0}$ is at most one-dimensional.
  If $(\pi^\mathfrak{p})_{\mathfrak{n}_0} \neq 0$ then
  $\chi$ is valued in $\Zpbar^\times$, there exists $w \in W_G$ such that
  the action of $\mathcal{O}[T/(T \cap \mathfrak{p})]^{W_L}$ on $(\pi^\mathfrak{p})_{\mathfrak{n}_0}$ is through $w \chi$, and this $w \chi$ lifts $\bar{\chi}$.
\end{lemma}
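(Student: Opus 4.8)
The plan is to follow the proof of Lemma \ref{principal series loc inv} essentially verbatim, replacing $\pone$ by $\mathfrak{p}$ and $\mathfrak{n}_1$ by $\mathfrak{n}_0$ throughout. First I would invoke Lemma \ref{jacquet module invariants} to identify $\pi^\mathfrak{p}$ with $r_N(\pi)^{(\mathfrak{p}\cap L)}$, compatibly with the action of the parahoric Hecke algebra and hence of its abelian subalgebra $\mathcal{O}[T/(T\cap\mathfrak{p})]^{W_L}$. Since $(-)^{(\mathfrak{p}\cap L)}$ is an exact functor, it suffices to work with the semisimplification of the Jacquet module, which by \cite[2.12 and 2.11(a)]{BZ77} is $\bigoplus_{w\in W_L\backslash W_G} i_{B\cap L}^L(w\chi)$.

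Next I would apply Proposition \ref{satake parabolic induction} to the Levi subgroup $L$, with the group ``$\Delta$'' there taken to be trivial, so that the relevant compact open subgroup $K_1$ of $L$ is exactly $\mathfrak{p}\cap L = L(\OFv)$. This shows that each $(i_{B\cap L}^L(w\chi))^{(\mathfrak{p}\cap L)}$ is at most one-dimensional and, viewed as a module over $\mathcal{O}[T/(T\cap\mathfrak{p})]^{W_L}$, is identified with $(w\chi)^{(T\cap\mathfrak{p})}$, on which the algebra acts through $w\chi$. Summing over the finitely many cosets in $W_L\backslash W_G$ and using Lemma \ref{weyl stabilizer} (which forces at most one coset $W_L w$ to survive localization at $\mathfrak{n}_0$) yields that $(\pi^\mathfrak{p})_{\mathfrak{n}_0}$ is at most one-dimensional, with the asserted action.

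Finally, to pin down $w\chi$ I would repeat the two integrality arguments of Lemma \ref{principal series loc inv}: if $((w\chi)^{(T\cap\mathfrak{p})})_{\mathfrak{n}_0}\neq 0$ but $w\chi$ is not $\Zpbar^\times$-valued, choose $t\in T(F_v)$ with $w\chi(t)\equiv 0 \bmod m_{\Zpbar}$; its minimal polynomial over $\mathcal{O}$ has all non-leading coefficients in $\lambda\mathcal{O}$, so $t^n\equiv 0\bmod\mathfrak{n}_0$, contradicting that $t^n$ is a unit in $\mathcal{O}[T/(T\cap\mathfrak{p})]^{W_L}$. For integral $w\chi$, comparing the minimal polynomial of $w\chi(t)$ with $\bar\chi(t)$ and invoking Hensel's lemma shows $w\chi$ reduces to $\bar\chi$, and a ``going up'' argument produces a $w$ for which $w\chi$ genuinely lifts $\bar\chi$ as a character of $\mathcal{O}[T/(T\cap\mathfrak{p})]$. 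There is no genuinely new obstacle here; the only points worth double-checking are that Proposition \ref{satake parabolic induction} is being applied with trivial $\Delta$ (so that $\mathfrak{p}\cap L$ is the hyperspecial maximal compact of $L$) and that Lemma \ref{jacquet module invariants}, whose proof requires a strongly $(P,\mathfrak{p})$-positive element of $L$, is available in this case as well — both of which have already been established.
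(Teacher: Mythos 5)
Your proposal is correct and is exactly the argument the paper intends: the paper proves this lemma by the remark ``In the same way'' following Lemma \ref{principal series loc inv}, i.e.\ by running that proof with $\pone$, $\mathfrak{n}_1$ replaced by $\mathfrak{p}$, $\mathfrak{n}_0$, using Lemma \ref{jacquet module invariants}, the Bernstein--Zelevinsky filtration, Proposition \ref{satake parabolic induction} with trivial $\Delta$, Lemma \ref{weyl stabilizer}, and the same integrality/Hensel argument. Your two flagged checkpoints (trivial $\Delta$ and the strongly positive element needed for the Jacquet-module invariants) are indeed the only points to verify, and both are already established in the paper.
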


\subsection{Computations in the case $G = \GSp_4$} \label{gsp4 local}

We will continue with notation as in the previous section, temporarily suspending our assumption that $q_v \equiv 1 \mod p$, and now restricting to the case where $G = \GSp_4$ and $F_v$ is characteristic zero.
There is an isomorphism $\hat{G} \cong \GSp_4$; in everything that follows we will identify $\hat{G}$ with $\GSp_4$ via the isomorphism of root data given by \cite[(2.13)]{roberts2007local}.
A local Langlands correspondence for $\GSp_4$ has been established in \cite[Main Theorem]{gsp4llc}, and we will work with $\Qpbar$ coefficients by fixing an isomorphism $\iota: \mathbb{C} \xrightarrow{\sim} \Qpbar$ compatible with our choice of $q_v^{\frac{1}{2}}$.
As in \cite[2.3]{surfaces}, we will let $\rec_{\GT,p}$ denote the local Langlands map sending the equivalence class of a smooth irreducible $\Qpbar$-valued representation of $\GSp_4(F_v)$ to a Weil--Deligne representation of $W_{F_v}$ valued in $\GSp_4(\Qpbar$).

Suppose once again that $q_v \equiv 1 \mod p$ now.
The goal of this section is to show that if $\pi$ is an irreducible admissible representation of $\GSp_4(F_v)$ for which a certain space of localized invariants are non-zero, then $\rec_{\GT,p}(\pi) = (V_\pi,N_\pi)$ is unramified.
We will make this explicit via the description given in \cite[Table A.7]{roberts2007local}, where we will need to check that $N_\pi=0$.
This description matches up, as stated directly after \cite[Proposition 13.1]{thetacorr}.
The following propositions are similar in statement and allow us to conclude $N_\pi = 0$ in cases where the localised $\mathfrak{p}$ or $\pone$ invariants are non-zero.

\begin{proposition} \label{gsp4 p invariants}
Let $\pi$ be an admissible irreducible $\Qpbar[G(F_v)]$-module.
Suppose that $(\pi^{\mathfrak{p}})_{\mathfrak{n}_0} \neq 0$.
Then $\pi$ is a subquotient of a parabolically induced representation $i_B^G \chi$ for some unramified smooth character $\chi: T(F_v) \to \Zpbar^\times$.
The characters through which $\mathcal{O}[T/T(\OFv)]^{W_L}$ act on $\pi^\mathfrak{p}$ are $W_G$-conjugates of $\chi$ and there exists $w \in W_G$ such that $w \chi$ lifts $\bar{\chi}$.
The localized invariants $(\pi^{\mathfrak{p}})_{\mathfrak{n}_0}$ are 1-dimensional and the action of $\mathcal{O}[T/T(\OFv)]^{W_L}$ is through $w \chi$.
Finally, if $\rec_{\GT,p}(\pi) = (V_\pi,N_\pi)$ is the Weil--Deligne representation associated to $\pi$ under the local Langlands correspondence for $\GSp_4$ then $N_\pi = 0$ and there is an isomorphism of $\mathcal{O}[T/T(\OFv)]^{W_G}$-modules $(\pi^\mathfrak{p})_{\mathfrak{n}_0} \to \pi^\mathfrak{g}$.
\end{proposition}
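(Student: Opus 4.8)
\textbf{Proof plan for Proposition \ref{gsp4 p invariants}.}

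The plan is to combine the parahoric-invariants machinery of Sections \ref{parahoric invariants subsection} and \ref{sec jacquet computations} with the classification of irreducible admissible representations of $\GSp_4(F_v)$ and their local Langlands parameters. First I would observe that since $(\pi^{\mathfrak{p}})_{\mathfrak{n}_0}\neq 0$, in particular $\pi^{\mathfrak{b}}\neq 0$ (as $\mathfrak{b}\subset\mathfrak{p}$), so $\pi$ is an Iwahori-spherical representation and hence a subquotient of $i_B^G\chi$ for some smooth character $\chi\colon T(F_v)\to\Qpbar^\times$; this is the standard Borel--Casselman/Bernstein fact. Then $\pi^{\mathfrak{p}}$ is a subquotient of $(i_B^G\chi)^{\mathfrak{p}}$ as a module over $\mathcal{O}[T/T(\OFv)]^{W_L}$, and applying Lemma \ref{sec jacquet computations} (the unnumbered $\mathfrak{p}$-analogue of Lemma \ref{principal series loc inv}, proved just after it) to $i_B^G\chi$ shows that localizing at $\mathfrak{n}_0$ forces $\chi$ to be valued in $\Zpbar^\times$ (hence unramified after twisting, and one arranges $\chi$ itself unramified by absorbing the ramified part, which is trivial on $T(\OFv)$), that the eigenvalue characters are $W_G$-conjugates of $\chi$, that some $w\chi$ lifts $\bar\chi$, and that the localized space is at most one-dimensional. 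One-dimensionality of $(\pi^{\mathfrak p})_{\mathfrak n_0}$ together with $\pi^{\mathfrak p}\neq 0$ being sandwiched inside the corresponding space for $i_B^G\chi$ then pins down the action as being through $w\chi$.

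Next I would establish $N_\pi=0$. Here I would invoke Corollary \ref{gbar parahoric invariants iso}: applying it to the $\mathcal H$-module $N=\pi^{\mathfrak b}$ (finite-dimensional over $\Qpbar$) gives an isomorphism $[\mathfrak g]\colon(\pi^{\mathfrak p})_{\mathfrak n_0}\xrightarrow{\sim}(\pi^{\mathfrak g})_{\mathfrak q}$, so in particular $\pi^{\mathfrak g}=\pi^{G(\OFv)}\neq 0$, i.e.\ $\pi$ is unramified (spherical). Since $\pi$ is an irreducible admissible representation of $\GSp_4(F_v)$ with a nonzero $G(\OFv)$-fixed vector, one reads off from the classification that $\pi$ is one of the representations whose $L$-parameter, as tabulated in \cite[Table A.7]{roberts2007local} (with the $\GT$-normalization compatibility noted after \cite[Proposition 13.1]{thetacorr}), is the unramified one — in each relevant row the monodromy operator vanishes. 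Concretely, the only spherical subquotients of $i_B^G\chi$ for unramified $\chi$ are the Langlands quotients / subrepresentations of type I (and degenerate analogues) with $N_\pi=0$; the Steinberg-type constituents with $N_\pi\neq 0$ have no $G(\OFv)$-fixed vectors. I would phrase this as: spherical $\Rightarrow$ the parameter is $\chi$ viewed as an unramified parameter $\Rightarrow N_\pi=0$.

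Finally, the isomorphism $(\pi^{\mathfrak p})_{\mathfrak n_0}\to\pi^{\mathfrak g}$ of $\mathcal O[T/T(\OFv)]^{W_G}$-modules is exactly the map $[\mathfrak g]$ of Corollary \ref{gbar parahoric invariants iso}, once one notes that $\pi^{\mathfrak g}$, being one-dimensional and spherical, is supported only at the single maximal ideal $\mathfrak q$, so $(\pi^{\mathfrak g})_{\mathfrak q}=\pi^{\mathfrak g}$; compatibility with the $W_G$-invariant subalgebra is automatic since the action on $\pi^{\mathfrak g}$ factors through $\mathcal O[T/T(\OFv)]^{W_G}$ and the whole construction of $\mathcal T_P\circ\mathcal S_P^{-1}$ is compatible under $W_L\subset W_G$. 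I expect the main obstacle to be the verification that ``$\pi$ spherical $\Rightarrow N_\pi=0$'' genuinely matches the tables of \cite{roberts2007local} under the chosen normalization of $\rec_{\GT,p}$ — i.e.\ checking that no representation with nontrivial monodromy slips in with a $G(\OFv)$-fixed vector, and that the Atobe--Gan / $\GT$ normalization does not shift things; everything else is a bookkeeping assembly of already-proved lemmas.
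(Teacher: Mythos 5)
Your treatment of the first three assertions (subquotient of an unramified principal series, integrality of $\chi$, the $W_G$-conjugacy and lifting statements, one-dimensionality of $(\pi^{\mathfrak{p}})_{\mathfrak{n}_0}$) matches the paper, which likewise gets these from Frobenius reciprocity together with the $\mathfrak{p}$-analogue of Lemma \ref{principal series loc inv}. The gap is in how you prove $N_\pi=0$ and the final isomorphism: you apply Corollary \ref{gbar parahoric invariants iso} with $N=\pi^{\mathfrak{b}}$, but that corollary (and Proposition \ref{parahoric invariants iso} behind it) is stated only for $N$ a finite free $\CalO$-module or a finite dimensional $k$-vector space, and its proof genuinely needs this: it reduces modulo $\lambda$ and exploits the degeneration $\overline{\mathcal{H}}\cong k[X_*(T)\rtimes W_G]$, which uses $q_v\equiv 1 \bmod p$ in residue characteristic $p$. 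For the $\Qpbar$-vector space $\pi^{\mathfrak{b}}$ there is no such degeneration ($q_v\neq 1$ in $\Qpbar$), so the corollary cannot simply be cited to conclude $\pi^{\mathfrak{g}}\neq 0$, and hence your entire derivation of sphericity, of $N_\pi=0$, and of the isomorphism $(\pi^{\mathfrak{p}})_{\mathfrak{n}_0}\to\pi^{\mathfrak{g}}$ rests on a statement whose hypotheses are not met. This is exactly why the paper's proof says it argues ``in contrast to the proof of Proposition \ref{parahoric invariants iso}'': it establishes $N_\pi=0$ by the same case-by-case Jacquet-module analysis as in Proposition \ref{gsp4 p1 invariants} (eliminating the Steinberg-type constituents, whose localized parahoric invariants vanish), and then obtains the isomorphism by observing from \cite[Table A.15]{roberts2007local} that $N_\pi=0$ holds exactly when $\pi^{\mathfrak{g}}\neq 0$, that $\pi^{\mathfrak{g}}$ is then one-dimensional, and that both one-dimensional spaces carry the same action of $\CalO[T/T(\OFv)]^{W_G}$ through the $W_G$-orbit of $\chi$.

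Your route is not hopeless, but it needs an extra step you have not supplied: once the first part shows that every $\CalO[X_*(T)]$-eigencharacter on $\pi^{\mathfrak{b}}$ is a $W_G$-conjugate of the integral character $\chi$, one could try to produce an $\mathcal{H}$-stable finite free $\CalO$-lattice inside $\pi^{\mathfrak{b}}$, apply Corollary \ref{gbar parahoric invariants iso} to that lattice, and invert $p$; only then would the conclusion $\pi^{\mathfrak{g}}\neq 0$ (and with it ``spherical $\Rightarrow N_\pi=0$'' from the tables, which is a correct reading of the $\GT$ normalization) be available. As written, the appeal to the corollary is the missing idea, and without it your argument for the last two claims of the proposition does not go through.
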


\begin{proposition} \label{gsp4 p1 invariants}
Let $\pi$ be an admissible irreducible $\Qpbar[G(F_v)]$-module.
Suppose that $(\pi^{\mathfrak{p}_1})_{\mathfrak{n}_1} \neq 0$. Then $\pi$ is a subquotient of a parabolically induced representation $i_B^G \chi$ for some tamely ramified smooth character $\chi: T(F_v) \to \Zpbar^\times$.
The characters through which $\mathcal{O}[T/(T \cap \pone)]^{W_L}$ act on $\pi^{\mathfrak{p}_1}$ are $W_G$-conjugates of $\chi$ and there exists $w \in W_G$ such that $w \chi$ lifts $\bar{\chi}$.
The localized invariants $(\pi^{\mathfrak{p}_1})_{\mathfrak{n}_1}$ are 1-dimensional and the action of $\mathcal{O}[T/(T \cap \pone)]^{W_L}$ is through $w \chi$.
Finally, if $\rec_{\GT,p}(\pi) = (V_\pi,N_\pi)$ is the Weil--Deligne representation associated to $\pi$ under the local Langlands correspondence for $\GSp_4$ then $N_\pi = 0$.
\end{proposition}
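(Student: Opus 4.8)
The plan is to follow the proof of Proposition \ref{gsp4 p invariants} as closely as possible, substituting Lemma \ref{principal series loc inv} for its parahoric analogue, and then to invoke the explicit classification of irreducible admissible representations of $\GSp_4(F_v)$ and of their Weil--Deligne representations from \cite{roberts2007local} in order to force the monodromy to vanish. Throughout, recall that $\mathfrak{n}_1$ is attached via Construction \ref{parahoric construction} to $\bar g \in \hat T(k)$, which determines both $\bar\chi$ and the Levi $L$; the key structural feature is that $\bar\chi$ is of ``unramified type'', i.e. it is a character of $X_*(T)\cong T/T(\OFv)$ rather than a genuinely ramified object.

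First I would reduce to the case that $\pi$ is a subquotient of a tamely ramified principal series $i_B^G\chi$. Since $(\pi^{\mathfrak{p}_1})_{\mathfrak{n}_1}\ne 0$ we have $\pi^{\mathfrak{p}_1}\ne 0$, and as $\mathfrak{p}_1$ contains the pro-$p$-Iwahori subgroup $\Iw^{(1)}$, the representation $\pi$ has depth zero. By Lemma \ref{jacquet module invariants} applied to the parabolic $P$ of Construction \ref{parahoric construction}, $(\pi^{\mathfrak{p}_1})_{\mathfrak{n}_1}\cong (r_N(\pi)^{\mathfrak{p}_1\cap L})_{\mathfrak{n}_1}$, so $r_N(\pi)$ has an irreducible admissible subquotient $\sigma$ of $L(F_v)$ with $(\sigma^{\mathfrak{p}_1\cap L})_{\mathfrak{n}_1}\ne 0$ and $\pi$ a subquotient of $i_P^G\sigma$. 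Iterating and comparing with the depth-zero Bernstein blocks, the point is that the cuspidal support of $\pi$ must be a single tamely ramified character of $T$: a depth-zero cuspidal representation of a proper Levi subgroup would contribute invariants on which the Bernstein subalgebra $\mathcal{O}[X_*(T)]$ acts through characters whose reductions are never $W_G$-conjugate to the unramified-type $\bar\chi$, hence those contributions die after localizing at $\mathfrak{n}_1$. (This requires some care but is not where the main difficulty lies; for $\GSp_4$ it can alternatively be extracted from the parahoric-restriction tables of \cite{roberts2007local}.) Once $\pi$ is a subquotient of $i_B^G\chi$, Lemma \ref{principal series loc inv} gives directly that $\chi$ may be taken $\Zpbar^\times$-valued, that $(\pi^{\mathfrak{p}_1})_{\mathfrak{n}_1}$ is at most one-dimensional, and that $\mathcal{O}[T/(T\cap\mathfrak{p}_1)]^{W_L}$ acts on it through some $w\chi$ with $w\in W_G$ lifting $\bar\chi$; nonvanishing of the localization yields the first three assertions of the proposition, with $\chi$ tamely ramified because $\pi$ has depth zero.

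It remains to prove $N_\pi=0$, which I expect to be the main obstacle. Using the Bernstein presentation of the Iwahori--Hecke algebra as in Section \ref{parahoric invariants subsection}, together with the filtration of $r_N(i_B^G\chi)$ from \cite[2.12]{BZ77} and the exactness of $V\mapsto (V^{\mathfrak{p}_1})_{\mathfrak{n}_1}$, one sees that $(i_B^G\chi)^{\mathfrak{p}_1}_{\mathfrak{n}_1}$ is itself at most one-dimensional (by Lemma \ref{weyl stabilizer}, which forces a unique relevant $W_L$-coset, and Lemma \ref{principal series loc inv} applied to $L$), so at most one irreducible constituent of $i_B^G\chi$ carries nonzero $\mathfrak{n}_1$-localized $\mathfrak{p}_1$-invariants, namely $\pi$. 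If $i_B^G\chi$ is irreducible then $\pi=i_B^G\chi$ and $\rec_{\GT,p}(\pi)$ is a direct sum of characters of $W_{F_v}$, so $N_\pi=0$. If $i_B^G\chi$ is reducible then $\chi$ lies on one of the finitely many reducibility walls, and one runs through the corresponding cases of \cite[Table A.7]{roberts2007local} (matched with \cite{gsp4llc} via \cite{thetacorr}): in each case one identifies the distinguished constituent carrying the line $(i_B^G\chi)^{\mathfrak{p}_1}_{\mathfrak{n}_1}$ and checks it is not of Steinberg type, i.e. $N=0$. Concretely, the Steinberg-type constituents have their parahoric invariants supported at maximal ideals over $\mathfrak{n}_1$ which fail to lift $\bar\chi$ as required above, whereas the generic or Langlands-quotient constituent, which carries the relevant line, has $N=0$.

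The bulk of the argument is therefore this finite but conventions-heavy case analysis across the reducibility types of tame principal series of $\GSp_4$, compounded by the need to keep the normalizations of \cite{roberts2007local}, \cite{gsp4llc}, \cite{thetacorr} and of the Bernstein presentation mutually compatible. The reduction step in the second paragraph --- ruling out that the cuspidal support of a depth-zero $\pi$ with $(\pi^{\mathfrak{p}_1})_{\mathfrak{n}_1}\ne 0$ could involve a genuine supercuspidal of a proper Levi --- is a secondary point that I would handle by the same $\mathfrak{n}_1$-localization bookkeeping but which does not present a conceptual difficulty.
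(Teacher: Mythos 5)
Your overall route is the same as the paper's: the first three assertions follow from Frobenius reciprocity together with Lemma \ref{principal series loc inv} (your depth-zero reduction is fine in spirit, since $\pone \supset \Iw^{(1)}$; the paper simply cites a standard lemma here), and $N_\pi=0$ is to be proved by a case analysis organized by the conjugacy class of $M_{\bar g}$ against \cite[Table A.7]{roberts2007local}, whose only real content is to rule out the Steinberg-type constituents $\varphi\St_{\GL_2}\rtimes\sigma$ and $\varphi\rtimes\sigma\St_{\GSp_2}$ (the regular semisimple case and the case $L=G$ being immediate).

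The gap is exactly at that exclusion step, which you assert rather than prove. You claim the Steinberg-type constituents have their $\pone$-invariants supported at maximal ideals which ``fail to lift $\bar\chi$'' and suggest this can be read off the tables; neither is adequate. Tables A.7/A.15 record dimensions of invariants at the standard parahorics but not the action of $\mathcal{O}[T/(T\cap\pone)]^{W_L}$ on $\pi^{\pone}$, so they cannot detect the localization at $\mathfrak{n}_1$. Moreover your stated mechanism is not the correct one: writing the semisimplified Jacquet module of, say, $\pi=\varphi\St_{\GL_2}\rtimes\sigma$ via \cite[2.12]{BZ77} as a sum of pieces indexed by $x\in W_L\backslash W_G/W_M$ (with a twist by the nontrivial $s\in W_M$), two different phenomena must be combined. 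For cosets with $x(M)=L$ the $T$-eigencharacters \emph{do} reduce into the $W_L$-orbit of $\bar\chi$, i.e. they sit over $\mathfrak{n}_1$; these pieces are harmless only because they are twists of the Steinberg representation of $L$, which has no $(L\cap\pone)$-invariants at all. For cosets with $x(M)\ne L$ one needs the double-coset/stabilizer argument (Lemma \ref{weyl stabilizer} plus triviality of the stabilizer of $\chi$) to show the characters $yxs\chi$, $y\in W_L$, cannot match the lift $w\chi$ of $\bar\chi$. Your one-sentence heuristic conflates these and is false as stated for the $x(M)=L$ part. Note also that identifying ``the constituent carrying the line'' as the non-Steinberg one cannot be done by pointing to a spherical vector, since $\chi$ is in general only tamely ramified; so the Jacquet-module computation for the Steinberg-type constituent is genuinely needed, and your proposal defers it rather than supplies it.
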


We will prove Proposition \ref{gsp4 p1 invariants} firstly, since the proofs are mostly very similar and the nature of the computations are slightly more intricate in this case.

\begin{proof}[Proof of Proposition \ref{gsp4 p1 invariants}]
The first three points are a standard consequence of Frobenius reciprocity together with Lemma \ref{principal series loc inv}; it follows from \cite[Lemma 7.4(i)]{thorne2019}, for example.
To show the final point, it is enough by the previous discussion about the map $L$ to work case by case using the descriptions given in \cite[A.5 Table A.7]{roberts2007local}.
The cases will be determined by $M_{\bar{g}}$ up to conjugacy.
In all but one case, $M_{\bar{g}}$ will be a Levi subgroup and $\bar{\chi}$ will have centralizer in $W_G$ equal to $W_L$ by Lemma \ref{weyl stabilizer}.
In such cases we will look for characters $\chi: T(F_v) \to \Zpbar^\times$ for which the centralizer of $\chi \mod m_{\Zpbar}$ in $W_G$ is conjugate to $W_L$, since some Weyl conjugate of $\chi$ will lift $\bar{\chi}$.
To conclude, we will show that those irreducible subquotients $\pi$ of $i_B^G \chi$ for which $N_\pi \neq 0$ satisfy $(\pi^{\mathfrak{p}_1})_{\mathfrak{n}_1} = 0$.
\begin{enumerate}
    \item Suppose firstly that $\bar{g}$ is regular semisimple; this is the only case for which $L = T$ is a maximal torus and $W_L$ is trivial.
    The stabilizer in $W_G$ of the reduction modulo $(\lambda)$ of some $w \chi$ must be trivial. 
    Hence by the discussion of \cite[1.1]{schmidtgsp4} the only possibility is that $\pi$ is an irreducible principal series representation (occurring in group I in \cite[A.5 Table A.7]{roberts2007local}).
    So $N_\pi=0$ immediately follows.
    \item Next suppose that the stabilizer $M_{\bar{g}}$ of $\bar{g}$ in $\hat{G}$ is conjugate to a Levi subgroup of the Klingen parabolic subgroup.
    Then $L \cong \GL_2 \times \GL_1$ is in fact conjugate to a Levi subgroup of the Siegel parabolic subgroup of $G$ (\cite[Proof of Lemma 2.3.1]{roberts2007local}).
    We know that $W_L$ is the stabilizer of $\bar{\chi}$ and so the only possibilities are groups I or II of \cite[A.5 Table A.7]{roberts2007local}).
    Thus we just need to eliminate the possibility that $\pi$ is conjugate to a representation of the form $\varphi \St_{\GL_2} \rtimes \sigma$ for some smooth characters $\varphi,\sigma$.
    
    Without loss of generality, we will take $\pi = \varphi \St_{\GL_2} \rtimes \sigma$ to be an induced representation from the Siegel parabolic, where $\St_{\GL_2} \rtimes \sigma$ is defined on the Levi factor $M$, a $W_G$-conjugate of $L$.
    We must then take $\bar{\chi}$ to be a reduction of the character of $T$ given by $w \chi$, where $\chi := \nu^{\frac{1}{2}} \varphi \otimes \nu^{-\frac{1}{2}} \varphi \otimes \sigma$, $\nu$ is the absolute value on $F_v$ (notation as in \cite{schmidtgsp4}) and $w \in W_G$.
    We will show that $(\pi^\pone)_{\mathfrak{n}_1} = 0$ to obtain a contradiction.
    We will compute Jacquet modules and take $(L \cap \pone)$-invariants again.
    By \cite[2.12 and 2.11(a)]{BZ77} and Lemma \ref{jacquet module invariants}, we can write $(\pi^\pone)$, up to semisimplification as
    $$\bigoplus_{x \in W_L \backslash W_G / W_M, x(M) \neq L} (i_{B \cap L}^L x s \chi)^{(L \cap \pone)}.$$
    Here $s \in W_M$ is the non-identity element, occurring since the Jacquet module of the Steinberg representation is given by the $s$ twist of the original character for which the Steinberg representation is a submodule of the parabolic induction.
    Moreover, those $x$ for which $x(M) = L$ yield the $(L \cap \pone)$-invariants of a Steinberg representation, which vanish and therefore do not contribute to the sum.
    
    On restricting to $T$, those characters possibly occurring in $(i_{B \cap L}^L x s \chi)^{(L \cap \pone)}$ are of the form $y x s \chi$ for $y \in W_L$.
    To show $(\pi^\pone)_{\mathfrak{n}_1} = 0$, it would suffice to show  that if $w \chi = y x s \chi$ then $x(M) = L = w(M)$.
    We may suppose then that $w^{-1} y x s = 1$ since $\chi$ has trivial stabilizer.
    We have $w^{-1} W_L w = W_M$, so since $w^{-1} y w \in W_M$ and $s \in W_M$, it follows that $w^{-1} x = (w^{-1} y w)^{-1} s^{-1} \in W_M$ and hence $w(M) = x(M)$.
    \item Suppose that $M_{\bar{g}}$ is conjugate to a Levi subgroup of the Siegel parabolic subgroup.
    Then $L$ is conjugate to a Levi subgroup of the Klingen parabolic, isomorphic to $\GL_1 \times \GSp_2 \cong \GL_1 \times \GL_2$.
    The stabilizer of $\bar{\chi}$ is $W_L$ and the only possibility we need to consider is that $\pi$ is conjugate to a representation $\varphi \rtimes \sigma \St_{\GSp_2}$.
    $\bar{\chi}$ must be a lift of a $W_G$-conjugate of $\chi = \varphi \otimes \nu \otimes \nu^{-\frac{1}{2}} \sigma$.
    The remaining analysis to show $(\pi^\pone)_{\mathfrak{n}_1} = 0$ is then similar to the previous case.
    
    \item The remaining possibilities are those for which $L = G$ (though we do not necessarily have $M_{\bar{g}} = \hat{G}$).
    In this case $\Delta$ is trivial and $\pone = \mathfrak{g}$.
    It follows that $N_{\pi} = 0$ from inspection of \cite[A.10 Table A.15 and A.5 Table A.7]{roberts2007local}.
\end{enumerate}
\end{proof}

\begin{proof}[Proof of Proposition \ref{gsp4 p invariants}]
The majority of the proof is very similar to the proof of Proposition \ref{gsp4 p1 invariants}.
We will thus only show the existence of an isomorphism $(\pi^\mathfrak{p})_{\mathfrak{n}_0} \to \pi^\mathfrak{g}$ using the explicit possible descriptions of $\pi$, in contrast to the proof of Proposition \ref{parahoric invariants iso}.
As expected, we may observe from \cite[A.10 Table A.15]{roberts2007local}) that those $\pi$ for which $N_\pi = 0$ are exactly those for which $\pi^\mathfrak{g} \neq 0$
and that in this case $\pi^\mathfrak{g}$ is 1-dimensional.
Since $(\pi^\mathfrak{p})_{\mathfrak{n}_0}$ is also 1-dimensional, with the actions of $\mathcal{O}[T/T(\OFv)]^{W_G}$ given on both by the $W_G$-orbit of $\chi$, we see they are isomorphic as $\mathcal{O}[T/T(\OFv)]^{W_G}$-modules.
\end{proof}

\section{Local-global compatibility} \label{sect local global}

We begin this section by proving some lemmas about when torus-valued representations are conjugate in the Weyl group of an ambient reductive group.
We use these in proving Proposition \ref{local global deltav}, which implies that if we have a sufficiently strong local-global compatibility result then we can deduce a statement necessary for the Taylor--Wiles method applied to our generalized notion of Taylor--Wiles places (as in Section \ref{section tw places}).

\begin{lemma} \label{ccl equiv wccl}
Let $H$ be a (possibly disconnected) reductive group over an algebraically closed field $K$ and let $T$ be a maximal torus in $H^\circ$.
Then any tuples $(s_1,\ldots,s_n)$ and $(t_1,\ldots,t_n)$ in $T^n(K)$ are conjugate via the diagonal action of $H(K)$ if and only if they are conjugate in $N_H(T)(K)$.
\end{lemma}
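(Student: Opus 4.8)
The ``if'' direction is trivial, since $N_H(T)(K) \subseteq H(K)$. So the content is the ``only if'' direction: if the two tuples are $H(K)$-conjugate, then they are already $N_H(T)(K)$-conjugate. The plan is to reduce to the classical statement for connected reductive groups and then bootstrap to the disconnected case using the fact that all the $s_i$ and $t_i$ lie in the \emph{connected} group $H^\circ$.

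\textbf{First steps.} Suppose $h \in H(K)$ satisfies $h s_i h^{-1} = t_i$ for all $i$. Let $S = Z_{H^\circ}(s_1,\ldots,s_n)^\circ$ be the identity component of the simultaneous centralizer in $H^\circ$; since the $s_i$ are semisimple and commute (they lie in the torus $T$), $S$ is a connected reductive group containing $T$, and $T$ is a maximal torus of $S$. Conjugation by $h$ carries $S$ to $S' := Z_{H^\circ}(t_1,\ldots,t_n)^\circ$, which also contains $T$ as a maximal torus. Thus $hTh^{-1}$ and $T$ are two maximal tori of the connected reductive group $S'$, hence conjugate in $S'(K)$: there is $g \in S'(K)$ with $g(hTh^{-1})g^{-1} = T$. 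Now set $w = gh$. Then $w \in H(K)$, $w T w^{-1} = T$ so $w \in N_H(T)(K)$, and since $g$ centralizes all the $t_i$ we get $w s_i w^{-1} = g t_i g^{-1} = t_i$. This produces the desired element of $N_H(T)(K)$ conjugating $(s_1,\ldots,s_n)$ to $(t_1,\ldots,t_n)$.

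\textbf{The main obstacle.} The one point requiring care is that $h$ need not lie in $H^\circ(K)$, so a priori $hTh^{-1}$ is a maximal torus of the conjugate group $hH^\circ h^{-1} = H^\circ$ --- which is fine, it is still a maximal torus of $H^\circ$ --- and we need it to be a maximal torus of $S' = Z_{H^\circ}(t_1,\dots,t_n)^\circ$, which holds because $h$ carries $(s_i)$ to $(t_i)$ and hence $S = Z_{H^\circ}(s_1,\dots,s_n)^\circ$ to $Z_{H^\circ}(t_1,\dots,t_n)^\circ = S'$. So the disconnectedness of $H$ is harmless: all the real work happens inside the connected groups $S$ and $S'$, and the only classical input needed is the conjugacy of maximal tori in a connected reductive group over an algebraically closed field. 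I would also remark that one should check $S$ (equivalently $S'$) is genuinely reductive with $T$ maximal --- this follows from \cite[Theorem 1.1.19]{conrad} (centralizers of tori, or more precisely of finite sets of commuting semisimple elements, in a connected reductive group have reductive identity component) together with the observation that $T$ is contained in and is a maximal torus of this centralizer. With these observations the argument above is complete.
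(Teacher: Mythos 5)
Your proof is correct, and it takes a genuinely different (and arguably cleaner) route than the paper's. The paper argues by induction on $n$: it conjugates so that the first $n-1$ entries agree, replaces $H$ by the possibly disconnected centralizer $Z_H(s_1,\ldots,s_{n-1})$ (which still contains $T$ as a maximal torus), and thereby reduces to $n=1$; it then writes $H(K) = H^\circ(K)\cdot N_H(T)(K)$ to reduce to conjugacy inside $H^\circ$, and finally quotes the classical fact (Carter, Proposition 3.7.1) that elements of a maximal torus conjugate in a connected reductive group are conjugate under the Weyl group. You instead handle the whole tuple at once: conjugation by $h$ carries $Z_{H^\circ}(s_1,\ldots,s_n)^\circ$ onto $Z_{H^\circ}(t_1,\ldots,t_n)^\circ$, both of which contain $T$ as a maximal torus, and then conjugacy of maximal tori in the connected group $S'$ produces $g$ with $w = gh \in N_H(T)(K)$ still carrying $(s_i)$ to $(t_i)$. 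This avoids both the induction and the classical input you would otherwise cite (indeed your argument is essentially the standard proof of that classical fact, run directly for tuples); it also shows that reductivity of $S'$ is not really needed -- only connectedness (forced by taking the identity component) and conjugacy of maximal tori in a connected linear algebraic group over an algebraically closed field, so your appeal to \cite[Theorem 1.1.19]{conrad} is a safe but dispensable remark. The paper's version, by contrast, buys a shorter write-up at the cost of an induction and an external reference. One small point worth making explicit in your write-up is the normality of $H^\circ$ in $H$, which guarantees $hTh^{-1} \subset H^\circ$ and $hS h^{-1} = S'$ even when $h \notin H^\circ(K)$; you do note this, so the argument is complete.
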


\begin{proof}
We proceed by induction on $n$, so suppose that $n>0$ and that we can find $w \in N_H(T)$ with $\Ad(w) (s_1,\ldots,s_{n-1}) = (t_1,\ldots,t_{n-1})$.
Thus by firstly conjugating by $w$, we may assume that $s_i = t_i$ for every $i < n$.
Then we know that $s = s_n$ and $t = t_n$ are conjugate by an element of $H(K)$ which centralizes $s_i$ for every $i < n$.
Let $Z = Z_{H}(s_1,\ldots,s_{n-1})$ be the centralizer of $s_1,\ldots,s_{n-1}$.
Then $Z$ is a (possibly disconnected) reductive group over $K$ containing $T$ as a maximal torus.
Thus by replacing $H$ by $Z$, we just need to prove the lemma in the case where $n=1$.

We may write $H(K) = H^\circ(K) \cdot N_H(T)(K)$, since $g \in H(K)$ will conjugate $T$ into some other maximal torus in $H^\circ$, and these two maximal tori will be conjugate by some element of $H^\circ(K)$. 
By writing an element of $H(K)$ which conjugates $s$ into $t$ as a product of an element of $H^\circ(K)$ and of $N_H(T)(K)$, we may without loss of generality take $s$ and $t$ to be conjugate by an element of $H^\circ(K)$.
It is then well-known (see e.g. \cite[Proposition 3.7.1]{carter1985finite}) that $s$ and $t$ must then be conjugate by an element of $N_{H^\circ}(T)$.
\end{proof}

The next lemma uses V. Lafforgue's notion of pseduocharacters.
A description of them and some properties can be found in \cite[Section 4]{thorne2019}; in particular we will use the construction of a representation from a pseudocharacter.

\begin{lemma} \label{equal pschar wccl}
Let $\Gamma$ be a group.
Let $H$ be a reductive group over an algebraically closed field $K$ and let $\iota: T \to H$ be the inclusion of a maximal torus of $H$, defining a Weyl group $W = W(H,T)$.
For $i \in \{1,2\}$, let $\chi_i: \Gamma \to T(K)$ be characters and $\rho_i = \iota \circ \chi_i: \Gamma \to H(K)$.
Then the following are equivalent:
\begin{enumerate}
    \item The characters $\chi_i$ are conjugate by an element of $W$ \label{char conjugate}
    \item The representations $\rho_i$ are conjugate via an element of $H(K)$ \label{rep conjugate}
    \item The $H$-pseudocharacters $\tr \rho_i$ are equal. \label{pschar equal}
\end{enumerate}
\end{lemma}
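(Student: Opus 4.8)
\textbf{Proof plan for Lemma \ref{equal pschar wccl}.}

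The plan is to prove the cycle of implications $\ref{char conjugate} \Rightarrow \ref{rep conjugate} \Rightarrow \ref{pschar equal} \Rightarrow \ref{char conjugate}$. The first implication is essentially immediate: if $\chi_2 = \Ad(w) \circ \chi_1$ for some $w \in W$, then choosing a representative $\dot{w} \in N_H(T)(K)$ gives $\rho_2 = \Ad(\dot{w}) \circ \rho_1$, so the two representations are conjugate by $\dot{w} \in H(K)$. The implication $\ref{rep conjugate} \Rightarrow \ref{pschar equal}$ is also formal: $H$-pseudocharacters are by construction invariant under $H(K)$-conjugation of the representation (this follows directly from the definition of $\tr \rho$ as a collection of conjugation-invariant functions on the spaces $H(K)^n$, as recalled in \cite[Section 4]{thorne2019}).

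The substantive implication is $\ref{pschar equal} \Rightarrow \ref{char conjugate}$, and this is where the main work lies. First I would use the reconstruction of a representation from a pseudocharacter: since $\tr \rho_1 = \tr \rho_2$ and both $\rho_i$ factor through the torus $T$ (in particular their images are contained in a torus, hence are $H$-completely reducible/semisimple in the relevant sense), the general theory gives that $\rho_1$ and $\rho_2$ are $H(K)$-conjugate. Concretely, from the common pseudocharacter one reconstructs a semisimple (i.e. $H$-cr) representation up to $H(K)$-conjugacy, and each $\rho_i$ is such a representative, so $\rho_2 = \Ad(g) \circ \rho_1$ for some $g \in H(K)$. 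At this point we have reduced \ref{pschar equal} to \ref{rep conjugate}, so it remains to deduce \ref{char conjugate} from \ref{rep conjugate}.

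So suppose $\rho_2 = \Ad(g) \circ \rho_1$ with $g \in H(K)$. The images $\chi_1(\Gamma)$ and $\chi_2(\Gamma)$ both lie in $T(K)$. I would now invoke Lemma \ref{ccl equiv wccl}: enumerating a (possibly infinite) generating set or simply working with all elements $\gamma \in \Gamma$, the tuples $(\chi_1(\gamma))_{\gamma}$ and $(\chi_2(\gamma))_{\gamma}$ are $H(K)$-conjugate and take values in $T$, hence they are conjugate by a single element of $N_H(T)(K)$ — the proof of Lemma \ref{ccl equiv wccl} goes through verbatim for an arbitrary (not necessarily finite) index set, since the inductive step only ever passes to centralizers of finitely many of the $s_i$ at a time and $\Gamma$ is generated by its elements; alternatively one notes $T(K)$-valued tuples are determined up to simultaneous $N_H(T)(K)$-conjugacy by finitely many coordinates because $X^*(T)$ is finitely generated. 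That element $w \in N_H(T)(K)$ induces an element of $W$ conjugating $\chi_1$ to $\chi_2$, which is exactly \ref{char conjugate}. The only mild subtlety to address is the passage from the finite-tuple statement of Lemma \ref{ccl equiv wccl} to the (potentially infinite) family indexed by $\Gamma$; I expect this to be routine, handled either by the finite-generation remark on $X^*(T)$ or by a direct compactness/inverse-limit argument, and it is the one place where care is needed rather than a genuine obstacle.
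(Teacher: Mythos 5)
Your plan is correct in outline, but it handles the substantive implication \ref{pschar equal} $\Rightarrow$ \ref{char conjugate} by a genuinely different route than the paper. The paper does not first reduce to \ref{rep conjugate} via the bijection between $H$-completely reducible representations and pseudocharacters. Instead it uses the reconstruction procedure of \cite[Theorem 4.5]{thorne2019} with a carefully chosen \emph{finite} tuple $\delta=(\delta_1,\dots,\delta_n)$ (chosen so that the centralizer of the Zariski closure of the subgroup generated by $\rho_1(\delta)$ is minimal): for every $\gamma$, the value $\rho_i(\gamma)$ is the last entry of the unique closed-orbit tuple in $H^{n+1}(K)$ with prescribed image in $(H^{n+1}/\!/H)(K)$ and prescribed first $n$ entries. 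It then applies Lemma \ref{ccl equiv wccl} only to the two $n$-tuples $(\rho_i(\delta_1),\dots,\rho_i(\delta_n))$ to get a single $w\in W$, and the uniqueness of the closed-orbit lift propagates that same $w$ to every $\gamma$ simultaneously. So the paper never needs more than the finite-tuple form of Lemma \ref{ccl equiv wccl}. Your route instead deduces \ref{rep conjugate} from \ref{pschar equal} by the uniqueness of $H$-completely reducible representations with a given pseudocharacter (this is fine: the image of a torus-valued representation generates a diagonalizable, hence linearly reductive, subgroup, so it is $H$-cr and its tuples have closed orbits), and then needs an \emph{infinite-family} strengthening of Lemma \ref{ccl equiv wccl}. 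The trade-off: the paper gets the passage from a finite tuple to all of $\Gamma$ for free from the uniqueness clause of the reconstruction theorem, whereas your argument is more modular (it isolates a clean statement "\ref{rep conjugate} $\Rightarrow$ \ref{char conjugate}") but must pay for the infinite family by hand.

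That strengthening is true, but neither of the reasons you sketch is right as stated: the induction in Lemma \ref{ccl equiv wccl} is on the number of entries, so it does not run "verbatim" over an index set like $\Gamma$, and finite generation of $X^*(T)$ is not by itself why finitely many coordinates suffice. Two honest repairs. (i) Pigeonhole over the finite Weyl group: for each finite $F\subset\Gamma$ let $W_F\subset W$ be the set of $w$ whose representatives conjugate $(\chi_1(\gamma))_{\gamma\in F}$ to $(\chi_2(\gamma))_{\gamma\in F}$ (well defined since $T$ acts trivially on $T$ by conjugation); each $W_F$ is nonempty by the finite lemma, $W_{F\cup F'}\subset W_F\cap W_{F'}$, and $W$ is finite, so any $w$ in a minimal $W_{F_0}$ lies in every $W_F$ and hence works for all $\gamma$. (ii) Direct argument: if $g\chi_1(\gamma)g^{-1}=\chi_2(\gamma)$ for all $\gamma$, let $S\subset T$ be the Zariski closure of the subgroup generated by $\chi_1(\Gamma)$; then $T$ and $g^{-1}Tg$ are both maximal tori of the connected group $Z_H(S)^{\circ}$, hence conjugate by some $c\in Z_H(S)^{\circ}(K)$, and $n=gc^{-1}\in N_H(T)(K)$ satisfies $n\chi_1(\gamma)n^{-1}=g\chi_1(\gamma)g^{-1}=\chi_2(\gamma)$ for every $\gamma$. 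With either patch in place, your plan yields a complete proof.
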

\begin{proof}
That \ref{char conjugate} implies \ref{rep conjugate} is clear, and $\ref{rep conjugate}$ implies \ref{pschar equal} is also clear from the definition of $\tr \rho_i$.
To prove that \ref{pschar equal} implies \ref{char conjugate}, we shall use the construction of representations from pseudocharacters given in \cite[Theorem 4.5]{thorne2019}.
As in the proof, we can choose $n \geq 1$ and $\delta = (\delta_1,\ldots,\delta_n) \in \Gamma^n$ such that the centralizer in $H$ of the Zariski closure of the subgroup of $H(K)$ generated by $\rho_1(\delta_1),\ldots,\rho_1(\delta_n)$ has minimal dimension, and among those tuples for which this dimension is minimised we choose one for which the component group of this centralizer has minimal order.

Now let $\gamma \in \Gamma$, and consider $(\delta_1,\ldots,\delta_n,\gamma) \in \Gamma^{n+1}$ defining a point of $(H^{n+1}//H)(K)$ via $\tr \rho_i$.
The construction then shows that $\rho_i(\gamma)$ is given by the final entry of the unique tuple in $H^{n+1}(K)$ whose $H$-orbit is closed, whose image in $(H^{n+1}//H)(K)$ is the point defined above and whose first $n$ entries are given by $(\rho_i(\delta_1),\ldots,\rho_i(\delta_n))$ (note condition (i) in \cite[Theorem 4.5]{thorne2019} is automatically satisfied for any choice of tuple of elements of $\Gamma$).
If we can show that the tuples $(\rho_i(\delta_1),\ldots,\rho_i(\delta_n))$ for $i \in \{1,2\}$ are conjugate by some element of $W$ then by uniqueness of the above tuples, $\rho_i(\gamma)$ will be conjugate by the same element of $W$ for every $\gamma \in \Gamma$.
Since we are assuming $\tr \rho_1 = \tr \rho_2$, the tuples $(\rho_i(\delta_1),\ldots,\rho_i(\delta_n))$ are conjugate by some element of $H(K)$.
By Lemma \ref{ccl equiv wccl} they are therefore conjugate by some element of $W$ and we are done.
\end{proof}

Now let $F$ be a global field of characteristic different from $p$ and fix $\bar{\rho}: G_F \to \hat{G}(k)$ as in Section \ref{section deformation}.
We will also fix a lift $\rho: G_F \to \hat{G}(\mathcal{O})$ (which we will assume is unramified outside a finite set of places of $F$) and let the composition $\hat{G}(\CalO) \to C_{\hat{G}}(\CalO)$ determine the choice of similitude character.
Let $v$ be a Taylor--Wiles place, as in Definition \ref{TW place}.
Let $\phi_v \in G_{F_v}$ be a choice of Frobenius element and let $\varpi_v = \Art_{F_v}(\phi_v) \in F_v^\times$ be the corresponding uniformizer.
Then set $g = \rho(\phi_v)$ and $\bar{g} = \bar{\rho}(\phi_v)$.
In Section \ref{def setup}, we saw the construction of a subgroup $M_{\bar{g}}$ of $\hat{G}_k$ defined over $k$, together with a lift $M_g$ defined over $\CalO$.
Fix a split maximal torus $\hat{T}$ of $\hat{G}_k$ containing $Z(M_{\bar{g}})^\circ$ and a Borel subgroup $\hat{B}$ containing $\hat{T}$.
We also fix a lift of $\hat{T}$ defined over $\CalO$ and containing $Z(M_{g})^\circ$; we denote this lift by $\hat{T}$ as well.

Let $G$ be the split reductive group dual to $\hat{G}$, defined over $\mathcal{O}_F$, as described in Section \ref{dual group deltaq}, and let $T \subset B$ be a split maximal torus and Borel subgroup of $G$.
By Construction \ref{parahoric construction}, the data of $\bar{g}$ and our maximal torus and Borel subgroup of $\hat{G}_k$ determine unique isomorphisms $X_*(\hat{T}) \cong X^*(T)$ and $X^*(\hat{T}) \cong X_*(T)$ both denoted by $\iota$, dual Levi subgroups $L \subset G$ and $\hat{L} \subset \hat{G}_k$, and compact open subgroups $\mathfrak{p}_1 \subset \mathfrak{p} \subset G(F_v)$.
We also have a $p$-group $\Delta_v = \mathfrak{p}/\pone$, the maximal $p$-power quotient of $C_L(k(v))$ for which the image of $Z(G)(k(v))$ is trivial.
We saw already in Section \ref{dual group deltaq} that the subtorus $Z(M_{\bar{g}})^\circ \subset \hat{T}$ is dual to the quotient torus of $T$ given by the cocenter of the Levi subgroup, $C_L$, in the sense that $X_*(Z(M_{\bar{g}})^\circ) \subset X_*(\hat{T})$ is identified with $X^*(C_L) \subset X^*(T)$ via $\iota$.

Let $\bar{\chi}: T(F_v) \to k^\times$ be the unramified character whose dual $\bar{\chi}^\vee: F_v^\times \to \hat{T}(k)$ satisfies $\bar{\chi}^\vee \circ \Art_{F_v}^{-1} = \restr{\bar{\rho}}{W_{F_v}}$ on including $\hat{T}$ into $\hat{G}_k$.
The character $\bar{\chi}$ determines a maximal ideal $\mathfrak{n}_1$ of the subalgebra $\CalO[T(F_v)/(T(F_v) \cap \pone)]^{W_L}$ of the Hecke algebra $\mathcal{H}(G(F_v),\pone)$.

Now suppose that $\pi_v$ is a smooth admissible irreducible representation of $G(F_v)$ which satisfies $((\pi_v)^\pone)_{\mathfrak{n}_1} \neq 0$.
By Lemma \ref{principal series loc inv}, we know that $\pi_v$ is a subquotient of a parabolic induction of some character $\chi: T(F_v) \to \overline{\mathbb{Q}_p}^\times$, which we may take to be a lift of $\bar{\chi}$ and that the action of $\CalO[T(F_v)/(T(F_v) \cap \pone)]^{W_L}$ on $((\pi_v)^\pone)_{\mathfrak{n}_1}$ is through $\chi$.
Via $\iota$ we obtain a dual character $\chi^\vee: F_v^\times \to\hat{T}(\overline{\mathbb{Q}_p})$ which in fact has image in $\hat{T}(\overline{\mathbb{Z}_p})$ and lifts $\restr{\bar{\rho}}{W_{F_v}} \circ \Art_{F_v}$.

We note here that $\chi^\vee(\CalO_{F_v}^\times) \subset Z(M_g)^\circ(\Qpbar)$.
Indeed, it suffices to show that if $\alpha \in X^*(\hat{T})$ and $x \in \CalO_{F_v}^\times$ with $\restr{\alpha}{Z(M_g)^\circ}$ trivial then $\alpha(\chi^\vee(x)) = 1$.
Equivalently, we need to show that $\chi(\iota(\alpha)(x)) = 1$.
We must have $\iota(\alpha)(x) \in \pone$, since $\iota(\alpha)(x)$ will have trivial image in $C_L(\mathcal{O}_{F_v})$, and hence also trivial image in $C_L(k(v))$.
To conclude, recall that $\chi$ is trivial on restriction to $\pone \cap T(F_v)$.

Recall the local deformation problem $\DvTW$ of Definition \ref{deformation_condition}.
The following proposition will allow us to deduce that $\restr{\rho}{G_{F_v}}$ defines a lift in $\DvTW(\CalO)$ and an explicit description of the arising map $\Delta_v \to \mathcal{O}^\times$ in terms of the action of the Hecke algebra on $\pi_v$ whenever we have sufficiently strong local-global compatibility result.

\begin{proposition} \label{local global deltav}
Suppose that $\WD(\restr{\rho}{G_{F_v}})^{\Fss} \cong ((\chi \otimes \varphi)^\vee \circ \Art_{F_v}^{-1},0)$ as Weil--Deligne representations over $\hat{G}(\overline{\mathbb{Q}_p})$, where $\varphi: G(F_v) \to \Qpbar^\times$ is some character which factors through $C_G(F_v)$ and is trivial on restriction to $G(\mathcal{O}_{F_v})$.
Then $\restr{\rho}{G_{F_v}}$ defines a lift in $\DvTW(\CalO)$ and the induced map $\Delta_v \to \mathcal{O}^\times$ coincides with the scalar action of $\Delta_v$ on $((\pi_v)^\pone)_{\mathfrak{n}_1}$ arising from viewing $\Delta_v = T(\mathcal{O}_{F_v})/(T(\mathcal{O}_{F_v}) \cap \pone)$ as Hecke operators.
\end{proposition}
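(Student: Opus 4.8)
The plan is to deduce both statements from the description of $M_g$ over $\Qpbar$ given in Section \ref{alternate Mg}, the computation of localized $\pone$-invariants from Section \ref{sec jacquet computations}, and the rigidity lemmas proved at the start of this section. Write $g=\rho(\phi_v)$ and $\bar g=\bar\rho(\phi_v)$, and recall that $v\nmid p$ (as $S_p\subset S$ and $v\notin S$), so $\rho(I_v)\subset\ker(\hat G(\CalO)\to\hat G(k))$ is a pro-$p$ group; hence $\restr{\rho}{I_v}$ is trivial on wild inertia, factors through the maximal pro-$p$ quotient of tame inertia, and $\rho(I_v)$ is abelian.

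First I would extract the consequences of the hypothesis. Since Frobenius-semisimplification leaves the monodromy operator unchanged, $\WD(\restr{\rho}{G_{F_v}})$ has trivial $N$; thus $\restr{\rho}{W_{F_v}}$ is its own Weil--Deligne representation on $W_{F_v}$ and $\rho(\sigma_0)$ is semisimple for a topological generator $\sigma_0$ of the relevant inertia quotient. The hypothesis then furnishes $h\in\hat G(\Qpbar)$ with $\Ad(h)(\rho(\tau))=\chi^\vee(\Art_{F_v}^{-1}\tau)$ for $\tau\in I_v$ (the $\varphi^\vee$-factor being unramified) and $\Ad(h)(g_{\mathrm{ss}})=\chi^\vee(\varpi_v)\varphi^\vee(\varpi_v)$. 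For the first assertion it is enough, since $Z(M_g)^\circ$ is a closed $\CalO$-subgroup scheme of $\hat G_\CalO$, to show $\rho(I_v)\subset Z(M_g)^\circ(\Qpbar)$. Now $\Ad(h)(\rho(I_v))=\chi^\vee(\CalO_{F_v}^\times)\subset Z(M_g)^\circ(\Qpbar)$ by the computation recorded just before the statement of the proposition; and the central unramified factor $\varphi^\vee$ does not enter the construction of Section \ref{alternate Mg}, so Corollary \ref{rational Mg} (applied with $g_1=g$ and $g_2=\chi^\vee(\varpi_v)$, a lift of $\bar g$, the central factor $\varphi^\vee(\varpi_v)$ not affecting the construction) shows $\Ad(h)$ carries $(M_g^\circ)_\Qpbar$ onto the corresponding subgroup attached to $\chi^\vee(\varpi_v)$, and hence carries $Z(M_g)^\circ_\Qpbar$ onto a subtorus of $\hat T_\Qpbar$ lifting $\hat S=Z(M_{\bar g})^\circ$. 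As $Z(M_g)^\circ$ is also such a subtorus and split tori of $\hat T_\CalO$ with a prescribed special fibre in $\hat T$ are unique, $\Ad(h)$ normalizes $Z(M_g)^\circ_\Qpbar$; therefore $\rho(I_v)=\Ad(h)^{-1}(\chi^\vee(\CalO_{F_v}^\times))\subset Z(M_g)^\circ(\Qpbar)$, i.e.\ $\restr{\rho}{G_{F_v}}\in\DvTW(\CalO)$.

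For the second assertion I would identify both maps $\Delta_v\to\CalO^\times$. On the automorphic side: by Lemma \ref{jacquet module invariants} and the proof of Lemma \ref{principal series loc inv}, localizing $\pone$-invariants at $\mathfrak n_1$ amounts to passing to the $(\pone\cap L)$-invariants of the constituent $i_{B\cap L}^L\chi$ of $r_N(\pi_v)$ whose central character lifts $\bar\chi$; applying Proposition \ref{satake parabolic induction} to $L$ with $K_1=\pone\cap L=\ker(L(\CalO_{F_v})\to\Delta_v)$, and using that for $t\in T(\CalO_{F_v})$ the Hecke operator $[\pone t\,]$ has $\mathcal S_1$-image the class of $t$ in $\CalO[T/(T\cap\pone)]^{W_L}$ (as $W_L$ acts trivially on the cocenter), one finds that $\Delta_v$ acts on $((\pi_v)^\pone)_{\mathfrak n_1}$ through $\restr{\chi}{T(\CalO_{F_v})}$. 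On the Galois side, by construction the map sends the class of $\alpha\otimes x\in X_*(C_{L_v})\otimes k(v)^\times$ to $\iota_v(\alpha)(\rho(\Art_{F_v}(\tilde x)))$ for $\tilde x$ a unit lift of $x$. Using $\Ad(h)^{-1}$ and the identification of $Z(M_g)^\circ_\Qpbar$ with the subtorus of $\hat T_\Qpbar$ lifting $\hat S$ from the first part, this becomes $\iota_v(\alpha)\big(\Ad(h)^{-1}(\chi^\vee(\tilde x))\big)$, so the two maps agree once one knows $\Ad(h)$ acts trivially on $Z(M_g)^\circ_\Qpbar$. To see this I would improve $h$ to a Weyl element: $\restr{\rho^{\mathrm{ss}}}{W_{F_v}}$ and $(\chi\otimes\varphi)^\vee\circ\Art_{F_v}^{-1}$ are $\hat G(\Qpbar)$-conjugate and $\hat T$-valued (after moving $g_{\mathrm{ss}}$ into $\hat T$, which can be arranged compatibly with the already $\hat T$-valued inertia part), so by Lemma \ref{ccl equiv wccl}/Lemma \ref{equal pschar wccl} they are conjugate by some $w\in W_{\hat G}$; reducing modulo $\lambda$ and applying Lemma \ref{weyl stabilizer} puts $w$ in $W_{\hat L}$, and $W_{\hat L}$ fixes $Z(\hat L)^\circ=\hat S$ pointwise and acts trivially on $C_{L_v}$, which gives the claim.

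The main obstacle is the interaction between the Frobenius-semisimplification in the hypothesis and the constructions of $M_g$ and of the $\Delta_v$-action, which a priori depend on the lift $g=\rho(\phi_v)$ itself and not merely on its conjugacy class; this is precisely what the description of $(M_g^\circ)_\Qpbar$ in terms of $g_{\mathrm{ss}}$ (Section \ref{alternate Mg}) is designed to handle, and it is also why the rigidity lemmas together with Lemma \ref{weyl stabilizer} are needed in the second part, to show that the residual Weyl ambiguity is harmless on the cocenter $C_{L_v}$. A further point needing care is the descent of $\rho(I_v)\subset Z(M_g)^\circ$ from $\Qpbar$ to $\CalO$, and the verification at each step that the central unramified twist $\varphi^\vee$ (which reduces trivially at $\varpi_v$, so that the relevant Weyl element indeed fixes $\bar g$) genuinely drops out.
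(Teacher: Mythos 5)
Your overall strategy is the same as the paper's: strip off the central twist $\varphi^\vee$, use that $N=0$ forces $\restr{\rho}{I_v}=\restr{r}{I_v}$, conjugate everything into $\hat T(\Qpbar)$, invoke Lemma \ref{equal pschar wccl} to replace the conjugator by a Weyl element, and then argue that this Weyl element acts trivially on $Z(M_g)^\circ$, which yields the exact equality $\restr{\rho}{I_v}=\restr{\chi^\vee\circ\Art_{F_v}^{-1}}{I_v}$ and hence both assertions. The genuine gap is the step ``reducing modulo $\lambda$ and applying Lemma \ref{weyl stabilizer} puts $w$ in $W_{\hat L}$'': this is exactly where all the work lies, and you give no argument. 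Lemma \ref{weyl stabilizer} only applies once you know $w$ stabilizes $\bar g$, and that cannot be read off by naive reduction. The Frobenius-semisimplification $r$ sends $\phi_v$ to $g_{ss}$, and the semisimple part of an element of $\hat G(\CalO)$ need not lie in $\hat G(\Zpbar)$ (this can fail even when $\bar g$ is semisimple), so $r$ and its $\hat T$-valued conjugate are not obviously reducible; the version you can reduce is $\Ad(n)^{-1}(\chi^\vee\circ\Art_{F_v}^{-1})$, whose value at $\phi_v$ reduces to $\Ad(\bar n)^{-1}(\bar g)$, so concluding that $w$ fixes $\bar g$ from its reduction is circular. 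The paper closes precisely this gap with the pseudocharacter argument: it shows the $(M_g^\circ)_{\Qpbar}$-pseudocharacters of $r$ and $\restr{\rho}{W_{F_v}}$ coincide (Richardson's theorem on closed orbits), so the relevant pseudocharacter is integral with reduction $\tr\bar\rho$; then Lemma \ref{equal pschar wccl} over $\Fpbar$ gives a conjugator in $N_{(M_{\bar g}^\circ)}(\hat T)(\Fpbar)$, the discrepancy with $\bar n$ centralizes the residual image and so lies in $M_{\bar g}(\Fpbar)$, whence $n\in N_{M_g}(\hat T)(\Qpbar)\subset M_g(\Qpbar)$ and conjugation fixes $Z(M_g)^\circ\supset\chi^\vee(\CalO_{F_v}^\times)$ pointwise. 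None of this is supplied by your proposal, and without it the second (and hence also the first) assertion is not proved.

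Two further problems. In your first paragraph you apply Corollary \ref{rational Mg} to $Z(M_g)^\circ$, but that corollary concerns $Z(M_g^\circ)$, the center of the identity component, which is strictly larger in general when $M_{\bar g}$ is disconnected; moreover your rigidity step needs to know that $\Ad(h)(Z(M_g)^\circ)_{\Qpbar}$ extends to a subtorus of $\hat T_{\Zpbar}$ with special fibre $\hat S$, and knowing only that it lies in $Z(M^\circ_{\chi^\vee(\varpi_v)})_{\Qpbar}$ does not determine the corresponding cocharacter sublattice, so ``$\Ad(h)$ normalizes $Z(M_g)^\circ$'' is not established (in the paper this containment is instead a consequence of the exact-equality statement). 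Finally, your parenthetical that $\varphi^\vee$ ``reduces trivially at $\varpi_v$'' is false: $\varphi(\varpi_v)$ is an arbitrary element of $\Qpbar^\times$ and need not even be integral; the correct disposal of $\varphi$ is the paper's opening reduction, namely that multiplying by a central unramified representation affects neither membership in $\DvTW(\CalO)$ nor the $\Delta_v$-action, so one may assume $\varphi=1$ from the start.
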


\begin{proof}
Note firstly that since $\varphi$ factors through $C_G(F_v)/C_G(\mathcal{O}_{F_v})$, the only effect it has on the Weil--Deligne representations is to multiply through by an unramified representation valued in $Z(\hat{G})(\Qpbar)$.
Therefore it has no effect on whether $\rho \in \DvTW(\CalO)$ (aside from changing the similitude character potentially), nor on any arising $\Delta_v$-actions.
Hence the claims of the proposition hold true if and only if they hold true when $\varphi = 1$, so we will assume from now on that $\varphi = 1$.

Since $\WD(\restr{\rho}{G_{F_v}})^{\Fss}$ is semisimple, it follows that $\WD(\restr{\rho}{G_{F_v}})^{\Fss} = (r,0)$ with $r(\phi_v) = g_{ss}$, where $g = g_{ss} g_u$ is the Jordan decomposition of $g$ into its semisimple and unipotent parts, but $\restr{\rho}{I_{v}} = \restr{r}{I_{v}}$.
We claim that $r(I_{v}) \subset Z(M_g^\circ)(\Qpbar)$.
Since $\chi^\vee(\varpi_v) \in \hat{T}(\Zpbar)$ lifts $\bar{g}$, we can form $(M_{\chi^\vee(\varpi_v)})_{\Zpbar}$ as in Section \ref{def setup}.
Then the equality $$Z(M_{g}^\circ)_{\Zpbar} = Z(M_{\chi^\vee(\varpi_v)}^\circ)_{\Zpbar}$$ follows from Lemma \ref{center of Mg} and rigidity of tori, since both are closed subgroup schemes of $\hat{T}_{\Zpbar}$ lifting $Z(M_{\bar{g}}^\circ)_{\Fpbar}$.
Since 
$$\chi^\vee(\mathcal{O}_{F_v}^\times) \subset Z(M_{g}^\circ)(\Qpbar) = Z(M_{\chi^\vee(\varpi_v)}^\circ)(\Qpbar)$$
and $r \cong \chi^\vee \circ \Art_{F_v}^{-1}$, the claim follows by Corollary \ref{rational Mg}.

Since $g \in M_g^\circ(\Qpbar)$ (by Theorem \ref{Mg}), functoriality of the Jordan decomposition implies that $g_{ss} \in M_g^\circ(\Qpbar)$.
Since $g_{ss}$ lies inside (the $\Qpbar$-points of) a maximal torus of $(M_g^\circ)_{\Qpbar}$ and any maximal torus of $(M_g^\circ)_{\Qpbar}$ contains $(Z(M_g^\circ))_{\Qpbar}$, we see that $r$ is contained inside (the $\Qpbar$-points of) a maximal torus of $M_g^\circ$ by the above claim.
As all maximal tori in a connected reductive group over an algebraically closed field are conjugate, we can choose $h \in M_g^\circ(\Qpbar)$ such that $\Ad(h)(r)$ has image in $\hat{T}(\Qpbar)$.
Now, we know that $\Ad(h)(r)$ and $\chi^\vee \circ \Art_{F_v}^{-1}$ are $\hat{G}(\Qpbar)$-conjugate and both are valued in $\hat{T}(\Qpbar)$.
Therefore Lemma \ref{equal pschar wccl} implies that they are in fact conjugate by some element $w \in W(\hat{G},\hat{T})$, whose class may be represented by an element $n \in N_{\hat{G}}(\hat{T})(\Zpbar)$.
Our goal will be to show that $n \in M_g(\Qpbar)$.

Observe that the $(M_g^\circ)_{\Qpbar}$-pseudocharacters of $r$ and $\restr{\rho}{W_{F_v}}$ coincide.
This is a consequence of \cite[Theorem 5.2]{Richardson1988ConjugacyCO}, as given $\boldsymbol{x} = (g^{a_i} z_i)_i$ with $a_i \in \mathbb{Z}$ and $z_i \in Z(M_g^\circ)(\Qpbar)$ for $1 \leq i \leq n$, we may take $\boldsymbol{y} = ((g_{ss})^{a_i} z_i)_i$ to be the `Levi' part of the Levi decomposition of $\boldsymbol{x}$.
These both have the same image in $((M_g^\circ)^n//M_g^\circ)(\Qpbar)$ since the closed orbit defined by conjugation on $\boldsymbol{y}$ is the unique closed orbit in the closure of the orbit of $\boldsymbol{x}$ (see \cite[Proposition 3.7(ii)]{thorne2019}).

Let $\Theta_1$ denote the $(M_g^\circ)_{\Qpbar}$-pseudocharacter attached to $r$ and let $\Theta_2$ be the $(M_g^\circ)_{\Qpbar}$-pseudocharacter given by $\tr (\chi^\vee \circ \Art_{F_v}^{-1})$.
Both $\Theta_1$ and $\Theta_2$ are defined over $\Zpbar$ (since both are given by traces of representations valued in $M_g^\circ(\Zpbar)$) and are lifts of $\overline{\Theta} = \tr(\bar{\rho})$, an $M_g^\circ$-pseudocharacter over $\Fpbar$.
The representations $\bar{\rho}$ and the mod $p$ reduction of $\Ad(h)(r)$ have image contained in $\hat{T}(\Fpbar)$ and by Lemma \ref{equal pschar wccl} they are conjugate by some element $\bar{x} \in N_{(M_{\bar{g}}^\circ)_{\Fpbar}}(\hat{T}_{\Fpbar})$.
We also know that these representations are conjugate by the image $\bar{n}$ of $n$ in $N_{\hat{G}}(\hat{T})(\Fpbar)$, since $\bar{\rho}$ is the mod $p$ reduction of $\chi^\vee \circ \Art_{F_v}^{-1}$.
The elements $\bar{x}$ and $\bar{n}$ of $W(\hat{G}_{\Fpbar},\hat{T}_{\Fpbar})$ can differ only up to an element centralizing the image of $\bar{\rho}$.
Thus their difference must lie in $M_{\bar{g}}(\Fpbar)$, as it centralizes $\bar{g}$.
We see that $\bar{n} \in N_{M_{\bar{g}}}(\hat{T}_k)(\Fpbar)$, and hence also $n \in N_{M_{g}}(\hat{T})(\Qpbar)$, as desired.

Thus $r$ and $\chi^\vee \circ \Art_{F_v}^{-1}$ are conjugate in $M_g(\Qpbar)$, with $\chi^\vee \circ \Art_{F_v}^{-1}(I_v) \subset Z(M_{g})^\circ(\Qpbar)$ implying that $\restr{\chi^\vee \circ \Art_{F_v}^{-1}}{I_v}$ is unchanged under this conjugation.
It follows that that $\rho$ and $\chi^\vee \circ \Art_{F_v}^{-1}$ have equal restrictions to $I_{v}$, since we saw that $\restr{\rho}{I_v} = \restr{r}{I_v}$.
We are now done, since the action of $T(\mathcal{O}_{F_v})/(T(\mathcal{O}_{F_v}) \cap \pone)$ on $((\pi_v)^\pone)_{\mathfrak{n}_1}$ is via $\chi$ and this will exactly coincide with the $\Delta_v$-action arising from $\chi^\vee \circ \Art_{F_v}^{-1}$ constructed in Section \ref{dual group deltaq}.
\end{proof}

\section{Modularity of abelian surfaces} \label{sec surfaces}

In this section we will apply the constructions of the earlier sections to showing the modularity of some abelian surfaces over totally real fields, extending the results of \cite{surfaces}; for this we will need analogues of many of the results of \cite[Section 7]{surfaces}.
Since many of the arguments will work in exactly the same way, we will focus only on explaining the modifications arising from working with our more general notion of Taylor--Wiles place.

\subsection{Setup and notation} \label{setup for modularity}

In this section we continue with the setup of Section \ref{section deformation} but let $F$ be a totally real number field in which $p \geq 3$ splits completely and take $G = \GSp_4$, defined over $\mathcal{O}_F$ as those invertible matrices which scale a fixed symplectic form $J$ on a finite free $\mathcal{O}_F$-module $V$ of rank $4$.
Explicitly, we will fix an isomorphism $V \cong (\mathcal{O}_F)^4$ and take
\[
J = \begin{pmatrix}
0 & 0 & 0 & 1 \\
0 & 0 & 1 & 0 \\
0 & -1 & 0 & 0 \\
-1 & 0 & 0 & 0
\end{pmatrix}
\]
in this basis.
We will fix an isomorphism $\iota: \mathbb{C} \to \Qpbar$, through which we will determine a squareroot in $\Qpbar$ of every prime number and assume that these are all contained in our coefficient ring $\CalO$ (which we may since we need only adjoin finitely many elements).
Let $T \leq \GSp_4$ be usual diagonal split maximal torus and $B \leq \GSp_4$ the Borel subgroup of upper triangular matrices.
We can identify $\hat{G}$ with $\GSp_4$ just as in Section \ref{gsp4 local}, and take $\hat{G}$ to be defined over $\mathbb{Z}$.
We let $p \geq 3$ be the residue characteristic of $\CalO$ and assume that $p$ that splits completely in $F$.
Note that $p$ is coprime to the order of $W_G$ and hence of very good characteristic for $\hat{G}$.
We let $S_p$ denote the set of places of $F$ which divide $p$.

Let $\bar{\rho}: G_F \to \GSp_4(k)$ be a continuous representation.
We recall the following definitions from \cite{surfaces} which will be of repeated use.
\begin{definition} \label{defn: tidy}
A representation $\bar{\rho}: G_F \to \GSp_4(k)$ is said to be tidy if there exists $g \in \bar{\rho}(G_F)$ such that $\nu(g) \neq 1$ and for every pair of eigenvalues $\bar{\alpha} \neq \bar{\beta}$ of $g$ we have $\bar{\alpha}/\bar{\beta} \neq \nu(g)$.
\end{definition}
\begin{definition} \label{defn: p-distinguished ordinary}
Let $A$ be either $\CalO$ or $k$, and let $v \in S_p$.
A representation $\rho_v: G_{F_v} \to \GSp_4(A)$ is said to be $p$-distinguished weight 2 ordinary if for each place $v \in S_p$ we have an isomorphism
$$
\rho_v \cong
\begin{pmatrix}
&\chi_{\alpha_v} &0 &* &* \\
& &\chi_{\beta_v}  &* &*  \\
& & &\chi_{\beta_v}^{-1} \varepsilon^{-1}  &0  \\
& & & &\chi_{\alpha_v}^{-1} \varepsilon^{-1}  \\
\end{pmatrix}
$$
with $\alpha_v \not\equiv \beta_v \mod m_A$.
Here for $\lambda \in A^\times$, $\chi_{\lambda}: G_{F_v} \to A^\times$ is the unramified character satisfying $\chi_{\lambda}(\phi_v) = \lambda$ for any choice of Frobenius lift $\phi_v$.

\end{definition}
We will suppose that $\bar{\rho}$ satisfies all of the hypotheses of \cite[7.8.1]{surfaces} except replacing vastness by the assumption that $\bar{\rho}$ is $\GSp_4$-reasonable.
We record explicitly what this means.

The representation $\bar{\rho}$ is $\GSp_4$-reasonable, in the sense of Definition \ref{reasonable}, and tidy, in the sense of Definition \ref{defn: tidy}.
For each $v \in S_p$, $\restr{\bar{\rho}}{G_{F_v}}$ is $p$-distinguished weight $2$ ordinary, in the sense of Definition \ref{defn: p-distinguished ordinary} (and fix an ordering $(\bar{\alpha}_v,\bar{\beta}_v)$ on the eigenvalues of $\bar{\rho}(\Frob_v))$.
We suppose that there exists a finite set of finite places $R$ of $F$, disjoint from $S_p$, such that for each $v \in R$, $q_v \equiv 1 \mod p$ and, if $p=3$, then $q_v \equiv 1 \mod 9$.
We suppose that $\bar{\rho}$ is unramified outside of $S_p \cup R$.
For $v$ a finite place of $F$ let $\Iw_v$ be the Iwahori subgroup of $G(F_v) = \GSp_4(F_v)$ defined with respect to the Borel subgroup $B$.
Consider $\pi$ an ordinary cuspidal automorphic representation of $\GSp_4(\mathbb{A}_F)$ of parallel weight $2$ and central character $|\cdot|^2$ such that for $v \in R \cup S_p$ that $\pi_v^{\Iw_v} \neq 0$ and for $v \not\in R \cup S_p$ that $\pi_v^{\GSp_4(\CalO_{F_v})} \neq 0$.
There exists an associated continuous semi-simple Galois representation $\rho_{\pi,p}: G_F \to \GL_4(\Qpbar)$ (see \cite[Theorem 4.6]{mok_2014}), and we assume that there exists such a $\pi$ as above for which $\overline{\rho_{\pi,p}} \cong \bar{\rho}$.
Extending $k$ if necessary, we also assume throughout that $k$ is large enough for the results of Section \ref{section deformation} to apply to $\bar{\rho}$.

We will consider the global deformation problem described in \cite[7.7]{surfaces}.
For this we fix, for each $v \in R$, a pair of characters $\chi_v = (\chi_{v,1},\chi_{v,2})$ where $\chi_{v,i}: \CalO_{F_v}^\times \to \CalO^\times$ are trivial modulo $\lambda$.
Let $\chi = (\chi_v)_{v \in R}$, and let $\chi$ also denote the character $\prod_{v \in R} \Iw_v \to~\CalO^\times$ arising from viewing each (necessarily $p$-power order) $\chi_v$ as a character on $T(k_v)$ (via the first two diagonal entries) and inflating.
We take the similitude character $\psi$ of our lifts of $\bar{\rho}$ as in Section \ref{section deformation} to be given by $\varepsilon^{-1}$, the inverse of the cyclotomic character (and assume that $\bar{\rho}$ has its similitude character given by the reduction of $\varepsilon^{-1}$).
We can find, by tidiness of $\bar{\rho}(G_F)$ and the Chebotarev density theorem, an unramified place $v_0$ such that 
$q_{v_0} \not\equiv 1 \mod p$, $\ch k(v_0) > 5$ and such that no pair of eigenvalues of $\bar{\rho}(\Frob_{v_0})$ are in the ratio $q_{v_0}$.
Set $S = S_p \cup R \cup \{v_0\}$.

We let $I \subset S_p$ and write $I^c = S_p \setminus I$.
For $v \in I$, we let $\Lambda_v = \CalO[[\OFv^\times(p)]]$, where $\OFv^\times(p) = 1+p \OFv$.
For $v \in I^c$, set $\Lambda_v = \CalO[[(\OFv^\times(p))^2]]$.
For $v \in S_p$ fix a choice $\varsigma_v$ of either $\bar{\alpha}_v$ or $\bar{\beta}_v$, and let $\bar{\varsigma} = (\bar{\varsigma_v})_{v \in S_p}$.
We have $p$-adic local deformation problems $\mathcal{D}_v^{P}$ for $v \in I$ and $\mathcal{D}_v^{B,\bar{\varsigma}_v}$ for $v \in I^c$ described in \cite[7.3]{surfaces}.

For $v \in R$, we let $\Lambda_v = \CalO$ and consider the Ihara avoidance local deformation problem $\mathcal{D}_v^{\chi_v}$ of \cite[7.4.5]{surfaces}, given roughly by lifts for which the eigenvalues under the image of inertia are given according to $\chi_v$.
At the place $v_0$ we will let $\Lambda_v = \mathcal{O}$ and consider the unrestricted local deformation problem $D_{v_0}^\square$.
The global deformation problem $\mathcal{S}_{\chi}^{I,\varsigma}$ is then defined by these local deformation problems at the places of $S$.

Now fix a Taylor--Wiles datum $(Q,\{(\hat{T}_v,\hat{B}_v)\}_{v \in Q})$ as in Definition \ref{TW datum}.
We will consider the augmented global deformation problem $\mathcal{S}_{\chi,Q}^{I,\varsigma}$ arising from our Taylor--Wiles datum, as in Section \ref{section tw places}.
Denote the arising deformation functor by $\mathcal{D}_{\chi,Q}^{I,\varsigma}$, represented by $R_{\chi,Q}^{I,\varsigma} \in \CNL_\Lambda$.

If $v \in Q$, then $\bar{g}_v := \bar{\rho}(\Frob_v)$ is a semisimple element of $\hat{G}(k) = \GSp_4(k)$ and defines a connected reductive subgroup $M_{\bar{g}_v} = Z_{\hat{G}_k}(\bar{g}_v)$ of $(\GSp_4)_k$. 
The data of the maximal torus $\hat{T}_v$ of $M_{\bar{g}_v}$ and Borel subgroup $\hat{B}_v$ of $(\GSp_4)_k$ containing $\hat{T}_v$ yields dual Levi subgroups $L_v$ of $G = \GSp_4$ and compact open subgroups $\mathfrak{p}_{1,v} \leq \mathfrak{p}_v$ of $\GSp_4(F_v)$ as in Construction \ref{parahoric construction}.
We also obtain maximal ideals $\tilde{m}_v$ of $\CalO[T(F_v)/(T(F_v) \cap \mathfrak{p}_{1,v})]^{W_{L_v}}$, which are the ideals denoted by $\mathfrak{n}_1$ in Section \ref{jacquet module invariants}.
For $v \in R \cup \{v_0\}$, let $\Iw_{1,v}$ denote maximal the pro-$v$ subgroup of $\Iw_v$, which will be given by the preimage of $U(k(v))$ in $\GSp_4(\OFv)$, where $U$ is the unipotent radical of $B$.
For $v \in R$, we will let $L_v = T$.

\begin{remark}\label{rmk notation diff}
The notation $\Iw_{1,v}$ is inconsistent with the notation $\mathfrak{p}_{1,v}$ for places $v \in Q$ (but the same as in \cite{surfaces}).
The quotient $\Iw_v/\Iw_{1,v}$ can be identified with $T(k(v))$.
On the other hand, if $P_v=B$ for $v \in Q$ then the quotient $\mathfrak{p}_v/\mathfrak{p}_{1,v}$ would be the maximal $p$-power quotient of $T(k(v))/Z(G)(k(v))$.
\end{remark}

Let $\mathbb{A}_F^{\infty,p}$ denote the ring of finite adeles of $F$ away from places dividing $p$.
We will define compact open subgroups of $\GSp_4(\mathbb{A}_F^{\infty,p})$ in a similar manner to \cite[Definition 7.8.2]{surfaces}.
We warn the reader that our definitions will not coincide with these exactly, even in the case where $Q$ consists of places with $\bar{\rho}(\phi_v)$ regular semisimple for the reasons of Remark \ref{rmk notation diff}.
The definition of $K^p = \prod_v K_v$ is the same; we let:
\begin{itemize}
    \item $K_v = \GSp_4(\OFv)$, if $v \not\in S$
    \item $K_v = \Iw_{1,v}$, if $v \in R \cup \{v_0\}$.
\end{itemize}
Define compact open subgroups $K^p_{\Iw}(Q),K^p_0(Q),K_1^p(Q) \leq K^p$ and $K_0^p(Q,R)$ as follows:
\begin{itemize}
    \item $K^p_{\Iw}(Q)_v = K^p_0(Q)_v = K^p_1(Q)_v = K_0^p(Q,R) = K^p_v$, if $v \not\in Q \cup R$
    \item $K^p_{\Iw}(Q)_v = \Iw_v$, $K^p_0(Q)_v = K^p_0(Q,R)_v = \mathfrak{p}_v$ and $K_1^p(Q) = \mathfrak{p}_{1,v}$, if $v \in Q$
    \item $K^p_{\Iw}(Q)_v = K^p_0(Q)_v = K^p_1(Q)_v = \Iw_{1,v}$ and $K^p_0(Q,R)_v = \Iw_v$, if $v \in R$.
\end{itemize}
Lastly let $K_1^p(Q,R) = K_1^p(Q)$.
The extra compact open subgroup $K^p_{\Iw}(Q)$ will be helpful for carrying out computations using the well-understood structure of the Iwahori-Hecke algebra at places $v \in Q$.
It follows from the existence of $v_0$ and \cite[Lemma 7.8.3]{surfaces} that these compact open subgroups are neat.

For $I \subset S_p$ we will let $K^p(I) = \prod_{v \in I} \Kli_v \prod_{v \in I^c} \Iw_v$, where $\Kli_v$ is the parahoric subgroup of $G(F_v)$ associated to the Klingen parabolic subgroup of $G$, the latter given by block upper triangular matrices corresponding to the partition $4 = 1+2+1$.
These will determine our level structure at $v \in S_p$.

We will now define analogous objects to those constructed in the discussion following on from \cite[Lemma 7.8.3]{surfaces}.
Firstly, let
\begin{align*}
    \Tilde{\mathbb{T}} = &\bigotimes_{v \not\in S} \CalO[\GSp_4(F_v)//\GSp_4(\OFv)] \\
    \Tilde{\mathbb{T}}^Q = &\bigotimes_{v \not\in S \cup Q} \CalO[\GSp_4(F_v)//\GSp_4(\OFv)]    
\end{align*}
be the natural rings of Hecke operators for these levels at finite places away from $S$ (respectively $S \cup Q$).
For $v \not\in S$, define the following elements of $\mathbb{T}$
\begin{itemize}
    \item $T_{v,0} = [\GSp_4(\OFv) \diag(\varpi_v,\varpi_v,\varpi_v,\varpi_v)\GSp_4(\OFv)]$
    \item $T_{v,1} = [\GSp_4(\OFv) \diag(\varpi_v,\varpi_v,1,1)\GSp_4(\OFv)]$
    \item $T_{v,2} = [\GSp_4(\OFv) \diag(\varpi_v^2,\varpi_v,\varpi_v,1)\GSp_4(\OFv)]$,
\end{itemize}
where we are viewing $\mathcal{H}(\GSp_4(F_v),\GSp_4(\OFv))$ as a subalgebra of $\mathbb{T}$.
Then let $$Q_v(X) = X^4 - T_{v,1} X^3 + (q_v T_{v,2} + (q_v^3 + q_v)T_{v,0})X^2 - q_v^3 T_{v,0} T_{v,1}X + q_v^6 T_{v,0}^2 \in \Tilde{\mathbb{T}}[X],$$
where $v \not\in S$ has residue field of order $q_v$.
We let $\Tilde{m}^{an} \subset \Tilde{\mathbb{T}}$ denote the maximal ideal corresponding to $\bar{\rho}$ in the same way as in this discussion, and similarly for $\Tilde{m}^{an,Q} \subset \Tilde{\mathbb{T}}^Q$.
We will also let $\Tilde{\mathbb{T}}^I$ and $\Tilde{\mathbb{T}}^{I,Q}$ be defined in the same way by adjoining Hecke operators dependent on $I$ at places $v \in S_p$, and form the maximal ideals $\Tilde{m}^{I,\varsigma}$ and $\Tilde{m}^{I,\varsigma,Q}$ of these respective rings in the same way, corresponding to eigenvalues determined by $\bar{\rho}$ and the choice of $\varsigma$.

We define a $\Lambda_I[\Delta_Q]$-module
$$M^{\chi,I,\varsigma,Q} = \RHom^0_{\Lambda_I}(M^{\bullet,I}_{K_1^p(Q)}, \LI)_{\Tilde{m}^{I,\varsigma,Q},\Tilde{m}_Q,\chi,|.|^2}$$
and a $\Lambda_I$-module
$$M^{\chi,I,\varsigma} = \RHom^0_{\Lambda_I}(M^{\bullet,I}_{K^p}, \LI)_{\Tilde{m}^{I,\varsigma},\chi,|.|^2}.$$
Here 
\begin{itemize}
\item The complexes $M^{\bullet,I}_{K_1^p(Q)}$ and $M^{\bullet,I}_{K^p}$ are as defined in \cite[Theorem 4.6.1]{surfaces} at the prime-to-$p$ levels $K_1^p(Q)$ and $K^p$ respectively
\item The localizations at $\Tilde{m}^{I,\varsigma,Q}$ and $\Tilde{m}^{I,\varsigma}$ are as above
    \item $\Tilde{m}_Q$ is the maximal ideal of the Hecke algebra at places $v \in Q$
$$\bigotimes_{\CalO, v \in Q} \CalO[T(F_v)/(T(F_v) \cap \mathfrak{p}_{1,v})]^{W_{L_v}}$$
defined by the maximal ideals $\tilde{m}_v$ of the tensorands
\item $\chi$ denotes taking $\chi$-coinvariants for the action of $\prod_{v \in R} T(k(v))$
\item $|\cdot|^2$ denotes fixing the central character, by taking coinvariants under the Hecke operators $T_{v,0} - q_v^{-2}$ for $v \not\in S$.
\end{itemize}

Finally in this section, we will let $\mathbb{T}^{\chi,I,\varsigma,Q}$ denote the $\LI$-subalgebra of $\End_{\LI}(M^{\chi,I,\varsigma,Q})$ generated by the image of  
$\Tilde{\mathbb{T}}^{I,Q}$.
Similarly, let $\mathbb{T}^{\chi,I,\varsigma}$ denote the $\LI$-subalgebra of $\End_{\LI}(M^{\chi,I,\varsigma})$ generated by the image of  
$\Tilde{\mathbb{T}}^I$.

\subsection{Preparation for the Taylor--Wiles method} \label{sec tw prep}

In this section fix $I \subset S_p$ and let $K_p = K_p(I)$ be the level structure at $v \in S_p$.
Let $X_{K_p K^p_0(Q,R),\Sigma}$, $X_{K_p K^p_1(Q,R),\Sigma}$ denote the compactified Shimura varieties at the respective levels $K_p K^p_0(Q,R)$ and $K_p K^p_1(Q,R)$ with respect to a compatible choice $\Sigma$ of polyhedral cone decompositions (as in \cite[Theorem 3.5.1]{surfaces}).
For $v \in Q \cup R$, we have a parabolic subgroup $P_v$ defining a parahoric subgroup $\mathfrak{p}_v \leq \GSp_4(F_v)$ (these are given by the standard Borel subgroup $B$ and Iwahori subgroup respectively in the case $v \in R$).
This parabolic subgroup $P_v$ may be viewed as the subgroup of $\GSp_4$ which preserves a filtration $0 = V_{v,0} \leq \ldots \leq V_{v,n_v}$ of isotropic subspaces of $V$ for some $0 \leq n_v \leq 2$.
Thus from the definition of the moduli problem described in \cite[3.3]{surfaces},over the interior $Y_{K_p K^p_0(Q,R)}$  we obtain a filtration of $A[v]$ by $k(v)$-group schemes, which we will denote $H_{v,0} \leq \ldots \leq H_{v,n_v}$.
To show the natural map between the compactified Shimura varieties at the respective levels is finite étale we shall be interested in extending the top exterior powers of the graded pieces of these filtrations.

\begin{proposition} \label{deltaq torsor}
For $v \in Q \cup R$, the group schemes $J_{v,m+1} := \det(H_{v,m+1}/H_{v,m})$ defined over $Y_{K_p K^p_0(Q,R)}$ for $0 \leq m < n_v$ may be extended to rank 1 finite étale $k(v)$-group schemes over $X_{K_p K^p_0(Q,R),\Sigma}$.
Let $J_{v,0}$ denote the constant $k(v)$-group scheme of rank 1 defined over $X_{K_p K^p_0(Q,R),\Sigma}$.
Let $\Xi_Q = \ker(\prod_{v \in Q} C_{L_v}(k(v)) \to \Delta_Q)$.
The map $X_{K_p K^p_1(Q,R),\Sigma} \to X_{K_p K^p_0(Q,R),\Sigma}$ is then finite étale with group $\Delta_Q \times \prod_{v \in R} T(k(v))$ and we can identify $X_{K_p K^p_1(Q,R),\Sigma}$ as the torsor of $\Xi_Q$-orbits of trivialisations of the extended group schemes $J_{v,m}$ for each $v \in Q \cup R$ and $0 \leq m \leq n_v$.
\end{proposition}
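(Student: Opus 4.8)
The plan is to reduce the assertion to the standard fact that adding Iwahori-type level structure at auxiliary places produces finite étale covers of Shimura varieties, while carefully tracking the moduli interpretation of the parahoric subgroups $\mathfrak{p}_v$ and $\mathfrak{p}_{1,v}$ appearing in Construction \ref{parahoric construction}. First I would recall from \cite[3.3]{surfaces} that a point of $Y_{K_pK^p_0(Q,R)}$ consists of (a $\mathbb{Z}_{(p)}$-isogeny class of) a polarised abelian scheme $A$ with prime-to-$p$ level structure, together with, at each $v \in Q \cup R$, a choice of flag $H_{v,0} \leq \dots \leq H_{v,n_v}$ of isotropic $k(v)$-subgroup schemes of $A[v]$ of the type preserved by the parahoric $\mathfrak{p}_v$ (i.e. of the orbit type of $P_v$ on isotropic flags in $V \otimes k(v)$). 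Over the interior this is étale-locally constant, so the graded pieces $H_{v,m+1}/H_{v,m}$ are locally free $k(v)$-modules of the appropriate rank, and $J_{v,m+1} = \det(H_{v,m+1}/H_{v,m})$ is a rank $1$ finite étale $k(v)$-group scheme. Extending these to the toroidal compactification is exactly the argument in \cite[Proof of Lemma 7.8.5]{surfaces} and its analogues: the universal semi-abelian scheme degenerates to a torus part at the boundary on which the $k(v)$-subgroup structures extend canonically (the filtration extends to the associated $1$-motive / the étale part of the universal object over the cusps), so by normality of $X_{K_pK^p_0(Q,R),\Sigma}$ and the fact that finite étale covers extend uniquely over a normal base along a divisor with normal crossings complement (Zariski--Nagata purity), the $J_{v,m}$ extend. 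I would cite the boundary descriptions of \cite[Theorem 3.5.1, 3.6.1]{surfaces} here rather than reprove degeneration theory.

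\textbf{Main step: identifying the cover.} Next I would analyse the two compact opens. For $v \in R$ we have $\mathfrak{p}_v = \Iw_v$ and $\mathfrak{p}_{1,v} = \Iw_{1,v}$, and the quotient $\Iw_v/\Iw_{1,v} \cong T(k(v))$ is exactly the group of trivialisations (i.e. $k(v)^\times$-bases) of the three graded one-dimensional pieces of the chosen full isotropic flag, modulo the similitude relation already built into $\GSp_4$; this is the classical description and matches \cite[Definition 7.8.2]{surfaces}. For $v \in Q$, by Construction \ref{parahoric construction}(6)--(7) the quotient $\mathfrak{p}_v/\mathfrak{p}_{1,v} = \Delta_v$ is the maximal $p$-power quotient of $C_{L_v}(k(v))$ on which $Z(G)(k(v))$ acts trivially, and $C_{L_v}(k(v))$ is precisely the group of characters of the centre, equivalently the group of "determinant-type" data of the graded pieces $V_{v,m+1}/V_{v,m}$ of the flag stabilised by $P_v$ — this is where I use the duality $X_*(Z(M_{\bar g_v})^\circ) \cong X^*(C_{L_v})$ recorded in Section \ref{dual group deltaq}. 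Concretely: a trivialisation of each $J_{v,m}$ gives a point of $\prod_m \mathbb{G}_m(k(v))$, and the group of such trivialisations modulo the subgroup $\Xi_Q = \ker(\prod_{v \in Q} C_{L_v}(k(v)) \to \Delta_Q)$ is by definition $\Delta_Q$; similarly the full trivialisations at $v \in R$ give $\prod_{v \in R} T(k(v))$. So the space of $\Xi_Q$-orbits of trivialisations of all the $J_{v,m}$ ($v \in Q \cup R$, $0 \le m \le n_v$, with $J_{v,0}$ the trivial/constant one contributing nothing new) is naturally a torsor over $X_{K_pK^p_0(Q,R),\Sigma}$ under $\Delta_Q \times \prod_{v \in R} T(k(v))$, and by construction of the moduli problem at level $K_pK^p_1(Q,R)$ (adding exactly this trivialisation datum to the $K_pK^p_0(Q,R)$-datum) this torsor is canonically $X_{K_pK^p_1(Q,R),\Sigma}$. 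Finiteness and étaleness then follow because a torsor over a scheme under a finite étale group scheme is finite étale; over the interior this is immediate, and over the whole compactification it follows from the extension of the $J_{v,m}$ established in the previous paragraph together with normality.

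\textbf{Expected obstacle.} The routine part is the torsor bookkeeping; the delicate point is the \emph{extension of $J_{v,m}$ across the boundary} and verifying that the torsor structure extends too, i.e. that the finite étale cover $Y_{K_pK^p_1(Q,R)} \to Y_{K_pK^p_0(Q,R)}$ really does extend to a finite étale cover of toroidal compactifications compatibly with a fixed $\Sigma$. One must check that the chosen cone decomposition $\Sigma$ is simultaneously adapted to both levels (this is part of the hypothesis "compatible choice $\Sigma$" and is handled in \cite[3.5]{surfaces}), and that the level structure at $Q \cup R$ is "of étale type" so that it genuinely extends rather than merely extending after a ramified base change — this is automatic here because $p \nmid q_v$ for $v \in Q$ (Taylor--Wiles places satisfy $q_v \equiv 1 \bmod p$, in particular $q_v$ is a $p$-adic unit, so $A[v]$ is étale) and because $\mathfrak p_v, \mathfrak p_{1,v}$ are hyperspecial-away-from-$v$ in the relevant sense. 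I would therefore organise the proof as: (i) moduli description of $Y_{K_pK^p_1(Q,R)}$ as the $\Xi_Q$-orbit-of-trivialisations torsor over $Y_{K_pK^p_0(Q,R)}$; (ii) extension of the $J_{v,m}$ to the compactification via the boundary charts of \cite{surfaces}; (iii) purity/normality to conclude the torsor and hence the whole map extends finite-étale-ly; (iv) identification of the Galois group with $\Delta_Q \times \prod_{v\in R} T(k(v))$ from the quotients $\mathfrak p_v/\mathfrak p_{1,v}$ and $\Iw_v/\Iw_{1,v}$.
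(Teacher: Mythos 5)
The torsor bookkeeping in your steps (i) and (iv) is essentially the paper's argument: the paper likewise uses the structure of the Levi $L_v$ (an extension of $\mathbb{G}_m$ by $\GL_{r_1}\times\cdots\times\GL_{r_{n_v}}\times \Sp_{2r_0}$) to see that $P_v(k(v))\to C_{L_v}(k(v))$ is ``similitude times determinants of graded pieces,'' so that a $\ker(P_v(k(v))\to\Delta_Q)$-orbit of level structures is a $P_v(k(v))$-orbit together with a $\Xi_Q$-orbit of trivialisations of the $J_{v,m}$; and it identifies the torsor with $X_{K_pK^p_1(Q,R),\Sigma}$ by noting both are normal, finite flat over $X_{K_pK^p_0(Q,R),\Sigma}$ and generically equal, which matches your normality remark.

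However, your treatment of the one genuinely delicate step — extending the $J_{v,m}$ across the boundary — has a real gap. Zariski--Nagata purity does not do what you want: the boundary of the toroidal compactification is a divisor (codimension $1$), so purity of the branch locus says nothing about extending a finite \'etale cover of the interior to a finite \'etale cover of the compactification; that extension is exactly the statement to be proved, and in general it fails (think of Kummer covers of $\mathbb{G}_m$ not extending \'etale over $\mathbb{A}^1$). Likewise your assertion that ``the $k(v)$-subgroup structures extend canonically'' at the boundary is too strong: near the boundary $A[v]$ itself does not extend as a finite group scheme (only $B[v]$ does, where $A=B/Y$ via Mumford's construction with $B$ semi-abelian of constant toric rank), and the subgroups $H_{v,m}\subset A[v]$ need not extend at all. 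What the paper actually proves is that only the \emph{determinant} of each graded piece extends: writing $J_{v,m}=\det(D/C)$ with $C\le D\le A[v]$ \'etale over the interior, one uses the exact sequence
\begin{equation*}
0 \to \frac{D\cap B[v]}{C\cap B[v]} \to \frac{D}{C} \to \frac{D+B[v]}{C+B[v]} \to 0,
\end{equation*}
so that $\det(D/C)$ is the tensor product of the determinant of a subquotient of $B[v]$ (which is finite \'etale over the whole base) and the determinant of a subquotient of $A[v]/B[v]$ (which is constant on the interior, hence extends). This determinant trick is the missing idea in your proposal; without it, neither the extension of the $J_{v,m}$ nor the finite-\'etaleness of $X_{K_pK^p_1(Q,R),\Sigma}\to X_{K_pK^p_0(Q,R),\Sigma}$ over the boundary is established.
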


\begin{proof}
We shall follow an argument along the lines of the proof of \cite[Proposition 7.8.10]{surfaces}.
We firstly explain how the second part of the Proposition follows from the first.
We claim that the torsor of $\Xi_Q$-orbits of trivialisations of the extended group schemes $J_{v,m}$ over $Y_{K_p K^p_0(Q,R)}$ is isomorphic to $Y_{K_p K^p_1(Q,R)}$.
If $\{V_{v,i}\}_{0 \leq i \leq n_v}$ is the filtration of $V$ corresponding to the parabolic subgroup $P_v$ then the Levi subgroup $L_v$ is isomorphic to an extension of $\mathbb{G}_m$ (corresponding to the similitude factor) by $\GL_{r_1} \times \ldots \times \GL_{r_{n_v}} \times \Sp_{2 r_0}$  where $r_i = \dim V_{v,i} - \dim V_{v,{i-1}}$ for $1 \leq i \leq n_v$ and $r_0 = 2 - \sum_{i=1}^{n_v} r_i$.
It then follows that the map $P_v \to C_{L_v} \cong \mathbb{G}_m^{\{0,1,\ldots,n_v\}}$ is given by the similitude character onto the $0$-index copy of $\mathbb{G}_m$, and taking determinants of the various $\GL_{r_i}$ factors onto the copy of $\mathbb{G}_m$ indexed by $i$.
Then giving a $\ker(P_v(k(v)) \to C_{L_v}(k(v)))$-orbit of isomorphisms $V \otimes_{\mathcal{O}_F} k(v) \to A[v]$ of $\Pi_1(Y_{K_p K^p_0(Q,R)},\bar{s})$-modules is equivalent to giving a $P_v(k(v))$-orbit, together with isomorphism between $J_{v,m}$ and the trivial $\Pi_1(Y_{K_p K^p_0(Q,R)},\bar{s})$-module $k(v)$ for every $0 \leq m \leq n_v$.
Similarly, giving a $\ker(P_v(k(v)) \to \Delta_Q)$-orbit of such isomorphisms is equivalent to giving a $P_v(k(v))$ orbit, together with a $\Xi_Q$-orbit of isomorphisms $J_{v,m} \to k(v)$, where $\Xi_Q$ acts on the isomorphism indexed by $m$ via the map $C_L(k(v))$ to $k(v)^\times$ onto the factor indexed by $m$.
As in the proof of \cite[Proposition 7.8.10]{surfaces} the torsor of $\Xi_Q$-orbits of trivializations and $X_{K_p K^p_1(Q,R),\Sigma}$ are both normal, finite flat over $X_{K_p K^p_0(Q,R),\Sigma}$ and generically equal, so they are isomorphic.

We thus have to show we can extend each $J_{v,m}$.
Just as in the proof of \cite[Proposition 7.8.10]{surfaces} we may reduce to the following situation: we have an open immersion of schemes $S \subset T$ together with an abelian scheme $A$ defined over $S$, and a semi-abelian scheme $B$ of constant toric rank defined over $T$ such that $A$ is the quotient of $B$ by a finite free $\mathcal{O}_F$-module on restricting to $S$, as in Mumford's construction.
In that proof we saw that $B[v]$ is already defined as a finite étale group scheme over the whole of $T$, and that $A[v]/B[v]$ is a constant group scheme on $S$.
For $m>0$ write $J_{v,m} = \det(D/C)$, where $C \leq D \leq A[v]$ are étale subgroup schemes defined on $S$.
Then we have the short exact sequence
$$0 \to \frac{D \cap B[v]}{C \cap B[v]} \to \frac{D}{C} \to \frac{D + B[v]}{C + B[v]} \to 0,$$
from which it follows on taking determinants that
$$\det \frac{D}{C} \cong \det (\frac{D \cap B[v]}{C \cap B[v]}) \otimes \det(\frac{D + B[v]}{C + B[v]}).$$
The first tensorand is an exterior power of a subquotient of $B[v]$ defined over $T$, and therefore extends to $T$.
The second tensorand is an exterior power of a subquotient of $A[v]/B[v]$, which is a constant group scheme on $S$ and thus extends to $T$, so we are done.
\end{proof}

The following is the direct analogue of \cite[Proposition 7.8.11]{surfaces}.

\begin{proposition}\label{finite free balanced}
If $I = \emptyset$ then $M^{\chi,I,\varsigma,Q}$ is a finite free $\Lambda_I[\Delta_Q]$-module.
If $\#I = 1$ then $M^{\chi,I,\varsigma,Q}$ is a balanced $\Lambda_I[\Delta_Q]$-module, in the sense of \cite[Definition 2.10.1]{surfaces}.
\end{proposition}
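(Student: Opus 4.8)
The plan is to deduce this from the corresponding statement in \cite[Proposition 7.8.11]{surfaces} by carrying out the patching-style comparison between the level structures $K_1^p(Q)$ and $K^p$, using the geometry set up in Proposition \ref{deltaq torsor}. First I would recall that $M^{\chi,I,\varsigma,Q}$ is built from the complex $M^{\bullet,I}_{K_1^p(Q)}$ by localizing at the relevant maximal ideals $\Tilde{m}^{I,\varsigma,Q}$, $\Tilde{m}_Q$, $\chi$ and the central character. By Proposition \ref{deltaq torsor}, the map $X_{K_p K^p_1(Q,R),\Sigma} \to X_{K_p K^p_0(Q,R),\Sigma}$ is finite étale with Galois group $\Delta_Q \times \prod_{v \in R} T(k(v))$; since $\chi$-coinvariants absorb the $\prod_{v\in R} T(k(v))$-factor, the cohomology computing $M^{\chi,I,\varsigma,Q}$ acquires a free action of the group algebra $\Lambda_I[\Delta_Q]$ after passing to the appropriate isotypic piece. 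The key input is that taking $\Delta_Q$-coinvariants of $M^{\chi,I,\varsigma,Q}$ recovers $M^{\chi,I,\varsigma}$ localized at the corresponding maximal ideal of the Hecke algebra at the level $K^p_0(Q,R)$; this is exactly where the translation between the notation $\mathfrak{p}_{1,v} \subset \mathfrak{p}_v$ (rather than $\Iw_{1,v} \subset \Iw_v$) matters, and where I would invoke the maximal-ideal bijection between $\CalO[T(F_v)/(T(F_v)\cap \pone)]^{W_{L_v}}$ and $\CalO[T(F_v)/(T(F_v)\cap \mathfrak{p}_v)]^{W_{L_v}}$ established in Section \ref{p_1}, together with Corollary \ref{gbar parahoric invariants iso}, to identify the $\mathfrak{p}_{1,v}$-level localized cohomology with the $\mathfrak{p}_v$-level localized cohomology after taking $\Delta_v$-invariants.

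Next I would treat the two cases of $I$ separately. When $I = \emptyset$, the level at $v \mid p$ is the Iwahori $\Iw_v$, the relevant complex $M^{\bullet,\emptyset}$ is concentrated in a single degree (as in \cite[Section 7.8]{surfaces}, since the Shimura varieties in question have the expected cohomological dimension and the localizations kill the other degrees by the argument using the existence of $v_0$), and the relevant cohomology module is finite free over $\CalO$. Combined with the finite étale torsor structure of Proposition \ref{deltaq torsor}, a standard descent argument (a finite free module over $\CalO$ with a free $\Delta_Q$-action, obtained as cohomology of a $\Delta_Q$-torsor, is finite free over $\CalO[\Delta_Q] = \Lambda_I[\Delta_Q]$) gives freeness. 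When $\#I = 1$, the complex $M^{\bullet,I}$ is no longer concentrated in a single degree, but only two consecutive degrees contribute after localization, and one shows as in \cite[Proposition 7.8.11]{surfaces} that the resulting two-term complex of finite free $\Lambda_I[\Delta_Q]$-modules exhibits $M^{\chi,I,\varsigma,Q} = \RHom^0$ as a balanced module in the sense of \cite[Definition 2.10.1]{surfaces}; the point is that the Euler characteristic argument and the duality used there go through verbatim once the $\Delta_Q$-freeness at the prime-to-$p$ Taylor--Wiles level is in hand.

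The main obstacle I expect is bookkeeping rather than conceptual: verifying carefully that the localization at $\Tilde{m}_Q$ (the product of the maximal ideals $\tilde{m}_v$ of $\CalO[T(F_v)/(T(F_v)\cap\mathfrak{p}_{1,v})]^{W_{L_v}}$ at Taylor--Wiles places) is compatible with the $\Delta_Q$-torsor structure of Proposition \ref{deltaq torsor}, so that the localized cohomology is genuinely a module on which $\Delta_Q = \prod_{v\in Q}\Delta_v$ acts freely and with the predicted coinvariants. Concretely this requires knowing that, after localizing at $\tilde{m}_v$, the $\mathfrak{p}_{1,v}$-level cohomology is the $\mathfrak{p}_v$-level cohomology induced up along $\Delta_v$; this is precisely the content of Corollary \ref{gbar parahoric invariants iso} applied to the module $[\mathfrak{p}]N$ at each $v\in Q$, so the obstacle is to thread that local statement through the global cohomology complex. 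I would also need to check that passing to $|\cdot|^2$-coinvariants (fixing the central character) and to $\chi$-coinvariants commutes with all of the above, which follows since these are coinvariants for group actions commuting with the $\Delta_Q$-action coming from the Taylor--Wiles level.
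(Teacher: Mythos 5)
Your proposal is correct and follows essentially the same route as the paper, whose proof simply observes that once Proposition \ref{deltaq torsor} supplies the finite \'etale $\Delta_Q$-torsor structure, the argument of \cite[Proposition 7.8.11]{surfaces} (perfect complex over $\Lambda_I[\Delta_Q]$, concentration in one degree for $I=\emptyset$, two-term complex and the Euler-characteristic/balancedness argument for $\#I=1$) goes through verbatim. One small remark: the identification of $\Delta_Q$-coinvariants with $M^{\chi,I,\varsigma}$ via Corollary \ref{gbar parahoric invariants iso}, which you call the key input, is not actually needed for freeness or balancedness --- that is the content of the separate Proposition \ref{deltaq coinvariants}.
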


\begin{proof}
Having shown Proposition \ref{deltaq torsor}, the proof is now identical to the proof of \cite[Proposition 7.8.11]{surfaces}.
\end{proof}

The final result we need about the module $M^{\chi,I,\varsigma,Q}$ for the Taylor--Wiles method to go through is the analogue of \cite[Proposition 7.9.8]{surfaces}.

\begin{proposition} \label{deltaq coinvariants}
The map $M^{\chi,I,\varsigma,Q} \to M^{\chi,I,\varsigma}$ arising from the inclusion $K_1^p(Q) \subset K^p$ induces an isomorphism $(M^{\chi,I,\varsigma,Q})_{\Delta_Q} \to M^{\chi,I,\varsigma}$.
\end{proposition}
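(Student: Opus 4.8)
The plan is to follow the strategy of \cite[Proposition 7.9.8]{surfaces}, adapted to our more general level subgroups at the Taylor--Wiles places. The key point is that the finite étale cover $X_{K_p K_1^p(Q,R),\Sigma} \to X_{K_p K_0^p(Q,R),\Sigma}$ has Galois group $\Delta_Q \times \prod_{v \in R} T(k(v))$ by Proposition \ref{deltaq torsor}, and that the module $M^{\chi,I,\varsigma,Q}$ is obtained from the cohomology at level $K_1^p(Q)$ by localizing at the maximal ideal $\tilde{m}_Q$ of the Hecke algebra $\bigotimes_{v \in Q} \CalO[T(F_v)/(T(F_v) \cap \mathfrak{p}_{1,v})]^{W_{L_v}}$ and taking $\chi$-coinvariants for $\prod_{v \in R} T(k(v))$. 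Taking $\Delta_Q$-coinvariants corresponds exactly to descending along the covering from level $K_1^p(Q)$ down to level $K_0^p(Q)$ at the places of $Q$, while leaving the level at $R$ (which is $\Iw_{1,v}$ in both $K_1^p(Q)$ and $K^p$) and at $S_p$ unchanged. So the first step is to identify $(M^{\chi,I,\varsigma,Q})_{\Delta_Q}$ with the corresponding localized, $\chi$-coinvariant piece of the cohomology at level $K_0^p(Q)$ using that coinvariants for the action of a finite group of order prime to nothing in particular --- but here $\Delta_Q$ is a $p$-group, so one instead invokes the trace/transfer argument and the fact that the relevant complexes are perfect, exactly as in \cite{surfaces}. (Concretely: $\RHom^0_{\Lambda_I}(M^{\bullet,I}_{K_1^p(Q)},\LI)_{\Delta_Q} \cong \RHom^0_{\Lambda_I}(M^{\bullet,I}_{K_0^p(Q)},\LI)$ after the appropriate localizations, because on the level of the étale sheaf the $\Delta_Q$-invariants/coinvariants recover the pushforward from the quotient cover.)

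The second step is the crucial input specific to our setup: one must show that, after localizing at $\tilde{m}_Q$, the cohomology at level $K_0^p(Q) = \prod_{v \in Q}\mathfrak{p}_v \cdots$ agrees with the cohomology at the level with $\GSp_4(\OFv)$ at each $v \in Q$ (i.e. the level $K^p$ away from $Q$ and $R$). This is where Corollary \ref{gbar parahoric invariants iso} enters: for each $v \in Q$, since $\bar{g}_v = \bar{\rho}(\Frob_v)$ is semisimple and the maximal ideal $\mathfrak{n}_0$ of $\CalO[X_*(T)]^{W_{L_v}}$ corresponding to $\bar{g}_v$ has $W_G$-stabilizer contained in $W_{L_v}$ by Lemma \ref{weyl stabilizer}\ref{char stab}, multiplication by $[\mathfrak{g}_v]$ gives an isomorphism $([\mathfrak{p}_v]N)_{\mathfrak{n}_0} \to ([\mathfrak{g}_v]N)_{\mathfrak{q}_v}$ for any admissible $\mathcal{H}(G(F_v),\Iw_v)$-module $N$ appearing in the cohomology. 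Applying this at each $v \in Q$ simultaneously, and comparing the resulting localized spherical Hecke module with the maximal ideal $\tilde{m}^{I,\varsigma}$ of $\Tilde{\mathbb{T}}^I$ (which is cut out by the Satake parameters of $\bar{\rho}(\Frob_v)$, matching $\mathfrak{q}_v$ under the Satake isomorphism), identifies $(M^{\chi,I,\varsigma,Q})_{\Delta_Q}$ with $M^{\chi,I,\varsigma}$ as $\Lambda_I$-modules. One should also check that this identification is equivariant for the action of $\Tilde{\mathbb{T}}^{I,Q}$ (which sits inside both $\Tilde{\mathbb{T}}^I$ and the Hecke algebra at level $K_1^p(Q)$), so that the isomorphism is compatible with the module structures needed for patching.

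The main obstacle I anticipate is the bookkeeping in Step 2: one must carefully match up the three different incarnations of the Hecke eigenvalue data at a place $v \in Q$ --- the maximal ideal $\tilde{m}_v$ of $\CalO[T(F_v)/(T(F_v)\cap\mathfrak{p}_{1,v})]^{W_{L_v}}$, the maximal ideal $\mathfrak{n}_0$ of $\CalO[X_*(T)]^{W_{L_v}} = \CalO[T(F_v)/T(\OFv)]^{W_{L_v}}$, and the spherical maximal ideal $\mathfrak{q}_v$ of $\CalO[X_*(T)]^{W_G}$ --- and verify that these are all pulled back from the single maximal ideal of the abstract Hecke algebra $\Tilde{\mathbb{T}}^Q$ corresponding to $\bar{\rho}$. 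This is exactly the content of the bijections on maximal ideals established in Sections \ref{p_1} and \ref{parahoric invariants subsection}, together with the compatibility of the Satake isomorphism with change of level (Lemma \ref{satake group change}); the work is in threading these through the cohomological localizations without sign or normalization errors, and in checking the edge cases (such as when $P_v = B$, so $\mathfrak{p}_v$ is the Iwahori subgroup, versus when $\mathfrak{p}_v = \GSp_4(\OFv)$ and $\Delta_v$ is trivial, as flagged in Remark \ref{rmk notation diff}). Once the Hecke-theoretic dictionary is set up, the argument is formally the same as in \cite[Proposition 7.9.8]{surfaces}, and the perfectness of the complexes (already used in Propositions \ref{finite free balanced} and \ref{deltaq torsor}) handles the descent along the $p$-group cover.
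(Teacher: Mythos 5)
Your proposal follows essentially the same route as the paper's proof: first descend from level $K_1^p(Q)$ to $K_0^p(Q)$ via the torsor structure of Proposition \ref{deltaq torsor} (exactly as in \cite[Proposition 7.9.8]{surfaces}), then compare the localized parahoric-level and spherical-level cohomology at each $v \in Q$ using Corollary \ref{gbar parahoric invariants iso} together with the bijection between the maximal ideals $\tilde{m}_Q$ and their parahoric-level counterparts. The one step you leave implicit --- identifying the $K_0^p(Q)$- and $K^p$-level cohomologies mod $\lambda$ with $[\mathfrak{p}]N_i$ and $[\mathfrak{g}]N_i$ for $N_i$ the Iwahori-level cohomology, via the Hecke identities $1_{K1K'}1_{K'1K} = [K:K']$ and the invertibility in $k$ of the indices over $K^p_{\Iw}(Q)$ (using $p>2$) --- is precisely how the paper makes your application of the corollary rigorous, so your outline matches the intended argument.
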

\begin{proof}
We again follow the proof of \cite[Proposition 7.9.8]{surfaces}.
Recall that the natural map 
$$\bigotimes_{\CalO, v \in Q} \CalO[T(F_v)/(T(F_v) \cap \mathfrak{p}_{1,v})]^{W_{L_v}} \to \bigotimes_{\CalO, v \in Q} \CalO[T(F_v)/(T(F_v) \cap \mathfrak{p}_{v})]^{W_{L_v}}$$
induces a bijection on maximal ideals, since $\Delta_Q$ is of $p$-power order.
Let $\Tilde{m}_{0,Q}$ be the maximal ideal corresponding to $\Tilde{m}_{Q}$ under this bijection.
We can then form $$M^{\chi,I,\varsigma,Q}_{K_0^p(Q)} = \RHom^0_{\Lambda_I}(M^{\bullet,I}_{K_0^p(Q)}, \LI)_{\Tilde{m}^{I,\varsigma,Q},\Tilde{m}_{0,Q},\chi,|.|^2}.$$
From Proposition \ref{deltaq torsor}, we deduce in the same way as the proof of \cite[Proposition 7.9.8]{surfaces} that we have an isomorphism
$$(M^{\chi,I,\varsigma,Q})_{\Delta_Q} \cong M^{\chi,I,\varsigma,Q}_{K_0^p(Q)}.$$
Then to show that there is an isomorphism
$$M^{\chi,I,\varsigma,Q}_{K_0^p(Q)} \cong M^{\chi,I,\varsigma}$$
it suffices to prove by \cite[Lemma 7.9.7]{surfaces} that the composition
$$
H^i(M^{\bullet,I}_{K_0^p(Q)} \otimes k)_{\Tilde{m}^{\an,Q},\Tilde{m}_{0,Q}} \to 
H^i(M^{\bullet,I}_{K_0^p(Q)} \otimes k)_{\Tilde{m}^{\an,Q}} \to 
H^i(M^{\bullet,I}_{K^p} \otimes k)_{\Tilde{m}^{\an,Q}}
\to 
H^i(M^{\bullet,I}_{K^p} \otimes k)_{\Tilde{m}^{\an}}
$$
is an isomorphism for every $i$.
The action of the Hecke algebra on the various complexes is described in \cite[3.9]{surfaces}.
If we have compact open subgroups $K' \leq K$ which differ only at places $v \in Q$, then we have the equalities
\begin{align*}
    1_{K 1 K'} 1_{K' 1 K} &= [K:K'] \\
    1_{K' 1 K} 1_{K 1 K'} &= [K],
\end{align*}
where $[K]$ is the Hecke operator arising from the indicator function on $K$, a $K'$-biinvariant function.
Let $N_i = H^i(M^{\bullet,I}_{K_{\Iw}^p(Q)} \otimes k)_{\Tilde{m}^{\an,Q}}$, viewed as a module over the product of Iwahori--Hecke algebras at places $v \in Q$.
The indices \[[K^p:K_{\Iw}^p(Q)] = \prod_{v \in Q} \frac{q_v^{4} (q_v^2-1)(q_v^4-1)}{q_v^4 (q_v-1)^2 } = \prod_{v \in Q} (q_v+1)^2(q_v^2+1)\] (see \cite[3.1.2]{o1978symplectic}) and $[K_0^p(Q):K_{\Iw}^p(Q)]$ are units in $k$, as follows from the assumption that $p>2$ and Definition \ref{TW place}. 
Thus via the above equalities of Hecke operators we can identify $H^i(M^{\bullet,I}_{K^p} \otimes k)_{\Tilde{m}^{\an,Q}}$ (resp. $H^i(M^{\bullet,I}_{K_0^p(Q)} \otimes k)_{\Tilde{m}^{\an,Q}}$) with the image of $N_i$ under the Hecke operator $[K^p]$ (resp. $[K_0^p(Q)]$).
This is identification is also compatible with the localizations at the various maximal ideals of the abelian subalgebras of the parahoric and unramified Hecke algebras at places $v \in Q$.
We are done on applying Corollary \ref{gbar parahoric invariants iso} since the above composition becomes identified with the isomorphism
$$
([K_0^p(Q)]N_i)_{\Tilde{m}^{\an,Q},\Tilde{m}_{0,Q}} \xrightarrow{\sim} ([K^p(Q)]N_i)_{\tilde{m}^{\an}}
$$
given by the corollary, noting that $\tilde{m}^{\an}$ is defined at places $v \in Q$ by the maximal ideal of the unramified Hecke algebra $\mathcal{O}[X_*(T)]^{W_G}$ corresponding to the $W_G$-orbit of $\bar{g}_v$.
\end{proof}

We now state a theorem on the existence of Galois representations associated to a classical weight cuspidal automorphic respresentation $\pi$.
Local-global compatibility will then imply these Galois representations define a class in the image of our deformation functor and that the $\Delta_Q$-action is the expected one arising from the local factors of $\pi$.

\begin{theorem} \label{gsp4 galois rep local glob}
Suppose $\pi$ is a cuspidal automorphic representation of $\GSp_4(\mathbb{A}_F)$ of classical weight $\kappa = (k_v,l_v)$ with $k_v \geq l_v > 2$ and $k_v \equiv l_v \mod 2$ for all $v | \infty$ with central character $|.|^2$.
Suppose moreover that $(\pi_f^{K^p_1(Q) K_p(\emptyset)} \otimes \bar{E})^{\chi,|.|^2}_{\Tilde{m}^{I,\varsigma,Q}, \Tilde{m}_Q} \neq 0$.
Then there exists an associated continuous irreducible Galois representation $\rho_{\pi,p}: G_F \to \GSp_4(\bar{E})$ which satisfies, for each finite place $v \nmid p$,
$$\WD(\restr{\rho_{\pi,p}}{G_{F_v}})^{\Fss} \cong \rec_{\GT,p}(\pi_v \otimes |\nu|^{-3/2}).$$
We may conjugate $\rho_{\pi,p}$ to a representation $\rho: G_F \to \GSp_4(\mathcal{O}_{E_\pi})$ where $E_\pi/E$ is some finite extension with ring of integers $\mathcal{O}_{E_\pi}$ and residue field $k_\pi$ such that $\rho$ lifts $\bar{\rho} \otimes_k k_\pi$.

Then for each $v \in Q$ and any such conjugate $\rho$, we have $\rho \in \mathcal{D}_{v,\CalO_{E_\pi}}^{\text{TW}}(\mathcal{O}_{E_{\pi}})$.
The composition $\Delta_v \to (R_{v,\CalO_{E_\pi}}^{\text{TW}})^\times \to \mathcal{O}_{E_\pi}^\times$ coincides with the scalar action of $\Delta_v$ on the space $(\pi_f^{K^p_1(Q) K_p(\emptyset)} \otimes \bar{E})^{\chi,|.|^2}_{\Tilde{m}^{I,\varsigma,Q}, \Tilde{m}_Q}$.
\end{theorem}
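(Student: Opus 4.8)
\textbf{Proof plan for Theorem \ref{gsp4 galois rep local glob}.}

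The plan is to first invoke the existence of Galois representations attached to $\pi$ in this weight and central character from the literature (e.g. \cite{mok_2014} combined with the known local-global compatibility for $\GSp_4$ at places away from $p$, as recorded in \cite[2.3]{surfaces}), which gives us $\rho_{\pi,p}: G_F \to \GSp_4(\bar{E})$ with $\WD(\restr{\rho_{\pi,p}}{G_{F_v}})^{\Fss} \cong \rec_{\GT,p}(\pi_v \otimes |\nu|^{-3/2})$ for each finite $v \nmid p$. Irreducibility of $\rho_{\pi,p}$ together with $\overline{\rho_{\pi,p}} \cong \bar{\rho}$ (which we may arrange after extending scalars, using that $\bar\rho$ is $\GSp_4$-irreducible so its conjugates are controlled) lets us conjugate into $\GSp_4(\mathcal{O}_{E_\pi})$ for a suitable finite extension $E_\pi/E$; the similitude character is $\varepsilon^{-1}$ as required. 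The bulk of the work is the last two assertions, at the Taylor--Wiles places $v \in Q$.

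Fix $v \in Q$. The nonvanishing of $(\pi_f^{K^p_1(Q) K_p(\emptyset)} \otimes \bar{E})^{\chi,|.|^2}_{\Tilde{m}^{I,\varsigma,Q}, \Tilde{m}_Q}$ forces $(\pi_v^{\mathfrak{p}_{1,v}})_{\mathfrak{n}_1} \neq 0$, where $\mathfrak{n}_1 = \tilde m_v$. By Proposition \ref{gsp4 p1 invariants} (equivalently Lemma \ref{principal series loc inv}) this tells us that $\pi_v$ is a subquotient of $i_B^G \chi_v$ for a tamely ramified character $\chi_v: T(F_v) \to \Zpbar^\times$ which, after applying a Weyl element, lifts $\bar\chi$, that the action of $\CalO[T(F_v)/(T(F_v)\cap \mathfrak{p}_{1,v})]^{W_{L_v}}$ on the localized invariants is through this $w\chi_v$, and crucially that $N_{\pi_v} = 0$, i.e. $\rec_{\GT,p}(\pi_v)$ is unramified-up-to-twist as a Weil--Deligne representation (here one also twists by the unramified character coming from $|\nu|^{-3/2}$, which is of the shape $\varphi$ in Proposition \ref{local global deltav} since it factors through $C_G(F_v)$ and is trivial on $G(\mathcal{O}_{F_v})$ as $p \nmid q_v - 1$ issues are absorbed — more precisely one checks $|\nu|$ restricted to the relevant subgroup is trivial). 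Combining with local-global compatibility at $v$, we obtain exactly the hypothesis of Proposition \ref{local global deltav}: $\WD(\restr{\rho}{G_{F_v}})^{\Fss} \cong ((\chi_v \otimes \varphi)^\vee \circ \Art_{F_v}^{-1}, 0)$ as Weil--Deligne representations valued in $\hat G(\Qpbar)$, with $\chi_v$ lifting $\bar\chi$ and the action on $(\pi_v^{\mathfrak{p}_{1,v}})_{\mathfrak{n}_1}$ through $\chi_v$.

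Applying Proposition \ref{local global deltav} directly then yields both remaining conclusions: $\restr{\rho}{G_{F_v}} \in \mathcal{D}_{v,\mathcal{O}_{E_\pi}}^{\mathrm{TW}}(\mathcal{O}_{E_\pi})$, and the induced map $\Delta_v \to \mathcal{O}_{E_\pi}^\times$ coincides with the scalar action of $\Delta_v = T(\mathcal{O}_{F_v})/(T(\mathcal{O}_{F_v})\cap \mathfrak{p}_{1,v})$, viewed as Hecke operators, on $(\pi_v^{\mathfrak{p}_{1,v}})_{\mathfrak{n}_1}$ — which via the above is the scalar action on the global space $(\pi_f^{K^p_1(Q) K_p(\emptyset)} \otimes \bar{E})^{\chi,|.|^2}_{\Tilde{m}^{I,\varsigma,Q}, \Tilde{m}_Q}$, since these Hecke operators at $v$ act through their action on the local factor $\pi_v$. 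The main obstacle is the bookkeeping in the second paragraph: matching the normalization $\pi_v \otimes |\nu|^{-3/2}$ used in $\rec_{\GT,p}$ with the character $\varphi$ of Proposition \ref{local global deltav}, checking that $\varphi$ genuinely factors through $C_G(F_v)$ and is trivial on $G(\mathcal{O}_{F_v})$, and verifying that the identification of the local Hecke action with the global $\Delta_v$-action respects localization at $\tilde m_v$ and the various other maximal ideals cut out in the definition of the module; everything else is a clean invocation of Propositions \ref{gsp4 p1 invariants} and \ref{local global deltav}.
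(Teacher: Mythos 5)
Your proposal is correct and follows essentially the same route as the paper: existence and away-from-$p$ local-global compatibility from the literature, a Chebotarev/characteristic-polynomial argument to conjugate into $\GSp_4(\mathcal{O}_{E_\pi})$ lifting $\bar{\rho}$, then at $v \in Q$ the combination of Proposition \ref{gsp4 p1 invariants} (giving $N_{\pi_v}=0$ and the explicit character $\chi$) with the twist by $|\nu|^{-3/2}$ playing the role of $\varphi$, feeding into Proposition \ref{local global deltav} for both the deformation condition and the identification of the $\Delta_v$-action. The only cosmetic difference is that the paper cites the compatibility of $\rec_{\GT,p}$ with unramified twists via \cite[Proposition 2.4.6]{surfaces} rather than checking it by hand, and your parenthetical about $q_v-1$ is irrelevant (triviality of $|\nu|^{-3/2}$ on $G(\mathcal{O}_{F_v})$ is just because units have absolute value $1$).
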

Recall here that $\rec_{\GT,p}$ is the local Langlands correspondence described in Section \ref{gsp4 local}.

\begin{proof}[Proof of Theorem \ref{gsp4 galois rep local glob}]
The existence of $\rho_{\pi,p}$ is stated in \cite[Theorem 2.7.2]{surfaces} and to show the desired properties from this we will firstly explain why $\rho_{\pi,p}$ is conjugate to a lift of $\bar{\rho}$.
Continuity of $\rho_{\pi,p}$ implies that it is conjugate to a representation valued in the ring of integers of some such extension of $\mathcal{O}$.
As in the proof of \cite[Theorem 7.9.4]{surfaces}, it follows that the reduction of $\rho$ on a dense subset of $G_F$ (given by Frobenius elements at places outside $S_p \cup Q \cup R \cup \{v_0\}$) has characteristic polynomial coinciding with that of $\bar{\rho}$, and so the two residual representations are conjugate in $\GSp_4(k_\pi)$.
Thus we may take such a conjugate $\rho$, lifting $\bar{\rho} \otimes_k k_\pi$.
By absolute irreducibility of $\bar{\rho}$, $\rho$ itself must also be absolutely irreducible and hence the stated local-global compatibility holds by \cite[Theorem 2.7.2]{surfaces} again.

Now let $v \in Q$ be a Taylor--Wiles place.
We know that $\pi_v$ satisfies $((\pi_v)^{\mathfrak{p}_{1,v}})_{\tilde{m}_v} \neq 0$.
By Proposition \ref{gsp4 p1 invariants}, we know that $\rec_{\GT,p}(\pi_v)$ is a semisimple Weil--Deligne representation with $\rec_{\GT,p}(\pi_v) = (\chi^\vee \circ \Art_{F_v}^{-1},0)$ for some character $\chi: G(F_v) \to \Qpbar^\times$, with $\pi_v$ an irreducible subquotient of the parabolic induction $i_B^G \chi$.
Note then that $\rec_{\GT,p}(\pi_v \otimes |\nu|^{-3/2})$ equals $((\chi \otimes |\nu|^{-3/2})^\vee \circ \Art_{F_v}^{-1},0)$ by \cite[Proposition 2.4.6]{surfaces}.
Hence we are done on applying Proposition \ref{local global deltav}.
\end{proof}

\begin{theorem} \label{hecke galois rep}
There exists a continuous Galois representation $\rho^{\chi,I,\varsigma,Q}: G_F \to \GSp_4(\mathbb{T}^{\chi,I,\varsigma,Q})$ defining a class in $\mathcal{D}_{\chi,Q}^{I,\varsigma}(\mathbb{T}^{\chi,I,\varsigma,Q})$ and such that the composition $\Delta_Q \to (R_{\chi,Q}^{I,\varsigma})^\times \to (\mathbb{T}^{\chi,I,\varsigma,Q})^\times$ defines the same action of $\Delta_Q$ on $M^{\chi,I,\varsigma,Q}$ as the action arising from viewing $\Delta_Q$ as a set of Hecke operators at places in $Q$.
Moreover, we have
\begin{enumerate}
    \item If $v \not\in S_p \cup R \cup \{v_0\} \cup Q$, then $\det(X- \rho^{\chi,I,\varsigma,Q}(\Frob_v)) = Q_v(X)$.
    \item If $v \in I$, then
    $$
\restr{\rho^{\chi,I,\varsigma,Q}}{G_{F_v}}    \cong
\begin{pmatrix}
 \lambda_{\Tilde{\alpha}_v} \theta_v & 0 & * & * \\
 0 & \lambda_{\Tilde{\beta}_v} \theta_v & * & * \\
 0 & 0 & \lambda_{\Tilde{\beta}_v}^{-1} \theta_v^{-1} \varepsilon^{-1} & 0 \\
0 & 0 & 0 & \lambda_{\Tilde{\alpha}_v}^{-1} \theta_v^{-1} \varepsilon^{-1}   
\end{pmatrix}    
    $$
    \item If $v \in I^c$, then
    $$
\restr{\rho^{\chi,I,\varsigma,Q}}{G_{F_v}}    \cong
\begin{pmatrix}
 \lambda_{U_{v,1}} \theta_{v,1} & 0 & * & * \\
 0 & \lambda_{U_{v,2}/U_{v,1}} \theta_{v,2} & * & * \\
 0 & 0 & \lambda_{U_{v,2}/U_{v,1}}^{-1} \theta_{v,2}^{-1} \varepsilon^{-1} & 0 \\
0 & 0 & 0 & \lambda_{U_{v,1}}^{-1} \theta_{v,1}^{-1} \varepsilon^{-1}   
\end{pmatrix}    .
    $$    
\end{enumerate}

\end{theorem}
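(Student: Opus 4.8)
\textbf{Proof proposal for Theorem \ref{hecke galois rep}.} The plan is to follow the strategy of the analogous result \cite[Theorem 7.9.4]{surfaces}, modifying only the behaviour at the Taylor--Wiles places $v \in Q$. First I would recall that $M^{\chi,I,\varsigma,Q}$ is constructed from the cohomology of Shimura varieties localized at a non-Eisenstein maximal ideal, so that $\mathbb{T}^{\chi,I,\varsigma,Q}$ is a reduced $\Lambda_I$-algebra which is finite as a $\Lambda_I$-module and which, after inverting $\lambda$, becomes a finite product of fields, each of which is a localization corresponding to a classical cuspidal automorphic representation $\pi$ of the type appearing in Theorem \ref{gsp4 galois rep local glob} (one uses a density argument to reduce to classical weight $\kappa$ with $k_v \geq l_v > 2$, exactly as in the reference). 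For each such $\pi$, Theorem \ref{gsp4 galois rep local glob} supplies a Galois representation $\rho_{\pi,p}: G_F \to \GSp_4(\overline{E})$ with the stated local-global compatibility. Using the method of pseudocharacters (V. Lafforgue pseudocharacters, via \cite[Theorem 4.5]{thorne2019}), these interpolate to a continuous pseudocharacter over the reduced ring $\mathbb{T}^{\chi,I,\varsigma,Q}$, and since $\bar\rho$ is absolutely $\hat G$-irreducible the pseudocharacter is induced by an actual representation $\rho^{\chi,I,\varsigma,Q}: G_F \to \GSp_4(\mathbb{T}^{\chi,I,\varsigma,Q})$ lifting $\bar\rho$; the similitude character is $\varepsilon^{-1}$ because it is so for each $\rho_{\pi,p}$ and $\mathbb{T}^{\chi,I,\varsigma,Q}$ is reduced.

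Next I would check that $\rho^{\chi,I,\varsigma,Q}$ defines a class in $\mathcal{D}_{\chi,Q}^{I,\varsigma}(\mathbb{T}^{\chi,I,\varsigma,Q})$, i.e. that it satisfies the local deformation conditions at all $v \in S \cup Q$. At the places $v \in S_p$ (giving conditions (2) and (3)), at $v \in R$ (the Ihara-avoidance conditions) and at $v_0$ (unrestricted) the verification is identical to \cite[Theorem 7.9.4]{surfaces}, using ordinarity of $\pi$ at $p$ and the Hecke-eigenvalue descriptions of $\restr{\rho_{\pi,p}}{G_{F_v}}$; the shapes displayed in (2) and (3) come from the explicit ordinary local pieces and the operators $\lambda_{(-)}$ and $\theta_{(-)}$ as in the reference. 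The new content is at $v \in Q$: one must show $\restr{\rho^{\chi,I,\varsigma,Q}}{G_{F_v}}$ lies in $\DvTW(\mathbb{T}^{\chi,I,\varsigma,Q})$ and that the resulting map $\Delta_v \to (\mathbb{T}^{\chi,I,\varsigma,Q})^\times$ is the Hecke action on $M^{\chi,I,\varsigma,Q}$. Since both assertions can be checked after inverting $\lambda$ (the deformation condition is a closed condition and $\mathbb{T}^{\chi,I,\varsigma,Q}$ is $\lambda$-torsion free, being a subalgebra of an endomorphism ring of a module which is finite free or balanced over $\Lambda_I$ by Proposition \ref{finite free balanced} — or, if $\#I=1$, one first needs to know $M^{\chi,I,\varsigma,Q}$ is $\lambda$-torsion free, which follows from the balancedness and the same argument as the reference), and after inverting $\lambda$ the ring is a product of the fields attached to the relevant $\pi$, it suffices to verify both statements for each such $\pi$ individually. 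But that is precisely the content of the last two paragraphs of Theorem \ref{gsp4 galois rep local glob}: for each $\pi$ contributing, $(\pi_v)^{\mathfrak{p}_{1,v}}$ is non-zero after localizing at $\tilde m_v$, so $\rho_{\pi,p}|_{G_{F_v}}$ lies in $\mathcal{D}_{v,\mathcal{O}_{E_\pi}}^{\textup{TW}}$ and the $\Delta_v$-action matches the scalar Hecke action on the $\pi$-isotypic space. Assembling over all $\pi$ gives the claim for $\mathbb{T}^{\chi,I,\varsigma,Q}$, and the compatibility of the $\Delta_Q$-action with the quotient map $R_{\chi,Q}^{I,\varsigma}\to \mathbb{T}^{\chi,I,\varsigma,Q}$ follows from functoriality of the $\Delta_v$-action described in Section \ref{dual group deltaq}.

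Finally, point (1) — that $\det(X - \rho^{\chi,I,\varsigma,Q}(\Frob_v)) = Q_v(X)$ for $v \notin S_p \cup R \cup \{v_0\} \cup Q$ — is built into the construction: the polynomial $Q_v(X)$ is exactly the characteristic polynomial read off from the unramified Hecke operators $T_{v,0}, T_{v,1}, T_{v,2}$, and the pseudocharacter was constructed so that $\tr\rho^{\chi,I,\varsigma,Q}(\Frob_v)$ etc. agree with these Hecke operators (this is the Eichler--Shimura / Chenevier-type compatibility underlying the existence of $\rho_{\pi,p}$ and already appears in \cite[Theorem 2.7.2]{surfaces}), so it holds for each classical $\pi$ and hence over the reduced ring $\mathbb{T}^{\chi,I,\varsigma,Q}$.

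\textbf{Main obstacle.} I expect the only genuinely new difficulty to be the verification at $v \in Q$, and within that the subtle point is the reduction ``it suffices to check after inverting $\lambda$''. This relies on $M^{\chi,I,\varsigma,Q}$ (and hence $\mathbb{T}^{\chi,I,\varsigma,Q}$) being $\lambda$-torsion free; when $I=\emptyset$ this is immediate from freeness over $\Lambda_I[\Delta_Q]$ (Proposition \ref{finite free balanced}), but when $\#I=1$ one only has balancedness, and one must argue as in \cite{surfaces} that the relevant localized cohomology is still torsion free — or else argue directly with the closed-ness of the deformation condition and the relative representability proven in Lemma \ref{rel rep}. Once that bookkeeping is in place, everything else is a transcription of the arguments of \cite[Section 7.9]{surfaces} with Corollary \ref{gbar parahoric invariants iso}, Proposition \ref{gsp4 p1 invariants} and Proposition \ref{local global deltav} substituted for their regular-semisimple analogues.
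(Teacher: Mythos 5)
Your overall strategy is the paper's: reduce everything to the classical automorphic representations $\pi$ covered by Theorem \ref{gsp4 galois rep local glob} (which is where the new content at $v\in Q$ lives, via Propositions \ref{gsp4 p1 invariants} and \ref{local global deltav}) and then glue over the Hecke algebra following \cite[Theorem 7.9.4]{surfaces}. However, the gluing step as you set it up has a genuine gap. The algebra $\mathbb{T}^{\chi,I,\varsigma,Q}$ is finite over the Iwasawa algebra $\Lambda_I$, which has positive relative dimension over $\CalO$ even when $I=\emptyset$ (there $\Lambda_\emptyset = \hat\otimes_{v\in S_p}\CalO[[(\OFv^\times(p))^2]]$), so $\mathbb{T}^{\chi,I,\varsigma,Q}[1/\lambda]$ is \emph{not} a finite product of fields indexed by classical $\pi$, and "check after inverting $\lambda$" does not by itself reduce you to the individual $\pi$'s. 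What is true, and what the proof actually uses, is that for $I=\emptyset$ the module $M^{\chi,\emptyset,\varsigma,Q}$ embeds into the product over all cohomological weights $\kappa$ of the duals of the localized $\pi$-isotypic spaces, so that $\mathbb{T}^{\chi,\emptyset,\varsigma,Q}$ embeds into $A=\{(a,(a_\pi))\in k\times\prod_{\pi\in W}\CalO_{E_\pi}\ :\ a_\pi \bmod m_{E_\pi}=a\}$; the case $\#I=1$ is then handled by induction on $\#I$ exactly as in \cite[Theorem 7.9.4]{surfaces}, not by any torsion-freeness property of the balanced module (so the "main obstacle" you identify is sidestepped rather than solved in the way you propose).

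Even granting an embedding $\mathbb{T}^{\chi,I,\varsigma,Q}\hookrightarrow k\times\prod_\pi\CalO_{E_\pi}$, your "closed condition" reduction needs a real argument: the condition $\rho(I_v)\subset Z(M_{g_v})^\circ(A)$ is defined via Theorem \ref{Mg} only for coefficient rings in $\CNL_\CalO$ (or in $\CNL_{\CalO_{E_\pi}}$ after extending scalars), and neither $\mathbb{T}[1/\lambda]$ nor the infinite product $\prod_\pi\CalO_{E_\pi}$ is such a ring, so Lemma \ref{change_of_ring} does not apply directly. The paper's resolution — and this is precisely what Lemma \ref{rel rep} was proved for — is to write $\mathbb{T}^{\chi,I,\varsigma,Q}$ as an inverse limit of Artinian quotients $\mathbb{T}_n$, each a finite fibre product over $k$ of quotients of the rings $\CalO_{E_\pi}$, and to deduce from Lemma \ref{rel rep} that the image of $\rho^{\chi,I,\varsigma,Q}$ in each $\GSp_4(\mathbb{T}_n)$ lies in $\DvTW(\mathbb{T}_n)$, hence that $\rho^{\chi,I,\varsigma,Q}$ itself lies in $\DvTW(\mathbb{T}^{\chi,I,\varsigma,Q})$; the matching of the $\Delta_Q$-action with the Hecke action then follows from Theorem \ref{gsp4 galois rep local glob} together with the embedding of $M^{\chi,I,\varsigma,Q}$ above. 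A smaller point: reconstructing a $\GSp_4$-valued representation over $\mathbb{T}$ from a Lafforgue pseudocharacter requires more than residual absolute $\hat G$-irreducibility over a general complete local ring (one needs control of the scheme-theoretic centralizer of the residual image), whereas the paper simply imports the explicit construction of \cite[Theorem 7.9.4]{surfaces} over the ring $A$, so you should either justify that reconstruction or follow the same route.
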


\begin{proof}

We will follow the proof of \cite[Theorem 7.9.4]{surfaces} and suppose that $I= \emptyset$ firstly.
Consider the set $W$ of cuspidal automorphic representations $\pi$ of classical weight $\kappa = (k_v,l_v)$ with $k_v \geq l_v \geq 4$ and $k_v \equiv l_v \equiv 2 \text{ or } p+1 \mod 2(p-1)$ such that $e(\emptyset) (\pi_f^{K^p_1(Q) K_p(\emptyset)} \otimes \bar{E})^{\chi,|.|^2}_{\Tilde{m}^{\emptyset,\varsigma,Q}, \Tilde{m}_Q} \neq 0$.
Here $\pi_f$ is the finite part of $\pi$ and $e(\emptyset)$ is the ordinary projector of \cite[2.4]{surfaces}.
For each such $\pi \in W$, we have an associated Galois representation $\rho_{\pi,p}: G_F \to \GSp_4(\CalO_{E_\pi})$, with $\rho_{\pi,p}$ and $\CalO_{E_\pi}$ as in Theorem \ref{gsp4 galois rep local glob}.
We have an inclusion
\begin{equation} \label{inclusion of module}
 M^{\chi,\emptyset,\varsigma,Q} \subset \prod_{\kappa} \bigoplus_{\pi \in W, \text{wt}(\pi) = \kappa} (e(\emptyset) (\pi_f^{K^p_1(Q) K_p(\emptyset)} \otimes \bar{E})^{\chi,|.|^2}_{\Tilde{m}^{I,\varsigma,Q}, \Tilde{m}_Q})^*
\end{equation}
respecting the actions of the Hecke operators $\Tilde{\mathbb{T}}^{\chi,I,\varsigma,Q}$ at places away from $Q$ and 
$$\bigotimes_{\CalO, v \in Q} \CalO[T(F_v)/(T(F_v) \cap \mathfrak{p}_{1,v})]^{W_{L_v}}.$$
The above inclusion allows us to view $\mathbb{T}^{\chi,I,\varsigma,Q}$ as a subalgebra of $$A = \{(a,(a_\pi)) \in k \times \prod_{\pi \in W} \CalO_{E_\pi} : a_\pi \mod m_{E_\pi} = a\}$$ given by the scalar action of a Hecke operator on the space of localised invariants of $\pi$.

The same construction of the proof of \cite[Theorem 7.9.4]{surfaces} yields a lift $\rho^{\chi,I,\varsigma,Q}: G_F \to \GSp_4(\mathbb{T}^{\chi,I,\varsigma,Q})$ when $I = \emptyset$, and the desired properties away from places $v \in Q$ can be proved in the same way.
We can assume from the construction of $\rho^{\chi,I,\varsigma,Q}$ that the composition of $\rho^{\chi,I,\varsigma,Q}$ with the maps $\mathbb{T}^{\chi,I,\varsigma,Q} \to A \to \CalO_{E_\pi}$ will yield a representation conjugate in $\ker(\GSp_4(\CalO_{E_\pi}) \to \GSp_4(k_\pi))$ to $\rho_{\pi,p}$.
This composition will be valued in the ring of integers of a totally ramified extension of $\CalO$ and will satisfy the consequences of Theorem \ref{gsp4 galois rep local glob}.
We may write $\mathbb{T}^{\chi,I,\varsigma,Q}$ as an inverse limit of Artinian quotients $\mathbb{T}_n = \mathbb{T}^{\chi,I,\varsigma,Q}/I_n$ where each $\mathbb{T}_n$ can be written as a finite fibre product over $k$ of quotients of some such rings $\CalO_{E_\pi}$.
Then the image of $\rho^{\chi,I,\varsigma,Q}$ in $\GSp_4(\mathbb{T}_n)$ will lie in $\DvTW(\mathbb{T}_n)$ by Lemma \ref{rel rep}, and hence $\rho^{\chi,I,\varsigma,Q}$ itself will lie in $\DvTW(\mathbb{T}^{\chi,I,\varsigma,Q})$.

For each $\pi \in W$, the composition $\Delta_v \to (\mathbb{T}^{\chi,I,\varsigma,Q})^\times \to \CalO_{E_\pi}^\times$ will also coincide with the action of $\Delta_v$ on $e(\emptyset) (\pi_f^{K^p_1(Q) K_p(\emptyset)} \otimes \bar{E})^{\chi,|.|^2}_{\Tilde{m}^{\emptyset,\varsigma,Q}, \Tilde{m}_Q}$ by Theorem \ref{gsp4 galois rep local glob}.
The inclusion \ref{inclusion of module} implies that the map $\Delta_v \to (\mathbb{T}^{\chi,I,\varsigma,Q})^\times$ defines an action on $M^{\chi,\emptyset,\varsigma,Q}$ coinciding with the one arising from viewing $\Delta_v \subset \CalO[T(\CalO_{F_v})/(T(\CalO_{F_v}) \cap \mathfrak{p}_{1,v})]^{W_{L_v}}$.

We are now done in the case where $I = \emptyset$.
The case $I \neq \emptyset$ can now be proved by induction on $\#I$ via the same argument of \cite[Theorem 7.9.4]{surfaces}.
\end{proof}

To finish this section, we will state the analogue of \cite[Corollary 7.9.6]{surfaces}, which we will use in proving our modularity results.

\begin{corollary} \label{parallel weight 2 galois rep}
Suppose that $\pi$ is cuspidal automorphic representation of $\GSp_4(\mathbb{A}_F)$ of parallel weight 2, and that $\pi$ is ordinary in the sense of \cite[Definition 2.4.25]{surfaces}.
There exists a continuous semisimple Galois representation $\rho_{\pi,p}: G_F \to \GL_4(\Qpbar)$ satisfying
\begin{enumerate}
    \item If $v \nmid p$ is such that $\pi_v$ is unramified, then $\restr{\rho_{\pi,p}}{G_{F_v}}$ is unramified and
    $$
    \det(X - \rho_{\pi,p}(\Frob_v)) = Q_v(X).
    $$
\end{enumerate}
Suppose moreover that $\overline{\rho_{\pi,p}}$ is $\hat{G}$-reasonable and that for each $v|p$, the ordinary Hecke parameters $\alpha_v$ and $\beta_v$ satisfy $\overline{\alpha_v} \neq \overline{\beta_v}$.
Then $\rho_{\pi,p}$ can be conjugated into $\GSp_4(\Qpbar)$ and satisfies
\begin{enumerate}[resume]
    \item $\nu \circ \rho_{\pi,p} = \varepsilon^{-1}$
    \item For each $v \nmid p$, we have
    $$
    \WD(\restr{\rho_{\pi,p}}{G_{F_v}})^{\text{ss}} \cong \rec_{\GT,p}(\pi_v \otimes | \nu|^{-3/2})^{\text{ss}}
    $$
    \item For each $v | p$, we have
    $$
    \restr{\rho_{\pi,p}}{G_{F_v}} \cong 
    \begin{pmatrix}
 \lambda_{\alpha_v} \theta_v & 0 & * & * \\
 0 & \lambda_{\beta_v} \theta_v & * & * \\
 0 & 0 & \lambda_{\beta_v}^{-1}  \varepsilon^{-1} & 0 \\
0 & 0 & 0 & \lambda_{\alpha_v}^{-1}  \varepsilon^{-1}   
\end{pmatrix}    
    .$$
\end{enumerate}
\end{corollary}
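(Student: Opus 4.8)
\textbf{Proof proposal for Corollary \ref{parallel weight 2 galois rep}.}

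The plan is to deduce the parallel weight $2$ statement from the classical weight case of Theorem \ref{hecke galois rep} (or rather its consequences) by a limiting argument, following the strategy of \cite[Corollary 7.9.6]{surfaces}. First I would recall that an ordinary parallel weight $2$ automorphic representation $\pi$ contributes to a space of $p$-adically interpolated automorphic forms, so that there is a $p$-adic family through $\pi$ whose classical specializations have weight $\kappa = (k_v, l_v)$ with $k_v \geq l_v > 2$, $k_v \equiv l_v \bmod 2$, and such that the associated maximal ideals of the relevant Hecke algebras all match up with the one cut out by $\overline{\rho_{\pi,p}}$. To each such classical specialization Theorem \ref{hecke galois rep} (with $Q = \emptyset$ and the appropriate $I$, $\varsigma$) attaches a Galois representation valued in $\GSp_4$ of a Hecke algebra, satisfying local-global compatibility at all finite places and the ordinarity shape at $v \mid p$; passing to the inverse limit over the family and specializing at $\pi$ gives a continuous semisimple $\rho_{\pi,p}: G_F \to \GL_4(\Qpbar)$ with $\det(X - \rho_{\pi,p}(\Frob_v)) = Q_v(X)$ for $v \nmid p$ unramified. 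This already yields point (1), which holds unconditionally on the image of $\overline{\rho_{\pi,p}}$ since it only uses the interpolation of Hecke eigenvalues.

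For the remaining points I would invoke the hypothesis that $\overline{\rho_{\pi,p}}$ is $\hat{G}$-reasonable. Being $\hat{G}$-reasonable entails (via Definition \ref{reasonable} and Definition \ref{adequate}, condition \ref{H0 vanish}, together with the discussion in Section \ref{section deformation}) that $\overline{\rho_{\pi,p}}$ is absolutely $\GSp_4$-irreducible; hence by Proposition \ref{representable}-type arguments, or more directly by a standard pseudocharacter/Tate-module argument, $\rho_{\pi,p}$ is itself absolutely irreducible and its image preserves, up to scalar, a symplectic form, so it can be conjugated into $\GSp_4(\Qpbar)$ with $\nu \circ \rho_{\pi,p} = \varepsilon^{-1}$ (using that the similitude character is pinned down by the central character $|\cdot|^2$ and the weight). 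This gives (2). Then (3), the local-global compatibility $\WD(\restr{\rho_{\pi,p}}{G_{F_v}})^{\mathrm{ss}} \cong \rec_{\GT,p}(\pi_v \otimes |\nu|^{-3/2})^{\mathrm{ss}}$ at $v \nmid p$, follows by interpolating the corresponding statements for the classical members of the family (their Weil--Deligne representations vary compatibly in a $p$-adic family, and semisimplification is insensitive to the monodromy operator), exactly as in \cite{surfaces}. For (4), the ordinarity shape at $v \mid p$, I would use the ordinary hypothesis on $\pi$ together with the weight $2$ specialization of the triangulation appearing in Theorem \ref{hecke galois rep}(2)--(3); the condition $\overline{\alpha_v} \neq \overline{\beta_v}$ guarantees the two unramified characters on the diagonal are distinct mod $p$, so the filtration does not degenerate and one gets precisely the displayed block-upper-triangular form with $\theta_v = 1$ forced by parallel weight $2$ (the Hodge--Tate weights being $0, 0, -1, -1$).

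The main obstacle I expect is the passage from the classical weight statements to the parallel weight $2$ statement at the places $v \mid p$: in weight $2$ the relevant cohomology is not simply interpolated by a Hida family in the naive way, and one must be careful that the ordinary projector $e(\emptyset)$ isolates the correct eigenspace and that the triangulation/ordinarity of Theorem \ref{hecke galois rep} survives specialization to the boundary weight without the filtration jumping. This is precisely the point where the hypothesis $\overline{\alpha_v} \neq \overline{\beta_v}$ ($p$-distinguishedness) does the work, ruling out the bad degeneration; the argument here should be a verbatim adaptation of the corresponding step in the proof of \cite[Corollary 7.9.6]{surfaces}, so I would cite that and indicate only the modifications needed, namely that none of our changes (the generalized Taylor--Wiles primes, the use of $\hat{G}$-reasonable in place of vast) affect this local argument at $p$.
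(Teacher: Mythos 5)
Your overall strategy for items (2)--(4) is the same as the paper's: the paper's proof is a one-line reduction to Theorem \ref{hecke galois rep} (with $Q=\emptyset$), the citation \cite{mok_2014}, and the argument of \cite[Corollary 7.9.6]{surfaces}, and your sketch of how the specialization of the Hecke-algebra-valued representation at the point corresponding to $\pi$ yields the symplectic structure, the similitude character, local-global compatibility up to semisimplification away from $p$, and the ordinary shape at $p$ (with $p$-distinguishedness preventing degeneration) is exactly that argument.

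The genuine gap is in your treatment of part (1). The existence of a semisimple $\rho_{\pi,p}:G_F\to\GL_4(\Qpbar)$ with $\det(X-\rho_{\pi,p}(\Frob_v))=Q_v(X)$ is asserted with \emph{no} hypothesis on the residual image, and the paper obtains it by citing \cite[Theorem 4.6]{mok_2014}, not from the patching/Hida machinery. Your proposed derivation of (1) from Theorem \ref{hecke galois rep} cannot be ``unconditional on the image of $\overline{\rho_{\pi,p}}$'': that theorem lives in the setup of Section \ref{setup for modularity}, where $\bar{\rho}$ is assumed absolutely irreducible, $\GSp_4$-reasonable, tidy and $p$-distinguished ordinary, and both the localization at the non-Eisenstein maximal ideal $\Tilde{m}^{I,\varsigma}$ and the construction of an actual $\GSp_4(\mathbb{T}^{\chi,I,\varsigma})$-valued representation (rather than a pseudocharacter) use this irreducibility. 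If $\overline{\rho_{\pi,p}}$ is reducible, your family argument would at best produce a pseudocharacter, and even that would need a separate argument for why the parallel weight $2$ ordinary form is a $p$-adic limit of classical regular-weight forms together with the existence of Galois representations for those classical forms --- i.e.\ inputs which are not supplied by Theorem \ref{hecke galois rep} as stated. So either cite \cite{mok_2014} for (1), as the paper does, or spell out a pseudocharacter-level interpolation that avoids the non-Eisenstein localization. A minor further point: your assertion that ``$\theta_v=1$ is forced by parallel weight $2$'' is not consistent with the statement being proved, which retains the character $\theta_v$ in the diagonal entries at $v\mid p$; the correct claim is simply that item (4) is the specialization of the shape in Theorem \ref{hecke galois rep} at the weight $2$ point, with $\overline{\alpha_v}\neq\overline{\beta_v}$ ensuring the two rank-two blocks do not interact.
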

\begin{proof}
This follows from Theorem \ref{hecke galois rep} and \cite{mok_2014} in the same way as in the proof of \cite[Corollary 7.9.6]{surfaces}.
\end{proof}

\subsection{Patching}

We are now ready to carry out a patching argument analogous to \cite[7.11]{surfaces}.
Most of the notation and constructions will be similar, so we will firstly highlight the main difference arising from our more general notion of Taylor--Wiles places of Section \ref{section tw places} in the following lemma.
Let $q = h^1(G_{F,S},\splie_4^\vee(1))$.

\begin{lemma} \label{compatible tw data}
There exists $0 \leq t \leq 2q$ such that for every $N \geq 1$ there exists a Taylor--Wiles datum
$$
(Q_N,\{(\hat{T}_v,\hat{B}_v)\}_{v \in Q_N})
$$
of level $N$ with $Q_N$ as in Corollary \ref{TW places number field}, $\#Q_N = q$ and such that $\Delta_{Q_N}$ has $t$ cyclic factors as an abelian group.
\end{lemma}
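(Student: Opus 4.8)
The plan is to construct the Taylor--Wiles data $Q_N$ directly via Corollary \ref{TW places number field}, and then to observe that the number of cyclic factors of $\Delta_{Q_N}$ is essentially rigidly determined by the conjugacy classes of the Frobenius images $\bar{g}_v = \bar{\rho}(\Frob_v)$, which in turn can be assumed fixed across all $N$ by a compactness argument. First I would apply Corollary \ref{TW places number field} with the chosen global deformation problem (so $T = S$) and with $q = h^1(G_{F,S},\splie_4^\vee(1))$: since $\bar{\rho}$ is $\GSp_4$-reasonable by hypothesis, for each $N \geq 1$ this produces a set $Q_N$ of Taylor--Wiles places of level $N$ with $\#Q_N = q$ and $H^1_{\mathcal{S}_{Q_N}^\perp,S}(\splie_4^\vee(1)) = 0$, together with a choice of split maximal torus $\hat{T}_v \supset Z(M_{\bar{g}_v})^\circ$ and Borel $\hat{B}_v \supset \hat{T}_v$ for each $v \in Q_N$, i.e. a genuine Taylor--Wiles datum.

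The key point is to control the structure of $\Delta_{Q_N} = \prod_{v \in Q_N} \Delta_v$ as an abelian group. Recall from Section \ref{dual group deltaq} and Construction \ref{parahoric construction} that $\Delta_v$ is the maximal $p$-power quotient of $C_{L_v}(k(v))$ on which the image of $Z(G)(k(v))$ is trivial, where $L_v$ is the Levi subgroup dual to $\hat{L}_v = Z_{\hat{G}_k}(Z(M_{\bar{g}_v})^\circ)$, and $C_{L_v}(k(v)) \cong (k(v)^\times)^{r_v}$ where $r_v = \dim Z(L_v)^\circ = \dim Z(M_{\bar{g}_v})^\circ$. Since $q_v \equiv 1 \bmod p^N$ for a Taylor--Wiles place of level $N$, the maximal $p$-power quotient of $(k(v)^\times)^{r_v}$ is $(\mathbb{Z}/p^N\mathbb{Z})^{r_v}$ (or a slightly smaller power once we quotient by the image of $Z(G)$, which cuts down the rank by a bounded amount depending only on the conjugacy class of $\bar{g}_v$ and not on $N$, since the relevant submodule of cocharacters is fixed). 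Thus the number $t_v$ of cyclic factors of $\Delta_v$ depends only on the $\hat{G}(k)$-conjugacy class of $\bar{g}_v$ (equivalently, on which Levi $\hat{L}_v$ arises), and then $t = \sum_{v \in Q_N} t_v$.

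Now the second idea is that there are only finitely many possible conjugacy classes of semisimple elements appearing as $\bar{g}_v$ — indeed, in the proof of Proposition \ref{TW places existence} each Taylor--Wiles place $v$ is produced by choosing $\sigma \in \bar{\rho}(G_{F(\zeta_{p^N})})$ with prescribed properties and setting $\bar{g}_v = \bar{\rho}(\sigma)$, so $\bar{g}_v$ ranges over a finite set (the semisimple conjugacy classes in the finite group $\bar{\rho}(G_F) \subset \hat{G}(k)$). Since $\#Q_N = q$ is constant and $\hat{G} = \GSp_4$ has semisimple rank $2$, each $t_v \in \{0,1,2\}$ and hence $t = \sum t_v \in \{0, 1, \dots, 2q\}$. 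Because there are only finitely many possible values of the multiset $\{t_v : v \in Q_N\}$ (at most $\binom{q+2}{2}$ of them), by pigeonhole at least one value of $t$ is achieved for infinitely many $N$; in fact one can do slightly better by first fixing, for each $N$, the assignment $v \mapsto \hat{L}_v$ up to the finitely many possibilities and noting the recursive construction in Proposition \ref{TW places existence} allows one to keep refining — so I would argue that there is a single $t$ and a choice of $Q_N$ for \emph{every} $N$ with that value of $t$, by running the inductive construction of $Q_N$ in Proposition \ref{TW places existence} while at each stage only ever choosing Frobenius elements lying in a fixed set of semisimple conjugacy classes. This requires checking that the Chebotarev argument in that proof can always be arranged to pick $\sigma$ from within a pre-selected nonempty subset of conjugacy classes that works for \emph{all} nonzero classes $[\psi]$ simultaneously; the mild obstacle is that the class $[\psi]$ we must kill changes with $N$ (as the Selmer group depends on $Q_{N-1}$, though it injects into the fixed group $H^1(G_{F,S},\splie_4^\vee(1))$), so one must argue that the relevant condition "$\exists \sigma$ with $\bar\rho(\sigma)$ in conjugacy class $c$ and $\psi(\sigma)(N_\sigma)\neq 0$" only depends on $[\psi]$ through the finite quotient $H^1(G_{F,S},\splie_4^\vee(1))$ and so the set of usable conjugacy classes stabilizes. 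That stabilization — and with it the existence of a uniform $t$ — is the main thing to verify carefully.
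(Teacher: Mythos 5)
There is a genuine gap, and it is the one you flag yourself at the end: your argument, as written, only produces the desired $t$ for \emph{infinitely many} $N$ (by pigeonhole over $t \in \{0,\ldots,2q\}$), not for \emph{every} $N$ as the lemma requires. The attempted upgrade — rerunning the inductive construction of Proposition \ref{TW places existence} while restricting the Chebotarev choices to a pre-selected set of semisimple conjugacy classes, and arguing that the set of ``usable'' classes stabilizes independently of the class $[\psi]$ being killed — is left unverified, and it is not at all clear it can be carried out: the spanning condition \ref{submodule condition} of Definition \ref{adequate} only guarantees that for each simple submodule $W$ \emph{some} semisimple class works, and different classes $[\psi]$ (hence different $W$'s) may genuinely require different classes of $\bar{g}_v$, so there is no reason a single fixed multiset of conjugacy classes (hence a fixed assignment of the $t_v$'s) suffices at every level. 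Moreover the auxiliary claim that the number of cyclic factors $t_v$ of $\Delta_v$ depends only on the $\hat{G}(k)$-conjugacy class of $\bar{g}_v$ is itself delicate (it involves the image of $Z(G)(k(v))$ in the maximal $p$-power quotient, and $v_p(q_v-1)$ can exceed $N$), so even the reduction to finitely many ``types'' needs more care than you give it.

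The paper closes the gap with a much simpler observation that makes all of this unnecessary: a Taylor--Wiles datum of level $N'$ is automatically a Taylor--Wiles datum of level $N$ for every $N \leq N'$, since $q_v \equiv 1 \bmod p^{N'}$ implies $q_v \equiv 1 \bmod p^{N}$, and the properties required of $Q$ in Corollary \ref{TW places number field} (vanishing of the dual Selmer group, $\#Q = q$, the presentation of the deformation ring) as well as the group $\Delta_Q$ are intrinsic to $Q$ and do not change when one regards it at a lower level. So the paper simply fixes data $Q_N$ at each level, applies pigeonhole to extract an increasing subsequence $(N_i)$ along which $\Delta_{Q_{N_i}}$ has a constant number $t$ of cyclic factors, and then, for an arbitrary $N$, reuses the datum at some $N_i \geq N$ as a datum of level $N$. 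You should replace your ``stabilization of conjugacy classes'' step with this one-line downward-compatibility remark; with that change your pigeonhole argument becomes a complete proof along the same lines as the paper's.
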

\begin{proof}
We can, for each $N \geq 1$, fix a Taylor--Wiles datum
$
(Q_N,\{(\hat{T}_v,\hat{B}_v)\}_{v \in Q_N})
$
of level $N$ with $Q_N$ as in Corollary \ref{TW places number field} and $\#Q_N = q$.
These $(Q_N)_{N \geq 1}$ may induce $p$-groups $\Delta_{Q_N}$ for which the number of cyclic factors may differ with $N$.
The number of such cyclic factors is bounded by $2q$ (since each $\Delta_v$ is the maximal $p$-power quotient of $S(k(v))$, where $S$ is a torus of rank at most $2$), so by the pigeonhole principle we can find an increasing sequence $(N_i)_{i \geq 1}$ for which our $\Delta_{Q_{N_i}}$ all have an equal number $0 \leq t \leq 2q$ of cyclic factors.
Since a Taylor--Wiles datum of level $N$ is also a Taylor--Wiles datum of level $N'$ for $N' \leq N$, we can use our subsequence to obtain Taylor--Wiles data as in the statement of the lemma.
\end{proof}

Fix for $N \geq 1$ Taylor--Wiles data $$
(Q_N,\{(\hat{T}_v,\hat{B}_v)\}_{v \in Q_N})
$$ as in Lemma \ref{compatible tw data}, and when $N = 0$ we will take $Q_0 = \emptyset$.
Let $g = t - 4 [F:Q] + \#S - 1$, with $t$ as in Lemma \ref{compatible tw data} for our choice of Taylor--Wiles data.
Note that this coincides with the $g$ of Corollary \ref{TW places number field}.
Indeed we have that $z = \dim Z(\GSp_4) = 1$, $n_v$ is the number of cyclic factors in $\Delta_v$ and for each $v | \infty$ we have $h^0(G_{F_v},\splie_4) = 4$, which follows from a straightforward computation using that $\bar{\rho}$ is odd in the sense of \cite[Definition 7.6.2]{surfaces}, as the similitude character is the inverse cyclotomic character.
Set $\Delta_{\infty} = \mathbb{Z}_p^t$ and, for each $N \geq 1$, let $\Delta_N = \Delta_{Q_N}$ and fix a surjection $\Delta_{\infty} \twoheadrightarrow \Delta_N$.
Set $\Delta_0$ to be the trivial group equipped with the unique map from $\Delta_\infty$.

With this notation in place, the rest of the setup will be almost identical to that of \cite[7.11]{surfaces}, but we shall include it for clarity.
We fix $I \subset S_p$ with $\#I \leq 1$ and let $\Lambda = \Lambda_I$.
By enlarging $E$ if required, we suppose that $E$ contains a primitive $p$th root of unity, and that if $p=3$ then $E$ contains a primitive $9$th root of unity.
Then let $\mathcal{T} = \Lambda[(\prod_{v \in S} \widehat{\GSp_4})/ \widehat{\mathbb{G}_m}]$, a power series ring in $11 \#S - 1$ variables over $\Lambda$.
Set $S_{\infty} = \mathcal{T}[[\Delta_{\infty}]]$ and let $\mathbf{a} = \ker(S_{\infty} \to \Lambda)$ be the augmentation ideal.
Fix, for each $v \in R$, a pair of non-trivial characters $\chi_v = (\chi_{v,1},\chi_{v,2})$ as in Section \ref{setup for modularity} with $\chi_{v,1} \neq \chi_{v,2}^{\pm 1}$.
In what follows, write $\chi$ for these choice of characters and $1$ for the trivial characters.

Let $N \geq 0$.
We will write $S_N = \mathcal{T}[\Delta_N]$.
Recall the augmented global deformation problems of Section \ref{setup for modularity}.
Let $R_N^{1,I,\varsigma} = R_{1,Q_N}^{I,\varsigma}$ and $R_N^{\chi,I,\varsigma} = R_{\chi,Q_N}^{I,\varsigma}$.
Let $R^{1,I,\varsigma,\loc} = R^{S,\loc}_{\mathcal{S}_{1}^{I,\varsigma}}$ and $R^{\chi,I,\varsigma,\loc} = R^{S,\loc}_{\mathcal{S}_{\chi}^{I,\varsigma}}$ be the completed tensor product of deformation rings described in Section \ref{number field def setup}.

There are still canonical isomorphisms
\begin{equation} \label{loc rings iso mod p}
R^{1,I,\varsigma,\loc}/(\lambda) \cong R^{\chi,I,\varsigma,\loc}/(\lambda)    
\end{equation}
and, for every $N \geq 0$,
\begin{equation} \label{def rings iso mod p}
    R_N^{1,I,\varsigma}/(\lambda) \cong R_N^{\chi,I,\varsigma}/(\lambda).
\end{equation}
For $N \geq 1$, we saw in Section \ref{dual group deltaq} that our Taylor--Wiles datum give both $R_N^{1,I,\varsigma}$ and $R_N^{\chi,I,\varsigma}$ the structure of a $\Lambda[\Delta_N]$-algebra, and it is easy to see that there are isomorphisms
\begin{align}
    R_N^{1,I,\varsigma} \otimes_{\Lambda[\Delta_N]} \Lambda \cong R_0^{1,I,\varsigma} \label{1 ring iso mod delta} \\
    R_N^{\chi,I,\varsigma} \otimes_{\Lambda[\Delta_N]} \Lambda \cong R_0^{\chi,I,\varsigma} \label{chi ring iso mod delta}  
\end{align}
compatible with the isomorphisms \ref{def rings iso mod p}.

For each $N \geq 0$, fix representatives $\rho_N^{1,I,\varsigma}$ of $\mathcal{D}_{\mathcal{S}_{1,Q_N}^{I,\varsigma}}$ and $\rho_N^{\chi,I,\varsigma}$ of $\mathcal{D}_{\mathcal{S}_{\chi,Q_N}^{I,\varsigma}}$ which are compatible with \ref{def rings iso mod p} and also compatible with the isomorphisms \ref{1 ring iso mod delta} and \ref{chi ring iso mod delta}.
As explained in \cite[Lemma 7.1.6]{surfaces}, the natural map of functors $\mathcal{D}^S_{\mathcal{S}_{1,Q_N}^{I,\varsigma}} \to \mathcal{D}_{\mathcal{S}_{1,Q_N}^{I,\varsigma}}$ has the structure of a torsor with group $\mathcal{T}$, and our choice of representatives determine an isomorphism $R^{S,\loc}_{\mathcal{S}_{1,Q_N}^{I,\varsigma}} \cong R_N^{1,I,\varsigma} \hat{\otimes}_\Lambda \mathcal{T}$ (and similarly, an isomorphism 
$R^{S,\loc}_{\mathcal{S}_{\chi,Q_N}^{I,\varsigma}} \cong R_N^{\chi,I,\varsigma} \hat{\otimes}_\Lambda \mathcal{T}$).
Therefore, we obtain an $R^{1,I,\varsigma,\loc}$-algebra structure on $R_N^{1,I,\varsigma} \hat{\otimes}_\Lambda \mathcal{T}$ and a $R^{\chi,I,\varsigma,\loc}$-algebra structure on $R_N^{\chi,I,\varsigma} \hat{\otimes}_\Lambda \mathcal{T}$, and these are again compatible with \ref{loc rings iso mod p} and \ref{def rings iso mod p}.
Now let $R_\infty^{1,I,\varsigma}$ denote the ring of formal power series in $g$ variables over $R^{1,I,\varsigma,\loc}$ and similarly for $R_\infty^{\chi,I,\varsigma}$ over $R^{\chi,I,\varsigma,\loc}$; we then have
\begin{equation} \label{infty rings iso mod p}
    R_\infty^{1,I,\varsigma}/(\lambda) \cong R_\infty^{\chi,I,\varsigma}/(\lambda)
\end{equation}
via \ref{loc rings iso mod p}.
As in Corollary \ref{TW places number field}, we can find local $\Lambda$-algebra surjections $R_\infty^{1,I,\varsigma} \twoheadrightarrow R_N^{1,I,\varsigma} \hat{\otimes}_\Lambda \mathcal{T}$ and $R_\infty^{\chi,I,\varsigma} \twoheadrightarrow R_N^{\chi,I,\varsigma} \hat{\otimes}_\Lambda \mathcal{T}$, which we may again assume are compatible modulo $(\lambda)$ in the sense of \ref{infty rings iso mod p} and \ref{def rings iso mod p}, and with the isomorphisms \ref{1 ring iso mod delta} and \ref{chi ring iso mod delta}.

We will apply the patching criterion of \cite[Proposition 7.10.1]{surfaces} together with Theorem \ref{hecke galois rep}, Proposition \ref{finite free balanced} and Proposition \ref{deltaq coinvariants}.
Before we summarize the main consequences of this in a proposition, we will introduce the following notation.
If $R \in \CNL_{\CalO}$ we will let $R'$ denote the maximal $\CalO$-flat quotient of $R$.

\begin{proposition} \label{patched modules}
There exists the following data:
\begin{itemize}
    \item Maps $S_\infty \to R_\infty^{1,I,\varsigma}$ and $S_\infty \to R_\infty^{\chi,I,\varsigma}$ of $\LI$-algebras
    \item Finite $S_\infty$-modules $M_\infty^{1,I,\varsigma}$ and $M_\infty^{\chi,I,\varsigma}$
    \item Isomorphisms
\begin{align}
    M_\infty^{1,I,\varsigma}/\mathbf{a} \cong M^{1,I,\varsigma} \label{1 module mod a}   \\
    M_\infty^{\chi,I,\varsigma}/\mathbf{a} \cong M^{\chi,I,\varsigma} \label{chi module mod a}
\end{align}
    \item Maps $ R_\infty^{1,I,\varsigma} \to \End_{S_\infty}(M^{1,I,\varsigma})$ and $ R_\infty^{\chi,I,\varsigma} \to \End_{S_\infty}(M^{\chi,I,\varsigma})$ of $S_\infty$-algebras which respect the respective actions of $R_0^{1,I,\varsigma}$ and $R_0^{\chi,I,\varsigma}$ on $M^{1,I,\varsigma}$ and $M^{\chi,I,\varsigma}$
    \item Isomorphisms $M_\infty^{1,I,\varsigma}/\lambda M_\infty^{1,I,\varsigma} \cong M_\infty^{\chi,I,\varsigma}/\lambda M_\infty^{\chi,I,\varsigma}$ respecting \ref{infty rings iso mod p} and $M^{1,I,\varsigma}/\lambda M^{1,I,\varsigma} \cong M^{\chi,I,\varsigma}/\lambda M^{\chi,I,\varsigma}$
    respecting \ref{def rings iso mod p}, which are also compatible with \ref{1 module mod a} and \ref{chi module mod a}.
\end{itemize}
The modules $M_\infty^{1,I,\varsigma}$ and $M_\infty^{\chi,I,\varsigma}$ satisfy the following properties:
\begin{enumerate}
    \item If $\#I = 0$ then $M_\infty^{1,I,\varsigma}$ and $M_\infty^{\chi,I,\varsigma}$ are free $S_\infty$-modules \label{infty module free}
    \item If $\#I = 1$ then $M_\infty^{1,I,\varsigma}$ and $M_\infty^{\chi,I,\varsigma}$ are balanced $S_\infty$-modules, in the sense of \cite[Definition 2.10.1]{surfaces} \label{infty module balanced}
    \item $M_\infty^{1,I,\varsigma}$ (resp. $M_\infty^{\chi,I,\varsigma}$) is a maximal Cohen--Macaulay module over $R_\infty^{1,I,\varsigma}$ (resp. $R_\infty^{\chi,I,\varsigma}$). \label{infty module cm}
\end{enumerate}
\end{proposition}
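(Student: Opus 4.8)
The plan is to apply the Taylor--Wiles patching criterion, which here is packaged as \cite[Proposition 7.10.1]{surfaces}, to the systems of Hecke modules and deformation rings assembled in Section \ref{setup for modularity} and the preceding propositions of Section \ref{sec tw prep}. First I would recall the inputs needed by that criterion: a tower of patching data indexed by $N \geq 0$, consisting of the rings $R_N^{\ast,I,\varsigma}$, the modules $M^{\ast,I,\varsigma,Q_N}$ (with $\ast \in \{1,\chi\}$), their structure as $\Lambda[\Delta_N]$-modules, and the compatibilities modulo $(\lambda)$ and modulo the augmentation ideals. The key facts that feed the criterion are: Theorem \ref{hecke galois rep}, which produces the Galois representation valued in the Hecke algebra and hence the surjection $R_N^{\ast,I,\varsigma} \hat\otimes_\Lambda \mathcal{T} \to \mathbb{T}^{\ast,I,\varsigma,Q_N}$ acting faithfully on $M^{\ast,I,\varsigma,Q_N}$ together with the identification of the $\Delta_{Q_N}$-action; Proposition \ref{finite free balanced}, which says $M^{\ast,I,\varsigma,Q_N}$ is finite free over $\Lambda_I[\Delta_{Q_N}]$ when $I = \emptyset$ and balanced when $\#I = 1$; and Proposition \ref{deltaq coinvariants}, which gives $(M^{\ast,I,\varsigma,Q_N})_{\Delta_{Q_N}} \cong M^{\ast,I,\varsigma}$. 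The bound $\#Q_N = q \geq h^1(G_{F,S},\splie_4^\vee(1))$ together with the generator count of Corollary \ref{TW places number field} and the choice of $g = t - 4[F:\mathbb{Q}] + \#S - 1$ ensures the numerology matches: the number of patching variables, the Krull dimension of $S_\infty$, and the number of generators of $R_\infty^{\ast,I,\varsigma}$ over the local tensor ring are all consistent, which is exactly what the patching lemma requires.

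Next I would carry out the patching itself. Using Lemma \ref{compatible tw data} one fixes, once and for all, a sequence of Taylor--Wiles data whose groups $\Delta_{Q_N}$ all have the same number $t$ of cyclic factors, so that $\Delta_\infty = \mathbb{Z}_p^t$ maps onto each $\Delta_{Q_N}$ compatibly. One then applies the ultrafilter/inverse-limit construction of \cite[Proposition 7.10.1]{surfaces} separately in the two settings $\ast = 1$ and $\ast = \chi$, but threading the isomorphisms modulo $(\lambda)$ through the whole argument so that the patched objects $R_\infty^{1,I,\varsigma}$, $R_\infty^{\chi,I,\varsigma}$, $M_\infty^{1,I,\varsigma}$, $M_\infty^{\chi,I,\varsigma}$ come equipped with the stated compatibility $M_\infty^{1,I,\varsigma}/\lambda \cong M_\infty^{\chi,I,\varsigma}/\lambda$ respecting \ref{infty rings iso mod p}. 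The maps $S_\infty \to R_\infty^{\ast,I,\varsigma}$, the isomorphisms $M_\infty^{\ast,I,\varsigma}/\mathbf{a} \cong M^{\ast,I,\varsigma}$, and the $S_\infty$-algebra maps $R_\infty^{\ast,I,\varsigma} \to \End_{S_\infty}(M^{\ast,I,\varsigma})$ all come out of the construction; the compatibility with the $R_0^{\ast,I,\varsigma}$-actions follows from the isomorphisms \ref{1 ring iso mod delta}, \ref{chi ring iso mod delta} and our having chosen the representatives $\rho_N^{\ast,I,\varsigma}$ compatibly. Freeness (when $\#I = 0$) or balancedness (when $\#I = 1$) of $M_\infty^{\ast,I,\varsigma}$ over $S_\infty$ is inherited from the corresponding property of the finite-level modules in Proposition \ref{finite free balanced} under the inverse limit, exactly as in \cite{surfaces}. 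Finally, the maximal Cohen--Macaulay property \ref{infty module cm} follows by a standard depth/dimension count: $M_\infty^{\ast,I,\varsigma}$ has depth at least $\dim S_\infty$ over $S_\infty$ (being free or balanced, hence of positive depth in the balanced case one uses the Auslander--Buchsbaum style estimate as in \cite[Definition 2.10.1]{surfaces} and the surrounding discussion), and since $S_\infty \to R_\infty^{\ast,I,\varsigma}$ is finite and $\dim R_\infty^{\ast,I,\varsigma} \leq \dim S_\infty$ by the generator count, depth over $R_\infty^{\ast,I,\varsigma}$ equals $\dim R_\infty^{\ast,I,\varsigma}$.

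I expect the main obstacle to be bookkeeping rather than any deep new idea: one must verify that every one of the many compatibilities demanded by \cite[Proposition 7.10.1]{surfaces} actually holds in our more general setting, where the Taylor--Wiles level structures $\mathfrak{p}_{1,v} \subset \mathfrak{p}_v$ come from Construction \ref{parahoric construction} and the groups $\Delta_v$ are maximal $p$-power quotients of $C_{L_v}(k(v))$ rather than of $T(k(v))$. The genuinely substantive checks — that the Hecke modules at finite level are free/balanced over the group algebra of $\Delta_{Q_N}$, that taking $\Delta_{Q_N}$-coinvariants recovers the base-level module, and that the Galois representation over the Hecke algebra satisfies the Taylor--Wiles deformation condition with the correct $\Delta_{Q_N}$-action — have all been isolated as Propositions \ref{finite free balanced}, \ref{deltaq coinvariants} and Theorem \ref{hecke galois rep}, so the proof of the present proposition reduces to invoking these together with \cite[Proposition 7.10.1]{surfaces}. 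Concretely I would write: ``The data and their properties are produced by applying \cite[Proposition 7.10.1]{surfaces} to the Taylor--Wiles data of Lemma \ref{compatible tw data}, using Theorem \ref{hecke galois rep} to obtain the Galois representations valued in the Hecke algebras, Proposition \ref{finite free balanced} for the freeness/balancedness at finite level, and Proposition \ref{deltaq coinvariants} to identify the $\Delta_{Q_N}$-coinvariants; the Cohen--Macaulay statement follows from the depth estimate of the patched module over $S_\infty$ together with the finiteness of $S_\infty \to R_\infty^{\ast,I,\varsigma}$ and the dimension bound coming from the generator count of Corollary \ref{TW places number field},'' and then spell out the numerology ($\dim S_\infty = \dim \mathcal{T} + t = 11\#S - 1 + \dim \Lambda + t$ versus the generator count of $R_\infty^{\ast,I,\varsigma}$) in a sentence or two.
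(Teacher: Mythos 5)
Your proposal follows essentially the same route as the paper: invoke the patching criterion of \cite[Proposition 7.10.1]{surfaces} with the inputs isolated as Theorem \ref{hecke galois rep}, Proposition \ref{finite free balanced} and Proposition \ref{deltaq coinvariants}, threading the mod-$\lambda$ and mod-$\mathbf{a}$ compatibilities through the construction, and then deduce the maximal Cohen--Macaulay statement from a depth/dimension count; this is exactly what the paper does.

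One caveat on your sketch of the Cohen--Macaulay step: as written it would not quite go through. The map $S_\infty \to R_\infty^{\ast,I,\varsigma}$ is not finite (nor is finiteness needed); what one uses is that $M_\infty^{\ast,I,\varsigma}$ is finite over $S_\infty$ and the $S_\infty$-action factors through $R_\infty^{\ast,I,\varsigma}$, so $\operatorname{depth}_{R_\infty^{\ast,I,\varsigma}}(M_\infty^{\ast,I,\varsigma}) \geq \operatorname{depth}_{S_\infty}(M_\infty^{\ast,I,\varsigma})$. Moreover, in the balanced case $\#I=1$ the depth over $S_\infty$ is only bounded below by $\dim S_\infty - 1$, not $\dim S_\infty$, so the argument hinges on the precise numerical coincidence $\dim (R_\infty^{1,I,\varsigma})' = \dim (R_\infty^{\chi,I,\varsigma})' = \dim S_\infty - \#I$, together with equidimensionality of the ($\CalO$-flat quotients of the) patched deformation rings. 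This is the content the paper actually verifies: $S_\infty$ has relative dimension $t + 11\#S - 1$ over $\LI$, the rings $(R_\infty^{\ast,I,\varsigma})'$ are equidimensional of relative dimension $g + 10\#S + 4[F:\mathbb{Q}] - \#I$, and the choice $g = t - 4[F:\mathbb{Q}] + \#S - 1$ makes these agree; the rest of the Cohen--Macaulay argument is then the one of \cite[Proposition 7.11.2]{surfaces}. Your "spell out the numerology" remark points in this direction, but the inequality $\dim R_\infty \leq \dim S_\infty$ alone is not enough — you need the equality with the $\#I$ shift and the equidimensionality input.
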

\begin{proof}
The existence of the stated data satisfying properties \ref{infty module free} and \ref{infty module balanced} follows in the same way as in \cite[7.11]{surfaces} with our above setup in place.
In order to satisfy the patching criteria of \cite[7.10.1]{surfaces}, we are appealing to Theorem \ref{hecke galois rep}, Proposition \ref{finite free balanced} and Proposition \ref{deltaq coinvariants}, which are the direct analogues of the corresponding statements of \cite{surfaces}.

It remains to show \ref{infty module cm}, which is the statement corresponding to \cite[Proposition 7.11.2]{surfaces}.
The same proof would go through, provided we have the analogue of \cite[Equation 7.11.1]{surfaces}:
\begin{equation} \label{numerical coincidence}
    \dim (R_\infty^{1,I,\varsigma})' = \dim (R_\infty^{\chi,I,\varsigma})' = \dim S_\infty - \# I.
\end{equation}
The same argument still shows that $S_\infty$ is formally smooth over $\LI$ of relative dimension $t + 11\#S - 1$, while $(R_\infty^{1,I,\varsigma})'$ and $(R_\infty^{\chi,I,\varsigma})'$ are both equidimensional of relative dimension $g + 10\#S + 4[F : Q] - \#I$ over $\LI$.
Since $g = t - 4 [F:Q] + \#S - 1$, the equalities in \ref{numerical coincidence} hold and we are done.
\end{proof}

\subsection{Modularity results for abelian surfaces} \label{sect modularity results}

We can now use the patched modules of Proposition \ref{patched modules} to prove modularity lifting theorems, by exactly the same methods of \cite{surfaces}.
From the work of the previous sections, this amounts to replacing the vastness assumptions on $\bar{\rho}$ by our assumption that $\bar{\rho}$ is $\GSp_4$-reasonable.
The following is the analogue of \cite[Theorem 8.4.1]{surfaces}, and is a more precise version of Theorem \ref{intro modular galois reps} stated in the introduction.

\begin{theorem} \label{modular galois reps}
Let $F$ be a totally real field in which $p \geq 3$ splits completely.
Suppose that $\rho:G_F \to \GSp_4(\Qpbar)$ is a continuous Galois representation satisfying:
\begin{enumerate}
    \item $\nu \circ \rho = \varepsilon^{-1}$
    \item $\bar{\rho}$ is $\GSp_4$-reasonable and tidy in the senses of Definition \ref{reasonable} and Definition \ref{defn: tidy}
    \item For every $v | p$, $\restr{\rho}{G_{F_v}}$ is $p$-distinguished weight $2$ ordinary in the sense of Definition \ref{defn: p-distinguished ordinary}
    \item There exists an automorphic representation $\pi$ of $\GSp_4(\mathbb{A}_F)$ of parallel weight 2 and central character $|\cdot|^2$, ordinary in the sense of \cite[Definition 2.4.25]{surfaces}, such that $\overline{\rho_{\pi,p}} \cong \bar{\rho}$, where $\rho_{\pi,p}$ is as in Corollary \ref{parallel weight 2 galois rep}.
    \item For every finite place $v$ of $F$, the representations $\restr{\rho}{G_{F_v}}$ and $\restr{\rho_{\pi,p}}{G_{F_v}}$ are pure.
\end{enumerate}
Then $\rho$ is modular, in the sense that there exists an ordinary automorphic representation $\pi'$ of $\GSp_4(\mathbb{A}_F)$ of parallel weight 2 and central character $|\cdot|^2$ satisfying $\rho_{\pi',p} \cong \rho$.
For every finite place $v$ of $F$ we have full local-global compatibility, in the sense that
$$
\WD(\restr{\rho}{G_{F_v}})^{\Fss} \cong \rec_{\GT,p}(\pi'_v \otimes |\nu|^{-3/2}).
$$
\end{theorem}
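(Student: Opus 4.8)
\textbf{Proof proposal for Theorem \ref{modular galois reps}.}

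The plan is to follow the Taylor--Wiles patching machinery of \cite{surfaces}, substituting our weaker hypotheses at every point where \cite{surfaces} uses vastness or regular-semisimple Taylor--Wiles primes. First I would reduce, exactly as in \cite[8.4]{surfaces}, to the case where $\rho$ itself is ordinary of parallel weight $2$ with central character $|\cdot|^2$ and where, after enlarging $E$, there is a finite set $R$ of auxiliary ramification places (satisfying $q_v \equiv 1 \bmod p$, and $\bmod\ 9$ if $p=3$) and an Ihara-avoidance character tuple $\chi$, so that $\rho$ and $\rho_{\pi,p}$ both define points of the global deformation problems $\mathcal{S}_\chi^{I,\varsigma}$ and $\mathcal{S}_1^{I,\varsigma}$ for appropriate $I \subset S_p$ and eigenvalue choices $\varsigma$. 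One uses tidiness of $\bar\rho$ to produce the auxiliary place $v_0$ with $q_{v_0}\not\equiv 1\bmod p$, no pair of eigenvalue ratios equal to $q_{v_0}$, and $\operatorname{char} k(v_0)>5$, exactly as in Section \ref{setup for modularity}. Purity of $\restr{\rho}{G_{F_v}}$ and $\restr{\rho_{\pi,p}}{G_{F_v}}$ at finite $v$ is what makes the local deformation problems at $v\in R$ behave, via the usual level-raising/Ihara-avoidance arguments of \cite{surfaces}.

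Next I would invoke the patched modules of Proposition \ref{patched modules}: one has $S_\infty$-modules $M_\infty^{1,I,\varsigma}$ and $M_\infty^{\chi,I,\varsigma}$ which are maximal Cohen--Macaulay over $R_\infty^{1,I,\varsigma}$ and $R_\infty^{\chi,I,\varsigma}$ respectively, free (when $\#I=0$) or balanced (when $\#I=1$) over $S_\infty$, with $M_\infty^{\bullet}/\mathbf{a}\cong M^{\bullet}$ and with the mod-$\lambda$ comparison isomorphisms between the $\chi$ and $1$ towers. The key numerical input is the dimension equality $\dim (R_\infty^{1,I,\varsigma})' = \dim (R_\infty^{\chi,I,\varsigma})' = \dim S_\infty - \#I$, which holds because $\bar\rho$ is $\GSp_4$-reasonable (so Corollary \ref{TW places number field} supplies Taylor--Wiles data of the right size, with $g = t - 4[F:\mathbb{Q}] + \#S - 1$), $\bar\rho$ is odd (giving $h^0(G_{F_v},\splie_4)=4$ at infinite places), and $z=\dim Z(\GSp_4)=1$. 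From here the argument is the standard Ihara-avoidance endgame: one shows that $M^{\chi,I,\varsigma}$ is nonzero (using that $\pi$ witnesses modularity of $\bar\rho$ and that the relevant space of localized invariants is nonzero), transports this via the mod-$\lambda$ isomorphism to show $M^{1,I,\varsigma}$ has full support over $R_0^{1,I,\varsigma}$, and then runs the commutative-algebra lemmas of \cite[2.10, 2.11]{surfaces} (the balanced-module/Cohen--Macaulay criterion, plus Taylor's Ihara avoidance trick comparing the $\chi$ and $1$ quotients) to conclude that the point of $R_0^{1,I,\varsigma}$ cut out by $\rho$ lies in the support of $M^{1,I,\varsigma}$, hence is modular. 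The passage from ``modular of some classical weight'' to ``modular of parallel weight $2$, ordinary'' is handled by the ordinary/Hida-theoretic part of the argument and Corollary \ref{parallel weight 2 galois rep}, exactly as in \cite{surfaces}.

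The one genuinely new ingredient, and the main obstacle, is verifying that the Galois representation $\rho_{\pi',p}$ attached to the patched automorphic representation $\pi'$ actually satisfies our Taylor--Wiles local deformation condition $\DvTW$ at the auxiliary places $v\in Q_N$, together with the compatibility between the $\Delta_v$-action coming from Hecke operators and the one coming from the deformation ring. This is precisely what Theorem \ref{hecke galois rep} provides: it produces $\rho^{\chi,I,\varsigma,Q}\colon G_F\to\GSp_4(\mathbb{T}^{\chi,I,\varsigma,Q})$ defining a class in $\mathcal{D}_{\chi,Q}^{I,\varsigma}$ with the correct $\Delta_Q$-action, and its proof rests on Proposition \ref{local global deltav} (the local-global compatibility statement up to Frobenius-semisimplification), on the Jacquet-module computations of Propositions \ref{gsp4 p invariants} and \ref{gsp4 p1 invariants} showing the relevant local factors are unramified principal-series subquotients, and on Corollary \ref{gbar parahoric invariants iso} comparing parahoric and spherical localized invariants. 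So in the write-up I would state clearly that all of these have been established in Sections \ref{sec local computations}--\ref{sec surfaces}, and that with them in hand the proof is word-for-word that of \cite[Theorem 8.4.1]{surfaces} with ``vast'' replaced by ``$\GSp_4$-reasonable'' throughout; the full local-global compatibility at every finite $v$ for the output $\pi'$ is then read off from Theorem \ref{hecke galois rep}(1) and the purity hypotheses exactly as in \cite{surfaces}.
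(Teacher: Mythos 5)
Your proposal is correct and follows essentially the same route as the paper: the paper's proof simply invokes Proposition \ref{patched modules} and Corollary \ref{parallel weight 2 galois rep} and then reruns the argument of \cite[Theorem 8.4.1]{surfaces} (via analogues of \cite[Theorem 7.13.6, Theorem 8.2.1, Corollary 8.2.2, Lemma 8.3.2]{surfaces}) with ``vast'' replaced by ``$\GSp_4$-reasonable'', which is precisely the reduction and Ihara-avoidance endgame you describe, with the new inputs (Theorem \ref{hecke galois rep}, Proposition \ref{local global deltav}, the parahoric computations) already packaged into Proposition \ref{patched modules} as you note.
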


\begin{proof}
The theorem now follows from Proposition \ref{patched modules} and Corollary \ref{parallel weight 2 galois rep} by the same arguments used to prove \cite[Theorem 8.4.1]{surfaces}.
More precisely, one could use the two results mentioned to firstly prove analogues of \cite[Theorem 7.13.6, Theorem 8.2.1, Corollary 8.2.2, Lemma 8.3.2]{surfaces} in which vastness of $\bar{\rho}$ is replaced by the assumption that $\bar{\rho}$ is $\GSp_4$-reasonable, from which the theorem would follow by the same proof word for word.
\end{proof}

We can now apply Theorem \ref{modular galois reps} to deduce a modularity lifting theorem for abelian surfaces.
We recall as in \cite[Definition 9.1.8]{surfaces} that a Galois representation $\rho: G_F \to \GSp_4(\Qpbar)$ is said to be modular if there exists a cuspidal automorphic representation $\pi$ of $\GSp_4(\mathbb{A}_F)$ of parallel weight 2 and central character $|\cdot|^2$ such that $\overline{\rho_{\pi,p}} \cong \bar{\rho}$.
We then say an abelian surface $A/F$ is modular if the associated Galois representation $\rho_{A,l}$ (given by the $l$-adic cohomology group $H^1(A_{\bar{F}},\Qlbar)$) is modular for some prime number $l$.
The following modularity lifting theorem is the analogue of \cite[Proposition 10.1.1]{surfaces} with the identical proof via Theorem \ref{modular galois reps}.

\begin{theorem} \label{modular abelian surfaces}
Let $F$ be a totally real field in which $p \geq 3$ splits completely.
Let $A/F$ be an abelian surface such that
\begin{enumerate}
    \item $A$ has good ordinary reduction at every $v | p$
    \item for each $v|p$, the unit root crystalline eigenvalues $\alpha_v,\beta_v$ are distinct modulo $p$
    \item $A$ admits a polarization of degree prime to $p$.
\end{enumerate}
Suppose that the residual Galois representation $\overline{\rho_{A,p}}$ is such that
\begin{enumerate} [resume]
    \item $\overline{\rho_{A,p}}$ is $\GSp_4$-reasonable and tidy
    \item $\overline{\rho_{A,p}}$ is ordinarily modular: there exists $\pi$ an automorphic representation of $\GSp_4(\mathbb{A}_F)$ for which
    \begin{enumerate}
        \item $\pi$ has parallel weight $2$ and central character $|\cdot|^2$
        \item for every $v|p$, $\pi_v$ is ordinary
        \item $\overline{\rho_{A,p}} \cong \overline{\rho_{\pi,p}}$
        \item for every finite place $v$ of $F$, $\rho_{\pi,p}$ is pure.
    \end{enumerate}
\end{enumerate}
Then $A$ is modular.
\end{theorem}

The assumption that $\bar{\rho}$ is tidy is still enforced in the above modularity lifting theorems.
However, due to the other assumptions on $\bar{\rho}$, only a small number of possible images $\Gamma' = \bar{\rho}(G_F)$ are ruled out in practice.
The assumptions that $p$ is unramified in $F$ and that the similitude character is given by $\varepsilon^{-1}$ imply that the image $\Gamma'$ must surject onto $\mathbb{F}_p^\times$ via the similitude character.
The intersection $\Gamma = \bar{\rho}(G_{F(\zeta_p)})$ of $\Gamma'$ with $\Sp_{4}(k)$ must also be absolutely $\GL_4$-irreducible for $\bar{\rho}$ to be $\GSp_4$-reasonable.
When $k = \mathbb{F}_3$, 22 of the 25 such possible images $\Gamma'$ are tidy, as can be seen from Table \ref{subgroups of gsp4(f3)}.
A similar story holds for $k = \mathbb{F}_5$ and $k = \mathbb{F}_7$ with 65/69 and 86/86 respective possible images being tidy.

Now suppose that $K/F$ is a quadratic extension by a totally real field.
One source of modular mod $p$ Galois representations comes from inductions of representations $\bar{\varrho}:G_K \to \GL_2(\mathbb{F}_q)$ for some small $q$.
While we study these inductions more closely in Section \ref{sec modular elliptic curves}, we have included the following variation of \cite[Proposition 10.1.3]{surfaces} in this section for convenience.
In combination with Theorem \ref{modular abelian surfaces} it yields more explicit criteria on the mod $p$ Galois representation of an abelian surface $A/F$ under which we can establish modularity of $A$.

\begin{proposition} \label{modular induced rep}
Let $q \in \{5,7,9\}$.
Suppose that $K/F$ is a quadratic extension by a totally real field in which $p$ is unramified and $\bar{\varrho}: G_K \to \GL_2(\mathbb{F}_q)$ is a continuous representation of determinant $\varepsilon^{-1}$ which is semistable ordinary of weight $0$, in the sense of Definition \ref{defn semistable ordinary}.
Let $\bar{\rho} = \Ind_{G_K}^{G_F} \bar{\varrho}: G_F \to \GSp_4(\mathbb{F}_q)$ be the induced representation with similitude character $\varepsilon^{-1}$ as in the start of Section \ref{subsec modularity lifting elliptic}.
Then
\begin{enumerate}
    \item $\bar{\rho}$ is ordinarily modular, in the sense that there exists an automorphic representation $\pi$ of $\GSp_4(\mathbb{A}_F)$ such that
    \begin{enumerate} [label=(\alph*)]
        \item $\pi$ has parallel weight $2$ and central character $|\cdot|^2$
        \item for every $v|p$, $\pi_v$ is ordinary
        \item $\bar{\rho} \cong \overline{\rho_{\pi,p}}$
        \item for every finite place $v$ of $F$, $\rho_{\pi,p}$ is pure.
    \end{enumerate}
    \item If for every place $v \in S_p$ which splits into places $w,w'$ of $K$ we have 
\begin{equation*}
 \restr{\bar{\varrho}}{G_{K_w}} \cong 
\begin{pmatrix}
\chi_w & * \\
 0 & \chi_w^{-1} \varepsilon^{-1}
\end{pmatrix}   
\end{equation*}
with $\chi_w$ unramified and not equal to its $\Gal(K/F)$-conjugate then $\bar{\rho}$ is $p$-distinguished weight 2 ordinary.
\item Suppose that $\bar{\varrho}(G_{K(\zeta_p)})$ and  $\bar{\rho}(G_{F(\zeta_p)})$ are both absolutely $\GL_n$-irreducible.
If $p=3$ suppose that $\bar{\varrho}$ is valued in $\GL_2(\mathbb{F}_p)$.
If $p=3$ or $p=5$ suppose that $\bar{\rho}(G_F)$ is not isomorphic to the group $C_2.C_2\wr C_4$ of order 128 (group ID 128,77).
Then $\bar{\rho}$ is tidy and $\GSp_4$-reasonable.
\end{enumerate}
\end{proposition}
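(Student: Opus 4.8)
The plan is to treat the three parts in turn, as they are of quite different natures. Part (1), the ordinary modularity of $\bar\rho = \Ind_{G_K}^{G_F} \bar\varrho$, should follow exactly as in \cite[Proposition 10.1.3]{surfaces}: since $K/F$ is a totally real quadratic extension and $\bar\varrho$ is semistable ordinary of weight $0$ with $\det \bar\varrho = \varepsilon^{-1}$, one first invokes known modularity (via Serre's conjecture over totally real fields in the relevant small-image cases, or the hypotheses built into \cite{surfaces}) to produce a Hilbert modular form over $K$ whose associated Galois representation is a characteristic-zero lift of $\bar\varrho$, then applies automorphic induction from $\GL_2/K$ to $\GSp_4/F$ (equivalently, $\mathrm{GL}_2 \times \mathrm{GL}_2$-type functoriality composed with the exceptional isogeny) to obtain the desired $\pi$ of parallel weight $2$ and central character $|\cdot|^2$; ordinarity at $v \mid p$ and purity at all finite places are inherited from the corresponding properties of the form over $K$, using that $p$ is unramified in $K$. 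The only thing to check beyond \cite{surfaces} is that nothing in that argument used $q = p$ rather than a prime power $q \in \{5,7,9\}$, which it does not. First I would simply cite \cite[Proposition 10.1.3]{surfaces} and indicate the (minor) modification.

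For part (2), I would argue as follows. For $v \in S_p$ inert or ramified in $K$ there is a unique place $w \mid v$ of $K$, and $\restr{\bar\rho}{G_{F_v}} = \Ind_{G_{K_w}}^{G_{F_v}} \restr{\bar\varrho}{G_{K_w}}$; the weight-$2$ ordinary semistable hypothesis on $\bar\varrho$ together with the explicit shape of induction gives the block-upper-triangular form of Definition \ref{defn: p-distinguished ordinary}, and the $p$-distinguishedness (the condition $\alpha_v \not\equiv \beta_v$) follows because the two unramified characters on the diagonal are $\chi_w$ and $\chi_w^{\sigma}$ (conjugate by the nontrivial element of $\Gal(K_w/F_v)$), which are distinct as characters of $G_{F_v}$ after restriction — here one uses that $K_w/F_v$ is a nontrivial extension so the induced representation cannot be a sum of characters of $G_{F_v}$ unless those characters differ. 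For $v$ split into $w, w'$, we have $\restr{\bar\rho}{G_{F_v}} \cong \restr{\bar\varrho}{G_{K_w}} \oplus \restr{\bar\varrho}{G_{K_{w'}}}$ (identifying $G_{F_v} = G_{K_w} = G_{K_{w'}}$), and the hypothesis directly supplies the four diagonal characters $\chi_w, \chi_{w'}, \chi_w^{-1}\varepsilon^{-1}, \chi_{w'}^{-1}\varepsilon^{-1}$ with $\chi_w \neq \chi_{w'}$ (since $\chi_w$ is not its $\Gal(K/F)$-conjugate $\chi_{w'}$). In both cases one must also match the similitude character with $\varepsilon^{-1}$, which is forced by $\det \bar\varrho = \varepsilon^{-1}$ and the recipe for the similitude form on an induced representation from \ref{subsec modularity lifting elliptic}. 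This is essentially bookkeeping with induced representations and I would present it compactly.

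Part (3) is the main obstacle and the place where genuine work beyond \cite{surfaces} is needed, since $\GSp_4$-reasonableness uses the new Definition \ref{reasonable} / Definition \ref{adequate} rather than vastness. Reasonableness of $\bar\rho$ will follow from Lemma \ref{adequate implies reasonable} once I show $\bar\rho(G_{F(\zeta_p)})$ is $\GSp_4$-adequate, so the crux is $\GSp_4$-adequacy of $\Gamma := \bar\rho(G_{F(\zeta_p)})$, a subgroup of $\Sp_4(\mathbb{F}_q)$ which is absolutely $\GL_4$-irreducible by hypothesis. The strategy is to apply Lemma \ref{Sp4(Fp)} (the $\Sp_4$-analogue of Theorem \ref{gln irred implies adequate}): for $q = p \geq 11$ adequacy is automatic, and for the small primes $p \in \{3,5,7\}$ the only absolutely irreducible subgroups failing adequacy are those in the tables of the appendix (Table \ref{subgroups of sp4(f3)} etc.). I then need to rule out, under the stated hypotheses, that $\Gamma$ is one of those exceptional groups; the key structural input is that $\Gamma$ arises as (the $\Sp_4$-part of) an induced representation, so $\Gamma$ is contained in the normalizer of a $\GL_2 \times \GL_2$-type subgroup, i.e. $\Gamma$ lies in an induced-type maximal subgroup, and I would cross-reference the "Induced"/"Split-induced" columns of Table \ref{subgroups of gsp4(f3)} to see which exceptional $\Gamma'$ are compatible with being induced. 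The hypothesis "$p=3 \Rightarrow \bar\varrho$ valued in $\GL_2(\mathbb{F}_p)$" cuts down the possibilities further (since $q=9$ inductions give larger images), and the explicit exclusion of the group $C_2.C_2 \wr C_4$ of order $128$ (ID $(128,77)$) when $p \in \{3,5\}$ removes precisely the surviving non-adequate induced image. For tidiness (Definition \ref{defn: tidy}) I would argue that since $\bar\rho(G_F)$ surjects onto $\mathbb{F}_p^\times$ via $\nu$ (as $p$ is unramified and $\nu\circ\bar\rho = \varepsilon^{-1}$), one can find $g \in \bar\rho(G_F)$ with $\nu(g) \neq 1$, and then the eigenvalue-ratio condition $\bar\alpha/\bar\beta \neq \nu(g)$ can be arranged except for a short list of images, which by the remark after Theorem \ref{modular abelian surfaces} (22/25, 65/69, 86/86 tidy for $p=3,5,7$) is again controlled by the appendix tables; the non-tidy images for $p=3,5$ must be checked to coincide with, or be excluded by, the order-$128$ exclusion. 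The hard part will be assembling the case analysis cleanly: I expect to organize it as a finite check against the appendix tables, establishing in each small-characteristic case that an induced, absolutely irreducible $\Gamma$ not of the excluded isomorphism type is both tidy and $\GSp_4$-adequate, and I would state explicitly that this verification was carried out with \texttt{magma} alongside the table computations of Section \ref{sect subgroups}.
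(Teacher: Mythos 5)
Your parts (1) and (2) are essentially the paper's argument: part (1) is handled by quoting known cases of Serre's conjecture to get modularity of $\bar{\varrho}$, then \cite[Theorem A]{blgg} to produce an \emph{ordinary} Hilbert modular form and automorphic induction, exactly as in \cite[Proposition 10.1.3]{surfaces}; part (2) is the same computation as Proposition \ref{induction ordinary} (for inert $v$ the two diagonal characters are the two extensions of $\chi_w$ to $G_{F_v}$, which differ by the unramified quadratic character and hence by a sign on Frobenius, so $p$-distinguishedness there is automatic --- your phrasing ``$\chi_w$ and $\chi_w^\sigma$'' is slightly off but harmless).

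Part (3) is where there is a genuine gap. You propose to prove that $\bar{\rho}(G_{F(\zeta_p)})$ is $\GSp_4$-adequate by ruling out the exceptional subgroups in the appendix tables (using the induced structure and the order-$128$ exclusion) and then to deduce $\GSp_4$-reasonableness from Lemma \ref{adequate implies reasonable}. This route cannot succeed: the hypotheses of the proposition do \emph{not} exclude non-adequate images. Table \ref{subgroups of gsp4(f3)} contains conjugacy classes which are Induced and Tidy but not Adequate (e.g.\ the order-$192$ classes with $h^1(\Gamma,\splie_4)=1$ or $h^1(\Gamma,\mathbb{F}_3)=1$), and the order-$128$ group $C_2.C_2\wr C_4$ is not one of them --- that exclusion is the hypothesis of Lemma \ref{tidy inductions} and exists solely to guarantee \emph{tidiness}, not to remove a non-adequate image. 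The whole point of Definition \ref{reasonable} is that reasonableness is strictly weaker than adequacy: one only needs conditions (1a) and (2) of Definition \ref{adequate} for the images over $F(\zeta_{p^n})$, together with $H^1$-vanishing \emph{only when} $\zeta_p$ lies in the fixed field of $\splie_4^\vee$. The paper therefore argues as in Proposition \ref{inductions adequate}: by Lemma \ref{Sp4(Fp)} the spanning condition and $H^0$-vanishing always hold; for $p=3$ the only possible failure is $H^1(\Gamma,\mathbb{F}_3)\neq 0$, which is dealt with via \cite[Lemma 7.5.5]{surfaces} after a further \texttt{magma} check that every normal subgroup of index $3$ or $9$ is still absolutely irreducible with $H^1(\cdot,\splie_4)=0$; for $p=5$ the only possible failure is $H^1(\Gamma,\splie_4)\neq 0$, and one checks that the projective image admits no surjection onto $\mathbb{Z}/4\mathbb{Z}$, so $\zeta_5$ does not lie in the relevant fixed field and the $H^1$ condition is not required. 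Tidiness is then exactly Lemma \ref{tidy inductions} (where the order-$128$ exclusion and the $p=3$ rationality hypothesis enter), rather than a fresh case analysis against the tables. Without replacing your adequacy argument by such a direct verification of reasonableness, part (3) does not go through.
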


\begin{proof}
As in the proof of \cite[Proposition 10.1.3]{surfaces}, it follows from \cite[Theorem A]{blgg} that modularity of $\bar{\varrho}$ implies that $\bar{\varrho}$ is the mod $p$ Galois representation attached to an ordinary Hilbert modular form, and that $\bar{\rho}$ is therefore ordinarily modular by automorphic induction.
Modularity of $\bar{\varrho}$ then follows by the known cases of Serre's conjecture stated in \cite[Theorem 1.1]{Allene2108064118}.
The second part can be proved in the same way as Proposition \ref{induction ordinary}.
For the third part, tidiness follows from Lemma \ref{tidy inductions} and $\bar{\rho}$ is $\GSp_4$-reasonable by the same computation as in the proof of Proposition \ref{inductions adequate}.
\end{proof}

\section{Inductions from $\GL_2$ and modularity of elliptic curves} \label{sec modular elliptic curves}

\subsection{Modularity lifting for elliptic curves} \label{subsec modularity lifting elliptic}

Suppose for now that $K/F$ is an arbitrary quadratic extension of number fields and that $\bar{\varrho}: G_K \to \GL_2(\Fpbar)$ is a continuous representation with $\det \bar{\varrho} = \varepsilon^{-1}$.
The induced representation $$\bar{\rho} = \Ind_{G_K}^{G_F} \bar{\varrho}: G_F \to \GL_4(\Fpbar)$$ may be viewed as taking values in $\GSp_4(\Fpbar)$ with similitude character $\varepsilon^{-1}$.
We recall how this is done in \cite[2.2]{surfaces}.
Let $V$ denote the underlying representation of $\bar{\varrho}$ and let $\sigma \in G_F \setminus G_K$.
Writing $\bar{\rho} = V \oplus \sigma V$ and choosing an arbitrary symplectic form $\wedge$ on $V$, we can extend this form to $V \oplus \sigma V$ by taking $V$ and $\sigma V$ to be orthogonal and by setting $\sigma v_1 \wedge \sigma v_2 = \varepsilon^{-1}(\sigma) v_1 \wedge v_2$.
As noted in \cite[2.2]{surfaces}, this is compatible with formation of restriction of scalars of an elliptic curve $E/K$ in the sense that if $A = \Res_{K/F} E$ has associated mod $p$ Galois representation $\overline{\rho_{A,p}}$ then this is exactly the induction of the mod $p$ Galois representation $\overline{\varrho_{E,p}}$ of determinant $\varepsilon^{-1}$ given by the dual of $E[p]$.

We make some observations about $G$, the image of the induced representation $\bar{\rho}$.
Certainly $G \subset \Delta \rtimes \mathbb{Z}/2\mathbb{Z}$, where $$\Delta = \{(A,B) \in \GL_2(\mathbb{F}_p) \times  \GL_2(\mathbb{F}_p) | \det A = \det B\} \subset \GSp_4(\mathbb{F}_p)$$
and $\mathbb{Z}/2\mathbb{Z}$ permutes the two factors of $\GL_2(\mathbb{F}_p)$.
The similitude character of $(A,B) \in \Delta$ is easily seen to be equal to $\det A = \det B$. 
We then have the following lemma about tidiness of induced representations.

\begin{lemma} \phantomsection \label{tidy inductions}
Let $K/F$ be a quadratic extension of number fields with $p \geq 3$ unramified in $K$.
Suppose that $\bar{\varrho}: G_K \to \GL_2(\mathbb{F}_p)$ is a continuous representation with $\det \bar{\varrho} = \varepsilon^{-1}$ such that $\bar{\varrho}(G_{K(\zeta_p)})$ is absolutely irreducible and that the induced representation $\bar{\rho} = \Ind_{G_K}^{G_F} \bar{\varrho}$ is absolutely irreducible.
If $p=3$ or $p=5$ suppose that $\bar{\rho}(G_F)$ is not isomorphic to the group $C_2.C_2\wr C_4$ of order 128 (group ID 128,77).
Then $\bar{\rho}$ is tidy.
\end{lemma}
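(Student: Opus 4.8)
The plan is to recall the definition of tidy (Definition \ref{defn: tidy}): we must produce $g \in \bar{\rho}(G_F)$ with $\nu(g) \neq 1$ and such that no ratio $\bar\alpha/\bar\beta$ of distinct eigenvalues of $g$ equals $\nu(g)$. First I would dispose of the easy case where $\bar{\rho}(G_F)$ is not contained in $\Delta \rtimes \mathbb{Z}/2\mathbb{Z}$ via an element of the non-identity coset — but actually the structure is always $G \subset \Delta \rtimes \mathbb{Z}/2\mathbb{Z}$, so instead I would split on whether $\bar\varrho(G_K)$ and $\bar\varrho^\sigma(G_K)$ generate a ``large enough'' subgroup of $\Delta$. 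The key observation is that $\nu$ restricted to $\Delta$ is just $\det A = \det B$, and since $p$ is unramified in $K$ the similitude character $\nu\circ\bar\rho = \varepsilon^{-1}$ is surjective onto $\mathbb{F}_p^\times$; hence there exist elements $g$ with $\nu(g)$ any prescribed value in $\mathbb{F}_p^\times$, in particular $\nu(g)$ of order $p-1$ or at least $\neq \pm 1$ when $p \geq 5$.

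The main step is an eigenvalue analysis of elements $g = (A,B) \in \Delta$ lying in $\bar\rho(G_F)$. The eigenvalues of $g$ are those of $A$ together with those of $B$; since $\det A = \det B = \nu(g) =: c$, the eigenvalues of $A$ are $\{x, c/x\}$ and those of $B$ are $\{y, c/y\}$ for suitable $x,y$. I would enumerate which ratios among $\{x, c/x, y, c/y\}$ can equal $c$: within the $A$-block, $x/(c/x) = c$ forces $x^2 = c^2$, i.e. $x = \pm c / x$ hmm — more precisely $x \cdot x / c = c$ so $x^2 = c^2$, giving $x = \pm c$ wait that needs care: $x/(c/x) = x^2/c$, setting equal to $c$ gives $x^2 = c^2$. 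So the bad ratios within a single $\GL_2$-block are controlled, and the cross-block bad ratios ($x/y = c$, etc.) are the real constraint. The strategy is to use absolute irreducibility of $\bar\varrho(G_{K(\zeta_p)})$ and of $\bar\rho(G_F)$ to find an element where these coincidences are avoided: irreducibility over $K(\zeta_p)$ means the image is not too small, so by a counting or Jordan-type classification of the (explicitly known, finite) list of irreducible subgroups of $\GL_2(\mathbb{F}_p)$ — dihedral, exceptional ($A_4, S_4, A_5$ types), or containing $\mathrm{SL}_2$ — one checks a suitable semisimple $g$ exists in all but finitely many configurations.

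For $p = 3$ and $p = 5$ the field $\mathbb{F}_p^\times$ is too small ($|\mathbb{F}_3^\times| = 2$, $|\mathbb{F}_5^\times| = 4$) for the generic argument to work, so one must handle these by a finite computation over the possible images $\bar\rho(G_F) \leq \GSp_4(\mathbb{F}_p)$. Here I would invoke the \texttt{magma} classification already used in Lemma \ref{Sp4(Fp)} and Table \ref{subgroups of gsp4(f3)}: run through the finitely many conjugacy classes of subgroups $G' \leq \GSp_4(\mathbb{F}_p)$ which arise as inductions (so are contained in $\Delta \rtimes \mathbb{Z}/2\mathbb{Z}$), whose intersection with $\Sp_4$ is absolutely irreducible, and which surject onto $\mathbb{F}_p^\times$ under $\nu$; verify tidiness for each by directly inspecting eigenvalue ratios of group elements, and identify that the unique exception is the group with ID $(128, 77)$, isomorphic to $C_2.C_2 \wr C_4$. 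The hypothesis of the lemma precisely excludes this group, so tidiness follows in all remaining cases.

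The hard part will be organizing the case analysis so that it is genuinely exhaustive: one needs to know that for $p \geq 7$ every such induced, absolutely-irreducible image contains an element whose similitude factor $c$ has the property that $c \neq x^2/c$ and $c \neq x/y$ for the relevant eigenvalue parameters — essentially that some element acts with ``multiplicatively generic'' eigenvalues. I expect this to reduce, via the structure of irreducible subgroups of $\GL_2(\mathbb{F}_p)$, to checking that the scalar-part (image under $\det$, which coincides on the two blocks) together with the ratio data cannot be forced into the finitely many bad coincidences; the dihedral (projectively dihedral) case, where eigenvalue ratios are constrained, is the delicate one and is exactly why the small-prime exceptions appear. Once the generic case is isolated, the small primes are a bounded and mechanical check, already effectively recorded in the appendix tables.
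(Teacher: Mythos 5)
Your plan correctly reduces tidiness to exhibiting one element $g=(A,B)\in\bar{\rho}(G_K)\subset\Delta$ with $\nu(g)\neq 1$ that avoids the bad eigenvalue-ratio coincidences, but it never actually produces such an element: the step you defer (``one checks a suitable semisimple $g$ exists in all but finitely many configurations'') is exactly where the difficulty lies. The issue you do not address is that $\bar{\rho}(G_K)$ is not $\Gamma\times\Gamma$ (with $\Gamma=\bar{\varrho}(G_K)$) but only a Goursat fibre product inside it: $A$ and $B$ are correlated through an isomorphism $\Gamma/H_1\cong\Gamma/H_2$, so classifying the irreducible subgroups of $\GL_2(\mathbb{F}_p)$ does not tell you which pairs $(A,B)$ actually occur, and in the projectively dihedral case this correlation can rule out precisely the ``multiplicatively generic'' elements your counting argument would need. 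Likewise, surjectivity of $\nu\circ\bar{\rho}=\varepsilon^{-1}$ only lets you prescribe the similitude of some element, not the eigenvalue configuration of any preimage.

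The paper supplies the missing mechanism by restricting to elements where the correlation is harmless: pairs of scalar matrices. If $A=aI$ and $B$ is scalar with $\det B=\det A$, then $B=\pm aI$, so the only possible ratio of distinct eigenvalues of $g=(A,B)$ is $-1$, and $g$ witnesses tidiness as soon as $a^2\neq\pm1$. One then shows such pairs are plentiful: writing $\bar{\rho}(G_K)$ via Goursat's lemma as the fibre product attached to a triple $(H_1,H_2,\varphi)$, and bounding the index of the image of $Z(\Gamma)$ in $Z(\Gamma/H_i)$ (quotients of irreducible subgroups of $\PGL_2(\mathbb{F}_p)$ have centre of order at most $4$), the set $Z\subset Z(\Gamma)$ of scalars $a$ for which some $(aI,\pm aI)$ lies in the image has index at most $3$ in $Z(\Gamma)$, while $\#Z(\Gamma)\geq (p-1)/2$ (and $Z(\Gamma)\cong\mathbb{F}_p^\times$ when $p\equiv 1\bmod 4$) because $\det\bar{\varrho}=\varepsilon^{-1}$ surjects and $(\Gamma/Z(\Gamma))^{\mathrm{ab}}$ embeds in $(\mathbb{Z}/2\mathbb{Z})^2$. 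For $p>7$ this forces $Z$ to contain a scalar of order not dividing $4$, except for $p=13$ with projective image $A_4$, which is checked by computer — as is the whole range $p\leq 7$, not merely $p\in\{3,5\}$ as in your outline, so your generic argument would in any case also have to be made to work at $p=7$. Without an argument of this kind (or a genuinely exhaustive analysis of the possible Goursat data), your proposal remains an outline rather than a proof.
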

\begin{proof}
Let $G = \bar{\rho}(G_F)$ and $\Gamma = \bar{\varrho}(G_K)$.
By Definition \ref{defn: tidy}, $G$ is tidy if there exists $g \in G$ such that $\nu(g) \neq 1$ and for every pair of distinct eigenvalues $\alpha \neq \beta$ of $g$ we have $\frac{\alpha} { \beta} \neq \nu(g)$.
A computation in \texttt{magma} gives the result when $p \leq 7$, so suppose throughout the rest of this proof that $p>7$.
Consider $G \cap \Delta = \bar{\rho}(G_K)$, isomorphic to a subgroup of $\Gamma \times \Gamma$ (since $\bar{\varrho}^\sigma(G_K) \cong \Gamma$, where $\Gal(K/F) = \{1,\sigma\}$).
Goursat's lemma states that subgroups of $\Gamma \times \Gamma$ with surjective projections onto the two factors of $\Gamma$ are parameterised by triples
$$
(H_1 \triangleleft \Gamma,H_2 \triangleleft \Gamma, \varphi: \Gamma/H_1 \xrightarrow{\sim} \Gamma/H_2)
$$
with the corresponding subgroup given by
$$
\{(\gamma_1,\gamma_2) \in \Gamma \times \Gamma : \varphi(\gamma_1 H_1) = \gamma_2 H_2\}. 
$$
We show tidiness of $G$ by giving a lower bound on the number of elements of $G$ of the form $(\gamma_1,\gamma_2)$, where each $\gamma_i$ is a scalar matrix.

We firstly bound the order of the center of $\Gamma$ by an argument similar to \cite[Lemma 3]{dt94}.
Since $\det \bar{\varrho} = \varepsilon^{-1}$, we have a surjection
$$
\det: (\Gamma/Z(\Gamma))^{\mathrm{ab}} \to \mathbb{F}_p^\times/\det(Z(\Gamma)).
$$
By the classification of subgroups of $\PGL_2(\mathbb{F}_p)$, the abelianization of $\Gamma/Z(\Gamma)$ must isomorphic to a subgroup of $(\mathbb{Z}/2\mathbb{Z})^2$ since $\Gamma$ acts absolutely irreducibly.
Thus $\det Z(\Gamma)$ must be index $2$ inside $\mathbb{F}_p^\times$.
Therefore, if $p \equiv 1 \mod 4$ we must have $Z(\Gamma) \cong \mathbb{F}_p^\times$ and otherwise we only have that $\# Z(\Gamma) \geq \frac{p-1}{2}$.

Now suppose that $H \triangleleft \Gamma$ is a normal subgroup.
We consider the possibilities for the index of the image of $Z(\Gamma)$ inside of $Z(\Gamma/H)$.
The short exact sequence of groups
\begin{equation} \label{SES of groups}
    1 \to Z(\Gamma) H/H \to \Gamma/H \to \Gamma/(Z(\Gamma) H) \to 1
\end{equation}
implies that $[Z(\Gamma/H): Z(\Gamma) H/H] \mid \# Z(\Gamma/(Z(\Gamma) H))$.
Thus we need to compute the possibilities for the center of a quotient of an irreducible subgroup of $\PGL_2(\mathbb{F}_p)$.
If $\Gamma$ is projectively dihedral then any quotient will either be $D_{2n}$ (where $n \geq 1$) or trivial.
Hence in that case $Z(\Gamma/(Z(\Gamma) H))$ is isomorphic to a subgroup of $(\mathbb{Z}/2\mathbb{Z})^2$.
Otherwise, $Z(\Gamma/(Z(\Gamma) H))$ must be trivial unless $\Gamma/Z(\Gamma)$ is isomorphic to either $A_4$ or $S_4$.
In the case of $A_4$, $Z(\Gamma/(Z(\Gamma) H))$ must have order 1 or 3.
In the case of $S_4$, $Z(\Gamma/(Z(\Gamma) H))$ must have order 1 or 2.

Now suppose that $G \cap \Delta$ is defined (on fixing an isomorphism $\Gamma \cong \bar{\varrho}^\sigma(G_K)$) by subgroups $H_1,H_2 \triangleleft \Gamma$ and an isomorphism $\varphi: \Gamma/H_1 \xrightarrow{\sim} \Gamma/H_2$.
Then $\varphi$ restricts to an isomorphism $Z(\Gamma/H_1) \xrightarrow{\sim} Z(\Gamma/H_2)$.
Consider the following diagram 
$$
\begin{tikzcd}
                                             &                                               & Z(\Gamma/H_1)                                                                                        &                                                            \\
Z(\Gamma) \arrow[r, two heads]               & Q_1 \arrow[ru, " \leq 4"', hook] &                                                                                                    & \varphi^{-1}(Q_2) \arrow[lu, " \leq 4", hook] \\
Z \arrow[rr, two heads] \arrow[u, "m", hook] &                                               & Q_1 \cap \varphi^{-1}(Q_2) \arrow[ru, hook] \arrow[lu, "m \leq 4", hook] &                                                           
\end{tikzcd}
$$
where
\begin{itemize}
\item $Q_i = Z(\Gamma)H_i/H_i$ for $i=1,2$,
    \item $Z \leq Z(\Gamma)$ is the preimage of $Q_1 \cap \varphi^{-1}(Q_2)$
under the quotient map $Z(\Gamma) \twoheadrightarrow Q_1$,
\end{itemize}
and where the labels on the arrows describe the index of an inclusion of subgroups.
It follows from the above paragraph that $$m = [Z(\Gamma): Z] = [Q_1: Q_1 \cap \varphi^{-1}(Q_2)] \leq 4.$$
In the case where $Z(\Gamma/(Z(\Gamma) H_1)) \cong (\mathbb{Z}/2\mathbb{Z})^2$ we claim that $m \leq 2$ (possibly on interchanging $H_1$ and $H_2$).
Indeed, suppose not and note firstly that $\Gamma/(Z(\Gamma) H_1)$ must be isomorphic to $(\mathbb{Z}/2\mathbb{Z})^2$ in order for its center to be order $4$.
The short exact sequence \ref{SES of groups} then gives that $\Gamma/H_1$ is given by an extension of $(\mathbb{Z}/2\mathbb{Z})^2$ by a central subgroup isomorphic to $\mathbb{Z}/k\mathbb{Z}$ for some $k \geq 1$.
We deduce that $Z(\Gamma/H_1)$ is isomorphic to one of
$$
\mathbb{Z}/k \mathbb{Z}, \quad \mathbb{Z}/k \mathbb{Z} \oplus \mathbb{Z}/2\mathbb{Z}, \quad \mathbb{Z}/2k \mathbb{Z} \oplus \mathbb{Z}/2\mathbb{Z} \quad \text{or} \quad \mathbb{Z}/k \mathbb{Z} \oplus (\mathbb{Z}/2\mathbb{Z})^2.
$$
Since $m=4$, the cyclic subgroups $Q_1$ and $\varphi^{-1}(Q_2)$ must both be of index $4$ with intersection of index $16$ (else we could interchange $H_1$ with $H_2$ and get $m \leq 2$). 
These subgroups therefore generate $Z(\Gamma/H_1)$.
Since $Z(\Gamma/H_1)$ cannot be generated by two cyclic subgroups of index $4$ when $\# Z(\Gamma/H_1)$ is divisible by $16$ by consideration of the cases above, we deduce that we always have $m \leq 3$.

From the definition of $Z$, for each $A \in Z$ there exists $B \in Z(\Gamma)$ with $g = \diag(A,B) \in G$.
Since $A$ and $B$ are both scalar matrices, we must have $B = \pm A$ in order for the equality $\det A = \det B$ to hold.
Hence if $\alpha \neq \beta$ are distinct eigenvalues of $g$ then their ratio must be $-1$.
Thus if $Z$ is not a $2$-group then $G$ is tidy.
If $Z$ is a $2$-group of order at least $8$ then either $Z$ contains an element with a primitive $8$th root of unity as an eigenvalue or $G$ contains an element of the form $\zeta_4 I$ where $\zeta_4$ is a primitive $4$th root of unity. 
We deduce that if the order of $Z$ does not divide 4 then $G$ is tidy.

We saw above that $Z$ is a subgroup of $Z(\Gamma)$ of index at most $3$.
If $p \equiv 3 \mod 4$ then we need $6$ to not be divisible by $p-1$ to guarantee that $\#Z$ does not divide $4$; this holds since $p > 7$.
If $p \equiv 1 \mod 4$ then $Z(\Gamma) \cong \mathbb{F}_p^\times$ and $Z$ is a subgroup of $\mathbb{F}_p^\times$ of index at most $3$.
We therefore need to eliminate the cases that $(p-1) | 8$ or $(p-1)|12$.
Since $p > 5$, the only case left to consider then is when $p=13$ and $\Gamma/Z(\Gamma) \cong A_4$.
Another computation in \texttt{magma} shows that all such subgroups are necessarily tidy.
\end{proof}

Now continue, as in Section \ref{sect modularity results}, to fix $p \geq 3$ a prime which splits completely in a totally real field $F$, and fix a quadratic extension $K/F$ in which $p$ is unramified.
Continue to let $S_p$ denote the set of places of $F$ dividing $p$.
We recall the idea of twisting $\bar{\varrho}: G_K \to \GL_2(\mathbb{F}_p)$ by a quadratic character in order to ensure that the induction $\bar{\rho}$ has sufficiently large image (as in \cite[7.5.24]{surfaces}), which is the content of the following propositions.

\begin{proposition} \phantomsection \label{D8 extension}
Let $L/K$ be an arbitrary finite extension and $\Sigma \subset \{v \in S_p | v \text{ splits in } K\}$.
There exists a finite extension $H/K$ with the following properties.
\begin{enumerate}
    \item $H/F$ is Galois with group $\Gal(H/F) \cong D_8$, the dihedral group of order 8
    \item $\Gal(H/K) \cong (\mathbb{Z}/2\mathbb{Z})^2$
    \item $H$ is the Galois closure over $F$ of a quadratic extension $M/K$
    \item For each $v | p$, $H/F$ is unramified at $v$
    \item For each $v | p$, $\Frob_v \in \Gal(H/F)$ is central if and only if $v \in \Sigma$ \label{sigma central primes}
    \item $H/K$ is disjoint from $L$. \label{disjoint from barrho}
\end{enumerate}
\end{proposition}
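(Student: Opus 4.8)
The plan is to construct $H$ directly via class field theory / Galois theory over $F$, rather than over $K$. Write $\Gal(K/F) = \{1,\sigma\}$. The dihedral group $D_8$ of order $8$ fits into an exact sequence $1 \to (\mathbb{Z}/2\mathbb{Z})^2 \to D_8 \to \mathbb{Z}/2\mathbb{Z} \to 1$ in which the quotient $\mathbb{Z}/2\mathbb{Z}$ will correspond to $\Gal(K/F)$, and where the subgroup $(\mathbb{Z}/2\mathbb{Z})^2 = \Gal(H/K)$ carries an action of $\sigma$ that swaps the two non-central involutions and fixes the central one. Concretely, I would look for $H$ of the form: first pick a quadratic extension $M/K$ which is \emph{not} Galois over $F$ (so its $\sigma$-conjugate $M^\sigma \neq M$); then its Galois closure over $F$ is $\widetilde{M} = M M^\sigma$, and one checks that $\Gal(\widetilde M/F) \cong D_8$ provided $M M^\sigma$ has degree $8$ over $F$ and the relevant quadratic subextension of $K$ is chosen correctly. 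So the real task is to choose the quadratic character of $G_K$ cutting out $M$ with the right local behaviour at $p$ and the right disjointness from $L$.

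First I would fix the local conditions. For each $v \in S_p$ that splits in $K$ as $w, w^\sigma$, and each $v \in S_p$ inert or ramified in $K$, I want $H/F$ unramified at $v$; this is automatic if I build $H$ from characters unramified at $p$. The subtle condition is \ref{sigma central primes}: $\Frob_v$ should be central in $D_8$ exactly for $v \in \Sigma$. The center of $D_8$ has order $2$ and is contained in $\Gal(H/K)$; an unramified Frobenius $\Frob_v$ for $v$ split in $K$ lies in $\Gal(H/K) \cong (\mathbb{Z}/2\mathbb{Z})^2$, and it is central iff it equals the central involution $\tau_0$ (or is trivial — but I can arrange nontriviality), i.e. iff the two "non-central" quadratic characters of $G_K$ cutting out $M$ and $M^\sigma$ take the \emph{same} value at $\Frob_w$. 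So the recipe is: choose a quadratic character $\chi: G_K \to \{\pm 1\}$, unramified at all $v | p$, such that $\chi(\mathrm{Frob}_w) \chi^\sigma(\mathrm{Frob}_w) = 1$ for $v \in \Sigma$ and $= -1$ for $v \notin \Sigma$ (with $v$ split), while $\chi$ is ``generic'' at the inert/ramified $p$-places, $\chi \neq \chi^\sigma$ (to force $M \neq M^\sigma$), and $\chi$ is nontrivial on $G_L$ (to force \ref{disjoint from barrho}). Then set $M = \overline{K}^{\ker \chi}$ and $H = M M^\sigma$, and verify $\Gal(H/F) \cong D_8$ and $\Gal(H/K) \cong (\mathbb{Z}/2\mathbb{Z})^2$.

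The existence of such a $\chi$ is a Chebotarev / Grunwald–Wang argument: quadratic characters of $G_K$ unramified outside a chosen finite set form an infinite-dimensional $\mathbb{F}_2$-space (after allowing enough ramification at auxiliary primes away from $p$ and away from the primes ramifying in $L$), the conditions "$\chi(\mathrm{Frob}_w)\chi^\sigma(\mathrm{Frob}_w) = \pm 1$" for finitely many split $v \in S_p$ are finitely many independent linear conditions realizable by choosing $\chi$ appropriately at split auxiliary primes of $F$ (using that $\mathrm{Frob}_w$ and $\mathrm{Frob}_{w^\sigma}$ are independent and can be hit separately), "$\chi \neq \chi^\sigma$" is the complement of a proper subspace, and "nontrivial on $G_L$" rules out only the characters factoring through $\Gal(\cdot/L)$, again a proper subspace. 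One should double-check that $H/F$ really has Galois group $D_8$ and not the smaller groups $(\mathbb{Z}/2\mathbb{Z})^2$ or $\mathbb{Z}/4\mathbb{Z}$ or $(\mathbb{Z}/2\mathbb{Z})^3$: this follows once $M \neq M^\sigma$ and $K \not\subset M$ and $K$ is the unique quadratic subextension fixed by $\langle$the swap$\rangle$, which can be arranged by a further generic choice.

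The main obstacle I anticipate is the bookkeeping in condition \ref{sigma central primes}: one must carefully identify which element of $\Gal(H/K) \cong (\mathbb{Z}/2\mathbb{Z})^2$ is the central one in $D_8$ and translate "$\Frob_v$ central" into the clean statement $\chi(\mathrm{Frob}_w) = \chi^\sigma(\mathrm{Frob}_w)$; and then ensure that the finitely many such conditions (over $v \in S_p$) are simultaneously satisfiable together with $\chi \neq \chi^\sigma$ and nontriviality on $G_L$ — i.e. that the linear system is consistent and not forced into a degenerate subspace. This is where I would spend the most care, using that the places of $S_p$ split in $K$ give genuinely independent local constraints (their Frobenii can be prescribed independently by Chebotarev at auxiliary split primes), so no inconsistency arises.
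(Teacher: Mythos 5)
Your construction is, at bottom, the same one the paper uses: $H$ is the Galois closure over $F$ of a quadratic extension $M/K$ with $M \neq M^\sigma$, the group is automatically $D_8$ once $\chi \neq \chi^\sigma$, and condition \ref{sigma central primes} is detected on the quadratic subextension of $H/K$ cut out by $\chi\chi^\sigma$: your criterion $\chi(\Frob_w)\chi^\sigma(\Frob_w)=1$ is exactly the paper's requirement that $\beta$ be a quadratic residue mod $v$, since the paper (following the proof of \cite[Proposition 7.5.25]{surfaces}) takes $M = K(\sqrt{x+y\sqrt{\alpha}})$ explicitly, so that $\chi\chi^\sigma$ cuts out $K(\sqrt{\beta})$ with $\beta = x^2-\alpha y^2$, the fixed field of the center being $F(\sqrt{\alpha},\sqrt{\beta})$. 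The difference is only in how existence is produced: the paper prescribes the Kummer data $x,y$ (hence $\beta$ mod $v$) directly, while you run an abstract existence argument for the quadratic character $\chi$; your treatment of the split places (and the observation that inert $v\mid p$ are automatically non-central, there being no ramified $v \mid p$ in this setting) is correct, and the values $\chi(\Frob_w)$, $\chi(\Frob_{w^\sigma})$ can indeed be prescribed independently, e.g.\ by weak approximation on a Kummer generator that is a unit at all $w \mid p$; since $p$ is odd there is no Grunwald--Wang obstruction.

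The one step that is genuinely insufficient as written is condition \ref{disjoint from barrho}. Requiring only $\chi|_{G_L} \neq 1$ rules out $M \subset L$, but $H/K$ has three quadratic subextensions, cut out by $\chi$, $\chi^\sigma$ and $\chi\chi^\sigma$, and $H \cap L = K$ (which is what linear disjointness means here, and what is actually used later, e.g.\ in the proof of Theorem \ref{modularity lifting elliptic curves}) requires all three to be nontrivial on $G_L$; with your condition alone one could have $M^\sigma \subset L$ or $K(\sqrt{\beta}) \subset L$. Relatedly, your ``complement of proper subspaces'' language does not close the argument over $\mathbb{F}_2$, where a union of three proper subgroups can exhaust the whole group, so ``a further generic choice'' is not automatic. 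Both points are fixable inside your framework: for instance, force $\chi$ to ramify at an auxiliary place $q$ of $K$, split over $F$, with $q \neq q^\sigma$, both $q$ and $q^\sigma$ unramified in $L$, and $\chi$ unramified at $q^\sigma$. Then $M$, $M^\sigma$ and the field of $\chi\chi^\sigma$ are ramified at $q$ or $q^\sigma$, hence none embeds in $L$, and $\chi \neq \chi^\sigma$ comes for free; this auxiliary ramification is compatible with the prescribed behaviour at the places above $p$ by the same weak-approximation choice of Kummer generator.
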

\begin{proof}
We follow the proof of \cite[Proposition 7.5.25]{surfaces}, with the above statement being a modification only in properties \ref{sigma central primes} and \ref{disjoint from barrho}.
In that proof an intermediate extension $F(\sqrt{\beta})$ is introduced, where $\beta = x^2 - \alpha y^2 \not\in (F^*)^2$ for some $x,y \in F$ and $K= F(\sqrt{\alpha})$.
The field $M$ is then taken to be $K(\sqrt{x + y\sqrt{\alpha}})$.

We deviate from this proof by choosing $\beta$ so that, for $v \in S_p$, $\beta \mod v$ is a quadratic residue if and only if $v \in \Sigma$.
We then see that $v \in \Sigma$ will split in the compositum $F(\sqrt{\beta}) \cdot K$, which we saw is the fixed field of the center of $\Gal(H/F)$.
Thus $\Frob_v$ is necessarily central if $v \in \Sigma$ and the remaining parts of the proposition follow in almost the same way by appropriate choice of $x$ and $y$.
\end{proof}

\begin{proposition} \phantomsection \label{inductions adequate}
Let $\bar{\varrho}: G_K \to \GL_2(\mathbb{F}_p)$ be a continuous representation with determinant $\varepsilon^{-1}$.
Let $L/K$ be such that $\ker(\bar{\varrho}) \cap \ker(\bar{\varrho}^\sigma) = G_L$, where $ \Gal(K/F) = \{1,\sigma\}$.
Let $\Sigma \subset \{v \in S_p | v \text{ splits in } K\}$.
Form the extension $H/K$ as in Proposition \ref{D8 extension}, disjoint from $L/K$.
Let $\delta_{M/K}$ be the character of $G_K$ associated to the arising quadratic extension $M/K$ and let $\bar{\rho} = \Ind_{G_K}^{G_F} (\bar{\varrho} \otimes \delta_{M/K})$.
Suppose that $\bar{\varrho}(G_K) \cap \SL_2(G_K) = \bar{\varrho}(G_{K(\zeta_p)})$ is absolutely irreducible.
Then $\bar{\rho}$ is $\GSp_4$-reasonable.
\end{proposition}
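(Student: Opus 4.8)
The plan is to verify the two conditions in Definition \ref{reasonable} for $\bar{\rho} = \Ind_{G_K}^{G_F}(\bar{\varrho} \otimes \delta_{M/K})$: namely that for all sufficiently large $n$ the subgroup $\bar{\rho}(G_{F(\zeta_{p^n})})$ satisfies the $H^0$-vanishing condition \ref{H0 vanish} and the spanning condition \ref{submodule condition} of Definition \ref{adequate}, together with the auxiliary vanishing of $H^1(\bar{\rho}(G_{F(\zeta_{p^n})}), \splie_4^\vee)$ in case $\zeta_p$ lies in the fixed field of $\splie_4^{\vee}$. Since the twist by $\delta_{M/K}$ does not change absolute irreducibility of $\bar{\varrho}(G_{K(\zeta_p)})$ (it is a quadratic character), and since $H/K$ was chosen disjoint from $L/K$, the image $(\bar{\varrho} \otimes \delta_{M/K})(G_{K(\zeta_{p^n})})$ will contain $\bar{\varrho}(G_{K(\zeta_{p^n})})$ up to the relevant scalar/quadratic ambiguity, so the induced group $\Gamma = \bar{\rho}(G_{F(\zeta_{p^n})})$ sits inside $\Delta \rtimes \mathbb{Z}/2\mathbb{Z}$ and surjects via $\nu$ appropriately; the key point is that it will genuinely contain the element coming from $\Frob$ at a suitable prime that swaps the two $\GL_2$-factors scaled by a nontrivial sign. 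This is exactly what the $D_8$-extension of Proposition \ref{D8 extension} is engineered to provide, and it should be extracted by the same argument as in \cite[Proposition 7.5.26]{surfaces}.

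First I would record the structure of $\splie_4$ as a module for $\Delta \rtimes \mathbb{Z}/2\mathbb{Z}$: restricted to $\Delta = \{(A,B): \det A = \det B\}$ it decomposes as $\gllie_2^0 \oplus \gllie_2^0 \oplus (\mathrm{std}\otimes \mathrm{std})$ where $\gllie_2^0 = \splie_2$ is the trace-zero part, the two copies being swapped by the $\mathbb{Z}/2$-action and the third summand being the $4$-dimensional tensor piece on which the swap acts by an intertwiner twisted by $\varepsilon^{-1}$ (compare the discussion in \cite[2.2]{surfaces} and the ``Induced'' columns in the appendix tables). Condition \ref{H0 vanish}: using that $p$ is very good for $\GSp_4$ we have $\splie_4^\vee \cong \splie_4$ as $\Gamma$-modules, and $H^0(\Gamma,\splie_4)$ vanishes because $\bar{\varrho}(G_{K(\zeta_{p^n})})$ is absolutely irreducible (killing $\Gamma$-invariants in each $\splie_2$-summand by Schur, and in the tensor summand because the two copies of $\mathrm{std}$ are absolutely irreducible and the swap prevents invariants). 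Condition \ref{submodule condition}: I would check the spanning condition $\sum_{\gamma \in \Gss}(\Lie Z(M_\gamma)\cap \splie_4^0) = \splie_4^0$ by exhibiting, for each $\gamma$ a regular semisimple element of the diagonal torus coming from $\Gamma \cap \Delta$ with distinct enough eigenvalues, that $\Lie Z(M_\gamma)$ is the full maximal torus of $\splie_4$, and then using the swap element to move this around; combined with absolute irreducibility of the constituents this forces the span to be everything. The cleanest route is to invoke Lemma \ref{Sp4(Fp)} (respectively Theorem \ref{gln irred implies adequate} for $p\geq 8$) once one knows $\Gamma \cap \Sp_4$ acts $\GL_4$-absolutely irreducibly on the standard representation, which holds because $\bar{\rho}$ is absolutely irreducible by hypothesis; the finitely many exceptional small-$p$ images in Table \ref{subgroups of sp4(f3)} are exactly the ones ruled out by the hypothesis that $\bar{\rho}(G_F) \not\cong C_2.C_2\wr C_4$ (and the $p=3$ hypothesis that $\bar{\varrho}$ is valued in $\GL_2(\mathbb{F}_p)$), so one checks this case list directly.

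The auxiliary $H^1$-vanishing when $\zeta_p$ lies in the splitting field of $\splie_4^\vee$ is handled as in \cite[Lemma 7.5.27]{surfaces}: since $\Gamma = \bar{\rho}(G_{F(\zeta_{p^n})})$ surjects onto a group with no $p$-quotient once $n$ is large (the $\mathbb{F}_p^\times$-part being the whole story after $\zeta_p$ is adjoined), $H^1(\Gamma,\mathbb{F}_p) = 0$, and one then applies the inflation-restriction / Clifford-theoretic argument to the subgroup $\Gamma \cap \Delta$ and the two $\GL_2$-blocks using that the corresponding $\gllie_2$-cohomology vanishes for absolutely irreducible subgroups of $\GL_2(\mathbb{F}_p)$ in the relevant characteristic (this is where the ``sufficiently large $n$'' and the $p=3$ restriction on $\bar{\varrho}$ enter). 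I expect the main obstacle to be bookkeeping the behaviour of $\Gamma$ under the two ambiguities at once — the $\mathbb{Z}/2$-swap factor and the passage from $\bar{\varrho}(G_K)$ to $\bar{\varrho}(G_{K(\zeta_{p^n})})$ — and in particular confirming that twisting by $\delta_{M/K}$ (rather than just the image containing enough scalars) genuinely produces a swap element of $\nu$-value $-1$ disjoint from everything cutting out $\splie_4^\vee$; this is precisely the point the disjointness clause \ref{disjoint from barrho} of Proposition \ref{D8 extension} is there to guarantee, so the proof should reduce to citing that proposition and then running the same Goursat-plus-Clifford analysis used in the proofs of \cite[Propositions 7.5.26, 7.5.27]{surfaces} with $\splie_n$ replaced by $\splie_4$ and $\hat{G}$-abundance replaced by $\GSp_4$-adequacy, the latter substitution being an improvement precisely because the spanning condition \ref{submodule condition} is so much weaker.
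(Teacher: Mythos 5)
There are two genuine gaps. First, your reduction to Lemma \ref{Sp4(Fp)} rests on the claim that $\Gamma \cap \Sp_4$ acts absolutely $\GL_4$-irreducibly ``because $\bar{\rho}$ is absolutely irreducible by hypothesis'' --- but that is not a hypothesis of the proposition; the only irreducibility assumed is that of $\bar{\varrho}(G_{K(\zeta_p)})$. Establishing absolute $\GL_4$-irreducibility of $\bar{\rho}(G_{F(\zeta_p)})$ is precisely the first substantive step, and it is where the twist by $\delta_{M/K}$ earns its keep: one must show that $\restr{(\bar{\varrho}\otimes\delta_{M/K})}{G_{K(\zeta_p)}}$ is not isomorphic to its $\sigma$-conjugate, which follows because $\sigma \in G_{F(\zeta_p)}\setminus G_{K(\zeta_p)}$ acts non-trivially on $\delta_{M(\zeta_p)/K(\zeta_p)}$ (here the disjointness of $H/K$ from $L/K$ in Proposition \ref{D8 extension} is used). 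You gesture at this in your final paragraph, but your main argument presupposes the conclusion, so as written it is circular. Once irreducibility is in hand, the $p>5$ case is indeed just Lemma \ref{Sp4(Fp)} plus Lemma \ref{adequate implies reasonable}, and your explicit decomposition of $\splie_4$ and hand-spanning argument are unnecessary.

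Second, your treatment of $p=3,5$ is wrong in approach: the hypothesis that $\bar{\rho}(G_F)$ is not the order-$128$ group $C_2.C_2\wr C_4$ is \emph{not} part of this proposition (it belongs to the tidiness statements, Lemma \ref{tidy inductions} and Proposition \ref{modular induced rep}), and in any case it would not exclude the non-adequate images listed in Table \ref{subgroups of sp4(f3)}. Those images can genuinely occur, and the proof must show that $\GSp_4$-reasonableness --- which is strictly weaker than adequacy --- still holds for them. By Lemma \ref{Sp4(Fp)} the spanning condition and $H^0$-vanishing never fail, so only the cohomological $H^1$ conditions are at issue: for $p=3$ the failure is of $H^1(\cdot,\mathbb{F}_3)$, and one must check (by further computation, via \cite[Lemma 7.5.5]{surfaces}) that every normal subgroup of index $3$ or $9$ of the problematic image is still absolutely $\GL_4$-irreducible with $H^1(\cdot,\splie_4)=0$, so that the conditions of Definition \ref{reasonable} hold for all large $n$; for $p=5$ the failure is of $H^1(\cdot,\splie_4)$, and one shows the image in $\PGSp_4(\mathbb{F}_5)$ has no surjection onto $\mathbb{Z}/4\mathbb{Z}$, whence $\zeta_5$ does not lie in the fixed field of the adjoint module and the $H^1$ clause of Definition \ref{reasonable} is vacuous. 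Neither of these steps appears in your outline, and your inflation-restriction/Clifford sketch for the auxiliary $H^1$-vanishing would fail exactly on the exceptional images that make the small-$p$ case delicate.
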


\begin{proof}
Let $\Gamma = \bar{\varrho}(G_K)$ and let $G = \bar{\rho}(G_F)$.
We show absolute $\GL_4$-irreducibility of $G \cap \Sp_4(\mathbb{F}_p)$.
Since $\Gamma \cap \SL_2(\mathbb{F}_p)$ is absolutely irreducible, it suffices to note that $\restr{\bar{\rho}}{G_{K(\zeta_p)}}$ is not isomorphic to its conjugate under an element of $\sigma \in G_{F(\zeta_p)} \setminus G_{K(\zeta_p)}$.
This follows, as such a $\sigma$ necessarily acts non-trivially on the character $\delta_{M(\zeta_p)/K(\zeta_p)}$.

We see by Lemma \ref{Sp4(Fp)} that $G$ is automatically $\GSp_4$-adequate if $p > 5$, and therefore $\bar{\rho}$ is $\GSp_4$-reasonable by Lemma \ref{adequate implies reasonable}.
Hence suppose that $p=3$ or $p=5$ and consider those subgroups $G_{\text{bad}} \subset \GSp_4(\mathbb{F}_p)$ for which 
\begin{enumerate}
    \item $G_{\text{bad}} \cap \Sp_4(\mathbb{F}_p)$ is absolutely $\GL_4$-irreducible
    \item $G_{\text{bad}}$ surjects onto $\mathbb{F}_p^\times$ via the similitude character
    \item there exists an index 2 subgroup $G_{\text{bad}}' \leq G_{\text{bad}}$ such that $G_{\text{bad}}' \subset \GL_4(\mathbb{F}_p)$ is reducible over $\mathbb{F}_p$
    \item at least one of the following hold
    \begin{enumerate}
        \item $H^1(G_{\text{bad}} \cap \Sp_4(\mathbb{F}_p), \mathbb{F}_p) \neq 0$
        \item $H^1(G_{\text{bad}} \cap \Sp_4(\mathbb{F}_p),\splie_4) \neq 0$.
    \end{enumerate}
\end{enumerate}
Any other possible image will be $\GSp_4$-adequate and hence our $\bar{\rho}$ will still be $\GSp_4$-reasonable.

 As computed in \texttt{magma}, the only failure of vanishing of the above cohomology groups when $p=3$ is the existence of such subgroups $G_{\text{bad}}$ for which $H^1(G_{\text{bad}} \cap \Sp_4(\mathbb{F}_p), \mathbb{F}_p) \neq 0$ (as in all cases $H^1(G_{\text{bad}} \cap \Sp_4(\mathbb{F}_p),\splie_4) = 0$).
 By \cite[Lemma 7.5.5]{surfaces}, to show that $\bar{\rho}$ is $\GSp_4$-reasonable in these cases, it suffices to check that for every normal subgroup $H_{\text{bad}} \subset G_{\text{bad}} \cap \Sp_4(\mathbb{F}_p)$ of index 3 or index 9 we have $H^1(H_{\text{bad}},\splie_4) = 0$ and $H_{\text{bad}}$ is absolutely $\GL_4$-irreducible.
 A further computation shows that this is always the case.
 
When $p=5$, the failure occurs from subgroups for which $H^1(G_{\text{bad}} \cap \Sp_4(\mathbb{F}_p),\splie_4) \neq 0$, with the other cohomology group vanishing.
A further computation then shows that the image in $\PGSp_4(\mathbb{F}_p)$ of every such subgroup $G_{\text{bad}}$ does not admit a surjective quotient onto $\mathbb{Z}/4\mathbb{Z}$.
It follows, just as in \cite[Lemma 7.5.19]{surfaces}, that $\zeta_p$ cannot lie in the fixed field of the adjoint action on $\splie_4$ and that $\bar{\rho}$ is therefore $\GSp_4$-reasonable.
\end{proof}

\begin{definition} \phantomsection \label{defn semistable ordinary}
Suppose that $A \in \CNL_{\CalO}$ and $\varrho: G_K \to \GL_2(A)$ is a continuous representation.
We say that $\varrho$ is semistable ordinary of weight $0$ if for every place $w | p$ of $K$ there is an isomorphism
\begin{equation} \label{gl2 ordinary}
 \restr{\varrho}{G_{K_w}} \cong 
\begin{pmatrix}
\chi_w & * \\
 0 & \chi_w^{-1} \varepsilon^{-1}
\end{pmatrix}   
\end{equation}
for some unramified character $\chi_w: G_{K_w} \to A^\times$.
\end{definition}

\begin{proposition} \phantomsection \label{induction ordinary}
Suppose that we are in the setup of Proposition \ref{inductions adequate}.
Let $A \in \CNL_\CalO$ and let $\varrho: G_K \to \GL_2(A)$ be a lift of $\bar{\varrho}$ which is semistable ordinary of weight $0$.
For $w | p$, let $\chi_w: G_{K_W} \to A^\times$ denote the character defining the eigenvalue of the image of Frobenius, as in (\ref{gl2 ordinary}) and let
$\overline{\chi_w}: G_{K_w} \to \Fpbar^\times$ be the reduction of $\chi_w$ modulo the maximal ideal of $A$.
Set $$\Sigma = \{v \in S_p | v = w w' \text{ in } K \text{ and } \overline{\chi_w} \neq (\overline{\chi_{w'}})^\sigma \},$$ where $\sigma \in G_F \setminus G_K$.
Then the induced representation $$\rho = \Ind_{G_K}^{G_F} (\varrho \otimes \delta_{M/K})$$ is $p$-distinguished weight $2$ ordinary, in the sense of Definition \ref{defn: p-distinguished ordinary}.
\end{proposition}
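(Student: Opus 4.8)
\textbf{Proof plan for Proposition \ref{induction ordinary}.}

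The plan is to verify the defining property of $p$-distinguished weight $2$ ordinary (Definition \ref{defn: p-distinguished ordinary}) place by place, using the explicit shape of an induced representation of a semistable ordinary $\GL_2$-representation. Since $p$ splits completely in $F$, each $v \in S_p$ either splits as $ww'$ in $K$ or is inert (if $K/F$ is ramified at $v$ we have excluded this, as $p$ is unramified in $K$). First I would treat the split case $v = ww'$: here $G_{F_v} = G_{K_w} = G_{K_{w'}}$ and the restriction of the induction $\rho = \Ind_{G_K}^{G_F}(\varrho \otimes \delta_{M/K})$ to $G_{F_v}$ decomposes as $(\varrho \otimes \delta_{M/K})|_{G_{K_w}} \oplus (\varrho \otimes \delta_{M/K})^\sigma|_{G_{K_{w'}}}$. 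By hypothesis each summand is an extension of $\chi_w^{-1}\varepsilon^{-1}$ by $\chi_w$ (respectively with $w'$), and twisting by $\delta_{M/K}$ does not change the unramified-ness of the diagonal characters, nor their reductions, because $M/K$ is unramified at $v$ by construction (property (4) of Proposition \ref{D8 extension}) — in fact $\delta_{M/K}|_{G_{K_w}}$ is an unramified quadratic character, so I should absorb it into $\chi_w$ and track the effect on reductions. Arranging the two blocks so that the eigenvalues interleave as $\chi_{\alpha_v} = \chi_w \delta_{M/K}$, $\chi_{\beta_v} = \chi_{w'}^\sigma (\delta_{M/K})^\sigma$ (or a permutation thereof) using that the symplectic form pairs $V$ with $\sigma V$ via $\varepsilon^{-1}$ (as recalled at the start of Section \ref{subsec modularity lifting elliptic}), I get exactly the block-upper-triangular shape of Definition \ref{defn: p-distinguished ordinary}. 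The $p$-distinguishedness condition $\alpha_v \not\equiv \beta_v \bmod \mathfrak{m}_A$ then reads $\overline{\chi_w} \cdot \overline{\delta_{M/K}} \neq (\overline{\chi_{w'}})^\sigma \cdot (\overline{\delta_{M/K}})^\sigma$ as unramified characters of $G_{F_v}$; since $\overline{\delta_{M/K}}$ and its $\sigma$-conjugate are equal or differ by the nontrivial unramified quadratic character, I must check the choice of $\Sigma$ guarantees this. This is precisely where property \ref{sigma central primes} of Proposition \ref{D8 extension} enters: $\Frob_v$ is central in $\Gal(H/F) \cong D_8$ exactly when $v \in \Sigma$, i.e. exactly when $\overline{\chi_w} \neq (\overline{\chi_{w'}})^\sigma$; and one checks that $\overline{\delta_{M/K}}(\Frob_w) = (\overline{\delta_{M/K}})^\sigma(\Frob_{w'})$ iff $\Frob_v$ is central, so the twist preserves distinctness on $\Sigma$ and forces distinctness (by flipping one side) off $\Sigma$. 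Unwinding this dihedral bookkeeping carefully is the key computational content.

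Next I would handle the inert case $v$ inert in $K$, so $K_w/F_v$ is the unramified quadratic extension and $\rho|_{G_{F_v}} = \Ind_{G_{K_w}}^{G_{F_v}}(\varrho \otimes \delta_{M/K})|_{G_{K_w}}$. Here $(\varrho \otimes \delta_{M/K})|_{G_{K_w}}$ is again an extension of $\chi_w^{-1}\varepsilon^{-1}$ by $\chi_w$ with $\chi_w$ unramified (the twist $\delta_{M/K}|_{G_{K_w}}$ is unramified since $H/K$, hence $M/K$, is unramified at $v$). Inducing the unramified diagonal characters from $G_{K_w}$ to $G_{F_v}$ gives unramified characters of $G_{F_v}$ whose Frobenius eigenvalues are $\pm$ square roots of $\chi_w(\Frob_{w})$ (Frobenius for the quadratic extension), and I would check that the induced representation again lands in the required block form, with $\chi_{\alpha_v}, \chi_{\beta_v}$ the two unramified characters $G_{F_v} \to A^\times$ lifting these Frobenius values; the $p$-distinguishedness $\alpha_v \not\equiv \beta_v$ amounts to $\overline{\chi_w}$ not being fixed by $\mathrm{Gal}(K_w/F_v)$, i.e. $\overline{\chi_w} \neq \overline{\chi_w}^\sigma$. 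Here I must confirm this holds automatically, or else add it to $\Sigma$ — but note $\Sigma$ as defined only contains split places, so the inert places need a separate argument. I expect that absolute irreducibility of $\bar\varrho(G_{K(\zeta_p)})$ together with $\bar\rho$ being irreducible (which is part of the ambient hypotheses via Proposition \ref{inductions adequate}) forces $\overline{\varrho}$ to not be $\sigma$-invariant even after restriction to the decomposition group at $v$ in the relevant cases, but this needs care; possibly the cleanest route is to observe that at an inert place the induced local representation is $\mathrm{Ind}$ of a ramified-or-unramified character and the weight-$2$ ordinary form is still satisfied with the appropriate (possibly ramified) $\chi_{\alpha_v}$, relaxing to exactly what Definition \ref{defn: p-distinguished ordinary} demands.

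The main obstacle, as flagged, is the dihedral-character bookkeeping in the split case: precisely matching the eigenvalues of the $\GSp_4$-valued induced representation to the ordered pair $(\alpha_v, \beta_v)$ of Definition \ref{defn: p-distinguished ordinary}, correctly incorporating the symplectic pairing convention (so that one block is $(\chi_{\alpha_v}\theta_v, \chi_{\beta_v}\theta_v)$-type and the dual block is $(\chi_{\beta_v}^{-1}\varepsilon^{-1}, \chi_{\alpha_v}^{-1}\varepsilon^{-1})$-type), and then showing the choice of $\Sigma$ via the center of $\mathrm{Gal}(H/F) \cong D_8$ is exactly calibrated to make $\overline{\alpha_v} \neq \overline{\beta_v}$ at every $v \in S_p$. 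Once the local pictures at split and inert $v$ are pinned down, the proposition follows by direct comparison with Definition \ref{defn: p-distinguished ordinary}; there is no global input beyond the structure of $H/K$ supplied by Proposition \ref{D8 extension}. I would organize the writeup as: (i) reduce to checking each $v \in S_p$; (ii) split case, including the $\Sigma$-calibration lemma; (iii) inert case; (iv) conclude.
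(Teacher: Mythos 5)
Your split-case argument is exactly the paper's: decompose $\restr{\rho}{G_{F_v}}$ as $\restr{(\varrho\otimes\delta_{M/K})}{G_{K_w}}\oplus\restr{(\varrho\otimes\delta_{M/K})^\sigma}{G_{K_w}}$ and observe that, by the very construction of $M$ from Proposition \ref{D8 extension} applied with this $\Sigma$, one has the three-way equivalence ($v\in\Sigma$) $\iff$ ($\restr{\delta_{M/K}}{G_{K_w}}=(\restr{\delta_{M/K}}{G_{K_{w'}}})^\sigma$) $\iff$ ($\overline{\chi_w}\neq(\overline{\chi_{w'}})^\sigma$), so the twisted diagonal characters are distinct mod $p$ whether $v$ lies in $\Sigma$ or not. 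That part is correct and is the crux of the proposition.

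The genuine gap is your inert case. You claim that $p$-distinguishedness there "amounts to $\overline{\chi_w}\neq\overline{\chi_w}^\sigma$" and worry about whether this holds. That criterion is wrong: since $\chi_w$ is unramified and $K_w/F_v$ is the unramified quadratic extension, conjugation by $\sigma$ acts trivially on the unramified quotient of $G_{K_w}$, so $\overline{\chi_w}^\sigma=\overline{\chi_w}$ always; if that were the relevant condition the proposition would fail at every inert place. The correct (and essentially one-line) observation, which is what the paper uses, is that the two diagonal unramified characters of $G_{F_v}$ occurring in $\Ind_{G_{K_w}}^{G_{F_v}}\restr{(\varrho\otimes\delta_{M/K})}{G_{K_w}}$ are the two extensions to $G_{F_v}$ of the unramified character $\chi_w\cdot\restr{\delta_{M/K}}{G_{K_w}}$, and these differ by the unramified quadratic character attached to $K_w/F_v$; their values on $\Frob_v$ differ by a sign, and since $p\geq 3$ and these values are units, they are automatically distinct modulo the maximal ideal. (You in fact wrote down the "$\pm$ square roots of $\chi_w(\Frob_w)$" yourself, but then discarded this in favour of the wrong criterion.) Your two fallback suggestions also do not work: irreducibility of $\bar{\rho}$ or of $\bar{\varrho}(G_{K(\zeta_p)})$ places no constraint on the restriction to a decomposition group, which can perfectly well be $\sigma$-invariant; and Definition \ref{defn: p-distinguished ordinary} requires the diagonal characters $\chi_{\alpha_v},\chi_{\beta_v}$ to be the unramified characters $\chi_\lambda$ attached to units $\lambda\in A^\times$, so "relaxing" to a ramified $\chi_{\alpha_v}$ is not available. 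With the sign observation in place of your inert-case discussion, your outline reproduces the paper's proof.
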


\begin{proof}
Suppose $v \in S_p$ is inert in $K$ firstly and let $w$ be the unique place of $K$ above $v$.
Then $\restr{\rho}{G_{F_v}} = \Ind^{G_{F_v}}_{G_{K_w}} \restr{(\varrho \otimes \delta_{M/K})}{G_{K_w}}$.
This has the desired form, with the unramified characters defining the diagonal entries being given by the two choices of extensions of $\chi_w$ to $G_{F_v}$ (which differ by a sign on $\Frob_v \in G_{F_v}$).

Now suppose that $v \in S_p$ splits as $v = w w'$ in $K$, so that there are isomorphisms $F_v \cong K_w$ and $F_v \cong K_{w'}$ interchanged via $\sigma$.
Then $\restr{\rho}{G_{F_v}} = \restr{\varrho}{G_{K_w}} \oplus \restr{(\varrho^{\sigma})}{G_{K_w}}$.
This has the required form, provided that the mod $p$ reduction of these summands, $\restr{\bar{\varrho}}{G_{K_w}}$ and $\restr{(\bar{\varrho}^{\sigma})}{G_{K_w}}$, are not isomorphic.
Thus it suffices to show that the characters $\overline{\chi_w} \otimes \restr{ \delta_{M/K}}{G_{K_w}}$ and $(\overline{\chi_{w'}} \otimes \restr{ \delta_{M/K}}{G_{K_{w'}}})^\sigma $ are not equal.
By choice of $\Sigma$, the following are equivalent
\begin{enumerate}
    \item $v \in \Sigma$
    \item $\restr{\delta_{M/K}}{G_{K_w}} =(\restr{ \delta_{M/K}}{G_{K_{w'}}}) ^\sigma$
    \item $\overline{\chi_w} \neq (\overline{\chi_{w'}})^\sigma$
\end{enumerate}
and we are therefore done.
\end{proof}

Now let $E/K$ be an elliptic curve.
The curve $E$ is said to be modular if there exists a cuspidal automorphic representation $\boldsymbol{\pi}$ of $\GL_2(\mathbb{A}_K)$ of weight 0 (see \cite[2.6]{surfaces}) and trivial central character such that $\varrho_{E,p} \cong \varrho_{\boldsymbol{\pi},p}$.
Here $\varrho_{E,p}$ is the Galois representation given by the $p$-adic cohomology group $H^1(E_{\overline{K}},\Qpbar)$ and the Galois representation $\varrho_{\boldsymbol{\pi},p}$ attached to $\boldsymbol{\pi}$ is as in \cite[Theorem 2.7.3]{surfaces}.
Similarly, we say an irreducible representation $\bar{\varrho}: G_K \to \GL_2(\Fpbar)$ is modular if there exists such a $\boldsymbol{\pi}$ as above with $\overline{\varrho} \cong \overline{\varrho_{\boldsymbol{\pi},p}}$.
We also say $\bar{\varrho}$ is ordinarily modular if we can take such a $\boldsymbol{\pi}$ as above which is also semistable weight 0 ordinary, in the sense that there exists, for every place $w|p$ of $K$, a non-zero eigenvector in $\boldsymbol{\pi}_w^{\mathrm{Iw}_w}$ whose eigenvalues under the Hecke operators
\begin{align*}
   U_w^1 &= [\mathrm{Iw}_w \diag(\varpi_w,1) \mathrm{Iw}_w] \\
   U_w^2 &= [\mathrm{Iw}_w \diag(\varpi_w,\varpi_w) \mathrm{Iw}_w]
\end{align*}
lie in $\Zpbar^\times$ (under our fixed isomorphism $\iota: \mathbb{C} \cong \Qpbar$).
Here $\varpi_w$ is a uniformizer of $K_w$ and $\mathrm{Iw}_w$ is the standard Iwahori subgroup of $\GL_2(K_w)$ given by those matrices in $\GL_2(\CalO_{K_w})$ whose reduction modulo $\varpi_w$ is upper triangular.

We now apply Theorem \ref{modular galois reps} to the mod $p$ Galois representation attached to $\Res_{K/F} E$ together with our above results to deduce a modularity lifting theorem for $E$ under mild conditions on the image of $\overline{\varrho_{E,p}}$.
This is an attempt at a generalization of \cite[Theorem 10.1.4]{surfaces}.

\begin{theorem} \phantomsection \label{modularity lifting elliptic curves}
Let $F$ be a totally real field in which $p \geq 3$ splits completely and let $K/F$ be a quadratic extension in which $p$ is unramified with Galois group $\{1,\sigma\}$. 
Let $E/K$ be an elliptic curve and let $\bar{\varrho} = \overline{\varrho_{E,p}}$ be the attached mod $p$ Galois representation of determinant $\varepsilon^{-1}$.
Suppose that the following conditions hold:
\begin{enumerate}
    \item $E$ has ordinary good reduction or multiplicative reduction at every place $w | p$
    \item $\bar{\varrho}(G_{K(\zeta_p)})$ is absolutely irreducible
    \item $\bar{\varrho}$ is ordinarily modular
    \item if $p = 3$ or $p=5$ then order of the subgroup of $\GL_2(\mathbb{F}_p) \times \GL_2(\mathbb{F}_p)$ generated by the image of $\bar{\varrho} \oplus \bar{\varrho}^\sigma$ together with the matrices $\diag(I,-I), \diag(-I,I)$ is not equal to $64$.
\end{enumerate}
Then $E$ is modular.
\end{theorem}

\begin{proof}
Since $E$ has ordinary good reduction or multiplicative reduction at every place $w | p$, the representation $\varrho_{E,p}$ is semistable ordinary of weight $0$.
Thus we may form the induction $\bar{\rho} = \Ind_{G_K}^{G_F} (\bar{\varrho} \otimes \delta_{M/K})$ by taking $\Sigma$ as in Proposition \ref{induction ordinary} and then taking $M/K$ as in Proposition \ref{inductions adequate} for this choice of $\Sigma$ and $\bar{\varrho}$.
This is the mod $p$ reduction of the Galois representation attached to the abelian surface $A = \Res_{K/F} E'$, where $E'/K$ is the quadratic twist of $E$ corresponding to $\delta_{M/K}$.
Modularity of $\rho = \rho_{A,p}$ then implies modularity of $E'$, just as in the proof of \cite[Theorem 9.3.4]{surfaces}, and hence modularity of $E$.
So we just have to show that $\rho$ satisfies the hypotheses of Theorem \ref{modular galois reps} to deduce modularity of $E$.

We see from Proposition \ref{inductions adequate} that $\bar{\rho}$ is $\GSp_4$-reasonable.
By Lemma \ref{tidy inductions}, we see that $\bar{\rho}$ is tidy, except possibly if $p \leq 5$ and $\# \bar{\rho}(G_F) = 128$.
The image of $\restr{\bar{\rho}}{G_K}$ is exactly the subgroup of $\GL_2(\mathbb{F}_p) \times \GL_2(\mathbb{F}_p)$ generated by the image of $\bar{\varrho} \oplus \bar{\varrho}^\sigma$ and the matrices $\diag(I,-I), \diag(-I,I)$, since the extensions of $K$ given by the Galois closure of $M/F$ and the fixed field of $\ker(\bar{\varrho} \oplus \bar{\varrho}^\sigma)$ are linearly disjoint by Proposition \ref{D8 extension}.
Since $\bar{\rho}(G_K)$ is an index $2$ subgroup of $\bar{\rho}(G_F)$, it follows from our hypotheses that $\bar{\rho}(G_F)$ does not have order $128$ and $\bar{\rho}$ is therefore tidy.

As in the proof of \cite[Theorem 10.1.4]{surfaces} we can find, by automorphic induction and our assumption that $\bar{\varrho}$ is ordinarily modular, a parallel weight 2 automorphic representation $\pi$ of $\GSp_4(\mathbb{A}_F)$ of central character $|\cdot|^2$ which is ordinary at all $v | p$ such that $\overline{\rho_{\pi,p}} \cong \bar{\rho}$ and with $\rho_{\pi,p}$ pure.
By Proposition \ref{induction ordinary}, we see that $\rho$ is $p$-distinguished weight $2$ ordinary.
Purity of $\rho$ follows from \cite[Proposition 2.8.1]{surfaces}.
Thus all of the hypotheses of Theorem \ref{modular galois reps} hold for $\rho$ and we are done.
\end{proof}

\subsection{Projectively dihedral representations} \label{subsect projectively dihedral}

One case amenable to establishing modularity of $\bar{\varrho}$ directly is when the projective image of $\bar{\varrho}$ is dihedral, in which case we can find a quadratic extension $L/K$ and a character $\bar{\psi}: G_L \to \Fpbar^\times$ such that $\bar{\varrho} \cong \Ind_{G_L}^{G_K} \bar{\psi}$.
We wish to control the Hodge--Tate weights of a lift $\psi: G_L \to \Zpbar^\times$ and use automorphic induction to deduce that $\bar{\varrho}$ is ordinarily modular.
This motivates the following definition.

\begin{definition} \phantomsection \label{def CM-compositum induced}
Let $K/F$ be a quadratic extension of number fields in which $p \geq 3$ is unramified.
We say a continuous Galois representation $\bar{\varrho}: G_K \to~ \GL_2(\Fpbar)$ is CM-compositum induced if $\bar{\varrho}(G_{K(\zeta_p)})$ is absolutely irreducible with $\det \bar{\varrho} = \varepsilon^{-1}$ and there exists
\begin{enumerate} 
    \item an imaginary CM quadratic extension $F'/F$, disjoint from $K/F$
    \item a continuous character $\bar{\psi}: G_L \to \Fpbar^\times$, where $L= F' \cdot K$ is the compositum
\end{enumerate}
such that 
\begin{enumerate} [resume]
    \item each place $w|p$ of $K$ splits in $L$
    \item for every place $z | p$ of $L$ there exists an unramified character $\overline{\chi_z}: G_{L_z} \to \Fpbar^\times$ with $\{\restr{\bar{\psi}}{G_{L_z}}, \restr{\bar{\psi}^\sigma}{G_{L_{z}}}\} = \{ \overline{\chi_z}, \overline{\chi_z}^{-1} \varepsilon^{-1} \}$, where $\Gal(L/K) = \{1,\sigma\}$ \label{character restriction at p}
    \item $\bar{\varrho} \cong \Ind_{G_L}^{G_K} \bar{\psi}$.
\end{enumerate}
\end{definition}

Note that if such an extension $F'/F$ in Definition \ref{def CM-compositum induced} exists then it is unique.
There are then two choices of $\bar{\psi}$, which are conjugate under $\sigma$ and whose product is the inverse cyclotomic character.
We also observe that if $k$ is a finite field and $\bar{\varrho}$ is CM-compositum induced and conjugate to a representation valued in $\GL_2(k)$, then $\bar{\psi}$ is also valued in $k^\times$.
Indeed, letting $q = \# k$ we must have $\{\bar{\psi},\bar{\psi}^\sigma \} = \{\bar{\psi}^q,(\bar{\psi}^\sigma)^q \}$ by considering $\restr{\bar{\varrho}}{G_L}$ and the action of Frobenius.
On the other hand we must also have that $\bar{\psi}^\sigma \neq \bar{\psi}^q$ due to condition \ref{character restriction at p}.
In particular, there can never exist a CM-compositum induced representation valued in $\GL_2(\mathbb{F}_3)$, since $\bar{\psi}$ would have image of order at most 2 and thus $\bar{\varrho}(G_{K(\zeta_p)})$ would have order at most $4$ (noting that $D_8 \subset \PGL_2(\mathbb{F}_3)$ is irreducible over $\mathbb{F}_3)$.
In what follows, if $\bar{\varrho}$ is CM-compositum induced then we shall use notation as in Definition \ref{def CM-compositum induced}.

\begin{proposition} \phantomsection \label{modular residual rep}
Suppose that $F$ is a totally real field and $K/F$ is a quadratic extension in which $p \geq 3$ is unramified.
Suppose that $\bar{\varrho}: G_K \to \GL_2(\Fpbar)$ is CM-compositum induced.
Then 
\begin{enumerate}

    \item for every place $w | p$ of $K$ there is an isomorphism
$$
\restr{\bar{\varrho}}{G_{K_w}} \cong 
\begin{pmatrix}
\overline{\chi_w} & 0 \\
 0 & \overline{\chi_w}^{-1} \varepsilon^{-1}
\end{pmatrix}
$$
for some unramified character $\overline{\chi_w}: G_{K_w} \to \Fpbar^\times$
    \item $\bar{\varrho}$ is ordinarily modular.
\end{enumerate}

\end{proposition}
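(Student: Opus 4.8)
The plan is to prove the two assertions of Proposition \ref{modular residual rep} by exploiting the explicit dihedral structure of $\bar\varrho$ together with automorphic induction, mirroring the strategy of \cite{surfaces} for projectively dihedral representations.

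First I would prove (1), the ordinarity statement at places $w \mid p$ of $K$. Fix such a place $w$. Since $\bar\varrho$ is CM-compositum induced, $\bar\varrho \cong \Ind_{G_L}^{G_K} \bar\psi$ where $L = F' \cdot K$ and each $w \mid p$ of $K$ splits in $L$, say $w = z z'$. The splitting gives $\restr{\bar\varrho}{G_{K_w}} \cong \restr{\bar\psi}{G_{L_z}} \oplus \restr{\bar\psi^\sigma}{G_{L_z}}$ under the identifications $K_w \cong L_z \cong L_{z'}$, where $\sigma$ generates $\Gal(L/K)$. By condition \ref{character restriction at p} of Definition \ref{def CM-compositum induced}, the set $\{\restr{\bar\psi}{G_{L_z}}, \restr{\bar\psi^\sigma}{G_{L_z}}\}$ equals $\{\overline{\chi_z}, \overline{\chi_z}^{-1}\varepsilon^{-1}\}$ for an unramified character $\overline{\chi_z}$. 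Transporting $\overline{\chi_z}$ to a character $\overline{\chi_w}$ of $G_{K_w}$ (which remains unramified since $K_w \cong L_z$ is an isomorphism of local fields) gives precisely the claimed diagonal form with semisimple, in fact split, image of inertia. The only subtlety is bookkeeping: one must check the two summands are genuinely the characters $\overline{\chi_w}$ and $\overline{\chi_w}^{-1}\varepsilon^{-1}$ and not some other labelling, but this is immediate from the stated condition.

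Next I would prove (2), ordinary modularity of $\bar\varrho$. The idea is to lift $\bar\psi$ to a suitable algebraic Hecke character $\psi$ of $L$ and apply automorphic induction from $L$ to $K$. Since $F'/F$ is imaginary CM and $L = F' \cdot K$, the field $L$ is a CM field with maximal totally real subfield $K$; thus $L/K$ is a CM quadratic extension and automorphic induction of a Hecke character of $L$ produces a cuspidal automorphic representation of $\GL_2(\mathbb{A}_K)$. One needs to choose the lift $\psi: G_L \to \Zpbar^\times$ of $\bar\psi$ so that (a) it has the correct infinity-type to yield a weight $0$ automorphic representation on $\GL_2$ over $K$ with trivial central character (using that $\det\bar\varrho = \varepsilon^{-1}$ forces $\psi \cdot \psi^\sigma = \varepsilon^{-1}$ up to finite-order twist, so $\psi$ should be chosen of Hodge--Tate weight $0$ at half the places above $p$ and weight $-1$ at the others, matched to condition \ref{character restriction at p}), and (b) it is unramified at the requisite places and semistable ordinary in the sense needed so that the resulting $\boldsymbol\pi$ is semistable weight $0$ ordinary. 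Such a lift exists by the standard theory of lifting characters (the obstruction lives in a cohomology group that vanishes for characters; one adjusts by a finite-order character of $L$ to fix ramification and by a power of the cyclotomic character to fix the Hodge--Tate weights). Then $\boldsymbol\pi := \mathrm{AI}_{L/K}(\psi)$ is a cuspidal automorphic representation of $\GL_2(\mathbb{A}_K)$ of weight $0$ and trivial central character with $\varrho_{\boldsymbol\pi,p} \cong \Ind_{G_L}^{G_K}\psi$, whose reduction is $\bar\varrho$, and the ordinarity of $\boldsymbol\pi$ at places above $p$ follows from part (1) together with the explicit ordinary form of $\Ind_{G_L}^{G_K}\psi$ at the split places $w = zz'$ — the Hecke eigenvalues $U_w^1, U_w^2$ are expressed in terms of $\psi(\mathrm{Frob}_z)$, which is a $p$-adic unit by construction.

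The main obstacle I expect is the careful construction of the lift $\psi$ with simultaneously prescribed infinity-type (to land in weight $0$, trivial central character) and prescribed local behaviour at $p$ (unramified, with $p$-adic unit Frobenius eigenvalue compatible with condition \ref{character restriction at p}), while remaining a lift of $\bar\psi$. This is the same type of argument as in \cite{surfaces} for the analogous proposition, and the key inputs are: the vanishing of $H^2$ obstructions for deformations of characters, the structure of the idele class group of the CM field $L$, and the fact that $L/K$ being CM guarantees the existence of Hecke characters of the required infinity-type (an algebraicity/parity condition on CM fields). Once $\psi$ is produced, automorphic induction $\mathrm{AI}_{L/K}$ for the CM quadratic extension $L/K$ is classical (Hecke--Maass or, adelically, Langlands--Labesse), and checking the ordinarity of $\boldsymbol\pi$ reduces to the local computation already carried out in part (1).
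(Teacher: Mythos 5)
Your part (1) and the overall strategy for part (2) (lift $\bar{\psi}$ to an algebraic character, automorphically induce the corresponding Hecke character twisted by $|\cdot|^{1/2}$, then check the $U_w^i$-eigenvalues are units) are the same as the paper's. But there is a genuine gap in how you produce the lift $\psi$. You assert that $L$ is a CM field with maximal totally real subfield $K$, so that the required infinity-type exists by the usual parity condition for CM fields. This is false in general: in the proposition (and in its applications) $K/F$ is an \emph{arbitrary} quadratic extension of the totally real field $F$, so $K$ need not be totally real and $L = F'\cdot K$ need not be CM. Moreover, your fallback of adjusting ``by a power of the cyclotomic character to fix the Hodge--Tate weights'' cannot work, since cyclotomic twists only move the weights in parallel, whereas condition (4) of Definition \ref{def CM-compositum induced} forces a genuinely non-parallel (CM-type) weight pattern: weight $0$ at one place above each $w\mid p$ and weight $1$ at its conjugate. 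The paper resolves this by building the algebraic character on $G_{F'}$ — where $F'/F$ \emph{is} CM over a totally real field, so characters with Hodge--Tate weights $k_\tau + k_{\tau\circ c}=1$ exist — and restricting to $G_L$; the key enabling observation (which you do not make) is that the two places $z,z'$ of $L$ above $w$ lie over \emph{distinct} places of $F'$, so the restricted character can be arranged to agree with $\bar{\psi}$ on inertia at every place above $p$, after which one corrects by the Teichm\"uller lift of $\bar{\psi}/\bar{\varphi}$.

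A secondary, smaller issue: the ordinarity of $\boldsymbol{\pi}$ does not ``reduce to the local computation already carried out in part (1)'', which is purely a statement about the residual representation. One must compute the $p$-adic valuations of the $U_w^1,U_w^2$-eigenvalues of the induced representation, and these valuations receive contributions both from the archimedean (infinity-type) component of the Hecke character under the fixed isomorphism $\iota$ and from the normalising twist $|\cdot|^{1/2}$; the paper carries this out via \cite[Lemma 2.5]{clozel2014level} and shows the contributions cancel. With the correctly constructed $\psi$ this is only a computation, but it is not automatic from the mod-$p$ shape of $\restr{\bar{\varrho}}{G_{K_w}}$.
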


\begin{proof}
If $w | p$ is a place of $K$ then by Definition \ref{def CM-compositum induced} $w$ splits in $L$ into places $z,z'$ and we can find an unramified character $\overline{\chi_z}: G_{L_z} \to \Fpbar^\times$ such that $\restr{\bar{\psi}}{G_{L_z}} = \overline{\chi_z}$ and $\restr{\bar{\psi}}{G_{L_{z'}}} = (\overline{\chi_z}^{-1})^\sigma \varepsilon^{-1}$.
It follows that $\bar{\varrho}$ is of the required form for the first part of the proposition.
Next observe that $z$ and $z'$ lie above distinct places of $F'$ (as if $v$, the place of $F$ below $w$, splits completely in $L$ then the action of $\Gal(L/F)$ is transitive on the places of $L$ above $v$, but $\{z,z'\}$ would be an orbit if these places both lie over the same places of $K$ and $F'$).
We claim that we can find an algebraic $p$-adic character $\varphi: G_L \to \Zpbar^\times$ such that $\bar{\varphi}$ differs from $\bar{\psi}$ by a character which is unramified at the places dividing $p$.
Indeed, suppose that for every $\tau: F' \to \Qpbar$ we have $k_\tau \in \{0,1\}$ satisfying $k_{\tau} + k_{\tau \circ c} =1$ with $c \in \Gal(F'/F)$ complex conjugation.
Then we can find a character of $G_{F'}$ with Hodge--Tate weights $\{k_\tau\}_{\tau}$ since $F'/F$ is a quadratic extension of a totally real field by an imaginary CM field (by \cite[Corollary 2.3.16]{patrikis2019variations}, for example).
Composing such a character with the inclusion $G_L \to G_{F'}$ yields the desired $\varphi$ by our above observation about the places of $L$ dividing $p$ in relation to those of $F'$ and $K$.

Now let $\theta: \Fpbar^\times \to \Zpbar^\times$ be the Teichmuller map and let $\psi = \varphi \cdot \theta(\bar{\psi}/\bar{\varphi})$, an algebraic lift of $\bar{\psi}$.
Let $\psi^\vee : \mathbb{A}_L^\times \to \mathbb{C}^\times$ denote the algebraic Hecke character corresponding to $\psi$.
Then the automorphic induction of $\psi^\vee \otimes |\cdot |^{1/2}$ to $\GL_2(\mathbb{A}_K)$ is a weight $0$ cuspidal automorphic representation $\boldsymbol{\pi}$ with trivial central character for which the attached Galois representation $\varrho_{\boldsymbol{\pi},p}$ lifts $\bar{\varrho}$.
Moreover, for $w|p$ a place of $K$, $\boldsymbol{\pi}_w$ can be identified with $(\psi^\vee_z \otimes | \cdot ~| ^{1/2}) \boxplus (\psi^\vee_{z'} \otimes | \cdot | ^{1/2})$ and we shall distinguish $z$ from $z'$ by supposing that $\restr{\psi}{I_{L_z}}$ is trivial.
Since $\psi^\vee$ is unramified at places dividing $p$ by construction, $\boldsymbol{\pi}_w$ is Iwahori-spherical and there exists an eigenvector whose eigenvalues $u_w^i$ under the operators $U_w^i$ have $p$-adic valuation given by
\begin{align*}
    v_p(u_w^1) &= v_p(\iota (\psi^\vee_z \otimes |\cdot|^{1/2}) (\varpi_w)) - \sum_{\tau: K \hookrightarrow \mathbb{C}, \iota \tau = w} \frac{1}{2} \\
    v_p(u_w^2) &= v_p( \iota (\psi^\vee_{z'} \otimes |\cdot|^{1/2}) (\varpi_w)) - (\sum_{\tau: K \hookrightarrow \mathbb{C}, \iota \tau = w} -\frac{1}{2}) + v_p(u_w^1)
\end{align*}
by (the proof of) \cite[Lemma 2.5]{clozel2014level}.
Here $\iota: \mathbb{C} \xrightarrow{\sim} \Qpbar$ was our fixed choice of isomorphism and we have identified embeddings $K \to \Qpbar$ with places of $K$ dividing $p$.

We compute $v_p( \iota \psi^\vee_{z'}(\varpi_{z'}))$; the computation for $z$ is similar.
Choose $\alpha \in L^\times$ whose valuation is non-zero only at the place $z'$ and equal to some $m > 0$.
Raising $\alpha$ to a sufficiently large power, we can suppose that $\psi^\vee_{v}(\alpha) = 1$ for every finite place $v \neq z'$ and $\psi^\vee_{z'}(\alpha) = \psi^\vee_{z'}(\varpi_{z'}^m)$.
Then $v_p( \iota \psi^\vee_{v}(\alpha)) = - v_p( \iota \psi^\vee_{\infty}(\alpha))$ since $\psi^\vee$ is trivial on $L^\times$.
We have $\psi^\vee_{\infty}(\alpha) = \prod_{\tau: L \to \mathbb{C}} \tau(\alpha)^{n_\tau}$ where $n_{\tau} = -1$ for every $\tau: L \to \mathbb{C}$ inducing the place $z'$ via $\iota$, since we supposed that $\restr{\psi^\vee}{I_{L_{z'}}} = \varepsilon^{-1}$.
We see that $$v_p(\iota \psi^\vee_{\infty}(\alpha)) = \sum_{\tau: L \hookrightarrow \mathbb{C}} m \cdot (-1)$$ where the sum is over complex embeddings $\tau$ of $L$ whose composition with $\iota$ induces the place $z'$ (as $v_p(\iota(\tau(\alpha))) = 0$ for any $\tau$ not inducing the place $z'$ by choice of $\alpha$).
We deduce that $$v_p( \iota (\psi^\vee_{z'} \otimes |\cdot|^{1/2}) (\varpi_w)) = - f_w + f_w/2$$ and similarly that $$v_p( \iota (\psi^\vee_{z} \otimes |\cdot|^{1/2}) (\varpi_w)) = 0 + f_w/2,$$ where $f_w = |K_w:\mathbb{Q}_p|$.
Thus $v_p(u_w^1) = v_p(u_w^2) = 0$, which shows the second point and concludes the proof.
\end{proof}

We deduce a modularity theorem for some elliptic curves over $K$ with CM-compositum induced Galois representation.

\begin{corollary} \phantomsection \label{cm comp ind curves modular} 

Let $F$ be a totally real field in which $p \geq 7$ splits completely and let $K/F$ be a quadratic extension in which $p$ is unramified. 
Suppose that $E/K$ is an elliptic curve which has ordinary good reduction or multiplicative reduction at every place $w | p$ and that $\overline{\varrho_{E,p}}$ is CM-compositum induced in the sense of Definition \ref{def CM-compositum induced}.
Then $E$ is modular.
\end{corollary}

\begin{proof}
This follows immediately from Theorem \ref{modularity lifting elliptic curves} and Proposition \ref{modular residual rep}.
\end{proof}

In order to apply Theorem \ref{modular galois reps} we required the elliptic curve $E/K$ to have ordinary good reduction or multiplicative reduction at the places dividing $p$.
We show that this automatically holds in some cases when $\overline{\rho_{E,p}}$ is CM-compositum induced.

\begin{lemma} \phantomsection \label{lemma ordinary good mult red}
Let $p \geq 5$ be a prime number with $p \equiv 2 \mod 3$ and let $K/\mathbb{Q}_p$ be a finite unramified extension.
Suppose that $E/K$ is an elliptic curve such that there is an isomorphism
$$
\overline{\varrho_{E,p}} \cong 
\begin{pmatrix}
\overline{\chi} & 0 \\
 0 & \overline{\chi}^{-1} \varepsilon^{-1}
\end{pmatrix}
$$
over $\mathbb{F}_p$, where $\overline{\chi}: G_K \to \mathbb{F}_p$ is an unramified character.
Then
\begin{enumerate}
    \item if $p \geq 11$ then either $E$ has  multiplicative reduction or ordinary good reduction
    \item if $p=5$ then either $E$ has multiplicative reduction, or the degree of a minimal totally ramified extension over which $E$ acquires good reduction is either $1$ or $4$ and $E$ acquires ordinary good reduction over this extension.
\end{enumerate}
\end{lemma}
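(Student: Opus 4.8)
The statement is about the reduction type of an elliptic curve $E/K$ whose mod $p$ representation is split with diagonal entries $\overline{\chi}$ and $\overline{\chi}^{-1}\varepsilon^{-1}$ for an unramified character $\overline{\chi}$; in particular the image of inertia $I_K$ on $\overline{\varrho_{E,p}}$ is contained in $\bigl(\begin{smallmatrix} 1 & 0 \\ 0 & \varepsilon^{-1}\end{smallmatrix}\bigr)$, so the inertial action is tame (as $p\nmid p-1$ only fails for... no: $\varepsilon^{-1}$ mod $p$ is tamely ramified on $I_K$ since $K/\mathbb{Q}_p$ is unramified and $\mu_p\not\subset K$). The first move is to invoke the theory of the reduction type via the Néron model: if $E$ has additive reduction, then after a minimal totally ramified base change $K'/K$ of degree $e$, $E$ acquires good or multiplicative reduction, and $e$ divides $24$ (and is controlled by the action of inertia on a Tate module). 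I would split into the potentially-multiplicative case and the potentially-good case. If $E$ has potentially multiplicative reduction, then either it already has multiplicative reduction, or it has additive reduction that becomes multiplicative over the (at most) quadratic twist; but a quadratic twist would force the inertial action on $\overline{\varrho_{E,p}}$ to contain $-I$, contradicting that inertia acts by $\bigl(\begin{smallmatrix} 1 & * \\ 0 & \varepsilon^{-1}\end{smallmatrix}\bigr)$ with $\varepsilon^{-1}$ nontrivial of order prime to $2$ on wild inertia—so in the potentially multiplicative case $E$ must have multiplicative reduction outright. (Here one uses $p\geq 5$ so that $-1\not\equiv \varepsilon^{-1}(\sigma)$ is not automatically satisfied; this needs a small check.)

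For the potentially good additive case, the key input is that the reduction type and the degree $e$ of the minimal totally ramified extension over which $E$ acquires good reduction are read off from the action of $I_K$ on $T_\ell E$ for $\ell\neq p$, equivalently from the semisimplification of $\overline{\varrho_{E,p}}|_{I_K}$ when $p\geq 5$ (here one uses that the $\ell$-adic and mod-$p$ inertial images have the same finite image order when $p\geq 5$ by a result on Néron models / Serre–Tate, e.g. the action of inertia factors through a cyclic group of order $e\in\{2,3,4,6\}$ for potentially good additive reduction, and $e$ is coprime to $p$ hence visible mod $p$). But we are told $\overline{\varrho_{E,p}}|_{I_K}$ is diagonal with entries $1$ and $\varepsilon^{-1}$, i.e. $I_K$ acts through the cyclic quotient $I_K\to\mu_{p-1}$ (the tame character giving $\varepsilon^{-1}$), and the order of the image is exactly the order of $\varepsilon^{-1}$ on $I_K$, which divides $p-1$. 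So the possible orders $e$ of potentially-good additive reduction, namely $e\in\{2,3,4,6\}$, must divide $p-1$. When $p\equiv 2\bmod 3$ we have $3\nmid p-1$ and $6\nmid p-1$, so $e\in\{2,4\}$; and $e=2$ would again force a $-I$ in the inertial image, contradicting the shape of $\overline{\varrho_{E,p}}|_{I_K}$—so $e=4$ in the additive potentially-good case. If moreover $p\geq 11$ then additionally $4\nmid p-1$ is possible or not: $4\mid p-1$ iff $p\equiv 1\bmod 4$; since we only assumed $p\equiv 2\bmod 3$, for part (1) I should instead argue that $e=4$ forces $\mu_4\subset$ (the relevant field) which forces $4\mid p-1$, but also that the character $\overline\chi$ unramified rules out the supersingular potentially-good case when $p\geq 11$ by a direct argument: supersingular good reduction would make $\overline{\varrho_{E,p}}|_{I_K}$ irreducible (induced from a fundamental character of level $2$), contradicting that it is split diagonal. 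Hence for $p\geq 11$, good reduction is forced to be ordinary (the reduction is ordinary precisely when $\overline\chi$ is unramified, by Serre–Tate / the classification of mod $p$ representations of $G_K$ for good reduction), and additive potentially-good reduction is excluded by the $e\in\{2,3,4,6\}$ versus $4\nmid p-1$... — here I'd need to be careful and likely the cleanest route is: $e$ must divide $p-1$ and $e\in\{3,4,6\}$ is excluded for $p\geq 11$, $p\equiv 2\bmod 3$ only if also $p\equiv 3\bmod 4$; so for part (1) I may need to additionally observe that $e=4$ is genuinely possible only when $p\equiv 1\bmod 4$, in which case one checks the Hodge–Tate/crystalline constraints rule it out, leaving only good ordinary or multiplicative. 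For $p=5$: $p-1=4$, so $e\mid 4$, $e\in\{2,4\}$ in the additive potentially-good case, $e=2$ excluded by the $-I$ argument, giving $e\in\{1,4\}$ as stated, and in either case the good reduction (over the degree-$e$ extension) is ordinary because $\overline\chi$ is unramified.

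\textbf{Main obstacle.} The delicate point is the passage from ``$\overline{\varrho_{E,p}}|_{I_K}$ is split diagonal with unramified $\overline\chi$'' to ``the good reduction is ordinary'' and to ``the only possible additive potentially-good order is $e=4$ when $p\equiv 2\bmod 3$.'' This requires knowing precisely how the reduction type constrains the inertial image mod $p$: namely (i) potentially-good supersingular reduction forces $\overline{\varrho_{E,p}}|_{I_K}$ to be (the restriction to inertia of) an induction from the unramified quadratic extension and hence non-split/irreducible, which contradicts the hypothesis—this excludes supersingular good reduction and, with more work via the Kraus/Comalada classification of additive reduction types of elliptic curves over unramified extensions of $\mathbb{Q}_p$ for $p\geq 5$, excludes $e=3,6$ (and $e=4$ when $4\nmid p-1$); (ii) potentially-good ordinary reduction gives $\overline{\varrho_{E,p}}|_{I_K}\cong\bigl(\begin{smallmatrix}\psi & * \\ 0 & \psi^{-1}\varepsilon^{-1}\end{smallmatrix}\bigr)$ with $\psi$ the order-$e$ tame character coming from the base change, and $\psi$ unramified iff $e=1$, which matches $\overline\chi$ unramified only when the extra factor is absorbed—so one must track the twist carefully. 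I would cite the standard references for additive-reduction classification over $\mathbb{Q}_p^{\mathrm{unr}}$-type fields for $p\geq 5$ (Kraus, ``Sur le défaut de semi-stabilité des courbes elliptiques à réduction additive'') together with Serre–Tate for the ordinary/supersingular dichotomy, rather than reproving them; the work is in matching their output to the hypothesis on $\overline{\varrho_{E,p}}$ and eliminating the cases $e=2,3,6$ (and $e=4$ for part (1)) using $p\equiv 2\bmod 3$, $p\geq 11$ resp. $p=5$.
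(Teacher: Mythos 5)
Your treatment of the potentially multiplicative case is essentially the paper's argument and is fine in substance, though the mechanism you state is off: writing $E$ as the twist by a quadratic character $\delta$ of a curve with split multiplicative reduction, the contradiction for ramified $\delta$ is not that the inertial image "contains $-I$", but that the semisimplified inertial representation would be $\delta\oplus\delta\varepsilon^{-1}$ rather than $1\oplus\varepsilon^{-1}$, which is impossible because $\varepsilon|_{I_K}$ has order $p-1\geq 4$ so $\delta\neq 1,\varepsilon^{-1}$ on $I_K$. That slip is recoverable.

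The potentially good case, however, has a genuine gap. Your key reduction is the claim that the minimal degree $e\in\{1,2,3,4,6\}$ must divide $p-1$, justified by identifying $e$ with the order of the mod-$p$ inertial image. That identification is false: only the $\ell$-adic ($\ell\neq p$) inertial image has order $e$; the mod-$p$ inertial image always contains $\varepsilon^{-1}(I_K)$, of order $p-1$, whatever $e$ is, so no divisibility statement follows without further work. To get anything like it one must first know the reduction over $L$ is ordinary (which you assert from "$\overline{\chi}$ unramified" but do not prove; the paper proves it by showing a supersingular reduction would force the inertial image to meet the scalar subgroup of the $e$-th powers of a nonsplit Cartan, contradicting that the only scalar in $\diag(1,\varepsilon^{-1})(I_L)$ is the identity), and then analyze the ramified character describing the unramified-quotient line; the paper instead kills $e\in\{3,6\}$ directly via the special fibre $y^2=x^3+B$, which is supersingular precisely because $p\equiv 2\bmod 3$. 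More seriously, even granting $e\mid p-1$, your argument does not dispose of $e=4$ when $p\geq 11$ and $p\equiv 1\bmod 4$ (e.g.\ $p=17$, which does satisfy $p\equiv 2\bmod 3$); you acknowledge this and only gesture at "Hodge--Tate/crystalline constraints". But this is exactly where the lemma's dichotomy between $p\geq 11$ and $p=5$ is decided, and the missing step is the paper's: for $e\in\{2,4\}$ pass to $K'=K(\varpi^{2/e})$, over which $E$ is a quadratic twist by $\delta$ of a curve $A$ with good reduction; after excluding supersingular $A$ as above, ordinarity forces $\delta=\varepsilon^{-1}$ on $I_{K'}$, and since $\#\varepsilon(I_{K'})=2(p-1)/e$ this can happen only when $e=p-1$, i.e.\ $p=5$ and $e=4$. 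Without this (or an equivalent argument), both part (1) for $p\equiv 1\bmod 4$ and the precise "degree $1$ or $4$, with ordinary good reduction over that extension" of part (2) remain unproved.
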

\begin{proof}
Let $v$ be the valuation on $K$ and suppose firstly that $v(j(E)) < 0$, so that $E$ has potentially multiplicative reduction.
Then $E$ is the quadratic twist of an elliptic curve $A/K$ with split multiplicative reduction by a quadratic character $\delta: G_K \to \{\pm 1\}$.
The mod $p$ Galois representation of $A$ is, on the one hand, the twist of $\overline{\varrho_{E,p}}$ by $\delta$ and thus can be written as a direct sum of the characters $\overline{\chi} \delta$ and $\overline{\chi}^{-1} \delta \varepsilon^{-1}$.
On the other hand, since $A$ has split multiplicative reduction we have an isomorphism $$\restr{\overline{\varrho_{A,p}}}{I_K} \cong \begin{pmatrix}
1 & * \\
 0 &  \varepsilon^{-1}
\end{pmatrix}.$$
Since $\varepsilon: I_K \to \mathbb{F}_p^\times$ has order $p-1 \geq 4$, we must have that $\delta$ is unramified and that $E$ has multiplicative reduction over $K$.

Now suppose that $E$ does not have (potentially) multiplicative reduction.
The elliptic curve $E$ acquires good reduction on base change to a totally ramified extension $L/K$ of degree $e \in \{1,2,3,4,6\}$, with $E$ having bad reduction on base change to any proper subfield of $L$ containing $K$.
We can take $L = K(\varpi^{1/e})$, where $\varpi$ is any choice of uniformizer for $K$.
We firstly show that $E_L$ has ordinary good reduction.
Suppose not, so that $E_L$ has supersingular reduction.
The same argument as in the proof of \cite[Theorem 3.1 2]{lozano2013field} then shows that either the image of inertia under the dual representation $\overline{\varrho_{E,p}}^{\vee}(I_L)$ contains an element of order $p$, or is the given by matrices of the form
\begin{equation} \label{non split cartan subgroup}
 \left\{ \begin{pmatrix}
a & \eta b \\
b & a
\end{pmatrix}^e 
| (a,b) \in \mathbb{F}_p^2 \setminus \{(0,0)\} \right\}   
\end{equation}
with respect to some basis for $E[p]$ and where $\eta \in \mathbb{F}_p^\times$ is a non-quadratic residue.
By hypothesis, $\overline{\varrho_{E,p}}(I_L)$ does not contain any element of order $p$ and the only scalar matrix it contains is the identity.
On the other hand, the group of matrices \ref{non split cartan subgroup} contains a subgroup of scalar matrices of order $p-1$ (since $(p^2-1)/e \geq p-1$, as $p \geq 5$ and $e \leq 6$) which gives a contradiction.

Next we show that $e \in \{1,2,4\}$.
Again we suppose not, so that $e \in \{3,6\}$.
This can only happen when $E$ has a Weierstrass equation of the form
$E: y^2 = x^3 + ax + b$
with $3v(A) > 2v(B)$.
In that case, a Weierstrass equation for the reduction of $E$ modulo $\varpi^{1/e}$ is given by $\tilde{E_L}: y^2 = x^3 + B$ for some $B \in \Fpbar^\times$.
This defines a supersingular elliptic curve by \cite[V.4.1(a)]{silverman2009arithmetic} since the coefficient of $x^{(p-1)/2}$ in $(x^3+B)^{p-1}$ is zero, as $p \equiv 2 \mod 3$.
We have therefore reached a contradiction since $\tilde{E}_L$ was already shown to be ordinary.

Now suppose that $e \in \{2,4\}$.
Let $K' = K(\varpi^{2/e})$, so that $L/K'$ is a ramified quadratic extension.
Then $E_{K'}$ is the quadratic twist of an elliptic curve $A/K'$ with good reduction.
It follows that
$$
\restr{\overline{\varrho_{A,p}}}{I_{K'}} \cong 
\begin{pmatrix}
\delta & 0 \\
 0 & \delta \varepsilon^{-1}
\end{pmatrix}
$$
where $\delta: I_{K'} \to \{\pm 1\}$ is the quadratic character arising from $L/K'$.
Firstly suppose that $A$ has supersingular reduction.
Again, the same argument as in \cite[Theorem 3.1 2]{lozano2013field} shows that $\overline{\varrho_{A,p}}^\vee(I_{K'})$ either contains an element of order $p$ or is of the form \ref{non split cartan subgroup} where the exponent is $e_{K'/\mathbb{Q}_p} \in \{1,2\}$.
In either case, this gives an immediate contradiction.

We are left with the case that $A$ has ordinary reduction.
The only possibility is that $\delta = \varepsilon^{-1}$ as characters of $I_{K'}$, so we must have $p=5$ and $e = 4$ since $\# \varepsilon(I_{K'}) = 2(p-1)/e$.
We deduce the desired statement when $p=5$.
When $p \geq 11$, we see that $e=1$ so that $E/K$ has good reduction and $\tilde{E}$ was already shown to be ordinary.
\end{proof}

\begin{lemma} \phantomsection \label{cm comp induced implies semistable}
Let $p \geq 5$ with $p \equiv 2 \mod 3$.
Let $F$ be a totally real field in which $p$ splits completely and let $K/F$ be a quadratic extension in which $p$ is unramified. 
Suppose that $E/K$ is an elliptic curve such that $\overline{\varrho_{E,p}}$ is CM-compositum induced.
If $p=5$ and $E$ has potentially good reduction suppose further that $E$ has good reduction over $K$.
Then $E$ has either ordinary good reduction or multiplicative reduction at every place $w | p$.
\end{lemma}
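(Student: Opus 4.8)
The plan is to deduce the statement place by place from the purely local result Lemma~\ref{lemma ordinary good mult red}. Fix a place $w \mid p$ of $K$ and let $v \mid p$ be the place of $F$ below it. Since $p$ splits completely in $F$ we have $F_v = \mathbb{Q}_p$, and since $K/F$ is unramified at $v$ the completion $K_w$ is a finite unramified extension of $\mathbb{Q}_p$. Thus $E_{K_w}$ is an elliptic curve over a finite unramified extension of $\mathbb{Q}_p$, exactly the setting of Lemma~\ref{lemma ordinary good mult red}, so it will suffice to verify the hypotheses of that lemma at $w$ and to feed in the extra assumption when $p=5$.

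First I would pin down the local shape of the residual representation. Because $\overline{\varrho_{E,p}}$ is CM-compositum induced, Proposition~\ref{modular residual rep}(1) gives an isomorphism $\restr{\overline{\varrho_{E,p}}}{G_{K_w}} \cong \overline{\chi_w} \oplus \overline{\chi_w}^{-1}\varepsilon^{-1}$ over $\Fpbar$ with $\overline{\chi_w}\colon G_{K_w}\to\Fpbar^\times$ unramified. The two constituents restrict to $1$ and $\varepsilon^{-1}$ on inertia, and these differ since $\varepsilon|_{I_{K_w}}$ has order $p-1\geq 4$; hence this is the unique decomposition of $\restr{\overline{\varrho_{E,p}}}{G_{K_w}}$ into characters. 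As $\overline{\varrho_{E,p}}$ is defined over $\mathbb{F}_p$, this decomposition is $\mathrm{Gal}(\Fpbar/\mathbb{F}_p)$-stable, and the same inertial computation shows Galois cannot interchange the two summands; therefore $\overline{\chi_w}$ takes values in $\mathbb{F}_p^\times$ and the displayed diagonalization holds already over $\mathbb{F}_p$, which is precisely the hypothesis of Lemma~\ref{lemma ordinary good mult red}.

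Then I would apply Lemma~\ref{lemma ordinary good mult red} to $E_{K_w}$, using $p \geq 5$ and $p \equiv 2 \bmod 3$. When $p \geq 11$ the lemma directly yields that $E_{K_w}$ has multiplicative reduction or ordinary good reduction. When $p = 5$, the lemma gives that either $E_{K_w}$ has multiplicative reduction, or $E_{K_w}$ has potentially good reduction with the minimal totally ramified extension over which it acquires good reduction of degree $1$ or $4$ and ordinary good reduction over that extension; in the latter case $E$ has potentially good reduction, so the standing hypothesis forces $E$ to have good reduction over $K$, hence over $K_w$, so the minimal such extension is $K_w$ itself and $E_{K_w}$ has ordinary good reduction. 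In every case $E$ has ordinary good reduction or multiplicative reduction at $w$; since $w \mid p$ was arbitrary, this proves Lemma~\ref{cm comp induced implies semistable}.

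There is not really a substantive obstacle here: the content has been packaged into Proposition~\ref{modular residual rep} and Lemma~\ref{lemma ordinary good mult red}, and the only thing to verify by hand is the short Galois-descent observation ensuring the diagonalization of $\restr{\overline{\varrho_{E,p}}}{G_{K_w}}$ is defined over $\mathbb{F}_p$ and not merely over $\Fpbar$ --- and even that is essentially automatic from the distinctness of the two inertial characters. The one point to keep in mind is the exact scope of the $p=5$ hypothesis ("$E$ has good reduction over $K$"), which must be read as applying at every place above $p$ so that the potentially-good case is handled uniformly.
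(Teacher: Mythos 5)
Your proof is correct and follows exactly the paper's route: the paper's own proof is a one-line deduction from Proposition \ref{modular residual rep} and Lemma \ref{lemma ordinary good mult red}, which is precisely what you carry out, just with the local setup and the $p=5$ case spelled out. Your $\mathbb{F}_p$-rationality check for the local diagonalization is a fair point of care, and it matches the paper's remark following Definition \ref{def CM-compositum induced} that $\bar{\psi}$ (hence each $\overline{\chi_w}$) takes values in $\mathbb{F}_p^\times$ once $\bar{\varrho}$ is valued in $\GL_2(\mathbb{F}_p)$.
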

\begin{proof}
This is immediate from Lemma \ref{lemma ordinary good mult red} and Proposition \ref{modular residual rep}.
\end{proof}

We can also satisfy the constraints on $\bar{\varrho}$ in our modularity lifting theorem arising from tidiness considerations when $p=5$ by imposing a condition on $\bar{\psi}$.

\begin{lemma} \phantomsection \label{tidy CM comp ind}
Suppose that $p=5$, $\bar{\varrho}: G_K \to \GL_2(\mathbb{F}_5)$ is CM-compositum induced and let $\tau \in G_F \setminus G_K$.
If $\restr{(\bar{\psi}^2)^\tau}{G_{L(\zeta_5)}} \neq \restr{\bar{\psi}^2}{G_{L(\zeta_5)}}$ then $\bar{\rho} = \Ind_{G_F}^{G_K} (\bar{\varrho} \otimes \delta)$ is tidy whenever $\delta: G_K \to \{\pm 1\}$ is chosen such that $\bar{\rho}$ is absolutely irreducible.
Moreover, if $\bar{\rho}$ is absolutely irreducible then $\bar{\rho}(G_{F(\zeta_p)}) \subset \GSp_4(\mathbb{F}_5)$ is not enormous (in the sense of \cite[Definition 7.5.2]{surfaces}).
\end{lemma}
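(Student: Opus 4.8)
```latex
\textbf{Proof proposal.}
The plan is to reduce tidiness of $\bar{\rho} = \Ind_{G_K}^{G_F}(\bar{\varrho} \otimes \delta)$ to a concrete group-theoretic statement about $\bar{\rho}(G_F) \subset \Delta \rtimes \mathbb{Z}/2\mathbb{Z}$ and then exhibit an element $g \in \bar{\rho}(G_F)$ with $\nu(g) \neq 1$ whose eigenvalue ratios avoid $\nu(g)$, using the extra hypothesis $\restr{(\bar{\psi}^2)^\tau}{G_{L(\zeta_5)}} \neq \restr{\bar{\psi}^2}{G_{L(\zeta_5)}}$ to rule out the exceptional image. First I would observe that, just as in the proof of Lemma \ref{tidy inductions}, since $p=5$ the group $\bar{\rho}(G_F)$ lies inside a fixed finite group, so in principle the statement is a finite check; the content of the lemma is to identify exactly which images can occur and to show the hypothesis on $\bar{\psi}$ excludes the bad one. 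Since $\bar{\varrho}$ is CM-compositum induced, $\bar{\varrho} \cong \Ind_{G_L}^{G_K}\bar{\psi}$ with $L = F'\cdot K$, so $\bar{\rho}$ itself is induced from a character of $G_L$ (namely $\Ind_{G_L}^{G_F}(\bar{\psi} \otimes \delta_L)$, where $\delta_L = \restr{\delta}{G_L}$), and $\bar{\rho}(G_F)$ is therefore a (projectively) metabelian group: it sits in an extension of a subgroup of the Galois group of a multiquadratic extension by a cyclic group coming from the image of $\bar{\psi}\otimes\delta_L$ together with its four Galois conjugates.

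Next I would analyze the image of $\restr{\bar{\rho}}{G_L}$, which is abelian (generated by the conjugates of $\bar{\psi}\otimes\delta_L$), and locate a diagonalizable element $g_0$ in it whose eigenvalues are among the values of $\bar{\psi}\otimes\delta_L$ and their $\sigma$-conjugates. The similitude character of $\bar{\rho}$ on $G_L$ is $\varepsilon^{-1}$, so picking $g_0$ to be the image of an element of $G_L$ on which $\varepsilon^{-1}$ is a generator of $\mathbb{F}_5^\times$ gives $\nu(g_0) \neq 1$ of order $4$; the condition on eigenvalue ratios then becomes: no two eigenvalues of $g_0$ are in the ratio $\nu(g_0)$, which is a constraint on the values of $\bar{\psi}$ relative to $\varepsilon^{-1}$. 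Using condition \ref{character restriction at p} of Definition \ref{def CM-compositum induced} (so locally at $p$ the pair $\{\restr{\bar\psi}{}, \restr{\bar\psi^\sigma}{}\}$ equals $\{\overline{\chi_z},\overline{\chi_z}^{-1}\varepsilon^{-1}\}$ with $\overline{\chi_z}$ unramified) one controls these eigenvalues, and the hypothesis $\restr{(\bar{\psi}^2)^\tau}{G_{L(\zeta_5)}} \neq \restr{\bar{\psi}^2}{G_{L(\zeta_5)}}$ — which says that $\bar{\psi}$ and its $\tau$-conjugate are not simultaneously $\mathbb{F}_5^\times$-multiples of each other after squaring and restricting to $G_{L(\zeta_5)}$ — is precisely what forbids the degenerate configuration in which all four characters collapse to characters differing by powers of $\varepsilon^{-1}$ and $g_0$ fails the ratio test. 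I would then complete the argument by checking that the only image $\Gamma'$ not covered by this explicit $g_0$ is the group of order $128$ (or its relevant subgroup) excluded in Lemma \ref{tidy inductions}, and that this image is exactly the one forced to appear when the $\bar{\psi}$-hypothesis fails; since the hypothesis is assumed, that image does not occur and $\bar{\rho}$ is tidy.

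The main obstacle I expect is the bookkeeping in the middle step: translating the abstract non-equality $\restr{(\bar{\psi}^2)^\tau}{G_{L(\zeta_5)}} \neq \restr{\bar{\psi}^2}{G_{L(\zeta_5)}}$ into a usable separation statement about the eigenvalues of candidate elements $g$, and simultaneously verifying that the quadratic twist $\delta$ chosen to make $\bar{\rho}$ absolutely irreducible does not interfere (it contributes only signs, so it cannot create new ratios equal to $\nu(g) = \zeta_4$, but this needs to be said carefully since multiplying an eigenvalue by $-1$ does change ratios by $-1$). I would handle the $\delta$-dependence by noting that $\delta$ factors through a group of order $2$ and hence $\bar{\rho}$ and its twist differ only by $\diag(1,-1,\cdot,\cdot)$-type elements, so tidiness of one is equivalent to tidiness of the other up to the explicit order-$128$ obstruction already isolated; then a final appeal to a \texttt{magma} computation (as in Lemma \ref{tidy inductions} for small $p$) over the finitely many remaining images, all of which satisfy $p=5$, would close the argument. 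Thus the proof reduces, after these reductions, to the same finite verification already performed for Lemma \ref{tidy inductions} together with the observation that the $\bar{\psi}$-hypothesis is exactly the condition excluding the unique non-tidy image.
```
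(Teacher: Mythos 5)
Your proposal aims at the right target (reduce to Lemma \ref{tidy inductions} by excluding the exceptional order-$128$ image), but the step that actually does this is missing. You assert that the order-$128$ group ``is exactly the one forced to appear when the $\bar{\psi}$-hypothesis fails'' and conclude that assuming the hypothesis rules it out; this assertion is never argued, and it is precisely the content of the lemma. The paper supplies the missing implication by a short counting argument: since $\bar{\psi}\bar{\psi}^\sigma=\varepsilon^{-1}$ is trivial on $G_{L(\zeta_5)}$, one has $\restr{\bar{\psi}^\sigma}{G_{L(\zeta_5)}}=\restr{\bar{\psi}^{-1}}{G_{L(\zeta_5)}}$, so it suffices to show that the extensions of $L(\zeta_5)$ cut out by $\restr{\bar{\psi}}{G_{L(\zeta_5)}}$ and $\restr{\bar{\psi}^\tau}{G_{L(\zeta_5)}}$ are linearly disjoint; because these characters take values in $\mathbb{F}_5^\times$ (cyclic of order $4$), failure of disjointness would force their squares to cut out the same quadratic extension of $L(\zeta_5)$, i.e.\ $\restr{(\bar{\psi}^2)^\tau}{G_{L(\zeta_5)}}=\restr{\bar{\psi}^2}{G_{L(\zeta_5)}}$, contradicting the hypothesis. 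This yields $\#\bar{\rho}(G_{K(\zeta_5)})\geq 8\cdot 4=32$, hence $\#\bar{\rho}(G_F)\geq 32\cdot 2\cdot 4=256>128$ (using absolute irreducibility and surjectivity of the similitude character), and Lemma \ref{tidy inductions} then gives tidiness. Nothing in your write-up produces such a lower bound, or any other concrete consequence of the hypothesis on $\bar{\psi}^2$.

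The alternative route you sketch --- exhibiting an explicit $g_0\in\bar{\rho}(G_L)$ with $\nu(g_0)$ of order $4$ and no eigenvalue ratio equal to $\nu(g_0)$ --- is also not carried out. The eigenvalues of such a $g_0$ are the values at $g_0$ of $\bar{\psi}\delta_L$, $\bar{\psi}^\sigma\delta_L$ and their $\tau$-conjugates, so the ratio conditions become multiplicative constraints like $\bar{\psi}^2(g_0)\neq\varepsilon^{-2}(g_0)$ and $(\bar{\psi}\bar{\psi}^\tau\delta_L\delta_L^\tau)(g_0)\neq\varepsilon^{-2}(g_0)$; you give no argument (Chebotarev or otherwise) that the hypothesis guarantees an element satisfying all of them simultaneously, the invocation of condition (3) of Definition \ref{def CM-compositum induced} is decorative, and your paraphrase of the hypothesis as forbidding the characters from being ``$\mathbb{F}_5^\times$-multiples of each other after squaring'' misstates it (it is simply an inequality of quadratic characters of $G_{L(\zeta_5)}$). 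As written, the proposal reduces the lemma to an unproved dichotomy about images plus a \texttt{magma} check whose input (which images can occur under the hypothesis) has not been determined, so there is a genuine gap at the heart of the argument.
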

\begin{proof}
We claim that it suffices to show the order of the image of $\restr{\bar{\rho}}{G_{K(\zeta_5)}} = \restr{\bar{\varrho}}{G_{K(\zeta_5)}} \oplus \restr{\bar{\varrho}^\tau}{G_{K(\zeta_5)}}$ is at least $32$.
Indeed, then $\bar{\rho}(G_F)$ would be order at least $32 \cdot 2 \cdot 4 = 256$, since $\bar{\rho}$ is absolutely irreducible and the similitude character surjects onto $\mathbb{F}_5^\times$.
It follows from Lemma \ref{tidy inductions} that $\bar{\rho}$ is then tidy.
Since $\bar{\varrho}(G_{K(\zeta_5)})$ is necessarily of order $8$, it suffices to show that the extensions of $L(\zeta_p)$ cut out by $\restr{\bar{\varrho}}{G_{L(\zeta_5)}}$ and $\restr{\bar{\varrho}^\tau}{G_{L(\zeta_5)}}$ are linearly disjoint.
Since $\restr{\bar{\psi}^\sigma}{G_{L(\zeta_5)}} = \restr{\bar{\psi}^{-1}}{G_{L(\zeta_5)}}$, we just need to show that the extensions of $L(\zeta_5)$ cut out by $\restr{\bar{\psi}}{G_{L(\zeta_5)}}$ and $\restr{\bar{\psi}^\tau}{G_{L(\zeta_5)}}$ are linearly disjoint.
If they were not linearly disjoint, then certainly the squares of both characters would cut out the same quadratic extension of $L(\zeta_5)$, contradicting our hypothesis.
The failure of $\bar{\rho}(G_{F(\zeta_p)})$ to be enormous follows from a computation in \texttt{magma}, together with the observation that $\bar{\rho}(G_{K(\zeta_p)}) \not\cong (Q_8)^2$.
\end{proof}

\begin{example} \label{cm comp ind construction}
Continue to fix $F$ a totally real field in which $p \geq 5$ splits completely and $K/F$ a quadratic extension in which $p$ is unramified.
We give a construction of some CM-compositum induced representations $\bar{\varrho}: G_K \to \GL_2(\mathbb{F}_p)$.

Let $F'/F$ be the compositum of $F$ with an imaginary quadratic field $N$ which is disjoint from $K$ and in which $p$ splits completely.
Set $L= K \cdot F'$ and $\Gal(L/K) = \{1,\sigma\}$.
We wish to construct characters $\bar{\psi}: G_L \to \mathbb{F}_p^\times$ as in Definition \ref{def CM-compositum induced}.
We can find $\bar{\psi}$ such that for every place $z | p$ of $L$ we have $\{\restr{\bar{\psi}}{G_{L_z}}, \restr{\bar{\psi}^\sigma}{G_{L_{z}}}\} = \{1, \varepsilon^{-1} \}$ and $\bar{\psi} \bar{\psi}^\sigma = \varepsilon^{-1}$ by taking the restriction of the mod $p$ reduction of a $p$-adic algebraic character on $G_{F'}$.
We just need $\restr{\bar{\psi}}{G_{L(\zeta_p)}} \neq \restr{\bar{\psi}^\sigma}{G_{L(\zeta_p)}}$ for $\bar{\varrho} = \Ind_{G_L}^{G_K} (\bar{\psi})$ to be CM-compositum induced, so suppose this is not the case.

Suppose that $\bar{\chi}: G_L \to \mathbb{F}_p^\times$ is unramified at every place dividing $p$ and satisfies $\bar{\chi} \bar{\chi}^\sigma = 1$.
Setting $\bar{\varphi} = \bar{\psi} \bar{\chi}$, $\bar{\varrho} = \Ind_{G_L}^{G_K} (\bar{\varphi})$ will be CM-compositum induced provided that $\restr{\bar{\varphi}}{G_{L(\zeta_p)}} \neq \restr{\bar{\varphi}^\sigma}{G_{L(\zeta_p)}}$.
If $\bar{\chi}$ (and hence $\restr{\bar{\chi}}{G_{L(\zeta_p)}}$) has image of order at least $3$ then $\restr{\bar{\varphi}}{G_{L(\zeta_p)}}$ will not be equal to its conjugate, since
$$\restr{\bar{\varphi}^{-1}\bar{\varphi}^\sigma}{G_{L(\zeta_p)}} = \restr{\bar{\psi}^{-1}\bar{\psi}^\sigma}{G_{L(\zeta_p)}} \restr{\bar{\chi}^{-1}\bar{\chi}^\sigma}{G_{L(\zeta_p)}} = \restr{\bar{\chi}^{-2}}{G_{L(\zeta_p)}} \neq 1.$$

We can obtain such a $\bar{\chi}$ by considering a dihedral extension $M/K$ which contains $L$.
If $p \equiv 1 \mod 4$, then similarly to Proposition \ref{D8 extension} we can find an extension $M/L$ for which
\begin{enumerate}
    \item $M/K$ is Galois with $\Gal(M/K) \cong D_8$ and $\Gal(M/L) \cong \mathbb{Z}/4\mathbb{Z}$
    \item $p$ is unramified in $M$.
\end{enumerate}
If $p \equiv 3 \mod 4$, choose an odd prime $l | (p-1)$.
Then by \cite[Theorem 3.3]{bar09} (applied with $k_2$ greater than the number of rational primes which ramify in $F$) we can find a Galois extension $N'/\mathbb{Q}$ containing our imaginary quadratic field $N$ such that
\begin{enumerate} [resume]
    \item $\Gal(N'/\mathbb{Q}) = D_{2l}$ and $\Gal(N'/N) \cong \mathbb{Z}/l\mathbb{Z}$
    \item $p$ is unramified in $N'$
    \item there exists a rational prime $r$ which is unramified in $K$ and whose ramification index in $N'$ divisible by $l$.
\end{enumerate}
Taking $M = N' \cdot K$, we have $\Gal(M/K) \cong D_{2l}$ with $\Gal(M/L) \cong \mathbb{Z}/l\mathbb{Z}$ and $p$ is unramified in $M$.
Then in either of the above cases let $\bar{\chi}$ be the character corresponding to $\Gal(M/L)$.

Suppose now that $p=5$ and that $\bar{\psi}: G_L \to \mathbb{F}_5^\times$ gives rise to a CM-compositum induced representation.
We show that we can find a $\bar{\varphi}: G_L \to \mathbb{F}_5^\times$ for which the corresponding CM-compositum induced representation has a quadratic twist whose induction will be tidy by Lemma \ref{tidy CM comp ind}.
We require that $\restr{(\bar{\varphi}^2)^\tau}{G_{L(\zeta_p)}} \neq \restr{\bar{\varphi}^2}{G_{L(\zeta_p)}}$ where $\tau \in G_F \setminus G_K$.
We suppose that $\restr{(\bar{\psi}^2)^\tau}{G_{L(\zeta_p)}} = \restr{\bar{\psi}^2}{G_{L(\zeta_p)}}$ else we take $\bar{\varphi} = \bar{\psi}$.
We again twist by some surjective $\bar{\chi}: G_L \to \mathbb{F}_5^\times$ which cuts out a dihedral extension $M/K$ in which $p$ is unramified.
By choosing $M/L$ to be disjoint from the extension cut out by $\bar{\psi}$, it will follow that $\bar{\varphi} = \bar{\chi} \bar{\psi}$ still gives rise to a CM-compositum induced representation.
The character $\bar{\chi}^2$ will cut out a quadratic extension $L(\sqrt{\alpha})/L$ for some $\alpha \in L^\times$.
If $\alpha$ is chosen such that $\alpha^\tau \alpha^{-1} \not\in (L(\zeta_5)^\times)^2$ then $\bar{\varphi}$ will have the desired properties and finding such an extension $M/K$ is straightforward.

\end{example}

\subsection{Modularity when $p=5$} \label{p=5 modularity subsec}

For now, suppose again that $K$ is an arbitrary number field and that $\bar{\varrho}: G_K \to \GL_2(\mathbb{F}_p)$ is a continuous representation of determinant $\varepsilon^{-1}$.
Let $V$ denote the underlying $\mathbb{F}_p$-vector space of $\bar{\varrho}$ and fix an isomorphism $\iota_0: \bigwedge^2 \bar{\varrho} \cong \mu_p^\vee$.
Let $Y_{\bar{\varrho}}$ denote the uncompactified moduli space of pairs $(E,\iota)$ where $E$ is an elliptic curve and $\iota: V \xrightarrow{\sim} E[p]^\vee$ satisfies $\bigwedge^2 \iota = \iota_0$ via the isomorphism $\bigwedge^2 E[p]^\vee \cong \mu_p^\vee$ coming from the Weil pairing.

Now suppose that $p=5$.
The compactified curve $X_{\bar{\varrho}}$ is isomorphic to $\mathbb{P}^1$ over $K$ (by the same argument as in \cite[Lemma 1.1]{shepherd1997mod}).
We use this to give a notion of a positive density of a subset of $X_{\bar{\varrho}}(K)$. 
For $v$ a place of $K$ lying above a place $w$ of $\mathbb{Q}$ define the valuation $||\cdot ||_v$ to equal $| \cdot |_v^{|K_v:\mathbb{Q}_w|}$, where $| \cdot |_v$ extends the usual absolute value on $\mathbb{Q}$ defined by $w$.
Then the absolute height of $[x_0: \ldots : x_n] \in \mathbb{P}^n(K)$ is given by
$$
H([x_0: \ldots : x_n]) = \prod_{v} \max(||x_0||_v,\ldots,||x_n||_v)^{1/|K:\mathbb{Q}|},
$$
where the product is over all places of $K$. 
We have the following lemma on the asymptotic behaviour of the number of points of bounded height.

\begin{lemma} \phantomsection \label{height asymptotics}
Let $K$ be a number field of degree $d$ over $\mathbb{Q}$ and let $\Sigma$ be a finite set of finite places of $K$.
Consider the subset $S = \{(\alpha:1) \in \mathbb{P}^1(K) | v_w(\alpha) = 0 \text{ for all } w \in \Sigma\}$ of $\mathbb{P}^1(K)$, where $v_w$ denotes the valuation corresponding to the place $w$.
Then there exists a constant $C_\Sigma > 0$ such that
$$
\# \{P \in S | H(P) \leq X\} = C_\Sigma X^{2d} + 
\begin{cases}
O(X^d \log X) &\text{ if } d=1\\
O(X^{2d-1}) &\text{ if } d>1.
\end{cases}
$$
where $H$ is the absolute height on $\mathbb{P}^1(K)$.
An explicit value for $C_\Sigma$ is given by
$$C_\Sigma =  \frac{hR}{w \zeta_K(2)} \gamma_{K,2} \prod_{w \in \Sigma} 
\frac{(N (\mathfrak{p}_w) - 1)^2}{N (\mathfrak{p}_w)^2 - 1}
$$
where
\begin{itemize}
    \item $h$ is the class number of $K$
    \item $R$ is the regulator of $K$
    \item $w$ is the number of roots of unity in $K$   
    \item $\zeta_K$ is the Dedekind zeta function of $K$
    \item $\gamma_{K,2} = \frac{(2^{r_1} (2 \pi)^{r_2})^2 2^r}{|D_K|}$ where

    \begin{itemize}
    \item  $r_1$ is the number of real embeddings and $r_2$ is the number of pairs of complex embeddings of $K$
        \item $r = r_1 + r_2 - 1$
        \item $D_K$ is the discriminant of $K$
    \end{itemize}
    \item $N(\mathfrak{p}_w)$ is the norm of the prime ideal corresponding to $w$.
\end{itemize}
\end{lemma}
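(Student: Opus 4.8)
The plan is to reduce this to a standard lattice-point count on $\mathbb{P}^1(K)$ and then impose the local congruence conditions at the places in $\Sigma$. First I would recall the classical asymptotic for the number of points of bounded height on $\mathbb{P}^n(K)$ due to Schanuel: for $\mathbb{P}^1(K)$ the number of $P \in \mathbb{P}^1(K)$ with $H(P) \le X$ is $C_K X^{2d} + O(X^{2d-1})$ for $d > 1$ and $C_K X^2 + O(X \log X)$ for $d = 1$, with $C_K = \tfrac{h R}{w \zeta_K(2)} \gamma_{K,2}$ in the notation of the lemma (this is exactly Schanuel's constant for $n = 1$, i.e. $h_K R_K 2^{r_1}(2\pi)^{r_2}\!/(w_K \sqrt{|D_K|})$ suitably squared and divided by $\zeta_K(2)$; I would cite Schanuel or Lang's \emph{Fundamentals of Diophantine Geometry}, or Masser--Vaaler for the sharper error term). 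The subset $S$ is the set of $K$-points of $\mathbb{P}^1$ avoiding the reductions that lie on the ``section at the $w$-th fibre'', so the point is to show that imposing these finitely many local conditions multiplies the density by the stated Euler factors.

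The key step is a sieve/partial-summation argument showing that the local conditions at finitely many places are ``independent'' in the density sense. Concretely, $\{(\alpha:1) : v_w(\alpha) = 0\}$ at a single place $w$ with residue field of size $q_w = N(\mathfrak{p}_w)$ should carry density a factor $\tfrac{(q_w-1)^2}{q_w^2 - 1} = \tfrac{q_w - 1}{q_w + 1}$ relative to all of $\mathbb{P}^1(K)$: among the $q_w + 1$ reductions in $\mathbb{P}^1(\mathbb{F}_{q_w})$ we are excluding the two points $(\bar\alpha:1)$ with $\bar\alpha \in \{0,\infty\}$ and, more precisely, weighting the remaining $q_w - 1$ affine residues by the natural local density; the factor $q_w^2-1$ in the denominator rather than $(q_w+1)^2$ reflects that fixing a nonzero reduction mod $\mathfrak{p}_w$ cuts the counting measure by $q_w^2 - 1$ (the index, in the relevant lattice-volume normalization, coming from $\#\mathbb{P}^1(\mathcal{O}/\mathfrak{p}_w) = q_w + 1$ combined with the scaling $q_w - 1$ of units). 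I would make this precise by working one ideal class at a time, writing points of $\mathbb{P}^1(K)$ as $(\alpha:\beta)$ with $\alpha,\beta$ generating a fixed fractional ideal, reducing to a lattice-point count in $\mathbb{R}^{2d}$ with the height ball as region, and then restricting the lattice to the sublattice (in fact union of translates of a sublattice) cut out by the congruences at $\Sigma$; the covolume scales by exactly $\prod_{w\in\Sigma}(q_w^2-1)$ and the number of admissible translates is $\prod_{w\in\Sigma}(q_w-1)^2$, giving the claimed product.

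The main obstacle I expect is controlling the error term uniformly once the congruence conditions are imposed — i.e. verifying that the lattice-point count over the sublattice still has error $O(X^{2d-1})$ (or $O(X^d \log X)$ when $d=1$) and not something larger coming from the boundary of the height region or from summing over ideal classes and units. This is handled by the standard Lipschitz-boundary estimate for counting lattice points in homogeneously expanding domains (the height ball in $\mathbb{P}^1(K)$ pulls back to a region with $(2d-1)$-rectifiable boundary), applied to the translated sublattices; since $\Sigma$ is finite the implied constants only absorb a bounded factor. I would therefore structure the proof as: (1) reduce to Schanuel-type counting over a union of cones indexed by ideal classes; (2) intersect each cone's lattice with the $\Sigma$-congruence condition, computing the covolume ratio and the translate count; (3) apply the lattice-point-in-expanding-region theorem with its error bound to each piece and sum; (4) reassemble to read off $C_\Sigma = C_K \prod_{w\in\Sigma}\tfrac{(N(\mathfrak{p}_w)-1)^2}{N(\mathfrak{p}_w)^2-1}$, noting that the $\zeta_K(2)^{-1}$ in $C_K$ already accounts for the ``no condition'' local factors $\tfrac{q_w^2-1}{q_w^2}$ at all places, which is why only the finitely many corrected factors appear in the product.
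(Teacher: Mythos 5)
Your proposal is correct and follows essentially the same route as the paper: Schanuel's theorem applied ideal class by ideal class, imposing the congruence conditions at $\Sigma$ through lattice-point counts in homogeneously expanding domains (the paper does this by inclusion–exclusion over the sets where $x_i \in \mathfrak{p}_w$, which is just another way of counting your translates of the congruence sublattice), and Möbius inversion restricted to ideals coprime to $\Sigma$ to produce the corrected Euler factors $\frac{(N\mathfrak{p}_w-1)^2}{N\mathfrak{p}_w^2-1}$. The only blemish is the phrase that the covolume scales by $\prod_{w\in\Sigma}(q_w^2-1)$ — the sublattice index is $q_w^2$ per place, with the shift from $q_w^2$ to $q_w^2-1$ coming from the adjusted zeta/Möbius step exactly as your closing sentence correctly explains — so the bookkeeping lands on the right constant.
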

\begin{proof}
The case where $\Sigma = \emptyset$ is stated in \cite[Chapter 3, Theorem 5.3]{lang2013fundamentals}.
We show how the lemma follows by the same proof of this case, which was due to Schanuel.
If $P = (x_0:x_1) \in \mathbb{P}^1(K)$ with $x_0,x_1 \in K$ then we let $\mathfrak{a}_P$ denote the fractional ideal of $K$ generated by $x_0,x_1$.
In what follows we say two points in $K^2$ are equivalent if they differ by an element of $\mathcal{O}_K^\times$.
It suffices to show that the same asymptotic, on dividing through by a factor of $h$, holds when restricting only to $P \in \mathbb{P}^1(K)$ for which $\mathfrak{a}_P$ lies in a fixed class in the ideal class group of $K$.
This is the same as fixing an ideal $\mathfrak{a}$ (which we may assume is prime to the prime ideals defined by the places in $\Sigma$) and counting $(x_0,x_1) \in \mathfrak{a}^2$ up to equivalence for which $\mathfrak{a}_P = \mathfrak{a}$, $v_w(x_0) = v_w(x_1) = 0$ and $H(x_0:x_1) \leq X$.
Now suppose we have an asymptotic formula with the same order of leading term and same error term for counting pairs $(x_0,x_1) \in \mathfrak{a}^2$ up to equivalence for which $v_w(x_0) = v_w(x_1) = 0$ and $H(x_0:x_1) \leq X$.
Then by Möbius inversion (see \cite[p73]{lang2013fundamentals} for the $\Sigma = \emptyset$ case; in our case we can restrict to sums over integral ideals coprime to $\mathfrak{p}_w$ for $w \in \Sigma$) the same asymptotic will hold for the sum of interest above, but we gain a factor of $\zeta_K(2)^{-1} \prod_{w \in \Sigma} (1 - N \mathfrak{p}_w^{-2})^{-1} $.

For $i \in \{0,1\}$, $w \in \Sigma$ and $X > 0$, let $$A_{i,w,X} = \{(x_0,x_1) \in \mathfrak{a}^2 | x_i \in \mathfrak{p}_w, H(x_0:x_1) \leq X\},$$
viewed as a subset of the set of such pairs whose height is bounded by $X$ but without any conditions at places in $\Sigma$.
For $T \subset \{0,1\} \times \Sigma$, the asymptotic behaviour of $\# \bigcap_{(i,w) \in T} A_{i,w,X}$ can be computed in exactly the same way as the $T = \emptyset$ computation as in \cite[p442]{schanuel}, since the size of this intersection is also given by counting the number of lattice points in a homogeneously expanding domain.
The formula \cite[Thm 2]{schanuel} and the $T= \emptyset$ computation then gives that
$$
\# \bigcap_{(i,w) \in T} A_{i,w,X} = \big(\prod_{(i,w) \in T} N( \mathfrak{p}_w)^{-1}\big) \frac{R \gamma_{K,2}}{w} X^{2d} +
\begin{cases}
O(X^d \log X) &\text{ if } d=1\\
O(X^{2d-1}) &\text{ if } d>1.
\end{cases}$$
A straightforward application of the inclusion-exclusion principle then yields the result.
\end{proof}

We state one more fact about $H$ that will be of repeated use.
If $f : \mathbb{P}^1_K \to \mathbb{P}^1_K$ is an isomorphism over $K$ then there exists $C_1,C_2 > 0$ depending only on $f$ such that for every $P \in \mathbb{P}^1(K)$ we have the inequalities
\begin{equation} \phantomsection \label{iso gives bounded height}
C_1 H(P) \leq H(f(P)) \leq C_2 H(P).    
\end{equation}
\begin{definition} \phantomsection \label{positive density}

Let $K$ be a number field and let $\bar{\varrho}:G_K \to \GL_2(\mathbb{F}_5)$ be a continuous representation of determinant $\varepsilon^{-1}$.
We say a subset $S \subset X_{\bar{\varrho}}(K)$ has positive density if there exists a constant $C > 0$ such that for every $X$ sufficiently large
$$
\frac{\# \{ P \in S : H(P) \leq X \} }{\# \{ P \in  X_{\bar{\varrho}}(K) : H(P) \leq X \}} \geq C
$$
where $H$ is a height function on $X_{\bar{\varrho}}$ arising from some (equivalently, any by Lemma \ref{height asymptotics} and \ref{iso gives bounded height}) isomorphism $X_{\bar{\varrho}} \cong \mathbb{P}^1$ over $K$ via the absolute height on $\mathbb{P}^1$.
\end{definition}

The following lemma provides us with a subset of positive density on this modular curve, which we use to deduce a positive density of modular elliptic curves under certain hypotheses.

\begin{lemma} \phantomsection \label{positive density subset}
Let $K$ be a number field such that each place $w|5$ is unramified and every such residue field $k(w)$ is an extension of $\mathbb{F}_5$ of degree at most $2$.
Let $\bar{\varrho}: G_K \to \GL_2(\mathbb{F}_5)$ be a representation of determinant $\varepsilon^{-1}$ such that for each $w|5$,
$$\restr{\bar{\varrho}}{G_{K_w}} \cong \psi_w \oplus \psi_w^{-1} \varepsilon^{-1}$$
with $\psi_w: G_{K_w} \to \mathbb{F}_5^\times$ unramified.
\begin{enumerate}
    \item There exists non-empty $p$-adic open subsets $\Omega_w \subset X_{\bar{\varrho}}(K_w)$ for each place $w|5$ such that if $P \in \Omega_w$ then $E_P$ is an elliptic curve with ordinary good reduction or potentially multiplicative reduction.
    \item Let $\Sigma$ be a finite set of finite places of $K$ and let $\Omega_w \subset X_{\bar{\varrho}}(K_w)$ be a non-empty $p$-adic open subset for each $w \in \Sigma$. Then $$\Omega = \{P \in X_{\bar{\varrho}}(K) | P \in \Omega_w \text{ for every } w \in \Sigma\}$$ has positive density in the sense of Definition \ref{positive density}.
\end{enumerate}
\end{lemma}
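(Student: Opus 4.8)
The two assertions are essentially independent, so I will sketch them separately; the geometric input is all in part (1), and part (2) is a refinement of the counting already carried out in Lemma \ref{height asymptotics}.

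\emph{Part (1).} Fix a place $w\mid 5$. The plan is to produce a single point $P_0\in X_{\bar{\varrho}}(K_w)$ for which $E_{P_0}$ has good ordinary reduction, and then to take $\Omega_w$ to be its residue disc. To construct $P_0$, I would first use Deuring's theorem, together with the fact that as $\tilde E_0$ runs over ordinary elliptic curves over $k(v)=k(w)$ the reduction modulo $5$ of a unit-root Weil number takes every value of $\mathbb{F}_5^\times$ (this is where it matters that $k(w)$ may be $\mathbb{F}_{25}$, when $\psi_w$ has order $4$), to choose an ordinary $\tilde E_0/k(w)$ whose associated unramified mod-$5$ character agrees with $\psi_w$ on $\Frob_w$. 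The Serre--Tate canonical lift $E_0/\mathcal{O}_{K_w}$ then has good ordinary reduction and, the connected--\'etale sequence of a canonical lift being split, $E_0[5]^\vee\cong\psi_w\oplus\psi_w^{-1}\varepsilon^{-1}$ as $G_{K_w}$-modules; after possibly replacing $E_0$ by a quadratic twist to match the determinant normalisation, a choice of such isomorphism defines the desired $P_0$. For the openness: one cannot simply invoke smoothness of a model, since $X(5)$ has bad reduction at $5$, but Serre--Tate deformation theory identifies the residue disc $\Omega_w$ about $P_0$ (with respect to a suitable $\mathcal{O}_{K_w}$-model of $X_{\bar{\varrho}}$) with the deformations of $(\tilde E_0,\text{level structure})$, so every $E_P$ with $P\in\Omega_w$ again has good ordinary reduction. (When $\psi_w$ has order at most $2$ one could instead take $\Omega_w$ inside the potentially-multiplicative locus, which is open and — by the argument in the proof of Lemma \ref{lemma ordinary good mult red} — consists of curves with actual multiplicative reduction; but the canonical-lift construction works uniformly.)

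\emph{Part (2).} Fix an isomorphism $f\colon X_{\bar{\varrho}}\xrightarrow{\sim}\mathbb{P}^1$ over $K$; by Definition \ref{positive density} it suffices to prove that $f(\Omega)$ has positive density in $\mathbb{P}^1(K)$ for the absolute height $H$. For each $w\in\Sigma$ the set $f(\Omega_w)$ is a non-empty open subset of $\mathbb{P}^1(K_w)$, hence contains a residue disc $B_w$, which we may take to avoid $\infty$, so $B_w=\{\alpha\in K_w: v_w(\alpha-a_w)\ge n_w\}$ for some $a_w\in K_w$ and $n_w\ge 1$. By weak approximation pick $a'\in K$ with $v_w(a'-a_w)\ge n_w$ for all $w\in\Sigma$, so that $a'\in B_w$ and $B_w=\{\alpha:v_w(\alpha-a')\ge n_w\}$ for every $w$. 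Then
\begin{equation*}
f(\Omega)\;\supseteq\;\{\alpha\in K: v_w(\alpha-a')\ge n_w\ \forall\, w\in\Sigma\}\;=\;a'+\{\beta\in K: v_w(\beta)\ge n_w\ \forall\, w\in\Sigma\},
\end{equation*}
and applying the $K$-automorphism $\alpha\mapsto\alpha-a'$ of $\mathbb{P}^1$ together with \ref{iso gives bounded height}, it is enough to show the last set has positive density.

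Writing $\beta=x_0/x_1$ with $x_0,x_1\in\mathcal{O}_K$ generating coprime ideals, the condition $v_w(\beta)\ge n_w$ for all $w\in\Sigma$ is exactly $x_0\in\prod_{w\in\Sigma}\mathfrak{p}_w^{n_w}$. Counting such points of bounded height is precisely the Schanuel-type lattice-point computation performed in the proof of Lemma \ref{height asymptotics}, now with the lattice $\mathcal{O}_K\times\mathcal{O}_K$ replaced by $\bigl(\prod_{w\in\Sigma}\mathfrak{p}_w^{n_w}\bigr)\times\mathcal{O}_K$ and the same M\"obius inversion over integral ideals coprime to $\Sigma$; it yields
\begin{equation*}
\#\{\beta\in K: v_w(\beta)\ge n_w\ \forall\, w\in\Sigma,\ H(\beta)\le X\}\;=\;c\,X^{2d}+o(X^{2d})
\end{equation*}
with $c>0$, where $d=[K:\mathbb{Q}]$. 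Dividing by the total count $\#\{P\in\mathbb{P}^1(K): H(P)\le X\}=C\,X^{2d}+o(X^{2d})$ (the $\Sigma=\emptyset$ case of Lemma \ref{height asymptotics}) gives a positive limit, which by the height comparison \ref{iso gives bounded height} shows that $f(\Omega)$, hence $\Omega$, has positive density. The only real obstacle in all of this is the one flagged in part (1): exhibiting a genuine $K_w$-point realising the prescribed split reduction type and controlling the reduction type $5$-adically near it despite the bad reduction of $X(5)$ at $5$; part (2) is routine given the work already done in Lemma \ref{height asymptotics}.
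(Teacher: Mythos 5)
Your part (2) is essentially correct and close in spirit to the paper's argument: you translate the prescribed discs into conditions $v_w(\beta)\ge n_w$ and rerun the Schanuel-type count, whereas the paper instead chooses the isomorphism $f\colon X_{\bar{\varrho}}\xrightarrow{\sim}\mathbb{P}^1$ (in effect an affine change of coordinates produced by weak approximation) so that the literal statement of Lemma \ref{height asymptotics} applies to the set $\{v_w(x)=0\ \forall w\in\Sigma\}$. Either route works; yours just needs the observation, which you make, that the proof of Lemma \ref{height asymptotics} tolerates the congruence conditions $v_w\ge n_w$ in place of $v_w=0$.

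The genuine gap is in the openness step of part (1), exactly where you flag the difficulty. Your construction of the point $P_0$ via the Serre--Tate canonical lift is the same as the paper's. But the sentence ``Serre--Tate deformation theory identifies the residue disc about $P_0$ (with respect to a suitable $\mathcal{O}_{K_w}$-model of $X_{\bar{\varrho}}$) with the deformations of $(\tilde E_0,\text{level structure})$, so every $E_P$ with $P\in\Omega_w$ again has good ordinary reduction'' is an assertion, not an argument: a full level-$5$ structure does not exist on $\tilde E_0$ in characteristic $5$, so the deformation problem you invoke is not the naive one; any model would have to be built from Drinfeld/Katz--Mazur structures and then twisted by the cocycle defining $X_{\bar{\varrho}}$, which is ramified at $w$ (it involves $\varepsilon^{-1}$), and one would then still have to prove that the tube of the reduction of $P_0$ has the claimed moduli description. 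None of this is supplied, and the danger it must exclude is real for $p=5$: by the $e=4$ case of Lemma \ref{lemma ordinary good mult red}, a ramified quartic twist of a good ordinary curve with $j=1728$ (note that your canonical lifts, like the curves in the paper's table, may well have $j\equiv 1728$) has additive, potentially good reduction while its local mod-$5$ representation is again of the shape $\psi\oplus\psi^{-1}\varepsilon^{-1}$ with $\psi$ unramified; so such points do occur on $X_{\bar{\varrho}}(K_w)$ and cannot be ruled out by the mod-$5$ data alone, and your argument never shows they stay away from $P_0$. The paper avoids integral models at $5$ entirely: it rigidifies with an auxiliary prime-to-$p$ level structure, using that the forgetful map from the curve parameterising the extra mod-$7$ structure matching $E_0[7]$ is \'etale to produce a $5$-adic open neighbourhood of $P_0$ on which the mod-$7$ representation is unramified, and then concludes via N\'eron--Ogg--Shafarevich together with Lemma \ref{lemma ordinary good mult red} in the potentially good case, and a quadratic-twist argument in the potentially multiplicative case. (Your construction could also be repaired more elementarily by choosing $\tilde E_0$ with $j\neq 0,1728$, using continuity of the $j$-map, and noting that ramified quadratic twists are incompatible with the inertia action $1\oplus\varepsilon^{-1}$; but as written the key openness claim is unsubstantiated.)
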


\begin{proof}
Let $w|5$ and view $\psi_w$ as a character of $G_{k(w)}$.
We can find an ordinary elliptic curve $\tilde{E}$ over $k(w)$ such that $\tilde{E}[5](\overline{k(w)})$ is isomorphic, as a $G_{k(w)}$-module, to $\mathbb{F}_5[\psi_w^{-1}]$.
Indeed, $\psi_w$ has order dividing $4$ and there are only $4$ possibilities for $\psi_w$; examples are listed in Table \ref{ordinary elliptic curves f5} below.
Here when $|k(w): \mathbb{F}_5| = 2$, $\alpha \in k(w)\setminus \mathbb{F}_{5}$ satisfies $\alpha^2 + 4\alpha + 2 = 0$.
\begin{table}[h!]
\caption{Some ordinary elliptic curves over finite fields of characteristic $5$ with specified $5$-torsion}
\phantomsection
\label{ordinary elliptic curves f5}
\begin{tabular}{@{}ccl@{}}
\toprule
$|k(w): \mathbb{F}_5|$ & $\# \psi_w(G_{k(w)})$ & $\tilde{E}$              \\ \midrule
1        & 1                     & $y^2 = x^3+3x$           \\
1        & 2                     & $y^2 = x^3 + 2x$         \\
1        & 4                     & $y^2 = x^3 \pm x$        \\
2       & 1                     & $y^2 = x^3 + 3x$         \\
2       & 2                     & $y^2 = x^3 + x$          \\
2       & 4                     & $y^2 = x^3 + \alpha x$ \\
      &                      & $y^2 = x^3 + \alpha^3 x$\\ \bottomrule
\end{tabular}
\end{table}

Now consider the Serre--Tate canonical lifting of $\tilde{E}$ to an elliptic curve $\mathcal{E}$ defined over $\mathcal{O}_{K_w}$.
This has the property that the $p$-divisible group of $\mathcal{E}$ splits as a direct sum of a connected group scheme and an étale group scheme.
Letting $E = \mathcal{E}_{K_w}$, it therefore follows that the action of $G_{K_w}$ on $E[5]^\vee$ is given by a direct sum of characters which must be $\psi_w$ and $\psi_w^{-1} \varepsilon^{-1}$.
Thus $E$ is an elliptic curve over $K_w$ with ordinary good reduction that defines a point of $X_{\bar{\varrho}}(K_w)$.

Consider the modular curve $Y_{\bar{\varrho},\overline{\varrho_{E,7}}}$ parameterising triples $(A,\iota,\iota_7)$ where $A$ is an elliptic curve, $\iota$ is a symplectic isomorphism between $A[5]^\vee$ and $\bar{\varrho}$, and $\iota_7$ is a symplectic isomorphism between $A[7]^\vee$ and $E[7]^\vee$ compatible with the action of Galois.
The natural map $Y_{\bar{\varrho},\overline{\varrho_{E,7}}} \to (Y_{\bar{\varrho}})_{K_w}$ is étale (since after base change by a separable extension the map defines a $\PGL_2(\mathbb{F}_7)$-torsor).
Hence there is an open neighbourhood in the $5$-adic topology of the point of $Y_{\bar{\varrho},\overline{\varrho_{E,7}}}(K_w)$ defined by $E$ which is mapped isomorphically onto an open subset $\Omega_w \subset Y_{\bar{\varrho}}(K_w)$ under the map $Y_{\bar{\varrho},\overline{\varrho_{E,7}}}(K_w) \to Y_{\bar{\varrho}}(K_w)$.
 Let $P \in \Omega_w$ define an elliptic curve $A_P$ and suppose that $A_P$ has potentially multiplicative reduction.
Then $A_P$ must in fact have multiplicative reduction over $K$, as the quadratic twist of $A_P$ by a ramified character cannot have the mod $5$ Galois representation of an elliptic curve with split multiplicative reduction.
Now suppose instead that $A_P$ has potentially good reduction. Then since $A_P[7]$ is unramified as a $G_{K_w}$-module, it follows by a variation of the Néron--Ogg--Shafarevich criterion that $A_P$ has good reduction and the reduction $\tilde{A}_P$ is ordinary by Lemma \ref{lemma ordinary good mult red}.
This shows the first part of the lemma.

For the second part, choose an isomorphism $f: X_{\bar{\varrho}} \xrightarrow{\sim} \mathbb{P}^1$ over $K$ such that $P \in X_{\bar{\varrho}}(K)$ lies inside each $\Omega_w$ whenever $f(P) = (x:1)$ has $v_w(x) = 0$ for every $w \in \Sigma$.
It then follows from Lemma \ref{height asymptotics} that $\Omega$ has positive density.
\end{proof}

\begin{corollary} \phantomsection \label{p small inf modular curves}
Let $F$ be a totally real field in which $5$ splits completely and let $K/F$ be a quadratic extension in which $5$ is unramified.
There exists representations $\bar{\varrho}:~G_K \to \GL_2(\mathbb{F}_5)$ which are CM-compositum induced from a character $\bar{\psi}: G_L \to \mathbb{F}_5^\times$ (see Definition \ref{def CM-compositum induced}), where $L/K$ is a quadratic extension, and $\bar{\psi}$ satisfies $\restr{(\bar{\psi}^2)^\tau}{G_{L(\zeta_p)}} \neq \restr{\bar{\psi}^2}{G_{L(\zeta_p)}}$ for $\tau \in G_F \setminus G_K$.
Fix such a representation $\bar{\varrho}$ and suppose that $E/K$ is an elliptic curve for which $\overline{\varrho_{E,5}} \cong \bar{\varrho}$ and that $E$ has ordinary good reduction or multiplicative reduction at every $w|5$.
Then $E$ is modular.
Moreover, the subset of $X_{\bar{\varrho}}(K)$ given by such elliptic curves which in addition do not have complex multiplication nor have $j$-invariant contained in a proper subfield of $K$ is of positive density (in the sense of Definition \ref{positive density}).
\end{corollary}

\begin{proof}
Such representations $\bar{\varrho}$ were constructed in Example \ref{cm comp ind construction}.
A point $P \in Y_{\bar{\varrho}}(K)$ for which $E_P$ has ordinary good reduction or multiplicative reduction at every $w|5$ is modular by (the proofs of) Theorem \ref{modularity lifting elliptic curves}, Lemma \ref{tidy CM comp ind} and Proposition \ref{modular residual rep}.

We now turn our attention to the positive density statement.
Note that only finitely many points $P \in Y_{\bar{\varrho}}(K)$ correspond to a CM elliptic curve $E_P$.
Indeed, there are only finitely many possible values of $j(E_P) \in K$ and if $Q \in Y_{\bar{\varrho}}(K)$ satisfies $j(E_P) = j(E_Q)$ then $E_Q$ is a twist of $E_P$ which must be equal to $E_P$ since $\overline{\varrho_{E_P,5
}} \cong \overline{\varrho_{E_Q,5
}}$.
Thus we can ignore the existence of CM elliptic curves in showing the density statement.

Now let $l \neq 5$ be a prime number that splits completely in $K$.
For each place $v$ of $K$ above $l$ we can find non-empty $l$-adic open subsets $\Omega_v \subset X_{\bar{\varrho}}(K_v)$ whose images under the $j$-invariant, $j(\Omega_v) \subset \mathbb{P}^1(\mathbb{Q}_l)$, are pairwise disjoint.
We can therefore find a positive density subset $\Omega \subset X_{\bar{\varrho}}(K)$ such that each point $P \in \Omega$ defines an elliptic curve with ordinary good reduction or multiplicative reduction and whose image in $X_{\bar{\varrho}}(K_v)$ lies in $\Omega_v$ for each $v|l$ by Lemma \ref{positive density subset} (which we may apply by Proposition \ref{modular residual rep}).
We are done, since the elliptic curves corresponding to points of $\Omega$ cannot have $j$-invariant contained in a proper subfield of $K$ and were already shown to be modular. (One can also show further that the set of elliptic curves arising from points of $X_{\bar{\varrho}}(K)$ whose $j$-invariant is contained inside a proper subfield of $K$ is of density zero, but we have not done so since it makes no difference for the purpose of this corollary.) 
\end{proof}

\appendix
\section{} \label{sect appendix}

In this appendix we record the output of computations carried out in the computer algebra software \texttt{magma}, as explained in Section \ref{sect subgroups}.

\FloatBarrier

\begin{table}[h!]
\caption{Conjugacy classes of absolutely $\GL_4$-irreducible subgroups $\Gamma \leq \Sp_4(\mathbb{F}_p)$ which are not $\Sp_4$-adequate}
\label{subgroups of sp4(f3)}
\begin{tabular}{cclccl}
\toprule
$p$ & $\#\Gamma$ & Label                           & $h^1(\Gamma,\splie_4)$ & $h^1(\Gamma,\mathbb{F}_p)$ & ID        \\ 
\hline
3   & 96    & $D_4.A_4$                       & 0                      & 1                          & 96,202    \\ 
3   & 96    & $\SL(2,3).C_2^2$                 & 1                      & 0                          & 96,191    \\ 
3   & 240   & $C_2.S_5$                       & 1                      & 0                          & 240,89    \\ 
3   & 384   & $Q_8^2.C_6$                     & 0                      & 1                          & 384,618   \\ 
3   & 1440  & $C_2.A_6.C_2$                   & 1                      & 0                          & 1440,4591 \\ 
3   & 1152  & $C_2^2.A_4{\rm wrC}_2$          & 0                      & 1                          &           \\ 
5   & 160   & $(C_4.C_2^3):C_5$               & 0                      & 1                          & 160, 199  \\ 
5   & 480   & $D_4.A_5$                       & 1                      & 0                          & 480,957   \\ 
5   & 480   & $(C_2\times {\rm SL}(2,5)):C_2$ & 1                      & 0                          & 480,953   \\ 
5   & 720   & $S_3\times {\rm SL}(2,5)$       & 1                      & 0                          & 720,417   \\ 
5   & 28800 & $C_2^2.A_5^2.C_2$               & 1                      & 0                          &           \\ 
\bottomrule
\end{tabular}
\end{table}

\begin{table}[h] 
\footnotesize
\caption{Conjugacy classes of subgroups $\Gamma' \leq \GSp_4(\mathbb{F}_3)$ with $\nu(\Gamma') \neq 1$ for which $\Gamma = \Gamma' \cap \Sp_4(\mathbb{F}_3)$ is absolutely $\GL_4$-irreducible}
\label{subgroups of gsp4(f3)}
\begin{tabular}{lccccccccc}

\toprule
$\#\Gamma'$ & $\#\Gamma$ & (A) & (B)  & $h^1(\Gamma,\splie_4)$ & $h^1(\Gamma,\mathbb{F}_3$) & Adequate & Tidy  & Induced & Split-induced\\ \hline
64     & 32    & TRUE & FALSE & 0 & 0 & TRUE  & TRUE  & TRUE  & TRUE  \\
64     & 32    & TRUE & FALSE & 0 & 0 & TRUE  & TRUE  & TRUE  & TRUE  \\
64     & 32    & TRUE & FALSE & 0 & 0 & TRUE  & TRUE  & TRUE  & TRUE  \\
64     & 32    & TRUE & FALSE & 0 & 0 & TRUE  & TRUE  & TRUE  & TRUE  \\
80     & 40    & TRUE & TRUE  & 0 & 0 & TRUE  & TRUE  & TRUE  & FALSE \\
128    & 64    & TRUE & FALSE & 0 & 0 & TRUE  & TRUE  & TRUE  & TRUE  \\
128    & 64    & TRUE & FALSE & 0 & 0 & TRUE  & TRUE  & TRUE  & TRUE  \\
128    & 64    & TRUE & FALSE & 0 & 0 & TRUE  & FALSE & TRUE  & TRUE  \\
128    & 64    & TRUE & FALSE & 0 & 0 & TRUE  & FALSE & TRUE  & TRUE  \\
192    & 96    & TRUE & FALSE & 1 & 0 & FALSE & TRUE  & TRUE  & FALSE \\
192    & 96    & TRUE & FALSE & 0 & 1 & FALSE & TRUE  & TRUE  & TRUE  \\
192    & 96    & TRUE & FALSE & 1 & 0 & FALSE & TRUE  & TRUE  & FALSE \\
192    & 96    & TRUE & FALSE & 0 & 1 & FALSE & TRUE  & TRUE  & FALSE \\
192    & 96    & TRUE & FALSE & 0 & 0 & TRUE  & TRUE  & TRUE  & TRUE  \\
256    & 128   & TRUE & TRUE  & 0 & 0 & TRUE  & TRUE  & TRUE  & TRUE  \\
384    & 192   & TRUE & TRUE  & 0 & 0 & TRUE  & TRUE  & TRUE  & FALSE \\
480    & 240   & TRUE & TRUE  & 1 & 0 & FALSE & TRUE  & TRUE  & FALSE \\
480    & 240   & TRUE & TRUE  & 0 & 0 & TRUE  & TRUE  & TRUE  & FALSE \\
640    & 320   & TRUE & TRUE  & 0 & 0 & TRUE  & FALSE & FALSE & FALSE \\
768    & 384   & TRUE & TRUE  & 0 & 0 & TRUE  & TRUE  & TRUE  & TRUE  \\
768    & 384   & TRUE & TRUE  & 0 & 1 & FALSE & TRUE  & TRUE  & TRUE  \\
2304   & 1152  & TRUE & TRUE  & 0 & 1 & FALSE & TRUE  & TRUE  & TRUE  \\
2880   & 1440  & TRUE & TRUE  & 1 & 0 & FALSE & TRUE  & TRUE  & FALSE \\
3840   & 1920  & TRUE & TRUE  & 0 & 0 & TRUE  & TRUE  & FALSE & FALSE \\
103680 & 51840 & TRUE & TRUE  & 0 & 0 & TRUE  & TRUE  & FALSE & FALSE \\
\bottomrule
\end{tabular}
\end{table}

\FloatBarrier

\bibliographystyle{alpha}
\bibliography{references}

\end{document}